\providecommand{\U}[1]{\protect\rule{.1in}{.1in}}
\theoremstyle{definition}
\newtheorem{theo}{Theorem}[section]
\newenvironment{theorem}[1][]
{\begin{theo}[#1]\begin{leftbar}}
{\end{leftbar}\end{theo}}
\newtheorem{lem}[theo]{Lemma}
\newenvironment{lemma}[1][]
{\begin{lem}[#1]\begin{leftbar}}
{\end{leftbar}\end{lem}}
\newtheorem{prop}[theo]{Proposition}
\newenvironment{proposition}[1][]
{\begin{prop}[#1]\begin{leftbar}}
{\end{leftbar}\end{prop}}
\newtheorem{defi}[theo]{Definition}
\newenvironment{definition}[1][]
{\begin{defi}[#1]\begin{leftbar}}
{\end{leftbar}\end{defi}}
\newtheorem{remk}[theo]{Remark}
\newenvironment{remark}[1][]
{\begin{remk}[#1]\begin{leftbar}}
{\end{leftbar}\end{remk}}
\newtheorem{coro}[theo]{Corollary}
\newenvironment{corollary}[1][]
{\begin{coro}[#1]\begin{leftbar}}
{\end{leftbar}\end{coro}}
\newtheorem{conv}[theo]{Convention}
\newenvironment{convention}[1][]
{\begin{conv}[#1]\begin{leftbar}}
{\end{leftbar}\end{conv}}
\newtheorem{warn}[theo]{Warning}
\newtheorem{conj}[theo]{Conjecture}
\newtheorem{exam}[theo]{Example}
\newenvironment{example}[1][]
{\begin{exam}[#1]\begin{leftbar}}
{\end{leftbar}\end{exam}}
\newenvironment{statement}{\begin{quote}}{\end{quote}}
\let\sumnonlimits\sum
\let\prodnonlimits\prod
\renewcommand{\sum}{\sumnonlimits\limits}
\renewcommand{\prod}{\prodnonlimits\limits}
\newcommand{\NN}{\mathbb{N}}
\newcommand{\ZZ}{\mathbb{Z}}
\newcommand{\id}{\operatorname{id}}
\newcommand{\exc}{\operatorname{exc}}
\newcommand{\SYT}{\operatorname{SYT}}
\newcommand{\SSYT}{\operatorname{SSYT}}
\newcommand{\FSSYT}{\operatorname{FSSYT}}
\newcommand{\calE}{\mathcal{E}}
\newcommand{\calF}{\mathcal{F}}
\newcommand{\bb}{\mathbf{b}}
\newcommand{\zz}{\mathbf{z}}
\newcommand{\DD}{\mathbf{D}}
\newcommand{\set}[1]{\left\{ #1 \right\}}
\newcommand{\abs}[1]{\left| #1 \right|}
\newcommand{\tup}[1]{\left( #1 \right)}
\newcommand{\ive}[1]{\left[ #1 \right]}
\newcommand{\email}[1]{\href{mailto:#1}{\texttt{#1}}}
\definecolor{dbluecolor}{rgb}{0.01,0.02,0.7}
\definecolor{dgreencolor}{rgb}{0.2,0.4,0.0}
\definecolor{darkred}{rgb}{0.7,0,0}
\begin{document}

\title{The Pak--Postnikov and Naruse skew hook length formulas: a new proof}
\author{Darij Grinberg\footnote{Department of Mathematics, Drexel University, Philadelphia, U.S.A. (\email{darijgrinberg@gmail.com})}, Nazar Korniichuk\footnote{Kyiv Natural-Scientific Lyceum No. 145, Kyiv, Ukraine (\email{n.korniychuk.a@gmail.com})},\\Kostiantyn Molokanov\footnote{Kyiv Natural-Scientific Lyceum No. 145, Kyiv, Ukraine (\email{kostyamolokanov@gmail.com})}, and Severyn Khomych\footnote{Brucknergymnasium Wels, Austria  (\email{severyn.khomych@gmail.com})}}
\subtitle{[Yulia's Dream research paper, 2023]}
\date{version 1.0, 27 October 2023}
\maketitle
\begin{abstract}
\textbf{Abstract.}
The classical hook length formula of enumerative combinatorics expresses the number of standard Young tableaux of a given partition shape as a single fraction. In recent years, two generalizations of this formula have emerged: one by Pak and Postnikov, replacing the number by a (rational) generating function, and one by Naruse, which generalizes the setting from a partition to a skew partition. Both generalizations appear to lie significantly deeper, with no simple proofs known. We combine them into a generating-function identity for skew partitions, and prove it in a fairly elementary way using recursion, determinants and simple combinatorics.
\medskip\\
\textbf{Keywords:}
Young tableaux, hook length formulas, partitions, enumerative combinatorics, excited diagrams, Jacobi--Trudi formulas, Schur polynomials, determinants.
\medskip\\
\textbf{Mathematics Subject Classification 2020:}
05A17, 05E05, 15A15, 05A19.
\end{abstract}

\tableofcontents

\section{Introduction}

The hook length formula is one of the most prominent results in
algebraic combinatorics.
Since its discovery by Frame, Robinson and Thrall in 1953, it has
seen several proofs and various applications.
It expresses the number of standard tableaux of shape $\lambda$,
where $\lambda$ is a given integer partition, as the fraction
\begin{align}
\dfrac{n!}{\prod_{c\in Y\left(  \lambda\right)  }h_{\lambda}\left(  c\right)  },
\label{eq.intro.HLF.RHS}
\end{align}
where $n$ is the size (i.e., number of boxes) of $\lambda$,
where $Y\tup{\lambda}$ is the Young diagram of $\lambda$ (as a
set of boxes), and where $h_\lambda\tup{c}$ denotes the
so-called hook length of a given box $c$ in $\lambda$.
Viewed in isolation, this is a surprising enough result, yet it has
also found applications in probability theory \cite{Romik15},
representation theory \cite[\S 9.5.2]{Procesi}, \cite{JamKer09},
and enumerative algebraic geometry \cite[\S 4.6]{Gillespie}.

A natural generalization of a partition $\lambda$ is a skew partition $\lambda/\mu$.
For a long time, no analogue of the hook length formula was known for skew partitions, until Hiroshi Naruse announced one in 2014.
He expressed the number of standard tableaux of shape $\lambda/\mu$ not as a simple fraction (such an expression is easily seen to be impossible), but as a sum
\begin{align}
\sum_{E\in\mathcal{E}\left(  \lambda/\mu\right)  }\dfrac{n!}{\prod_{c\in Y\left(
\lambda\right)  \setminus E} h_{\lambda}\left(  c\right)  }
\label{eq.intro.naruse.RHS}
\end{align}
over all \emph{excitations} $E$ of $Y\tup{\mu}$ that fit inside $Y\tup{\lambda}$ (see Theorem~\ref{thm.naruse} for a precise statement).
The latter excitations (also known as \emph{excited diagrams}) are a (finite) set of diagrams obtained from $Y\tup{\mu}$ through a simple combinatorial transition rule.
When $\mu = \varnothing$, there is only one such excitation, and the sum \eqref{eq.intro.naruse.RHS} boils down to the single fraction \eqref{eq.intro.HLF.RHS}. Thus, Naruse's formula generalizes the classical hook length formula.

Naruse's formula has garnered a reputation of being hard to prove. Naruse never published his proof, which is believed to have used Schubert calculus. Several proofs have since been published by Morales, Pak and Panova in \cite{MPP1} and \cite{MPP2} and by Konvalinka in \cite{Konva1} and \cite{Konva2}, all relying on intricate combinatorics or fairly deep symmetric function theory. Compared with the many elementary proofs of the original hook length formula, these paint a rather desolate picture.

Another generalization of the hook length formula has surfaced in a rather different place. In his 2001 work \cite{Pak01}, Igor Pak has found a new approach to the original hook length formula using discrete geometry -- specifically, using volume-preserving maps on certain polytopes associated to any poset. Alexander Postnikov observed that, as a side effect of his approach, the formula could be refined to an equality between two rational functions (one being defined as a sum over all standard tableaux, and the other being a multivariate generalization of \eqref{eq.intro.HLF.RHS}). This generalization, too, has withstood many attempts at a simple proof; the only proofs known so far are Pak's original geometric proof and Sam Hopkins's proof \cite{Hopkins} using P-partitions.\footnote{The two proofs are more akin than they might seem: The P-partitions can be viewed as the integer points in the polytopes considered by Pak, and Hopkins's toggle-based map $\mathcal{RSK}$ is essentially Pak's geometric bijection $\xi_\lambda$.}

The present work aims to ameliorate the situation by proving both generalizations (the Naruse one and the Pak--Postnikov one) in an elementary way that requires no combinatorial virtuosity from the reader. In fact, we combine them into a single result, which generalizes both: a generating-function hook length formula for skew diagrams. This result has been discovered by the first author and proved by Konvalinka, who outlined his proof in \cite[Section 5]{Konva2}, but nothing more has been published on it so far.

The ingredients of our proof are a bijection between excitations and flagged semistandard tableaux (essentially the same bijection that appears in all existing proofs of the formula); a refined version of the Jacobi--Trudi formula (already known to Gessel and Viennot); a few elementary determinantal identities; and a handful of recursions and elementary properties of partitions. To keep this paper self-contained, we prove all of them, trusting the expert reader to skip anything that is known or clear. We hope that the level of detail makes this paper accessible to the undergraduate or the non-combinatorialist.

This paper is formatted as a sequence of intermediate results (lemmas, propositions and corollaries), each of which is not too hard to prove using what has been shown before.
Some hints to these proofs can be found in \ref{sec.hints}, while detailed proofs are given in Section \ref{sec.proofs}.
We believe the latter are most useful as a backup, as the reader will attain a better understanding of the material by constructing these proofs on his own.
(Thus, our intermediate results serve a similar role as the ``Examples'' in Macdonald's text \cite{Macdon95}.)

\subsection*{Related questions}

Naruse's formula is not the only formula for the number of standard tableaux of a given shape $\lambda / \mu$.
A different formula was found by Okounkov and Olshanski in 1996 \cite[Theorem 8.1]{OkoOls98}, and remarkably can also be reformulated in a similar vein as Naruse's formula,
using \emph{reverse excitations} instead of excitations.
We refer to \cite{MorZhu20} for the details of this formula and various ways to state it.
We are not aware of any Pak--Postnikov-type generalizations of this formula so far, but it appears natural to try extending it in this direction.

Yet another Naruse--like formula, true however only for a narrower class of skew partitions (``slim diagrams''), can be found in \cite[\S 9.1]{Pak21}.

\subsection*{Acknowledgments}

We thank the Yulia's Dream program and, in particular, its organizers Pavel Etingof, Slava Gerovich, Vasily Dolgushev and Dmytro Matvieievskyi.
The first author further appreciates conversations with Alexander Postnikov (who made him aware of the Pak--Postnikov formula), Matja\v{z} Konvalinka (who first confirmed his conjecture that has become Theorem~\ref{thm.main}) and Igor Pak (for several relevant pointers to the literature).

\section{\label{sec.nots}Notations and terminology}

The hook length formula concerns \emph{standard tableaux}. What
follows is a brief introduction to the subject, sufficient for
the purposes of the present work.
More comprehensive treatments of the theory of standard tableaux
can be found in \cite{Fulton97}, \cite[Chapter 7]{Sagan19} and \cite[Chapter 7]{EC2}.

\subsection{Partitions and skew partitions}

We let $\NN := \set{0,1,2,\ldots}$. The size of a set $S$ is denoted by $\abs{S}$.

A \emph{partition} means an infinite sequence $\lambda=\left(  \lambda
_{1},\lambda_{2},\lambda_{3},\ldots\right)  $ of nonnegative integers such
that $\lambda_{1}\geq\lambda_{2}\geq\lambda_{3}\geq\cdots$ and $\lambda_{i}=0$
for all sufficiently large $i$. For instance, $\left(  5,2,2,1,0,0,0,\ldots
\right)  $ is a partition. We usually omit the zeroes when we write down a
partition; e.g., the partition we just mentioned can be rewritten as $\left(
5,2,2,1\right)  $. In particular, the sequence $\left(  0,0,0,\ldots\right)  $
is a partition, denoted by $\varnothing$.

We write $\lambda_{i}$ for the $i$-th entry of any partition $\lambda$. Thus,
$\lambda=\left(  \lambda_{1},\lambda_{2},\lambda_{3},\ldots\right)  $ for any
partition $\lambda$.

Two partitions $\lambda$ and $\mu$ are said to satisfy $\lambda\supseteq\mu$
if $\lambda_{i}\geq\mu_{i}$ for all $i$. We also write $\mu\subseteq\lambda$
for $\lambda\supseteq\mu$. A pair $\left(  \lambda,\mu\right)  $ of two
partitions satisfying $\lambda\supseteq\mu$ is called a \emph{skew partition},
and is denoted by $\lambda/\mu$.

\begin{convention}
    For Sections \ref{sec.nots} and \ref{sec.thms}, we fix two partitions $\lambda$ and $\mu$. (We shall sometimes -- but not always -- require that $\lambda\supseteq\mu$.)
\end{convention}

\subsection{Diagrams and Young diagrams}

A \emph{box} (or \emph{cell}) will mean a pair $\tup{i, j} \in \ZZ^2$ of two integers.
A \emph{diagram} will mean a finite set of boxes, i.e., a finite subset of $\ZZ^2$.

\emph{Young diagrams} are a particularly crucial type of diagrams, defined as follows:
If $\lambda \supseteq \mu$, then the \emph{(skew) Young diagram }$Y\left(  \lambda/\mu\right)  $
means the set of all pairs $\left(  i,j\right)  $ of positive integers satisfying
$\lambda_{i}\geq j>\mu_{i}$. For instance,
\[
Y\tup{\tup{4,2,1,1} / \tup{2,1,1}}
= \set{\tup{1,3},\ \tup{1,4},\ \tup{2,2},\ \tup{4,1}} .
\]
The Young diagram $Y\left(  \lambda/\mu\right)  $ is clearly a diagram.
But there are also diagrams that are not Young diagrams (for instance, $\left\{  \left(  1,1\right)  ,\ \left(
2,3\right)  \right\}  $).

If $\lambda \supseteq \mu$, then the elements $\tup{i,j}$ of the skew Young diagram
$Y\left( \lambda/\mu\right)  $ will be called the \emph{boxes} (or \emph{cells}) of
$Y\left(  \lambda/\mu\right)  $.

Boxes and diagrams will be visually represented in a specific way.
Namely, we visualize each box $\tup{i, j}$ as a square box of
sidelength $1$, placed in the Cartesian plane in such a way that its center has
Cartesian coordinates $\tup{i, j}$.
However, we let the x-axis go north-to-south and the y-axis go west-to-east%
\footnote{We shall use the words ``north'', ``east'', ``south'' and ``west'' as synonyms for ``up'', ``right'', ``down'' and ``left''.}
(so that the eastern neighbor of the box $\tup{i,j}$ is $\tup{i,j+1}$, whereas its southern neighbor is $\tup{i+1,j}$).
Thus, for instance, the skew Young
diagram $Y\left(  \left(  5,4,3,3,1\right)  /\left(  2,1,1\right)  \right)  $
is the following conglomeration of boxes:
\begin{equation*}
\ydiagram{2+3,1+3,1+2,0+3,0+1}
\end{equation*}
(where, e.g., the three boxes in the topmost row correspond to the pairs
$\left(  1,3\right)  $, $\left(  1,4\right)  $ and $\left(  1,5\right)  $ from
left to right).
This way of representing boxes and diagrams is known as the \emph{English notation} or the \emph{matrix notation} (as it imitates the way that the entries of a matrix are commonly indexed).
It makes each diagram look like a table (although usually not of rectangular shape),
and thus allows us to speak of ``rows'' and ``columns'' of a diagram (e.g., the set of all boxes $\tup{i, j}$ with a given $i$ is called the \emph{$i$-th row}),
and also to fill numbers into the boxes (which we shall do later).

Any partition $\lambda$ satisfies $\lambda \supseteq \varnothing$, and thus the skew partition $\lambda / \varnothing$ is well-defined.
We will write $Y\left(\lambda\right)$ for the skew Young diagram $Y\left(\lambda / \varnothing\right)$.
We observe that it consists of all pairs $\tup{i, j}$ of positive integers satisfying $\lambda_i \geq j$.
Young diagrams of the form $Y\tup{\lambda}$ are called \emph{straight Young diagrams}.
In a straight Young diagram, the rows are ``left-aligned'' (i.e., each row has its westernmost box in the $1$-st column).

We note that any skew partition $\lambda / \mu$ satisfies $Y\left(\lambda / \mu\right) = Y\left(\lambda\right) \setminus Y\left(\mu\right)$. Furthermore, two partitions $\lambda$ and $\mu$ satisfy $\mu \subseteq \lambda$ if and only if $Y\tup{\mu} \subseteq Y\tup{\lambda}$.

\subsection{Hooks and their lengths}

If $c = \tup{i, j}$ is a box of a Young diagram $Y\left(  \lambda\right)  $, then its \emph{hook} $H_{\lambda}\left(  c\right)  $ is
defined to be the set of all boxes of $Y\left(  \lambda\right)  $ that lie
due east or due south of $c$, including $c$ itself.
Formally speaking, $H_{\lambda} \tup{c}$ is defined by
\[
H_{\lambda} \tup{c} :=
\set{ \tup{i, k} \in Y\tup{\lambda} \ \mid\ k \geq j }
\cup
\set{ \tup{k, j} \in Y\tup{\lambda} \ \mid\ k \geq i }.
\]


For example, if $\lambda = \tup{5,4,3,3,1}$, then the hook $H_{\lambda}\left(3,2\right)$ of the
box $\left(  3,2\right)  $ consists of three boxes, which are marked with
asterisks in the picture below:
\begin{equation}
\begin{ytableau} \phantom{x} & & & & \\ \phantom{x}  & & & \\ \phantom{x}  &\ast&\ast\\ &\ast&\\ \phantom{x} \end{ytableau}\ \ .
\label{eq.nots.hook.1}
\end{equation}
Formally speaking, this hook is the set
$\set{ \tup{3,2}, \tup{3,3}, \tup{4,2} }$.

The \emph{hook length} $h_{\lambda}\left(  c\right)  $ of a box $c\in
Y\left(  \lambda\right)  $ is defined to be $\abs{ H_{\lambda}\tup{c} }$, that is, the number of all boxes in the hook
of $c$. Thus, $h_{\lambda}\left(  3,2\right)  =3$ in our above example
(\ref{eq.nots.hook.1}). Likewise, in the same example, $h_{\lambda}\left(  2,2\right)  =5$ and $h_{\lambda}\left(  1,3\right)  =6$.

\subsection{Standard tableaux}

If $\lambda \supseteq \mu$, then a \emph{standard tableau} of shape $\lambda/\mu$ is defined to be a way to put
a positive integer into each box of $Y\left(  \lambda/\mu\right)  $ such that

\begin{itemize}
\item the integers are $1,2,\ldots,n$ (where $n$ is the number of boxes of
$Y\left(  \lambda/\mu\right)  $), and each appears exactly once;

\item the integers increase left-to-right in each row;

\item the integers increase top-to-bottom in each column.
\end{itemize}

Formally speaking, this means that a standard tableau of shape $\lambda/\mu$
is a bijection $T : Y\left(  \lambda/\mu\right)  \rightarrow\left\{
1,2,\ldots,n\right\}  $ (where $n = \abs{Y\tup{\lambda/\mu}}$) such that
\[
T\left(  i,j\right)  <T\left(  i,j+1\right)  \ \ \ \ \ \ \ \ \ \ \text{for all
}\left(  i,j\right)  \in Y\left(  \lambda/\mu\right)  \text{ satisfying
}\left(  i,j+1\right)  \in Y\left(  \lambda/\mu\right)
\]
and
\[
T\left(  i,j\right)  <T\left(  i+1,j\right)  \ \ \ \ \ \ \ \ \ \ \text{for all
}\left(  i,j\right)  \in Y\left(  \lambda/\mu\right)  \text{ satisfying
}\left(  i+1,j\right)  \in Y\left(  \lambda/\mu\right) .
\]
We imagine each value $T\left(  i,j\right)  $ of this bijection $T$ to be
written into the box $\left(  i,j\right)  $, so that $T$ is visualized as
a filling of the diagram $Y\left(  \lambda/\mu\right)  $ with the numbers
$1,2,\ldots,n$.

For example, here is a standard tableau of shape $\left(  5,4,3,3,1\right)  /\left(  2,1,1\right)$:
\begin{equation}
\begin{ytableau} \none&\none&1&3&10\\ \none &2&4&11\\ \none&5&6\\ 7&8&12\\ 9 \end{ytableau}\ \ .
\label{eq.nots.skew-tab.3}%
\end{equation}

We also agree that if $\lambda$ and $\mu$ do \textbf{not} satisfy $\lambda \supseteq \mu$, then there are no standard tableaux of shape $\lambda / \mu$.

We let $\operatorname*{SYT}\left(  \lambda/\mu\right)  $ denote the set of all
standard tableaux of shape $\lambda/\mu$. Clearly, this set is finite (and empty unless $\lambda \supseteq \mu$). The
famous \emph{hook length formula} of Frame, Robinson and Thrall gives a beautifully simple expression for its size when $\mu=\varnothing$:

\begin{theorem}[hook length formula]
\label{thm.hlf-classical}
Let $\lambda$ be a partition, and let $\operatorname*{SYT}\left(  \lambda\right)  $ denote $\operatorname*{SYT}\left(  \lambda/\varnothing\right)  $. Then,
\[
\abs{\operatorname*{SYT}\left(  \lambda\right) }
=
\dfrac{n!}{\prod_{c\in Y\left(  \lambda\right)  }h_{\lambda}\left(  c\right)  },
\]
where $n$ is the number of boxes in $Y\left(  \lambda\right)  $.
\end{theorem}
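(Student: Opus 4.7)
The plan is to proceed by strong induction on $n$, the number of boxes of $Y\tup{\lambda}$. The base case $\lambda = \varnothing$ is trivial, since both $\abs{\SYT\tup{\varnothing}}$ and the empty product equal $1$. For the inductive step, I would use the observation that in any $T \in \SYT\tup{\lambda}$ the entry $n$ must occupy a \emph{removable corner} of $\lambda$, i.e., a box $\tup{r, \lambda_r}$ such that $\lambda^{(r)} := \tup{\lambda_1, \ldots, \lambda_{r-1},\, \lambda_r - 1,\, \lambda_{r+1}, \ldots}$ is itself a partition. Deleting this entry $n$ defines a bijection
\[
\SYT\tup{\lambda} \xrightarrow{\,\sim\,} \bigsqcup_r \SYT\tup{\lambda^{(r)}} ,
\]
and hence gives the recursion $\abs{\SYT\tup{\lambda}} = \sum_r \abs{\SYT\tup{\lambda^{(r)}}}$, with $r$ ranging over the indices of removable corners.

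It then suffices to verify that $F\tup{\lambda} := n! / \prod_{c \in Y\tup{\lambda}} h_\lambda\tup{c}$ obeys the same recursion, i.e.\ that
\[
\sum_r \frac{F\tup{\lambda^{(r)}}}{F\tup{\lambda}} = 1 .
\]
The key geometric input is that deleting the corner $\tup{r, \lambda_r}$ alters the hook length of a box $c$ only when $c$ lies in row $r$ or column $\lambda_r$, and in both cases the drop is exactly $1$ (while the corner itself had hook length $1$). So each ratio collapses to
\[
\frac{F\tup{\lambda^{(r)}}}{F\tup{\lambda}} = \frac{1}{n} \prod_{j=1}^{\lambda_r - 1} \frac{h_\lambda\tup{r,j}}{h_\lambda\tup{r,j} - 1} \prod_{i=1}^{r-1} \frac{h_\lambda\tup{i, \lambda_r}}{h_\lambda\tup{i, \lambda_r} - 1} ,
\]
a finite product of factors along the arm and leg of the corner.

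The main obstacle is proving the resulting combinatorial identity -- that the sum of these arm-and-leg products over all removable corners is exactly $1$. My preferred route is to reparametrize via the shifted parts $\ell_i := \lambda_i - i$, under which the relevant column hook lengths become plain differences $\ell_i - \ell_r$; the identity then reduces to a short partial-fraction decomposition (or, equivalently, a Vandermonde-determinant computation). A more transparent alternative, due to Greene--Nijenhuis--Wilf, is the ``hook walk'': start at a uniformly random box of $Y\tup{\lambda}$ and iteratively jump to a uniformly chosen other box of the current hook; one checks directly that the walk terminates at corner $\tup{r, \lambda_r}$ with probability equal to the above ratio, so that the sum equals $1$ by total probability. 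Either path closes the induction, and everything besides this one identity is bookkeeping.
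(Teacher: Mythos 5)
Your proof is correct, but it takes a genuinely different route from the paper. The paper never proves Theorem \ref{thm.hlf-classical} directly; it obtains it as the specialization $\mu = \varnothing$, $z_i = 1$ of the much more general Theorem \ref{thm.main}, whose proof is the whole content of the paper (excitations, flagged Jacobi--Trudi, $h$-polynomial recursions, determinantal lemmas, the Konvalinka recursion). Your argument, by contrast, is the classical direct induction on $n$: delete the maximal entry $n$ from a removable corner, invoke the branching recursion $\abs{\SYT\tup{\lambda}} = \sum_r \abs{\SYT\tup{\lambda^{(r)}}}$, and reduce the whole theorem to the corner-ratio identity $\sum_r F\tup{\lambda^{(r)}}/F\tup{\lambda} = 1$. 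Your ratio computation is right (when $\tup{r,\lambda_r}$ is a corner one has $\lambda^t_{\lambda_r} = r$, so the column hooks become $h_\lambda\tup{i,\lambda_r} = \ell_i - \ell_r + 1$ with $\ell_i := \lambda_i - i$, and the identity indeed reduces to a partial-fraction computation). Both suggested closings are sound: the partial-fraction route is essentially the Frame--Robinson--Thrall/Glass--Ng argument (the paper itself cites Glass--Ng as one of the ``easiest'' proofs of this theorem), and the Greene--Nijenhuis--Wilf hook walk is the standard probabilistic alternative. There is a structural parallelism worth noting: the paper also proves its main result by stripping an extreme entry (there, the entry $1$, in Lemma \ref{recus.z}) and matching a recursion on both sides; but matching the recursion for the Naruse excitation sum (Lemma \ref{rec.2}) is vastly harder than for the single fraction $F\tup{\lambda}$, which is exactly the price of generality. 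Your route buys a short, self-contained proof of the straight-shape case; the paper's route buys the Pak--Postnikov and Naruse generalizations, neither of which your corner induction reaches.
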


Much about this theorem is surprising, starting with the fact that the right
hand side is not obviously an integer. Various proofs are known; the easiest
are probably the ones in \cite{Glass-Ng} (see \cite{Zhang10} for a simplification) and in \cite[\S 7.3]{Sagan19}.
A zoology of different proofs of Theorem~\ref{thm.hlf-classical} can be found in Pak's recent work \cite[\S 11.2]{Pak22}.

In the following, we agree to abbreviate $\SYT\tup{\lambda / \varnothing}$ as $\SYT\tup{\lambda}$.
Likewise, a ``standard tableau of shape $\lambda$'' will mean a standard tableau of shape $\lambda / \varnothing$.

\subsection{The Pak--Postnikov refined hook length formula}

A recurring motif in algebraic combinatorics is to replace counting by
summing. In particular, one might hope to replace the $\left\vert
\operatorname*{SYT}\left(  \lambda\right)  \right\vert $ on the left hand side
of Theorem \ref{thm.hlf-classical} by a sum of some functions, one for each
standard tableau in $\operatorname*{SYT}\left(  \lambda\right)  $, such that
the sum of these functions simplifies to a sufficiently neat expression, and
such that substituting $1$ for all variables in the resulting equality would
recover the hook length formula. Such a generalization has been found by Igor
Pak and Alexander Postnikov. To state it, we need one more piece of notation:

If $T$ is a standard tableau (of any shape), and if $k$ is a positive integer,
then $c_{T}\left(  k\right)  $ shall denote the difference $j-i$, where
$\left(  i,j\right)  $ is the box of $T$ that contains the entry $k$ (assuming
that such a box exists\footnote{Such box is certainly unique, since $T$ is a standard tableau.}). For example, if $T$ is the tableau given in
(\ref{eq.nots.skew-tab.3}), then $c_{T}\left(  1\right)  =3-1=2$ and
$c_{T}\left(  2\right)  =2-2=0$ and $c_{T}\left(  3\right)  =4-1=3$ and so on.
Now we can state the formula:

\begin{theorem}[Pak--Postnikov hook length formula]
\label{thm.pak-1}
Let $\lambda$ be a
partition such that $Y\left(  \lambda\right)  $ has $n$ boxes. Let
$\ldots,z_{-2},z_{-1},z_{0},z_{1},z_{2},\ldots$ be an infinite family of
commuting indeterminates.

For any standard tableau $T$ of shape $\lambda/\varnothing$, we define the fraction
\[
\zz_{T}:=\dfrac{1}{\prod_{k=1}^{n}\left(  z_{c_{T}\left(  k\right)  }%
+z_{c_{T}\left(  k+1\right)  }+\cdots+z_{c_{T}\left(  n\right)  }\right)  }
\]
(a rational function in our indeterminates).

Furthermore, for any box $c=\left(  i,j\right)  $ in $Y\left(
\lambda\right)  $, we define the \emph{algebraic
hook length} $h_{\lambda}\left(  c;z\right)  $ by%
\[
h_{\lambda}\left(  c;z\right)  :=\sum_{\left(  i,j\right)
\in H_{\lambda}\left(  c\right)  }z_{j-i}.
\]

Then,
\begin{equation}
\sum_{T\in\operatorname*{SYT}\left(  \lambda\right)  }\zz_{T}=\prod_{c\in
Y\left(  \lambda\right)  }\dfrac{1}{h_{\lambda}\left(  c;z\right)  }.
\label{eq.thm.pak-1.claim}%
\end{equation}

\end{theorem}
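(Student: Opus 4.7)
My plan is to proceed by induction on $n := \abs{Y(\lambda)}$, splitting the left-hand side according to the corner of $Y(\lambda)$ containing the largest entry $n$ of each tableau. The base cases $n\in\set{0,1}$ are immediate. For the inductive step, let $c_1,\ldots,c_r$ be the corners of $\lambda$ (boxes whose removal still leaves a Young diagram), with contents $d_s:=j_s-i_s$ where $c_s=(i_s,j_s)$, and set $\lambda^{(s)}:=\lambda\setminus\set{c_s}$. Tableaux $T\in\SYT(\lambda)$ with $T(c_s)=n$ are in bijection with their restrictions $T'\in\SYT(\lambda^{(s)})$. Writing $S_k(T):=z_{c_T(k)}+z_{c_T(k+1)}+\cdots+z_{c_T(n)}$, a direct computation gives $S_k(T)=S_k(T')+z_{d_s}$ for $k<n$ and $S_n(T)=z_{d_s}$, so $\zz_T=\frac{1}{z_{d_s}}\prod_{k=1}^{n-1}\frac{1}{S_k(T')+z_{d_s}}$, and the LHS becomes
\[
\sum_{T\in\SYT(\lambda)}\zz_T \;=\; \sum_{s=1}^r \frac{1}{z_{d_s}}\sum_{T'\in\SYT(\lambda^{(s)})}\prod_{k=1}^{n-1}\frac{1}{S_k(T')+z_{d_s}}.
\]

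The obstacle is that the inner factors are \emph{shifted} by $z_{d_s}$, so the inductive hypothesis $\sum_{T'}\zz_{T'}=\prod_c 1/h_{\lambda^{(s)}}(c;z)$ does not apply verbatim to the inner sum. I would attack this in one of two ways. Approach (a): strengthen the inductive statement to a shifted identity with an auxiliary parameter $y$, seeking a clean closed form for $\sum_T \prod_k (S_k(T)+y)^{-1}$. Small examples ($\lambda=(2,1)$) suggest that the shifted LHS does not factor as a single product but rather as a determinant, which hints that the ``correct'' generalization is precisely the refined Jacobi--Trudi determinant invoked later in the paper. Approach (b): show that the right-hand side $\prod_{c\in Y(\lambda)}1/h_\lambda(c;z)$ obeys the \emph{same} corner-splitting recursion, via a partial-fraction decomposition in the variables $z_{d_s}$ that exploits the fact that the hooks of $\lambda^{(s)}$ differ from those of $\lambda$ only along the row and column of $c_s$.

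The main obstacle in either approach is to pin down the correct partial-fraction or determinantal identity that matches the ``shift by $z_{d_s}$'' phenomenon: once one has it, the two sides obey the same recursion with the same base case and the induction closes. I expect the paper's strategy to sidestep these difficulties entirely, by packaging both sides of the stronger (skew) identity into determinants via the refined Jacobi--Trudi formula and then using the bijection between excitations of $\mu$ inside $\lambda$ and flagged semistandard tableaux of shape $\mu$. Specializing $\mu=\varnothing$ in that framework collapses the excitation sum to the single empty diagram and trivializes the flagged-SSYT factor, yielding Theorem~\ref{thm.pak-1} directly as a corollary rather than via the delicate corner induction above.
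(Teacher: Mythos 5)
Your sketch stops short of a proof: you correctly compute that after splitting by the box containing the largest entry $n$, the partial sums $S_k(T')$ are all shifted by an additive $z_{d_s}$, and you then list candidate approaches without settling on one that closes. So there is a genuine gap.

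The shift that blocks you is an artifact of deleting the \emph{largest} entry. The paper instead deletes the box containing the \emph{smallest} entry $1$ (Lemma \ref{recus.z}). If $T'$ is obtained from $T$ by removing that box and decrementing every remaining entry, then $c_{T'}(k)=c_T(k+1)$ for all $k$, so the denominator of $\zz_T$ is the denominator of $\zz_{T'}$ times the single extra factor $z_{c_T(1)}+\cdots+z_{c_T(n)}=\sum_{(i,j)\in Y(\lambda/\mu)}z_{j-i}$; no shift appears and the recursion is cleanly multiplicative. The price is that removing the entry-$1$ box does not shrink $\lambda$ but enlarges $\mu$, so this recursion forces you into the skew setting; this is one reason the paper proves Theorem~\ref{thm.main} for general $\lambda/\mu$ and obtains Theorem~\ref{thm.pak-1} as the $\mu=\varnothing$ specialization.

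Your guess about the paper's mechanism is close but not exact. The Jacobi--Trudi and excitations-vs.-flagged-SSYT machinery is not used to ``package both sides into determinants and compare''; it is used to show that the \emph{right-hand side} of \eqref{eq.thm.main.claim} obeys the same recursion (Lemma~\ref{rec.2}, built on Lemma~\ref{lem.konvalinka-bi}) that the left-hand side obeys (Lemma~\ref{recursion.main}~\textbf{(b)}), and then a single induction on $\abs{Y(\lambda/\mu)}$ finishes. In other words, the induction by deletion does not disappear — it just happens at the small end of $T$ rather than the large end, where the arithmetic becomes trivial. Restarting your induction with the entry-$1$ box would dissolve the shift problem; the resulting recursion in $\mu$ is Lemma~\ref{recursion.main}~\textbf{(b)}, and the hard work is entirely on the right-hand side.
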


For example, if $\lambda=\left(  2,2\right)  $, then (\ref{eq.thm.pak-1.claim}%
) says that%
\begin{align*}
&  \dfrac{1}{ \left(  z_{0}+z_{1}+z_{-1}+z_{0}\right)  \left(  z_{1}+z_{-1}%
+z_{0}\right) \left(  z_{-1}+z_{0}\right) z_{0} }\\
&  \ \ \ \ \ \ \ \ \ \ +\dfrac{1}{ \left(  z_{0}+z_{-1}+z_{1}+z_{0}\right) \left(
z_{-1}+z_{1}+z_{0}\right) \left(  z_{1}+z_{0}\right) z_{0}  }\\
&  =\dfrac{1}{\left(  z_{0}+z_{1}+z_{-1}\right)  \left(  z_{-1}+z_{0}\right)
\left(  z_{1}+z_{0}\right)  z_{0}},
\end{align*}
since there are only two standard tableaux of shape $\lambda$ (namely,
$\ytableaushort{12,34}$ and $\ytableaushort{13,24}$), and since the algebraic
hook lengths for $\lambda=\left(  2,2\right)  $ are
\begin{align*}
h_{\lambda}\left(  \left(  1,1\right)  ;z\right)   &  =z_{1-1}+z_{2-1}%
+z_{1-2}=z_{0}+z_{1}+z_{-1},\\
h_{\lambda}\left(  \left(  2,1\right)  ;z\right)   &  =z_{1-2}+z_{2-2}%
=z_{-1}+z_{0},\\
h_{\lambda}\left(  \left(  1,2\right)  ;z\right)   &  =z_{2-1}+z_{2-2}%
=z_{1}+z_{0},\\
h_{\lambda}\left(  \left(  2,2\right)  ;z\right)   &  =z_{2-2}=z_{0}.
\end{align*}

When we set all the variables $z_{i}$ to $1$, each fraction $\zz_{T}$ in Theorem~\ref{thm.pak-1} simplifies
to $\dfrac{1}{n!}$, so that the left hand side of (\ref{eq.thm.pak-1.claim})
becomes $\sum_{T\in\operatorname*{SYT}\left(  \lambda\right)  }\dfrac{1}%
{n!}=\dfrac{\left\vert \operatorname*{SYT}\left(  \lambda\right)  \right\vert
}{n!}$, whereas the right hand side becomes $\prod_{c\in Y\left(
\lambda\right)  }\dfrac{1}{h_{\lambda}\left(  c\right)  }$ (since each
$h_{\lambda}\left(  c;z\right)  $ becomes $\sum_{\left(  i,j\right)  \in
H_{\lambda}\left(  c\right)  }1=\left\vert H_{\lambda}\left(  c\right)
\right\vert = h_{\lambda}\tup{c}$), and thus the equality (\ref{eq.thm.pak-1.claim}) becomes $\dfrac{\left\vert \operatorname*{SYT}\left(  \lambda\right)  \right\vert
}{n!} = \prod_{c\in Y\left(
\lambda\right)  }\dfrac{1}{h_{\lambda}\left(  c\right)  } = \dfrac{1}{\prod_{c \in Y\tup{\lambda}} h_{\lambda}\left(  c\right)  }$. But this is precisely the claim of Theorem
\ref{thm.hlf-classical}. Hence,
we recover the original hook length formula (Theorem
\ref{thm.hlf-classical}) from Theorem~\ref{thm.pak-1}.

While the original hook length formula has seen many proofs, including some
pretty short and elementary ones, its Pak--Postnikov generalization (Theorem
\ref{thm.pak-1}) has so far resisted all simple approaches. The only proof
that, to our knowledge, appears explicitly in the present literature is
Hopkins's algebraic proof \cite[Theorem 18]{Hopkins} using toggles and
P-partitions. The original proof by Pak and Postnikov, using volumes of
polytopes, has never been published, but all the tools for it have been built
in \cite{Pak01}. Finally, Konvalinka's recent paper \cite[Section 5]{Konva2}
outlines an elaborate combinatorial proof of a more general fact (Theorem \ref{thm.main} below) using a
complex RSK-like bijection. None of these proofs is easy or particularly
elementary. One of the purposes of the present paper is to give a new proof
that relies on no advanced tools or vertiginous bijective combinatorics.

\subsection{Excited moves and excitations}

Another recent development in hook length formulas is the discovery of a
\emph{skew hook length formula} by Naruse, later proved by Morales, Pak and
Panova in two ways (\cite[Theorem 1.2]{MPP1}, \cite[Theorem 1.2]{MPP2}). It
gives an expression for $\left\vert \operatorname*{SYT}\left(  \lambda
/\mu\right)  \right\vert $ rather than merely for $\left\vert
\operatorname*{SYT}\left(  \lambda/\varnothing\right)  \right\vert $. The
expression is more intricate, relying on a notion of \emph{excitations} (also known as \emph{excited diagrams}).
Let us start by defining this notion.

For any box $c = \tup{i,j} \in \ZZ^2$, we define
\begin{itemize}
    \item its \emph{southern neighbor} $c_{\downarrow} := \tup{i+1,j}$;
    \item its \emph{eastern neighbor} $c_{\to} := \tup{i,j+1}$;
    \item its \emph{southeastern neighbor} $c_{\searrow} := \tup{i+1,j+1}$.
\end{itemize}
These neighbors are arranged as follows:
\[%
\begin{tabular}
[c]{|c|c|}\hline
$c$ & $c_{\to}$\\\hline
$c_{\downarrow}$ & $c_{\searrow}$\\\hline
\end{tabular}
\ \ .
\]

If $D$ is a diagram that contains some box $c$ but
contains none of its three neighbors $c_{\downarrow}, c_{\to}, c_{\searrow}$,
then we can replace the box $c$ by its
southeastern neighbor $c_{\searrow}$ in $D$. The resulting diagram
$\left(  D\setminus\left\{  c\right\}  \right)  \cup\left\{ 
c_{\searrow} \right\}  $ is denoted by $\operatorname*{exc}\nolimits_{c}D$,
and we say that $\operatorname*{exc}\nolimits_{c}D$ is obtained from $D$ by an
\emph{excited move}. Thus,%
\[
\operatorname*{exc}\nolimits_{c}D=\left(  D\setminus\left\{  c\right\}
\right)  \cup\left\{  c_{\searrow}\right\}  .
\]

\begin{example}
Let $D$ be the diagram $\left\{  \left(  1,1\right)  ,\ \left(  1,3\right)
,\ \left(  2,1\right)  \right\}  $. Then, we can obtain the diagram
\begin{align*}
\operatorname*{exc}\nolimits_{\left(  1,3\right)  }D  &  =\left(
D\setminus\left\{  \left(  1,3\right)  \right\}  \right)  \cup\left\{  \left(
2,4\right)  \right\} \\
&  =\left\{  \left(  1,1\right)  ,\ \left(  2,4\right)  ,\ \left(  2,1\right)
\right\}
\end{align*}
by an excited move from $D$. However, the diagram $\left(  D\setminus\left\{
\left(  1,1\right)  \right\}  \right)  \cup\left\{  \left(  2,2\right)
\right\}  $ cannot be obtained by an excited move from $D$, since the southern
neighbor $\left(  2,1\right)  $ of $\left(  1,1\right)  $ is in $D$ (so that
$\operatorname*{exc}\nolimits_{\left(  1,1\right)  }D$ is not defined).

Another way to apply an excited move to $D$ results in%
\begin{align*}
\operatorname*{exc}\nolimits_{\left(  2,1\right)  }D  &  =\left(
D\setminus\left\{  \left(  2,1\right)  \right\}  \right)  \cup\left\{  \left(
3,2\right)  \right\} \\
&  =\left\{  \left(  1,1\right)  ,\ \left(  1,3\right)  ,\ \left(  3,2\right)
\right\}  .
\end{align*}

\end{example}

We can visualize an excited move as a diagonal (bishop) move\footnote{``Bishop move'' refers to the conventions of chess, but it should be noticed that bishops in chess can move in all four diagonal directions, whereas excited moves only go in one direction.} by one unit to
the east and one unit to the south simultaneously; it is only allowed if the
two ``intermediate''\ boxes (i.e., the
southern and eastern neighbors of the original box) as well as the target
box are not in the diagram.

If a diagram $E$ is obtained from a diagram $D$ by a sequence of excited
moves, then we say that $E$ is an \emph{excitation} of $D$. (This sequence can
be empty, so that $D$ itself is an excitation of $D$.)

\begin{example}
Let $D$ be the diagram $\left\{  \left(  1,1\right)  ,\ \left(  1,3\right)
,\ \left(  2,1\right)  \right\}  $, and let $E$ be the diagram $\left\{
\left(  1,3\right)  ,\ \left(  2,2\right)  ,\ \left(  4,3\right)  \right\}  $.
Then, $E$ is an excitation of $D$. In fact, it is easy to check that
$
E=\operatorname*{exc}\nolimits_{\left(  1,1\right)  }\left(
\operatorname*{exc}\nolimits_{\left(  3,2\right)  }\left(  \operatorname*{exc}%
\nolimits_{\left(  2,1\right)  }D\right)  \right)
$.

\end{example}

Now, recall that $\lambda$ and $\mu$ are two partitions.

\begin{definition}
\label{def.Elamu}
We define $\mathcal{E}
\left(  \lambda/\mu\right)  $ to be the set of all excitations $E$ of
$Y\left(  \mu\right)  $ that satisfy $E\subseteq Y\left(  \lambda\right)  $.
\end{definition}

\begin{example}
\label{exa.Elamu.1}
Let $\lambda=\left(  4,4,3\right)  $ and $\mu=\left(  3,1\right)  $. Then, the
skew diagram $Y\left(  \lambda/\mu\right)  $ looks as follows:
\[
\ydiagram{3+1,1+3,0+3}\ \ .
\]
The set $\mathcal{E}\left(  \lambda/\mu\right)  $ consists of
the following diagrams:
\begin{align*}
Y\left(  \mu\right)   &  =\left\{  \left(  1,1\right)  ,\ \left(  1,2\right)
,\ \left(  1,3\right)  ,\ \left(  2,1\right)  \right\}  ,\\
\operatorname*{exc}\nolimits_{\left(  2,1\right)  }\left(  Y\left(
\mu\right)  \right)   &  =\left\{  \left(  1,1\right)  ,\ \left(  1,2\right)
,\ \left(  1,3\right)  ,\ \left(  3,2\right)  \right\}  ,\\
\operatorname*{exc}\nolimits_{\left(  1,3\right)  }\left(  Y\left(
\mu\right)  \right)   &  =\left\{  \left(  1,1\right)  ,\ \left(  1,2\right)
,\ \left(  2,4\right)  ,\ \left(  2,1\right)  \right\}  ,\\
\operatorname*{exc}\nolimits_{\left(  1,3\right)  }\left(  \operatorname*{exc}%
\nolimits_{\left(  2,1\right)  }\left(  Y\left(  \mu\right)  \right)  \right)
&  =\left\{  \left(  1,1\right)  ,\ \left(  1,2\right)  ,\ \left(  2,4\right)
,\ \left(  3,2\right)  \right\}  ,\\
\operatorname*{exc}\nolimits_{\left(  1,2\right)  }\left(  \operatorname*{exc}%
\nolimits_{\left(  1,3\right)  }\left(  Y\left(  \mu\right)  \right)  \right)
&  =\left\{  \left(  1,1\right)  ,\ \left(  2,3\right)  ,\ \left(  2,4\right)
,\ \left(  2,1\right)  \right\}  ,\\
\operatorname*{exc}\nolimits_{\left(  1,2\right)  }\left(  \operatorname*{exc}%
\nolimits_{\left(  1,3\right)  }\left(  \operatorname*{exc}\nolimits_{\left(
2,1\right)  }\left(  Y\left(  \mu\right)  \right)  \right)  \right)   &
=\left\{  \left(  1,1\right)  ,\ \left(  2,3\right)  ,\ \left(  2,4\right)
,\ \left(  3,2\right)  \right\}  ,\\
\operatorname*{exc}\nolimits_{\left(  1,1\right)  }\left(  \exc_{\left(  1,2\right)  }\left(  \operatorname*{exc}\nolimits_{\left(
1,3\right)  }\left(  \operatorname*{exc}\nolimits_{\left(  2,1\right)
}\left(  Y\left(  \mu\right)  \right)  \right)  \right)  \right)   &
=\left\{  \left(  2,2\right)  ,\ \left(  2,3\right)  ,\ \left(  2,4\right)
,\ \left(  3,2\right)  \right\}  .
\end{align*}
(The order in which the excited moves are made can sometimes be altered, but this does not affect the resulting diagrams.) Here are these diagrams, drawn
inside the shape $Y\left(  \lambda\right)  $ (we mark the boxes in each
diagram with asterisks):
\[%
\begin{tabular}
[c]{cccc}%
$\begin{ytableau} \ast&\ast&\ast& \\ \ast&&& \\ && \end{ytableau}$ &
$\begin{ytableau} \ast&\ast&\ast& \\ &&& \\ &\ast& \end{ytableau}$ &
$\begin{ytableau} \ast&\ast&& \\ \ast&&&\ast \\ && \end{ytableau}$ &
$\begin{ytableau} \ast&\ast&& \\ &&&\ast \\ &\ast& \end{ytableau}$\\
&  &  & \\
$\begin{ytableau} \ast&&& \\ \ast&&\ast&\ast \\ && \end{ytableau}$ &
$\begin{ytableau} \ast&&& \\ &&\ast&\ast \\ &\ast& \end{ytableau}$ &
$\begin{ytableau} \phantom{a}&&& \\ &\ast&\ast&\ast \\ &\ast& \end{ytableau}$
&
\end{tabular}
\
\]
It is easy to see that no other excitations are possible (without outgrowing
the diagram $Y\left(  \lambda\right)  $).
\end{example}

We will soon (in Section \ref{sec.flag}) learn of an equivalent model for the
excitations in $\mathcal{E}\left(  \lambda/\mu\right)  $ which is easier to
handle.

Note that, despite the notation, the set $\mathcal{E}\left(  \lambda
/\mu\right)  $ is not directly related to the skew diagram $Y\left(
\lambda/\mu\right)  $; it could just as well be reasonably called
$\mathcal{E}\left(  \mu,\lambda\right)  $. Nevertheless, we call it
$\mathcal{E}\left(  \lambda/\mu\right)  $ in order to follow the existing literature.

At this point, let us state three easy observations, which will be useful
later. Proofs for these observations (just as for all others in this paper)
are given in Section \ref{sec.proofs} below.

\begin{lemma}
\label{lem.exc.straight}
Let $\lambda$ be any partition. Then, $\mathcal{E}%
\left(  \lambda/\varnothing\right)  =\left\{  \varnothing\right\}  $.
\end{lemma}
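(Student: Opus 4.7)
The plan is to unfold the definitions and observe that the only ingredient of the lemma is the fact that the empty diagram admits no excited moves. First I would note that $Y(\varnothing) = \varnothing$, since $Y(\varnothing)$ consists of all pairs $(i,j)$ of positive integers with $\varnothing_i \geq j$, and $\varnothing_i = 0$ for all $i$ precludes $j \geq 1$.

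Next I would check that $\varnothing \in \mathcal{E}(\lambda/\varnothing)$: the empty sequence of excited moves produces $Y(\varnothing) = \varnothing$ from itself, and $\varnothing \subseteq Y(\lambda)$ trivially, so $\varnothing$ is indeed an excitation of $Y(\varnothing)$ contained in $Y(\lambda)$.

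For the reverse inclusion, I would show by induction on the length of the sequence of excited moves that any excitation of $\varnothing$ equals $\varnothing$. The base case (the empty sequence) is immediate. For the inductive step, suppose we have obtained some diagram $D$ from $\varnothing$ by a sequence of excited moves, and inductively $D = \varnothing$. Then no excited move $\exc_c$ can be applied to $D$, since the definition of $\exc_c D$ requires $c \in D$, but $D = \varnothing$ has no boxes. Hence the sequence must already be over, and the final diagram is $\varnothing$.

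There is no real obstacle here: the lemma is a sanity check that says $\mu = \varnothing$ collapses Naruse's sum to a single term. The only thing to be a bit careful about is not to conflate the empty \emph{diagram} $\varnothing$ (a subset of $\ZZ^2$) with the empty \emph{partition} $\varnothing$; but once we observe that $Y(\varnothing) = \varnothing$ as sets, there is essentially nothing more to prove.
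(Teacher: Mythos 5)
Your proof is correct and follows essentially the same route as the paper: observe $Y(\varnothing)=\varnothing$, note that no excited move can apply to the empty diagram (the induction is a slightly more explicit way of saying this), and note $\varnothing\subseteq Y(\lambda)$ trivially.
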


\begin{lemma}
\label{lem.exc.empty}
Let $\lambda$ and $\mu$ be two partitions that don't satisfy $\lambda \supseteq \mu$. Then, $\mathcal{E}%
\left(  \lambda/\mu\right)  =\varnothing$.
\end{lemma}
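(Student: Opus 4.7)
The plan is to exploit a basic invariant of excited moves: an excited move replaces a box $c = (i, j)$ by $c_{\searrow} = (i+1, j+1)$, thereby preserving the diagonal $j - i$ while strictly increasing the row index $i$. This suggests that each box of $Y(\mu)$ can be ``tracked'' through a sequence of excited moves to a uniquely determined box of the resulting excitation, lying on the same diagonal and in a weakly larger row.

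First, I would show by induction on the length of the sequence of excited moves that every excitation $E$ of $Y(\mu)$ admits a bijection $\phi : Y(\mu) \to E$ such that, for each $(i, j) \in Y(\mu)$, the image $\phi(i, j) =: (i', j')$ satisfies $i' \geq i$ and $j' - i' = j - i$. The base case (empty sequence, $\phi = \id$) is immediate. In the induction step, a single excited move $\exc_c$ applied to a diagram $D$ alters only the (unique) preimage of $c$, moving it one step southeast; the other boxes of $D$ (and their preimages) are untouched, so both invariants are inherited.

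Given this, assume for contradiction that $\lambda \not\supseteq \mu$ and yet some $E \in \mathcal{E}(\lambda/\mu)$ exists. Pick any index $i$ with $\lambda_i < \mu_i$, and apply $\phi$ to the rightmost box $c := (i, \mu_i)$ of the $i$-th row of $Y(\mu)$. Its image $\phi(c) = (i', j')$ lies in $E \subseteq Y(\lambda)$ and satisfies $i' \geq i$ together with $j' = \mu_i + (i' - i)$. Membership in $Y(\lambda)$ forces $\lambda_{i'} \geq j' = \mu_i + (i' - i)$. But $\lambda_{i'} \leq \lambda_i$ (as $\lambda$ is weakly decreasing and $i' \geq i$), while $\lambda_i < \mu_i \leq \mu_i + (i' - i)$, a contradiction.

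The only technical piece is setting up the bijection $\phi$ and verifying its two invariants; both follow by a one-line induction, so I expect no substantial obstacle. Everything after that reduces to a single arithmetic comparison on the distinguished box $(i, \mu_i)$.
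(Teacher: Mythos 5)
Your proof is correct and rests on the same underlying idea as the paper's: a box of $Y(\mu)$ can only migrate southeast under excited moves, and a box that starts outside $Y(\lambda)$ (which exists because $\lambda\not\supseteq\mu$) stays outside. The paper gets there a bit more economically: rather than constructing the full bijection $\phi$ and re-deriving the containment failure by arithmetic on the distinguished box $(i,\mu_i)$, it simply picks any $c\in Y(\mu)\setminus Y(\lambda)$, observes that the complement of $Y(\lambda)$ is closed under the move $(i,j)\mapsto(i+1,j+1)$ (since $\lambda_{i+1}\leq\lambda_i<j<j+1$), and concludes the tracked box can never land in $Y(\lambda)$; your explicit $\phi$ is sound but not needed for this lemma (it is, in spirit, what the paper later formalizes as the map $T\mapsto\mathbf{D}(T)$ in Section~\ref{sec.flag}).
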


\begin{lemma}
\label{lem.exc.equal}
Let $\lambda$ be any partition. Then, $\mathcal{E}%
\left(  \lambda/\lambda\right)  =\left\{  Y\tup{\lambda}\right\}  $.
\end{lemma}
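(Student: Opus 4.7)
The plan is to argue that the only excitation of $Y(\lambda)$ contained in $Y(\lambda)$ is $Y(\lambda)$ itself, using a simple cardinality argument.

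First I would establish the key observation that every excited move preserves the cardinality of the diagram. Indeed, if $c \in D$ and $\exc_c D$ is defined, then $c_{\searrow} \notin D$ by the definition of an excited move; hence $c_{\searrow} \notin D \setminus \set{c}$, and therefore $\abs{\exc_c D} = \abs{(D \setminus \set{c}) \cup \set{c_{\searrow}}} = \abs{D} - 1 + 1 = \abs{D}$. By induction on the number of excited moves, any excitation $E$ of a diagram $D$ satisfies $\abs{E} = \abs{D}$.

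Now fix $E \in \mathcal{E}(\lambda/\lambda)$. By definition, $E$ is an excitation of $Y(\lambda)$ with $E \subseteq Y(\lambda)$. By the previous paragraph, $\abs{E} = \abs{Y(\lambda)}$. Since $Y(\lambda)$ is a finite set and $E$ is a subset of $Y(\lambda)$ of the same (finite) cardinality, we conclude $E = Y(\lambda)$. Conversely, $Y(\lambda)$ is trivially an excitation of itself (via the empty sequence of excited moves), and clearly $Y(\lambda) \subseteq Y(\lambda)$, so $Y(\lambda) \in \mathcal{E}(\lambda/\lambda)$. Combining both directions yields $\mathcal{E}(\lambda/\lambda) = \set{Y(\lambda)}$.

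There is no real obstacle here: the only thing to be careful about is verifying that the cardinality really is preserved (which requires using the hypothesis $c_{\searrow} \notin D$ from the definition of an excited move), and then invoking the elementary fact that a subset of a finite set with the same cardinality must equal the whole set.
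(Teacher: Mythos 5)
Your proof is correct, but it follows a genuinely different route from the paper's. The paper argues geometrically: any excited move applied to $Y(\lambda)$ (or to any excitation already containing a box outside $Y(\lambda)$) must produce a box outside $Y(\lambda)$, because a valid excited move from $Y(\lambda)$ replaces a box $c$ by $c_{\searrow}$ only when $c_{\searrow} \notin Y(\lambda)$, and boxes outside $Y(\lambda)$ stay outside under further southeast moves. Hence a nontrivial excitation can never satisfy $E \subseteq Y(\lambda)$. Your argument instead tracks cardinality: each excited move replaces one box by a box not already present, so $\abs{\exc_c D} = \abs{D}$, whence every excitation $E$ of $Y(\lambda)$ has $\abs{E} = \abs{Y(\lambda)}$, and a subset of a finite set with the same cardinality must be the whole set. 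The cardinality argument is arguably more self-contained, since it avoids the (small but real) auxiliary observation that southeast moves preserve the property of lying outside a Young diagram; the paper's argument, on the other hand, reuses the same escape principle already deployed in its proof of the neighboring Lemma~\ref{lem.exc.empty}, so the two proofs share infrastructure there. Both are valid; your version is a clean alternative.
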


\subsection{The Naruse skew hook length formula}

In a conference talk in 2014, Hiroshi Naruse stated the following
generalization of the hook length formula to skew diagrams (see \cite{Naruse},
\cite[Theorem 1.2]{MPP1}, \cite[Theorem 1.2]{MPP2}, \cite[(2)]{Konva1}):

\begin{theorem}[Naruse skew hook length formula]
\label{thm.naruse}
For any skew partition $\lambda/\mu$, we have
\[
\left\vert \operatorname*{SYT}\left(  \lambda/\mu\right)  \right\vert
=n!\sum_{E\in\mathcal{E}\left(  \lambda/\mu\right)  }\ \ \prod_{c\in Y\left(
\lambda\right)  \setminus E}\dfrac{1}{h_{\lambda}\left(  c\right)  },
\]
where $n$ is the number of boxes in $Y\left(  \lambda/\mu\right)  $.
\end{theorem}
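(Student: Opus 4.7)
The plan is to prove a stronger multivariate identity, mentioned in the introduction as Theorem~\ref{thm.main}, which simultaneously refines Theorem~\ref{thm.pak-1} and Theorem~\ref{thm.naruse}, and then extract Theorem~\ref{thm.naruse} by a one-line specialization. Defining $\zz_T$ for $T \in \SYT(\lambda/\mu)$ by the same formula as in Theorem~\ref{thm.pak-1} (with $n = \abs{Y(\lambda/\mu)}$), this refined identity reads
\[
\sum_{T \in \SYT(\lambda/\mu)} \zz_T \;=\; \sum_{E \in \calE(\lambda/\mu)} \ \prod_{c \in Y(\lambda) \setminus E} \dfrac{1}{h_{\lambda}(c; z)},
\]
with both sides vanishing when $\lambda \not\supseteq \mu$ (on the right by Lemma~\ref{lem.exc.empty}). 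Substituting $z_i = 1$ collapses $\zz_T$ to $1/n!$ and each $h_{\lambda}(c;z)$ to the integer $h_{\lambda}(c)$, so that multiplying through by $n!$ produces exactly Theorem~\ref{thm.naruse}. Lemma~\ref{lem.exc.straight} moreover shows that when $\mu = \varnothing$ the refined identity reduces to Theorem~\ref{thm.pak-1}, confirming that it is genuinely the common generalization.

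To manipulate the right-hand side I would invoke the bijection (announced in the introduction) between $\calE(\lambda/\mu)$ and a set of flagged semistandard Young tableaux of shape $\mu$ whose row flags are read off from $\lambda$. Under this bijection the product $\prod_{c \in Y(\lambda) \setminus E} \dfrac{1}{h_\lambda(c;z)}$ factors row by row, so summing over all flagged tableaux and applying the Gessel--Viennot flagged Jacobi--Trudi formula expresses the right-hand side as a determinant whose entries are row-indexed complete homogeneous sums of reciprocals of algebraic hook lengths along each diagonal of $\lambda$. For the left-hand side I would proceed by recursion on $\abs{Y(\lambda/\mu)}$: stripping the box containing the entry $n$ from $T \in \SYT(\lambda/\mu)$ decomposes the sum over $\SYT(\lambda/\mu)$ into contributions indexed by outer corners $c$ of $\lambda$, each in bijection with $\SYT(\lambda^{-}/\mu)$ where $\lambda^{-}$ is $\lambda$ with $c$ removed. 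Tracking the factor $z_{c_T(n)}$ and iterating, $\sum_T \zz_T$ is seen to satisfy precisely the Laplace expansion of a determinant in rational functions of the $z_k$.

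The main obstacle I expect is the final alignment: verifying that the Jacobi--Trudi determinant produced from flagged tableaux and the determinant produced by recursion on standard tableaux are the \emph{same} matrix, or can be turned into each other by row and column operations of determinant one. Careful bookkeeping of the algebraic hook lengths $h_\lambda(c;z)$ versus the row-sum expressions $z_{c_T(k)} + \cdots + z_{c_T(n)}$ that arise naturally from the tableau-recursion is required, and this is where the elementary determinantal identities advertised in the introduction come in. Once the two determinants are identified, the base cases ($\lambda = \mu$, trivial, and $\lambda \not\supseteq \mu$, handled by Lemma~\ref{lem.exc.empty}) close the induction, and specializing $z_i = 1$ yields Theorem~\ref{thm.naruse}.
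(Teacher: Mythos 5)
Your top-level plan — prove the refined multivariate identity (the paper's Theorem~\ref{thm.main}) and recover Theorem~\ref{thm.naruse} by setting $z_i = 1$ — is exactly the paper's plan, and your description of the specialization step is correct. The high-level ingredients you list (the bijection between excitations and flagged semistandard tableaux, the flagged Jacobi--Trudi determinant, elementary determinantal identities, and a recursion on $\abs{Y(\lambda/\mu)}$) are also the ones the paper actually uses.

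However, there is a concrete gap in the recursion you propose for the left-hand side. You strip the box containing the entry $n$ (the largest), passing from $\lambda/\mu$ to $\lambda^{-}/\mu$ with $\lambda^{-}$ an outer corner shorter. But the factors in $\zz_T$ are cumulative \emph{from above}: they are $z_{c_T(k)} + z_{c_T(k+1)} + \cdots + z_{c_T(n)}$, and removing entry $n$ removes the summand $z_{c_T(n)}$ from \emph{every one} of the $n$ factors, not merely from the $k = n$ factor. Consequently $\zz_T / \zz_{T'}$ is not a single scalar independent of $T$; the numerator and denominator products have the same number of factors but each factor differs by $z_{c_T(n)}$, and nothing telescopes. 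By contrast, stripping the box containing the entry $1$ and shifting all remaining entries down by one gives $c_{T'}(k) = c_T(k+1)$, so the product for $T'$ is literally the $k = 2, \ldots, n$ part of the product for $T$, and the extracted factor is $\sum_{k=1}^{n} z_{c_T(k)} = \sum_{(i,j) \in Y(\lambda/\mu)} z_{j-i}$, which is the same for every $T$. This is Lemma~\ref{recus.z} and Lemma~\ref{recursion.main} in the paper. The ``strip entry $1$'' recursion also keeps $\lambda$ fixed and moves $\mu$ up by a box, which matters because the algebraic hook lengths $h_\lambda(c;z)$ depend only on $\lambda$; your proposed recursion varies $\lambda$, which would force you to track how every hook length changes at each step. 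Without this fix the promised ``Laplace expansion of a determinant'' for $\sum_T \zz_T$ does not materialize, and the alignment you flag as the ``main obstacle'' never even gets off the ground.

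A secondary point: the paper does not actually identify both sides with the same explicit determinant. Instead it shows the left-hand side satisfies the recursion $f(\lambda/\mu) = \left(\sum_{(i,j) \in Y(\lambda/\mu)} z_{j-i}\right)^{-1} \sum_{\mu \lessdot \nu \subseteq \lambda} f(\lambda/\nu)$ and then proves (via Konvalinka's recursion, Corollary~\ref{cor.slam.det}, and Lemmas~\ref{lem.det-uc}, \ref{lem.det.lastrow=01}) that the right-hand side satisfies the same recursion with the same base case $\lambda = \mu$. The flagged Jacobi--Trudi determinant appears only on the excitation side, and its entries are the $h$-polynomials $h(\mu_i - i + j, b_i, 1 - j)$ in auxiliary variables $x, y$, which only later get specialized to partial sums of the $z$'s — they are not directly ``complete homogeneous sums of reciprocals of hook lengths''. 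This is a smaller issue than the recursion direction, but it affects whether your ``match two determinants'' plan is even the right target.
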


For $\mu=\varnothing$, this theorem simplifies to the classical hook length
formula (Theorem \ref{thm.hlf-classical}), since the set $\mathcal{E}\left(
\lambda/\varnothing\right)  $ consists of the single diagram $\varnothing
=\left\{  {}\right\}  $. Once again, the generality comes at a cost: All known
proofs of Theorem \ref{thm.naruse} are complex, using either deep
combinatorics or (for Naruse's original proof) algebraic geometry. We hope
that the proof we give in the present paper will be somewhat more accessible.

Note that the expression \eqref{eq.intro.naruse.RHS} is just a rewritten form of the right hand side of Theorem~\ref{thm.naruse}.

We remark that Matjaz Konvalinka has generalized Theorem \ref{thm.naruse}
further to shifted skew partitions of types B and D in \cite{Konva2}. We shall
not follow this direction of generalization here; the applicability of our
method to these settings is an interesting question that deserves further work.

\section{\label{sec.thms}The main theorem}

The main protagonist of this paper is a common generalization of Theorem
\ref{thm.naruse} and Theorem \ref{thm.pak-1}:

\begin{theorem}
\label{thm.main}
Let $\lambda/\mu$ be a skew partition.
Let $\ldots,z_{-2},z_{-1},z_{0},z_{1},z_{2},\ldots$
be an infinite family of commuting indeterminates.

For any standard tableau $T$ of shape $\lambda/\mu$, we define the fraction%
\[
\zz_{T}:=\dfrac{1}{\prod_{k=1}^{n}\left(  z_{c_{T}\left(  k\right)  }%
+z_{c_{T}\left(  k+1\right)  }+\cdots+z_{c_{T}\left(  n\right)  }\right)  }%
\]
(a rational function in our indeterminates),
where $n = \abs{Y\tup{\lambda / \mu}}$ is the number of boxes of $Y\tup{\lambda / \mu}$.

Furthermore, for any box $c=\left(  i,j\right)  $ in $Y\left(  \lambda\right)
$, we define the \emph{algebraic hook length} $h_{\lambda}\left(  c;z\right)  $ by
\begin{equation}
h_{\lambda}\left(  c;z\right)  :=
\sum_{\left(  i,j\right)  \in H_{\lambda} \left(  c\right)  }z_{j-i}.
\label{eq.def.hlcz}
\end{equation}
(Note that this does not depend on $\mu$.)

Then,
\begin{equation}
\sum_{T\in\operatorname*{SYT}\left(  \lambda/\mu\right)  } \zz_{T}
= \sum_{E\in\mathcal{E}\left(  \lambda/\mu\right)  }
\ \ \prod_{c\in Y\left(  \lambda\right)  \setminus E}
\dfrac{1}{h_{\lambda}\left(  c;z\right)  }.
\label{eq.thm.main.claim}
\end{equation}

\end{theorem}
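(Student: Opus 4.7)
The plan is to prove Theorem \ref{thm.main} by induction on $\abs{Y\tup{\lambda/\mu}}$, with base case $\mu=\lambda$. In the base case both sides equal $1$: the left-hand side has the single empty tableau with $\zz_T=1$ (empty product), while by Lemma \ref{lem.exc.equal} the only excitation is $Y\tup{\lambda}$, so the right-hand side is also an empty product.

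For the inductive step, I would first extract a recursion for the left-hand side by stripping off the entry $1$. Because a standard tableau $T$ is a bijection from $Y\tup{\lambda/\mu}$ to $\set{1,2,\ldots,n}$, the quantity
\[
z_{c_T(1)}+z_{c_T(2)}+\cdots+z_{c_T(n)}=\sum_{(i,j)\in Y\tup{\lambda/\mu}} z_{j-i}
\]
does not depend on $T$; denote it by $H_{\lambda/\mu}(z)$. If $c$ is the box where $T$ places entry $1$, and $T'$ is the SYT of shape $\lambda/\widetilde{\mu}$ (where $\widetilde{\mu}$ is obtained by adjoining $c$ to $\mu$) produced by deleting this entry and decrementing the rest, a direct inspection of denominators gives $\zz_T=\zz_{T'}/H_{\lambda/\mu}(z)$. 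Summing, partitioned by the location of $1$, yields
\[
H_{\lambda/\mu}(z)\sum_{T\in\SYT(\lambda/\mu)}\zz_T=\sum_{c}\sum_{T'\in\SYT(\lambda/\widetilde{\mu})}\zz_{T'},
\]
where $c$ ranges over all boxes of $Y\tup{\lambda/\mu}$ whose adjunction to $\mu$ produces a partition still contained in $\lambda$. By the induction hypothesis, each inner sum equals the corresponding sum over $\calE(\lambda/\widetilde{\mu})$ on the right of \eqref{eq.thm.main.claim}, so the problem reduces to the purely combinatorial identity
\[
H_{\lambda/\mu}(z)\sum_{E\in\calE(\lambda/\mu)}\prod_{c'\in Y\tup{\lambda}\setminus E}\frac{1}{h_\lambda(c';z)}=\sum_{c}\sum_{E'\in\calE(\lambda/\widetilde{\mu})}\prod_{c'\in Y\tup{\lambda}\setminus E'}\frac{1}{h_\lambda(c';z)}.
\]

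To establish this, I would invoke the bijection (promised in the introduction) between $\calE(\lambda/\mu)$ and flagged semistandard tableaux of shape $\mu$: each excitation $E$ is encoded by a filling of $Y\tup{\mu}$ recording, at each box, how far it has drifted along the southeast diagonal. Clearing the common denominator $\prod_{c\in Y\tup{\lambda}}h_\lambda(c;z)$, the identity becomes a polynomial equation between two sums indexed by flagged tableaux. Using the flagged Jacobi--Trudi formula of Gessel--Viennot, each such sum can be rewritten as a determinant whose entries are certain row-sum polynomials in the $z_k$; the target identity then reduces to a cofactor row-expansion of this determinant, with the factor $H_{\lambda/\mu}(z)$ emerging naturally as the sum of contents of the boxes of $Y\tup{\lambda/\mu}$. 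The main obstacle will be executing this final determinantal matching: the flag conditions must be propagated correctly through Jacobi--Trudi, and the row-expansion must be shown to yield precisely the sum over admissible corners $c$ with the correct content weighting. Once this algebraic identity is in hand, the induction closes, and both Theorem \ref{thm.naruse} (by specializing all $z_i=1$) and Theorem \ref{thm.pak-1} (by taking $\mu=\varnothing$ and using Lemma \ref{lem.exc.straight}) follow as special cases.
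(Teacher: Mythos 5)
Your outline matches the paper's architecture up to the crucial final step: base case $\mu=\lambda$, stripping the entry $1$ to obtain the left-hand recursion of Lemma~\ref{recursion.main}, reducing to a corresponding recursion for the excitation sum (Lemma~\ref{rec.2}), and then encoding excitations by flagged semistandard tableaux and invoking a flagged Jacobi--Trudi determinant. But the phrase ``a determinant whose entries are certain row-sum polynomials in the $z_k$, [\ldots] a cofactor row-expansion'' hides a genuine gap. The flagged Jacobi--Trudi identity (Proposition~\ref{prop.flagJT.f}) naturally produces sums of products $\prod_{(i,j)}\left(x_{T(i,j)}+y_{T(i,j)+j-i}\right)$ in \emph{two independent} variable families $x$ and $y$, not products of the $h_\lambda(c;z)$. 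For those factors to become hook lengths, you need the nonobvious decoupling $h_\lambda\bigl((i,j);z\bigr) = x_i + y_j$ for all $(i,j)\in Y(\lambda)$, achieved via the substitution $x_i := w_{\ell_i}$, $y_j := -w_{-\ell^t_j-1}$ where $w_k = z_{-n}+\cdots+z_k$ (Lemma~\ref{lem.pf-hlf.w-w}); this relies on the hook length splitting as a partial-sum difference along the row and column. Without first proving the recursion in the free $x,y$ variables (the Konvalinka recursion, Lemma~\ref{lem.konvalinka-bi}), there is no natural path to that identity, because the determinant entries and the hook lengths live in different worlds. Moreover, the determinantal manipulation is not a single cofactor expansion: it is a sum of $n$ determinants, one per candidate corner $\mu^{+k}$, and collapsing that sum requires a two-sided replacement identity (Lemma~\ref{determinant.sum}, expanded to Lemma~\ref{lem.det-uc}) together with nontrivial bookkeeping over the Delta-sets and flaggings (Lemmas~\ref{lem.Deltas.x}, \ref{lem.Deltas.mi+1+bi}, \ref{lem.b-vs-bk}) to match indices and extract the scalar factor. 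Your proposal honestly flags ``executing this final determinantal matching'' as the obstacle; the missing idea is precisely the detour through the abstract $(x,y)$-recursion followed by the $w$-substitution, and without it the direct $z$-entry approach you sketch would stall.
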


\begin{example}
Let $\lambda=\left(  3,2\right)  $ and $\mu=\left(  1\right)  $. Thus,
$Y\left(  \lambda/\mu\right)  $ looks as follows:%
\begin{equation}
\ydiagram{1+2,0+2}\ \ .
\end{equation}

Then, there are five standard tableaux of shape $\lambda/\mu$, namely%
\begin{align*}
A &
=\begin{ytableau} \none & 1 & 2 \\ 3 & 4 \end{ytableau}\ \ ,\ \ \ \ \ \ \ \ \ \ B=\begin{ytableau} \none & 1 & 3 \\ 2 & 4 \end{ytableau}\ \ ,\ \ \ \ \ \ \ \ \ \ C=\begin{ytableau} \none & 2 & 3 \\ 1 & 4 \end{ytableau}\ \ ,\\
& \\
D &
=\begin{ytableau} \none & 1 & 4 \\ 2 & 3 \end{ytableau}\ \ ,\ \ \ \ \ \ \ \ \ \ E=\begin{ytableau} \none & 2 & 4 \\ 1 & 3 \end{ytableau}\ \ .
\end{align*}
Thus, $\operatorname*{SYT}\left(  \lambda/\mu\right)  =\left\{
A,B,C,D,E\right\}  $. Furthermore,%
\begin{align*}
\zz_{A} &  =\dfrac{1}{\left(  z_{1}+z_{2}+z_{-1}+z_{0}\right)  \left(
z_{2}+z_{-1}+z_{0}\right)  \left(  z_{-1}+z_{0}\right)  z_{0}},\\
\zz_{B} &  =\dfrac{1}{\left(  z_{1}+z_{-1}+z_{2}+z_{0}\right)  \left(
z_{-1}+z_{2}+z_{0}\right)  \left(  z_{2}+z_{0}\right)  z_{0}},\\
\zz_{C} &  =\dfrac{1}{\left(  z_{-1}+z_{1}+z_{2}+z_{0}\right)  \left(
z_{1}+z_{2}+z_{0}\right)  \left(  z_{2}+z_{0}\right)  z_{0}},\\
\zz_{D} &  =\dfrac{1}{\left(  z_{1}+z_{-1}+z_{0}+z_{2}\right)  \left(
z_{-1}+z_{0}+z_{2}\right)  \left(  z_{0}+z_{2}\right)  z_{2}},\\
\zz_{E} &  =\dfrac{1}{\left(  z_{-1}+z_{1}+z_{0}+z_{2}\right)  \left(
z_{1}+z_{0}+z_{2}\right)  \left(  z_{0}+z_{2}\right)  z_{2}}.
\end{align*}
Meanwhile,%
\[
\mathcal{E}\left(  \lambda/\mu\right)  =\left\{  \left\{  \left(  1,1\right)
\right\}  ,\ \left\{  \left(  2,2\right)  \right\}  \right\}  ,
\]
since the only excitations of $Y\left(  \mu\right)  =\left\{  \left(
1,1\right)  \right\}  $ that are subsets of $Y\left(  \lambda\right)  $ are
$Y\left(  \mu\right)  =\left\{  \left(  1,1\right)  \right\}  $ itself and
$\operatorname*{exc}\nolimits_{\left(  1,1\right)  }\left(  Y\left(
\mu\right)  \right)  =\left\{  \left(  2,2\right)  \right\}  $. Furthermore,
the algebraic hook lengths of the boxes
of $\lambda=\left(  3,2\right)  $ are%
\begin{align*}
h_{\lambda}\left(  \left(  1,1\right)  ;z\right)   &  =z_{0}+z_{1}%
+z_{2}+z_{-1},\\
h_{\lambda}\left(  \left(  2,1\right)  ;z\right)   &  =z_{-1}+z_{0},\\
h_{\lambda}\left(  \left(  1,2\right)  ;z\right)   &  =z_{1}+z_{2}+z_{0},\\
h_{\lambda}\left(  \left(  2,2\right)  ;z\right)   &  =z_{0},\\
h_{\lambda}\left(  \left(  1,3\right)  ;z\right)   &  =z_{2}.
\end{align*}
Now, in view of $\operatorname*{SYT}\left(  \lambda/\mu\right)  =\left\{
A,B,C,D,E\right\}  $ and $\mathcal{E}\left(  \lambda/\mu\right)  =\left\{
\left\{  \left(  1,1\right)  \right\}  ,\ \left\{  \left(  2,2\right)
\right\}  \right\}  $, the claim of Theorem \ref{thm.main} says that%
\begin{align*}
&  \zz_{A}+\zz_{B}+\zz_{C}+\zz_{D}+\zz_{E}\\
&  =\prod_{c\in Y\left(  \lambda\right)  \setminus\left\{  \left(  1,1\right)
\right\}  }\dfrac{1}{h_{\lambda}\left(  c;z\right)  }+\prod_{c\in Y\left(
\lambda\right)  \setminus\left\{  \left(  2,2\right)  \right\}  }\dfrac
{1}{h_{\lambda}\left(  c;z\right)  }\\
&  =\dfrac{1}{h_{\lambda}\left(  \left(  2,1\right)  ;z\right)  }\cdot
\dfrac{1}{h_{\lambda}\left(  \left(  1,2\right)  ;z\right)  }\cdot\dfrac
{1}{h_{\lambda}\left(  \left(  2,2\right)  ;z\right)  }\cdot\dfrac
{1}{h_{\lambda}\left(  \left(  1,3\right)  ;z\right)  }\\
&  \ \ \ \ \ \ \ \ \ \ +\dfrac{1}{h_{\lambda}\left(  \left(  1,1\right)
;z\right)  }\cdot\dfrac{1}{h_{\lambda}\left(  \left(  2,1\right)  ;z\right)
}\cdot\dfrac{1}{h_{\lambda}\left(  \left(  1,2\right)  ;z\right)  }\cdot
\dfrac{1}{h_{\lambda}\left(  \left(  1,3\right)  ;z\right)  }\\
&  =\dfrac{1}{\left(  z_{-1}+z_{0}\right)  \left(  z_{1}+z_{2}+z_{0}\right)
z_{0}z_{2}}\\
&  \ \ \ \ \ \ \ \ \ \ +\dfrac{1}{\left(  z_{0}+z_{1}+z_{2}+z_{-1}\right)
\left(  z_{-1}+z_{0}\right)  \left(  z_{1}+z_{2}+z_{0}\right)  z_{2}}.
\end{align*}
This is indeed easily checked using our above expressions for
$\zz_A, \zz_B, \zz_C, \zz_D, \zz_E$.
\end{example}

We note how Theorem \ref{thm.main} generalizes the previous hook length formulas:

\begin{itemize}
\item If we set $\mu=\varnothing$ in Theorem \ref{thm.main}, then the sum on
the right hand side of (\ref{eq.thm.main.claim}) simplifies to $\prod_{c\in
Y\left(  \lambda\right)  }\dfrac{1}{h_{\lambda}\left(  c;z\right)  }$ (by
Lemma \ref{lem.exc.straight}), whereas the set $\operatorname*{SYT}\left(
\lambda/\mu\right)  $ on the left hand side becomes $\operatorname*{SYT}%
\left(  \lambda\right)  $. Thus, the equality (\ref{eq.thm.main.claim}) turns
into (\ref{eq.thm.pak-1.claim}), and we recover Theorem \ref{thm.pak-1}.

\item Let $n = \abs{Y\tup{\lambda/\mu}}$.
If we set $z_{i}=1$ for all $i\in\mathbb{Z}$ in Theorem \ref{thm.main},
then each $\zz_{T}$ becomes $\dfrac{1}{n!}$ (since the denominator $\prod
_{k=1}^{n}\left(  z_{c_{T}\left(  k\right)  }+z_{c_{T}\left(  k+1\right)
}+\cdots+z_{c_{T}\left(  n\right)  }\right)  $ becomes $\prod_{k=1}^{n}\left(
n-k+1\right)  =n\cdot\left(  n-1\right)  \cdot\left(  n-2\right)  \cdot
\cdots\cdot1=n!$). Hence, the sum on the left hand side of
(\ref{eq.thm.main.claim}) becomes $\sum_{T\in\operatorname*{SYT}\left(
\lambda/\mu\right)  }\dfrac{1}{n!}=\left\vert \operatorname*{SYT}\left(
\lambda/\mu\right)  \right\vert \cdot\dfrac{1}{n!}$, whereas each
algebraic hook length $h_{\lambda}\left(
c;z\right)  $ on the right hand side becomes $h_{\lambda}\left(  c\right)  $.
Thus, the equality (\ref{eq.thm.main.claim}) becomes%
\[
\left\vert \operatorname*{SYT}\left(  \lambda/\mu\right)  \right\vert
\cdot\dfrac{1}{n!}=\sum_{E\in\mathcal{E}\left(  \lambda/\mu\right)  }%
\ \ \prod_{c\in Y\left(  \lambda\right)  \setminus E}\dfrac{1}{h_{\lambda
}\left(  c\right)  },
\]
and we recover Theorem \ref{thm.naruse} (after multiplying both sides by $n!$).

\item Likewise, if we set both $\mu=\varnothing$ and $z_{i}=1$ for all
$i\in\mathbb{Z}$ in Theorem \ref{thm.main}, then Theorem
\ref{thm.hlf-classical} results.
\end{itemize}

Thus, a new proof of Theorem \ref{thm.main} will yield new proofs of all three
previously known hook length formulas.

Theorem \ref{thm.main} was originally conjectured by one of the authors in
2018, and was proved by Matjaz Konvalinka; his proof (in a type-B variant) has
been sketched in \cite[Section 5]{Konva2}. Our below proof of Theorem
\ref{thm.main} will have some commonalities with his; in particular, we shall
derive it from the same recursion as he did, although the recursion will be
proved quite differently.

\section{\label{sec.recz}A recursion for the $\zz_T$}

\begin{definition}
Let $\mu$ and $\nu$ be two partitions.
Then, we write $\mu \lessdot \nu$ if
$\mu \subseteq \nu$ and $\abs{Y\tup{\nu / \mu}} = 1$.
Equivalently, $\mu \lessdot \nu$ holds if and only if $\nu$ can be obtained
from $\mu$ by increasing one entry by $1$.
\end{definition}

For example, $\tup{4,2,2,1} \lessdot \tup{4,3,2,1}$ and
$\tup{5,2} \lessdot \tup{5,2,1}$.
(In the latter example, the ``invisible'' third entry is being increased from $0$ to $1$.)
But neither $\tup{2,1} \lessdot \tup{3,2}$ nor
$\tup{2,1} \lessdot \tup{4}$ holds.
\medskip

For the rest of this section, we shall use the following notations:

\begin{convention}
\label{conv.z-and-zT}
Fix an infinite family of commuting indeterminates
$\ldots,z_{-2},z_{-1},z_{0},z_{1},z_{2},\ldots$. We shall be working in the
field of rational functions in these indeterminates (with rational coefficients).

We shall furthermore use the fractions $\mathbf{z}_T$ defined in
Theorem~\ref{thm.main}.
\end{convention}

\begin{lemma}
\label{recus.z}
Let $\lambda/\mu$ be a skew partition with $\lambda \neq \mu$ (so that $Y\tup{\lambda/\mu}$ has at least one box).
Let $T \in \SYT\tup{\lambda/\mu}$.

If we remove the box with the entry $1$ from $T$, and subtract $1$ from all
remaining entries, then we obtain a new standard tableau $T'$, which has shape
$\lambda / \nu$ for some partition $\nu$ satisfying $\mu \lessdot \nu \subseteq \lambda$.
It satisfies
\begin{align}
\zz_{T} = \frac{1}{\sum\limits_{\tup{i,j} \in Y\left( \lambda/\mu\right)} z_{j-i}}\cdot \zz_{T'}.
\label{eq.recus_z.eq}
\end{align}
\end{lemma}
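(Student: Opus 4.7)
The plan is to verify the identity by direct computation, unpacking the definitions of $\zz_T$ and $\zz_{T'}$ and matching factors one-to-one.

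First I would identify the box $c_0 = (i_0, j_0) \in Y\tup{\lambda/\mu}$ that contains the entry $1$ in $T$. Since $T$ is standard and no entry is smaller than $1$, the box $c_0$ can have no northern or western neighbor inside $Y\tup{\lambda/\mu}$. Hence $c_0$ is an ``inner corner'' to $Y\tup{\mu}$ relative to $Y\tup{\lambda}$: the set $Y\tup{\mu} \cup \set{c_0}$ is itself a straight Young diagram $Y\tup{\nu}$ for a partition $\nu$ with $\mu \lessdot \nu \subseteq \lambda$. This forces $Y\tup{\lambda/\nu} = Y\tup{\lambda/\mu} \setminus \set{c_0}$, and a brief check of the row/column inequalities confirms that $T'$ is a legitimate standard tableau of shape $\lambda/\nu$.

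Next I would compare the content sequences. By construction, removing $c_0$ and subtracting $1$ from every remaining entry means that the box carrying entry $k$ in $T'$ is precisely the box carrying entry $k+1$ in $T$; hence
\[
c_{T'}(k) = c_T(k+1) \qquad \text{for } k = 1, 2, \ldots, n-1,
\]
and moreover $c_T(1) = j_0 - i_0$. Since $c_T$ restricted to $\set{1, 2, \ldots, n}$ is a bijection onto the boxes of $Y\tup{\lambda/\mu}$, we also have
\[
\sum_{k=1}^{n} z_{c_T(k)} \;=\; \sum_{(i,j) \in Y\tup{\lambda/\mu}} z_{j-i}.
\]

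Now I would split the denominator of $\zz_T$ at $k=1$: the factor for $k=1$ is exactly $\sum_{k=1}^n z_{c_T(k)}$, which equals the sum displayed above, while the product of the factors for $k = 2, 3, \ldots, n$ becomes, after the reindexing $k \mapsto k+1$ and the substitution $c_{T'}(k) = c_T(k+1)$, precisely
\[
\prod_{k=1}^{n-1} \tup{z_{c_{T'}(k)} + z_{c_{T'}(k+1)} + \cdots + z_{c_{T'}(n-1)}} \;=\; \frac{1}{\zz_{T'}}.
\]
Multiplying the two pieces together gives \eqref{eq.recus_z.eq}.

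The main obstacle is essentially nonexistent: the lemma is a pure unpacking of the definition. The only point requiring any actual argument (as opposed to bookkeeping) is the initial observation that the box containing $1$ is an inner corner, so that $\nu$ really is a partition; everything else is a telescoping between two nearly identical products.
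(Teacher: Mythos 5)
Your proposal is correct and follows essentially the same route as the paper's proof: identify the box containing $1$ as an addable corner of $\mu$ inside $\lambda$ (giving $\nu$), observe the shift $c_{T'}(k) = c_T(k+1)$, and split off the $k=1$ factor of the denominator of $\zz_T$ as the full content sum $\sum_{(i,j)\in Y(\lambda/\mu)} z_{j-i}$. The only place you are terser than the paper is in verifying that $Y(\mu)\cup\{c_0\}$ really is a straight Young diagram — the paper spells out that $\mu_p = q-1$ and $\mu_{p-1}\geq q$ from the absence of a western/northern neighbor in $Y(\lambda/\mu)$ — but you correctly flag this as the one step needing an argument.
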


\begin{example}
Let $\lambda = \tup{3,3,2}$ and $\mu = \tup{2,1}$.
Let $T$ be the standard tableau
\[
\ytableausetup{centertableaux}
\begin{ytableau}
\none & \none & 2 \\
\none & 1 & 3 \\
4 & 5 \\
\end{ytableau}
\in \SYT\tup{\lambda / \mu} .
\]
Then, $T'$ is
\[
\ytableausetup{centertableaux}
\begin{ytableau}
\none & \none & 1 \\
\none & \none & 2 \\
3 & 4 \\
\end{ytableau}
\in \SYT\tup{\lambda / \nu}
\]
for $\nu = \tup{2,2}$.

We have
\[
\zz_{T'} = \frac{1}{\left(z_2 + z_1 + z_{-2} + z_{-1}\right) \left(z_1 + z_{-2} + z_{-1}\right) \left(z_{-2} + z_{-1}\right) z_{-1}}
\]
and
\[
\zz_{T} = \frac{1}{\left(z_0 + z_2 + z_1 + z_{-2} + z_{-1}\right) \left(z_2 + z_1 + z_{-2} + z_{-1}\right) \left(z_1 + z_{-2} + z_{-1}\right) \left(z_{-2} + z_{-1}\right) z_{-1}}.
\]
Thus,
\[
\zz_{T} = \frac{1}{z_0 + z_2 + z_1 + z_{-2} + z_{-1}}\cdot \zz_{T'},
\]
as predicted by Lemma \ref{recus.z}
(since 
$\sum\limits_{\tup{i,j} \in Y\left( \lambda/\mu\right)} z_{j-i} = z_0 + z_2 + z_1 + z_{-2} + z_{-1}$).
\end{example}

Now, it is easy to see that the left hand side of \eqref{eq.thm.main.claim} satisfies a recursion relation:

\begin{lemma}
\label{recursion.main}
Let $\lambda/\mu$ be a skew partition.

\begin{enumerate}
\item[\textbf{(a)}] If $\lambda=\mu$, then
\[
\sum_{T\in\SYT \left(  \lambda/\mu\right)  } \zz_T =1.
\]

\item[\textbf{(b)}] If $\lambda\neq\mu$, then
\[
\sum_{T\in\SYT \left(  \lambda/\mu\right)  } \zz_T
= \frac{1}{\sum\limits_{\tup{i,j}\in Y\left(
\lambda/\mu\right)  }z_{j-i}}
\cdot
\sum\limits_{\mu\lessdot\nu\subseteq\lambda
}
\ \ \sum_{T\in\SYT \left(  \lambda/\nu\right)  } \zz_T.
\]
Here, the second sum on the right hand side is a sum over all partitions $\nu$
satisfying $\mu\lessdot\nu\subseteq\lambda$.
\end{enumerate}
\end{lemma}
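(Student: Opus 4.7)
The plan is to address the two parts separately; part (a) is immediate, while part (b) reduces to Lemma \ref{recus.z} via a ``delete the $1$'' bijection.

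For part (a), the assumption $\lambda=\mu$ forces $Y\tup{\lambda/\mu}=\varnothing$, so $n=0$ and the only standard tableau of shape $\lambda/\mu$ is the empty bijection. The product defining its $\zz_T$ is then an empty product, hence equals $1$, and the sum equals $1$.

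For part (b), the strategy is as follows. Lemma \ref{recus.z} already tells us that, for each $T\in\SYT\tup{\lambda/\mu}$, removing the box containing the entry $1$ and decrementing the remaining entries by $1$ yields a standard tableau $T'\in\SYT\tup{\lambda/\nu}$ for some (manifestly unique) $\nu$ with $\mu\lessdot\nu\subseteq\lambda$, and that
\[
\zz_T \;=\; \frac{1}{\sum_{\tup{i,j}\in Y\tup{\lambda/\mu}} z_{j-i}} \cdot \zz_{T'}.
\]
The prefactor depends only on $\lambda/\mu$, not on $T$. What remains is to verify that the assignment $T\mapsto\tup{\nu,T'}$ is a \emph{bijection} onto the set of all pairs $\tup{\nu,T'}$ with $\mu\lessdot\nu\subseteq\lambda$ and $T'\in\SYT\tup{\lambda/\nu}$. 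The inverse is transparent: given such a pair, let $c$ be the unique box of $Y\tup{\nu/\mu}$, place $1$ in $c$ and increment every entry of $T'$ by $1$. The support of the resulting filling is $Y\tup{\lambda/\nu}\cup\set{c} = Y\tup{\lambda/\mu}$, and it satisfies the row- and column-increase conditions, since the conditions not involving $c$ follow from those for $T'$, and any condition involving the new entry $1$ at $c$ is automatic because every other entry is at least $2$. Summing the displayed recursion over $T$, pulling out the $T$-independent prefactor, and re-indexing the right side by $\tup{\nu, T'}$ then yields precisely the formula of part (b).

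The main obstacle, insofar as there is one, is the bijectivity check for $T\mapsto\tup{\nu,T'}$. But since Lemma \ref{recus.z} already settles the forward direction and the inverse is written down explicitly above, this is really just bookkeeping: the entire substance of part (b) lives in Lemma \ref{recus.z}, and the present lemma merely sums that recursion over all tableaux of shape $\lambda/\mu$.
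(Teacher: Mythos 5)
Your proposal is correct and follows essentially the same route as the paper: part (a) is the same one-line observation, and part (b) is the ``delete the $1$'' bijection combined with Lemma~\ref{recus.z}. The paper organizes the bookkeeping slightly differently (first partitioning $\SYT(\lambda/\mu)$ according to which box holds the $1$, then applying the bijection onto $\SYT(\lambda/\nu)$ for each fixed $\nu$), whereas you package this as a single bijection $T \mapsto (\nu, T')$ onto the disjoint union; the substance is identical.

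One small point of care in your inverse-map argument: the claim that the conditions involving the new entry $1$ are ``automatic because every other entry is at least $2$'' only disposes of the conditions where $c$ lies to the west or north of a neighbor. You also need that the west and north neighbors of $c$ are \emph{not} in $Y(\lambda/\mu)$ (otherwise they would have to contain an entry $< 1$, which is impossible). This does hold — since $c$ is the unique box of $Y(\nu/\mu)$ and $\mu \lessdot \nu$, the west and north neighbors of $c$ are either outside the positive quadrant or lie in $Y(\mu)$ — but it deserves a word. The paper's proof glosses over the same detail at the same level, so this is a matter of polish rather than a gap.
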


Note that the skew partitions $\lambda/\nu$ on the right hand side of Lemma
\ref{recursion.main} \textbf{(b)} have one fewer box than the skew partition
$\lambda/\mu$ (in the sense that $\left\vert Y\left(  \lambda/\nu\right)
\right\vert =\left\vert Y\left(  \lambda/\mu\right)  \right\vert -1$). Thus,
Lemma \ref{recursion.main} yields a recursive algorithm for computing the left hand side $\sum_{T\in\SYT \left(  \lambda/\mu\right)  } \zz_T$ of \eqref{eq.thm.main.claim} for all skew partitions $\lambda/\mu$. We
shall now show that the right hand side  $\sum\limits_{E\in\mathcal{E}\left(
\lambda/\mu\right)  }\ \ \prod\limits_{c\in Y\left(  \lambda\right)  \setminus
E}\dfrac{1}{h_{\lambda}\left(  c;z\right)  }$ of (\ref{eq.thm.main.claim})
satisfies an analogous recursion (Lemma \ref{rec.2}). The proof of this recursion is much
subtler, and the preparations for it will occupy most of this paper. Once it
is proved, (\ref{eq.thm.main.claim}) will easily follow.

\section{\label{sec.konva}Conjugates, Delta-sets and the Konvalinka recursion}

We shall next explore some fundamental features of partitions, which will become basic ingredients in our proof.

\subsection{The conjugate partition}

We recall one of the most fundamental concepts in the theory of partitions:

\begin{definition}
\label{def.lambdat}
    Let $\lambda$ be a partition. Then, $\lambda^t$ is the partition whose Young diagram is the reflection of the diagram of $\lambda$ across the main diagonal -- i.e., it is given by
    \begin{equation}
    Y\tup{\lambda^t} = \set{\tup{j, i} \mid \tup{i, j} \in Y\tup{\lambda} }.
    \label{eq.def.lambdat.1}
    \end{equation}
    Explicitly, it is given by
    \begin{align}
    \lambda^t_k = \abs{\set{i \geq 1 \mid \lambda_i \geq k }}
    \qquad \text{for all } k \geq 1.
    \label{eq.def.lambdat.2}
    \end{align}
    Equivalently,
    \begin{align}
    \lambda^t_k = \tup{\text{number of boxes in the $k$-th column of $Y\tup{\lambda}$}}
    \label{eq.def.lambdat.5}
    \end{align}
    for all $k \geq 1$.
    Equivalently,
    \begin{align}
    \lambda^t_k = \max\set{i \geq 1 \mid \lambda_i \geq k }
    \qquad \text{for all } k \geq 1,
    \label{eq.def.lambdat.3}
    \end{align}
    with the understanding that the maximum of an empty set is $0$ here.

    The partition $\lambda^t$ is called the \emph{conjugate} (or \emph{transpose}) of $\lambda$.
\end{definition}

\begin{example}
    Let $\lambda$ be the partition $\tup{5, 2, 2, 1}$. Then, $\lambda^t = \tup{4, 3, 1, 1, 1}$. The Young diagrams $Y\tup{\lambda}$ and $Y\tup{\lambda^t}$ are \[
    \ydiagram{5,2,2,1} \qquad \text{ and } \qquad
    \ydiagram{4,3,1,1,1} .
    \]
\end{example}

The following lemma follows easily from this definition:

\begin{lemma}
    \label{lem.conj.uniprop}
    Let $\lambda$ be a partition. Let $i$ and $j$ be two positive integers. Then, we have the logical equivalence
    \[
    \tup{\lambda^t_i \geq j}
    \ \Longleftrightarrow\ %
    \tup{\lambda_j \geq i}.
    \]
\end{lemma}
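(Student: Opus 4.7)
My plan is to reduce the claim to a direct unpacking of the definitions of $Y(\lambda)$ and $Y(\lambda^t)$ via the ``box'' reformulation. Note that for any partition $\rho$ and positive integers $a, b$, the condition $\rho_a \geq b$ is precisely the condition $(a, b) \in Y(\rho)$, since $Y(\rho) = Y(\rho/\varnothing)$ consists of all pairs $(a, b)$ of positive integers with $\rho_a \geq b > 0$. Applying this twice, the desired equivalence
\[
\lambda^t_i \geq j \iff \lambda_j \geq i
\]
is just
\[
(i, j) \in Y(\lambda^t) \iff (j, i) \in Y(\lambda),
\]
which is immediate from (\ref{eq.def.lambdat.1}).

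If one prefers an entry-level argument instead of passing through the diagrammatic characterization, I would use (\ref{eq.def.lambdat.3}) directly. For the forward implication, assume $\lambda^t_i \geq j$, so the maximum $\max\set{k \geq 1 \mid \lambda_k \geq i}$ is at least $j$; in particular this set is nonempty, so some $k \geq j$ has $\lambda_k \geq i$, whence $\lambda_j \geq \lambda_k \geq i$ because $\lambda$ is weakly decreasing. For the reverse implication, assume $\lambda_j \geq i$; then $j$ itself lies in $\set{k \geq 1 \mid \lambda_k \geq i}$, so the maximum of this set is at least $j$, i.e., $\lambda^t_i \geq j$.

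There is essentially no obstacle here — the lemma is a definitional reformulation, and the only subtlety is being careful with the convention that an empty maximum equals $0$ (harmless, since in the forward direction we immediately get nonemptiness from $\lambda^t_i \geq j \geq 1$). I would write the proof using the diagrammatic route, as it is the shortest and makes the symmetry between $\lambda$ and $\lambda^t$ most transparent.
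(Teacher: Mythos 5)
Your primary (diagrammatic) route is exactly the paper's proof: unwind $\lambda^t_i \geq j$ to $(i,j) \in Y(\lambda^t)$, apply \eqref{eq.def.lambdat.1} to get $(j,i) \in Y(\lambda)$, and unwind that to $\lambda_j \geq i$. The alternative entry-level argument via \eqref{eq.def.lambdat.3} is also correct, but since you said you'd present the diagrammatic one, the proposal matches the paper.
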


Note that the conjugate partition $\lambda^t$ is denoted $\lambda'$ in \cite{Macdon95} and denoted $\widetilde{\lambda}$ in \cite{Fulton97}.

\subsection{Delta-sets}

The following definition is less standard but no less important to us:

\begin{definition}
\label{def.Delta-lambda}
Let $\lambda$ be any partition. Then, we define a set $\Delta\tup{\lambda}$ by
\[
\Delta\tup{\lambda} := \set{\lambda_i - i \mid i \geq 1} .
\]

\end{definition}

\begin{example}
    Let $\lambda$ be the partition $\tup{5, 2, 2, 1}$. Then,
    \[
    \Delta\tup{\lambda} = \set{4, 0, -1, -3, -5, -6, -7, \ldots}
    \]
    (where the ``$\ldots$'' are just the negative integers from $-8$ on downwards).
\end{example}

\subsection{The $\mathbf{s}_\lambda\ive{\nu}$ polynomial}

\begin{definition}
\label{def.snu}
Let $\lambda$ be a partition.

Let $x_{1},x_{2},x_3,\ldots$ and $y_{1},y_{2},y_3,\ldots$ be two infinite families of commuting indeterminates.

For any partition $\nu$, we set
\[
\mathbf{s}_\lambda\left[  \nu\right]  :=\sum_{D\in\mathcal{E}\left(  \lambda
/\nu\right)  }\ \ \prod_{\left(  i,j\right)  \in D}\left(  x_{i}+y_{j}\right)
.
\]
Also, if $\nu$ is not a partition, then we set
\[
\mathbf{s}_\lambda\left[  \nu\right]  :=0.
\]
\end{definition}

\begin{example}
Let $\lambda=\left(  3,3,1\right)  $ and $\mu=\left(  1\right)  $. Then,
\[
\mathcal{E}\left(  \lambda/\mu\right)  =\left\{  \left\{  \left(  1,1\right)
,\ \left(  1,2\right)  \right\}  ,\ \left\{  \left(  1,1\right)  ,\ \left(
2,3\right)  \right\}  ,\ \left\{  \left(  2,2\right)  ,\ \left(  2,3\right)
\right\}  \right\}  .
\]
Drawn using asterisks as in Example \ref{exa.Elamu.1}, these three excitations
look as follows:%
\[
\begin{ytableau}
\ast&\ast& \\ && \\
\end{ytableau}\ \ \ \ \ \ \ \ \ \ \begin{ytableau}
\ast&& \\ &&\ast \\
\end{ytableau}\ \ \ \ \ \ \ \ \ \ \begin{ytableau}
\vphantom{x}&& \\ &\ast&\ast \\
\end{ytableau}\ \ .
\]
Definition \ref{def.snu} yields
\begin{align*}
&\mathbf{s}_{\lambda}\left[  \mu\right]    \\
& =\sum_{D\in\mathcal{E}\left(
\lambda/\mu\right)  }\ \ \prod_{\left(  i,j\right)  \in D}\left(  x_{i}%
+y_{j}\right)  \\
& =\prod_{\left(  i,j\right)  \in\left\{  \left(  1,1\right)  ,\ \left(
1,2\right)  \right\}  }\left(  x_{i}+y_{j}\right)  +\prod_{\left(  i,j\right)
\in\left\{  \left(  1,1\right)  ,\ \left(  2,3\right)  \right\}  }\left(
x_{i}+y_{j}\right)  +\prod_{\left(  i,j\right)  \in\left\{  \left(
2,2\right)  ,\ \left(  2,3\right)  \right\}  }\left(  x_{i}+y_{j}\right)  \\
& =\left(  x_{1}+y_{1}\right)  \left(  x_{1}+y_{2}\right)  +\left(
x_{1}+y_{1}\right)  \left(  x_{2}+y_{3}\right)  +\left(  x_{2}+y_{2}\right)
\left(  x_{2}+y_{3}\right)  \\
& =x_{1}^{2}+x_{2}^{2}+x_{1}x_{2}+x_{1}y_{1}+x_{1}y_{2}+x_{2}y_{1}+x_{1}%
y_{3}+x_{2}y_{2}+x_{2}y_{3} \\
& \qquad \qquad +y_{1}y_{2}+y_{1}y_{3}+y_{2}y_{3}.
\end{align*}
\end{example}

In the lingo of symmetric functions, the polynomials $\mathbf{s}_\lambda\ive{\nu}$ can be called \emph{flagged factorial Schur polynomials}.
They turn into the usual flagged Schur polynomials if we set all the $y_i$ to $0$, and into the factorial Schur polynomials if we let $\lambda_i \to +\infty$.
(This follows from Corollary~\ref{corollaryFlaggedExc} below.)

\subsection{The Konvalinka recursion}

Now, we can state the \emph{Konvalinka recursion} (\cite[Theorem 5]{Konva1}), which was used by Konvalinka in his proof of Naruse's formula:

\begin{theorem}[Konvalinka recursion]
\label{mainKonvalinka}
Let $\lambda/\mu$ be any skew partition, and let $x_{1},x_{2},x_3,\ldots$ and $y_{1},y_{2},y_3,\ldots$ be two infinite families of commuting indeterminates.

Set
\[
\ell_i := \lambda_i - i \qquad \text{ and } \qquad \ell^t_i := \lambda^t_i - i \qquad  \text{ for all } i \geq 1.
\]
Then,
\begin{align*}
\left(\sum_{\substack{k \geq 1;\\ \ell_k \notin \Delta\tup{\mu}}} x_k + \sum_{\substack{k \geq 1;\\ \ell^t_k \notin \Delta\tup{\mu^t}}} y_k\right) \mathbf{s}_\lambda\ive{\mu}
= \sum_{\mu \lessdot \nu \subseteq \lambda} \mathbf{s}_\lambda\ive{\nu} .
\end{align*}
Here, the sum on the right hand side ranges over all partitions $\nu$ that satisfy $\mu \lessdot \nu \subseteq \lambda$.
\end{theorem}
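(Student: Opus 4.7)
The plan is to prove the Konvalinka recursion by reducing it to an elementary determinantal identity, using the bijection between excitations and flagged semistandard Young tableaux (to be established in Section~\ref{sec.flag}) together with a Lindstr\"om--Gessel--Viennot / refined Jacobi--Trudi argument.

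First, using the bijection, I would rewrite
\[
\mathbf{s}_\lambda[\mu] = \sum_T \prod_{(i,j) \in Y(\mu)} \left(x_{T(i,j)} + y_{T(i,j) + j - i}\right),
\]
where $T$ ranges over flagged SSYT of shape $\mu$ whose entries $T(i,j)$ lie between $i$ and a flag depending on $\lambda$ and the diagonal $j - i$ (and the corresponding excitation is $\{(T(i,j), T(i,j)+j-i) : (i,j) \in Y(\mu)\}$). Next, interpreting each such tableau as a system of non-intersecting lattice paths (one path per row or column of $\mu$) and applying the LGV lemma, I would obtain a Jacobi--Trudi-type determinantal expression $\mathbf{s}_\lambda[\mu] = \det H^\mu$, whose entries are single-path weighted generating polynomials in the $x$'s and $y$'s, and whose row/column indexing reflects positions in the Maya diagrams $\Delta(\mu)$ and $\Delta(\lambda)$.

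The coefficient on the LHS decomposes naturally under this formulation: each $x_k$-term (for $\ell_k \notin \Delta(\mu)$) marks a ``vacancy'' in $\mu$'s row Maya diagram at a position occupied in $\lambda$, and each $y_k$-term (for $\ell^t_k \notin \Delta(\mu^t)$) marks the analogous vacancy in the column Maya diagram. A summand $\nu \gtrdot \mu$ on the right-hand side corresponds to a single particle hopping into an adjacent vacancy in $\mu$'s (or $\mu^t$'s) Maya diagram, and this modifies exactly one row of $H^\mu$ into the corresponding row of $H^\nu$. By multilinearity of the determinant and expansion along that row, the whole Konvalinka identity thus reduces to a purely one-dimensional recursion on the single-path polynomials: roughly, multiplying one such polynomial by the sum of available vacancy variables should equal the sum of its versions shifted into those vacancies.

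The main obstacle will be verifying this one-dimensional recursion: an elementary combinatorial statement about a single lattice path's weighted generating polynomial under one-step shifts of its endpoint in the Maya diagram, amenable to a telescoping argument but requiring careful bookkeeping near the path's boundary (where some shifts are disallowed by the flag imposed by $\lambda$). Once established, multilinearity of the determinant propagates it to the full identity, with the symmetric $x \leftrightarrow y$, $\mu \leftrightarrow \mu^t$ treatment (via Lemma~\ref{lem.conj.uniprop}) handling the $y$-part of the coefficient and closing the proof.
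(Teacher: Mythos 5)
Your plan is essentially the paper's proof: express $\mathbf{s}_\lambda[\mu]$ via the flagged Jacobi--Trudi determinant (Corollary~\ref{cor.slam.det}), observe that each $\mathbf{s}_\lambda[\mu^{+k}]$ is the determinant with only the $k$-th row modified (Lemma~\ref{lem.k-th-det-1}, using the one-variable $h$-polynomial recursions of Corollaries~\ref{2hprop} and~\ref{3hprop2}), sum over $k$ using multilinearity of the determinant (Lemma~\ref{lem.det-uc}), and transfer the resulting asymmetric coefficient to the symmetric $x$/$y$ form via the conjugation bijection of Lemma~\ref{lem.Deltas.ybij}. The only cosmetic difference is that the paper proves the Jacobi--Trudi formula by a direct sign-reversing involution rather than quoting LGV.
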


Our proof of Theorem~\ref{thm.main} will not make direct use of the Konvalinka recursion as we just stated it, but instead use a simpler (if less aesthetically pleasant) variant (Lemma \ref{lem.konvalinka-bi}, which we will prove below).
We will then (in Section \ref{sec.konva-forreal}) derive the Konvalinka recursion from this variant (with some extra work).

\section{\label{sec.flag}Flagged semistandard tableaux}

We next define the notion of \emph{flagged semistandard tableaux}, which will (in a specific case) be a more manageable model for excitations.

\subsection{Semistandard tableaux}

We begin with the concept of semistandard tableaux (a relative of that of standard tableaux):

\begin{definition}
\label{SSYT_def}
Let $\mu$ be a partition.
A \emph{semistandard tableau} of shape $\mu$ means a way to put
a positive integer into each box of $Y\left( \mu\right)  $ (that is, formally speaking, a map $T : Y\tup{\mu} \to \set{1,2,3,\ldots}$) such that

\begin{itemize}
\item the integers weakly increase left-to-right in each row (i.e., we have $T\tup{i,j} \leq T\tup{i,j+1}$ whenever $\tup{i,j}$ and $\tup{i,j+1}$ belong to $Y\tup{\mu}$);

\item the integers strictly increase top-to-bottom in each column (i.e., we have $T\tup{i,j} < T\tup{i+1,j}$ whenever $\tup{i,j}$ and $\tup{i+1,j}$ belong to $Y\tup{\mu}$).

\end{itemize}

(Just as before, the value $T\tup{i,j}$ is regarded as the entry of $T$ in the box $\tup{i,j}$.)

We let $\SSYT\tup{\mu}$ denote the set of all semistandard tableaux of shape $\mu$.

\end{definition}

\begin{example}
    If $\mu=\tup{4,3,3}$, then $T\in \SSYT\tup{\mu}$ can be:
    \[
        \ytableaushort{1123,347,888}
    \]
    but cannot be
    \[
        \ytableaushort{1123,147,888}
    \]
    because its property $T\tup{1,1} = T\tup{2,1}$ would violate the second condition in Definition~\ref{SSYT_def}.
\end{example}

The following lemma exposes two basic properties of semistandard tableaux, which will be used later on and also make for good warmup exercises:

\begin{lemma}
\label{lem.ssyt.geq}
Let $\mu$ be a partition.
Let $T \in \SSYT\tup{\mu}$ be a semistandard tableau. Then:

\begin{enumerate}
    \item[\textbf{(a)}]
    We have $T\tup{i,j} \geq i$ for each $\tup{i,j} \in Y\tup{\mu}$.

    \item[\textbf{(b)}]
    Let $\tup{i,j}$ and $\tup{u,v}$ be two boxes in $Y\tup{\mu}$ such that $i \leq u$ and $j \leq v$.
    Then,
    \[
    T\tup{u,v} - u \geq T\tup{i,j} - i .
    \]
\end{enumerate}
\end{lemma}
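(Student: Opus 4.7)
My plan is to prove both parts by walking through column and row paths inside $Y\tup{\mu}$ and applying the monotonicity axioms of a semistandard tableau. The key observation is that whenever $\tup{u,v} \in Y\tup{\mu}$ and $i \leq u$, $j \leq v$, the partition condition $\mu_1 \geq \mu_2 \geq \cdots$ and $\mu_u \geq v$ forces every box $\tup{k, j}$ with $i \leq k \leq u$ and every box $\tup{u, \ell}$ with $j \leq \ell \leq v$ to lie in $Y\tup{\mu}$. This means we can freely concatenate vertical and horizontal steps inside the diagram.

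For part \textbf{(a)}, I would fix $\tup{i,j} \in Y\tup{\mu}$ and consider the column chain $\tup{1,j}, \tup{2,j}, \ldots, \tup{i,j}$, all of which lie in $Y\tup{\mu}$ since $\mu_k \geq \mu_i \geq j$ for $k \leq i$. The strict column-increase axiom of Definition~\ref{SSYT_def} yields $T\tup{1,j} < T\tup{2,j} < \cdots < T\tup{i,j}$, hence $T\tup{i,j} \geq T\tup{1,j} + \tup{i-1} \geq 1 + \tup{i-1} = i$ (using $T\tup{1,j} \geq 1$ since $T$ is valued in positive integers). This is essentially a one-line induction on $i$.

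For part \textbf{(b)}, the inequality to prove rearranges to $T\tup{u,v} - T\tup{i,j} \geq u - i$. I would go from $\tup{i,j}$ to $\tup{u,v}$ along the L-shaped path $\tup{i,j} \to \tup{i+1,j} \to \cdots \to \tup{u,j} \to \tup{u,j+1} \to \cdots \to \tup{u,v}$, every step of which stays inside $Y\tup{\mu}$ by the observation above. The strictly increasing column bound yields $T\tup{u,j} \geq T\tup{i,j} + \tup{u-i}$, and the weakly increasing row bound yields $T\tup{u,v} \geq T\tup{u,j}$. Chaining gives the desired $T\tup{u,v} - u \geq T\tup{i,j} - i$.

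Neither part presents a real obstacle; the only thing that requires a moment's care is verifying that the intermediate cells in the column- and row-paths really do lie in $Y\tup{\mu}$, which is where the monotonicity of $\mu$ gets used. Once that is noted, both claims follow immediately from the two defining inequalities of a semistandard tableau.
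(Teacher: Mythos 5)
Your proposal is correct, and it uses essentially the same geometric argument as the paper: walk down a column to transfer information vertically (getting a strict increase of at least one per row), then walk along a row to transfer it horizontally (weak increase). The one organizational difference is that you prove \textbf{(a)} directly via the column chain from $\tup{1,j}$ to $\tup{i,j}$, whereas the paper first establishes \textbf{(b)} (via a weakly increasing sequence $p_k := T\tup{k,j} - k$, which encodes the same column walk) and then obtains \textbf{(a)} as the special case $\tup{1,j}$, $\tup{i,j}$. Both routes are equally valid; yours is marginally more self-contained for \textbf{(a)}, while the paper's ordering avoids repeating the column-chain reasoning. Your remark about verifying that the intermediate boxes stay inside $Y\tup{\mu}$ via the monotonicity of $\mu$ is exactly the point the paper also makes explicit, so nothing is missing.
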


\subsection{Flagged semistandard tableaux in general}

Flagged semistandard tableaux are semistandard tableaux in which, for each $i$, the entries in the $i$-th row are bounded from above by a given integer $b_i$:

\begin{definition}

\begin{enumerate}
\item[\textbf{(a)}]
A \emph{flagging} means a sequence $\tup{b_1, b_2, b_3, \ldots}$, where each $b_i$ is a positive integer.

\item[\textbf{(b)}]
A flagging $\tup{b_1, b_2, b_3, \ldots}$ is said to be \emph{weakly increasing} if $b_1 \leq b_2 \leq b_3 \leq \cdots$.

\item[\textbf{(c)}]
Let $\bb = \tup{b_1, b_2, b_3, \ldots}$ be a flagging, and let $\mu$ be a partition. A semistandard tableau $T$ of shape $\mu$ is said to be \emph{$\bb$-flagged} if and only if it satisfies
\[
T\tup{i, j} \leq b_i \qquad \text{for all } \tup{i, j} \in Y\tup{\mu}
\]
(that is, all entries in row $i$ are $\leq b_i$).

We let $\FSSYT\tup{\mu, \bb}$ be the set of all $\bb$-flagged semistandard tableaux of shape $\mu$.

\end{enumerate}

\end{definition}

\begin{example}
    Let $\mu=\tup{3,2,1}$ and $\bb=\tup{2,3,3,3,3,\ldots}$. Then, $\FSSYT\tup{\mu, \bb}$ consists of the five semistandard tableaux
    \[
    \ytableaushort{111,22,3} \qquad
    \ytableaushort{111,23,3} \qquad
    \ytableaushort{112,22,3} \qquad
    \ytableaushort{112,23,3} \qquad
    \ytableaushort{122,23,3}
    \]
\end{example}

\begin{definition}
\label{diag_defin}
In the following, for any $k \in \ZZ$, we define the \emph{$k$-th diagonal} to be the set of all boxes $\tup{i, j} \in \ZZ^2$ such that $j-i = k$.
For instance, the $1$-st diagonal consists of the boxes $\ldots,\ \tup{-2, -1},\ \tup{-1, 0},\ \tup{0, 1},\ \tup{1, 2},\ \ldots$.
\end{definition}

We note that if $\lambda$ is a partition and $i \geq 1$, then the $i$-th row of $Y\tup{\lambda}$
has boxes in the $\tup{1-i}$-th, $\tup{2-i}$-th, ..., $\tup{\lambda_i-i}$-th diagonals.

\subsection{The flagging induced by $\lambda / \mu$}

\begin{convention}
    \textbf{For the rest of this section}, we fix two partitions $\lambda$ and $\mu$.
    (We do not require that $\lambda \supseteq \mu$.)
\end{convention}

\begin{definition}
\label{def.flagging-of-lm}
Let $\lambda$ and $\mu$ be two partitions (not necessarily satisfying $\mu \subseteq \lambda$).

For each $i \geq 1$, we set
\begin{align*}
b_i :=
\max\set{k \geq 0 \mid \lambda_k - k \geq \mu_i - i },
\end{align*}
where we understand $\lambda_0$ to be $+\infty$ (so that the set on the right hand side always includes $0$).
The maximum here is well-defined because of Lemma~\ref{lem.flagging-of-lm.wd} below.

We define the flagging $\bb$ to be $\tup{b_1, b_2, b_3, \ldots}$,
and we call it the \emph{flagging induced by $\lambda / \mu$}.


Note that if $\mu \subseteq \lambda$ and $\mu_i > 0$, then $b_i$ is $i$ plus the maximum number of diagonal moves that the box $\tup{i, \mu_i}$ could make along its diagonal without leaving the Young diagram $Y\tup{\lambda}$.
(A \emph{diagonal move} is a move that takes a box to its southeastern neighbor on the same diagonal, without regard for any other boxes in the diagram.)
This description applies to the general case as well, if we extend $Y\tup{\lambda}$ by all boxes $\tup{i,j}$ with $i<0$ or $j<0$, and allow ``negative'' diagonal moves (which are just reverse diagonal moves).

We define $\mathcal{F}\tup{\lambda/\mu}$ to be the set $\FSSYT\tup{\mu, \bb}$, with $\bb$ defined as above.
\end{definition}

\begin{example}
    Let $\mu=\tup{3,2,1,1}$ and $\lambda=\tup{7,6,6,5,5,3,1}$. Then,
    the flagging induced by $\lambda / \mu$ is
    $\bb = \tup{3,5,5,6,6,7,8,9,\ldots}$. (Note that $b_i = i$ for all sufficiently large $i$, by Lemma~\ref{lem.flagging-of-lm.0}.) Visually, $b_2 = 5$ can be seen by applying diagonal moves to the box $g = \tup{2, \mu_2}$, and $b_3 = 5$ can be seen by applying diagonal moves to the box $h = \tup{3, \mu_3}$ in the following picture:
    \[
        \ytableausetup{centertableaux}
        \ytableaushort
        {\none , \none g, h\none g,\none h\none g,\none \none h\none g}
        * {7,6,6,5,5,3,1}
        * [*(green)]{3,2,1,1}
    \]

\end{example}

The flagging induced by $\lambda / \mu$ has a few basic properties:

\begin{lemma}
\label{lem.flagging-of-lm.wd}
The maximum $\max\set{k \geq 0 \mid \lambda_k - k \geq \mu_i - i }$ in Definition~\ref{def.flagging-of-lm} is well-defined.
\end{lemma}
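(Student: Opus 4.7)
The plan is to show that the set $S_i := \set{k \geq 0 \mid \lambda_k - k \geq \mu_i - i}$ is both nonempty and bounded above, from which it follows that its maximum (as a subset of $\NN$) exists.

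First I would verify nonemptiness by showing $0 \in S_i$. Indeed, by the convention $\lambda_0 = +\infty$, the inequality $\lambda_0 - 0 = +\infty \geq \mu_i - i$ holds trivially, so $S_i$ always contains $0$.

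Next I would establish that $S_i$ is bounded above. Since $\lambda$ is a partition, we have $\lambda_k = 0$ for all sufficiently large $k$, and so $\lambda_k - k = -k$ for such $k$. As $k \to \infty$, this tends to $-\infty$, so eventually $\lambda_k - k < \mu_i - i$. More concretely, one can pick any integer $N$ large enough that $\lambda_N = 0$ and $N > i - \mu_i$; then for every $k \geq N$ we have $\lambda_k - k \leq -k \leq -N < \mu_i - i$, so $k \notin S_i$. Thus $S_i \subseteq \set{0, 1, \ldots, N-1}$ is a finite nonempty subset of $\NN$, and therefore has a maximum.

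There is no real obstacle here; the only mildly subtle point is making proper use of the convention $\lambda_0 = +\infty$ to guarantee nonemptiness regardless of whether $\mu_i - i$ is positive or negative. Everything else is a direct consequence of $\lambda$ being eventually zero.
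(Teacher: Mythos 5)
Your proof is correct and follows the same strategy as the paper's: establish nonemptiness using the convention $\lambda_0 = +\infty$, and establish that the set is finite (bounded above) by observing that $\lambda_k - k \to -\infty$, so that a nonempty finite subset of $\NN$ has a maximum.
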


\begin{lemma}
\label{lem.flagging-of-lm.uniprop}
Let $\bb=\left(  b_{1},b_{2},b_{3},\ldots\right)  $ be the flagging induced by $\lambda/\mu$.
Let $i$ and $j$ be two positive integers. Then, we have the logical
equivalence
\[
\left(  j\leq b_{i}\right)
\ \Longleftrightarrow\ \left(  \lambda_{j} 
-j\geq\mu_{i}-i\right)  .
\]

\end{lemma}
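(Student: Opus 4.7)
The plan is to reduce the equivalence to the (weak) monotonicity of the sequence $\tup{\lambda_k - k}_{k \geq 0}$, using the convention $\lambda_0 = +\infty$ set up in Definition~\ref{def.flagging-of-lm}.

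First I would establish the following auxiliary observation: the sequence $\tup{\lambda_k - k}_{k \geq 0}$ is weakly decreasing in $k$. For $k \geq 1$, this is because $\lambda_k \geq \lambda_{k+1}$ (since $\lambda$ is a partition) and hence $\lambda_k - k \geq \lambda_{k+1} - k > \lambda_{k+1} - \tup{k+1}$; for the step from $k=0$ to $k=1$ this is trivially true since $\lambda_0 = +\infty$. Iterating, we obtain $\lambda_j - j \geq \lambda_k - k$ whenever $0 \leq j \leq k$.

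For the ``$\Leftarrow$'' direction, suppose $\lambda_j - j \geq \mu_i - i$. Then $j$ itself belongs to the set $\set{k \geq 0 \mid \lambda_k - k \geq \mu_i - i}$, so its maximum $b_i$ must be at least $j$; that is, $j \leq b_i$.

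For the ``$\Rightarrow$'' direction, suppose $j \leq b_i$. Then $j$ is a positive integer with $j \leq b_i$, so in particular $b_i \geq 1$, meaning the defining maximum is attained at a positive value. By the definition of $b_i$ we have $\lambda_{b_i} - b_i \geq \mu_i - i$. Since $0 \leq j \leq b_i$, the monotonicity established above gives $\lambda_j - j \geq \lambda_{b_i} - b_i$, so combining the two inequalities yields $\lambda_j - j \geq \mu_i - i$, as required.

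There is no real obstacle here: the only thing one has to be careful about is the convention $\lambda_0 = +\infty$, which guarantees that the set whose maximum defines $b_i$ is nonempty (this is what Lemma~\ref{lem.flagging-of-lm.wd} handles) and that monotonicity holds even when bridging $k=0$ to $k=1$. Beyond that, the proof is just the one-line observation that $\lambda_k - k$ is decreasing in $k$.
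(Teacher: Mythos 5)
Your proof is correct and is essentially the same as the paper's: the ``$\Leftarrow$'' direction is the direct observation that $j$ lies in the set whose maximum is $b_i$, and the ``$\Rightarrow$'' direction combines $\lambda_{b_i}-b_i\geq\mu_i-i$ with the fact that $k\mapsto\lambda_k-k$ is decreasing (the paper phrases this as $\lambda_j-j\geq\lambda_{b_i}-j\geq\lambda_{b_i}-b_i$ rather than stating monotonicity as a separate observation, but it is the same argument).
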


\begin{lemma}
\label{lem.flagging-of-lm.inc}
The flagging $\bb$ induced by $\lambda/\mu$ is weakly increasing.
\end{lemma}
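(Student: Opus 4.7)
The plan is to prove that $b_i \leq b_{i+1}$ for every $i \geq 1$ by showing that the set that defines $b_{i+1}$ contains the set that defines $b_i$. This reduces the whole lemma to the inequality $\mu_i - i \geq \mu_{i+1} - (i+1)$, which follows immediately from the definition of a partition.

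More precisely, I would argue as follows. Fix $i \geq 1$. Since $\mu$ is a partition, we have $\mu_i \geq \mu_{i+1}$, and therefore
\[
\mu_i - i \;\geq\; \mu_{i+1} - i \;>\; \mu_{i+1} - (i+1).
\]
Now let $S_i := \set{k \geq 0 \mid \lambda_k - k \geq \mu_i - i}$ and $S_{i+1} := \set{k \geq 0 \mid \lambda_k - k \geq \mu_{i+1} - (i+1)}$ (both with the convention $\lambda_0 = +\infty$). The displayed inequality shows that every $k \in S_i$ also belongs to $S_{i+1}$, i.e., $S_i \subseteq S_{i+1}$. Taking maxima (both of which exist by Lemma \ref{lem.flagging-of-lm.wd}), we obtain $b_i = \max S_i \leq \max S_{i+1} = b_{i+1}$. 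Since this holds for all $i \geq 1$, the flagging $\bb = (b_1, b_2, b_3, \ldots)$ is weakly increasing.

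There is essentially no obstacle here: the whole argument is a one-line monotonicity observation about the defining set. The only mild point to be careful about is ensuring that the maxima exist, but that is exactly the content of the preceding Lemma \ref{lem.flagging-of-lm.wd}, which we are allowed to invoke.
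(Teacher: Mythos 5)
Your proof is correct and rests on the same key fact as the paper's proof, namely that $\mu_i - i$ is weakly decreasing in $i$. The only difference is in packaging: you observe directly that the defining sets are nested ($S_i \subseteq S_{i+1}$) and conclude by monotonicity of $\max$, whereas the paper first checks that $b_{i-1}$ satisfies $\lambda_{b_{i-1}} - b_{i-1} \geq \mu_i - i$ and then invokes Lemma~\ref{lem.flagging-of-lm.uniprop} to convert this into $b_{i-1} \leq b_i$. Your set-containment argument is a touch more direct and avoids routing through that auxiliary lemma; both are fine.
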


\begin{lemma}
\label{lem.flagging-of-lm.0}
Let $\bb=\left(  b_{1},b_{2},b_{3},\ldots\right)  $ be the flagging induced by $\lambda/\mu$.

Let $i \geq 1$ satisfy $\mu_i = 0$ and $\lambda_{i+1} = 0$.
Then, $b_i = i$.
\end{lemma}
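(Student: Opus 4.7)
The plan is to unpack Definition~\ref{def.flagging-of-lm} using the hypothesis $\mu_i = 0$, which collapses the inequality $\lambda_k - k \geq \mu_i - i$ defining $b_i$ to the much simpler $\lambda_k \geq k - i$. I expect this lemma to be essentially a direct computation, with no serious obstacle; the only thing to be mildly careful about is the convention $\lambda_0 = +\infty$, which ensures $k=0$ always lies in the set and so the maximum is well-defined (but of course does not prevent $b_i$ from being larger).

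My first step is to show $b_i \geq i$. I would invoke Lemma~\ref{lem.flagging-of-lm.uniprop}, which gives the equivalence $(j \leq b_i) \Longleftrightarrow (\lambda_j - j \geq \mu_i - i)$. Plugging in $j = i$ and $\mu_i = 0$, the right-hand side becomes $\lambda_i \geq 0$, which is trivially true since $\lambda$ is a partition. Hence $i \leq b_i$.

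My second step is to show $b_i \leq i$, for which it suffices to rule out $j = i+1$. Again by Lemma~\ref{lem.flagging-of-lm.uniprop}, we have $(i+1 \leq b_i) \Longleftrightarrow (\lambda_{i+1} - (i+1) \geq -i)$, i.e.\ $\lambda_{i+1} \geq 1$. But the hypothesis $\lambda_{i+1} = 0$ says precisely the opposite, so $i+1 \not\leq b_i$, that is $b_i \leq i$. Combining both inequalities yields $b_i = i$, as desired. (If one preferred not to use Lemma~\ref{lem.flagging-of-lm.uniprop} and instead argued directly from Definition~\ref{def.flagging-of-lm}, one would additionally note that for any $k > i+1$ the weak decrease of $\lambda$ forces $\lambda_k \leq \lambda_{i+1} = 0 < k - i$, so no such $k$ can lie in the set either; but routing through the already-proved equivalence makes this observation unnecessary.)
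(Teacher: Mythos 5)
Your proof is correct and takes essentially the same approach as the paper; the only cosmetic difference is that you route both inequalities through Lemma~\ref{lem.flagging-of-lm.uniprop} (so you need only test $j = i$ and $j = i+1$), whereas the paper works directly from the max-set definition of $b_i$ and shows $\lambda_j - j < \mu_i - i$ for every $j > i$.
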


\subsection{Flagged semistandard tableaux vs. excitations}

Now we get to the core of this section.
There is a connection between flagged semistandard tableaux and excitations.
This connection was first noticed by Kreiman \cite[\S 6]{Kreiman}, who stated its main properties but left them unproved\footnote{Kreiman refers to the excited moves as ``ladder moves''. We suspect that his $\SSYT_{\mu,\lambda}$ is our $\calF\tup{\lambda/\mu}$.}.
Essentially the same properties (in a slightly modified form\footnote{The flagging $\mathbf{f}^{\tup{\lambda/\mu}} = \tup{\mathsf{f}_1, \mathsf{f}_2, \ldots, \mathsf{f}_{\ell\tup{\mu}}}$ in \cite{MPP1} is not the same as our flagging $\bb$, but it is not hard to see that the two flaggings determine the same flagged semistandard tableaux.}) later appeared in \cite[Proposition 3.6]{MPP1} with a sketched proof and in \cite[Section 4]{Konva1} with an assurance of the proof being ``easy''.
We shall give complete proofs of these properties, although they are indeed highly intuitive and easy to the combinatorially trained reader (even as their simplicity gets lost in writing).

\begin{definition}
    \label{def.DD}
    Let $T \in \SSYT\tup{\mu}$ be a semistandard tableau.
    Then:
    
    \begin{enumerate}
    \item[\textbf{(a)}]
    If $c = \tup{i,j}$ is any box in $Y\tup{\mu}$, then we define a new box
    \[
    c_{+T} := \tup{T\tup{i,j}, \ T\tup{i,j}+j-i} \in \ZZ^2 .
    \]
    (Recall that $T\tup{i,j}$ denotes the entry of $T$ in the box $\tup{i,j}$.)

    The box $c_{+T}$ can be equivalently characterized as the unique box in the $T\tup{c}$-th row that lies on the same diagonal as $c$.

    \item[\textbf{(b)}]
    The diagram $\DD\tup{T}$ is defined by
    \begin{align}
        \DD\tup{T} := \set{ c_{+T} \ \mid \ c \in Y\tup{\mu} } .
        \label{eq.def.DD.DDT=}
    \end{align}
    \end{enumerate}
\end{definition}

\begin{example}
    Let $\mu = \tup{4, 2, 1}$, and let $T \in \SSYT\tup{\mu}$ be the following semistandard tableau:
    \[
    T = \ytableaushort{1123,23,4}.
    \]
    Then, the corresponding boxes $c_{+T}$ are
    \begin{align*}
        &
        \tup{1,1}_{+T} = \tup{1,1}, \quad
        \tup{1,2}_{+T} = \tup{1,2}, \quad
        \tup{1,3}_{+T} = \tup{2,4}, \quad
        \tup{1,4}_{+T} = \tup{3,6}, \\
        &
        \tup{2,1}_{+T} = \tup{2,1}, \quad
        \tup{2,2}_{+T} = \tup{3,3}, \quad
        \tup{3,1}_{+T} = \tup{4,2}.
    \end{align*}
    Thus, the diagram $\DD\tup{T}$ is the following collection of boxes (the northwesternmost of which is $\tup{1,1}$):
    \[
    \ytableaushort{{\phantom{\ast}}{\phantom{\ast}},{\phantom{\ast}}\none\none{\phantom{\ast}},\none\none{\phantom{\ast}}\none\none{\phantom{\ast}},\none{\phantom{\ast}}}
    \]
\end{example}

\begin{example}
    Let $\mu=\tup{3,2,1}$ and $\lambda=\tup{4,4,3}$.
    Then, the flagging $\bb$ induced by $\lambda/\mu$ is $\tup{2,3,3,4,5,6,\ldots}$.
    Here are all flagged semistandard tableaux $T \in \calF\tup{\lambda / \mu}$ and the corresponding diagrams $\DD\tup{T}$:
    \[
        \ytableaushort{111,22,3}
        \overset{\mathbf{D}}{\longleftrightarrow}
        \ytableaushort{\ast\ast\ast,\ast\ast,\ast} *{4,4,3}
        \qquad\qquad
        \ytableaushort{111,23,3}
        \overset{\mathbf{D}}{\longleftrightarrow}        \ytableaushort{\ast\ast\ast,\ast,\ast\none\ast} *{4,4,3}
    \]

    \[   
        \ytableaushort{112,22,3}
        \overset{\mathbf{D}}{\longleftrightarrow}
        \ytableaushort{\ast\ast,\ast\ast\none\ast,\ast} *{4,4,3}
        \qquad\qquad
        \ytableaushort{112,23,3}
        \overset{\mathbf{D}}{\longleftrightarrow}        \ytableaushort{\ast\ast,\ast\none\none\ast,\ast\none\ast} *{4,4,3}
    \] 
    
    \[
        \ytableaushort{122,23,3}
        \overset{\mathbf{D}}{\longleftrightarrow}
        \ytableaushort{\ast,\ast\none\ast\ast,\ast\none\ast} *{4,4,3}
    \]
  
\end{example}

We are now ready to present the relevant properties of excitations.
However, for the ease of their proofs, we first state two technical lemmas about semistandard tableaux.
The first lemma characterizes the cases in which two boxes of $\DD\tup{T}$ are adjacent:

\begin{lemma}
\label{lem.ssyt.neighbors}
Let $\mu$ be a partition. Let $T\in
\operatorname*{SSYT}\left(  \mu\right)  $ be a semistandard tableau. Let $c$
and $d$ be two boxes in $Y\left(  \mu\right)  $.

\begin{enumerate}
\item[\textbf{(a)}] If $d_{+T}=c_{+T}$, then $d=c$.

\item[\textbf{(b)}] If $d_{+T}=\left(  c_{+T}\right)  _{\rightarrow}$, then
$d=c_{\rightarrow}$ and $T\left(  d\right)  =T\left(  c\right)  $.

\item[\textbf{(c)}] If $d_{+T}=\left(  c_{+T}\right)  _{\downarrow}$, then
$d=c_{\downarrow}$ and $T\left(  d\right)  =T\left(  c\right)  +1$.

\item[\textbf{(d)}] If $d_{+T}=\left(  c_{+T}\right)  _{\searrow}$, then
$d=c_{\searrow}$ and $T\left(  d\right)  =T\left(  c\right)  +1$.
\end{enumerate}
\end{lemma}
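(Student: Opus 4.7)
The plan is to unpack the definition of $c_{+T}$ and then invoke Lemma \ref{lem.ssyt.geq}\textbf{(b)} to rule out all positions of $d$ except the one claimed. Writing $c = \tup{i,j}$ and $d = \tup{u,v}$, observe that $c_{+T} = \tup{T\tup{c},\ T\tup{c}+j-i}$ lies in row $T\tup{c}$ on the $\tup{j-i}$-th diagonal. Hence each of the hypotheses in parts \textbf{(a)}--\textbf{(d)} translates into a pair of numerical equations: one giving $T\tup{d}$ in terms of $T\tup{c}$ (either equal, or one greater), and one giving the diagonal $v-u$ in terms of $j-i$ (equal; one greater; one less; or equal; respectively). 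It therefore suffices to pin down the row index $u$ of $d$ to the value predicted by the claim; then $v$ is forced by the diagonal equation, and the assertion about $T\tup{d}$ is already built into the hypothesis.

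The key tool is Lemma \ref{lem.ssyt.geq}\textbf{(b)}, which after rearrangement reads
\[
T\tup{d} - T\tup{c} \geq u - i \qquad \text{whenever } i \leq u \text{ and } j \leq v,
\]
together with the symmetric statement obtained by swapping $c$ and $d$. For each of the four parts, I would case-split on the sign of $u - i$; the diagonal equation then determines the sign of $v - j$, so that either $c$ and $d$ are componentwise comparable with $c \leq d$, or with $d \leq c$, and one of the two forms of the lemma applies. Combining the resulting inequality with the fact that $T\tup{d} - T\tup{c} \in \set{0, 1}$ squeezes $u - i$ to the unique value claimed, after which $v$ is fixed by the diagonal equation and the desired form of $d$ follows.

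The main subtlety lies in parts \textbf{(b)} and \textbf{(c)}, where $v$ can equal $j$ for a non-obvious choice of $u$. For instance, in part \textbf{(b)} the subcase $u = i - 1$ yields $v = j$, so $c$ and $d$ are comparable with $d \preceq c$ and the swapped form of the lemma is what rules it out; in part \textbf{(c)} the subcase $u = i + 1$ yields $v = j$, which is in fact the desired outcome $d = c_{\downarrow}$. Keeping track of which form of Lemma \ref{lem.ssyt.geq}\textbf{(b)} applies in these borderline configurations, and noting that in part \textbf{(c)} the $v = j-1$ subcase can be dispatched directly by the row-wise weak inequality of a semistandard tableau, is where the proof demands the most care. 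Once this bookkeeping is settled, each of the four parts reduces to a short arithmetic check, and the accompanying claim on $T\tup{d}$ is read off directly from the hypothesis.
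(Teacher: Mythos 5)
Your plan is correct and matches the paper's proof essentially step for step: unpack $c_{+T}$ and $d_{+T}$ into two coordinate equations, derive the relation between $v-u$ and $j-i$, split on the sign of $u-i$, and use Lemma \ref{lem.ssyt.geq}\textbf{(b)} (in the given or swapped form) to pin down $u$, after which $v$ and $T(d)$ are forced. The only cosmetic difference is that in part \textbf{(c)} you offer a direct appeal to the row-weak-increase axiom for the $u=i$ subcase, whereas the paper folds that subcase uniformly into the $u\leq i$ branch and invokes the lemma there too.
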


The next technical lemma (which is nearly trivial) says that if we change a single entry in a semistandard tableau $S$ (while preserving its semistandardness), then the diagram $\DD\tup{S}$ changes just by a single box:

\begin{lemma}
\label{lem.DS-vs-DT}
Let $\left(  i,j\right)  \in Y\left(  \mu\right)  $ be a
box. Let $T\in\operatorname*{SSYT}\left(  \mu\right)  $ and $S\in
\operatorname*{SSYT}\left(  \mu\right)  $ be two semistandard tableaux. Assume
that%
\begin{equation}
T\left(  c\right)  =S\left(  c\right)  \ \ \ \ \ \ \ \ \ \ \text{for all }c\in
Y\left(  \mu\right)  \text{ distinct from }\left(  i,j\right)
.\label{eq.lem.DS-vs-DT.ass}%
\end{equation}
Then, the diagram $\mathbf{D}\left(  T\right)  $ can be obtained from
$\mathbf{D}\left(  S\right)  $ by replacing the box $\left(  i,j\right)
_{+S}$ by the box $\left(  i,j\right)  _{+T}$.
\end{lemma}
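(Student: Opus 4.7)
The plan is to unfold the definition of $\DD$ directly. The key observation is that the box $c_{+T}$ depends only on the box $c$ and on the single entry $T(c)$: by Definition~\ref{def.DD}\textbf{(a)}, we have $c_{+T} = \tup{T(i,j),\ T(i,j)+j-i}$ whenever $c = (i,j)$. Thus, from the hypothesis $T(c) = S(c)$ for all $c \neq (i,j)$, I would first conclude $c_{+T} = c_{+S}$ for every such $c$, so that the image sets $\DD(T)$ and $\DD(S)$ agree on all contributions coming from boxes other than $(i,j)$; only the contribution of $(i,j)$ itself can differ.

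To upgrade this into the precise ``replace one box by another'' statement, I would appeal to Lemma~\ref{lem.ssyt.neighbors}\textbf{(a)}, which tells us that the map $c \mapsto c_{+T}$ from $Y(\mu)$ to $\ZZ^2$ is injective (and the same holds for $S$). This guarantees that $(i,j)_{+S}$ does not coincide with any $c_{+S}$ for $c \neq (i,j)$, and likewise $(i,j)_{+T}$ is distinct from all other $c_{+T}$. Therefore both $\DD(S)$ and $\DD(T)$ decompose as disjoint unions
\begin{align*}
\DD(S) &= \set{(i,j)_{+S}} \ \sqcup\ \set{c_{+S} \ \mid\ c \in Y(\mu),\ c \neq (i,j)}, \\
\DD(T) &= \set{(i,j)_{+T}} \ \sqcup\ \set{c_{+T} \ \mid\ c \in Y(\mu),\ c \neq (i,j)}.
\end{align*}

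By the first observation the ``common'' pieces of these two displayed unions coincide as sets, so subtracting $(i,j)_{+S}$ from $\DD(S)$ and adding $(i,j)_{+T}$ yields exactly $\DD(T)$, which is the claim. I expect no real obstacle here: the only nontrivial input is the injectivity of $c \mapsto c_{+T}$ provided by Lemma~\ref{lem.ssyt.neighbors}\textbf{(a)}, and the remainder is careful bookkeeping of disjoint unions. One minor subtlety worth mentioning is the degenerate case $T(i,j) = S(i,j)$, in which $(i,j)_{+T} = (i,j)_{+S}$ and the ``replacement'' is vacuous; but this case is automatically absorbed by the same argument, so it needs no separate treatment.
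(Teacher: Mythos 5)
Your proposal is correct and takes essentially the same approach as the paper: both observe that $c_{+T}$ depends only on $c$ and $T(c)$ (so $c_{+T}=c_{+S}$ for $c\neq(i,j)$), invoke Lemma~\ref{lem.ssyt.neighbors}\textbf{(a)} for injectivity of $c\mapsto c_{+T}$, and then compare the resulting disjoint decompositions of $\DD(T)$ and $\DD(S)$.
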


We can now connect excitations of $Y\tup{\mu}$ with semistandard tableaux:

\begin{lemma}
\label{transition_wd}
    Let $T \in \SSYT\tup{\mu}$.
    Then:
    \begin{enumerate}
        
        \item[\textbf{(a)}] The diagram $\DD\tup{T}$ is an excitation of the diagram $Y\tup{\mu}$.
        
        \item[\textbf{(b)}] We have
        \[
        \prod_{\tup{i,j} \in \DD\tup{T}} \tup{x_{i}+y_{j}}
        = \prod_{\tup{i,j} \in Y\tup{\mu}} \tup{x_{T\tup{i,j}}+y_{T\tup{i,j}+j-i}} .
        \]
        
        \item[\textbf{(c)}] We have $\DD\tup{T} \in \calE\tup{\lambda / \mu}$ if and only if $T \in \calF\tup{\lambda/\mu}$.
    
    \end{enumerate}
\end{lemma}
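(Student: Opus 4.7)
I address the three parts in the order \textbf{(b)}, \textbf{(a)}, \textbf{(c)}, since \textbf{(b)} is purely formal and \textbf{(c)} depends on \textbf{(a)}.

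Part \textbf{(b)} is a reindexing. Lemma \ref{lem.ssyt.neighbors}(a) tells us that the map $c \mapsto c_{+T}$ from $Y(\mu)$ to $\DD(T)$ is injective, so by \eqref{eq.def.DD.DDT=} it is a bijection. Substituting $c_{+T} = (T(i,j),\,T(i,j)+j-i)$ for each $c = (i,j) \in Y(\mu)$ reindexes the product over $\DD(T)$ as the asserted product.

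For part \textbf{(a)}, I induct on $N(T) := \sum_{(i,j) \in Y(\mu)} (T(i,j) - i)$, which is nonnegative by Lemma \ref{lem.ssyt.geq}(a). The base case $N(T) = 0$ forces $T(i,j) = i$ at every box, whence $\DD(T) = Y(\mu)$ is an excitation of itself. For the inductive step, set $M := \max_{(i,j) \in Y(\mu)} (T(i,j) - i) \geq 1$ and pick $(i,j) \in Y(\mu)$ realizing $T(i,j) - i = M$, taking $i$ minimal and then $j$ minimal. Define $S$ to agree with $T$ everywhere except $S(i,j) := T(i,j) - 1$. Minimality of $i$ gives $T(i-1, j) \leq T(i,j) - 2$ when $(i-1, j) \in Y(\mu)$ (since every box in row $i-1$ has excess $< M$); minimality of $j$ gives $T(i, j-1) < T(i,j)$ when $(i, j-1) \in Y(\mu)$; together with semistandardness of $T$, these verify $S \in \SSYT(\mu)$ with $N(S) = N(T) - 1$. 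Now Lemma \ref{lem.DS-vs-DT} yields $\DD(T) = (\DD(S) \setminus \{(i,j)_{+S}\}) \cup \{(i,j)_{+T}\}$, and a direct computation confirms $(i,j)_{+T} = ((i,j)_{+S})_{\searrow}$. To promote this to a valid excited move $\exc_{(i,j)_{+S}} \DD(S) = \DD(T)$, one must show that $\DD(S)$ avoids the three points $((i,j)_{+S})_{\to}$, $((i,j)_{+S})_{\downarrow}$, $(i,j)_{+T}$. For each such forbidden point, any candidate preimage $(i', j') \in Y(\mu)$ lies on one specific diagonal, and a short case analysis using Lemma \ref{lem.ssyt.geq}(b) together with the excess bounds supplied by the minimality of $(i,j)$ forces $S(i',j')$ strictly away from the required first coordinate. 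The induction hypothesis applied to $S$, followed by the newly justified excited move, yields the claim.

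For part \textbf{(c)}, by (a) the set $\DD(T)$ is automatically an excitation of $Y(\mu)$, so $\DD(T) \in \calE(\lambda/\mu)$ iff $\DD(T) \subseteq Y(\lambda)$, iff $\lambda_{T(i,j)} - T(i,j) \geq j - i$ for all $(i,j) \in Y(\mu)$. Meanwhile, $T \in \calF(\lambda/\mu)$ means $T(i,j) \leq b_i$ for every such box, equivalent by Lemma \ref{lem.flagging-of-lm.uniprop} (applied with $T(i,j)$ in place of $j$) to $\lambda_{T(i,j)} - T(i,j) \geq \mu_i - i$. Since $j \leq \mu_i$, one direction is immediate; the converse uses that $\lambda_k - k$ is strictly decreasing in $k$ (as $\lambda$ is weakly decreasing) and that $T$ is weakly increasing along each row, so the inequality at the rightmost box $(i, \mu_i)$ propagates to every $j$ in the row. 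The main obstacle of the whole proof is the neighbor check in (a): diagonal-by-diagonal ruling out each of the three forbidden positions via Lemma \ref{lem.ssyt.geq}(b) and the minimality of $(i,j)$, nothing conceptually deeper.
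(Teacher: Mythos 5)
The proposal is correct, and the overall strategy for part \textbf{(a)} (induct on the total excess/weight of $T$, decrement a single entry to get a semistandard $S$ with smaller weight, apply Lemma~\ref{lem.DS-vs-DT}, and verify that the resulting one-box replacement is an excited move) matches the paper. The implementation of \textbf{(a)} differs in two places: you decrement a box achieving \emph{maximum} excess $T(i,j)-i$ (minimizing $i$, then $j$), whereas the paper decrements an ``interesting'' box with $T(i,j)>i$ of smallest $i+j$; and you verify the three forbidden-neighbor conditions via Lemma~\ref{lem.ssyt.geq}\textbf{(b)} together with the extremality of $(i,j)$, whereas the paper applies Lemma~\ref{lem.ssyt.neighbors}\textbf{(b)}--\textbf{(d)} to $S$ directly. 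I checked the diagonal-by-diagonal case analysis you sketch and it does close. Note, though, that the paper's route is more economical: Lemma~\ref{lem.ssyt.neighbors} applied to $S$ already pins the unique candidate preimage $(i',j')$ to one of $(i,j+1)$, $(i+1,j)$, $(i+1,j+1)$ and determines $S(i',j')$, at which point the contradiction comes solely from the semistandardness of $T$; no appeal to the extremal choice of $(i,j)$ is needed at this stage. So the extremality is really only required to make $S$ semistandard in the first place, and you could drop the excess bookkeeping in the neighbor check by using Lemma~\ref{lem.ssyt.neighbors} there as well (as you already do in part \textbf{(b)}). Parts \textbf{(b)} and \textbf{(c)} match the paper essentially verbatim; your invocation of the strict monotonicity of $\lambda_k-k$ in \textbf{(c)} is superfluous, since once $T(i,\mu_i)\le b_i$ is established, the chain $T(i,j)\le T(i,\mu_i)\le b_i$ finishes it.
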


\begin{lemma}
\label{transition_formula}
    The map
    \begin{align*}
        \SSYT\tup{\mu} &\to \set{\text{all excitations of $Y\tup{\mu}$}}, \\
        T &\mapsto \DD\tup{T}
    \end{align*}
    is well-defined and is a bijection.
\end{lemma}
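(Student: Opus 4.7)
The plan is to establish the two sides of the bijection separately; well-definedness of the map is already Lemma~\ref{transition_wd}(a), so only injectivity and surjectivity remain.

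For injectivity, I would exploit the diagonal structure. Fix a diagonal $d \in \ZZ$. The boxes of $Y\tup{\mu}$ lying on diagonal $d$ form a chain $\tup{i_1, i_1+d}, \tup{i_1+1, i_1+d+1}, \ldots, \tup{i_2, i_2+d}$ for some $i_1 \leq i_2$ (since $\mu$ is a partition). Along this chain, combining the weak row-increase with the strict column-increase gives $T\tup{i,i+d} < T\tup{i+1,i+d+1}$ (or one can cite Lemma~\ref{lem.ssyt.geq}(b) and note the gap is strict because of the strict column-increase in any intermediate step). Because each box $\tup{i,i+d}_{+T} = \tup{T\tup{i,i+d},\ T\tup{i,i+d}+d}$ also lies on diagonal $d$ with row equal to $T\tup{i,i+d}$, the boxes of $\DD\tup{T}$ on diagonal $d$ are in SE-order the sequence of rows $T\tup{i_1,i_1+d} < T\tup{i_1+1,i_1+d+1} < \cdots$. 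So if $\DD\tup{T} = \DD\tup{S}$, reading off these strictly increasing sequences diagonal-by-diagonal forces $T\tup{i,j} = S\tup{i,j}$ for every $\tup{i,j} \in Y\tup{\mu}$.

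For surjectivity, I would induct on the number $n$ of excited moves needed to produce the excitation $E$ from $Y\tup{\mu}$. For $n = 0$, take $T_0 \in \SSYT\tup{\mu}$ with $T_0\tup{i,j} = i$; then $\tup{i,j}_{+T_0} = \tup{i,j}$, so $\DD\tup{T_0} = Y\tup{\mu}$. For the inductive step, write $E = \exc_c E'$ with $E'$ reachable in $n-1$ moves. By the inductive hypothesis, $E' = \DD\tup{T}$ for some $T \in \SSYT\tup{\mu}$. By Lemma~\ref{lem.ssyt.neighbors}(a), there is a unique $\tup{i,j} \in Y\tup{\mu}$ with $c = \tup{i,j}_{+T}$. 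Define $T'$ by $T'\tup{i,j} := T\tup{i,j}+1$ and $T'\tup{p,q} := T\tup{p,q}$ otherwise; then $\tup{i,j}_{+T'} = c_{\searrow}$, and Lemma~\ref{lem.DS-vs-DT} yields $\DD\tup{T'} = \tup{\DD\tup{T} \setminus \set{c}} \cup \set{c_{\searrow}} = E$.

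The main obstacle is verifying that this $T'$ is still semistandard, since that is the one place where the hypotheses of the excited move $\exc_c$ actually get used. The only potentially violated inequalities are $T'\tup{i,j} \leq T'\tup{i,j+1}$ (if $\tup{i,j+1} \in Y\tup{\mu}$) and $T'\tup{i,j} < T'\tup{i+1,j}$ (if $\tup{i+1,j} \in Y\tup{\mu}$). A violation of the first means $T\tup{i,j+1} = T\tup{i,j}$, which gives $\tup{i,j+1}_{+T} = c_{\to}$, so $c_{\to} \in \DD\tup{T} = E'$, contradicting the applicability of $\exc_c$. A violation of the second means $T\tup{i+1,j} = T\tup{i,j}+1$, which gives $\tup{i+1,j}_{+T} = c_{\downarrow}$, again contradicting $c_{\downarrow} \notin E'$. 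Thus $T'$ is semistandard, completing the induction and the proof.
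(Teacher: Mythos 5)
Your proof is correct. The well-definedness is Lemma~\ref{transition_wd} \textbf{(a)}, and your surjectivity argument is essentially the paper's (Claim~2 in its proof of Lemma~\ref{transition_formula}): induct on the number of excited moves, pull the penultimate diagram $E'$ back to a tableau $T$, locate $(i,j)$ with $(i,j)_{+T} = c$, increment $T(i,j)$ by one, and check semistandardness by noting that a row violation would force $c_{\to} \in E'$ while a column violation would force $c_{\downarrow} \in E'$, contradicting the applicability of $\exc_c$; Lemma~\ref{lem.DS-vs-DT} then identifies $\DD(T')$ with $E$. Your injectivity argument, however, takes a genuinely different route. The paper's Claim~1 selects a disagreement box $(i,j)$ with minimal row index, assumes WLOG $S(i,j) < T(i,j)$, writes $(i,j)_{+S} = (u,v)_{+T}$, and derives a contradiction by a case split on $u < i$ versus $u \geq i$ (using Lemma~\ref{lem.ssyt.geq}~\textbf{(b)} and the minimality of $i$). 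You instead read $\DD(T)$ one diagonal at a time: $(i,i+d)_{+T}$ stays on diagonal $d$ with row $T(i,i+d)$, and along the contiguous chain of boxes of $Y(\mu)$ on that diagonal the entries $T(i,i+d)$ are strictly increasing (Lemma~\ref{lem.ssyt.geq}~\textbf{(b)} with $u - i = 1$), so the sorted list of rows of $\DD(T)$ on diagonal $d$ recovers $T$ on that diagonal. This is shorter and more structural --- it avoids the WLOG and the two-case analysis, and makes explicit that $T$ and $\DD(T)$ carry the same diagonal-by-diagonal data --- whereas the paper's argument is more local, needing only a single offending box rather than a global view of a diagonal.
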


\begin{lemma}
\label{transition_formula_flagged}
    The map
    \begin{align*}
        \calF(\lambda/\mu) &\to \calE(\lambda/\mu), \\
        T &\mapsto \DD\tup{T}
    \end{align*}
    is well-defined and is a bijection.
\end{lemma}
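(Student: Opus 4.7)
The plan is to deduce this lemma directly from the two immediately preceding results, since essentially all the work has already been done. Lemma \ref{transition_formula} gives me a bijection
\[
\Phi : \SSYT\tup{\mu} \to \set{\text{all excitations of } Y\tup{\mu}}, \qquad T \mapsto \DD\tup{T},
\]
and Lemma \ref{transition_wd}(c) tells me the condition for $T \in \SSYT\tup{\mu}$ to land in $\calE\tup{\lambda/\mu}$ under $\Phi$: namely, $\Phi\tup{T} \in \calE\tup{\lambda/\mu}$ if and only if $T \in \calF\tup{\lambda/\mu}$. So my strategy is just to restrict $\Phi$ to the subset $\calF\tup{\lambda/\mu} \subseteq \SSYT\tup{\mu}$.

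I would verify the three necessary properties in sequence. First, well-definedness: for $T \in \calF\tup{\lambda/\mu}$, Lemma \ref{transition_wd}(c) says $\DD\tup{T} \in \calE\tup{\lambda/\mu}$, so the restricted map indeed lands in $\calE\tup{\lambda/\mu}$. Second, injectivity: this is inherited from the injectivity of $\Phi$ established in Lemma \ref{transition_formula}. Third, surjectivity: given any $E \in \calE\tup{\lambda/\mu}$, it is in particular an excitation of $Y\tup{\mu}$, so by Lemma \ref{transition_formula} there is a unique $T \in \SSYT\tup{\mu}$ with $\DD\tup{T} = E$; and since $\DD\tup{T} = E \in \calE\tup{\lambda/\mu}$, applying the ``if'' direction of Lemma \ref{transition_wd}(c) gives $T \in \calF\tup{\lambda/\mu}$, so this $T$ lies in the correct domain.

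There is essentially no obstacle here: Lemma \ref{transition_formula} and Lemma \ref{transition_wd}(c) combine to yield the claim immediately, because the condition cutting out $\calF\tup{\lambda/\mu}$ inside $\SSYT\tup{\mu}$ has been engineered to match exactly the condition cutting out $\calE\tup{\lambda/\mu}$ inside the set of all excitations of $Y\tup{\mu}$. All the real content — the fact that $\DD$ is a bijection between semistandard tableaux and excitations, and the equivalence characterizing the flagging — is already established before this lemma is stated. Thus the proof amounts to a few lines noting that the restriction of a bijection to the preimage of a subset is a bijection onto that subset.
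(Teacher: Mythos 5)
Your proposal is correct and follows the same route as the paper: both restrict the bijection of Lemma~\ref{transition_formula} to the subset $\calF\tup{\lambda/\mu}$ and invoke the equivalence in Lemma~\ref{transition_wd}~\textbf{(c)} to see that this subset is mapped bijectively onto $\calE\tup{\lambda/\mu}$, checking well-definedness, injectivity (inherited) and surjectivity exactly as you do.
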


As a consequence of the above, we can rewrite the polynomial $\mathbf{s}_\lambda\ive{\mu}$ from Definition~\ref{def.snu} in terms of flagged semistandard tableaux:

\begin{corollary}
\label{corollaryFlaggedExc}
We have
\[
    \mathbf{s}_\lambda\ive{\mu} = \sum_{T \in \mathcal{F}(\lambda/\mu)}\ \ \prod_{\tup{i,j} \in Y\tup{\mu}} \tup{x_{T\tup{i,j}}+y_{T\tup{i,j}+j-i}} .
\]
\end{corollary}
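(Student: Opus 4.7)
The plan is to derive the corollary directly by substituting the bijection from Lemma~\ref{transition_formula_flagged} into the definition of $\mathbf{s}_\lambda\ive{\mu}$ and then applying the product-reindexing identity of Lemma~\ref{transition_wd}\textbf{(b)}. All the real work is already done in the preceding lemmas; what remains is a one-line reindexing argument.

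More precisely, I would start from the definition
\[
\mathbf{s}_\lambda\ive{\mu} = \sum_{D \in \mathcal{E}(\lambda/\mu)} \ \prod_{(i,j)\in D} (x_i + y_j).
\]
By Lemma~\ref{transition_formula_flagged}, the assignment $T \mapsto \DD(T)$ is a bijection from $\calF(\lambda/\mu)$ to $\mathcal{E}(\lambda/\mu)$, so I can reindex the outer sum as a sum over $T \in \calF(\lambda/\mu)$, with $D = \DD(T)$:
\[
\mathbf{s}_\lambda\ive{\mu} = \sum_{T \in \calF(\lambda/\mu)} \ \prod_{(i,j)\in \DD(T)} (x_i + y_j).
\]
Lemma~\ref{transition_wd}\textbf{(b)} then identifies the inner product with $\prod_{(i,j) \in Y(\mu)} (x_{T(i,j)} + y_{T(i,j)+j-i})$, yielding the claimed formula.

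The one subtlety to address is the edge case where $\mu$ is a partition but $\mu \not\subseteq \lambda$, so that Definition~\ref{def.snu} still gives the sum-over-excitations form. In that case Lemma~\ref{lem.exc.empty} makes $\mathcal{E}(\lambda/\mu) = \varnothing$, and the bijection of Lemma~\ref{transition_formula_flagged} forces $\calF(\lambda/\mu) = \varnothing$ as well, so both sides are empty sums and the equality holds trivially. Since the proof is essentially a two-step rewrite, there is no real obstacle — the main intellectual content was encapsulated in Lemmas~\ref{transition_wd} and~\ref{transition_formula_flagged}, and this corollary simply packages their output into the flagged-tableau formulation that will be needed for later developments.
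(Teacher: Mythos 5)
Your proposal is correct and matches the paper's own proof essentially step for step: reindex the sum via the bijection $T \mapsto \DD(T)$ of Lemma~\ref{transition_formula_flagged}, then rewrite the inner product with Lemma~\ref{transition_wd}\textbf{(b)}. The edge-case remark about $\mu \not\subseteq \lambda$ is harmless but not needed as a separate case, since the bijection argument already handles the empty-sum situation automatically.
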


\section{\label{sec.habc}The $h\left(  a,b,c\right)  $ polynomials}

Now we introduce three further pieces of notation.

\begin{definition}
    The \emph{Iverson bracket} is the function that assigns to each statement its truth value (i.e., the number $1$ if the statement is true and $0$ otherwise). More formally: If $X$ is a statement, then
    \[
    \left[ X \right]=        
    \begin{cases}
        1, & \text{if $X$ is true;}
        \\
        0, & \text{if $X$ is false.}
    \end{cases} 
    \]
\end{definition}

For example, $[2 = 3] = 0$ and $[5 \ne 1] = 1$.

\begin{definition}
    If $N$ is any integer, then $\ive{N}$ will denote the set $\set{1, 2, \ldots, N}$. This set is empty when $N \leq 0$.
\end{definition}

\begin{definition}
\label{defh}
\ \ \ \ %

\begin{enumerate}

\item[\textbf{(a)}]
Let $R$ be the polynomial ring over $\ZZ$ in countably many commuting indeterminates
\begin{align*}
    x_1, x_2, x_3, \ldots, \\
    y_1, y_2, y_3, \ldots .
\end{align*}

\item[\textbf{(b)}]
We furthermore set $x_i = 0$ and $y_i = 0$ for any integer $i \leq 0$. Thus, $x_i$ and $y_i$ are defined for any integer $i$.

\item[\textbf{(c)}]
For all $a,c \in \ZZ$ and $b \in \NN$, we set 
\begin{align*}
h\tup{a,b,c}
:= 
\sum_{\substack{\tup{i_1,i_2,\ldots,i_a} \in \ive{b}^a; \\  i_1 \leq i_2 \leq \cdots \leq i_a}}
\ \ \prod_{j=1}^a \tup{x_{i_j} + y_{i_j+\tup{j-1}+c}}.
\end{align*}
This is a polynomial in $R$, and will be called an \emph{$h$-polynomial}.

\end{enumerate}
\end{definition}

It is clear that $h\tup{0,b,c} = 1$ for any $b$ and $c$ (since the set $[b]^0$ has only one element). We furthermore understand $h\tup{a,b,c}$ to be $0$ when $a < 0$. Thus, we have
\begin{equation}
h(a,b,c) = \ive{a=0} \qquad \text{whenever }a\leq0 .
\label{eq.h.triv.aleq0}
\end{equation}
But we also have
\[
h(a,b,c) = 0 \qquad \text{whenever }a>0 \text{ and } b = 0
\]
(since $\ive{b}^a = \ive{0}^a = \varnothing^a = \varnothing$ in this case).
Combining these two equalities, we obtain
\begin{equation}
h(a,b,c) = \ive{a=0} \qquad \text{whenever }b=0 .
\label{eq.h.triv.b=0}
\end{equation}

Another simple example of $h$-polynomials are the $h\tup{1,b,c}$:

\begin{lemma}
    \label{lem.h1bc}
    Let $c \in \ZZ$ and $b \in \NN$. Then,
    \[
    h\tup{1,b,c} = \sum_{i=1}^b x_i + \sum_{j=c+1}^{c+b} y_j.
    \]
\end{lemma}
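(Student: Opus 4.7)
The plan is to simply unwind the definition of $h\tup{1,b,c}$ from Definition \ref{defh}. For $a = 1$, the tuples $\tup{i_1, i_2, \ldots, i_a} \in \ive{b}^a$ reduce to singletons $\tup{i_1}$ with $i_1 \in \ive{b}$, and the ordering condition $i_1 \leq i_2 \leq \cdots \leq i_a$ is vacuous. The product over $j \in \ive{1}$ collapses to a single factor with $j = 1$, namely $x_{i_1} + y_{i_1 + 0 + c} = x_{i_1} + y_{i_1 + c}$.

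Hence, directly from the definition,
\[
h\tup{1,b,c} = \sum_{i_1 = 1}^{b} \tup{x_{i_1} + y_{i_1 + c}} = \sum_{i_1 = 1}^{b} x_{i_1} + \sum_{i_1 = 1}^{b} y_{i_1 + c}.
\]
The first sum on the right already has the desired form $\sum_{i=1}^{b} x_i$. For the second sum, I would simply reindex by setting $j := i_1 + c$; as $i_1$ ranges over $\set{1, 2, \ldots, b}$, the new variable $j$ ranges over $\set{c+1, c+2, \ldots, c+b}$, giving $\sum_{j = c+1}^{c+b} y_j$. Combining these yields the claim.

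There is no real obstacle here — the only mildly subtle point is remembering the convention that $y_j = 0$ when $j \leq 0$, which ensures the reindexed sum $\sum_{j=c+1}^{c+b} y_j$ behaves correctly (with some terms vanishing) when $c$ is negative, consistent with the original sum $\sum_{i_1=1}^{b} y_{i_1 + c}$. So the proof is essentially a one-line unfolding of definitions followed by reindexing.
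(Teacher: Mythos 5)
Your proof is correct and follows essentially the same route as the paper's: unfold Definition~\ref{defh}~\textbf{(c)} at $a=1$, split the resulting sum, and reindex the $y$-sum by $j = i_1 + c$. (One small remark: the reindexing $j := i_1 + c$ is an exact substitution, so the convention $y_j = 0$ for $j \leq 0$ is not actually needed to justify the equality — it merely makes both sides well-defined.)
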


Let us now derive three recursive formulas for $h$-polynomials.

\begin{lemma}
\label{1hprop}
For all integers $a$ and $c$ and all positive integers $b$, we have
\begin{align}
h\tup{a,b,c} = \tup{x_b+y_{a+b+c-1}} \cdot h\tup{a-1,b,c} + h\tup{a,b-1,c}.
\end{align}
\end{lemma}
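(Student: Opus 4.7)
My plan is to prove the recursion by a simple case split on whether the largest index $i_a$ in the summation equals $b$ or is strictly smaller.

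First I would dispose of the trivial cases where $a \le 0$. If $a < 0$, both sides vanish: the LHS by convention, and on the RHS, $h(a-1,b,c) = 0$ and $h(a,b-1,c) = 0$ (the latter by \eqref{eq.h.triv.aleq0} if $b \ge 2$, or by \eqref{eq.h.triv.b=0} if $b=1$). If $a = 0$, then the LHS is $1$, while on the RHS the first summand vanishes (since $h(-1,b,c) = 0$) and $h(0,b-1,c) = 1$, matching. So we may henceforth assume $a \ge 1$.

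For $a \ge 1$, I would split the sum in Definition~\ref{defh}\textbf{(c)} according to the value of $i_a$. Since the tuple $(i_1, \ldots, i_a) \in [b]^a$ is weakly increasing, we have $i_a \in [b]$, and either $i_a = b$ or $i_a \le b-1$. This gives
\[
h(a,b,c) = \sum_{\substack{(i_1,\ldots,i_a) \in [b]^a;\\ i_1 \le \cdots \le i_a = b}} \prod_{j=1}^a (x_{i_j} + y_{i_j + (j-1) + c}) + \sum_{\substack{(i_1,\ldots,i_a) \in [b-1]^a;\\ i_1 \le \cdots \le i_a}} \prod_{j=1}^a (x_{i_j} + y_{i_j + (j-1) + c}).
\]
The second summand is exactly $h(a,b-1,c)$ by definition. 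For the first summand, fixing $i_a = b$ forces the last factor in the product to equal $x_b + y_{b + (a-1) + c} = x_b + y_{a+b+c-1}$, which I pull out of the sum. What remains is a sum over weakly increasing $(i_1, \ldots, i_{a-1}) \in [b]^{a-1}$ of the product $\prod_{j=1}^{a-1}(x_{i_j} + y_{i_j + (j-1) + c})$, and this is precisely $h(a-1,b,c)$. (The constraint $i_{a-1} \le i_a = b$ is automatically satisfied since $i_{a-1} \in [b]$.)

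Combining these two pieces yields the claimed recursion. There is no real obstacle here; the only point that needs minor care is ensuring the factorization in the $i_a = b$ case correctly produces the subscript $a+b+c-1$ on $y$, and that the case $a = 1$ (where the remaining inner sum is the empty product $h(0,b,c) = 1$) works as expected — it does, giving $h(1,b,c) = x_b + y_{b+c} + h(1,b-1,c)$, which matches Lemma~\ref{lem.h1bc}.
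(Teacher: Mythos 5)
Your proof is correct and follows essentially the same route as the paper: handle the trivial cases $a\le 0$ via the convention \eqref{eq.h.triv.aleq0}, then for $a\ge 1$ split the defining sum according to whether $i_a=b$ or $i_a<b$, factoring out $x_b+y_{a+b+c-1}$ from the first piece. Your treatment of the $a\le 0$ cases is in fact slightly more careful than the paper's (which only states the $a=0$ identity explicitly), but the substance is identical.
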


\begin{lemma}
\label{3hprop}
For all integers $a$ and $c$ and all nonnegative integers $b$, we have
\begin{align}
h\tup{a, b, c} - h\tup{a, b, c-1}
= \tup{y_{a+b+c-1} - y_c}\cdot h\tup{a-1, b, c}.
\label{eq.3hprop.eq}
\end{align}
\end{lemma}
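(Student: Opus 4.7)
The plan is to prove Lemma \ref{3hprop} by strong induction on $a+b$, with the base case covering both $a \leq 0$ and $b = 0$. In these base cases the identity degenerates: if $a \leq 0$, both sides are visibly zero since $h(a,b,c) = h(a,b,c-1) = \ive{a=0}$ and $h(a-1,b,c) = 0$; if $b = 0$ and $a \geq 1$, both $h(a,0,c)$ and $h(a,0,c-1)$ vanish by \eqref{eq.h.triv.b=0}, and on the right the factor $h(a-1,0,c) = \ive{a=1}$ restricts to $a=1$, where the coefficient $y_{a+b+c-1} - y_c = y_c - y_c$ already kills the right-hand side.

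For the inductive step I would assume $a \geq 1$ and $b \geq 1$, and apply Lemma \ref{1hprop} to both $h(a,b,c)$ and $h(a,b,c-1)$. Subtracting gives
\begin{align*}
h(a,b,c) - h(a,b,c-1) = {} & (x_b + y_{a+b+c-1}) h(a-1,b,c) - (x_b + y_{a+b+c-2}) h(a-1,b,c-1) \\
& {} + \bigl[ h(a,b-1,c) - h(a,b-1,c-1) \bigr] .
\end{align*}
Both pairs $(a-1,b)$ and $(a,b-1)$ have smaller $a+b$, so the induction hypothesis turns the bracket into $(y_{a+b+c-2} - y_c) h(a-1,b-1,c)$ and lets me rewrite $h(a-1,b,c-1)$ as $h(a-1,b,c) - (y_{a+b+c-2} - y_c) h(a-2,b,c)$.

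After substituting these two reductions and regrouping, the terms involving $x_b + y_{a+b+c-2}$ collect into $(y_{a+b+c-2} - y_c) \bigl[ (x_b + y_{a+b+c-2}) h(a-2,b,c) + h(a-1,b-1,c) \bigr]$. By Lemma \ref{1hprop} applied to $h(a-1,b,c)$ itself, the expression in brackets is exactly $h(a-1,b,c)$. The remaining leftover of the $x_b + y_{a+b+c-1}$ and $x_b + y_{a+b+c-2}$ terms combines by the telescoping $(y_{a+b+c-1} - y_{a+b+c-2}) + (y_{a+b+c-2} - y_c) = y_{a+b+c-1} - y_c$, yielding the claimed right-hand side $(y_{a+b+c-1} - y_c) h(a-1,b,c)$.

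The main obstacle here is purely organizational: there are several additively similar subscripts ($a+b+c-1$ versus $a+b+c-2$ versus $c$) that must be tracked carefully, and one has to notice that the leftover combination of $h(a-2,b,c)$ and $h(a-1,b-1,c)$ is itself an $h$-polynomial via the same recursion in Lemma \ref{1hprop}, rather than some new object. Once that recognition is made, the cancellations collapse cleanly to the stated difference formula.
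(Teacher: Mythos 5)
Your proof is correct and takes essentially the same approach as the paper's: both induct on $a+b$, invoke Lemma~\ref{1hprop} three times (for $h(a,b,c)$, $h(a,b,c-1)$, and $h(a-1,b,c)$), and invoke the induction hypothesis twice (for $(a-1,b)$ and $(a,b-1)$) — these are exactly the five relations the paper labels and combines in its proof. The only difference is organizational: the paper abstracts the five identities into a small linear system with renamed variables before collapsing them, whereas you substitute directly; the algebra is the same.
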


In what follows, we will use two corollaries that follow easily from the above lemmas:

\begin{corollary}
\label{2hprop}
For all integers $a$ and $c$ and all positive integers $b$, we have
\begin{align}
h\tup{a,b-1,c} = h\tup{a,b,c-1} - \tup{x_b+y_c} \cdot h\tup{a-1,b,c} .
\end{align}
\end{corollary}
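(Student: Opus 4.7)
The plan is to derive Corollary \ref{2hprop} directly by combining the two preceding recursions, Lemma \ref{1hprop} and Lemma \ref{3hprop}, without returning to the combinatorial definition of $h(a,b,c)$. No new combinatorial machinery should be needed, since the identity is purely algebraic in the $h$-polynomials.

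First, I would rearrange Lemma \ref{1hprop} to isolate $h(a,b-1,c)$, obtaining
\[
h(a,b-1,c) = h(a,b,c) - (x_b + y_{a+b+c-1}) \cdot h(a-1,b,c).
\]
Next, I would use Lemma \ref{3hprop} to replace $h(a,b,c)$ by $h(a,b,c-1) + (y_{a+b+c-1} - y_c) \cdot h(a-1,b,c)$. Substituting gives
\[
h(a,b-1,c) = h(a,b,c-1) + (y_{a+b+c-1} - y_c) \cdot h(a-1,b,c) - (x_b + y_{a+b+c-1}) \cdot h(a-1,b,c).
\]
Collecting the $h(a-1,b,c)$ terms, the coefficient simplifies to $-(x_b + y_c)$, since the two $y_{a+b+c-1}$ contributions cancel. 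This yields the claimed identity
\[
h(a,b-1,c) = h(a,b,c-1) - (x_b + y_c) \cdot h(a-1,b,c).
\]

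There is essentially no obstacle here: the main point is simply to notice that the subtraction $h(a,b,c) - h(a,b,c-1)$ supplied by Lemma \ref{3hprop} is exactly what is needed to exchange the troublesome term $y_{a+b+c-1}$ coming from Lemma \ref{1hprop} for the innocent term $y_c$ appearing in the statement of Corollary \ref{2hprop}. One should however briefly verify that the hypotheses match: Lemma \ref{1hprop} requires $b \geq 1$, which is precisely the hypothesis of Corollary \ref{2hprop}, and Lemma \ref{3hprop} is stated for $b \geq 0$, so the substitution is legitimate.
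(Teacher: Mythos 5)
Your proposal is correct and matches the paper's argument: both combine Lemma \ref{1hprop} and Lemma \ref{3hprop} by eliminating $h(a,b,c)$ and observing that the $y_{a+b+c-1}$ terms cancel. The only difference is cosmetic — you isolate $h(a,b-1,c)$ first and then substitute, whereas the paper subtracts one identity from the other — but the algebra is identical.
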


\begin{corollary}
\label{3hprop2}
For all integers $a$ and $c$ and all nonnegative integers $b$, we have
\begin{align*}
h\tup{a+1, b, c+1}
= h\tup{a+1, b, c}
+ \tup{y_{a+b+c+1} - y_{c+1}} \cdot h\tup{a, b, c+1}.
\end{align*}
\end{corollary}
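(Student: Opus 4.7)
The plan is to deduce Corollary~\ref{3hprop2} directly from Lemma~\ref{3hprop} by a simple reindexing, with no further work required. Lemma~\ref{3hprop} asserts that
\[
h\tup{a, b, c} - h\tup{a, b, c-1} = \tup{y_{a+b+c-1} - y_c} \cdot h\tup{a-1, b, c}
\]
holds for all integers $a$ and $c$ and all nonnegative integers $b$. Since this identity holds for all integer values of $a$ and $c$, I would simply substitute $a \mapsto a+1$ and $c \mapsto c+1$ (noting that $a+1$ and $c+1$ remain integers, while $b$ is untouched, so the hypotheses of Lemma~\ref{3hprop} still apply). This substitution yields
\[
h\tup{a+1, b, c+1} - h\tup{a+1, b, c} = \tup{y_{(a+1)+b+(c+1)-1} - y_{c+1}} \cdot h\tup{a, b, c+1},
\]
and since $(a+1)+b+(c+1)-1 = a+b+c+1$, the right-hand side equals $\tup{y_{a+b+c+1} - y_{c+1}} \cdot h\tup{a, b, c+1}$.

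Rearranging by adding $h\tup{a+1, b, c}$ to both sides then produces exactly the claimed identity. There is no real obstacle: the corollary is just a cosmetic reshuffling of Lemma~\ref{3hprop} to isolate $h\tup{a+1,b,c+1}$ on the left, presumably because in later arguments it will be more convenient to have $c+1$ (rather than $c-1$) appearing on the right.
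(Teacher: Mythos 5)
Your proof is correct and is essentially identical to the paper's own argument: the paper also applies Lemma~\ref{3hprop} with $a+1$ and $c+1$ in place of $a$ and $c$, simplifies the index $(a+1)+b+(c+1)-1 = a+b+c+1$, and rearranges.
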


\section{\label{sec.jt}The flagged Jacobi--Trudi identity}

\subsection{\label{subsec.jt.fjt}A flagged Jacobi--Trudi identity for $\FSSYT(\mu, \bb)$}

\begin{definition}
    Let $n \in \NN$. Then, the notation $\tup{a_{i,j}}_{i, j \in [n]}$ shall mean the $n\times n$-matrix whose $\tup{i,j}$-th entry is $a_{i,j}$ for all $i, j \in [n]$.
\end{definition}

We now state a crucial formula that helps us rewrite certain sums over flagged semistandard tableaux as determinants:

\begin{proposition}
    \label{prop.flagJT.f}
    Let $\mu = \tup{\mu_1, \mu_2, \ldots, \mu_n}$ be a partition.
    Let $\bb = \tup{b_1, b_2, b_3, \ldots}$ be a weakly increasing flagging.
    Then,
    \begin{align*}
    \sum_{T \in \FSSYT(\mu, \bb)}\ \ \prod_{\tup{i,j}\in Y\tup{\mu}} \tup{x_{T\tup{i,j}} + y_{T\tup{i,j}+j-i}} =
    \det\tup{h\tup{\mu_i - i + j, \ \ b_i, \ \ 1-j}}_{i, j \in \ive{n}}
    \end{align*}
    (where $x_1, x_2, x_3, \ldots$ and $y_1, y_2, y_3, \ldots$ are indeterminates).
\end{proposition}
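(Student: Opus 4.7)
The plan is to apply the Lindström--Gessel--Viennot (LGV) lemma, interpreting the determinant as a signed sum of $n$-tuples of weighted lattice paths, and then matching the surviving non-crossing tuples with $\bb$-flagged semistandard tableaux with matching weights.

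First I would give a lattice-path interpretation of $h(a, b, c)$. Consider lattice paths in $\ZZ^2$ using east and north unit steps, and weight each east step by $1$ and each north step landing at the point $(X, Y) \in \ZZ^2$ by $x_X + y_{X+Y}$ (using the paper's convention that $x_i = y_i = 0$ for $i \leq 0$). A path from $(1, -j)$ to $(b, a - j)$ has exactly $a$ north steps whose columns form a weakly increasing sequence $1 \leq i_1 \leq \cdots \leq i_a \leq b$; the $k$-th north step lands at $(i_k, k - j)$ and so carries weight $x_{i_k} + y_{i_k + k - j} = x_{i_k} + y_{i_k + (k-1) + (1-j)}$. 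Summing over all such paths yields $h(a, b, 1-j)$.

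Next I would take sources $A_j := (1, -j)$ and sinks $B_i := (b_i, \mu_i - i)$ for $i, j \in \ive{n}$; by the above, the weighted sum of east-north paths from $A_j$ to $B_i$ equals $h(\mu_i - i + j, b_i, 1 - j)$, which is the $(i,j)$-entry of our matrix. The LGV lemma then expresses the determinant as a signed sum over $n$-tuples of vertex-disjoint paths $(P_1, \ldots, P_n)$ with $P_i : A_{\sigma(i)} \to B_i$. Because the $y$-coordinates of the $A_j$ are strictly decreasing and (by $\mu_i \geq \mu_{i+1}$) so are those of the $B_i$, any vertex-disjoint routing is forced to have $\sigma = \id$; the usual sign-reversing involution (swap the tails of two paths at their first crossing) cancels all non-disjoint contributions. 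Finally, given a non-crossing tuple with $P_i : A_i \to B_i$, define $T(i, k)$ to be the column of the $k$-th north step of $P_i$. Then $T$ has weakly increasing rows by construction, satisfies $T(i, k) \leq b_i$ because $P_i$ terminates at $x = b_i$, and has strictly increasing columns exactly because the $x$-ranges swept by $P_i$ and $P_{i+1}$ at each shared $y$-level must be disjoint. The weight of $P_i$ factors as $\prod_k \bigl(x_{T(i,k)} + y_{T(i,k) + k - i}\bigr)$, so the total weight of the tuple is the desired $\prod_{(i,j) \in Y(\mu)} \bigl(x_{T(i,j)} + y_{T(i,j) + j - i}\bigr)$.

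The main obstacle will be the bookkeeping near the sources: all $A_j$ lie on the vertical line $x = 1$, so one must verify that vertex-disjointness near this source cluster is genuinely equivalent to column-strictness of $T$ -- in particular, that $T(i+1, 1) \geq 2$ whenever row $i$ is nonempty (which is exactly what prevents $P_{i+1}$ from passing through $A_i$ on its first north step). More generally, translating vertex-disjointness at a generic $y$-level into the strict inequality $T(i, k+1) < T(i+1, k+1)$ for $k + 1 \leq \mu_{i+1}$ requires a careful sweep over $y$-levels and a comparison of the east intervals swept by consecutive paths, but once this dictionary is laid out the LGV argument and the weight comparison both become routine.
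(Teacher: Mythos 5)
Your proposal is the Lindström--Gessel--Viennot route, which is correct in outline and is in fact explicitly acknowledged in the paper (Subsection~\ref{subsec.jt.fjt}): the authors note that their proof ``is a close relative of the well-known proof of the Jacobi--Trudi identities using the Lindström--Gessel--Viennot (LGV) lemma'' and that the result can be derived directly from Gessel--Viennot's Theorem~3 (see Subsection~\ref{subsec.gv-apx.1}). The paper deliberately avoids importing LGV and instead runs a self-contained sign-reversing involution (``flip'') on ``twisted arrays'' $(\sigma, T)$, where the pair carries a permutation and a row-weakly-increasing filling of a row-wise-deformed diagram $P(\sigma)$; the bottommost leftmost ``failure'' plays the role of the first crossing point. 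Morally, twisted arrays are the column-by-column encodings of the $n$-tuples of paths in your picture, and the flip is exactly the tail-swap. What your route buys is brevity (once LGV is a black box and the tableau--path dictionary is set up) and conceptual transparency; what the paper's route buys is that the entire argument lives inside the tableau world, with no detour through lattice-path combinatorics and no appeal to any external lemma -- which matters for the paper's stated goal of being accessible and self-contained.

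One point where your sketch is slightly too quick: you claim the routing is forced to be $\sigma = \id$ ``because the $y$-coordinates of the $A_j$ are strictly decreasing and \ldots so are those of the $B_i$.'' That is not sufficient, since all sources share the same $x$-coordinate; to rule out an inversion $i<j$, $\sigma(i)>\sigma(j)$, one compares the two paths at $y = -\sigma(j)$ (where $P_i$ must be strictly to the right of $P_j$) and at $y = \mu_j - j$ (where $P_i$'s rightmost $x$-coordinate is $\leq b_i \leq b_j$, forcing $P_i$ not to be strictly right of $P_j$), and the second step uses $b_i \leq b_j$. So the weak monotonicity of $\bb$ is load-bearing in the non-permutability argument, not just in the dictionary near the sources -- as it must be, since the statement fails for non-monotone flaggings. (In the paper's proof, this is where the weak increase of $\bb$ is invoked inside Claim~17 of the proof of Lemma~\ref{lem.flagJT.flip1}~\textbf{(f)}.) Once you make that explicit, the remaining bookkeeping you outline does indeed close the argument.
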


This proposition is a variant of the flagged Jacobi--Trudi identity \cite[Theorem 1.3]{Wachs85}, and in fact is a particular case of Gessel's and Viennot's generalization of the latter (\cite[Theorem 3]{GesVie89}).
(See Subsection \ref{subsec.gv-apx.1} for how Proposition \ref{prop.flagJT.f} can be derived
from \cite[Theorem 3]{GesVie89}.)
It can also be derived from \cite[second bullet point after Theorem 4.2]{ChLiLo02}
(see Subsection \ref{subsec.gv-apx.2} for a hint).
However, the sake of completeness, we shall give a standalone proof
(which essentially comes from \cite[proof of Theorem 11]{GesVie89}).
Our proof is a close relative of the well-known proof of the Jacobi--Trudi identities using the Lindstr\"om--Gessel--Viennot (LGV) lemma (see, e.g., \cite[First proof of Theorem 7.16.1]{EC2}), and indeed a reader familiar with the latter lemma will find it easy to adapt the latter proof to Proposition~\ref{prop.flagJT.f}.
In order to keep the present work self-contained, we are abstaining from the use of the LGV lemma in favor of a direct combinatorial argument using a sign-reversing involution.

\subsection{\label{subsec.jt.proof}Proof of Proposition \ref{prop.flagJT.f}}

We will actually prove a slight generalization of Proposition
\ref{prop.flagJT.f}:

\begin{theorem}
\label{thm.flagJT.gen}
Let $R$ be a commutative ring. Let $u_{i,j}$ be an
element of $R$ for each pair $\left(  i,j\right)  \in\mathbb{Z}\times
\mathbb{Z}$. For each $b\in\mathbb{N}$ and $q,d\in\mathbb{Z}$, we define an
element $h_{b;\ q}\left[  d\right]  \in R$ by%
\[
h_{b;\ q}\left[  d\right]  :=\sum_{\substack{\left(  i_{1},i_{2},\ldots
,i_{q}\right)  \in\left[  b\right]  ^{q};\\i_{1}\leq i_{2}\leq\cdots\leq
i_{q}}}\ \ \prod_{j=1}^{q}u_{i_{j},\ j-d}.
\]
This sum is understood to be $0$ if $q<0$, and to be $1$ if $q=0$.

Let $\mu=\left(  \mu_{1},\mu_{2},\ldots,\mu_{n}\right)  $ be a partition. Let
$\mathbf{b}=\left(  b_{1},b_{2},b_{3},\ldots\right)  $ be a weakly increasing
flagging. Then,
\begin{equation}
\sum_{T\in\operatorname{FSSYT}\left(  \mu,\mathbf{b}\right)  }\ \ 
\prod_{\tup{i,j}\in Y\tup{\mu}} u_{T\left(  i,j\right)  ,\ j-i}
=\det\left(  h_{b_{i};\ \mu_{i}-i+j}\left[  j\right]  \right)  _{i,j\in\left[  n\right] }.
\nonumber
\end{equation}

\end{theorem}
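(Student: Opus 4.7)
The plan is to interpret every weakly increasing sequence appearing in an $h$-polynomial as a monotone lattice path in $\ZZ^2$, and then to run a Lindström--Gessel--Viennot-style sign-reversing involution on (permutation, path-family) pairs directly. Concretely, introduce sources $A_j := \tup{1,\,1-j}$ and sinks $B_i := \tup{b_i,\,\mu_i - i + 1}$ for $i,j \in \ive{n}$, and consider lattice paths built from unit east-steps $\tup{a,b} \to \tup{a+1,b}$ and unit north-steps $\tup{a,b} \to \tup{a,b+1}$, with each north-step $\tup{a,b} \to \tup{a,b+1}$ weighted by $u_{a,b}$ and each east-step weighted by $1$. A path from $A_j$ to $B_i$ then has exactly $b_i - 1$ east-steps and $\mu_i - i + j$ north-steps, and the map that records the $x$-coordinates of the north-steps is a bijection between such paths and weakly increasing sequences in $\ive{b_i}^{\mu_i - i + j}$. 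Under this bijection the weight of a path equals the corresponding summand in the definition of $h_{b_i;\,\mu_i - i + j}[j]$, so
\[
h_{b_i;\,\mu_i - i + j}[j] \;=\; \sum_{P:\,A_j \to B_i} \operatorname{wt}(P).
\]

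Expanding the determinant by permutations then gives
\[
\det\bigl(h_{b_i;\,\mu_i - i + j}[j]\bigr)_{i,j \in \ive{n}} \;=\; \sum_{\sigma \in S_n} \operatorname{sgn}(\sigma) \sum_{(P_1,\ldots,P_n)} \prod_{i=1}^n \operatorname{wt}(P_i),
\]
where the inner sum ranges over all path families with $P_i$ going from $A_{\sigma(i)}$ to $B_i$. I would then define a sign-reversing involution $\iota$ on the set of such pairs in which at least two paths share a vertex, by the following algorithm: let $i$ be the smallest index such that $P_i$ meets some other path, let $v$ be the first vertex of $P_i$ (counted from $A_{\sigma(i)}$) that lies on another path, let $j \ne i$ be the largest index with $v \in P_j$, and swap the portions of $P_i$ and $P_j$ that follow $v$. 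The resulting family has the same multiset of steps (hence the same total weight), while its underlying permutation becomes $\sigma \circ (i\,j)$, flipping the sign. The main technical subtlety is to verify that $\iota$ really is an involution: applied to the swapped family, it selects the same triple $(i,v,j)$, because the prefixes of $P_i$ and $P_j$ up to $v$ are unchanged (so no earlier intersection is created on the new $P'_i$, and $i$ is still the smallest intersecting index) and the set of paths passing through any fixed vertex (in particular through $v$) is unchanged (so $j$ is still the largest index with $v$ on its path).

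The fixed points of $\iota$ are precisely the vertex-disjoint path families. Since the sources $A_1, \ldots, A_n$ have strictly decreasing $y$-coordinates $0, -1, -2, \ldots$ and -- using Lemma~\ref{lem.flagging-of-lm.inc} together with the strict decrease of $\mu_i - i + 1$ in $i$ -- the sinks $B_1, \ldots, B_n$ are likewise ordered strictly from north to south, a standard crossing argument for monotone lattice paths forces the underlying permutation to be the identity on any vertex-disjoint family. I would then biject these identity-permutation families with $\FSSYT\tup{\mu, \bb}$ by defining $T\tup{i, k}$ to be the $x$-coordinate of the $k$-th north-step of $P_i$: monotonicity of $P_i$ together with its endpoints forces $1 \leq T\tup{i,k} \leq b_i$ and weak increase in $k$, while the non-crossing condition between $P_i$ and $P_{i+1}$ at the $y$-level $k-i$ -- where $P_i$'s horizontal segment ends at $x = T\tup{i,k}$ just before its $k$-th north-step, and $P_{i+1}$'s segment begins at $x = T\tup{i+1,k}$ just after its $k$-th north-step -- is readily seen to be equivalent to the strict column inequality $T\tup{i,k} < T\tup{i+1,k}$. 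Under this bijection, the weight of the family equals $\prod_{\tup{i,k} \in Y\tup{\mu}} u_{T\tup{i,k},\,k-i}$, matching the left-hand side of Theorem~\ref{thm.flagJT.gen}.

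The main obstacle will be the precise formulation and verification of the involution $\iota$: pinning down a canonical choice of the exchange vertex $v$ so that the procedure is genuinely self-inverse, and tracking the effect of the swap on the intersection structure with care. The remaining bookkeeping, including the equivalence between vertex-disjointness and strict column-monotonicity in the bijection with $\FSSYT$, is conceptually straightforward but must be handled with attention to the edge cases at the first and last north-steps of each path (where a path's horizontal segment abuts its source or sink).
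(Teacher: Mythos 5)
Your overall strategy is sound: encoding $h_{b_i;\,\mu_i-i+j}[j]$ as a lattice-path generating function from $A_j$ to $B_i$, expanding the determinant over permutations, cancelling intersecting path families via a sign-reversing tail-swap, and bijecting the surviving vertex-disjoint identity families with $\FSSYT\tup{\mu,\bb}$. This is precisely the geometric form of the paper's twisted-array argument (the $i$-th row of a twisted array records the $x$-coordinates of the north-steps of the $i$-th path, and the paper's ``flip'' is your tail-swap), and the paper explicitly acknowledges the LGV route as natural. The weight bookkeeping and the disjoint-families-versus-$\FSSYT$ bijection, while terse, are essentially correct.

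However, the involution $\iota$ as you formulate it is not self-inverse, and your justification is where the argument breaks. After the swap at $v$, the new path $P_i'$ ending at $B_i$ begins with the \emph{old prefix of $P_j$}, not the old prefix of $P_i$; your claim that ``no earlier intersection is created on the new $P_i'$'' silently presupposes that old $P_j$ up to $v$ was disjoint from all the other paths, which nothing in your choice of $(i,v,j)$ guarantees. This can fail concretely: take $\mu=(3,3,1)$, $\bb=(2,2,3)$, $\sigma=\id$, and
$P_1\colon (1,0)\to(2,0)\to(2,1)\to(2,2)\to(2,3)$,
$P_2\colon (1,-1)\to(1,0)\to(1,1)\to(1,2)\to(2,2)$,
$P_3\colon (1,-2)\to(1,-1)\to(2,-1)\to(3,-1)$.
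Then $i=1$, $v=(1,0)$, $j=2$; but after swapping tails, the new $P_1'$ begins $(1,-1)\to(1,0)\to\cdots$, and its first vertex on another path is $(1,-1)\in P_3$, not $v$, so re-applying your rule selects a different triple and does not undo the swap. The standard repair is to choose $v$ as the \emph{lexicographically smallest} vertex lying on two or more paths: the tail-swap preserves, for every vertex, the \emph{number} of paths through it (only which paths, not how many, can change), so both this choice of $v$ and the set of indices of paths through $v$ are invariant. This canonical-choice issue is exactly what the paper's ``bottommost leftmost failure'' handles (Definition~\ref{def.flagJT.fail}, Lemma~\ref{lem.flagJT.flip1}~\textbf{(b)}), and its verification is the real content of the argument.
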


Our proof of this theorem will follow \cite[proof of Theorem 11]{GesVie89}. We
begin with some notations:%
\footnote{We note that Theorem~\ref{thm.flagJT.gen} can be generalized even further (see Theorem~\ref{thm.flagJT.gen-skew} below), but we shall not need this generality.}

\begin{convention}
For the rest of Subsection \ref{subsec.jt.proof}, we fix a number
$n\in\mathbb{N}$, a partition $\mu=\left(  \mu_{1},\mu_{2},\ldots,\mu
_{n}\right)  $ and a weakly increasing flagging $\mathbf{b}=\left(
b_{1},b_{2},b_{3},\ldots\right)  $. We also fix a commutative ring $R$ and an
element $u_{i,j}$ of $R$ for each pair $\left(  i,j\right)  \in\mathbb{Z}%
\times\mathbb{Z}$.

We let $S_{n}$ denote the $n$-th symmetric group, i.e., the group of
permutations of $\left\{  1,2,\ldots,n\right\}  $. For any permutation
$\sigma\in S_{n}$, we let $\left(  -1\right)  ^{\sigma}$ denote the sign of
$\sigma$. Thus, the Leibniz formula for a determinant says that%
\begin{equation}
\det\left(  a_{i,j}\right)  _{i,j\in\left[  n\right]  }=\sum_{\sigma\in S_{n}%
}\left(  -1\right)  ^{\sigma}\prod_{i=1}^{n}a_{i,\sigma\left(  i\right)  }
\label{eq.det.leibniz}%
\end{equation}
for any $n\times n$-matrix $\left(  a_{i,j}\right)  _{i,j\in\left[  n\right]
}\in R^{n\times n}$.
\end{convention}

\begin{definition}
\label{def.jt.sig-array}
Let $\sigma\in S_{n}$ be any permutation. Then:

\begin{enumerate}
\item[\textbf{(a)}] We say that $\sigma$ is \emph{legitimate} if each
$i\in\left[  n\right]  $ satisfies $\mu_{\sigma\left(  i\right)  }%
-\sigma\left(  i\right)  +i\geq0$.

\item[\textbf{(b)}] If $\sigma$ is legitimate, then we define $P\left(
\sigma\right)  $ to be the set of all pairs $\left(  i,j\right)  $ of positive
integers satisfying $i\in\left[  n\right]  $ and $j\leq\mu_{\sigma\left(
i\right)  }-\sigma\left(  i\right)  +i$. This set $P\left(  \sigma\right)  $
is a diagram. (It has $\mu_{\sigma\left(  i\right)  }-\sigma\left(  i\right)
+i$ boxes in the $i$-th row for each $i\in\left[  n\right]  $; all rows are left-aligned.)

(If $\sigma$ is not legitimate, then we don't define the diagram $P\left(
\sigma\right)  $, since it would have \textquotedblleft negative-length
rows\textquotedblright.)

\item[\textbf{(c)}] If $\sigma$ is legitimate, then a $\sigma$\emph{-array}
will mean a filling $T$ of the diagram $P\left(  \sigma\right)  $ with positive
integers (i.e., a map $T:P\left(  \sigma\right)  \rightarrow\left\{
1,2,3,\ldots\right\}  $) that weakly increase left-to-right along each row (i.e., that
satisfy $T\left(  i,j\right)  \leq T\left(  i,j+1\right)  $ whenever
$\left(  i,j\right)  $ and $\left(  i,j+1\right)  $ are two elements of
$P\left(  \sigma\right)  $). Note that we do not require the entries of $T$ to
strictly increase down the columns.

If $\sigma$ is not legitimate, then we agree that there are no $\sigma$-arrays.

\item[\textbf{(d)}] If $T$ is a $\sigma$-array, then the \emph{weight} of $T$
is defined to be the product $\prod_{(i,j)\in P(\sigma)}u_{T\left(
i,j\right)  ,\ j-i}$. We denote it by $w\left(  T\right)  $.

\item[\textbf{(e)}] We say that a $\sigma$-array $T$ is $\mathbf{b}%
$\emph{-flagged} if it has the property that for each $i\in\left[  n\right]
$, every entry of $T$ in the $i$-th row is $\leq b_{\sigma\left(  i\right)  }$
(that is, if we have $T\left(  i,j\right)  \leq b_{\sigma\left(  i\right)  }$
for every $\left(  i,j\right)  \in P\left(  \sigma\right)  $).
\end{enumerate}
\end{definition}

\newpage

\begin{example}
Assume that $n=3$ and $\mu=\left(  4,2,1\right)  $. We write each permutation
$\sigma\in S_{3}$ as the triple $\left[  \sigma\left(  1\right)
,\ \sigma\left(  2\right)  ,\ \sigma\left(  3\right)  \right]  $ (delimited by
square brackets instead of parentheses in order to avoid confusion with a
partition). The symmetric group $S_{3}$ is generated by the two permutations
$s_{1}:=\left[  2,1,3\right]  $ and $s_{2}:=\left[  1,3,2\right]  $.

The permutation $s_{1}s_{2}s_{1}=\left[  3,2,1\right]  \in S_{3}$ is not
legitimate, because $i=1$ does not satisfy $\mu_{\sigma\left(  i\right)
}-\sigma\left(  i\right)  +i\geq0$ for $\sigma=s_{1}s_{2}s_{1}$ (indeed, for
this $i$ and this $\sigma$, we have $\mu_{\sigma\left(  i\right)  }%
-\sigma\left(  i\right)  +i=\mu_{3}-3+1=1-3+1=-1<0$). For a similar reason,
the permutation $s_{2}s_{1}=\left[  3,1,2\right]  $ is not legitimate either.
All four remaining permutations in $S_{3}$ are legitimate. Here are these four
permutations $\sigma$ along with the corresponding diagrams $P\left(
\sigma\right)  $:%
\[%
\begin{tabular}
[c]{|c|c|}\hline
$\sigma$ & $P\left(  \sigma\right)  $\\\hline\hline
$\operatorname*{id}=\left[  1,2,3\right]  $ &
\multicolumn{1}{|l|}{$\ydiagram{4,2,1}$}\\\hline
$s_{1}=\left[  2,1,3\right]  $ & \multicolumn{1}{|l|}{$\ydiagram{1,5,1}$%
}\\\hline
$s_{2}=\left[  1,3,2\right]  $ & \multicolumn{1}{|l|}{$\ydiagram{4,0,3}$%
}\\\hline
$s_{1}s_{2}=\left[  2,3,1\right]  $ & \multicolumn{1}{|l|}{$\ydiagram{1,0,6}$%
}\\\hline
\end{tabular}
\ \ \ \ \ .
\]

Thus, for instance, an $s_{1}$-array is a filling of $P\left(  s_{1}\right)  $
whose entries weakly increase along each row, i.e., a filling $T$ of the form%
\[
\ytableaushort{a,bcdef,g}
\]
with $b\leq c\leq d\leq e\leq f$. (No conditions on the columns are made!) The
weight of this $s_{1}$-array $T$ is%
\[
w\left(  T\right)  =u_{a,0}u_{b,-1}u_{c,0}u_{d,1}u_{e,2}u_{f,3}u_{g,-2}.
\]
This $s_{1}$-array $T$ is $\mathbf{b}$-flagged if and only if $a\leq b_{2}$
and $f\leq b_{1}$ and $g\leq b_{3}$. (Of course, in order to check that every
entry in the $i$-th row is $\leq b_{\sigma\left(  i\right)  }$, we only need
to check this for the last entry.)
\end{example}

We can use $\mathbf{b}$-flagged $\sigma$-arrays to interpret the right hand
side of Theorem \ref{thm.flagJT.gen}:

\begin{lemma}
\label{lem.flagJT.det=sum}\ \ 

\begin{enumerate}
\item[\textbf{(a)}] For each $\sigma\in S_{n}$, we have%
\begin{equation}
\prod_{i=1}^{n}h_{b_{\sigma\left(  i\right)  };\ \mu_{\sigma\left(  i\right)
}-\sigma\left(  i\right)  +i}\left[  i\right]  =\sum_{\substack{T\text{ is a
}\mathbf{b}\text{-flagged}\\\sigma\text{-array}}}w\left(  T\right)  .
\label{eq.lem.flagJT.det=sum.a}%
\end{equation}

\item[\textbf{(b)}] We have%
\[
\det\left(  h_{b_{i};\ \mu_{i}-i+j}\left[  j\right]  \right)  _{i,j\in\left[
n\right]  }=\sum_{\sigma\in S_{n}}\left(  -1\right)  ^{\sigma}\sum
_{\substack{T\text{ is a }\mathbf{b}\text{-flagged}\\\sigma\text{-array}%
}}w\left(  T\right)  .
\]

\end{enumerate}
\end{lemma}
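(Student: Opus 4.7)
The plan is to prove part (a) by directly expanding the left-hand side via the definition of $h_{b;q}[d]$, and then to derive part (b) from part (a) using the Leibniz formula and a change of summation variable $\tau := \sigma^{-1}$.

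For part (a), I consider two cases. If $\sigma$ is not legitimate, then some $i \in [n]$ satisfies $\mu_{\sigma(i)} - \sigma(i) + i < 0$, so the factor $h_{b_{\sigma(i)};\ \mu_{\sigma(i)} - \sigma(i) + i}[i]$ in the product is $0$ (by the convention that $h_{b;q}[d] = 0$ for $q<0$), while by Definition~\ref{def.jt.sig-array}(c) there are no $\sigma$-arrays, so both sides vanish. If $\sigma$ is legitimate, then for each $i \in [n]$ I set $q_i := \mu_{\sigma(i)} - \sigma(i) + i \geq 0$ and expand
\[
h_{b_{\sigma(i)};\ q_i}[i] = \sum_{\substack{(a^{(i)}_1, \ldots, a^{(i)}_{q_i}) \in [b_{\sigma(i)}]^{q_i};\\ a^{(i)}_1 \leq \cdots \leq a^{(i)}_{q_i}}} \prod_{j=1}^{q_i} u_{a^{(i)}_j,\ j-i}.
\]
Multiplying these over all $i$ and distributing, I obtain a sum indexed by choices, for each $i$, of a weakly increasing sequence in $[b_{\sigma(i)}]^{q_i}$. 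Each such choice corresponds bijectively to a filling $T$ of $P(\sigma)$ (taking $T(i,j) = a^{(i)}_j$) satisfying weak row-increase, $\mathbf{b}$-flaggedness $T(i,j) \leq b_{\sigma(i)}$, and giving weight $\prod_{(i,j) \in P(\sigma)} u_{T(i,j),\ j-i} = w(T)$. This yields (a).

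For part (b), I begin from the Leibniz expansion
\[
\det\tup{h_{b_i;\ \mu_i - i + j}[j]}_{i,j \in [n]} = \sum_{\sigma \in S_n} (-1)^\sigma \prod_{i=1}^{n} h_{b_i;\ \mu_i - i + \sigma(i)}[\sigma(i)],
\]
and I reindex the product using $\tau := \sigma^{-1}$: writing $i = \tau(j)$ whenever $\sigma(i) = j$ permutes the factors, giving
\[
\prod_{i=1}^n h_{b_i;\ \mu_i - i + \sigma(i)}[\sigma(i)] = \prod_{j=1}^n h_{b_{\tau(j)};\ \mu_{\tau(j)} - \tau(j) + j}[j].
\]
Since $(-1)^\sigma = (-1)^{\tau}$, summing over $\sigma \in S_n$ is the same as summing over $\tau \in S_n$, and after renaming $\tau$ to $\sigma$ the formula becomes precisely the sum $\sum_{\sigma \in S_n} (-1)^\sigma \prod_{i=1}^n h_{b_{\sigma(i)};\ \mu_{\sigma(i)} - \sigma(i) + i}[i]$ whose inner factor is handled by part (a). Substituting completes the proof.

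The argument is essentially bookkeeping and there is no genuine obstacle; the only care-point is the bijection between tuples of weakly increasing sequences (the terms arising from expanding the product) and $\mathbf{b}$-flagged $\sigma$-arrays, which requires verifying that the row-by-row independence in the product exactly matches the row-only constraints imposed on $\sigma$-arrays (no column condition is present, matching the independence of the $h$-polynomial expansions).
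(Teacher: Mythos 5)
Your proof is correct and follows essentially the same approach as the paper: part (a) is argued identically (splitting into the non-legitimate case where both sides vanish, then expanding the product of $h$-polynomials row by row via the product rule into a sum over $\mathbf{b}$-flagged $\sigma$-arrays). For part (b), you apply Leibniz directly and reindex the product by substituting $\tau := \sigma^{-1}$, whereas the paper first transposes the matrix (using $\det(A^T) = \det A$) and then applies Leibniz — these two steps are two faces of the same manipulation, so the argument is the same modulo bookkeeping.
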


We bring the claim of Lemma \ref{lem.flagJT.det=sum} \textbf{(b)} into a more
convenient form using the following definition:

\begin{definition}
A \emph{twisted array} will mean a pair $\left(  \sigma,T\right)  $, where
$\sigma\in S_{n}$ and where $T$ is a $\sigma$-array. (Of course, $\sigma$ is
necessarily legitimate if $\left(  \sigma,T\right)  $ is a twisted array,
since otherwise there are no $\sigma$-arrays.)

We say that a twisted array $\left(  \sigma,T\right)  $ is $\mathbf{b}%
$\emph{-flagged} if the $\sigma$-array $T$ is $\mathbf{b}$-flagged.
\end{definition}

Thus, Lemma \ref{lem.flagJT.det=sum} \textbf{(b)} becomes the following:

\begin{lemma}
\label{lem.flagJT.det=sum-twisted}
We have
\[
\det\left(  h_{b_{i};\ \mu_{i}-i+j}\left[  j\right]  \right)  _{i,j\in\left[
n\right]  }=\sum_{\substack{\left(  \sigma,T\right)  \text{ is a }%
\mathbf{b}\text{-flagged}\\\text{twisted array}}}\left(  -1\right)  ^{\sigma
}w\left(  T\right)  .
\]

\end{lemma}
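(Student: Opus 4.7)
The plan is to apply Lemma~\ref{lem.flagJT.det=sum}~\textbf{(b)}, which already expresses the determinant as the iterated sum
\[
\sum_{\sigma\in S_n}\left(-1\right)^\sigma \sum_{\substack{T\text{ is a }\bb\text{-flagged}\\\sigma\text{-array}}} w\left(T\right),
\]
and then to merge these two sums into a single one indexed by pairs. By definition, a twisted array is a pair $\left(\sigma,T\right)$ with $\sigma\in S_n$ and $T$ a $\sigma$-array, and such a pair is called $\bb$-flagged precisely when $T$ itself is $\bb$-flagged. Hence, as $\sigma$ ranges over $S_n$ and $T$ ranges (for each such $\sigma$) over all $\bb$-flagged $\sigma$-arrays, the pair $\left(\sigma,T\right)$ ranges bijectively over all $\bb$-flagged twisted arrays, while the summand $\left(-1\right)^\sigma w\left(T\right)$ depends only on this pair. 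Rewriting the iterated sum as a single sum over these pairs therefore yields exactly the right-hand side claimed in Lemma~\ref{lem.flagJT.det=sum-twisted}.

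There is no real obstacle here: this lemma is a pure notational repackaging of Lemma~\ref{lem.flagJT.det=sum}~\textbf{(b)}, and its only role is to set the stage for the sign-reversing involution on the set of $\bb$-flagged twisted arrays that the subsequent proof of Theorem~\ref{thm.flagJT.gen} will deploy in order to cancel all contributions other than those coming from twisted arrays of the form $\left(\operatorname{id},T\right)$ with $T$ a $\bb$-flagged semistandard tableau of shape $\mu$.
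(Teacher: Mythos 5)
Your proposal is correct and is essentially identical to the paper's own proof: invoke Lemma~\ref{lem.flagJT.det=sum}~\textbf{(b)}, observe that $\bb$-flagged twisted arrays are exactly pairs $\left(\sigma,T\right)$ with $\sigma\in S_n$ and $T$ a $\bb$-flagged $\sigma$-array, and merge the iterated sum into a single sum over such pairs.
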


It remains to connect the left hand side of Theorem \ref{thm.flagJT.gen} with
twisted arrays. In a sense, the connection is obvious: A $\mathbf{b}$-flagged
semistandard tableau $T\in\operatorname*{FSSYT}\left(  \mu,\mathbf{b}\right)
$ is a specific kind of $\operatorname*{id}$-array, so that $\left(
\operatorname*{id},T\right)  $ is a twisted array. It just remains to somehow
get rid of all the other twisted arrays $\left(  \sigma,T\right)  $ (i.e.,
those for which $\sigma\neq\operatorname*{id}$, but also those for which
$\sigma=\operatorname*{id}$ but $T$ is not semistandard). This is what we
shall do next. Indeed, we will pair up these unwanted twisted arrays with each
other in such a way that their contributions to the sum%
\[
\sum_{\substack{\left(  \sigma,T\right)  \text{ is a }\mathbf{b}%
\text{-flagged}\\\text{twisted array}}}\left(  -1\right)  ^{\sigma}w\left(
T\right)
\]
cancel out in each pair (i.e., each unwanted twisted array is cancelled out by
its partner in our pairing). This will reduce this sum to only the wanted
part
\[
\sum_{T\in\operatorname*{FSSYT}\left(  \mu,\mathbf{b}\right)  }w\left(
T\right)  ,
\]
which is easily seen to be the left hand side of Theorem \ref{thm.flagJT.gen}.
In combination with Lemma \ref{lem.flagJT.det=sum-twisted}, this will
prove Theorem \ref{thm.flagJT.gen}.

In order to construct our pairing, we introduce the concept of \emph{failure}
of a twisted array. Such a failure will exist exactly when the twisted array
is unwanted.

\begin{definition}
\label{def.flagJT.fail}
Let $\left(  \sigma,T\right)  $ be a twisted array
(i.e., let $\sigma\in S_{n}$, and let $T$ be a $\sigma$-array).

\begin{enumerate}
\item[\textbf{(a)}] A box $\left(  i,j\right)  \in P\left(  \sigma\right)  $
is said to be an \emph{outer failure} of $\left(  \sigma,T\right)  $ if $i>1$
and $\left(  i-1,j\right)  \notin P\left(  \sigma\right)  $. (In other words,
a box $c$ of $P\left(  \sigma\right)  $ is an outer failure of $\left(
\sigma,T\right)  $ if it does not lie in the first row, but its northern
neighbor fails to belong to $P\left(  \sigma\right)  $.)

(This notion does not depend on $T$.)

\item[\textbf{(b)}] A box $\left(  i,j\right)  \in P\left(  \sigma\right)  $
is said to be an \emph{inner failure} of $\left(  \sigma,T\right)  $ if
$\left(  i-1,j\right)  \in P\left(  \sigma\right)  $ and $T\left(
i-1,j\right)  \geq T\left(  i,j\right)  $. (In other words, a box $c$ of
$P\left(  \sigma\right)  $ is an inner failure of $\left(  \sigma,T\right)  $
if its northern neighbor is another box $d\in P\left(  \sigma\right)  $ but
satisfies $T\left(  d\right)  \geq T\left(  c\right)  $.)

\item[\textbf{(c)}] A box $\left(  i,j\right)  \in P\left(  \sigma\right)  $
is said to be a \emph{failure} of $\left(  \sigma,T\right)  $ if it is an
outer failure or an inner failure of $\left(  \sigma,T\right)  $.

\item[\textbf{(d)}] A \emph{leftmost failure} of $\left(  \sigma,T\right)  $
means a failure $\left(  i,j\right)  $ of $\left(  \sigma,T\right)  $ for
which $j$ is minimum (i.e., which lies as far west as a failure of $\left(
\sigma,T\right)  $ can lie).

\item[\textbf{(e)}] A \emph{bottommost leftmost failure} of $\left(
\sigma,T\right)  $ means a leftmost failure $\tup{i,j}$ of $\left(  \sigma,T\right)  $ for
which $i$ is maximum (i.e., which lies as far south as a leftmost failure of
$\left(  \sigma,T\right)  $ can lie). Note that this requirement uniquely
determines the failure (since all leftmost failures of $\left(  \sigma
,T\right)  $ lie in the same column).

\item[\textbf{(f)}] We say that the twisted array $\left(  \sigma,T\right)  $
is \emph{failing} if it has a failure. Otherwise, we say that it is
\emph{unfailing}.
\end{enumerate}
\end{definition}

\begin{example}
\label{exa.flagJT.fail.1}
Let $n=5$ and $\mu=\left(  4,4,3,3,3\right)  $. Let
$\sigma\in S_{5}$ be the permutation that swaps $3$ with $5$ while keeping the
remaining elements unchanged. Let $T$ be the following $\sigma$-array:%
\[
\ytableaushort{1235,2244,5,689,79999}\ \ .
\]
Then, the twisted array $\left(  \sigma,T\right)  $ is failing. It has both
inner and outer failures. Its outer failures are $\left(  4,2\right)  $,
$\left(  4,3\right)  $, $\left(  5,4\right)  $ and $\left(  5,5\right)  $. Its
inner failures are $\left(  2,2\right)  $ (since $T\left(  2,1\right)  \geq
T\left(  2,2\right)  $) as well as $\left(  2,4\right)  $ and $\left(
5,3\right)  $. Its leftmost failures are therefore $\left(  2,2\right)  $ and
$\left(  4,2\right)  $. Hence, its bottommost leftmost failure is $\left(
4,2\right)  $. (Note that the failure $\left(  5,3\right)  $ lies further
south, but does not count as leftmost since it also lies further east. Thus,
the bottommost leftmost failure is not the leftmost bottommost failure!)
\end{example}

\begin{remark}
\label{rmk.flagJT.fail.over1}
If $\left(  i,j\right)  $ is any failure of a
twisted array $\left(  \sigma,T\right)  $, then $i>1$. (This is clear for
outer failures, and follows for inner failures from $\left(  i-1,j\right)  \in
P\left(  \sigma\right)  $.)
\end{remark}

\begin{lemma}
\label{lem.flagJT.fail}
Let $\left(  \sigma,T\right)  $ be a twisted array. Then:

\begin{enumerate}
\item[\textbf{(a)}] If $\sigma\neq\operatorname*{id}$, then $\left(
\sigma,T\right)  $ has an outer failure.

\item[\textbf{(b)}] If $\sigma=\operatorname*{id}$ and $T\notin%
\operatorname*{SSYT}\left(  \mu\right)  $, then $\left(  \sigma,T\right)  $
has an inner failure.

\item[\textbf{(c)}] If $\left(  \sigma,T\right)  $ is unfailing, then
$\sigma=\operatorname*{id}$ and $T\in\operatorname*{SSYT}\left(  \mu\right)  $.

\end{enumerate}
\end{lemma}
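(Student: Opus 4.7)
The plan is to prove parts \textbf{(a)} and \textbf{(b)} directly from the definitions, and then deduce \textbf{(c)} by contraposition. The entire lemma is essentially bookkeeping about the shape of $P(\sigma)$ and the row-weak-increase property of $T$, so I do not anticipate any real obstacle; the only mildly subtle piece is a row-length computation that I sketch below.

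For part \textbf{(a)}, I use the standard fact that a non-identity permutation $\sigma \in S_n$ must have a descent, i.e., some index $i \geq 2$ with $\sigma(i-1) > \sigma(i)$. Set $a := \sigma(i)$ and $b := \sigma(i-1)$, so $a < b$ and hence $\mu_a \geq \mu_b$ (because $\mu$ is weakly decreasing). The lengths of the $i$-th and $(i-1)$-th rows of $P(\sigma)$ are $\mu_a - a + i$ and $\mu_b - b + (i-1)$ respectively, and their difference equals $(\mu_a - \mu_b) + (b - a) + 1 \geq 0 + 1 + 1 = 2 > 0$. Thus row $i$ extends strictly east of row $i-1$, and any box $(i,j) \in P(\sigma)$ with $j > \mu_b - b + (i-1)$ satisfies $i > 1$ and $(i-1,j) \notin P(\sigma)$, which by Definition \ref{def.flagJT.fail}~\textbf{(a)} makes it an outer failure.

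For part \textbf{(b)}, the hypothesis $\sigma = \operatorname{id}$ gives $P(\sigma) = Y(\mu)$, so $T$ is a filling of $Y(\mu)$ whose entries weakly increase along rows (because $T$ is an $\operatorname{id}$-array). The only way $T$ can fail to belong to $\SSYT\tup{\mu}$ is then to violate the strict column-increase condition of Definition \ref{SSYT_def}: there must exist boxes $(i,j), (i+1,j) \in Y(\mu)$ with $T(i,j) \geq T(i+1,j)$. Since $(i,j) \in P(\operatorname{id})$ is the northern neighbor of $(i+1,j) \in P(\operatorname{id})$ and $T(i,j) \geq T(i+1,j)$, the box $(i+1,j)$ is an inner failure of $(\operatorname{id}, T)$ by Definition \ref{def.flagJT.fail}~\textbf{(b)}.

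Finally, part \textbf{(c)} follows by contraposition packaging: if $(\sigma, T)$ is unfailing, then it has no outer failure, so \textbf{(a)} forces $\sigma = \operatorname{id}$; and it has no inner failure, so \textbf{(b)} (applied to the same $T$, now an $\operatorname{id}$-array) forces $T \in \SSYT\tup{\mu}$.
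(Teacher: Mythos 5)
Your proof is correct and follows essentially the same approach as the paper: part \textbf{(a)} locates a descent of $\sigma$ and compares row lengths of $P(\sigma)$ (you index the descent as $\sigma(i-1)>\sigma(i)$ rather than $\sigma(i)>\sigma(i+1)$, but the computation is the same), part \textbf{(b)} observes that $P(\operatorname{id})=Y(\mu)$ and a failure of semistandardness in a column gives an inner failure, and part \textbf{(c)} is the contrapositive of \textbf{(a)} and \textbf{(b)}.
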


As a consequence of Lemma~\ref{lem.flagJT.fail}, we easily obtain:

\begin{lemma}
\label{lem.flagJT.failsum}
We have%
\[
\sum_{\substack{\left(  \sigma,T\right)  \text{ is an}\\\text{unfailing
}\mathbf{b}\text{-flagged}\\\text{twisted array}}}\left(  -1\right)  ^{\sigma
}w\left(  T\right)  =\sum_{T\in\operatorname{FSSYT}\left(  \mu,\mathbf{b}%
\right)  }\ \ \prod_{(i,j)\in Y(\mu)}u_{T\left(  i,j\right)  ,\ j-i}.
\]
\end{lemma}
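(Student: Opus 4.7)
The plan is to show that the unfailing $\bb$-flagged twisted arrays are in natural bijection (in fact, identification) with the set $\operatorname{FSSYT}(\mu,\bb)$, and that under this identification the summands match.

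First I would use Lemma \ref{lem.flagJT.fail} \textbf{(c)} to conclude that if $(\sigma,T)$ is an unfailing twisted array, then $\sigma=\operatorname{id}$ and $T\in\operatorname{SSYT}(\mu)$. This collapses the outer sum to permutations equal to the identity. I would then verify the converse: for $\sigma=\operatorname{id}$ we have $P(\operatorname{id})=Y(\mu)$ (since $\mu_{\operatorname{id}(i)}-\operatorname{id}(i)+i=\mu_i$ and rows are left-aligned), and for any $T\in\operatorname{SSYT}(\mu)$ the twisted array $(\operatorname{id},T)$ has no outer failure (because $(i,j)\in Y(\mu)$ with $i>1$ forces $\mu_{i-1}\geq\mu_i\geq j$, hence $(i-1,j)\in Y(\mu)$) and no inner failure (because semistandardness gives $T(i-1,j)<T(i,j)$). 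Thus unfailing twisted arrays with $\sigma=\operatorname{id}$ are exactly the pairs $(\operatorname{id},T)$ with $T\in\operatorname{SSYT}(\mu)$.

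Next I would check that the $\bb$-flagging condition on the twisted array $(\operatorname{id},T)$ (namely, entries in row $i$ are $\leq b_{\operatorname{id}(i)}=b_i$) is precisely the defining condition for $T\in\operatorname{FSSYT}(\mu,\bb)$. Finally, since $(-1)^{\operatorname{id}}=1$ and $w(T)=\prod_{(i,j)\in P(\operatorname{id})}u_{T(i,j),\,j-i}=\prod_{(i,j)\in Y(\mu)}u_{T(i,j),\,j-i}$, each summand on the left matches the corresponding summand on the right.

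There is no real obstacle here: Lemma \ref{lem.flagJT.fail} \textbf{(c)} does the heavy lifting, and the rest is a straightforward unpacking of definitions (identifying $P(\operatorname{id})$ with $Y(\mu)$, verifying that the two ``no failure'' conditions coincide with the semistandardness axioms, and matching flaggings). The only thing to be a bit careful about is making sure both directions of the identification are stated explicitly, so that the sum on the left really reduces term-by-term to the sum on the right.
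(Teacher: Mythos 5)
Your proposal is correct and follows essentially the same route as the paper's proof: both establish $P(\operatorname{id}) = Y(\mu)$, invoke Lemma \ref{lem.flagJT.fail} \textbf{(c)} to pin down unfailing twisted arrays as precisely the pairs $(\operatorname{id},T)$ with $T\in\operatorname{SSYT}(\mu)$, verify that the $\bb$-flagging conditions correspond, and then match the summands term by term using $(-1)^{\operatorname{id}}=1$ and the definition of $w(T)$. No gaps.
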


Now, we promised to pair up the unwanted (i.e., failing) twisted arrays
$\left(  \sigma,T\right)  $. This is achieved by the following transformation:

\begin{definition}
\label{def.flagJT.flip}
Let $\left(  \sigma,T\right)  $ be a failing twisted
array. Let $c=\left(  i,j\right)  \in P\left(  \sigma\right)  $ be the
bottommost leftmost failure of $\left(  \sigma,T\right)  $. To \emph{flip}
$\left(  \sigma,T\right)  $ means to perform the following transformations on
$\sigma$ and on $T$:

\begin{enumerate}
\item We exchange the values of $\sigma$ on $i-1$ and on $i$. (In other words,
we replace $\sigma$ by $\sigma\circ s_{i-1}$, where $s_{i-1}\in S_{n}$ is the
transposition that swaps $i-1$ with $i$.) The resulting permutation will be
called $\sigma^{\prime}$. (Note that $s_{i-1}$ is well-defined, since Remark
\ref{rmk.flagJT.fail.over1} yields $i>1$.)

\item We define the \emph{top floor} of the failure $c$ to be the part of the
$\left(  i-1\right)  $-st row of $T$ that consists of the entries
\[
T\left(  i-1,j\right)  ,\ \ T\left(  i-1,j+1\right)  ,\ \ T\left(
i-1,j+2\right)  ,\ \ \ldots
\]
(that is, the entries from $T\left(  i-1,j\right)  $ on eastwards).

We define the \emph{bottom floor} of the failure $c$ to be the part of the
$i$-th row of $T$ that consists of the entries%
\[
T\left(  i,j+1\right)  ,\ \ T\left(  i,j+2\right)  ,\ \ T\left(  i,j+3\right)
,\ \ \ldots
\]
(that is, the entries from $T\left(  i,j+1\right)  $ on eastwards).

Now, we swap the top floor of the failure $c$ with the bottom floor (i.e., we
move the entries at the positions $\left(  i-1,j\right)  ,\ \ \left(
i-1,j+1\right)  ,\ \ \left(  i-1,j+2\right)  ,\ \ \ldots$ to the positions
$\left(  i,j+1\right)  ,\ \ \left(  i,j+2\right)  ,\ \ \left(  i,j+3\right)
,\ \ \ldots$ (respectively), and vice versa). In other words, we replace the
filling $T$ by the filling $T^{\prime}$ of $P\left(  \sigma^{\prime}\right)  $
which is given by%
\begin{align*}
T^{\prime} \tup{p, q} &=
\begin{cases}
T \tup{i, q+1}, & \text{if } p = i-1 \text{ and } q \geq j; \\
T \tup{i-1, q-1}, & \text{if } p = i \text{ and } q > j; \\
T \tup{p, q}  , & \text{otherwise}
\end{cases} \\
&
\qquad \qquad \qquad \qquad \qquad \qquad
\text{for all } \tup{p, q} \in P\tup{\sigma^{\prime}} .
\end{align*}
The resulting filling $T^{\prime}$ can be shown to be a
$\sigma^{\prime}$-array (see Lemma \ref{lem.flagJT.flip1} \textbf{(a)} for a proof).
\end{enumerate}

\noindent
The resulting pair $\left(  \sigma^{\prime},T^{\prime}\right)  $ will be
denoted by $\operatorname*{flip}\left(  \sigma,T\right)  $, and is said to
arise from $\left(  \sigma,T\right)  $ by \emph{flipping}.
\end{definition}

\begin{figure}
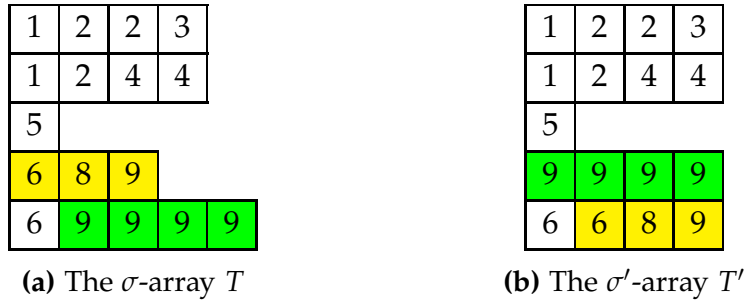

     \centering
     \begin{subfigure}[b]{0.3\textwidth}
         \centering
         \ytableaushort{1223,1244,5,{*(yellow)6}{*(yellow)8}{*(yellow)9},6{*(green)9}{*(green)9}{*(green)9}{*(green)9}}
         \caption[\textbf{(a)}]{The $\sigma$-array $T$}
         \label{fig.flip.1a}
     \end{subfigure}
     \qquad \qquad
     \begin{subfigure}[b]{0.3\textwidth}
         \centering
         \ytableaushort{1223,1244,5,{*(green)9}{*(green)9}{*(green)9}{*(green)9},6{*(yellow)6}{*(yellow)8}{*(yellow)9}}
         \caption{The $\sigma^{\prime}$-array $T^{\prime}$}
         \label{fig.flip.1b}
     \end{subfigure}
     \caption{Arrays for Example \ref{exa.flip.three-flips} \textbf{(a)}. The boxes constituting the top floor of $c$ in $T$ are marked in yellow, whereas the boxes constituting the bottom floor are marked in green. In $T^{\prime}$, the yellow entries and the green entries trade places.}
     \label{fig.flip.1}
\end{figure}

\begin{figure}
     \centering
     \begin{subfigure}[b]{0.3\textwidth}
         \centering
		 \ytableaushort{1223,2244,5,68{*(green)9},79999}
         \caption[\textbf{(a)}]{The $\sigma$-array $T$}
         \label{fig.flip.2a}
     \end{subfigure}
     \qquad \qquad
     \begin{subfigure}[b]{0.3\textwidth}
         \centering
         \ytableaushort{1223,2244,5{*(green)9},68,79999}
         \caption{The $\sigma^{\prime}$-array $T^{\prime}$}
         \label{fig.flip.2b}
     \end{subfigure}
     \caption{Arrays for Example \ref{exa.flip.three-flips} \textbf{(b)}. The colors are as in Figure \ref{fig.flip.1}.}
     \label{fig.flip.2}
\end{figure}

\begin{figure}
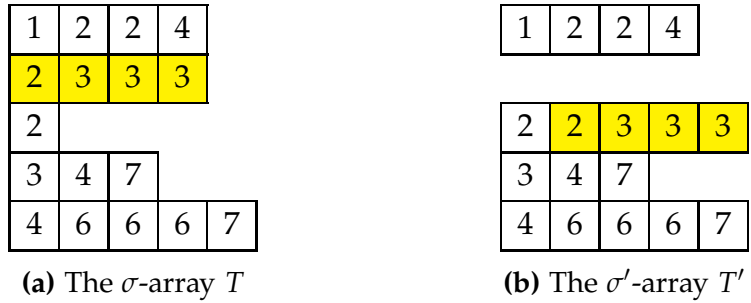

     \centering
     \begin{subfigure}[b]{0.3\textwidth}
         \centering
		 \ytableaushort{1224,{*(yellow)2}{*(yellow)3}{*(yellow)3}{*(yellow)3},2,347,46667}
         \caption[\textbf{(a)}]{The $\sigma$-array $T$}
         \label{fig.flip.3a}
     \end{subfigure}
     \qquad \qquad
     \begin{subfigure}[b]{0.3\textwidth}
         \centering
         \ytableaushort{1224,\none,2{*(yellow)2}{*(yellow)3}{*(yellow)3}{*(yellow)3},347,46667}
         \caption{The $\sigma^{\prime}$-array $T^{\prime}$}
         \label{fig.flip.3b}
     \end{subfigure}
     \caption{Arrays for Example \ref{exa.flip.three-flips} \textbf{(c)}. The colors are as in Figure \ref{fig.flip.1}.}
     \label{fig.flip.3}
\end{figure}

\begin{example}
\label{exa.flip.three-flips}
Let $n$, $\mu$ and $\sigma$ be as in Example \ref{exa.flagJT.fail.1}. We shall
flip three different failing arrays.

\begin{enumerate}

\item[\textbf{(a)}] Let $T$ be the $\sigma$-array
shown in Figure~\ref{fig.flip.1}~\textbf{(a)}.
Its bottommost leftmost failure is $\left(  5,1\right)  $. The top floor of
this failure consists of the entries $6,8,9$ in the $4$-th row, whereas the
bottom floor consists of the entries $9,9,9,9$ in the $5$-th row. Thus,
flipping $\left(  \sigma,T\right)  $ results in the pair $\left(
\sigma^{\prime},T^{\prime}\right)  $, where $\sigma^{\prime}$ is the
permutation $\sigma\circ s_{4}$ (which sends $1,2,3,4,5$ to $1,2,5,3,4$) and
where $T^{\prime}$ is the $\sigma^{\prime}$-array
shown in Figure~\ref{fig.flip.1}~\textbf{(b)}
(obtained from $T$ by swapping the top floor with the bottom floor). Note that
$\left(  \sigma^{\prime},T^{\prime}\right)  $ is still a failing twisted
array, and $\left(  5,1\right)  $ is still its bottommost leftmost failure.

\item[\textbf{(b)}] Let $T$ instead be the $\sigma$-array
shown in Figure~\ref{fig.flip.2}~\textbf{(a)}.
Its bottommost leftmost failure is $\left(  4,2\right)  $. The top floor of
this failure is empty (i.e., consists of no entries at all), whereas the
bottom floor consists of the single entry $9$ in the $4$-th row. Thus,
flipping $\left(  \sigma,T\right)  $ results in the pair $\left(
\sigma^{\prime},T^{\prime}\right)  $, where $\sigma^{\prime}$ is the
permutation $\sigma\circ s_{3}$ (which sends $1,2,3,4,5$ to $1,2,4,5,3$) and
where $T^{\prime}$ is the $\sigma^{\prime}$-array
shown in Figure~\ref{fig.flip.2}~\textbf{(b)}
(obtained from $T$ by swapping the top floor with the bottom floor). Note that
$\left(  \sigma^{\prime},T^{\prime}\right)  $ is still a failing twisted
array, and $\left(  4,2\right)  $ is still its bottommost leftmost failure
(but is now an inner failure).

\item[\textbf{(c)}] Let $T$ instead be the $\sigma$-array
shown in Figure~\ref{fig.flip.3}~\textbf{(a)}.
Its bottommost leftmost failure is $\left(  3,1\right)  $. The top floor of
this failure consists of the entire $2$-nd row, whereas the bottom floor is
empty. Thus, flipping $\left(  \sigma,T\right)  $ results in the pair $\left(
\sigma^{\prime},T^{\prime}\right)  $, where $\sigma^{\prime}$ is the
permutation $\sigma\circ s_{2}$ (which sends $1,2,3,4,5$ to $1,5,2,4,3$) and
where $T^{\prime}$ is the $\sigma^{\prime}$-array
shown in Figure~\ref{fig.flip.3}~\textbf{(b)}
(obtained from $T$ by swapping the top floor with the bottom floor). Note that
$\left(  \sigma^{\prime},T^{\prime}\right)  $ is still a failing twisted
array, and $\left(  3,1\right)  $ is still its bottommost leftmost failure
(but is now an outer failure).

\end{enumerate}
\end{example}

Whatever the name might suggest, flipping a failing twisted array
does not remove the failure; quite to the contrary, the failure is preserved:

\begin{lemma}
\label{lem.flagJT.flip1}
Let $\left(  \sigma,T\right)  $ be a failing twisted array.
Let $\left(  \sigma^{\prime},T^{\prime}\right)  $ be the pair
$\operatorname*{flip}\left(  \sigma,T\right)  $. Then:

\begin{enumerate}
\item[\textbf{(a)}] This pair $\left(  \sigma^{\prime},T^{\prime}\right)  $ is
again a failing twisted array.

\item[\textbf{(b)}] If $c$ is the
bottommost leftmost failure of $\left(  \sigma,T\right)  $, then
the same box $c$ is again the bottommost leftmost failure of
$\left(  \sigma^{\prime},T^{\prime}\right)  $.

\item[\textbf{(c)}] We have $\operatorname*{flip}\left(  \sigma^{\prime
},T^{\prime}\right)  =\left(  \sigma,T\right)  $. In other words, if we flip
$\left(  \sigma,T\right)  $, and then flip the result, then we return back to
$\left(  \sigma,T\right)  $.

\item[\textbf{(d)}] We have $\left(  -1\right)  ^{\sigma^{\prime}}=-\left(
-1\right)  ^{\sigma}$.

\item[\textbf{(e)}] We have $w\left(  T^{\prime}\right)  =w\left(  T\right)  $.

\item[\textbf{(f)}] If $\left(  \sigma,T\right)  $ is $\mathbf{b}$-flagged,
then so is $\left(  \sigma^{\prime},T^{\prime}\right)  $.
\end{enumerate}
\end{lemma}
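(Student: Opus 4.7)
The plan is to verify (a)--(f) sequentially, after first establishing one geometric fact that makes the flip well-defined.

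Preliminary fact: if $(\sigma, T)$ has bottommost leftmost failure $c = (i, j)$, then $\ell_{i-1} \geq j - 1$, where $\ell_k := \mu_{\sigma(k)} - \sigma(k) + k$ denotes the length of the $k$-th row of $P(\sigma)$. Indeed, if $j \geq 2$ and $\ell_{i-1} < j - 1$, then $(i-1, j-1) \notin P(\sigma)$ while $(i, j-1) \in P(\sigma)$, so $(i, j-1)$ would be an outer failure strictly west of column $j$, contradicting leftmostness. This bound makes the row lengths $\ell'_{i-1} = \ell_i - 1$ and $\ell'_i = \ell_{i-1} + 1$ of $P(\sigma')$ nonnegative (so $\sigma'$ is legitimate) and guarantees that each ``otherwise'' case in the definition of $T'$ refers to a box actually in $P(\sigma)$.

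For (a), I would check weak left-to-right increase in rows $i-1$ and $i$ of $T'$. Within either of the two blocks (columns $< j$ vs.\ columns $\geq j$ in row $i-1$; columns $\leq j$ vs.\ columns $> j$ in row $i$), monotonicity is inherited from $T$. The junction in row $i-1$ needs $T(i-1, j-1) \leq T(i, j+1)$, which follows from the chain
\[
T(i-1, j-1) < T(i, j-1) \leq T(i, j) \leq T(i, j+1),
\]
the first (strict) inequality being the non-failure of $(i, j-1)$. The junction in row $i$ needs $T(i, j) \leq T(i-1, j)$; this is precisely the inner-failure condition, while in the outer-failure case row $i$ of $T'$ has length $\leq j$ and no junction exists. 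The box $c$ persists as a failure of $(\sigma', T')$: an inner one if $\ell_i > j$ (since $T'(i-1, j) = T(i, j+1) \geq T(i, j) = T'(i, j)$), an outer one if $\ell_i = j$ (since then $(i-1, j) \notin P(\sigma')$). For (b), since only rows $i-1$ and $i$ are altered and at columns $q < j$ the new entries agree with the old ones, the failure status of every box in columns $< j$ and of every box strictly below $c$ in column $j$ is the same in $(\sigma', T')$ as in $(\sigma, T)$, hence none of them is a failure. Part (c) then follows, since (b) identifies $c$ as the bottommost leftmost failure of $(\sigma', T')$: the second flip applies the same transposition $s_{i-1}$ and swaps the top/bottom floors of $(\sigma', T')$ at $c$, which by direct inspection are precisely the bottom/top floors of $(\sigma, T)$ at $c$.

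Parts (d) and (e) are brief. Part (d) holds because $s_{i-1}$ is a transposition, so $(-1)^{\sigma'} = -(-1)^{\sigma}$. Part (e) follows by matching factors box-by-box: both substitution rules $T'(i-1, q) = T(i, q+1)$ and $T'(i, q) = T(i-1, q-1)$ preserve the diagonal index $q - p$, so each factor $u_{T(p, q),\, q-p}$ of $w(T)$ reappears verbatim in $w(T')$, merely attached to a different box. The main obstacle will be (f), because one has to compare $b_{\sigma(i-1)}$ and $b_{\sigma(i)}$ without an a priori ordering of $\sigma(i-1)$ and $\sigma(i)$. The key observation is that the strict monotonicity of $k \mapsto \mu_k - k$ forces $\sigma(i-1) > \sigma(i)$ in the outer-failure case and $\sigma(i-1) < \sigma(i)$ in the inner-failure case with $\ell_i = j$. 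Combined with the weak monotonicity of $\bb$ and the fact (established in (a)) that each row of $T'$ is weakly increasing, so its maximum is its rightmost entry, the required bounds on rows $i-1$ and $i$ of $T'$ follow in every subcase by tracing that rightmost entry back through the definition of $T'$ to an entry of $T$ whose bound by $b_{\sigma(i-1)}$ or $b_{\sigma(i)}$ is supplied by $\bb$-flaggedness of $(\sigma, T)$.
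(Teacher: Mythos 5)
Your plan is correct and tracks the paper's argument closely: your preliminary fact is the paper's Claim 4, the junction checks in part (a) mirror the paper's Claims 10--14, part (b) is Claims 15--16, and (c)--(e) are the same short arguments. The one substantive deviation is in (f), where you bound only the rightmost entry of each altered row (legitimate by the weak increase established in (a)), whereas the paper bounds staying and displaced entries separately; this streamlining works but leaves one detail unstated. Namely, when the bottom floor is empty ($\rho_i = j$), your bound on the rightmost entry $T(i-1,j-1)$ of row $i-1$ relies on $\sigma(i-1) < \sigma(i)$, which you established only for inner failures, so you must also observe that $\rho_i = j$ forces the failure to be inner: if it were outer, your preliminary fact would give $\rho_{i-1} = j-1 = \rho_i - 1$, hence $\mu_{\sigma(i-1)} - \sigma(i-1) = \mu_{\sigma(i)} - \sigma(i)$, contradicting the strict decrease of $k \mapsto \mu_k - k$. (The paper avoids this case split by bounding $T(i-1,j-1)$ uniformly via $T(i-1,j-1) < T(i,j-1) \le b_{\sigma(i)}$, the same chain you already used in part (a).)
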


Lemma \ref{lem.flagJT.flip1} allows us to pair up the failing twisted arrays
with each other, leading to the following:

\begin{lemma}
\label{lem.flagJT.cancel}
We have
\[
\sum_{\substack{\left(  \sigma,T\right)  \text{ is a }\mathbf{b}%
\text{-flagged}\\\text{twisted array}}}\left(  -1\right)  ^{\sigma}w\left(
T\right)  =\sum_{\substack{\left(  \sigma,T\right)  \text{ is an}%
\\\text{unfailing }\mathbf{b}\text{-flagged}\\\text{twisted array}}}\left(
-1\right)  ^{\sigma}w\left(  T\right)  .
\]

\end{lemma}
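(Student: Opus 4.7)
The plan is to split the sum on the left-hand side according to whether the twisted array is failing or unfailing, and then to show that the failing summands cancel each other out in pairs via the flipping operation. More precisely, I would write
\begin{align*}
\sum_{\substack{\left(  \sigma,T\right)  \text{ is a }\mathbf{b}\text{-flagged}\\\text{twisted array}}}\left(  -1\right)  ^{\sigma}w\left(  T\right)
&= \sum_{\substack{\left(  \sigma,T\right)  \text{ is an unfailing}\\\mathbf{b}\text{-flagged twisted array}}}\left(  -1\right)  ^{\sigma}w\left(  T\right) \\
&\qquad + \sum_{\substack{\left(  \sigma,T\right)  \text{ is a failing}\\\mathbf{b}\text{-flagged twisted array}}}\left(  -1\right)  ^{\sigma}w\left(  T\right)
\end{align*}
and then argue that the second summand vanishes.

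To show that the failing summand vanishes, I would invoke Lemma~\ref{lem.flagJT.flip1}. Parts \textbf{(a)} and \textbf{(f)} of that lemma tell us that $\operatorname*{flip}$ sends failing $\mathbf{b}$-flagged twisted arrays to failing $\mathbf{b}$-flagged twisted arrays, while part \textbf{(c)} says that $\operatorname*{flip}$ is an involution on this set. Moreover, part \textbf{(d)} shows that this involution is sign-reversing, i.e., $\left(-1\right)^{\sigma^{\prime}} = -\left(-1\right)^{\sigma}$, and thus has no fixed points (since $\sigma \ne \sigma^{\prime}$). Part \textbf{(e)} shows that the weight is preserved, i.e., $w\left(T^{\prime}\right) = w\left(T\right)$. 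Therefore, the failing $\mathbf{b}$-flagged twisted arrays may be grouped into pairs $\set{\left(\sigma, T\right),\, \operatorname*{flip}\left(\sigma, T\right)}$, and in each such pair the two contributions $\left(-1\right)^{\sigma} w\left(T\right)$ and $\left(-1\right)^{\sigma^{\prime}} w\left(T^{\prime}\right)$ are negatives of each other and hence cancel.

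The argument requires essentially no new input; the whole burden has been front-loaded into Lemma~\ref{lem.flagJT.flip1}, and in particular the nontrivial work lies in constructing $\operatorname*{flip}$ and verifying that it genuinely has all six properties claimed there (above all, that the output $\left(\sigma^{\prime}, T^{\prime}\right)$ is again a valid twisted array, that the bottommost leftmost failure is preserved so that flipping twice returns to the original, and that the $\mathbf{b}$-flaggedness survives the swap of the top and bottom floors). Given those facts, the present lemma is just the standard sign-reversing involution argument, and the writeup should take only a few lines.
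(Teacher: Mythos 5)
Your proposal is correct and follows essentially the same route as the paper: split into failing and unfailing twisted arrays, then cancel the failing contributions via the sign-reversing, weight-preserving involution $\operatorname*{flip}$, citing parts \textbf{(a)}, \textbf{(c)}, \textbf{(d)}, \textbf{(e)}, \textbf{(f)} of Lemma~\ref{lem.flagJT.flip1}. The only cosmetic difference is that the paper packages the pairing-up step in a standalone general lemma (Lemma~\ref{lem.A.cancel}, which also records the finiteness of the set of failing $\mathbf{b}$-flagged twisted arrays) rather than arguing it inline.
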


Theorem \ref{thm.flagJT.gen} now follows by combining the results of Lemma
\ref{lem.flagJT.det=sum-twisted}, Lemma \ref{lem.flagJT.cancel} and Lemma
\ref{lem.flagJT.failsum}.

In turn, deriving Proposition \ref{prop.flagJT.f} from Theorem \ref{thm.flagJT.gen} is
now an exercise in substitution.

\subsection{A formula for $\mathbf{s}_{\lambda}\ive{\nu}$}

Having proved Proposition \ref{prop.flagJT.f}, we can apply it to the
polynomials $\mathbf{s}_{\lambda}\left[  \nu\right]  $ introduced in
Definition~\ref{def.snu}:

\begin{corollary}
\label{cor.slam.det}
Let $n\in\mathbb{N}$. Let $\mu=\left(  \mu_{1},\mu
_{2},\ldots,\mu_{n}\right)  $ be any partition with at most $n$ nonzero
entries. Let $\lambda$ be a further partition. Let $\mathbf{b} = \left(  b_1, b_2, b_3,
\ldots\right)  $ be the flagging induced by $\lambda/\mu$. Then,
\begin{equation}
\mathbf{s}_{\lambda}\left[  \mu\right]  =\det\left(  h(\mu_i-i+j,\ \ b_i,\ \ 1-j)\right)  _{i,j\in\left[  n\right]  }.
\label{eq.snu=.det}
\end{equation}

\end{corollary}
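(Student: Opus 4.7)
The plan is to combine the two tools we have just developed: Corollary~\ref{corollaryFlaggedExc}, which rewrites $\mathbf{s}_\lambda[\mu]$ as a sum over flagged semistandard tableaux, and Proposition~\ref{prop.flagJT.f}, which turns such a sum into a determinant of $h$-polynomials. Concretely, I would start from the right-hand side of Corollary~\ref{corollaryFlaggedExc}, namely
\[
\mathbf{s}_\lambda[\mu] = \sum_{T \in \mathcal{F}(\lambda/\mu)} \ \ \prod_{(i,j)\in Y(\mu)} \tup{x_{T(i,j)} + y_{T(i,j)+j-i}} .
\]
By Definition~\ref{def.flagging-of-lm}, the set $\mathcal{F}(\lambda/\mu)$ is literally $\FSSYT(\mu, \mathbf{b})$, where $\mathbf{b}$ is the flagging induced by $\lambda/\mu$.

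Next I would verify the hypotheses of Proposition~\ref{prop.flagJT.f}. We need $\mu$ to be presented as a length-$n$ tuple, which is allowed because $\mu$ has at most $n$ nonzero entries (any trailing zeros fit harmlessly into $\mu = (\mu_1, \dots, \mu_n)$). We need $\mathbf{b}$ to be weakly increasing, and this is exactly Lemma~\ref{lem.flagging-of-lm.inc}. With these in place, Proposition~\ref{prop.flagJT.f} gives
\[
\sum_{T \in \FSSYT(\mu, \mathbf{b})} \ \ \prod_{(i,j)\in Y(\mu)} \tup{x_{T(i,j)} + y_{T(i,j)+j-i}} = \det\tup{h(\mu_i - i + j, \ b_i, \ 1-j)}_{i,j\in [n]} .
\]
Chaining the two equalities produces \eqref{eq.snu=.det}.

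There is no genuine obstacle here: once the two preceding results are in hand, this corollary is really a two-line substitution. The only thing that requires a moment's care is the bookkeeping around the flagging, i.e.\ making sure that the $\mathbf{b}$ defined by Definition~\ref{def.flagging-of-lm} is the same $\mathbf{b}$ that indexes $\mathcal{F}(\lambda/\mu)$, and that it meets the weak-monotonicity hypothesis of Proposition~\ref{prop.flagJT.f}; both are immediate from the definitions and from Lemma~\ref{lem.flagging-of-lm.inc}. So the proof will consist of writing out these two steps and observing that the flagging matches.
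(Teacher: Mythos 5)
Your argument is correct and is essentially identical to the paper's proof: both chain Corollary~\ref{corollaryFlaggedExc} with Proposition~\ref{prop.flagJT.f}, using $\mathcal{F}(\lambda/\mu)=\operatorname{FSSYT}(\mu,\mathbf{b})$ and Lemma~\ref{lem.flagging-of-lm.inc} to verify that the induced flagging is weakly increasing. Nothing further to add.
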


\section{\label{sec.det}Determinantal identities}

In this section, we shall prove some general properties of determinants,
which we will later use in reducing the determinants that arise from Corollary~\ref{cor.slam.det}.
The proofs will require nothing but the Laplace expansion of a determinant.

\begin{convention}
\label{conv.sec-det}
In all proofs in this section, we shall use the following notation:

Let $n\in\mathbb{N}$. For any $k,\ell\in\left[  n\right]  $, and for any
$n\times n$-matrix $A$, we let $A_{\sim k,\sim\ell}$ denote the matrix
obtained from $A$ by removing the $k$-th row and the $\ell$-th column. We also
let $A_{k,\ell}$ denote the $\left(  k,\ell\right)  $-th entry of the matrix
$A$.
\end{convention}

For example,
if $A = \begin{pmatrix} a & b & c \\ d & e & f \\ g & h & i \end{pmatrix}$, then $A_{\sim 2,\sim 3} = \begin{pmatrix} a & b \\ g & h \end{pmatrix}$ and $A_{2, 3} = f$.

Using these notations, we can state Laplace expansion as follows:

\begin{itemize}
\item For any $n\times n$-matrix $A$ and any $k\in\left[  n\right]  $, we can
compute $\det A$ using Laplace expansion along the $k$-th row:%
\begin{equation}
\det A=\sum_{\ell=1}^{n}\left(  -1\right)  ^{k+\ell}A_{k,\ell}\det\left(
A_{\sim k,\sim\ell}\right)  .
\label{pf.lem.det-u.lap}
\end{equation}

\item For any $n\times n$-matrix $A$ and any $\ell\in\left[  n\right]  $, we
can compute $\det A$ using Laplace expansion along the $\ell$-th column:%
\begin{equation}
\det A=\sum_{k=1}^{n}\left(  -1\right)  ^{k+\ell}A_{k,\ell}\det\left(  A_{\sim
k,\sim\ell}\right)  .
\label{pf.lem.det-u.lapc}
\end{equation}

\end{itemize}

We now move on to less well-trodden ground.
We begin with the following general lemma about determinants (the $r=1$ case of \cite[\S 319]{MuiMet60}):

\begin{lemma}
\label{determinant.sum}

Let $P$ and $Q$ be two $n\times n$-matrices over
some commutative ring.

For each $k\in\left[  n\right]  $, we let 
$P\underset{\operatorname*{row}}{\overset{k}{\leftarrow}}Q$
denote the $n\times n$-matrix
that is obtained from $P$ by replacing the $k$-th row by the $k$-th row of $Q$.
(That is, the $\tup{i,j}$-th entry of this matrix is $\begin{cases} P_{i,j}, & \text{ if } i \neq k; \\ Q_{i,j}, & \text{ if } i = k \end{cases}$ for every $i, j \in \ive{n}$.)

For each $k\in\left[  n\right]  $, we let
$P\underset{\operatorname*{col}}{\overset{k}{\leftarrow}}Q$
denote the matrix
that is obtained from $P$ by replacing the $k$-th column by the $k$-th column of $Q$.
(That is, the $\tup{i,j}$-th entry of this matrix is $\begin{cases} P_{i,j}, & \text{ if } j \neq k; \\ Q_{i,j}, & \text{ if } j = k \end{cases}$ for every $i, j \in \ive{n}$.)

Then,
\[
\sum_{k=1}^{n}\det\left(  P\underset{\operatorname*{row}%
}{\overset{k}{\leftarrow}}Q\right)  =\sum_{k=1}^{n}\det\left(
P\underset{\operatorname*{col}}{\overset{k}{\leftarrow}}Q\right)  .
\]

\end{lemma}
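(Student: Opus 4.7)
The plan is to expand both sides using Laplace expansion and observe that the resulting double sums are literally the same, just indexed in opposite orders.

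First I would fix $k \in \ive{n}$ and apply Laplace expansion along the $k$-th row to $\det\tup{P \underset{\operatorname{row}}{\overset{k}{\leftarrow}} Q}$. The crucial observation is that, when we strike out the $k$-th row of this matrix, what remains is simply $P_{\sim k, \sim \ell}$ (independent of $Q$), because the only row that differs from $P$ is exactly the one we removed. Meanwhile, the $\tup{k, \ell}$-th entry of $P \underset{\operatorname{row}}{\overset{k}{\leftarrow}} Q$ is $Q_{k, \ell}$. Therefore
\[
\det\tup{P \underset{\operatorname{row}}{\overset{k}{\leftarrow}} Q}
= \sum_{\ell = 1}^{n} \tup{-1}^{k+\ell} Q_{k, \ell} \det\tup{P_{\sim k, \sim \ell}} .
\]
Summing over $k \in \ive{n}$ gives
\[
\sum_{k=1}^{n} \det\tup{P \underset{\operatorname{row}}{\overset{k}{\leftarrow}} Q}
= \sum_{k=1}^{n} \sum_{\ell=1}^{n} \tup{-1}^{k+\ell} Q_{k, \ell} \det\tup{P_{\sim k, \sim \ell}} .
\]

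Next, I would do the symmetric computation for the column version: fix $\ell \in \ive{n}$ and apply Laplace expansion along the $\ell$-th column of $P \underset{\operatorname{col}}{\overset{\ell}{\leftarrow}} Q$. Again the surviving $\tup{n-1} \times \tup{n-1}$ minor is $P_{\sim k, \sim \ell}$ (since we deleted the only column where $P \underset{\operatorname{col}}{\overset{\ell}{\leftarrow}} Q$ differs from $P$), and the $\tup{k, \ell}$-th entry is $Q_{k, \ell}$. Hence
\[
\det\tup{P \underset{\operatorname{col}}{\overset{\ell}{\leftarrow}} Q}
= \sum_{k=1}^{n} \tup{-1}^{k+\ell} Q_{k, \ell} \det\tup{P_{\sim k, \sim \ell}} ,
\]
and summing over $\ell$ yields
\[
\sum_{\ell=1}^{n} \det\tup{P \underset{\operatorname{col}}{\overset{\ell}{\leftarrow}} Q}
= \sum_{\ell=1}^{n} \sum_{k=1}^{n} \tup{-1}^{k+\ell} Q_{k, \ell} \det\tup{P_{\sim k, \sim \ell}} .
\]

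Finally, I would compare the two double sums. They contain the same summands $\tup{-1}^{k+\ell} Q_{k,\ell} \det\tup{P_{\sim k,\sim \ell}}$ ranging over all pairs $\tup{k,\ell} \in \ive{n} \times \ive{n}$, and differ only in the order of summation. Swapping the order of summation gives the claimed equality. There is no real obstacle here; the only subtlety is noticing that the same minor $P_{\sim k, \sim \ell}$ appears on both sides because the row/column we modified is precisely the one we delete in the expansion, so $Q$ contributes only through the entry $Q_{k,\ell}$ and not through any of the surviving submatrices.
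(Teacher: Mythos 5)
Your proof is correct and is essentially identical to the paper's own argument: both expand $\det\tup{P\underset{\operatorname{row}}{\overset{k}{\leftarrow}}Q}$ along the $k$-th row and $\det\tup{P\underset{\operatorname{col}}{\overset{\ell}{\leftarrow}}Q}$ along the $\ell$-th column, note that the surviving minors are $P_{\sim k,\sim\ell}$ in both cases, and then compare the resulting double sums. Nothing to add.
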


\begin{example}
For $n=3$, this is saying that
\begin{align*}
&  \det
\begin{pmatrix}
Q_{1,1} & Q_{1,2} & Q_{1,3}\\
P_{2,1} & P_{2,2} & P_{2,3}\\
P_{3,1} & P_{3,2} & P_{3,3}
\end{pmatrix}
+\det
\begin{pmatrix}
P_{1,1} & P_{1,2} & P_{1,3}\\
Q_{2,1} & Q_{2,2} & Q_{2,3}\\
P_{3,1} & P_{3,2} & P_{3,3}
\end{pmatrix}
+\det
\begin{pmatrix}
P_{1,1} & P_{1,2} & P_{1,3}\\
P_{2,1} & P_{2,2} & P_{2,3}\\
Q_{3,1} & Q_{3,2} & Q_{3,3}
\end{pmatrix}
\\
&  =\det
\begin{pmatrix}
Q_{1,1} & P_{1,2} & P_{1,3}\\
Q_{2,1} & P_{2,2} & P_{2,3}\\
Q_{3,1} & P_{3,2} & P_{3,3}
\end{pmatrix}
+\det
\begin{pmatrix}
P_{1,1} & Q_{1,2} & P_{1,3}\\
P_{2,1} & Q_{2,2} & P_{2,3}\\
P_{3,1} & Q_{3,2} & P_{3,3}
\end{pmatrix}
+\det
\begin{pmatrix}
P_{1,1} & P_{1,2} & Q_{1,3}\\
P_{2,1} & P_{2,2} & Q_{2,3}\\
P_{3,1} & P_{3,2} & Q_{3,3}
\end{pmatrix}
.
\end{align*}

\end{example}

The interested reader can find the general case of \cite[\S 319]{MuiMet60} in Section~\ref{sec.odds} below (as Lemma~\ref{determinant.sumr}); but Lemma~\ref{determinant.sum} will suffice for us.
\medskip

We recall the Iverson bracket notation.
The following lemma is an easy consequence of Lemma \ref{determinant.sum}.

\begin{lemma}
\label{lem.det-u-cor}
Let $n$ be a positive integer. Let $R$ be a commutative ring.

Let $u_{i,j}$ be an element of $R$ for each $i\in\left[  n\right]  $ and each
$j\in\left[  n+1\right]  $. Then,%
\[
\sum_{k=1}^{n}\det\left(  u_{i,j+\left[  k=i\right]  }\right)  _{i,j\in\left[
n\right]  }=\det\left(  u_{i,j+\left[  n=j\right]  }\right)  _{i,j\in\left[
n\right]  }.
\]

\end{lemma}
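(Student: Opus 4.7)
The plan is to invoke Lemma \ref{determinant.sum} with a well-chosen pair of matrices, and then observe that most of the resulting column-replacement determinants vanish because of repeated columns.

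Concretely, I would define the $n\times n$ matrices $P$ and $Q$ by $P_{i,j} := u_{i,j}$ and $Q_{i,j} := u_{i,j+1}$ for all $i,j \in [n]$; thus $Q$ is obtained from an ``extended'' matrix by shifting its columns one step to the left. Then for each $k \in [n]$, the matrix $(u_{i, j+[k=i]})_{i,j\in[n]}$ agrees with $P$ in every row $i \neq k$ and, in row $k$, has entries $u_{k,2}, u_{k,3}, \ldots, u_{k,n+1}$, which is precisely the $k$-th row of $Q$. Hence $(u_{i,j+[k=i]})_{i,j\in[n]} = P\underset{\operatorname*{row}}{\overset{k}{\leftarrow}}Q$, and the left-hand side of the lemma becomes $\sum_{k=1}^n \det\bigl(P\underset{\operatorname*{row}}{\overset{k}{\leftarrow}}Q\bigr)$. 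Applying Lemma \ref{determinant.sum} rewrites this as $\sum_{k=1}^n \det\bigl(P\underset{\operatorname*{col}}{\overset{k}{\leftarrow}}Q\bigr)$.

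Next I would analyze each term in this column sum. For $k < n$, the matrix $P\underset{\operatorname*{col}}{\overset{k}{\leftarrow}}Q$ has two equal columns: its $k$-th column, coming from $Q$, has entries $Q_{i,k}=u_{i,k+1}$; but its $(k+1)$-th column, coming from $P$, also has entries $P_{i,k+1}=u_{i,k+1}$. Therefore $\det\bigl(P\underset{\operatorname*{col}}{\overset{k}{\leftarrow}}Q\bigr) = 0$ for every $k < n$. Only the term $k=n$ survives; and the matrix $P\underset{\operatorname*{col}}{\overset{n}{\leftarrow}}Q$ is precisely the matrix $(u_{i,j+[n=j]})_{i,j\in[n]}$, since its $n$-th column has entries $u_{i,n+1}$ while all other columns are unchanged from $P$. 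Thus the column sum collapses to $\det(u_{i,j+[n=j]})_{i,j\in[n]}$, which is the right-hand side.

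There is no real obstacle here: the whole argument is essentially an unpacking of definitions plus a vanishing observation, and the only thing that requires a moment of care is making sure the column index $k+1$ in $P$ really matches the column index $k$ in $Q$ (which is automatic from the definition $Q_{i,j}=u_{i,j+1}$). This proof will therefore be rather short once the matrices $P$ and $Q$ are introduced.
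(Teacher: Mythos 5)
Your proposal is correct and takes essentially the same route as the paper's proof: the same choice of $P$ and $Q$, the same application of Lemma \ref{determinant.sum}, and the same observation that for $k<n$ the column-replaced matrix has identical $k$-th and $(k+1)$-th columns, so that only the $k=n$ term survives.
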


\begin{example}
For $n=3$, the claim of Lemma \ref{lem.det-u-cor} is%
\begin{align*}
& \det\left(
\begin{array}
[c]{ccc}%
u_{1,2} & u_{1,3} & u_{1,4}\\
u_{2,1} & u_{2,2} & u_{2,3}\\
u_{3,1} & u_{3,2} & u_{3,3}%
\end{array}
\right)  +\det\left(
\begin{array}
[c]{ccc}%
u_{1,1} & u_{1,2} & u_{1,3}\\
u_{2,2} & u_{2,3} & u_{2,4}\\
u_{3,1} & u_{3,2} & u_{3,3}%
\end{array}
\right)  +\det\left(
\begin{array}
[c]{ccc}%
u_{1,1} & u_{1,2} & u_{1,3}\\
u_{2,1} & u_{2,2} & u_{2,3}\\
u_{3,2} & u_{3,3} & u_{3,4}%
\end{array}
\right)  \\
& =\det\left(
\begin{array}
[c]{ccc}%
u_{1,1} & u_{1,2} & u_{1,4}\\
u_{2,1} & u_{2,2} & u_{2,4}\\
u_{3,1} & u_{3,2} & u_{3,4}%
\end{array}
\right)  .
\end{align*}

\end{example}

\begin{lemma}
\label{lem.det-uc}
Let $n$ be a positive integer. Let $R$ be a commutative ring.

Let $u_{i,j}$ be an element of $R$ for each $i\in\left[  n\right]  $ and each
$j\in\left[  n+1\right]  $. Let $p_{1},p_{2},\ldots,p_{n}$ be $n$ further
elements of $R$.

Then,%
\begin{align*}
& \sum_{k=1}^{n}\det\left(  u_{i,j+\left[  k=i\right]  }-p_{i}u_{i,j}\left[
k=i\right]  \right)  _{i,j\in\left[  n\right]  }\\
& =\det\left(  u_{i,j+\left[  n=j\right]  }\right)  _{i,j\in\left[  n\right]
}-\left(  \sum_{k=1}^{n}p_{k}\right)  \det\left(  u_{i,j}\right)
_{i,j\in\left[  n\right]  }.
\end{align*}

\end{lemma}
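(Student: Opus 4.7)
The plan is to reduce Lemma \ref{lem.det-uc} to Lemma \ref{lem.det-u-cor} via multilinearity of the determinant. For each fixed $k \in \ive{n}$, the matrix $\left(u_{i,j+[k=i]} - p_i u_{i,j}[k=i]\right)_{i,j\in\ive{n}}$ differs from the base matrix $\left(u_{i,j}\right)_{i,j\in\ive{n}}$ only in its $k$-th row: every row $i \neq k$ has $[k=i] = 0$ and so is just $\left(u_{i,j}\right)_j$, while the $k$-th row is $\left(u_{k, j+1} - p_k u_{k,j}\right)_j$. So I would split the $k$-th row as a difference of two rows and use the fact that $\det$ is linear in the $k$-th row.

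This decomposition would give, for each $k$,
\[
\det\left(u_{i,j+[k=i]} - p_i u_{i,j}[k=i]\right)_{i,j\in\ive{n}}
= \det\left(u_{i,j+[k=i]}\right)_{i,j\in\ive{n}} - p_k \det\left(u_{i,j}\right)_{i,j\in\ive{n}},
\]
because the matrix with $k$-th row $\left(u_{k,j}\right)_j$ and all other rows $\left(u_{i,j}\right)_j$ is literally the base matrix $\left(u_{i,j}\right)_{i,j\in\ive{n}}$ itself.

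Now I would sum this identity over $k \in \ive{n}$. The second pieces collect into $-\left(\sum_{k=1}^n p_k\right) \det\left(u_{i,j}\right)_{i,j\in\ive{n}}$, which is precisely the second term on the right-hand side of the claim. For the first pieces, we get $\sum_{k=1}^n \det\left(u_{i,j+[k=i]}\right)_{i,j\in\ive{n}}$, which by Lemma \ref{lem.det-u-cor} equals $\det\left(u_{i,j+[n=j]}\right)_{i,j\in\ive{n}}$. Combining the two gives exactly the claim.

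There is no real obstacle here; the only substantive step is recognizing that once the $-p_i u_{i,j}[k=i]$ summand is peeled off by row-linearity, what remains is precisely the input to Lemma \ref{lem.det-u-cor}. Since everything is a one-line multilinearity argument plus a citation, this proof should be short and essentially calculation-free.
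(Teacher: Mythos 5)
Your proof is correct and follows essentially the same strategy as the paper: isolate the $k$-th row, split off the $-p_k u_{k,\cdot}$ piece, obtain the identity $\det\tup{u_{i,j+\ive{k=i}}-p_iu_{i,j}\ive{k=i}}_{i,j} = \det\tup{u_{i,j+\ive{k=i}}}_{i,j} - p_k\det\tup{u_{i,j}}_{i,j}$, sum over $k$, and invoke Lemma~\ref{lem.det-u-cor}. The only cosmetic difference is that you establish the key per-$k$ identity directly via linearity of $\det$ in a single row, whereas the paper carries it out by Laplace-expanding each of the three relevant determinants along the $k$-th row (noting that their cofactors agree since the matrices differ only in that row) and then recombining; these are two standard, interchangeable ways of saying the same thing, and your phrasing is if anything slightly more streamlined.
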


\begin{example}
For $n=3$, the claim of Lemma \ref{lem.det-uc} is%
\begin{align*}
& \det\left(
\begin{array}
[c]{ccc}%
u_{1,2}-p_{1}u_{1,1} & u_{1,3}-p_{1}u_{1,2} & u_{1,4}-p_{1}u_{1,3}\\
u_{2,1} & u_{2,2} & u_{2,3}\\
u_{3,1} & u_{3,2} & u_{3,3}%
\end{array}
\right)  \\
& \ \ \ \ \ \ \ \ \ \ +\det\left(
\begin{array}
[c]{ccc}%
u_{1,1} & u_{1,2} & u_{1,3}\\
u_{2,2}-p_{2}u_{2,1} & u_{2,3}-p_{2}u_{2,2} & u_{2,4}-p_{2}u_{2,3}\\
u_{3,1} & u_{3,2} & u_{3,3}%
\end{array}
\right)  \\
& \ \ \ \ \ \ \ \ \ \ +\det\left(
\begin{array}
[c]{ccc}%
u_{1,1} & u_{1,2} & u_{1,3}\\
u_{2,1} & u_{2,2} & u_{2,3}\\
u_{3,2}-p_{3}u_{3,1} & u_{3,3}-p_{3}u_{3,2} & u_{3,4}-p_{3}u_{3,3}%
\end{array}
\right)  \\
& =\det\left(
\begin{array}
[c]{ccc}%
u_{1,1} & u_{1,2} & u_{1,4}\\
u_{2,1} & u_{2,2} & u_{2,4}\\
u_{3,1} & u_{3,2} & u_{3,4}%
\end{array}
\right)  -\left(  p_{1}+p_{2}+p_{3}\right)  \det\left(
\begin{array}
[c]{ccc}%
u_{1,1} & u_{1,2} & u_{1,3}\\
u_{2,1} & u_{2,2} & u_{2,3}\\
u_{3,1} & u_{3,2} & u_{3,3}%
\end{array}
\right)  .
\end{align*}

\end{example}

As a curiosity, we observe that Lemma~\ref{lem.det-uc} would also hold if the $p_i$ on the left hand side were replaced by $p_j$; see Lemma~\ref{lem.det-u} below.
\medskip

Finally, we state a particularly simple and well-known property of determinants:

\begin{lemma}
\label{lem.det.lastrow=01}Let $n$ be a positive integer. Let $R$ be a
commutative ring.

Let $a_{i,j}$ be an element of $R$ for each $i,j\in\left[  n\right]  $. Assume
that
\begin{equation}
a_{n,\ell}=0\qquad\text{for each }\ell\in\left[  n-1\right]
.\label{eq.lem.det.lastrow=01.ass}%
\end{equation}
Then,%
\[
\det\left(  a_{i,j}\right)  _{i,j\in\left[  n\right]  }=a_{n,n}\cdot
\det\left(  a_{i,j}\right)  _{i,j\in\left[  n-1\right]  }.
\]

\end{lemma}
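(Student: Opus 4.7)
The plan is to apply Laplace expansion along the $n$-th row. Using the formula \eqref{pf.lem.det-u.lap} with $A = \tup{a_{i,j}}_{i,j \in \ive{n}}$ and $k = n$, I would write
\[
\det\tup{a_{i,j}}_{i,j \in \ive{n}} = \sum_{\ell=1}^{n} \tup{-1}^{n+\ell} a_{n,\ell} \det\tup{A_{\sim n, \sim \ell}}.
\]
The hypothesis \eqref{eq.lem.det.lastrow=01.ass} says $a_{n,\ell} = 0$ for $\ell \in \ive{n-1}$, so every term of this sum vanishes except for the $\ell = n$ term. The surviving term is $\tup{-1}^{2n} a_{n,n} \det\tup{A_{\sim n, \sim n}} = a_{n,n} \det\tup{A_{\sim n, \sim n}}$.

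It remains to identify the submatrix $A_{\sim n, \sim n}$, which is obtained from $A$ by removing the last row and the last column; by Convention \ref{conv.sec-det}, this is exactly $\tup{a_{i,j}}_{i,j \in \ive{n-1}}$. Combining these observations yields the claimed equality. There is no real obstacle here: the whole argument is one invocation of Laplace expansion together with the vanishing of $n-1$ of the $n$ terms.
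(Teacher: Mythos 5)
Your proof is correct and follows exactly the same approach as the paper: Laplace expansion along the $n$-th row, observing that all terms vanish except the $\ell=n$ term. Nothing to add.
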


\section{\label{sec.further}Combinatorial lemmas}

We shall next prove some combinatorial lemmas.

\subsection{The numbers $\ell_i$, $m_i$, $\ell^t_i$, $m^t_i$}

\begin{convention}
    \label{conv.further.convs}
    From now on \textbf{for the rest of this paper},
    we fix two partitions $\lambda$ and $\mu$.
    (We do not require $\lambda \supseteq \mu$.)

    We furthermore set
    \begin{align*}
    & \ell_i := \lambda_i - i,
    \qquad \qquad \qquad
    m_i := \mu_i - i, \\
    & \ell^t_i := \lambda^t_i - i
    \qquad \text{and} \qquad
    m^t_i := \mu^t_i - i
    \qquad \text{for all } i \geq 1.
    \end{align*}
\end{convention}

Thus, Definition~\ref{def.Delta-lambda} yields
\begin{align*}
    \Delta\tup{\lambda} = \set{\ell_1, \ell_2, \ell_3, \ldots},
    &\qquad \qquad
    \Delta\tup{\mu} = \set{m_1, m_2, m_3, \ldots}, \\
    \Delta\tup{\lambda^t} = \set{\ell^t_1, \ell^t_2, \ell^t_3, \ldots},
    &\qquad \qquad
    \Delta\tup{\mu^t} = \set{m^t_1, m^t_2, m^t_3, \ldots}.
\end{align*}

\begin{lemma}
    \label{lem.l-decrease}
    We have
    \begin{align*}
        & \ell_1 > \ell_2 > \ell_3 > \cdots \qquad \text{ and} \\
        & m_1 > m_2 > m_3 > \cdots \qquad \text{ and} \\
        & \ell^t_1 > \ell^t_2 > \ell^t_3 > \cdots \qquad \text{ and} \\
        & m^t_1 > m^t_2 > m^t_3 > \cdots .
    \end{align*}
\end{lemma}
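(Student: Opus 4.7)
The plan is to observe that all four sequences in question have the same form $\pi_i - i$ for some partition $\pi$, and then exploit the defining weak decrease of a partition to upgrade it to a strict decrease after subtracting $i$. Specifically, I would prove the following slightly more general claim once, and then apply it four times (to $\pi = \lambda$, $\pi = \mu$, $\pi = \lambda^t$ and $\pi = \mu^t$): for any partition $\pi$ and any $i \geq 1$, we have $\pi_i - i > \pi_{i+1} - (i+1)$.

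The proof of this claim is a one-liner. By the very definition of a partition, we have $\pi_i \geq \pi_{i+1}$, hence
\[
\pi_i - i \;\geq\; \pi_{i+1} - i \;=\; \pi_{i+1} - (i+1) + 1 \;>\; \pi_{i+1} - (i+1),
\]
which is exactly what we want.

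For the claim to apply to $\lambda^t$ and $\mu^t$, I need to know that $\lambda^t$ and $\mu^t$ are themselves partitions. This is the one point that requires a tiny justification, but it is immediate from Definition~\ref{def.lambdat}: the formula $\lambda^t_k = \abs{\set{i \geq 1 \mid \lambda_i \geq k}}$ makes it clear that $\lambda^t_k$ is a weakly decreasing function of $k$ (raising $k$ shrinks the set), and that $\lambda^t_k = 0$ for $k$ larger than any entry of $\lambda$; the same applies to $\mu^t$.

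I do not expect any substantive obstacle; this lemma is essentially bookkeeping, and the entire argument is a single inequality applied four times. The only thing to be careful about is not to confuse weak decrease (for the $\pi_i$) with strict decrease (for the $\pi_i - i$); the strictness genuinely comes from the ``$-i$'' shift, not from $\pi$ itself.
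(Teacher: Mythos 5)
Your proposal is correct and uses essentially the same argument as the paper: both derive $\pi_i - i \geq \pi_{i+1} - i > \pi_{i+1} - (i+1)$ from the weak decrease of the partition and apply it to each of the four sequences (the paper spells out $\lambda$ and handles the rest by "similarly"). Your added remark that $\lambda^t$ and $\mu^t$ are themselves partitions is a sensible bit of explicitness that the paper leaves tacit.
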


\subsection{$\operatorname*{ER}\left(  \mu\right)  $ and $\mu^{+k}$}

Next, we shall get a better understanding of the partitions $\nu$ that satisfy
$\mu\lessdot\nu$.

\begin{definition}
\ \ 

\begin{enumerate}
\item[\textbf{(a)}] If $k$ is a positive integer, then $\mu^{+k}$ shall denote
the sequence%
\[
\left(  \mu_{1},\ \mu_{2},\ \ldots,\ \mu_{k-1},\ \mu_{k}+1,\ \mu_{k+1}%
,\ \mu_{k+2},\ \ldots\right)  ,
\]
which is obtained from $\mu$ by incrementing the $k$-th entry by $1$. Note
that this sequence $\mu^{+k}$ is a partition if and only if $k=1$ or $\mu
_{k}\neq\mu_{k-1}$.

\item[\textbf{(b)}] We let $\operatorname*{ER}\left(  \mu\right)  $ be the set
of all positive integers $k$ that satisfy $k=1$ or $\mu_{k}\neq\mu_{k-1}$.
\end{enumerate}
\end{definition}

Thus, $\operatorname*{ER}\left(  \mu\right)  $ is the set of all positive
integers $k$ for which $\mu^{+k}$ is a partition. (The notation
$\operatorname*{ER}$ is short for \textquotedblleft extensible
rows\textquotedblright, since $\operatorname*{ER}\left(  \mu\right)  $
consists exactly of those $k\geq1$ such that the $k$-th row of $Y\left(
\mu\right)  $ can be extended by a new box at its end and the result will
still be the diagram of a partition.)

\begin{example}
If $\mu=\left(  5,2,2,1\right)  $, then $\operatorname*{ER}\left(  \mu\right)
=\left\{  1,2,4,5\right\}  $ and
\begin{align*}
\mu^{+1}  & =\left(  6,2,2,1\right)  ,\ \ \ \ \ \ \ \ \ \ \mu^{+2}=\left(
5,3,2,1\right)  ,\\
\mu^{+4}  & =\left(  5,2,2,2\right)  ,\ \ \ \ \ \ \ \ \ \ \mu^{+5}=\left(
5,2,2,1,1\right)  .
\end{align*}
(Do not forget that $\mu_{5}=0$ can be incremented by $1$, so that
$5\in\operatorname*{ER}\left(  \mu\right)  $.)
\end{example}

The following two lemmas are very easy:

\begin{lemma}
\label{lem.nu=mu+k}
The partitions $\nu$ that satisfy $\mu\lessdot\nu$ are
precisely the partitions $\mu^{+k}$ for the elements $k\in\operatorname*{ER}%
\left(  \mu\right)  $.
\end{lemma}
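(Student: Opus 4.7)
The plan is to unpack the two definitions and show that each direction is essentially immediate.

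First I would observe (this is noted right after the definition of $\lessdot$) that $\mu \lessdot \nu$ is equivalent to saying that $\nu$ is a partition obtainable from $\mu$ by incrementing exactly one entry by $1$. Indeed, $\mu \subseteq \nu$ forces $\nu_i \geq \mu_i$ for all $i$, while $\abs{Y\tup{\nu/\mu}} = \sum_{i \geq 1} \tup{\nu_i - \mu_i} = 1$ forces exactly one index $k \geq 1$ to satisfy $\nu_k = \mu_k + 1$, with all other entries equal. Thus $\nu = \mu^{+k}$ for a unique positive integer $k$.

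For the forward direction, assume $\mu \lessdot \nu$. By the observation above, $\nu = \mu^{+k}$ for some $k \geq 1$, and $\nu$ is a partition by assumption. Since $\mu^{+k}$ is a partition exactly when $k = 1$ or $\mu_k \neq \mu_{k-1}$ (this is the content of the definition of $\mu^{+k}$: otherwise $\mu^{+k}$ would fail to be weakly decreasing, having $\mu_{k-1} = \mu_k < \mu_k + 1$), we conclude $k \in \operatorname{ER}\tup{\mu}$.

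For the converse, let $k \in \operatorname{ER}\tup{\mu}$ and set $\nu := \mu^{+k}$. By definition of $\operatorname{ER}\tup{\mu}$, the sequence $\nu$ is a partition. Clearly $\mu_i \leq \nu_i$ for all $i$, so $\mu \subseteq \nu$; and $\sum_{i \geq 1} \tup{\nu_i - \mu_i} = 1$, so $\abs{Y\tup{\nu/\mu}} = 1$. Hence $\mu \lessdot \nu$. Finally, uniqueness of $k$ (different values of $k$ yield different partitions $\mu^{+k}$, as they differ in exactly one entry at different positions) shows that the two sets are in bijection, completing the proof.

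There is no real obstacle here — the whole argument is purely a matter of chasing definitions, with the only mildly delicate point being the verification that $\mu^{+k}$ is a partition precisely when $k = 1$ or $\mu_k \neq \mu_{k-1}$, which itself is immediate from the weak-decrease requirement.
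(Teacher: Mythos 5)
Your proof is correct and takes essentially the same approach as the paper's: unpack both sides of the equivalence $\mu \lessdot \nu \Longleftrightarrow \nu = \mu^{+k}$ for some $k \in \operatorname{ER}\tup{\mu}$ directly from the definitions. You supply a bit more detail than the paper (the box-count computation $\sum_i \tup{\nu_i - \mu_i} = 1$ and the uniqueness of $k$), but the underlying argument is identical.
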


\begin{lemma}
\label{lem.mu+ki}
Let $k\geq1$ and $i\geq1$. Then, the $i$-th entry of the
sequence $\mu^{+k}$ is $\tup{\mu^{+k}}_i = \mu_{i}+\left[  k=i\right]  $.
\end{lemma}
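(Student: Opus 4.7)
The plan is to simply unfold the definition of $\mu^{+k}$ and split into the two cases $i = k$ and $i \neq k$. By Definition of $\mu^{+k}$, the sequence is
\[
\mu^{+k} = \left(  \mu_{1},\ \mu_{2},\ \ldots,\ \mu_{k-1},\ \mu_{k}+1,\ \mu_{k+1},\ \mu_{k+2},\ \ldots\right),
\]
so its $i$-th entry equals $\mu_i$ when $i \neq k$ and equals $\mu_k + 1$ when $i = k$.

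Next, I would translate this case distinction into a single formula using the Iverson bracket. If $i = k$, then $\ive{k=i} = 1$, so $\mu_i + \ive{k=i} = \mu_i + 1 = \mu_k + 1$, which matches. If $i \neq k$, then $\ive{k=i} = 0$, so $\mu_i + \ive{k=i} = \mu_i$, which again matches. Hence in both cases $\tup{\mu^{+k}}_i = \mu_i + \ive{k=i}$, as claimed.

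There is essentially no obstacle here; the statement is just the Iverson-bracket reformulation of the definition, and the proof consists of a two-line case distinction.
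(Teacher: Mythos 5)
Your proof is correct and follows exactly the same reasoning the paper uses, which simply notes that the claim follows immediately from the definition of $\mu^{+k}$. The two-case Iverson-bracket unwinding you give is precisely the obvious verification.
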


\subsection{Some easy properties of $\Delta\left(\lambda\right)$ and $\Delta\left(\mu\right)$}

We will now state some further easy lemmas, which will help us simplify some sums later on.

\begin{lemma}
\label{lem.Deltas.if-k-then-i}
Let $n\in\mathbb{N}$ be such that
$\mu=\left(  \mu_{1},\mu_{2},\ldots,\mu_{n}\right)  $
(that is, $\mu_{n+1}=0$).
Let $j \in \ive{n}$, and let $k$ be a positive integer that satisfies $\ell_j = m_k$. 
Then, $k \in \left[  n\right]  $.
\end{lemma}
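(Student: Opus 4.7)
The plan is to prove this by contradiction: suppose $k \notin [n]$, i.e., $k \geq n+1$, and derive that $m_k$ must be strictly less than $\ell_j$, contradicting the hypothesis $\ell_j = m_k$.

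First I would use the assumption $\mu = (\mu_1, \mu_2, \ldots, \mu_n)$ (equivalently $\mu_{n+1} = 0$) together with the monotonicity of partition entries to conclude that $\mu_i = 0$ for every $i > n$. In particular, if $k \geq n+1$, then $\mu_k = 0$, so
\[
m_k = \mu_k - k = -k \leq -(n+1).
\]

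On the other hand, since $\lambda_j \geq 0$ and $j \in [n]$ (so $j \leq n$), I would estimate
\[
\ell_j = \lambda_j - j \geq -j \geq -n.
\]
Combining these two bounds gives $\ell_j \geq -n > -(n+1) \geq m_k$, in direct contradiction to the assumption $\ell_j = m_k$. Hence we must have $k \leq n$, that is, $k \in [n]$.

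There is no real obstacle here: the lemma is essentially a pigeonhole argument about the sizes of the $m_k$ once $\mu$ has been truncated to its first $n$ entries. The only small thing to be careful about is noting that $\mu_{n+1} = 0$ (which is what the notation $\mu = (\mu_1, \ldots, \mu_n)$ encodes) forces $\mu_k = 0$ for all $k \geq n+1$, a one-line consequence of $\mu$ being weakly decreasing and nonnegative.
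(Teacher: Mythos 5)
Your proof is correct and follows essentially the same contradiction argument as the paper: assume $k > n$, deduce $\mu_k = 0$ so $m_k = -k < -n$, while $\ell_j = \lambda_j - j \geq -j \geq -n$, contradicting $\ell_j = m_k$.
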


%
%

\begin{lemma}
\label{lem.Deltas.sum1i}
Let $n\in\mathbb{N}$ be such that
$\mu=\left(  \mu_{1},\mu_{2},\ldots,\mu_{n}\right)  $
(that is, $\mu_{n+1}=0$).

Let $j\in\left[  n\right]  $ be such that $\ell_{j}\in\Delta\left(
\mu\right)  $. Then,
\[
\sum_{\substack{i\in\left[  n\right]  ;\\\ell_{j}=m_{i}}}1=1.
\]

\end{lemma}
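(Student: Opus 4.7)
The plan is to establish that the indexing set $\set{i \in \ive{n} \mid \ell_j = m_i}$ has exactly one element, so that the sum equals $1$.

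First, I would prove existence. Since $\ell_j \in \Delta\tup{\mu} = \set{m_1, m_2, m_3, \ldots}$, there is some positive integer $k$ with $\ell_j = m_k$. Lemma \ref{lem.Deltas.if-k-then-i} (applied with this $k$) then guarantees that $k \in \ive{n}$. So at least one $i \in \ive{n}$ satisfying $\ell_j = m_i$ exists.

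Next, I would prove uniqueness. By Lemma \ref{lem.l-decrease}, the sequence $m_1, m_2, m_3, \ldots$ is strictly decreasing, and in particular its terms are pairwise distinct. Hence there can be at most one positive integer $i$ (and, a fortiori, at most one $i \in \ive{n}$) satisfying $m_i = \ell_j$.

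Combining existence and uniqueness, the set $\set{i \in \ive{n} \mid \ell_j = m_i}$ is a singleton, so $\sum\limits_{\substack{i \in \ive{n};\\ \ell_j = m_i}} 1 = 1$. There is no real obstacle here; the lemma is essentially a bookkeeping consequence of the two previously stated facts (the containment observation of Lemma \ref{lem.Deltas.if-k-then-i} and the strict monotonicity of Lemma \ref{lem.l-decrease}), and the only thing to do is to assemble them.
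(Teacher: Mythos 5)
Your proof is correct and follows the same two-step argument as the paper's own proof: existence of a suitable $i$ via Lemma \ref{lem.Deltas.if-k-then-i}, and uniqueness via the strict decrease of the $m_i$ from Lemma \ref{lem.l-decrease}.
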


\begin{lemma}
\label{lem.Deltas.if-not-then-k}
Let $n\in\mathbb{N}$ be such that $\lambda=\left(  \lambda_{1},\lambda
_{2},\ldots,\lambda_{n}\right)  $ (that is, $\lambda_{n+1}=0$) and
$\mu=\left(  \mu_{1},\mu_{2},\ldots,\mu_{n}\right)  $ (that is, $\mu_{n+1}%
=0$).
Let $k$ be a positive integer that satisfies $\ell_{k}\notin \Delta\left(  \mu\right)  $.
Then, $k\in\left[  n\right]  $.
\end{lemma}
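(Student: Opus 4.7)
The plan is to prove the contrapositive: I will show that if $k \notin [n]$ (that is, $k \geq n+1$), then $\ell_k \in \Delta(\mu)$, which contradicts the hypothesis.

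First, I would observe that since $\lambda$ is a partition with $\lambda_{n+1} = 0$, the weakly-decreasing property forces $\lambda_i = 0$ for every $i \geq n+1$; the same applies to $\mu$. Therefore, assuming $k \geq n+1$, we get $\lambda_k = 0$ and $\mu_k = 0$. This gives
\[
\ell_k = \lambda_k - k = -k \qquad \text{and} \qquad m_k = \mu_k - k = -k,
\]
so $\ell_k = m_k$. Since $m_k \in \set{m_i \mid i \geq 1} = \Delta(\mu)$, this shows $\ell_k \in \Delta(\mu)$, contradicting the assumption that $\ell_k \notin \Delta(\mu)$. Hence $k \leq n$, i.e., $k \in [n]$.

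There is no real obstacle here — the entire argument is a one-line contradiction exploiting the fact that outside the ``support'' of the partitions, the sequence $\ell_i$ coincides with the sequence $m_i$ (both equal $-i$). This is entirely analogous in spirit to Lemma~\ref{lem.Deltas.if-k-then-i}, just with the roles of $\lambda$ and $\mu$ playing symmetric parts.
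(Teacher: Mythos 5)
Your proof is correct and matches the paper's argument essentially verbatim: both assume $k > n$, deduce $\lambda_k = \mu_k = 0$ so that $\ell_k = m_k = -k$, and conclude $\ell_k = m_k \in \Delta(\mu)$, contradicting the hypothesis.
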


\begin{lemma}
\label{lem.Deltas.f=g}
Let $n\in\mathbb{N}$ be such that $\lambda=\left(  \lambda_{1},\lambda
_{2},\ldots,\lambda_{n}\right)  $ (that is, $\lambda_{n+1}=0$) and
$\mu=\left(  \mu_{1},\mu_{2},\ldots,\mu_{n}\right)  $ (that is, $\mu_{n+1}=0$).

Let $R$ be an additive group. Let $f,g:\left[  n\right]  \rightarrow R$ be two
maps such that if $i,j\in\left[  n\right]  $ satisfy $m_{i}=\ell_{j}$, then
\begin{equation}
f\left(  i\right)  =g\left(  j\right)  .
\label{eq.lem.Deltas.f=g.as}
\end{equation}
Then,%
\[
\sum_{\substack{i\in\left[  n\right]  ;\\m_{i}\in\Delta\left(  \lambda\right)
}}f\left(  i\right)  =\sum_{\substack{j\in\left[  n\right]  ;\\\ell_{j}%
\in\Delta\left(  \mu\right)  }}g\left(  j\right)  .
\]

\end{lemma}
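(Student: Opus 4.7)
The plan is to identify both sides with a common sum indexed by the set
\[
S := \set{(i,j) \in [n]^2 \ \mid\ m_i = \ell_j}
\]
of matched pairs, and then invoke the hypothesis (\ref{eq.lem.Deltas.f=g.as}) to equate the resulting expressions term-by-term. The guiding observation is that the relation ``$m_i = \ell_j$'' naturally pairs up the index sets of the two sums.

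First I would invoke Lemma~\ref{lem.l-decrease} to note that the sequences $m_1 > m_2 > \cdots$ and $\ell_1 > \ell_2 > \cdots$ are strictly decreasing. Consequently, the maps $i \mapsto m_i$ and $j \mapsto \ell_j$ are injective, so for each $i$ there is at most one $j$ with $m_i = \ell_j$, and vice versa. In particular, the two projections $\pi_1, \pi_2 \colon S \to [n]$ given by $(i,j) \mapsto i$ and $(i,j) \mapsto j$ are both injective.

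Next I would pin down the images of these projections. The image of $\pi_1$ is clearly contained in $\set{i \in [n] \mid m_i \in \Delta(\lambda)}$, since $(i,j) \in S$ forces $m_i = \ell_j \in \Delta(\lambda)$. For the reverse inclusion, given $i \in [n]$ with $m_i \in \Delta(\lambda)$, pick the unique $j \geq 1$ with $m_i = \ell_j$; if $j > n$, then $\lambda_j = 0$ (since $\lambda_{n+1} = 0$), so $\ell_j = -j \leq -(n+1)$, while $m_i = \mu_i - i \geq -n$, a contradiction. Thus $j \in [n]$ and $\pi_1$ is a bijection onto $\set{i \in [n] \mid m_i \in \Delta(\lambda)}$. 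The symmetric statement for $\pi_2$, namely that its image is $\set{j \in [n] \mid \ell_j \in \Delta(\mu)}$, follows from exactly Lemma~\ref{lem.Deltas.if-k-then-i} together with the same injectivity.

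With both bijections in hand, the proof concludes by chaining them:
\[
\sum_{\substack{i \in [n];\\ m_i \in \Delta(\lambda)}} f(i)
\;=\; \sum_{(i,j) \in S} f(i)
\;=\; \sum_{(i,j) \in S} g(j)
\;=\; \sum_{\substack{j \in [n];\\ \ell_j \in \Delta(\mu)}} g(j),
\]
where the middle equality is the hypothesis (\ref{eq.lem.Deltas.f=g.as}). There is no substantial obstacle: the only insight needed is to introduce the intermediate indexing set $S$ and to note that Lemma~\ref{lem.Deltas.if-k-then-i} (together with its evident analogue under swapping $\lambda \leftrightarrow \mu$, which uses $\lambda_{n+1} = 0$ instead of $\mu_{n+1} = 0$) supplies exactly the surjectivity we need.
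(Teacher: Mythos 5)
Your proof is correct and takes essentially the same approach as the paper: the paper's double sum $\sum_{i,j \in [n];\ \ell_j = m_i}$ is precisely your sum over $S$, and the paper's Lemma~\ref{lem.Deltas.sum1i} does the work of your bijectivity claim for the projections. The only cosmetic difference is that the paper phrases the argument as an interchange-of-sums manipulation rather than as explicit bijections.
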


\subsection{Some properties of the flagging induced by $\lambda / \mu$}

The next few lemmas involve the flagging induced by $\lambda / \mu$. (As we recall, it was defined in Definition~\ref{def.flagging-of-lm} to be the flagging $\bb = \tup{b_1, b_2, b_3, \ldots}$, where $b_i$ is the largest $k \geq 0$ satisfying $\lambda_k - k \geq \mu_i - i$. Here, $\lambda_0$ is understood to be $+\infty$.)

\begin{lemma}
\label{lem.bj=i}
Let $\mathbf{b}=\left(  b_{1},b_{2},b_{3},\ldots\right)  $ be
the flagging induced by $\lambda/\mu$. Let $i$ and $j$ be positive integers
such that $m_i = \ell_j$. Then, $b_i = j$.
\end{lemma}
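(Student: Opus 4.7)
The plan is to derive both inequalities $j \leq b_i$ and $b_i \leq j$ from Lemma~\ref{lem.flagging-of-lm.uniprop}, using Lemma~\ref{lem.l-decrease} to rule out the case $b_i > j$.

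First I would translate the hypothesis $m_i = \ell_j$ into the equality $\mu_i - i = \lambda_j - j$, which in particular gives the weak inequality $\lambda_j - j \geq \mu_i - i$. By Lemma~\ref{lem.flagging-of-lm.uniprop} (the characterization of the flagging as $\tup{j \leq b_i} \Leftrightarrow \tup{\lambda_j - j \geq \mu_i - i}$), this immediately yields $j \leq b_i$.

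For the reverse inequality $b_i \leq j$, I would argue by contradiction: suppose $b_i \geq j+1$. Applying Lemma~\ref{lem.flagging-of-lm.uniprop} with $j+1$ in place of $j$ (valid since $j+1$ is a positive integer), the assumption $j+1 \leq b_i$ would give
\[
\lambda_{j+1} - (j+1) \geq \mu_i - i = \lambda_j - j,
\]
i.e., $\ell_{j+1} \geq \ell_j$. But this directly contradicts Lemma~\ref{lem.l-decrease}, which asserts $\ell_1 > \ell_2 > \ell_3 > \cdots$. Hence $b_i \leq j$, and combining with $j \leq b_i$ gives $b_i = j$.

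No step here looks delicate. The only minor subtlety is ensuring that Lemma~\ref{lem.flagging-of-lm.uniprop} really applies in both directions for positive integers $j$ and $j+1$, which it does by its statement. The convention $\lambda_0 = +\infty$ from Definition~\ref{def.flagging-of-lm} is never needed since $j \geq 1$ by hypothesis. Thus the whole argument is a two-line application of the uniqueness property of $\bb$ together with the strict decrease of the sequence $\tup{\ell_k}_{k \geq 1}$.
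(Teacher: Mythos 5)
Your proof is correct and essentially matches the paper's, which also rests on the strict decrease $\ell_1 > \ell_2 > \cdots$ from Lemma~\ref{lem.l-decrease}. The only cosmetic difference is that you invoke Lemma~\ref{lem.flagging-of-lm.uniprop} twice to pin down $b_i$ from both sides, whereas the paper computes the maximum $\max\set{k \geq 0 \mid \ell_k \geq \ell_j} = j$ directly from the definition of $b_i$; both arguments rely on the same facts.
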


\begin{lemma}
\label{lem.Deltas.x}
Let $\mathbf{b}=\left(  b_{1},b_{2},b_{3},\ldots\right)  $ be the flagging
induced by $\lambda/\mu$.

Let $n\in\mathbb{N}$ be such that $\lambda=\left(  \lambda_{1},\lambda
_{2},\ldots,\lambda_{n}\right)  $ (that is, $\lambda_{n+1}=0$) and
$\mu=\left(  \mu_{1},\mu_{2},\ldots,\mu_{n}\right)  $ (that is, $\mu_{n+1}%
=0$). Then,%
\[
\sum_{i=1}^{n}x_{i}-\sum_{\substack{i\in\left[  n\right]  ;\\m_{i}\in
\Delta\left(  \lambda\right)  }}x_{b_{i}}=\sum_{\substack{k\in\ive{n}; \\\ell
_{k}\notin\Delta\left(  \mu\right)  }}x_{k}.
\]

\end{lemma}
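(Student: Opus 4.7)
The plan is to rearrange the desired identity into a form that directly matches the hypotheses of Lemma~\ref{lem.Deltas.f=g}. First, I would split the sum $\sum_{i=1}^n x_i$ according to whether $\ell_i \in \Delta(\mu)$ or not:
\[
\sum_{k=1}^{n} x_k = \sum_{\substack{k \in [n] \\ \ell_k \notin \Delta(\mu)}} x_k + \sum_{\substack{k \in [n] \\ \ell_k \in \Delta(\mu)}} x_k.
\]
Subtracting the first summand on the right from both sides and comparing with the claim, the lemma reduces to establishing the equality
\[
\sum_{\substack{k \in [n] \\ \ell_k \in \Delta(\mu)}} x_k \;=\; \sum_{\substack{i \in [n] \\ m_i \in \Delta(\lambda)}} x_{b_i}.
\]

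The second step is to recognize this as an instance of Lemma~\ref{lem.Deltas.f=g}. I would apply that lemma with $R$ taken to be the additive group of the polynomial ring in the $x_k$'s, and with the two maps $f, g : [n] \to R$ defined by $f(i) := x_{b_i}$ and $g(j) := x_j$. The hypothesis of Lemma~\ref{lem.Deltas.f=g} requires that $f(i) = g(j)$ whenever $i, j \in [n]$ satisfy $m_i = \ell_j$; in our setting this amounts to $x_{b_i} = x_j$, which follows at once from Lemma~\ref{lem.bj=i} (which asserts $b_i = j$ under precisely this condition on $i$ and $j$).

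With the hypothesis verified, Lemma~\ref{lem.Deltas.f=g} delivers the displayed equality exactly, and combining with the splitting above gives the claim. There is essentially no obstacle here: the proof is a bookkeeping reduction, and all the substantive content has already been packaged into Lemma~\ref{lem.bj=i} (relating the flagging $b_i$ to a matching index $j$ with $\ell_j = m_i$) and Lemma~\ref{lem.Deltas.f=g} (implementing the bijective identification of indices $i \in [n]$ with $m_i \in \Delta(\lambda)$ and indices $j \in [n]$ with $\ell_j \in \Delta(\mu)$ via the condition $m_i = \ell_j$). The only minor thing to keep in mind is that $b_i$ is automatically in $[n]$ in the relevant range (this is built into Lemma~\ref{lem.Deltas.if-k-then-i} and its use inside the proof of Lemma~\ref{lem.Deltas.f=g}), so the sum on the right of the claim genuinely makes sense as a sum of variables $x_k$ with $k \in [n]$ in the same way as the left-hand summands.
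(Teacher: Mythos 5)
Your proposal is correct and matches the paper's own proof: both verify the hypothesis of Lemma~\ref{lem.Deltas.f=g} via Lemma~\ref{lem.bj=i} with $f(i)=x_{b_i}$ and $g(j)=x_j$, and then rearrange the resulting identity $\sum_{\substack{i\in[n];\, m_i\in\Delta(\lambda)}}x_{b_i}=\sum_{\substack{j\in[n];\, \ell_j\in\Delta(\mu)}}x_j$ using the splitting of $\sum_{k=1}^n x_k$. The only difference is cosmetic (you split the full sum first, the paper does so at the end).
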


\begin{lemma}
\label{lem.Deltas.mi+1+bi}
Let $\mathbf{b}=\left(  b_{1},b_{2},b_{3},\ldots\right)  $ be the flagging
induced by $\lambda/\mu$.
Let $i$ be a positive integer that satisfies $m_{i}%
\notin\Delta\left(  \lambda\right)  $. Then,
\[
m_{i}+1+b_{i}\geq1 \qquad \text{ and } \qquad
\ell_{m_{i}+1+b_{i}}^{t}=-1-m_{i} .
\]
\end{lemma}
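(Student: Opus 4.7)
Set $j := m_i + 1 + b_i$; the two claims are then $j \geq 1$ and $\lambda^t_j = b_i$ (the latter being equivalent to $\ell^t_j = b_i - j = -1 - m_i$). The plan is to extract two tight inequalities on $\lambda_{b_i}$ and $\lambda_{b_i+1}$ from the definition of $b_i$, use the first to force $j \geq 1$ via nonnegativity of partition entries, and then read off $\lambda^t_j$ from Lemma~\ref{lem.conj.uniprop}.

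First I would unpack the definition of $b_i$ together with Lemma~\ref{lem.l-decrease}. Since $b_i = \max\set{k \geq 0 \mid \ell_k \geq m_i}$ (with the convention $\ell_0 = +\infty$), the maximality gives $\ell_{b_i + 1} < m_i$, and the definition gives $\ell_{b_i} \geq m_i$ (this is trivial if $b_i = 0$, using $\ell_0 = +\infty$). Because $m_i \notin \Delta(\lambda) = \set{\ell_1, \ell_2, \ldots}$, the second inequality cannot be an equality when $b_i \geq 1$, so it strengthens to $\ell_{b_i} > m_i$ in that case (and it is also strict when $b_i = 0$). Rewriting in terms of $\lambda$ and $j$, these become
\[
\lambda_{b_i + 1} - (b_i + 1) < m_i \quad \Longrightarrow \quad \lambda_{b_i + 1} < j,
\]
and (whenever $b_i \geq 1$)
\[
\lambda_{b_i} - b_i > m_i \quad \Longrightarrow \quad \lambda_{b_i} \geq j.
\]

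The inequality $j \geq 1$ now falls out for free: since $\lambda_{b_i + 1}$ is an entry of a partition, it is nonnegative, so $j > \lambda_{b_i + 1} \geq 0$ forces $j \geq 1$. This is the step I expect to be the only slightly slippery one, because one might worry about the case $b_i = 0$ where $\lambda_{b_i}$ is not a genuine entry; but the argument for $j \geq 1$ uses only $\lambda_{b_i + 1}$, so it goes through uniformly.

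Finally, with $j \geq 1$ secured, I would invoke Lemma~\ref{lem.conj.uniprop}, which says $\lambda^t_j \geq k$ iff $\lambda_k \geq j$ for all positive integers $k$. The inequality $\lambda_{b_i + 1} < j$ gives $\lambda^t_j < b_i + 1$, i.e.\ $\lambda^t_j \leq b_i$. When $b_i \geq 1$, the inequality $\lambda_{b_i} \geq j$ gives $\lambda^t_j \geq b_i$, so $\lambda^t_j = b_i$. When $b_i = 0$, we already have $\lambda^t_j \leq 0$ and trivially $\lambda^t_j \geq 0$, so again $\lambda^t_j = b_i$. Therefore
\[
\ell^t_j = \lambda^t_j - j = b_i - (m_i + 1 + b_i) = -1 - m_i,
\]
as desired.
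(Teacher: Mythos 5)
Your proof is correct and follows essentially the same route as the paper's: extract the two tight inequalities $\lambda_{b_i} \geq j$ and $\lambda_{b_i+1} < j$ (the paper gets them via Lemma~\ref{lem.flagging-of-lm.uniprop}, you get them by unpacking the definition of $b_i$ directly, which is the same thing), use $m_i \notin \Delta(\lambda)$ to make the first one strict before discretizing, and then read off $\lambda^t_j = b_i$. The only cosmetic difference is that the paper invokes the max characterization \eqref{eq.def.lambdat.3} of $\lambda^t_j$ while you use the equivalence of Lemma~\ref{lem.conj.uniprop} twice (once for each bound); your explicit handling of the $b_i = 0$ case is welcome and matches what the paper's convention silently does.
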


The next lemma specifically concerns the case when the partition
$\mu$ has two equal entries (i.e., when $\mu_{j-1} = \mu_j$).

\begin{lemma}
\label{lem.ER-bb}
Let $\mathbf{b}=\left(  b_{1},b_{2},b_{3},\ldots\right)  $ be
the flagging induced by $\lambda/\mu$. Let $j>1$ be an integer such that
$\mu_{j-1}=\mu_{j}$. Then:

\begin{enumerate}
\item[\textbf{(a)}] If $m_j\notin\Delta\left(  \lambda\right)  $, then
$b_{j}=b_{j-1}$.

\item[\textbf{(b)}] If $m_j\in\Delta\left(  \lambda\right)  $, then
$b_{j}=b_{j-1}+1$.
\end{enumerate}
\end{lemma}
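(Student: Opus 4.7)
The plan is to reduce both parts of the lemma to elementary arithmetic about the strictly decreasing sequence $\ell_1 > \ell_2 > \cdots$ (Lemma \ref{lem.l-decrease}) and the characterization of $b_i$ as the largest $k \geq 0$ with $\ell_k \geq m_i$ (with the convention $\ell_0 = +\infty$). The key observation to make up front is that the hypothesis $\mu_{j-1} = \mu_j$ gives
\[
m_{j-1} = \mu_{j-1} - (j-1) = \mu_j - j + 1 = m_j + 1.
\]
So part (a) amounts to showing that $b_j$ and $b_{j-1}$ coincide when the threshold $m_j$ is not actually attained by any $\ell_k$, and part (b) to showing they differ by exactly $1$ when $m_j$ is attained.

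For part (a), first note that $m_{j-1} = m_j + 1 > m_j$ immediately gives $b_j \geq b_{j-1}$, since any $k$ with $\ell_k \geq m_{j-1}$ also satisfies $\ell_k \geq m_j$. For the reverse inequality, use that by definition of $b_j$ we have $\ell_{b_j} \geq m_j$; but the hypothesis $m_j \notin \Delta(\lambda)$ means $\ell_k \neq m_j$ for every $k \geq 1$, so (using $\ell_0 = +\infty \neq m_j$) we conclude $\ell_{b_j} > m_j$, i.e.\ $\ell_{b_j} \geq m_j + 1 = m_{j-1}$. By the maximality in the definition of $b_{j-1}$, this gives $b_j \leq b_{j-1}$.

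For part (b), the assumption $m_j \in \Delta(\lambda)$ produces some $k \geq 1$ with $\ell_k = m_j$ (necessarily $k \geq 1$ since $\ell_0 = +\infty$), and by the strict decrease from Lemma \ref{lem.l-decrease} this $k$ is unique. Lemma \ref{lem.bj=i} then gives $b_j = k$ directly. To pin down $b_{j-1}$, observe on one hand that $\ell_k = m_j < m_j + 1 = m_{j-1}$, so $b_{j-1} < k$, and on the other hand that $\ell_{k-1} > \ell_k = m_j$ (or $\ell_0 = +\infty$ if $k=1$), hence $\ell_{k-1} \geq m_j + 1 = m_{j-1}$, giving $b_{j-1} \geq k-1$. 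Combining yields $b_{j-1} = k-1 = b_j - 1$.

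No serious obstacle is anticipated here; the only subtlety is keeping track of the boundary case $k = 1$ in part (b), which is handled cleanly by the convention $\ell_0 = +\infty$ built into Definition \ref{def.flagging-of-lm}. The entire argument is purely a manipulation of inequalities and could equally well be phrased through Lemma \ref{lem.flagging-of-lm.uniprop}, but the direct approach via the maximum definition seems shortest.
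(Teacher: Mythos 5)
Your proof is correct and takes essentially the same route as the paper's: both exploit $m_{j-1} = m_j + 1$, the convention $\ell_0 = +\infty$, and the strict decrease $\ell_0 > \ell_1 > \ell_2 > \cdots$ to manipulate the max-definition of the flagging. The only cosmetic differences are that you prove part (a) by a two-sided inequality rather than by showing the two defining sets coincide, and in part (b) you invoke Lemma \ref{lem.bj=i} to identify $b_j$ rather than repeating the maximum computation inline.
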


\subsection{The flagging induced by $\lambda/\mu^{+k}$}

How does the flagging induced by $\lambda/\mu$ change when we replace $\mu$ by
a partition of the form $\mu^{+k}$ (that is, when we add $1$ to the $k$-th
entry of $\mu$)? The following lemma gives a simple answer:

\begin{lemma}
\label{lem.b-vs-bk}
Let $k\in\operatorname*{ER}\left(  \mu\right)  $. Let
$\mathbf{b}=\left(  b_{1},b_{2},b_{3},\ldots\right)  $ be the flagging induced
by $\lambda/\mu$. Let $\mathbf{b}^{\ast}=\left(  b_{1}^{\ast},b_{2}^{\ast
},b_{3}^{\ast},\ldots\right)  $ be the flagging induced by $\lambda/\mu^{+k}$. Then:

\begin{enumerate}
\item[\textbf{(a)}] If $m_k\notin\Delta\left(  \lambda\right)  $, then
$b_{i}^{\ast}=b_{i}$ for each $i\geq1$.

\item[\textbf{(b)}] If $m_k\in\Delta\left(  \lambda\right)  $, then
$b_{i}^{\ast}=b_{i}-\left[  k=i\right]  $ for each $i\geq1$.

\item[\textbf{(c)}] If $i$ is a positive integer such that $i \neq k$, then
$b_{i}^{\ast}=b_{i}$.
\end{enumerate}
\end{lemma}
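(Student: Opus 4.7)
The plan is to reduce the whole lemma to a direct analysis of the defining maximum, using the fact (Lemma~\ref{lem.mu+ki}) that $\mu^{+k}$ differs from $\mu$ only in its $k$-th entry. Set $m^{\ast}_i := \tup{\mu^{+k}}_i - i$, so that $b^{\ast}_i = \max\set{j \geq 0 \mid \ell_j \geq m^{\ast}_i}$ with the convention $\ell_0 := +\infty$. Lemma~\ref{lem.mu+ki} gives $m^{\ast}_i = m_i$ for every $i \neq k$, and $m^{\ast}_k = m_k + 1$.

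Part (c) is then immediate: for $i \neq k$ the equality $m^{\ast}_i = m_i$ makes the defining sets for $b^{\ast}_i$ and $b_i$ coincide, so $b^{\ast}_i = b_i$. This already proves (c), and reduces parts (a) and (b) to computing $b^{\ast}_k$.

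For $i = k$, the condition $\ell_j \geq m_k + 1$ defining $b^{\ast}_k$ is strictly stronger than the condition $\ell_j \geq m_k$ defining $b_k$, so $b^{\ast}_k \leq b_k$ holds unconditionally. Whether this inequality is strict depends on whether $\ell_{b_k}$ equals $m_k$ or exceeds it. In case (a), $m_k \notin \Delta\tup{\lambda} = \set{\ell_1, \ell_2, \ldots}$ forces $\ell_{b_k} \neq m_k$ (trivially if $b_k = 0$, since $\ell_0 = +\infty$; otherwise because $\ell_{b_k} \in \Delta\tup{\lambda}$), which upgrades the inequality $\ell_{b_k} \geq m_k$ to $\ell_{b_k} \geq m_k + 1$ (integrality), yielding $b^{\ast}_k \geq b_k$ and hence $b^{\ast}_k = b_k$. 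In case (b), $m_k \in \Delta\tup{\lambda}$, so by Lemma~\ref{lem.l-decrease} there is a \emph{unique} $j$ with $\ell_j = m_k$, and Lemma~\ref{lem.bj=i} identifies this $j$ as $b_k$; in particular $b_k \geq 1$ and $\ell_{b_k} = m_k$. Hence $b^{\ast}_k \leq b_k - 1$, while the strict decrease $\ell_{b_k - 1} > \ell_{b_k} = m_k$ (Lemma~\ref{lem.l-decrease}) together with integrality gives $\ell_{b_k - 1} \geq m_k + 1$ and thus $b^{\ast}_k \geq b_k - 1$. Combining yields $b^{\ast}_k = b_k - 1$, which together with part (c) delivers the full formula $b^{\ast}_i = b_i - \ive{k = i}$ of (b).

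I do not anticipate any substantive obstacle; this is essentially a bookkeeping exercise that unpacks Definition~\ref{def.flagging-of-lm}. The only mild care needed is to handle the corner case $b_k = 0$ in (a), and to confirm $b_k \geq 1$ in (b) (which is automatic once $m_k \in \Delta\tup{\lambda}$, via Lemma~\ref{lem.bj=i}).
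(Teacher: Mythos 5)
Your proposal is correct and follows essentially the same route as the paper: both rewrite $b_i$ and $b^*_i$ via the maximum over $p \geq 0$ with $\ell_p \geq m_i$ (resp.\ $\geq m^*_i$), invoke Lemma~\ref{lem.mu+ki} to reduce to $i=k$, and then exploit the strict decrease $\ell_0 > \ell_1 > \ell_2 > \cdots$ together with integrality to compare the two maxima; your use of Lemma~\ref{lem.bj=i} in case~(b) is a minor shortcut where the paper re-derives the same fact inline. One small wording nit: in case~(b), when $b_k = 1$ the step ``integrality gives $\ell_{b_k-1} \geq m_k+1$'' is really the trivial observation $\ell_0 = +\infty \geq m_k+1$, not an integrality argument, but this affects nothing.
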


\section{\label{sec.kpf}A variant of the Konvalinka recursion}

We shall now approach a variant of the Konvalinka recursion, which follows fairly easily from the results of the previous sections.

\begin{convention}
In addition to Convention~\ref{conv.further.convs}, we agree on the following:

\begin{enumerate}
\item[\textbf{(a)}]
We let $\mathbf{b} = \left(  b_{1},b_{2},b_{3},\ldots\right)  $ be the flagging
induced by $\lambda/\mu$.

\item[\textbf{(b)}]
We fix a positive integer $n$ that is so large that $\lambda_{n}=0$ and $\mu_{n}=0$.
Thus, $\lambda=\left(  \lambda_{1},\lambda_{2},\ldots,\lambda_{n}\right)  $
and $\mu=\left(  \mu_{1},\mu_{2},\ldots,\mu_{n}\right)  =\left(  \mu_{1}%
,\mu_{2},\ldots,\mu_{n-1}\right)  $.

\end{enumerate}
\end{convention}

From Lemma~\ref{lem.flagging-of-lm.0}, we obtain $b_n = n$.

We observe that the outer sum on the right hand side of Theorem
\ref{mainKonvalinka} can be rewritten by dropping the $\nu\subseteq\lambda$
condition under the summation sign:

\begin{lemma}
\label{lem.nu-sub-lambda-irrelevant}
We have
\[
\sum_{\mu\lessdot\nu\subseteq\lambda}\mathbf{s}_{\lambda}\left[  \nu\right]
=\sum_{\mu\lessdot\nu}\mathbf{s}_{\lambda}\left[  \nu\right]
\]
(where $\nu$ is the summation index in both outer sums, and is supposed to be
a partition).
\end{lemma}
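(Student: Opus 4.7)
The plan is to show that the two sums differ only by terms that vanish. The right-hand sum ranges over all partitions $\nu$ with $\mu \lessdot \nu$ (since the relation $\lessdot$ is defined only between partitions), while the left-hand sum adds the extra restriction $\nu \subseteq \lambda$. Thus the right-hand side equals the left-hand side plus
\[
\sum_{\substack{\mu \lessdot \nu; \\ \nu \not\subseteq \lambda}} \mathbf{s}_{\lambda}[\nu],
\]
and it suffices to prove that $\mathbf{s}_{\lambda}[\nu] = 0$ for every partition $\nu$ satisfying $\mu \lessdot \nu$ and $\nu \not\subseteq \lambda$.

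Fix such a $\nu$. Since $\mu \lessdot \nu$, the sequence $\nu$ is a partition, so the first clause of Definition~\ref{def.snu} applies: $\mathbf{s}_{\lambda}[\nu] = \sum_{D \in \mathcal{E}(\lambda/\nu)} \prod_{(i,j) \in D}(x_i + y_j)$. Because $\nu \not\subseteq \lambda$, i.e.\ $\lambda$ and $\nu$ do not satisfy $\lambda \supseteq \nu$, Lemma~\ref{lem.exc.empty} (applied with $\nu$ in place of $\mu$) yields $\mathcal{E}(\lambda/\nu) = \varnothing$. The defining sum for $\mathbf{s}_{\lambda}[\nu]$ is therefore empty, so $\mathbf{s}_{\lambda}[\nu] = 0$, and the extra terms indeed contribute nothing.

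There is no real obstacle here: the only subtlety is checking that $\nu$ really is a partition (so that $\mathbf{s}_{\lambda}[\nu]$ is defined by the first branch of Definition~\ref{def.snu} rather than trivially being $0$ by the second), which is immediate from the definition of $\lessdot$. Combining these observations gives the claimed equality.
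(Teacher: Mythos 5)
Your proof is correct and follows essentially the same route as the paper's: both reduce the claim to showing $\mathbf{s}_{\lambda}[\nu]=0$ whenever $\nu\not\subseteq\lambda$, and both obtain this from Lemma~\ref{lem.exc.empty} (which gives $\mathcal{E}(\lambda/\nu)=\varnothing$, making the defining sum empty). Your extra remark that $\nu$ is automatically a partition (so the first branch of Definition~\ref{def.snu} applies) is a nice bit of bookkeeping the paper leaves implicit.
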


Using Lemma \ref{lem.nu=mu+k} and Lemma \ref{lem.nu-sub-lambda-irrelevant}, we
can easily see the following:

\begin{lemma}
\label{lem.konval-RHS-as-sum}
We have
\[
\sum_{\mu\lessdot\nu\subseteq\lambda}\mathbf{s}_{\lambda}\left[  \nu\right]
=\sum_{k=1}^{n}\mathbf{s}_{\lambda}\left[  \mu^{+k}\right]  .
\]

\end{lemma}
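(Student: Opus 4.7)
The plan is a short chain of index manipulations combining Lemma~\ref{lem.nu-sub-lambda-irrelevant}, Lemma~\ref{lem.nu=mu+k}, and the convention from Definition~\ref{def.snu} that $\mathbf{s}_\lambda[\nu] = 0$ whenever $\nu$ is not a partition. I do not anticipate any real obstacle; the only point that needs attention is verifying that extending the sum from $\operatorname*{ER}(\mu)$ to $\ive{n}$ neither drops nor adds any nonzero terms.

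First I would apply Lemma~\ref{lem.nu-sub-lambda-irrelevant} to rewrite the left hand side as $\sum_{\mu \lessdot \nu} \mathbf{s}_\lambda[\nu]$, discarding the constraint $\nu \subseteq \lambda$. Then I would invoke Lemma~\ref{lem.nu=mu+k}, which asserts that the partitions $\nu$ with $\mu \lessdot \nu$ are precisely the sequences $\mu^{+k}$ for $k \in \operatorname*{ER}(\mu)$. Since distinct values of $k$ increment distinct entries of $\mu$, the correspondence $k \leftrightarrow \mu^{+k}$ is a bijection from $\operatorname*{ER}(\mu)$ onto $\set{\nu : \mu \lessdot \nu}$, and hence
\[
\sum_{\mu \lessdot \nu} \mathbf{s}_\lambda[\nu] = \sum_{k \in \operatorname*{ER}(\mu)} \mathbf{s}_\lambda[\mu^{+k}].
\]

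It remains to upgrade the index set from $\operatorname*{ER}(\mu)$ to $\ive{n}$. This rests on two easy observations. The inclusion $\operatorname*{ER}(\mu) \subseteq \ive{n}$ holds because our choice of $n$ forces $\mu_k = 0$ for every $k \geq n$, so any $k \geq n+1$ satisfies $\mu_k = \mu_{k-1} = 0$ and fails the condition defining $\operatorname*{ER}(\mu)$. Conversely, for each $k \in \ive{n} \setminus \operatorname*{ER}(\mu)$ (necessarily $k > 1$ with $\mu_k = \mu_{k-1}$), the sequence $\mu^{+k}$ has its $k$-th entry equal to $\mu_{k-1} + 1$, which exceeds its $(k-1)$-st entry $\mu_{k-1}$; so $\mu^{+k}$ is not weakly decreasing and hence not a partition, giving $\mathbf{s}_\lambda[\mu^{+k}] = 0$ by Definition~\ref{def.snu}. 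Padding the sum with these vanishing terms therefore yields
\[
\sum_{k \in \operatorname*{ER}(\mu)} \mathbf{s}_\lambda[\mu^{+k}] = \sum_{k=1}^{n} \mathbf{s}_\lambda[\mu^{+k}],
\]
and chaining the three equalities completes the argument.
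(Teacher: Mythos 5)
Your proposal is correct and follows essentially the same route as the paper's own proof: both invoke Lemma~\ref{lem.nu-sub-lambda-irrelevant} to drop the constraint $\nu \subseteq \lambda$, use Lemma~\ref{lem.nu=mu+k} to reindex by $k \in \operatorname*{ER}(\mu)$, and then pad the index set to $\ive{n}$ using $\operatorname*{ER}(\mu) \subseteq \ive{n}$ and the vanishing of $\mathbf{s}_\lambda[\mu^{+k}]$ when $\mu^{+k}$ is not a partition. The only difference is cosmetic (you apply Lemma~\ref{lem.nu-sub-lambda-irrelevant} at the start rather than at the end).
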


We continue with some lemmas that help us understand the right hand side.

\begin{lemma}
\label{lem.mu+k.sdet}
Let $k\in\left[  n\right]  $. Then:

\begin{enumerate}
\item[\textbf{(a)}] If $m_{k}\notin\Delta\left(  \lambda\right)  $, then%
\[
\mathbf{s}_{\lambda}\left[  \mu^{+k}\right]  =\det\left(  h(\mu_{i}%
-i+j+\left[  k=i\right]  ,\ \ b_{i},\ \ 1-j)\right)  _{i,j\in\left[  n\right]
}.
\]

\item[\textbf{(b)}] If $m_{k}\in\Delta\left(  \lambda\right)  $, then%
\[
\mathbf{s}_{\lambda}\left[  \mu^{+k}\right]  =\det\left(  h(\mu_{i}%
-i+j+\left[  k=i\right]  ,\ \ b_{i}-\left[  k=i\right]  ,\ \ 1-j)\right)
_{i,j\in\left[  n\right]  }.
\]

\end{enumerate}
\end{lemma}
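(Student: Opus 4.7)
The plan is to split into two cases depending on whether $k\in\operatorname*{ER}\left(\mu\right)$, since $\mu^{+k}$ is a partition precisely in the former case.

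When $k\in\operatorname*{ER}\left(\mu\right)$, the sequence $\mu^{+k}$ is a partition. Since $\mu_n = 0$ and $k \leq n$, it has at most $n$ nonzero entries, and its entries past index $n$ still vanish. I will apply Corollary~\ref{cor.slam.det} to $\mu^{+k}$: letting $\mathbf{b}^{\ast}=\left(b_1^{\ast},b_2^{\ast},b_3^{\ast},\ldots\right)$ be the flagging induced by $\lambda/\mu^{+k}$, this gives
\[
\mathbf{s}_{\lambda}\left[\mu^{+k}\right] = \det\left(h\left(\mu^{+k}_i - i + j,\ \ b^{\ast}_i,\ \ 1-j\right)\right)_{i,j\in\left[n\right]}.
\]
Lemma~\ref{lem.mu+ki} rewrites $\mu^{+k}_i - i + j = \mu_i - i + j + \left[k=i\right]$, while Lemma~\ref{lem.b-vs-bk}(a) and (b) express $b^{\ast}_i$ in terms of $b_i$ depending on whether $m_k \notin \Delta\left(\lambda\right)$ or $m_k \in \Delta\left(\lambda\right)$. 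Substituting these expressions yields the formulas claimed in parts \textbf{(a)} and \textbf{(b)}, respectively.

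When $k\notin\operatorname*{ER}\left(\mu\right)$, the sequence $\mu^{+k}$ is not a partition, so $\mathbf{s}_{\lambda}\left[\mu^{+k}\right] = 0$ by Definition~\ref{def.snu}, and I must show that the determinant on the right hand side also vanishes. The hypothesis means $k > 1$ and $\mu_{k-1} = \mu_k$, hence $\mu_{k-1} - (k-1) = \mu_k - k + 1$; combined with the Iverson brackets $\left[k=k-1\right]=0$ and $\left[k=k\right]=1$, this makes the first argument to $h$ agree on rows $k-1$ and $k$. For the second argument, Lemma~\ref{lem.ER-bb}(a) gives $b_{k-1} = b_k$ when $m_k \notin \Delta\left(\lambda\right)$, and Lemma~\ref{lem.ER-bb}(b) gives $b_{k-1} = b_k - 1$ when $m_k \in \Delta\left(\lambda\right)$; in each case, the Iverson bracket in the formula makes the second arguments match. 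The third argument $1-j$ does not depend on $i$. Hence rows $k-1$ and $k$ of the matrix are identical, and the determinant vanishes.

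The argument is essentially bookkeeping once the supporting lemmas (especially Lemma~\ref{lem.b-vs-bk} and Lemma~\ref{lem.ER-bb}) are in hand. The only subtlety is coordinating the four sub-cases -- $k\in\operatorname*{ER}\left(\mu\right)$ or not, crossed with $m_k\in\Delta\left(\lambda\right)$ or not -- so that the Iverson brackets and the $b_i$ shifts line up correctly; I do not anticipate any real obstacle beyond this.
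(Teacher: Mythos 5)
Your proposal is correct and follows essentially the same route as the paper's proof (and, in fact, mirrors the hint given in Section~\ref{sec.hints}): in the case $k\in\operatorname*{ER}\left(\mu\right)$, apply Corollary~\ref{cor.slam.det} to $\mu^{+k}$ and translate using Lemmas~\ref{lem.mu+ki} and~\ref{lem.b-vs-bk}; in the case $k\notin\operatorname*{ER}\left(\mu\right)$, show rows $k-1$ and $k$ of the matrix coincide via Lemma~\ref{lem.ER-bb}, so both sides vanish. The only organizational difference is that you take the outer split on $k\in\operatorname*{ER}\left(\mu\right)$ while the paper first splits on parts \textbf{(a)} and \textbf{(b)}; this is immaterial.
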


\begin{convention}
\label{conv.u-and-p}
Set
\[
u_{i,j}:=h\left(  \mu_{i}-i+j,\ \ b_{i},\ \ 1-j\right)
\qquad \qquad
\text{for all $i\in\left[  n\right]  $ and $j\in\left[  n+1\right]  $.}
\]

Furthermore, set
\[
p_{i}:=%
\begin{cases}
x_{b_{i}}, & \text{if }m_{i}\in\Delta\left(  \lambda\right)  ;\\
-y_{m_{i}+1+b_{i}}, & \text{if }m_{i}\notin\Delta\left(  \lambda\right)
\end{cases}
\qquad \qquad
\text{for any $i\in\left[  n\right]  $.}
\]
\end{convention}

\begin{lemma}
\label{lem.k-th-det-0}
Define $u_{i,j}$ as in Convention~\ref{conv.u-and-p}.
Then,%
\begin{align}
\mathbf{s}_{\lambda}\left[  \mu\right]
&  =\det\left(  u_{i,j}\right)
_{i,j\in\left[  n\right]  }
\label{eq.lem.k-th-det-0.0}\\
&  =\det\left(  u_{i,j}\right)  _{i,j\in\left[  n-1\right]  }.
\label{eq.lem.k-th-det-0.-1}
\end{align}

\end{lemma}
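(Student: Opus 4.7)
The first equality is essentially Corollary~\ref{cor.slam.det}. Indeed, by our convention we have $\mu_n = 0$, so $\mu = \tup{\mu_1, \mu_2, \ldots, \mu_n}$ has at most $n$ nonzero entries, and $\mathbf{b}$ is the flagging induced by $\lambda/\mu$. Hence Corollary~\ref{cor.slam.det} yields
\[
\mathbf{s}_\lambda\ive{\mu}
= \det\tup{h\tup{\mu_i - i + j,\ b_i,\ 1-j}}_{i,j\in\ive{n}}
= \det\tup{u_{i,j}}_{i,j\in\ive{n}},
\]
which is \eqref{eq.lem.k-th-det-0.0}.

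For \eqref{eq.lem.k-th-det-0.-1}, the plan is to apply Lemma~\ref{lem.det.lastrow=01} to the matrix $\tup{u_{i,j}}_{i,j\in\ive{n}}$. To do so, I need to compute its $n$-th row. Since $\mu_n = 0$ and $\lambda_n = 0$, we also have $\lambda_{n+1} = 0$, so Lemma~\ref{lem.flagging-of-lm.0} gives $b_n = n$. Thus, for any $j \in \ive{n}$,
\[
u_{n,j} = h\tup{\mu_n - n + j,\ b_n,\ 1-j} = h\tup{j - n,\ n,\ 1-j}.
\]
For $j \in \ive{n-1}$ this has first argument $j - n \leq -1 < 0$, so \eqref{eq.h.triv.aleq0} yields $u_{n,j} = \ive{j-n=0} = 0$. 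For $j = n$ the first argument is $0$, and \eqref{eq.h.triv.aleq0} likewise gives $u_{n,n} = \ive{0=0} = 1$.

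Thus the hypothesis of Lemma~\ref{lem.det.lastrow=01} is satisfied (with $a_{i,j} := u_{i,j}$), and it gives
\[
\det\tup{u_{i,j}}_{i,j\in\ive{n}} = u_{n,n} \cdot \det\tup{u_{i,j}}_{i,j\in\ive{n-1}} = \det\tup{u_{i,j}}_{i,j\in\ive{n-1}},
\]
which combined with the first equality proves \eqref{eq.lem.k-th-det-0.-1}. There is no real obstacle here; the only thing to be careful about is using Lemma~\ref{lem.flagging-of-lm.0} to establish $b_n = n$, which is what makes the first argument of $h$ in the last row land in the range where \eqref{eq.h.triv.aleq0} applies.
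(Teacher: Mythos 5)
Your proof is correct. The first equality matches the paper exactly (both invoke Corollary~\ref{cor.slam.det} with $\mu=\tup{\mu_1,\ldots,\mu_n}$). For the second equality you take a genuinely different route: the paper simply applies Corollary~\ref{cor.slam.det} a second time, this time writing $\mu = \tup{\mu_1,\ldots,\mu_{n-1}}$ (which is valid since $\mu_n = 0$), and reads off $\mathbf{s}_\lambda\ive{\mu} = \det\tup{u_{i,j}}_{i,j\in\ive{n-1}}$ directly; equating the two determinant expressions then proves \eqref{eq.lem.k-th-det-0.-1}. You instead compute the $n$-th row of the matrix explicitly, show it is $\tup{0,0,\ldots,0,1}$ using $b_n = n$ and \eqref{eq.h.triv.aleq0}, and then apply Lemma~\ref{lem.det.lastrow=01}. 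Your computations are all correct (including the check that $\lambda_{n+1}=0$ so that Lemma~\ref{lem.flagging-of-lm.0} applies). The paper's approach is a bit slicker since it exploits the freedom in the size parameter $n$ appearing in Corollary~\ref{cor.slam.det}; your approach is more explicit about \emph{why} the two determinants agree, which makes visible the structure of the last row (and is, incidentally, the same mechanism the paper itself employs in the proof of Lemma~\ref{lem.k-th-det-3}). Either is fine; yours costs one extra lemma invocation but buys a concrete picture of the matrix.
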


\begin{lemma}
\label{lem.k-th-det-1}
Define $u_{i,j}$ and $p_{i}$ as in
Convention~\ref{conv.u-and-p}. Let $k\in\left[  n\right]  $. Then,%
\[
\mathbf{s}_{\lambda}\left[  \mu^{+k}\right]  =\det\left(  u_{i,j+\left[
k=i\right]  }-p_{i}u_{i,j}\left[  k=i\right]  \right)  _{i,j\in\left[
n\right]  }.
\]

\end{lemma}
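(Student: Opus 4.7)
The plan is to observe that the matrix $M$ on the right-hand side of the claim (whose $\tup{i,j}$-th entry is $u_{i,j+\ive{k=i}} - p_i u_{i,j}\ive{k=i}$) agrees in every row $i\neq k$ with the matrix appearing in Lemma \ref{lem.mu+k.sdet}: since $\ive{k=i}=0$ in that case, both $\tup{i,j}$-th entries reduce to $u_{i,j}$. So the problem reduces to matching the $k$-th row of $M$ with the $k$-th row of the matrix from Lemma \ref{lem.mu+k.sdet}, column by column.

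First I would split according to whether $m_k \in \Delta\tup{\lambda}$, since both $p_k$ and the determinantal expression for $\mathbf{s}_\lambda\ive{\mu^{+k}}$ depend on this dichotomy. In the case $m_k \notin \Delta\tup{\lambda}$, the $\tup{k,j}$-th entry provided by Lemma \ref{lem.mu+k.sdet}~\textbf{(a)} is $h\tup{\mu_k - k + j + 1,\ b_k,\ 1-j}$, while Convention \ref{conv.u-and-p} gives $p_k = -y_{m_k + 1 + b_k}$. Applying Lemma \ref{3hprop} with $a = \mu_k - k + j + 1$, $b = b_k$ and $c = 1 - j$ yields
\[
h\tup{\mu_k-k+j+1,\ b_k,\ 1-j} - h\tup{\mu_k-k+j+1,\ b_k,\ -j} = \tup{y_{m_k+b_k+1} - y_{1-j}}\, h\tup{\mu_k-k+j,\ b_k,\ 1-j}.
\]
Since $j \geq 1$, our convention $y_i = 0$ for $i \leq 0$ forces $y_{1-j} = 0$; recognizing the remaining two $h$-terms as $u_{k,j+1}$ and $u_{k,j}$, this rearranges to $u_{k,j+1} - p_k u_{k,j} = h\tup{\mu_k-k+j+1,\ b_k,\ 1-j}$, exactly as desired.

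In the case $m_k \in \Delta\tup{\lambda}$, Lemma \ref{lem.mu+k.sdet}~\textbf{(b)} gives the $\tup{k,j}$-th entry as $h\tup{\mu_k - k + j + 1,\ b_k - 1,\ 1-j}$ and now $p_k = x_{b_k}$; Corollary \ref{2hprop} with the same $a,b,c$ expresses this as $u_{k,j+1} - \tup{x_{b_k} + y_{1-j}} u_{k,j}$, and the same vanishing $y_{1-j} = 0$ collapses it to $u_{k,j+1} - p_k u_{k,j}$, as required. In both cases the $k$-th rows of the two matrices coincide, hence so do their determinants, establishing the lemma.

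The only subtlety is bookkeeping: the whole argument hinges on the third argument $1-j$ of the $h$-polynomials being nonpositive, which kills the $y_c = y_{1-j}$ term in both recurrences and makes the definition of $p_k$ in Convention \ref{conv.u-and-p} align with the surviving $y_{a+b+c-1} = y_{m_k + b_k + 1}$ or $x_b = x_{b_k}$ term. Once this observation is in hand, there is no real obstacle; the proof is a one-line invocation of each of the two recurrences from Section \ref{sec.habc}.
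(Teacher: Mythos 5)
Your proposal is correct and takes essentially the same route as the paper: reduce to the $k$-th row via Lemma~\ref{lem.mu+k.sdet}, then in the case $m_k\notin\Delta(\lambda)$ use the recurrence from Lemma~\ref{3hprop} (which the paper packages as Corollary~\ref{3hprop2}) and in the case $m_k\in\Delta(\lambda)$ use Corollary~\ref{2hprop}, noting $y_{1-j}=0$. The one small omission is that Corollary~\ref{2hprop} requires $b_k>0$, which does hold here since $m_k\in\Delta(\lambda)$ forces $b_k\geq 1$ via Lemma~\ref{lem.bj=i}, but should be mentioned.
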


We can now reap some rewards:

\begin{lemma}
\label{lem.k-th-det-2}
Define $u_{i,j}$ and $p_{i}$ as in Convention~\ref{conv.u-and-p}. Then,
\[
\sum_{k=1}^{n}\mathbf{s}_{\lambda}\left[  \mu^{+k}\right]  =\det\left(
u_{i,j+\left[  n=j\right]  }\right)  _{i,j\in\left[  n\right]  }-\left(
\sum_{k=1}^{n}p_{k}\right)  \mathbf{s}_{\lambda}\left[  \mu\right]  .
\]

\end{lemma}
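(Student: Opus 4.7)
The proof is a straightforward combination of three previously established facts, requiring no new combinatorial insight. The plan is to start from the right hand side $\sum_{k=1}^{n}\mathbf{s}_{\lambda}\ive{\mu^{+k}}$, apply Lemma \ref{lem.k-th-det-1} term-by-term to rewrite each summand as a determinant involving the $u_{i,j}$ and $p_i$, and then collapse the resulting sum of determinants using the general identity in Lemma \ref{lem.det-uc}.

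More concretely, Lemma \ref{lem.k-th-det-1} gives, for every $k \in [n]$,
\[
\mathbf{s}_{\lambda}\ive{\mu^{+k}} = \det\tup{u_{i,j+\ive{k=i}} - p_{i}u_{i,j}\ive{k=i}}_{i,j\in\ive{n}} .
\]
Summing these equalities over $k \in \ive{n}$ yields
\[
\sum_{k=1}^{n}\mathbf{s}_{\lambda}\ive{\mu^{+k}} = \sum_{k=1}^{n} \det\tup{u_{i,j+\ive{k=i}} - p_{i}u_{i,j}\ive{k=i}}_{i,j\in\ive{n}} .
\]
Now Lemma \ref{lem.det-uc}, applied with our $u_{i,j}$ (which is defined for $i \in \ive{n}$ and $j \in \ive{n+1}$, as required) and our scalars $p_1, p_2, \ldots, p_n$, tells us that this sum equals
\[
\det\tup{u_{i,j+\ive{n=j}}}_{i,j\in\ive{n}} - \tup{\sum_{k=1}^{n} p_{k}} \det\tup{u_{i,j}}_{i,j\in\ive{n}} .
\]

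Finally, equation \eqref{eq.lem.k-th-det-0.0} in Lemma \ref{lem.k-th-det-0} identifies $\det\tup{u_{i,j}}_{i,j\in\ive{n}}$ with $\mathbf{s}_{\lambda}\ive{\mu}$, and substituting this in produces exactly the claimed formula. The main (and only) ``obstacle'' is making sure that the hypotheses of Lemma \ref{lem.det-uc} are satisfied; but this is immediate from Convention \ref{conv.u-and-p}, which defines $u_{i,j}$ precisely for the index range $i \in \ive{n}$, $j \in \ive{n+1}$ needed by that lemma. No further computation is required.
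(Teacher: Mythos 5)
Your proposal is correct and follows exactly the same route as the paper's own proof: apply Lemma~\ref{lem.k-th-det-1} to each summand, sum over $k$, invoke Lemma~\ref{lem.det-uc}, and finally substitute $\det\tup{u_{i,j}}_{i,j\in\ive{n}} = \mathbf{s}_\lambda\ive{\mu}$ via \eqref{eq.lem.k-th-det-0.0}. No gaps.
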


Now, let us simplify the $\det$ and $\sum$ terms on the right hand side of Lemma \ref{lem.k-th-det-2}.

\begin{lemma}
\label{lem.k-th-det-3}
Define $u_{i,j}$ as in Convention~\ref{conv.u-and-p}.
Then,
\[
\det\left(  u_{i,j+\left[  n=j\right]  }\right)  _{i,j\in\left[  n\right]
}=\left(  \sum_{i=1}^{n}x_{i}\right)  \cdot\mathbf{s}_{\lambda}\left[
\mu\right]  .
\]

\end{lemma}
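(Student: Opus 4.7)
My plan is to exploit the fact that the matrix $\left(u_{i,j+[n=j]}\right)_{i,j\in[n]}$ differs from $\left(u_{i,j}\right)_{i,j\in[n]}$ only in the last column (the last column being replaced by the vector $\left(u_{1,n+1},u_{2,n+1},\ldots,u_{n,n+1}\right)^T$), and to then expand along the last \emph{row} using Lemma~\ref{lem.det.lastrow=01}. The key observation is that the last row of the new matrix is extremely simple.

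First, I would note that our assumption $\mu_n = 0$ together with $\lambda_{n+1} = 0$ (which holds since $\lambda = \tup{\lambda_1, \ldots, \lambda_n}$) lets me invoke Lemma~\ref{lem.flagging-of-lm.0} to conclude $b_n = n$. Then I would compute the entries of the last row of $\left(u_{i,j+[n=j]}\right)_{i,j\in[n]}$: for $\ell \in [n-1]$, the $\tup{n,\ell}$-entry is $u_{n,\ell} = h\tup{\ell - n,\ n,\ 1-\ell}$, which vanishes by \eqref{eq.h.triv.aleq0} since $\ell - n < 0$. For the $\tup{n,n}$-entry, I get $u_{n,n+1} = h\tup{1,\ n,\ -n}$, and Lemma~\ref{lem.h1bc} yields
\[
h\tup{1,n,-n} = \sum_{i=1}^n x_i + \sum_{j=-n+1}^{0} y_j = \sum_{i=1}^n x_i,
\]
since $y_j = 0$ for all $j \leq 0$ by Definition~\ref{defh}~\textbf{(b)}.

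Next, I would apply Lemma~\ref{lem.det.lastrow=01} to expand the determinant along this last row. This gives
\[
\det\tup{u_{i,j+[n=j]}}_{i,j\in[n]} = u_{n,n+1} \cdot \det\tup{u_{i,j+[n=j]}}_{i,j\in[n-1]}.
\]
For $i,j \in [n-1]$, we have $j \neq n$, so $[n=j] = 0$ and hence $u_{i,j+[n=j]} = u_{i,j}$. Thus the remaining $(n-1)\times(n-1)$ determinant equals $\det\tup{u_{i,j}}_{i,j\in[n-1]}$, which by \eqref{eq.lem.k-th-det-0.-1} is exactly $\mathbf{s}_\lambda[\mu]$. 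Combining with $u_{n,n+1} = \sum_{i=1}^n x_i$ yields the claim.

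I do not expect any real obstacles here, as every ingredient has been established: Lemma~\ref{lem.flagging-of-lm.0} supplies $b_n = n$, the triviality \eqref{eq.h.triv.aleq0} kills the off-diagonal entries in the last row, Lemma~\ref{lem.h1bc} evaluates the crucial corner entry, and \eqref{eq.lem.k-th-det-0.-1} identifies the minor with $\mathbf{s}_\lambda[\mu]$. The only mild pitfall is to keep straight that the last row (not the last column) is what has the nice vanishing structure, which is why the Laplace-type lemma in the form of Lemma~\ref{lem.det.lastrow=01} applies cleanly.
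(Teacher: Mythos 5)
Your proof is correct and follows essentially the same route as the paper's: compute the last row (using $b_n = n$, $\mu_n = 0$, the vanishing of $h(a,b,c)$ for $a<0$, and Lemma~\ref{lem.h1bc} for the corner entry $h(1,n,-n)$), expand by Lemma~\ref{lem.det.lastrow=01}, and identify the remaining $(n-1)\times(n-1)$ minor with $\mathbf{s}_\lambda[\mu]$ via \eqref{eq.lem.k-th-det-0.-1}.
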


\begin{lemma}
\label{lem.k-th-det-4}
Define $p_{i}$ as in Convention~\ref{conv.u-and-p}.
Then,
\[
\sum_{i=1}^{n}x_{i}-\sum_{k=1}^{n}p_{k}
= \sum_{\substack{k \in \ive{n};\\ \ell_k \notin \Delta\tup{\mu}}} x_k + \sum_{\substack{i \in \ive{n};\\ m_i \notin \Delta\tup{\lambda}}} y_{m_i+1+b_i} .
\]

\end{lemma}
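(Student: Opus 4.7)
The proof of Lemma \ref{lem.k-th-det-4} should be almost immediate, since the result essentially reduces to Lemma \ref{lem.Deltas.x}. My plan is as follows.

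First, I would unpack the sum $\sum_{k=1}^{n} p_k$ according to the piecewise definition of $p_k$ in Convention \ref{conv.u-and-p}. Splitting the range $k\in\ive{n}$ by the condition $m_k\in\Delta\tup{\lambda}$ versus $m_k\notin\Delta\tup{\lambda}$, I get
\[
\sum_{k=1}^{n} p_k
= \sum_{\substack{k\in\ive{n};\\ m_k\in\Delta\tup{\lambda}}} x_{b_k}
\ -\ \sum_{\substack{k\in\ive{n};\\ m_k\notin\Delta\tup{\lambda}}} y_{m_k+1+b_k}.
\]
Substituting this back yields
\[
\sum_{i=1}^{n} x_i - \sum_{k=1}^{n} p_k
= \left(\sum_{i=1}^{n} x_i - \sum_{\substack{i\in\ive{n};\\ m_i\in\Delta\tup{\lambda}}} x_{b_i}\right) + \sum_{\substack{i\in\ive{n};\\ m_i\notin\Delta\tup{\lambda}}} y_{m_i+1+b_i}.
\]
The $y$-part already matches the second sum on the right-hand side of the claim verbatim, so it suffices to handle the parenthesized $x$-part.

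Second, I would apply Lemma \ref{lem.Deltas.x} (which requires $\lambda_{n+1}=0$ and $\mu_{n+1}=0$, both guaranteed by our standing assumption) to rewrite the parenthesized expression as
\[
\sum_{i=1}^{n} x_i - \sum_{\substack{i\in\ive{n};\\ m_i\in\Delta\tup{\lambda}}} x_{b_i}
= \sum_{\substack{k\in\ive{n};\\ \ell_k\notin\Delta\tup{\mu}}} x_k.
\]
Combining the two pieces gives exactly the claimed identity. The proof is thus just bookkeeping plus one invocation of Lemma \ref{lem.Deltas.x}; the only ``obstacle'' is making sure that the index $b_k$ appearing in the $x_{b_k}$-terms matches what Lemma \ref{lem.Deltas.x} expects, but this is automatic from the definition of the induced flagging.
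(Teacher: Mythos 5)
Your proof is correct and follows essentially the same route as the paper's: unpack $\sum_{k=1}^{n}p_{k}$ by the case distinction $m_{k}\in\Delta(\lambda)$ versus $m_{k}\notin\Delta(\lambda)$, observe that the $y$-part already matches, and dispatch the $x$-part with Lemma~\ref{lem.Deltas.x}.
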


We are now ready to prove a simplified version of the Konvalinka recursion (which, as we recall, will be the version that we need for our proof of Theorem~\ref{thm.main}):

\begin{lemma}
\label{lem.konvalinka-bi}
We have
\begin{align*}
\left(\sum_{\substack{k \in \ive{n};\\ \ell_k \notin \Delta\tup{\mu}}} x_k + \sum_{\substack{i \in \ive{n};\\ m_i \notin \Delta\tup{\lambda}}} y_{m_i+1+b_i}\right) \mathbf{s}_\lambda\ive{\mu}
= \sum_{\mu \lessdot \nu \subseteq \lambda} \mathbf{s}_\lambda\ive{\nu} .
\end{align*}
Here, the sum on the right hand side ranges over all partitions $\nu$ that satisfy $\mu \lessdot \nu \subseteq \lambda$.
\end{lemma}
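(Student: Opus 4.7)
The plan is to chain together the four immediately preceding lemmas, which together give a complete proof with essentially no further work. All the technical difficulties (determinantal manipulations, handling of flaggings, treatment of the $\Delta$-set conditions) have already been absorbed into those lemmas.

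First I would start from the right-hand side and apply Lemma~\ref{lem.konval-RHS-as-sum} to rewrite
\[
\sum_{\mu\lessdot\nu\subseteq\lambda}\mathbf{s}_{\lambda}\left[\nu\right]
= \sum_{k=1}^{n}\mathbf{s}_{\lambda}\left[\mu^{+k}\right].
\]
This converts the sum over partitions $\nu$ with $\mu\lessdot\nu\subseteq\lambda$ into a finite sum over row-indices $k\in[n]$, leveraging Lemma~\ref{lem.nu=mu+k} and the fact that $n$ was chosen large enough to exhaust all relevant $k$.

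Next I would invoke Lemma~\ref{lem.k-th-det-2} to express this sum as
\[
\sum_{k=1}^{n}\mathbf{s}_{\lambda}\left[\mu^{+k}\right]
= \det\left(u_{i,\,j+[n=j]}\right)_{i,j\in[n]} - \left(\sum_{k=1}^{n}p_{k}\right)\mathbf{s}_{\lambda}\left[\mu\right],
\]
where $u_{i,j}$ and $p_{i}$ are as in Convention~\ref{conv.u-and-p}. Then Lemma~\ref{lem.k-th-det-3} evaluates the determinant to $\left(\sum_{i=1}^{n}x_{i}\right)\mathbf{s}_{\lambda}[\mu]$, so after factoring out $\mathbf{s}_{\lambda}[\mu]$ the right-hand side becomes
\[
\left(\sum_{i=1}^{n}x_{i} - \sum_{k=1}^{n}p_{k}\right)\mathbf{s}_{\lambda}\left[\mu\right].
\]

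Finally, Lemma~\ref{lem.k-th-det-4} identifies the coefficient on the left as exactly
\[
\sum_{\substack{k\in[n];\\ \ell_{k}\notin\Delta(\mu)}}x_{k} + \sum_{\substack{i\in[n];\\ m_{i}\notin\Delta(\lambda)}}y_{m_{i}+1+b_{i}},
\]
which is the coefficient appearing on the left-hand side of the statement we wish to prove. Stringing together these four equalities gives the claim. There is no serious obstacle at this stage; all the heavy lifting has already been done, and the only mild care needed is to check that the definitions of $u_{i,j}$ and $p_{i}$ used when chaining Lemmas~\ref{lem.k-th-det-2}, \ref{lem.k-th-det-3} and \ref{lem.k-th-det-4} are indeed those of Convention~\ref{conv.u-and-p}.
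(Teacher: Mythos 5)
Your proposal is correct and matches the paper's own proof exactly: both chain Lemmas~\ref{lem.konval-RHS-as-sum},~\ref{lem.k-th-det-2},~\ref{lem.k-th-det-3} and~\ref{lem.k-th-det-4} in the same order, with the notational conventions of Convention~\ref{conv.u-and-p}. Nothing further to add.
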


\section{\label{sec.pf-hlf}Proof of the Naruse--Pak--Postnikov formula}

\begin{convention}
For the rest of this section, we introduce the following notations:
\begin{enumerate}

    \item We fix a skew partition $\lambda / \mu$ (that is, two partitions $\lambda$ and $\mu$ satisfying $\lambda \supseteq \mu$).

    \item We remind the reader that
    \begin{align*}
    & \ell_i := \lambda_i - i,
    \qquad \qquad \qquad
    m_i := \mu_i - i, \\
    & \ell^t_i := \lambda^t_i - i
    \qquad \text{and} \qquad
    m^t_i := \mu^t_i - i
    \qquad \text{for all } i \geq 1.
    \end{align*}

    \item We fix a positive integer $n$ that is large enough to satisfy $\lambda=\left(  \lambda_{1},\lambda_{2},\ldots,\lambda_{n-1}\right)  $ (that is, $\lambda_n=0$) and
    $\mu=\left(  \mu_{1},\mu_{2},\ldots,\mu_{n-1}\right)  $ (that is, $\mu_n=0$).

    \item For each integer $i$, we set
    \begin{align*}
        w_i := \sum_{k = -n}^i z_k = z_{-n} + z_{-n+1} + \cdots + z_i.
    \end{align*}

    \item We shall use all the notations from Convention~\ref{conv.z-and-zT} as well as the algebraic hook lengths $h_{\lambda}\tup{c;z}$ defined in Theorem~\ref{thm.pak-1}.
\end{enumerate}
\end{convention}

\begin{lemma}
\label{lem.pf-hlf.w-w}
    Let $\tup{i,j} \in Y\tup{\lambda}$. Then
    \begin{align*}
        w_{\ell_i}-w_{-\ell_j^t-1} = h_{\lambda}\tup{\tup{i,j}; z}.
    \end{align*}
\end{lemma}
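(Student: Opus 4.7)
The plan is a direct computation unwrapping both sides of the claimed equality. The hook $H_{\lambda}\tup{\tup{i,j}}$ decomposes as the arm and the leg (with $\tup{i,j}$ itself counted once), and the key point is that their contents $k-i$ or $j-k$ sweep out a consecutive range of integers which turns out to be exactly $\ive{-\ell_j^t, \ell_i}$.

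First I would identify the arm. The arm consists of all boxes $\tup{i, k}$ of $Y\tup{\lambda}$ with $k \geq j$; since $\tup{i,k} \in Y\tup{\lambda}$ iff $\lambda_i \geq k$, these are the boxes $\tup{i,j}, \tup{i,j+1}, \ldots, \tup{i, \lambda_i}$. Their contents $k - i$ thus range over $j-i, j-i+1, \ldots, \lambda_i - i = \ell_i$. Next I would identify the leg. The leg consists of all boxes $\tup{k,j}$ with $k \geq i$, and $\tup{k,j} \in Y\tup{\lambda}$ iff $\lambda_k \geq j$, which by Lemma~\ref{lem.conj.uniprop} is equivalent to $k \leq \lambda_j^t$. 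So the leg boxes are $\tup{i,j}, \tup{i+1,j}, \ldots, \tup{\lambda_j^t, j}$, with contents $j-k$ ranging over $j-i, j-i-1, \ldots, j - \lambda_j^t = -\ell_j^t$. The two ranges overlap only at the content $j-i$ of the box $\tup{i,j}$, so taking the union gives contents ranging over the consecutive integers from $-\ell_j^t$ up to $\ell_i$. Hence
\[
h_{\lambda}\tup{\tup{i,j}; z} = \sum_{k = -\ell_j^t}^{\ell_i} z_k.
\]

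Then I would compute the right-hand side of the lemma. By definition,
\[
w_{\ell_i} - w_{-\ell_j^t - 1} = \sum_{k=-n}^{\ell_i} z_k - \sum_{k=-n}^{-\ell_j^t - 1} z_k = \sum_{k=-\ell_j^t}^{\ell_i} z_k,
\]
which matches the expression for $h_{\lambda}\tup{\tup{i,j}; z}$ obtained above.

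The only subtlety is to verify that the telescoping subtraction of $w$'s is legitimate, i.e., that $-\ell_j^t - 1 \geq -n$, so that $w_{-\ell_j^t - 1}$ genuinely splits off the prefix of $w_{\ell_i}$. Since $\lambda_n = 0$, the partition $\lambda$ has at most $n-1$ nonzero parts, so by \eqref{eq.def.lambdat.2} we have $\lambda_j^t \leq n-1$, and hence $\ell_j^t = \lambda_j^t - j \leq n - 2$, which gives $-\ell_j^t - 1 \geq -n+1 \geq -n$ as required. This is the only book-keeping step; there is no real obstacle, since the lemma is essentially the telescoping observation that a sum over a consecutive range of integers is the difference of two cumulative sums.
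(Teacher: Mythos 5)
Your proof is correct and follows essentially the same route as the paper's: decompose the hook into arm and leg, observe their contents cover the consecutive range $[-\ell_j^t,\ell_i]$, check that $-n \leq -\ell_j^t$ so the cumulative sums telescope, and conclude. The only cosmetic difference is your book-keeping bound ($\ell_j^t \leq n-2$ via the count of nonzero parts) versus the paper's ($\ell_j^t < n$ via Lemma~\ref{lem.conj.uniprop} applied with $n$); both establish the $-n \leq -\ell_j^t$ that makes the telescoping legitimate.
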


\begin{lemma}
\label{section14.wY}
    Let $i \geq 1$ be a positive integer. Then
    \begin{align*}
        w_{\ell_i}-w_{m_i} = \sum_{\substack{j \geq 1; \\ \tup{i,j}\in Y\tup{\lambda/\mu}}} z_{j-i}.
    \end{align*}
\end{lemma}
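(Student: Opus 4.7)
The plan is to unfold the definition of $w$ and reindex. First I would write out
\[
w_{\ell_i} - w_{m_i} = \sum_{k=-n}^{\ell_i} z_k - \sum_{k=-n}^{m_i} z_k.
\]
Since $\lambda \supseteq \mu$ implies $\lambda_i \geq \mu_i$ and hence $\ell_i \geq m_i$, the right hand side telescopes to
\[
\sum_{k=m_i+1}^{\ell_i} z_k.
\]
(If $\lambda_i = \mu_i$, the sum is empty, matching the fact that row $i$ of $Y\tup{\lambda/\mu}$ is empty in that case.)

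Next, I would substitute $k = j - i$, so the summation index $k$ ranging over $\set{m_i+1, m_i+2, \ldots, \ell_i}$ corresponds to $j$ ranging over $\set{\mu_i + 1, \mu_i + 2, \ldots, \lambda_i}$. By the definition of $Y\tup{\lambda/\mu}$ as the set of pairs $\tup{i,j}$ of positive integers with $\mu_i < j \leq \lambda_i$, the values $j \in \set{\mu_i+1, \ldots, \lambda_i}$ are exactly the positive integers $j \geq 1$ for which $\tup{i,j} \in Y\tup{\lambda/\mu}$. Hence
\[
\sum_{k=m_i+1}^{\ell_i} z_k = \sum_{j=\mu_i+1}^{\lambda_i} z_{j-i} = \sum_{\substack{j \geq 1; \\ \tup{i,j}\in Y\tup{\lambda/\mu}}} z_{j-i},
\]
which proves the claim.

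There is no real obstacle here; the statement is a direct bookkeeping identity, and the only thing worth being careful about is the edge case $\lambda_i = \mu_i$ (including the tail where both are zero), where both sides vanish and no boxes appear in row $i$ of the skew shape. The argument uses nothing beyond $\lambda \supseteq \mu$, the definition of $w_i$, and the definition of $Y\tup{\lambda/\mu}$.
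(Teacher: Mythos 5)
Your approach is the same as the paper's: unfold the definition of $w$, telescope, and reindex. The conclusion is correct, but the justification of the telescoping step is incomplete as stated. You cite only $\ell_i \geq m_i$ to pass from $\sum_{k=-n}^{\ell_i} z_k - \sum_{k=-n}^{m_i} z_k$ to $\sum_{k=m_i+1}^{\ell_i} z_k$, but that inequality alone does not suffice: if $m_i$ were $< -n-1$ while $\ell_i > m_i$, the telescoped sum would pick up terms $z_{m_i+1}, \ldots, z_{-n-1}$ that are absent from the left-hand difference (since $w_{m_i}$ would already be an empty sum). One must additionally observe that in the nondegenerate case $\lambda_i > \mu_i$, the positivity of $\lambda_i$ forces $i < n$, whence $m_i = \mu_i - i \geq -i > -n$; and in the degenerate case $\lambda_i = \mu_i$, one has $\ell_i = m_i$ so both sides vanish outright. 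The paper handles this cleanly by a case split on $i \leq n$ vs.\ $i > n$, noting $\ell_i \geq m_i \geq -n$ in the first case and $\ell_i = m_i$ in the second. Your parenthetical remark about $\lambda_i = \mu_i$ shows you sensed the issue, but you should make the lower bound on $m_i$ explicit to close the argument.
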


\begin{lemma}
    \label{lem.pf-hlf.4}
    We have
    \begin{align*}
        \sum_{\substack{k \in \ive{n}; \\ \ell_k \notin \Delta\tup{\mu}}} w_{\ell_k}
        - \sum_{\substack{i \in \ive{n}; \\ m_i \notin \Delta\tup{\lambda}}} w_{m_i}
        = \sum_{\tup{i,j} \in Y\tup{\lambda/\mu}} z_{j-i}.
    \end{align*}
\end{lemma}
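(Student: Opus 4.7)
The plan is to rewrite the right-hand side of the target equation as a single-sum over rows using Lemma~\ref{section14.wY}, and then to match up the resulting telescoping sums with the left-hand side by invoking Lemma~\ref{lem.Deltas.f=g}.

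First, I would sum the equality of Lemma~\ref{section14.wY} over $i \in \ive{n}$:
\begin{align*}
\sum_{i=1}^n \tup{w_{\ell_i} - w_{m_i}}
= \sum_{i=1}^n \ \sum_{\substack{j \geq 1;\\ \tup{i,j} \in Y\tup{\lambda/\mu}}} z_{j-i}
= \sum_{\tup{i,j} \in Y\tup{\lambda/\mu}} z_{j-i},
\end{align*}
since for $i > n$ we have $\lambda_i = \mu_i = 0$, so row $i$ of $Y\tup{\lambda/\mu}$ is empty. Hence it suffices to show that
\begin{align*}
\sum_{i=1}^n w_{\ell_i} - \sum_{i=1}^n w_{m_i}
= \sum_{\substack{k \in \ive{n};\\ \ell_k \notin \Delta\tup{\mu}}} w_{\ell_k}
- \sum_{\substack{i \in \ive{n};\\ m_i \notin \Delta\tup{\lambda}}} w_{m_i}.
\end{align*}

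Rearranging (bringing the partial sums together by splitting each full sum according to whether $\ell_k \in \Delta\tup{\mu}$ resp. $m_i \in \Delta\tup{\lambda}$), this is equivalent to the identity
\begin{align*}
\sum_{\substack{k \in \ive{n};\\ \ell_k \in \Delta\tup{\mu}}} w_{\ell_k}
= \sum_{\substack{i \in \ive{n};\\ m_i \in \Delta\tup{\lambda}}} w_{m_i}.
\end{align*}
This is exactly what Lemma~\ref{lem.Deltas.f=g} produces: set $f\tup{i} := w_{m_i}$ and $g\tup{j} := w_{\ell_j}$; whenever $i,j \in \ive{n}$ satisfy $m_i = \ell_j$, one clearly has $f\tup{i} = w_{m_i} = w_{\ell_j} = g\tup{j}$, so the hypothesis \eqref{eq.lem.Deltas.f=g.as} is met, and Lemma~\ref{lem.Deltas.f=g} yields exactly the above equality. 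The hypotheses $\lambda_{n+1} = 0$ and $\mu_{n+1} = 0$ of Lemma~\ref{lem.Deltas.f=g} are guaranteed by our choice of $n$ (since $\lambda_n = \mu_n = 0$ already).

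The proof is essentially a bookkeeping exercise; no step is genuinely hard. The only mild subtlety is recognizing the correct substitution $f\tup{i} = w_{m_i}$, $g\tup{j} = w_{\ell_j}$ that turns Lemma~\ref{lem.Deltas.f=g} into the identity we need. Once that is spotted, the proof is a one-line application of a previously established lemma.
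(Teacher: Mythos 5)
Your proof is correct and takes essentially the same approach as the paper: both sum Lemma~\ref{section14.wY} over $i \in \ive{n}$, split according to membership in $\Delta\tup{\mu}$ and $\Delta\tup{\lambda}$, and invoke Lemma~\ref{lem.Deltas.f=g} with $f\tup{i} = w_{m_i}$ and $g\tup{j} = w_{\ell_j}$ to cancel the extra terms. The order of steps differs slightly, but the underlying argument is the same.
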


\begin{lemma}
\label{rec.2}
Assume that $\mu \neq \lambda$. Then
\begin{align*}
    \sum_{E\in\mathcal{E}\left(  \lambda/\mu\right)}\ \ \prod_{c\in Y\left(  \lambda\right)  \setminus E}\dfrac{1}{h_{\lambda}\left(  c; z\right)  }
= \frac{1}{\sum_{\tup{i,j} \in Y(\lambda/\mu)}z_{j-i}}\ \ \sum_{\mu \lessdot \nu \subseteq  \lambda} \ \ \sum_{E\in\mathcal{E}\left(  \lambda/\nu\right)  }\ \ \prod_{c\in Y\left(  \lambda\right)  \setminus E}\dfrac{1}{h_{\lambda
}\left(  c; z\right)  } .
\end{align*}
\end{lemma}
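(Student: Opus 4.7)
The plan is to derive Lemma~\ref{rec.2} by specializing the Konvalinka-variant recursion from Lemma~\ref{lem.konvalinka-bi}. First I would define a ring homomorphism from the polynomial ring $R$ of Definition~\ref{defh} into the field of rational functions in the $z_i$'s by sending $x_k \mapsto w_{\ell_k}$ and $y_j \mapsto -w_{-\ell^t_j - 1}$ for every $k, j \geq 1$. The choice of specialization is dictated by Lemma~\ref{lem.pf-hlf.w-w}: under this map, for any box $\tup{i,j} \in Y\tup{\lambda}$, the factor $x_i + y_j$ becomes $w_{\ell_i} - w_{-\ell^t_j - 1} = h_\lambda\tup{\tup{i,j}; z}$. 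Since every excitation $D \in \calE\tup{\lambda/\nu}$ (for any partition $\nu$) is a subset of $Y\tup{\lambda}$, each polynomial $\mathbf{s}_\lambda\ive{\nu}$ specializes to $\sum_{D \in \calE\tup{\lambda/\nu}}\, \prod_{\tup{i,j} \in D} h_\lambda\tup{\tup{i,j}; z}$.

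Second, I would simplify the prefactor on the left hand side of Lemma~\ref{lem.konvalinka-bi} under this specialization. For each index $i \in \ive{n}$ with $m_i \notin \Delta\tup{\lambda}$, the variable $y_{m_i+1+b_i}$ maps to $-w_{-\ell^t_{m_i+1+b_i}-1}$, and Lemma~\ref{lem.Deltas.mi+1+bi} identifies $\ell^t_{m_i+1+b_i} = -1-m_i$, making this image exactly $-w_{m_i}$. Combining this with the image of the $x$-sum and invoking Lemma~\ref{lem.pf-hlf.4}, the full prefactor collapses to $\sum_{\tup{i,j} \in Y\tup{\lambda/\mu}} z_{j-i}$.

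Thus Lemma~\ref{lem.konvalinka-bi} turns into
\[
\Bigl(\sum_{\tup{i,j} \in Y\tup{\lambda/\mu}} z_{j-i}\Bigr)\, \sum_{D \in \calE\tup{\lambda/\mu}}\, \prod_{\tup{i,j} \in D} h_\lambda\tup{\tup{i,j}; z} \;=\; \sum_{\mu \lessdot \nu \subseteq \lambda}\, \sum_{D \in \calE\tup{\lambda/\nu}}\, \prod_{\tup{i,j} \in D} h_\lambda\tup{\tup{i,j}; z}.
\]
To conclude, I would divide both sides by the nonzero rational function $\prod_{c \in Y\tup{\lambda}} h_\lambda\tup{c; z}$: since $D \subseteq Y\tup{\lambda}$ for every excitation $D$, this turns each $\prod_{\tup{i,j} \in D} h_\lambda\tup{\tup{i,j}; z}$ into $\prod_{c \in Y\tup{\lambda} \setminus D} 1 / h_\lambda\tup{c; z}$. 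Dividing further by the prefactor $\sum_{\tup{i,j} \in Y\tup{\lambda/\mu}} z_{j-i}$, which is nonzero because $\mu \neq \lambda$ forces $Y\tup{\lambda/\mu}$ to be nonempty, yields exactly the identity asserted in Lemma~\ref{rec.2}.

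The main obstacle is finding the correct specialization of the $x$'s and $y$'s; the substitutions $x_k = w_{\ell_k}$ and $y_j = -w_{-\ell^t_j - 1}$ are engineered so that Lemma~\ref{lem.pf-hlf.w-w} collapses every $x_i + y_j$ into an algebraic hook length, while Lemma~\ref{lem.Deltas.mi+1+bi} simultaneously collapses the Konvalinka prefactor into the desired $z$-sum via Lemma~\ref{lem.pf-hlf.4}. Once these ingredients are aligned, the remaining steps are formal.
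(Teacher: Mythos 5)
Your proposal is correct and follows essentially the same route as the paper's proof: both specialize $x_k \mapsto w_{\ell_k}$ and $y_j \mapsto -w_{-\ell^t_j - 1}$ so that Lemma~\ref{lem.pf-hlf.w-w} converts $x_i + y_j$ into $h_\lambda\tup{\tup{i,j};z}$, collapse the Konvalinka prefactor via Lemmas~\ref{lem.Deltas.mi+1+bi} and~\ref{lem.pf-hlf.4}, and finish by dividing through by $\prod_{c \in Y\tup{\lambda}} h_\lambda\tup{c;z}$ and the prefactor. The only cosmetic difference is that you phrase the substitution explicitly as a ring homomorphism, whereas the paper simply reassigns the $x_i, y_i$ in place.
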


Theorem~\ref{thm.main} can now be proved by a simple induction on $\abs{Y\tup{\lambda / \mu}}$, using Lemma~\ref{recursion.main} \textbf{(b)} and Lemma~\ref{rec.2} in the induction step.
(As with all the other proofs, details can be found in Section~\ref{sec.proofs}.)

\section{\label{sec.konva-forreal}Proof of the Konvalinka recursion}

Now that the primary goal of this work (the proof of Theorem~\ref{thm.main}) has been achieved,
we turn to the proof of the Konvalinka recursion (Theorem~\ref{mainKonvalinka}).
Having established a closely related result (Lemma~\ref{lem.konvalinka-bi}) already,
we only need to show that the sums of $x$'s and of $y$'s that appear in the former agree with those that appear in the latter.
This requires some combinatorial lemmas, some of which are interesting in their own right.

\subsection{More on partitions, Delta-sets and transposes}

We begin with some general properties of partitions.

\begin{lemma}
\label{lem.conj.disjoint}
Let $\lambda$ be a partition. Let $i$ and $j$ be two
positive integers. Then, $\lambda_{j}+\lambda_{i}^{t}-i-j \neq -1$.
\end{lemma}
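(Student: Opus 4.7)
The plan is to argue by contradiction: assume $\lambda_j + \lambda_i^t - i - j = -1$, i.e., $\lambda_j + \lambda_i^t = i + j - 1$, and split into cases based on whether the box $\tup{i,j}$ belongs to $Y\tup{\lambda}$ or not. Geometrically, the claim says that the quantity $\lambda_j + \lambda_i^t - i - j$ cannot take the value $-1$; in fact it is either $\geq 0$ (when $\tup{i,j} \in Y\tup{\lambda}$) or $\leq -2$ (when $\tup{i,j} \notin Y\tup{\lambda}$), so the value $-1$ is ``skipped''. The key tool is Lemma~\ref{lem.conj.uniprop}, which gives the equivalence $\tup{\lambda_i^t \geq j} \iff \tup{\lambda_j \geq i}$.

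First I would handle the case $\lambda_j \geq i$. Then Lemma~\ref{lem.conj.uniprop} gives $\lambda_i^t \geq j$, and adding the two inequalities yields $\lambda_j + \lambda_i^t \geq i + j$, which is incompatible with the assumed equality $\lambda_j + \lambda_i^t = i + j - 1$. Next I would handle the complementary case $\lambda_j < i$, i.e., $\lambda_j \leq i-1$. Here the contrapositive of Lemma~\ref{lem.conj.uniprop} yields $\lambda_i^t < j$, i.e., $\lambda_i^t \leq j-1$, so $\lambda_j + \lambda_i^t \leq (i-1) + (j-1) = i + j - 2 < i + j - 1$, again a contradiction. Since the two cases exhaust all possibilities (we are dealing with integers), the assumption is untenable, proving the lemma.

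The expected obstacle is essentially zero: both cases are one-line manipulations of inequalities once the logical equivalence of Lemma~\ref{lem.conj.uniprop} is invoked. The only subtlety worth double-checking is that $\lambda_j$ and $\lambda_i^t$ are nonnegative integers (so that the inequalities $\lambda_j < i$ and $\lambda_j \leq i-1$ are actually equivalent for integers), which is immediate from the definition of a partition. No induction, no auxiliary construction, and no appeal to the theory of $\Delta\tup{\lambda}$ should be needed; this is a purely local property of conjugate partitions.
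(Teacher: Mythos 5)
Your proof is correct and takes essentially the same approach as the paper: the same case split on $\lambda_j \geq i$ versus $\lambda_j < i$, the same use of Lemma~\ref{lem.conj.uniprop} and its contrapositive, and the same inequality manipulations. The only cosmetic difference is that you argue by contradiction whereas the paper shows directly in each case that the quantity is $\geq 0$ or $\leq -2$.
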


\begin{lemma}
\label{lem.conj.tt}
Let $\lambda$ be a partition. Then, $\tup{\lambda^t}^t = \lambda$.
\end{lemma}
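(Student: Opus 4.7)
The plan is to apply the characterization of the conjugate partition twice. The cleanest route uses Lemma~\ref{lem.conj.uniprop}, which tells us that for any partition $\lambda$ and any positive integers $i, j$, we have the equivalence $(\lambda^t_i \geq j) \Longleftrightarrow (\lambda_j \geq i)$. Applying this with $\lambda$ replaced by $\lambda^t$ gives $((\lambda^t)^t_i \geq j) \Longleftrightarrow (\lambda^t_j \geq i)$, and then a second application of Lemma~\ref{lem.conj.uniprop} (with the roles of $i$ and $j$ swapped) converts the right-hand side into $\lambda_i \geq j$. Chaining these equivalences yields $((\lambda^t)^t_i \geq j) \Longleftrightarrow (\lambda_i \geq j)$ for all $i, j \geq 1$.

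From this equivalence, I would conclude that the $i$-th entry of $(\lambda^t)^t$ equals the $i$-th entry of $\lambda$ for every $i \geq 1$: indeed, for any nonnegative integer $\alpha$, we have $\alpha = \max\{ j \geq 0 \mid \alpha \geq j \}$, and applying this to both sequences shows $(\lambda^t)^t_i = \lambda_i$. Hence $(\lambda^t)^t = \lambda$ as sequences, which is precisely the claim.

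No step here should present real difficulty; the only thing to be mindful of is making sure that the two applications of Lemma~\ref{lem.conj.uniprop} are written with the right variable names, since the lemma is symmetric in $i$ and $j$ only in a superficial way (the roles of rows and columns get swapped). An equally valid alternative, which avoids entries entirely, is to use the diagram characterization \eqref{eq.def.lambdat.1}: one simply computes $Y((\lambda^t)^t) = \{(j,i) \mid (i,j) \in Y(\lambda^t)\} = \{(j,i) \mid (j,i) \in Y(\lambda)\} = Y(\lambda)$, and invokes the fact that a partition is determined by its Young diagram.
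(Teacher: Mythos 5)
Your proof is correct. The paper itself dismisses this lemma with the single sentence ``Well-known and easy consequence of the definition of $\lambda^t$,'' so there is no detailed argument to compare against; you have in effect supplied the argument the paper assumes the reader can reconstruct. Both routes you sketch --- chaining Lemma~\ref{lem.conj.uniprop} twice and then recovering the entries via $\alpha = \max\{j \geq 0 \mid \alpha \geq j\}$, and the diagram route via \eqref{eq.def.lambdat.1} --- are correct and standard; the second is arguably cleaner since it is a pure set computation (reflecting twice across the main diagonal is the identity) and sidesteps any bookkeeping with entries. One small point worth being explicit about in the first route: the equivalence $((\lambda^t)^t_i \geq j) \Leftrightarrow (\lambda_i \geq j)$ is established only for $j \geq 1$, but since both $(\lambda^t)^t_i$ and $\lambda_i$ are nonnegative integers, it holds trivially at $j = 0$ as well, so the $\max$ argument goes through.
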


\begin{lemma}
    \label{prop.Delta.disjoint}
    Let $\lambda$ be any partition. Let $p \in \Delta\left(\lambda\right)$.
    Then, $-1-p \notin \Delta\left(\lambda^t\right)$.
\end{lemma}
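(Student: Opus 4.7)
The plan is to derive this almost immediately from Lemma~\ref{lem.conj.disjoint}, which is the technical statement that makes the result work. The intuition is that Lemma~\ref{lem.conj.disjoint} says the ``shifted'' values $\lambda_j + \lambda^t_i - i - j$ never take the value $-1$; and the claim being asked for is a direct rewriting of this fact in terms of $\Delta$-sets.

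First, I would unpack the hypothesis: since $p \in \Delta(\lambda)$, there exists some positive integer $i$ with $p = \lambda_i - i$. Next, I would argue by contradiction, supposing that $-1 - p \in \Delta(\lambda^t)$. Then, by definition of $\Delta(\lambda^t)$, there exists some positive integer $j$ with $-1 - p = \lambda^t_j - j$. Substituting $p = \lambda_i - i$, this becomes
\[
-1 - (\lambda_i - i) = \lambda^t_j - j,
\]
which, after rearranging, yields
\[
\lambda_i + \lambda^t_j - i - j = -1.
\]

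This is now precisely the equation that Lemma~\ref{lem.conj.disjoint} forbids: applying that lemma with its dummy variables relabeled (swap $i$ and $j$ in its statement), we have $\lambda_i + \lambda^t_j - i - j \neq -1$ for all positive integers $i, j$. This contradicts the equation above, so our assumption was false, and $-1 - p \notin \Delta(\lambda^t)$.

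The main (and only) obstacle is really just making sure the reindexing matches what Lemma~\ref{lem.conj.disjoint} actually states; the proof itself is essentially a one-line dictionary translation between the language of $\Delta$-sets and the language of the lemma. I expect no further difficulty, and the proof will be short -- essentially a rewriting step plus an invocation of the prior lemma.
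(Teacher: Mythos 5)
Your proof is correct and follows the paper's argument almost verbatim: both proceed by contradiction, extract witnesses $p=\lambda_i-i$ and $-1-p=\lambda^t_j-j$, add to get $\lambda_i+\lambda^t_j-i-j=-1$, and contradict Lemma~\ref{lem.conj.disjoint}. The only cosmetic difference is that the paper names the indices so as to match Lemma~\ref{lem.conj.disjoint} directly, whereas you relabel; the substance is identical.
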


We note in passing that the converse of Lemma~\ref{prop.Delta.disjoint} also holds
(see Proposition~\ref{prop.Delta.disjoint-conv} below). \medskip

Next, we state an analogue of Lemma~\ref{lem.Deltas.if-not-then-k}:

\begin{lemma}
\label{lem.Deltas.if-not-then-k2}
Let $n\in\mathbb{N}$ be such that $\lambda=\left(  \lambda_{1},\lambda
_{2},\ldots,\lambda_{n}\right)  $ (that is, $\lambda_{n+1}=0$) and
$\mu=\left(  \mu_{1},\mu_{2},\ldots,\mu_{n}\right)  $ (that is, $\mu_{n+1}%
=0$).
Let $i$ be a positive integer that satisfies $m_i \notin \Delta\left(  \lambda\right)  $.
Then, $i\in\left[  n\right]  $.
\end{lemma}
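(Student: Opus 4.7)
The plan is to proceed by contrapositive (or equivalently, by contradiction): I will assume that $i > n$, and from this deduce that $m_i \in \Delta\left(\lambda\right)$, contradicting the hypothesis on $i$. This mirrors precisely the strategy used for the analogous Lemma~\ref{lem.Deltas.if-not-then-k}, except that the roles of $\lambda$ and $\mu$ are interchanged in the assumption.

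First I would exploit the assumptions that $\lambda = \left(\lambda_1, \lambda_2, \ldots, \lambda_n\right)$ and $\mu = \left(\mu_1, \mu_2, \ldots, \mu_n\right)$. These mean that $\lambda_k = 0$ and $\mu_k = 0$ for all $k \geq n+1$ (using that partitions are weakly decreasing and eventually $0$, together with the convention of truncating trailing zeros). Assuming for contradiction that $i > n$, we therefore have $\mu_i = 0$, so that $m_i = \mu_i - i = -i$.

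Next I would produce an element of $\Delta\left(\lambda\right)$ equal to $m_i$. Since $i > n$, we also have $\lambda_i = 0$, hence $\lambda_i - i = -i = m_i$. By the very definition $\Delta\left(\lambda\right) = \set{\lambda_k - k \mid k \geq 1}$, this exhibits $m_i$ as an element of $\Delta\left(\lambda\right)$, contradicting the hypothesis $m_i \notin \Delta\left(\lambda\right)$. Hence $i \leq n$, i.e., $i \in \ive{n}$.

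No serious obstacle is expected: the argument is a near-verbatim adaptation of Lemma~\ref{lem.Deltas.if-not-then-k} with $\ell$ and $m$ swapped. The only small care is to correctly invoke both truncation hypotheses ($\lambda_{n+1} = 0$ \emph{and} $\mu_{n+1} = 0$), since, unlike in Lemma~\ref{lem.Deltas.if-not-then-k}, the hypothesis here involves $m_i$ rather than $\ell_k$, and one needs the symmetric roles of $\lambda$ and $\mu$ to produce the trivial witness $k = i$ in $\Delta\left(\lambda\right)$.
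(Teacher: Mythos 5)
Your proof is correct and follows essentially the same approach as the paper: the paper simply states that Lemma~\ref{lem.Deltas.if-not-then-k2} follows by renaming $\lambda$, $\mu$, $\ell_j$, $k$ to $\mu$, $\lambda$, $m_j$, $i$ in Lemma~\ref{lem.Deltas.if-not-then-k} and its proof, and that is precisely the argument you have written out in full (assume $i > n$, use $\mu_i = \lambda_i = 0$ to see $m_i = -i = \lambda_i - i \in \Delta\left(\lambda\right)$, contradiction).
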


\subsection{A bijection}

The next two lemmas develop the claim of Lemma~\ref{lem.Deltas.mi+1+bi}
further. They will help us transform a sum that appears in
Lemma~\ref{lem.konvalinka-bi} into a sum that appears in
Theorem~\ref{mainKonvalinka}.

\begin{lemma}
\label{lem.Deltas.ybij}
We follow Convention~\ref{conv.further.convs}.
Let $\mathbf{b}=\left(  b_{1},b_{2},b_{3},\ldots\right)  $ be the flagging
induced by $\lambda/\mu$.
Then, the map
\begin{align*}
\left\{  i\geq1\ \mid\ m_{i}\notin\Delta\left(  \lambda\right)  \right\}   &
\rightarrow\left\{  p\geq1\ \mid\ \ell_{p}^{t}\notin\Delta\left(  \mu
^{t}\right)  \right\}  ,\\
i  &  \mapsto m_{i}+1+b_{i}
\end{align*}
is well-defined and is a bijection.
\end{lemma}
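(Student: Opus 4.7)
The plan is to verify the three standard things: well-definedness, injectivity, and surjectivity, each by piggybacking on results already in the paper.

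For well-definedness, suppose $i\ge 1$ satisfies $m_i\notin\Delta\tup{\lambda}$, and set $p:=m_i+1+b_i$. Lemma~\ref{lem.Deltas.mi+1+bi} immediately gives $p\ge 1$ and $\ell^t_p=-1-m_i$. It remains to check $\ell^t_p\notin\Delta\tup{\mu^t}$. Since $m_i\in\Delta\tup{\mu}$ by definition of $\Delta\tup{\mu}$, Lemma~\ref{prop.Delta.disjoint} (applied with $\lambda$ replaced by $\mu$) yields $-1-m_i\notin\Delta\tup{\mu^t}$, i.e.\ $\ell^t_p\notin\Delta\tup{\mu^t}$, as desired.

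For injectivity, suppose $i_1,i_2$ in the domain have the same image $p$. Then from $\ell^t_p=-1-m_{i_1}=-1-m_{i_2}$ (by the identity just established) we get $m_{i_1}=m_{i_2}$, and then Lemma~\ref{lem.l-decrease} (which says $m_1>m_2>m_3>\cdots$) forces $i_1=i_2$.

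For surjectivity, let $p\ge 1$ be such that $\ell^t_p\notin\Delta\tup{\mu^t}$. The converse of Lemma~\ref{prop.Delta.disjoint}, namely Proposition~\ref{prop.Delta.disjoint-conv} (applied with $\lambda$ replaced by $\mu^t$ and using Lemma~\ref{lem.conj.tt} which gives $\tup{\mu^t}^t=\mu$), tells us that $-1-\ell^t_p\in\Delta\tup{\mu}$. Hence there exists $i\ge 1$ with $m_i=-1-\ell^t_p$. Now $p\ge 1$ means $\ell^t_p\in\Delta\tup{\lambda^t}$, so Lemma~\ref{prop.Delta.disjoint} (applied to $\lambda^t$, using $\tup{\lambda^t}^t=\lambda$) yields $-1-\ell^t_p\notin\Delta\tup{\lambda}$, i.e.\ $m_i\notin\Delta\tup{\lambda}$; thus $i$ lies in the domain. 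Finally, Lemma~\ref{lem.Deltas.mi+1+bi} applied to this $i$ gives $\ell^t_{m_i+1+b_i}=-1-m_i=\ell^t_p$, and the strict decrease $\ell^t_1>\ell^t_2>\cdots$ from Lemma~\ref{lem.l-decrease} forces $m_i+1+b_i=p$. So $i$ is a preimage of $p$.

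The only real subtlety is that the proof uses the converse of Lemma~\ref{prop.Delta.disjoint}, which the paper defers to Proposition~\ref{prop.Delta.disjoint-conv}; granting that converse, the rest is essentially an unwinding of the formula $\ell^t_{m_i+1+b_i}=-1-m_i$ from Lemma~\ref{lem.Deltas.mi+1+bi}, using that both $\tup{m_i}_{i\ge 1}$ and $\tup{\ell^t_p}_{p\ge 1}$ are strictly decreasing sequences so the map $i\mapsto m_i$ and the map $p\mapsto\ell^t_p$ are injective. In essence, the bijection is just the restriction of the natural map $m_i\mapsto -1-m_i$ between $\Delta\tup{\mu}$ and the complement of $\Delta\tup{\mu^t}$ in $\ZZ$ to those indices respecting the analogous constraint coming from $\lambda$.
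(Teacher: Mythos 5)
Your proof is correct, and the well-definedness and injectivity parts coincide with the paper's approach (both use Lemma~\ref{lem.Deltas.mi+1+bi}, Lemma~\ref{prop.Delta.disjoint}, and the strict monotonicity from Lemma~\ref{lem.l-decrease}). Your surjectivity argument, however, takes a genuinely different route. You produce an explicit preimage: starting from $p$ with $\ell^t_p\notin\Delta\tup{\mu^t}$, you use Proposition~\ref{prop.Delta.disjoint-conv} (the converse of Lemma~\ref{prop.Delta.disjoint}) to land in $\Delta\tup{\mu}$, obtain the index $i$ with $m_i=-1-\ell^t_p$, check via Lemma~\ref{prop.Delta.disjoint} applied to $\lambda^t$ that $m_i\notin\Delta\tup{\lambda}$, and finally use Lemma~\ref{lem.Deltas.mi+1+bi} plus the injectivity of $q\mapsto\ell^t_q$ to conclude $m_i+1+b_i=p$. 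This is a clean direct argument. The paper instead avoids the converse proposition entirely: it first observes that both the domain and the codomain are finite, shows that injectivity gives $\abs{\text{domain}}\le\abs{\text{codomain}}$, and then obtains the reverse inequality by repeating the same reasoning with $\lambda,\mu$ replaced by $\mu^t,\lambda^t$ (exploiting Lemma~\ref{lem.conj.tt}); equal finite cardinalities plus injectivity then force bijectivity. The trade-off: your proof requires proving Proposition~\ref{prop.Delta.disjoint-conv} as a prerequisite (the paper defers it to the appendix precisely because it is not needed on the critical path), whereas the paper's proof is slightly more indirect but only needs the easier one-directional statement and a finiteness check. Both are valid; since Proposition~\ref{prop.Delta.disjoint-conv} is proved independently of this lemma, there is no circularity in your approach.
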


\begin{lemma}
\label{lem.Deltas.y}
We follow Convention~\ref{conv.further.convs}.
Let $\mathbf{b}=\left(  b_{1},b_{2},b_{3},\ldots\right)  $ be the flagging
induced by $\lambda/\mu$.

Let $n\in\mathbb{N}$ be such that $\lambda=\left(  \lambda_{1},\lambda
_{2},\ldots,\lambda_{n}\right)  $ (that is, $\lambda_{n+1}=0$) and
$\mu=\left(  \mu_{1},\mu_{2},\ldots,\mu_{n}\right)  $ (that is, $\mu_{n+1}%
=0$). Then,%
\[
\sum_{\substack{i\in\left[  n\right]  ;\\m_{i}\notin\Delta\left(
\lambda\right)  }}y_{m_{i}+1+b_{i}}=\sum_{\substack{k\geq1;\\\ell_{k}%
^{t}\notin\Delta\left(  \mu^{t}\right)  }}y_{k}.
\]

\end{lemma}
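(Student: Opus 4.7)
The plan is to combine Lemma~\ref{lem.Deltas.if-not-then-k2} with the bijection provided by Lemma~\ref{lem.Deltas.ybij}. The essential observation is that both sides are sums of the $y_k$'s indexed by two different sets, and the bijection of Lemma~\ref{lem.Deltas.ybij} is exactly what matches them up; the only subtlety is to dispose of the ``$i\in[n]$'' restriction on the left-hand side.

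First, I would argue that the restriction $i\in[n]$ on the left-hand side of the claim is in fact redundant given the condition $m_{i}\notin\Delta(\lambda)$. Indeed, Lemma~\ref{lem.Deltas.if-not-then-k2} (applied with our assumptions that $\lambda_{n+1}=0$ and $\mu_{n+1}=0$) guarantees that any positive integer $i$ satisfying $m_{i}\notin\Delta(\lambda)$ must already lie in $[n]$. Hence
\[
\sum_{\substack{i\in\left[  n\right]  ;\\m_{i}\notin\Delta\left(  \lambda\right)  }}y_{m_{i}+1+b_{i}}
=\sum_{\substack{i\geq1;\\m_{i}\notin\Delta\left(  \lambda\right)  }}y_{m_{i}+1+b_{i}}.
\]

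Second, I would apply Lemma~\ref{lem.Deltas.ybij}, which tells us that the map $i\mapsto m_{i}+1+b_{i}$ is a bijection from the set $\{i\geq 1\mid m_{i}\notin\Delta(\lambda)\}$ to $\{p\geq 1\mid \ell^{t}_{p}\notin\Delta(\mu^{t})\}$. Substituting $k=m_{i}+1+b_{i}$ in the sum above, this bijection transforms it into
\[
\sum_{\substack{k\geq1;\\\ell_{k}^{t}\notin\Delta\left(  \mu^{t}\right)  }}y_{k},
\]
which is exactly the right-hand side of the claim.

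There is really no main obstacle, since the two lemmas being cited do all the work; the proof amounts to observing that the index set on the left-hand side can be enlarged from $[n]$ to all of $\{1,2,3,\ldots\}$ without change, and then reindexing via the given bijection. The only care needed is to check that Lemma~\ref{lem.Deltas.if-not-then-k2} is applicable (which it is, given our choice of $n$), and that we invoke Lemma~\ref{lem.Deltas.ybij} in the correct direction.
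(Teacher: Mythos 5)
Your proposal is correct and takes essentially the same approach as the paper: first use Lemma~\ref{lem.Deltas.if-not-then-k2} to replace the restriction $i\in[n]$ with $i\geq1$, then reindex the resulting sum via the bijection from Lemma~\ref{lem.Deltas.ybij}.
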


\subsection{Proof of the Konvalinka recursion}

It is now easy to derive the original Konvalinka recursion
(Theorem \ref{mainKonvalinka}) from Lemma~\ref{lem.konvalinka-bi}.

\section{\label{sec.hints}Hints}

The following hints are given for the less straightforward proofs.
\medskip

\textit{Hint to Lemma~\ref{lem.ssyt.neighbors}.} Observe that if $e \in Y\tup{\mu}$ is any box, then the box $e_{+T}$ lies on the same diagonal as $e$.
However, the condition of each of the parts \textbf{(a)}--\textbf{(d)} of the lemma entails that the boxes $c_{+T}$ and $d_{+T}$ lie either on the same or on adjacent diagonals.
Hence, the same holds for the boxes $c$ and $d$.
Consequently, the box $d$ lies either weakly southeast or weakly northwest of $c$ (where ``weakly'' means that the boxes can share a row or column).
If the boxes $c$ and $d$ were more than one unit apart, or the entries $T\tup{c}$ and $T\tup{d}$ differed by more than $1$, then Lemma~\ref{lem.ssyt.geq} would thus place the boxes $c_{+T}$ and $d_{+T}$ too far apart from each other.
A sufficiently precise version of this reasoning leads to the claims of parts \textbf{(a)}--\textbf{(d)}.
\medskip

\textit{Hint to Lemma~\ref{transition_wd}.} \textbf{(a)} It is clear that the diagram $\DD\tup{T}$ can be obtained from $Y\tup{\mu}$ by moving each box $\tup{i,j}$ down into the $T\tup{i,j}$-th row by a sequence of $T\tup{i,j}-i$ southeastern moves (i.e., moves of the form $c \mapsto c_{\searrow}$).
What needs to be proved is that these moves can be arranged in an order in which they are excited moves (i.e., are not blocked by the presence of $c_{\to}$ or $c_{\downarrow}$ or $c_{\searrow}$ in the diagram).
One way to do this is by first moving the southeasternmost boxes of $Y\tup{\mu}$ to their intended positions first, then proceed recursively.
An easier way is to induct on the sum of all entries of $T$, using Lemma~\ref{lem.DS-vs-DT}.
\medskip

\textit{Hint to Lemma~\ref{transition_formula}.} The well-definedness follows from Lemma~\ref{transition_wd} \textbf{(a)}.
The hard part is surjectivity, i.e., to show that each excitation $E$ of $Y\tup{\mu}$ has the form $\DD\tup{T}$ for some $T \in \SSYT\tup{\mu}$.
One way is to induct on the number of excited moves needed to construct $E$.
By the induction hypothesis, the diagram obtained just before the last step has the form $\DD\tup{S}$ for some $S \in \SSYT\tup{\mu}$.
Argue that incrementing one specific entry of $S$ will cause $\DD\tup{S}$ to become $E$ instead (while $S$ still remains semistandard).
Injectivity follows from a similar argument.
\medskip

\textit{Hint to Lemma~\ref{1hprop}.} Break up the sum in the definition of $h\tup{a,b,c}$ into the part with $i_a = b$ and the part with $i_a < b$.
\medskip

\textit{Hint to Lemma~\ref{3hprop}.} Induct on $a+b$.
In the induction step, use Lemma~\ref{1hprop} three times (once for $a,b,c$; once for $a-1,b,c$; and once for $a,b,c-1$) and use the induction hypothesis twice (once for $a-1,b,c$, and once for $a,b-1,c$).
The five linear equations in the various $h\tup{i,j,k}$'s obtained can be combined to obtain the claim.
\medskip

\textit{Hint to Corollary~\ref{2hprop}.} Combine Lemma~\ref{1hprop} with Lemma~\ref{3hprop}.
\medskip

\textit{Hint to Lemma~\ref{lem.flagJT.fail}.} \textbf{(a)} Find an $i \in \set{1,2,\ldots,n-1}$ such that $\sigma\tup{i} > \sigma\tup{i+1}$, and show that the last box in the $\tup{i+1}$-th row of $P\tup{\sigma}$ is an outer failure.
\medskip

\textit{Hint to Lemma~\ref{lem.flagJT.flip1}.} Parts \textbf{(b)}, \textbf{(c)}, \textbf{(d)} and \textbf{(e)} are easy, but \textbf{(a)} and \textbf{(f)} are not to be underestimated.
Let $c = \tup{i,j}$ be as in Definition~\ref{def.flagJT.flip}.

For part \textbf{(a)}, it needs to first be checked that the swapping of the top and bottom floors results in an actual filling of $Y\tup{\sigma^\prime}$ (as opposed to a filling with gaps, or a filling of the wrong diagram).
Start by observing that the $i$-th row of $T$ has at least $j$ entries (obviously since $\tup{i,j} \in Y\tup{\sigma}$)
and that the $\tup{i-1}$-st row of $T$ has at least $j-1$ entries (here you need to use the fact that $c$ is a \textbf{leftmost} failure).
This shows that $T'$ has no gaps between entries in a row.
Then show that the rows of $T'$ have the right lengths.
Finally show that the entries in each row of $T'$ increase weakly from left to right
(this requires slightly different arguments when $c$ is an inner failure and when $c$ is an outer failure).
This handles part \textbf{(a)}.

Part \textbf{(f)} requires showing that the last entry of the $k$-th row of $T'$ is $\leq b_{\sigma'\tup{k}}$ (beware: not $\leq b_{\sigma\tup{k}}$ any more!) for each $k \in \ive{n}$.
This is clear for $k \notin \set{i, i-1}$, as in this case the $k$-th row does not change from $T$ to $T'$ and the value $\sigma\tup{k}$ does not change from $\sigma$ to $\sigma'$.
It remains to check the inequality for $k = i$ and for $k = i-1$.
This is easy when the top floor and the bottom floor both are nonempty (i.e., contain at least one entry), because in this case the 
last entries of the $\tup{i-1}$-st and $i$-th row are swapped along with the values $\sigma\tup{i}$ and $\sigma\tup{i-1}$.
The tricky cases are when one of the two floors is empty.
If the bottom floor is empty, then we need to show that $T\tup{i-1, j-1} \leq b_{\sigma\tup{i}}$, but this follows from $T\tup{i-1, j-1} < T\tup{i, j-1} \leq b_{\sigma\tup{i}}$.
If the top floor is empty, then $c$ is an outer failure, and in this case we need to show that $b_{\sigma\tup{i}} \leq b_{\sigma\tup{i-1}}$, but this follows from $\sigma\tup{i} > \sigma\tup{i-1}$, which in turn follows (after a bit of work) from the fact that the $t$-th row of $Y\tup{\sigma}$ is longer than the $\tup{i-1}$-st row.
In either case, the claim follows.
\medskip

\textit{Hint to Lemma~\ref{determinant.sum}.} Laplace expansion helps.
\medskip

\textit{Hint to Lemma~\ref{lem.det-u-cor}.} Apply  Lemma~\ref{determinant.sum} to $P = \tup{u_{i,j}}_{i,j\in\ive{n}}$ and $Q = \tup{u_{i,j+1}}_{i,j\in\ive{n}}$.
\medskip

\textit{Hint to Lemma~\ref{lem.det-uc}.} Expand the determinant on the left hand side along the $k$-th row and thus rewrite it as
$\det\tup{u_{i,j+\ive{k=i}}}_{i,j\in \ive{n}} - p_i \det\tup{u_{i,j}}_{i,j\in \ive{n}}$.
Now recall Lemma~\ref{lem.det-u-cor}.
\medskip

\textit{Hint to Lemma~\ref{lem.Deltas.x}.} Apply Lemma~\ref{lem.Deltas.f=g} to $f\tup{i} = x_{b_i}$ and $g\tup{i} = x_j$.
\medskip

\textit{Hint to Lemma~\ref{lem.Deltas.mi+1+bi}.} Set $k = m_i + 1 + b_i$.
Note that $m_i \neq \ell_{b_i}$ (since $m_i \notin \Delta\tup{\lambda}$).
Use Lemma~\ref{lem.flagging-of-lm.uniprop} and this fact
to prove that $\lambda_{b_i} \geq k$ but $\lambda_{b_i+1} < k$.
From this, conclude that $\lambda^t_k = b_i$. The rest is easy.
\medskip

\textit{Hint to Lemma~\ref{lem.ER-bb}.} Set $\ell_0 = \infty$.
Observe that $b_j = \max\set{k \geq 0 \mid \ell_k \geq m_j}$
and $b_{j-1} = \max\set{k \geq 0 \mid \ell_k \geq m_{j-1}}$.
Draw consequences.
\medskip

\textit{Hint to Lemma~\ref{lem.b-vs-bk}.} Rather similar to the proof of Lemma~\ref{lem.ER-bb}.
\medskip

\textit{Hint to Lemma~\ref{lem.mu+k.sdet}.}
If $k \in \operatorname{ER}\tup{\mu}$, then both parts of this lemma follow easily from \eqref{eq.snu=.det} and Lemma~\ref{lem.b-vs-bk}.
In the remaining case, both right hand sides are $0$, since the matrices have two equal rows.
\medskip

\textit{Hint to Lemma~\ref{lem.k-th-det-1}.} Use Lemma~\ref{lem.mu+k.sdet} as well as
Corollary~\ref{2hprop} (in the case when $m_k \in \Delta\tup{\lambda}$) and
Corollary~\ref{3hprop2} (in the case when $m_k \notin \Delta\tup{\lambda}$).
Don't forget that $x_i = y_i = 0$ for $i < 0$.
\medskip

\textit{Hint to Lemma~\ref{lem.k-th-det-2}.} Apply Lemma~\ref{lem.k-th-det-1}, then simplify using Lemma~\ref{lem.det-uc}.
\medskip

\textit{Hint to Lemma~\ref{lem.k-th-det-3}.} Use Lemma~\ref{lem.det.lastrow=01}, then simplify using Lemma~\ref{lem.h1bc}.
\medskip

\textit{Hint to Lemma~\ref{lem.k-th-det-4}.} The only nontrivial ingredient is Lemma~\ref{lem.Deltas.x}.
\medskip

\textit{Hint to Lemma~\ref{lem.konvalinka-bi}.} Combine Lemmas~\ref{lem.konval-RHS-as-sum},~\ref{lem.k-th-det-2},~\ref{lem.k-th-det-3} and~\ref{lem.k-th-det-4}.
\medskip

\textit{Hint to Lemma~\ref{lem.pf-hlf.4}.} Sum Lemma \ref{section14.wY} over all $i \in \ive{n}$ and recall Lemma~\ref{lem.Deltas.f=g}.
\medskip

\textit{Hint to Lemma~\ref{rec.2}.} Set $x_i := w_{\ell_i}$ and $y_i := - w_{-\ell^t_i - 1}$ for all $i \geq 1$, and apply Lemma \ref{lem.konvalinka-bi}. Use Lemmas \ref{lem.Deltas.mi+1+bi} and \ref{lem.pf-hlf.4} to simplify the first factor on the left hand side.
Observe that Lemma \ref{lem.pf-hlf.w-w} yields $x_i + y_j = h_\lambda \tup{\tup{i,j};z}$ for each $\tup{i,j} \in Y\tup{\lambda}$; use this to simplify the $\mathbf{s}_\lambda\ive{\nu}$ and $\mathbf{s}_\lambda\ive{\mu}$ expressions.
\medskip

\textit{Hint to Lemma~\ref{prop.Delta.disjoint}.} Use Lemma \ref{lem.conj.disjoint}.
\medskip


\textit{Hint to Lemma~\ref{lem.Deltas.ybij}.}
Let us denote this map by $\Phi$.
First, show that this map is well-defined by using Lemmas~\ref{lem.Deltas.mi+1+bi} and~\ref{prop.Delta.disjoint}.
Then, show that $\Phi$ is injective (argue that $\Phi\tup{u} = \Phi\tup{v}$ entails $m_u = m_v$ using Lemma~\ref{lem.Deltas.mi+1+bi}, and then conclude $u = v$).
It remains to show that $\Phi$ is surjective.
For that, it suffices to argue that its domain and its target have the same size (since they are easily seen to be finite sets).
The injectivity of $\Phi$ yields that its domain is at most as large as its target;
but the same argument, with $\lambda$ and $\mu$ replaced by $\mu^t$ and $\lambda^t$, yields that its target is at most as large as its domain.

\medskip
\section{\label{sec.proofs}Proofs}

\subsection{To Section \ref{sec.nots}}

\begin{proof}[Proof of Lemma \ref{lem.exc.straight}.]
The diagram $Y\left(  \varnothing\right)  $ is the empty set $\varnothing$.
Hence, no excited moves can be applied to it. In other words, its only
excitation is $\varnothing$ itself. This excitation $\varnothing$ of course
satisfies $\varnothing\subseteq Y\left(  \lambda\right)  $. Therefore,
$\mathcal{E}\left(  \lambda/\varnothing\right)  =\left\{  \varnothing\right\}
$ (by Definition~\ref{def.Elamu}).
\end{proof}

\begin{proof}[Proof of Lemma \ref{lem.exc.empty}.]
We don't have $\lambda \supseteq \mu$.
In other words, we don't have $Y\tup{\lambda} \supseteq Y\tup{\mu}$.
In other words, there exists some box $c \in Y\tup{\mu}$ that does not belong to $Y\tup{\lambda}$.
This box must clearly retain this property (of not belonging to $Y\tup{\lambda}$) under any excited move (since any excited move can only move this box further southeast).
Thus, any excitation of $Y\tup{\mu}$ contains a box that does not belong to $Y\tup{\lambda}$.
Hence, there exists no excitation $E$ of $Y\tup{\mu}$ that satisfies $E \subseteq Y\tup{\lambda}$.
In other words, the set of all such excitations is empty.
In other words, $\mathcal{E}\tup{\lambda/\mu}$ is empty (since $\mathcal{E}\tup{\lambda/\mu}$ was defined to be this set).
\end{proof}

\begin{proof}[Proof of Lemma \ref{lem.exc.equal}.]
The only excitation $E$ of the diagram $Y\left(  \lambda\right)  $ that satisfies $E \subseteq Y\tup{\lambda}$ is $Y\tup{\lambda}$ itself
(since any excited move would cause a box to move out of $Y\tup{\lambda}$, and thus break the $E \subseteq Y\tup{\lambda}$ condition).
This proves the lemma.
\end{proof}

\subsection{To Section \ref{sec.recz}}

\begin{proof}[Proof of Lemma \ref{recus.z}.]
Recall that $Y\left(  \lambda/ \mu\right)  =
Y\left(  \lambda\right)  \setminus Y\left(  \mu\right) $.

Let $c=\left(  p,q\right)  $ be the box in $Y\left(  \lambda/\mu\right)  $
that contains the entry $1$ in $T$ (that is, that satisfies $T\left(
c\right)  =1$). If the western neighbor $\left(  p,q-1\right)  $ of this box
$c$ lied in $Y\left(  \lambda/\mu\right)  $, then the entry of $T$ in this
western neighbor would be smaller than $1$ (since the entries of $T$ increase
left-to-right in each row), which is impossible (since the entries of $T$ are
positive integers). Thus, the western neighbor $\left(  p,q-1\right)  $ of $c$
does not lie in $Y\left(  \lambda/\mu\right)  $. Hence, this western neighbor
$\left(  p,q-1\right)  $ lies in $Y\left(  \mu\right)  $ unless $q=1$ (since
it definitely lies in $Y\left(  \lambda\right)  $ unless $q=1$).

Similarly, we can see that the northern neighbor $\left(  p-1,q\right)  $ of
$c$ lies in $Y\left(  \mu\right)  $ unless $p=1$. In other words, $\mu
_{p-1}\geq q$ unless $p=1$.

Recall that the box $\left(  p,q-1\right)  $ lies in $Y\left(
\mu\right)  $ unless $q=1$. This box must be the easternmost box in the
$p$-th row of $Y\left(  \mu\right)  $ (since its eastern neighbor $\left(
p,q\right)  =c$ lies in $Y\left(  \lambda/\mu\right)  =Y\left(  \lambda
\right)  \setminus Y\left(  \mu\right)  $ and thus does not lie in $Y\left(
\mu\right)  $). Thus, the $p$-th row of $Y\left(  \mu\right)  $ has $q-1$
boxes in total. In other words, $\mu_{p}=q-1$, so that $\mu_{p}+1=q$.

If we increment the $p$-th entry of the
partition $\mu$ by $1$, then we obtain a new sequence
$\nu = \left(\nu_1, \nu_2, \nu_3, \ldots\right)$. Consider this
$\nu$. Explicitly, it is given by
\begin{align*}
\nu_{i}  & =\mu_{i}\ \ \ \ \ \ \ \ \ \ \text{for all }i\neq
p,\ \ \ \ \ \ \ \ \ \ \text{and}\\
\nu_{p}  & =\mu_{p}+1.
\end{align*}
In particular, if $p\neq 1$, then $\nu_{p-1} = \mu_{p-1} \geq q$ (since we know that 
 $\mu_{p-1} \geq q$ unless $p=1$), and thus $\nu_{p-1} \geq q = \mu_p+1 = \nu_p$.
Thus, it is easy to see that the sequence $\nu$ is a partition.
[\textit{Proof:} The sequence $\mu$ is weakly decreasing (since $\mu$ is a partition).
But the sequence $\nu$ is obtained from $\mu$ by incrementing the $p$-th entry by $1$.
Hence, $\nu$ is also weakly decreasing, unless its (incremented) $p$-th entry has
become larger than the previous (i.e., the $\tup{p-1}$-th) entry.
In other words, $\nu$ is also weakly decreasing, unless $p \neq 1$ and $\nu_p > \nu_{p-1}$.
But the latter case cannot happen (because if $p \neq 1$, then $\nu_{p-1} \geq \nu_p$).
Thus, we conclude that $\nu$ is weakly decreasing, i.e., is a partition
(since it is clear that $\nu_i = \mu_i = 0$ for all sufficiently large $i$).]

Recall that the partition $\nu$ is obtained from $\mu$ by
incrementing the $p$-th entry by $1$. Hence,
\begin{equation}
Y\left(  \nu\right)  =Y\left(  \mu\right)  \cup\left\{  \left(  p,\mu
_{p}+1\right)  \right\}  \label{eq.pf.recus.z.1}%
\end{equation}
(because by incrementing $\mu_{p}$, we add a new box $\left(  p,\mu
_{p}+1\right)  $ to the Young diagram $Y\left(  \mu\right)  $) and
$\mu\lessdot\nu$.

Let us now recall that $\mu_{p}+1=q$. Hence, $\tup{p,\mu_p+1} = \tup{p,q} = c$.
Thus, we can rewrite \eqref{eq.pf.recus.z.1} as
\begin{equation}
Y\left(  \nu\right)  =Y\left(  \mu\right)  \cup\left\{  c\right\}
.\label{eq.pf.recus.z.2}%
\end{equation}

Note furthermore that $Y\left(  \mu\right)  \subseteq Y\left(  \lambda\right)
$ (since $\lambda/\mu$ is a skew partition) and $\left\{  c\right\}  \subseteq
Y\left(  \lambda\right)  $ (since $c\in Y\left(  \lambda/\mu\right)  \subseteq
Y\left(  \lambda\right)  $). Hence, $Y\left(  \mu\right)  \cup\left\{
c\right\}  \subseteq Y\left(  \lambda\right)  $ (since a union of two
subsets of $Y\tup{\lambda}$ must again be a subset of $Y\tup{\lambda}$).
In view of \eqref{eq.pf.recus.z.2}, this rewrites as
$Y\left(  \nu\right)  \subseteq Y\left(  \lambda\right)  $. In other words,
$\nu\subseteq\lambda$. Hence, $\mu \lessdot \nu \subseteq \lambda$.

Furthermore,%
\begin{align*}
Y\left(  \lambda/\nu\right)    & =Y\left(  \lambda\right)  \setminus Y\left(
\nu\right)  =Y\left(  \lambda\right)  \setminus\left(  Y\left(  \mu\right)
\cup\left\{  c\right\}  \right)  \ \ \ \ \ \ \ \ \ \ \left(  \text{by
\eqref{eq.pf.recus.z.2}}\right)  \\
& =\underbrace{\left(  Y\left(  \lambda\right)  \setminus Y\left(  \mu\right)
\right)  }_{=Y\left(  \lambda/\mu\right)  }\setminus\left\{  c\right\}
=Y\left(  \lambda/\mu\right)  \setminus\left\{  c\right\}  .
\end{align*}

Now, remove the box $c$ with the entry $1$ from $T$, and subtract $1$ from
all remaining entries. Let $T^{\prime}$ be the filling that remains. Then,
$T^{\prime}$ is a filling of $Y\left(  \lambda/\mu\right)
\setminus\left\{  c \right\}  = Y\left(  \lambda/ \nu\right) $. The entries of
this filling $T^{\prime}$ are $1,2,\ldots,n-1$ (since they are obtained by
subtracting $1$ from each of $2,3,\ldots,n$). Furthermore, these entries
increase left-to-right in each row (since this is true for $T$, and clearly
remains true as we subtract $1$ from each entry), and increase top-to-bottom
in each column (similarly). Hence, $T^{\prime}$ is a standard tableau of shape
$\lambda/ \nu$.

It remains to prove \eqref{eq.recus_z.eq}. To that purpose, we observe that
the construction of $T^{\prime}$ yields $c_{T^{\prime}}(k) = c_{T}(k+1)$ for
each $k \in\left\{  1, 2, \ldots, n-1 \right\} $
(since each entry $k$ of $T^{\prime}$ originated as an entry $k+1$ in $T$). Thus,
\begin{align*}
\prod_{k=1}^{n-1}\left(  z_{c_{T^{\prime}}\left(  k\right)  } +z_{c_{T^{\prime
}}\left(  k+1\right)  }+\cdots+z_{c_{T^{\prime}}\left(  n-1\right)  }\right)
& = \prod_{k=1}^{n-1}\left(  z_{c_{T}\left(  k+1\right)  }+z_{c_{T}\left(
k+2\right)  }+\cdots+z_{c_{T}\left(  n\right)  }\right) \\
& = \prod_{k=2}^{n}\left(  z_{c_{T}\left(  k\right)  }+z_{c_{T}\left(
k+1\right)  }+\cdots+z_{c_{T}\left(  n\right)  }\right)
\end{align*}
(here, we have substituted $k$ for $k+1$ in the product).
On the other hand,
\[
\sum_{\left(  i, j \right)  \in Y\left(  \lambda/ \mu\right) } z_{j-i} =
z_{c_{T}\left(  1\right)  }+z_{c_{T}\left(  2\right)  }+\cdots+z_{c_{T}\left(
n\right)  } ,
\]
since the numbers $c_T\tup{1},\ c_T\tup{2},\ \ldots,\ c_T\tup{n}$ are precisely the numbers $j-i$ for all $\tup{i,j} \in Y\tup{\lambda/\mu}$
(because each box of $Y\left(  \lambda/ \mu\right) $ is occupied by exactly one
of the numbers $1, 2, \ldots, n$ in $T$).
Multiplying this equality by the preceding one, we find
\begin{align*}
&  \left(  \sum_{\left(  i, j \right)  \in Y\left(  \lambda/ \mu\right) }
z_{j-i} \right)  \cdot\prod_{k=1}^{n-1}\left(  z_{c_{T^{\prime}}\left(
k\right)  } +z_{c_{T^{\prime}}\left(  k+1\right)  }+\cdots+z_{c_{T^{\prime}%
}\left(  n-1\right)  }\right) \\
= &  \left(  z_{c_{T}\left(  1\right)  }+z_{c_{T}\left(  2\right)  }%
+\cdots+z_{c_{T}\left(  n\right)  } \right)  \cdot\prod_{k=2}^{n}\left(
z_{c_{T}\left(  k\right)  }+z_{c_{T}\left(  k+1\right)  }+\cdots
+z_{c_{T}\left(  n\right)  }\right) \\
= &  \prod_{k=1}^{n}\left(  z_{c_{T}\left(  k\right)  }+z_{c_{T}\left(
k+1\right)  }+\cdots+z_{c_{T}\left(  n\right)  }\right)  .
\end{align*}
In other words,
\begin{align*}
&  \dfrac{1}{\prod_{k=1}^{n}\left(  z_{c_{T}\left(  k\right)  }+z_{c_{T}%
\left(  k+1\right)  }+\cdots+z_{c_{T}\left(  n\right)  }\right) }\\
=\  &  \dfrac{1}{\sum_{\left(  i, j \right)  \in Y\left(  \lambda/ \mu\right)
} z_{j-i}} \cdot\dfrac{1}{\prod_{k=1}^{n-1}\left(  z_{c_{T^{\prime}}\left(
k\right)  } +z_{c_{T^{\prime}}\left(  k+1\right)  }+\cdots+z_{c_{T^{\prime}%
}\left(  n-1\right)  }\right) } .
\end{align*}
But the fraction on the left hand side of this equality is $\mathbf{z}_{T}$,
while the second fraction on the right hand side is $\mathbf{z}_{T^{\prime}}$.
Thus, this equality is precisely \eqref{eq.recus_z.eq}.
\end{proof}

\begin{proof}
[Proof of Lemma \ref{recursion.main}.]
\textbf{(a)} Assume that $\lambda=\mu$.
Then, $Y\left(  \lambda/\mu\right)  =\varnothing$. Hence, there is only one
$T\in\operatorname*{SYT}\left(  \lambda/\mu\right)  $, namely the empty
tableau $T_{\varnothing}$ (with no entries). Therefore, $\sum_{T\in
\operatorname{SYT}\left(  \lambda/\mu\right)  }\mathbf{z}_{T}=\mathbf{z}%
_{T_{\varnothing}}=1$ (since an empty product is $1$ by definition).
\medskip

\textbf{(b)} Assume that $\lambda\neq\mu$. Thus, the diagram $Y\left(
\lambda/\mu\right)  $ has at least one box. Hence, any standard tableau
$T\in\operatorname*{SYT}\left(  \lambda/\mu\right)  $ must contain the entry
$1$ somewhere.

We shall use the following notations:

\begin{itemize}
\item If $\nu$ is a partition satisfying $\mu\lessdot\nu$, then
$\operatorname*{box}\left(  \nu/\mu\right)  $ shall denote the unique box in
$Y\left(  \nu\right)  \setminus Y\left(  \mu\right)  =Y\left(  \nu/\mu\right)
$. (This box is unique, since $\mu\lessdot\nu$.)

\item If $T\in\operatorname*{SYT}\left(  \lambda/\mu\right)  $ is a standard
tableau, then $B_{1}\left(  T\right)  $ shall denote the unique box of $T$
that contains the entry $1$. (This box exists, since any standard tableau
$T\in\operatorname*{SYT}\left(  \lambda/\mu\right)  $ must contain the entry
$1$ somewhere. It is unique, since a standard tableau cannot have repeated entries.)
\end{itemize}

Let $T\in\operatorname*{SYT}\left(  \lambda/\mu\right)  $ be a standard
tableau. Lemma~\ref{recus.z} shows that if we remove the box $B_{1}\left(
T\right)  $ (that is, the box that contains the entry $1$) from $T$, and if we
subtract $1$ from all remaining entries, then we obtain a new standard tableau
$T^{\prime}$, which has shape $\lambda/\nu$ for some partition $\nu$
satisfying $\mu\lessdot\nu\subseteq\lambda$. This latter partition $\nu$ has
the property that the unique box of $Y\left(  \nu/\mu\right)  $ is
$B_{1}\left(  T\right)  $ (since this was the box removed from $T$ to obtain
$T^{\prime}$). In other words, it satisfies $B_{1}\left(  T\right)
=\operatorname*{box}\left(  \nu/\mu\right)  $. Furthermore, this property (in
combination with $\mu\lessdot\nu\subseteq\lambda$) uniquely determines $\nu$
(since $B_{1}\left(  T\right)  =\operatorname*{box}\left(  \nu/\mu\right)  $
means $Y\left(  \nu/\mu\right)  =\left\{  B_{1}\left(  T\right)  \right\}  $
and thus $Y\left(  \nu\right)  =Y\left(  \mu\right)  \cup\underbrace{Y\left(
\nu/\mu\right)  }_{=\left\{  B_{1}\left(  T\right)  \right\}  }=Y\left(
\mu\right)  \cup\left\{  B_{1}\left(  T\right)  \right\}  $, but this uniquely
determines $\nu$). Thus, there is exactly one partition $\nu$ satisfying
$\mu\lessdot\nu\subseteq\lambda$ and $B_{1}\left(  T\right)
=\operatorname*{box}\left(  \nu/\mu\right)  $.

Hence, the sum $\sum\limits_{\substack{\mu\lessdot\nu\subseteq\lambda
;\\B_{1}\left(  T\right)  =\operatorname*{box}\left(  \nu/\mu\right)
}}\mathbf{z}_{T}$ has exactly one addend, and thus equals $\mathbf{z}_{T}$. In
other words,%
\[
\mathbf{z}_{T}=\sum\limits_{\substack{\mu\lessdot\nu\subseteq\lambda
;\\B_{1}\left(  T\right)  =\operatorname*{box}\left(  \nu/\mu\right)
}}\mathbf{z}_{T}.
\]

Forget that we fixed $T$. We thus have shown that%
\begin{equation}
\mathbf{z}_{T}=\sum\limits_{\substack{\mu\lessdot\nu\subseteq\lambda
;\\B_{1}\left(  T\right)  =\operatorname*{box}\left(  \nu/\mu\right)
}}\mathbf{z}_{T} \label{pf.recursion.main.b.0}%
\end{equation}
for any standard tableau $T\in\operatorname*{SYT}\left(  \lambda/\mu\right)  $.

Now, summing the equality \eqref{pf.recursion.main.b.0} over all
$T\in\operatorname{SYT}\left(  \lambda/\mu\right)  $, we obtain
\begin{align}
\sum_{T\in\operatorname{SYT}\left(  \lambda/\mu\right)  }\mathbf{z}_{T}  &
=\underbrace{\sum_{T\in\operatorname{SYT}\left(  \lambda/\mu\right)  }%
\ \ \sum\limits_{\substack{\mu\lessdot\nu\subseteq\lambda;\\B_{1}\left(
T\right)  =\operatorname*{box}\left(  \nu/\mu\right)  }}}_{=\sum
\limits_{\mu\lessdot\nu\subseteq\lambda}\ \ \sum_{\substack{T\in
\operatorname{SYT}\left(  \lambda/\mu\right)  ;\\B_{1}\left(  T\right)
=\operatorname*{box}\left(  \nu/\mu\right)  }}}\mathbf{z}_{T}\nonumber\\
&  =\sum\limits_{\mu\lessdot\nu\subseteq\lambda}\ \ \sum_{\substack{T\in
\operatorname{SYT}\left(  \lambda/\mu\right)  ;\\B_{1}\left(  T\right)
=\operatorname*{box}\left(  \nu/\mu\right)  }}\mathbf{z}_{T}.
\label{pf.recursion.main.b.1}%
\end{align}

Now, fix a partition $\nu$ satisfying $\mu\lessdot\nu\subseteq\lambda$. Thus,
$Y\left(  \mu\right)  \subseteq Y\left(  \nu\right)  \subseteq Y\left(
\lambda\right)  $.

Let $b=\operatorname*{box}\left(  \nu/\mu\right)  $ (this is well-defined,
since $\mu\lessdot\nu$). Then, $Y\left(  \nu/\mu\right)  =\left\{  b\right\}
$. Hence,%
\begin{align*}
\underbrace{Y\left(  \lambda/\mu\right)  }_{=Y\left(  \lambda\right)
\setminus Y\left(  \mu\right)  }\setminus\underbrace{\left\{  b\right\}
}_{\substack{=Y\left(  \nu/\mu\right)  \\=Y\left(  \nu\right)  \setminus
Y\left(  \mu\right)  }}  & =\left(  Y\left(  \lambda\right)  \setminus
Y\left(  \mu\right)  \right)  \setminus\left(  Y\left(  \nu\right)  \setminus
Y\left(  \mu\right)  \right)  \\
& =Y\left(  \lambda\right)  \setminus Y\left(  \nu\right)
\ \ \ \ \ \ \ \ \ \ \left(  \text{since }Y\left(  \mu\right)  \subseteq
Y\left(  \nu\right)  \subseteq Y\left(  \lambda\right)  \right)  \\
& =Y\left(  \lambda/\nu\right)  .
\end{align*}

Consider a standard tableau $T\in\operatorname{SYT}\left(  \lambda/\mu\right)
$ satisfying $B_{1}\left(  T\right)  =b$. Then, the standard tableau $T$ has
its entry $1$ in this box $b$ (by the definition of $B_{1}\left(  T\right)
$). If we remove this box $b$ from $T$, and subtract $1$ from all remaining
entries, then we obtain a new standard tableau $T^{\prime}$ of shape
$\lambda/\nu$ (indeed, we clearly obtain a filling of the diagram $Y\left(
\lambda/\mu\right)  \setminus\left\{  b\right\}  =Y\left(  \lambda/\nu\right)
$, and furthermore this filling is a standard tableau because the subtraction
of $1$ from each entry of $T$ did not overturn the inequalities between the
entries of $T$). Thus, for any standard tableau $T\in\operatorname*{SYT}%
\left(  \lambda/\mu\right)  $ satisfying $B_{1}\left(  T\right)  =b$, we have
defined a standard tableau $T^{\prime}$ of shape $\lambda/\nu$. In other
words, we have defined a map%
\begin{align}
\left\{  T\in\operatorname{SYT}\left(  \lambda/\mu\right)  \ \mid
\ B_{1}\left(  T\right)  =b\right\}   &  \rightarrow\operatorname*{SYT}\left(
\lambda/\nu\right)  ,\nonumber\\
T &  \mapsto T^{\prime}.\label{pf.recursion.main.b.bij}%
\end{align}
This map is easily seen to be injective (since we can recover $T$ from $T^{\prime}$ by incrementing all entries of $T^\prime$ by $1$ and placing a $1$ into the extra box $b$) and surjective (because given any standard tableau $S$ of
shape $\lambda/\nu$, we can increment all entries of $S$ by $1$ and fill the
additional box $b$ with the entry $1$, thus obtaining a standard tableau
$T\in\operatorname{SYT}\left(  \lambda/\mu\right)  $ that satisfies
$B_{1}\left(  T\right)  =b$ and $T^{\prime}=S$). Hence, it is bijective.

Now, recall that $\operatorname*{box}\left(  \nu/\mu\right)  =b$. Thus,%
\begin{align}
\sum_{\substack{T\in\operatorname{SYT}\left(  \lambda/\mu\right)
;\\B_{1}\left(  T\right)  =\operatorname*{box}\left(  \nu/\mu\right)
}}\mathbf{z}_{T} &  =\sum_{\substack{T\in\operatorname{SYT}\left(  \lambda
/\mu\right)  ;\\B_{1}\left(  T\right)  =b}}\mathbf{z}_{T}\nonumber\\
&  =\sum_{\substack{T\in\operatorname{SYT}\left(  \lambda/\mu\right)
;\\B_{1}\left(  T\right)  =b}}\frac{1}{\sum\limits_{(i,j)\in Y\left(
\lambda/\mu\right)  }z_{j-i}}\cdot\mathbf{z}_{T^{\prime}}\qquad\left(
\text{by (\ref{eq.recus_z.eq})}\right)  \nonumber\\
&  =\sum_{T\in\operatorname{SYT}\left(  \lambda/\nu\right)  }\frac{1}%
{\sum\limits_{(i,j)\in Y\left(  \lambda/\mu\right)  }z_{j-i}}\cdot
\mathbf{z}_{T}\qquad\left(
\begin{array}
[c]{c}%
\text{here, we substituted }T
\text{ for }T^{\prime}\\
\text{in the outer sum, since}\\
\text{the map \eqref{pf.recursion.main.b.bij} is bijective}%
\end{array}
\right)  \nonumber\\
&  =\frac{1}{\sum\limits_{(i,j)\in Y\left(  \lambda/\mu\right)  }z_{j-i}}%
\cdot\sum_{T\in\operatorname{SYT}\left(  \lambda/\nu\right)  }\mathbf{z}%
_{T}.\label{pf.recursion.main.b.6}%
\end{align}

Now, forget that we fixed $\nu$. We have thus proved
(\ref{pf.recursion.main.b.6}) for each partition $\nu$ satisfying $\mu
\lessdot\nu\subseteq\lambda$. Now,
\begin{align*}
\sum_{T\in\operatorname{SYT}\left(  \lambda/\mu\right)  }\mathbf{z}_{T} &
=\sum\limits_{\mu\lessdot\nu\subseteq\lambda}\ \ \sum_{\substack{T\in
\operatorname{SYT}\left(  \lambda/\mu\right)  ;\\B_{1}\left(  T\right)
=\operatorname*{box}\left(  \nu/\mu\right)  }}\mathbf{z}_{T}\qquad\left(
\text{by (\ref{pf.recursion.main.b.1})}\right)  \\
&  =\sum\limits_{\mu\lessdot\nu\subseteq\lambda}\frac{1}{\sum\limits_{(i,j)\in
Y\left(  \lambda/\mu\right)  }z_{j-i}}\cdot\sum_{T\in\operatorname{SYT}\left(
\lambda/\nu\right)  }\mathbf{z}_{T}\qquad\left(  \text{by
(\ref{pf.recursion.main.b.6})}\right)  \\
&  =\frac{1}{\sum\limits_{(i,j)\in Y\left(  \lambda/\mu\right)  }z_{j-i}}%
\cdot\sum\limits_{\mu\lessdot\nu\subseteq\lambda}\ \ \sum_{T\in
\operatorname{SYT}\left(  \lambda/\nu\right)  }\mathbf{z}_{T}.
\end{align*}
This proves Lemma \ref{recursion.main} \textbf{(b)}.
\end{proof}

\subsection{To Section \ref{sec.konva}}

\begin{proof}[Proof of Lemma \ref{lem.conj.uniprop}.]
    We have the chain of equivalences
    \begin{align*}
    \tup{\lambda^t_i \geq j}
    \ \Longleftrightarrow & \ %
    \tup{\tup{i,j} \in Y\tup{\lambda^t}}
    \qquad \qquad \tup{\text{by the definition of $Y\tup{\lambda^t}$}}
    \\
    \Longleftrightarrow & \ %
    \tup{\tup{j,i} \in Y\tup{\lambda}}
    \qquad \qquad \tup{\text{by \eqref{eq.def.lambdat.1}}}
    \\
    \Longleftrightarrow & \ %
    \tup{\lambda_j \geq i}
    \qquad \qquad \tup{\text{by the definition of $Y\tup{\lambda}$}}.
    \end{align*}
    Thus, Lemma \ref{lem.conj.uniprop} follows.
\end{proof}

\subsection{To Section \ref{sec.flag}}

\begin{proof}[Proof of Lemma \ref{lem.ssyt.geq}.]
\textbf{(b)} From $\left(  u,v\right)  \in
Y\left(  \mu\right)  $, we obtain $v\leq\mu_{u}$, so that $j\leq v\leq\mu_{u}$
and therefore $\left(  u,j\right)  \in Y\left(  \mu\right)  $. Hence, the
entry $T\left(  u,j\right)  $ is well-defined.

The entries of $T$ weakly increase left-to-right in each row (since $T$ is a
semistandard tableau). Hence, from $j\leq v$, we obtain $T\left(  u,j\right)
\leq T\left(  u,v\right)  $.

Next, recall that $\mu$ is a partition, thus weakly decreasing. Hence, for
each $k\in\left\{  1,2,\ldots,u\right\}  $, we have $\mu_u \leq \mu_k$ (since $u \geq k$) and thus $j\leq\mu_u\leq\mu_{k}$, so that $\left(  k,j\right)
\in Y\left(  \mu\right)  $. Hence, $T\left(  k,j\right)  $ is well-defined for
each $k\in\left\{  1,2,\ldots,u\right\}  $. We set%
\[
p_{k}:=T\left(  k,j\right)  -k\ \ \ \ \ \ \ \ \ \ \text{for each }k\in\left\{
1,2,\ldots,u\right\}  .
\]

We shall now show that the sequence $\left(  p_{i},p_{i+1},\ldots
,p_{u}\right)  $ is weakly increasing.

[\textit{Proof:} It suffices to show that $p_{s}\leq p_{s+1}$ for each
$s\in\left\{  i,i+1,\ldots,u-1\right\}  $. So let us fix $s\in\left\{
i,i+1,\ldots,u-1\right\}  $ and show this. The entries of $T$ strictly
increase top-to-bottom in each column (since $T$ is a semistandard tableau).
Hence, $T\left(  s,j\right)  <T\left(  s+1,j\right)  $. Thus, $T\left(
s,j\right)  \leq T\left(  s+1,j\right)  -1$ (since all entries of $T$ are
integers). However, the definition of $p_{k}$ yields $p_{s}=T\left(
s,j\right)  -s$ and $p_{s+1}=T\left(  s+1,j\right)  -\left(  s+1\right)  $.
Thus,%
\[
p_{s}=\underbrace{T\left(  s,j\right)  }_{\leq T\left(  s+1,j\right)
-1} - \, s\leq T\left(  s+1,j\right)  -1-s=T\left(  s+1,j\right)  -\left(
s+1\right)  =p_{s+1},
\]
as we desired to prove.]

Thus, we have shown that the sequence $\left(  p_{i},p_{i+1},\ldots
,p_{u}\right)  $ is weakly increasing. Hence,
\begin{align*}
p_{i}  &  \leq p_{u}=T\left(  u,j\right)  -u\ \ \ \ \ \ \ \ \ \ \left(
\text{by the definition of }p_{u}\right) \\
&  \leq T\left(  u,v\right)  -u\ \ \ \ \ \ \ \ \ \ \left(  \text{since
}T\left(  u,j\right)  \leq T\left(  u,v\right)  \right)  .
\end{align*}
Therefore,%
\[
T\left(  u,v\right)  -u\geq p_{i}=T\left(  i,j\right)
-i\ \ \ \ \ \ \ \ \ \ \left(  \text{by the definition of }p_{i}\right)  .
\]
This proves Lemma \ref{lem.ssyt.geq} \textbf{(b)}. \medskip

\textbf{(a)} Let $\left(  i,j\right)  \in Y\left(  \mu\right)  $. Thus,
$j\leq\mu_{i}$. But $\mu$ is a partition, thus weakly decreasing. Hence,
$\mu_{i}\leq\mu_{1}$ (since $i\geq1$). Thus, $j\leq\mu_{i}\leq\mu_{1}$, so
that $\left(  1,j\right)  \in Y\left(  \mu\right)  $. Since we also have
$1\leq i$ and $j\leq j$, we can thus apply Lemma \ref{lem.ssyt.geq}
\textbf{(b)} to $\left(  1,j\right)  $ and $\left(  i,j\right)  $ instead of
$\left(  i,j\right)  $ and $\left(  u,v\right)  $. We thus obtain
\[
T\left(  i,j\right)  -i\geq T\left(  1,j\right)
-1 \geq 0\qquad \qquad \left(\text{since } T\tup{1,j} \geq 1\right).
\]
In other words, $T\left(  i,j\right)  \geq i$. This proves Lemma
\ref{lem.ssyt.geq} \textbf{(a)}.
\end{proof}

\begin{proof}[Proof of Lemma \ref{lem.flagging-of-lm.wd}.]
    Let $i \geq 1$ be an integer. Every sufficiently large $k$ satisfies both $k > i - \mu_i$ and $\lambda_k = 0$, so that it satisfies $\underbrace{\lambda_k}_{=0} - \underbrace{k}_{> i - \mu_i} < 0 - \tup{i - \mu_i} = \mu_i - i$.
    Hence, any sufficiently large integer $k \geq 0$ will fail to satisfy $\lambda_k - k \geq \mu_i - i$. Hence, the set of all integers $k \geq 0$ that satisfy $\lambda_k - k \geq \mu_i - i$ is finite. Since this set is furthermore nonempty (because $\lambda_0 - 0 = \lambda_0 = \infty \geq \mu_i - i$ shows that $0$ belongs to this set), this set must thus have a maximum
    (because any nonempty finite set of integers has a maximum). In other words, $\max\set{k \geq 0 \mid \lambda_k - k \geq \mu_i - i }$ is well-defined.
\end{proof}

\begin{proof}[Proof of Lemma \ref{lem.flagging-of-lm.uniprop}.]
The \textquotedblleft$\Longleftarrow$\textquotedblright\ direction is easy:
Since $b_{i}$ is defined as the largest $k\geq0$ that satisfies $\lambda
_{k}-k\geq\mu_{i}-i$, it is clear that every integer $k\geq0$ that satisfies
$\lambda_{k}-k\geq\mu_{i}-i$ must be $\leq b_{i}$. Hence, if $\lambda
_{j}-j\geq\mu_{i}-i$, then $j\leq b_{i}$. This proves the \textquotedblleft%
$\Longleftarrow$\textquotedblright\ direction of the lemma.

It remains to prove the \textquotedblleft$\Longrightarrow$\textquotedblright%
\ direction. Thus, we assume that $j\leq b_{i}$, and set out to prove that
$\lambda_{j}-j\geq\mu_{i}-i$.

From $j\leq b_{i}$, we obtain $\lambda_{j}\geq\lambda_{b_{i}}$ (since
$\lambda_{0}\geq\lambda_{1}\geq\lambda_{2}\geq\lambda_{3}\geq\cdots$) and thus
$\lambda_{j}-j\geq\lambda_{b_{i}}-j\geq\lambda_{b_{i}}-b_{i}$ (since $j\leq
b_{i}$). However, $b_{i}$ is defined as the largest $k\geq0$ that satisfies
$\lambda_{k}-k\geq\mu_{i}-i$. Thus, in particular, $b_{i}$ itself is such a
$k$. In other words, $\lambda_{b_{i}}-b_{i}\geq\mu_{i}-i$. Therefore,
$\lambda_{j}-j\geq\lambda_{b_{i}}-b_{i}\geq\mu_{i}-i$. This completes the
proof of the \textquotedblleft$\Longrightarrow$\textquotedblright\ direction.
Hence, Lemma \ref{lem.flagging-of-lm.uniprop} is proved.
\end{proof}

\begin{proof}[Proof of Lemma \ref{lem.flagging-of-lm.inc}.]
We must show that $b_{i-1}\leq b_{i}$ for each $i\in\left\{
2,3,4,\ldots\right\}  $ (where the notations are those of Definition
\ref{def.flagging-of-lm}). Let us do this.

Let $i\in\left\{  2,3,4,\ldots\right\}  $. Then, $b_{i-1}$ is defined as
the largest $k\geq0$ that satisfies $\lambda_{k}-k\geq\mu_{i-1}-\left(
i-1\right)  $. Thus, in particular, $b_{i-1}$ itself is such a $k$. In other
words, $\lambda_{b_{i-1}}-b_{i-1}\geq\mu_{i-1}-\left(  i-1\right)  $. However,
from $\mu_{1}\geq\mu_{2}\geq\mu_{3}\geq\cdots$, we obtain $\mu_{i-1}-\left(
i-1\right)  \geq\mu_{i}-\left(  i-1\right)  \geq\mu_{i}-i$ (since $i-1\leq i$).
Therefore,%
\[
\lambda_{b_{i-1}}-b_{i-1}\geq\mu_{i-1}-\left(  i-1\right)  \geq\mu_{i}-i.
\]

Now, Lemma \ref{lem.flagging-of-lm.uniprop} (applied to $j=b_{i-1}$) yields
the equivalence%
\[
\left(  b_{i-1}\leq b_{i}\right)  \ \Longleftrightarrow\ \left(
\lambda_{b_{i-1}}-b_{i-1}\geq\mu_{i}-i\right)  .
\]
Hence, we have $b_{i-1}\leq b_{i}$ (since we have $\lambda_{b_{i-1}}%
-b_{i-1}\geq\mu_{i}-i$). This completes the proof of the lemma.
\end{proof}

\begin{proof}[Proof of Lemma \ref{lem.flagging-of-lm.0}.]
From $\mu_i = 0$, we obtain $\mu_i - i = -i$.

Let $j > i$ be an integer. Then, $j \geq i+1$ (since $j > i$). Since $\lambda_1 \geq \lambda_2 \geq \lambda_3 \geq \cdots$, we thus have $\lambda_j \leq \lambda_{i+1} = 0$ and therefore $\lambda_j = 0$. Hence, $\lambda_j - j = -j < -i$ (since $j>i$). In other words, $\lambda_j - j < \mu_i - i$ (since $\mu_i - i = -i$).

Forget that we fixed $j$. We thus have shown that for every integer $j > i$, we have $\lambda_j - j < \mu_i - i$.

We defined $b_i$ to be the maximum of the set $\set{k \geq 0 \mid \lambda_k - k \geq \mu_i - i}$. But this set clearly contains $i$ (since $\lambda_i - i \geq -i = \mu_i - i$), but does not contain any integer $j > i$ (because for every integer $j > i$, we have $\lambda_j - j < \mu_i - i$). Thus, its maximum is $i$. In other words, $b_i = i$.
This proves Lemma~\ref{lem.flagging-of-lm.0}.
\end{proof}

\begin{proof}
[Proof of Lemma \ref{lem.ssyt.neighbors}.]
Let us write the two boxes $c$ and
$d$ as $c=\left(  i,j\right)  $ and $d=\left(  u,v\right)  $ for some positive
integers $i$, $j$, $u$ and $v$. From $c=\left(  i,j\right)  $, we obtain%
\begin{equation}
c_{+T}=\left(  T\left(  i,j\right)  ,\ T\left(  i,j\right)  +j-i\right)
.\label{pf.lem.ssyt.neighbors.c+T=}%
\end{equation}
Similarly,%
\begin{equation}
d_{+T}=\left(  T\left(  u,v\right)  ,\ T\left(  u,v\right)  +v-u\right)
.\label{pf.lem.ssyt.neighbors.d+T=}%
\end{equation}
\medskip

\textbf{(a)} Assume that $d_{+T}=c_{+T}$. In view of
(\ref{pf.lem.ssyt.neighbors.c+T=}) and (\ref{pf.lem.ssyt.neighbors.d+T=}), we
can rewrite this as%
\[
\left(  T\left(  u,v\right)  ,\ T\left(  u,v\right)  +v-u\right)  =\left(
T\left(  i,j\right)  ,\ T\left(  i,j\right)  +j-i\right)  .
\]
In other words, the two equalities $T\left(  u,v\right)  =T\left(  i,j\right)
$ and $T\left(  u,v\right)  +v-u=T\left(  i,j\right)  +j-i$ hold. Subtracting
the former equality from the latter, we obtain $v-u=j-i$. In other words,
$v+i=u+j$, so that $v-j=u-i$.

We are in one of the following two cases:

\textit{Case 1:} We have $u\geq i$.

\textit{Case 2:} We have $u<i$.

Let us first consider Case 1. In this case, we have $u\geq i$. Hence,
$u-i\geq0$, so that $v\geq j$ (since $v-j=u-i\geq0$). In other words, $j\leq
v$. Also, $i\leq u$ (since $u\geq i$). Thus, Lemma \ref{lem.ssyt.geq}
\textbf{(b)} yields $T\left(  u,v\right)  -u\geq T\left(  i,j\right)  -i$.
Subtracting the equality $T\left(  u,v\right)  =T\left(  i,j\right)  $ from
this inequality, we find $-u\geq-i$, so that $i\geq u$. Combining this with
$i\leq u$, we obtain $i=u$. Hence, $u=i$, so that $u-i=0$. Thus, $v=j$ (since
$v-j=u-i=0$). Combining $u=i$ with $v=j$, we obtain $\left(  u,v\right)
=\left(  i,j\right)  $. In other words, $d=c$ (since $d=\left(  u,v\right)  $
and $c=\left(  i,j\right)  $). Thus, Lemma \ref{lem.ssyt.neighbors}
\textbf{(a)} is proved in Case 1.

Let us now consider Case 2. In this case, we have $u<i$. Hence, $u-i<0$, so
that $v<j$ (since $v-j=u-i<0$). Now, Lemma \ref{lem.ssyt.geq} \textbf{(b)}
(applied to $\left(  u,v\right)  $ and $\left(  i,j\right)  $ instead of
$\left(  i,j\right)  $ and $\left(  u,v\right)  $) yields $T\left(
i,j\right)  -i\geq T\left(  u,v\right)  -u$ (since $u<i$ and $v<j$). In other
words, $T\left(  u,v\right)  -u\leq T\left(  i,j\right)  -i$. Subtracting the
equality $T\left(  u,v\right)  =T\left(  i,j\right)  $ from this inequality,
we find $-u\leq-i$, so that $u\geq i$. But this contradicts $u<i$. Thus, we
have found a contradiction in Case 2, which shows that Case 2 is impossible.

Consequently, the only possible case is Case 1. Since we have proved Lemma
\ref{lem.ssyt.neighbors} \textbf{(a)} in this Case 1, we thus conclude that
Lemma \ref{lem.ssyt.neighbors} \textbf{(a)} is proved. \medskip

\textbf{(b)} Assume that $d_{+T}=\left(  c_{+T}\right)  _{\rightarrow}$. In
view of (\ref{pf.lem.ssyt.neighbors.c+T=}) and
(\ref{pf.lem.ssyt.neighbors.d+T=}), we can rewrite this as%
\begin{align*}
\left(  T\left(  u,v\right)  ,\ T\left(  u,v\right)  +v-u\right)    & =\left(
T\left(  i,j\right)  ,\ T\left(  i,j\right)  +j-i\right)  _{\rightarrow}\\
& =\left(  T\left(  i,j\right)  ,\ T\left(  i,j\right)  +j-i+1\right)
\end{align*}
(by the definition of $\left(  T\left(  i,j\right)  ,\ T\left(  i,j\right)
+j-i\right)  _{\rightarrow}$). In other words, the two equalities $T\left(
u,v\right)  =T\left(  i,j\right)  $ and $T\left(  u,v\right)  +v-u=T\left(
i,j\right)  +j-i+1$ hold. Subtracting the former equality from the latter, we
obtain $v-u=j-i+1$. In other words, $v+i=u+j+1$, so that $v-j=u-i+1$.

We are in one of the following two cases:

\textit{Case 1:} We have $u\geq i$.

\textit{Case 2:} We have $u<i$.

Let us first consider Case 1. In this case, we have $u\geq i$. Hence,
$u-i\geq0$, and therefore $v>j$ (since $v-j=u-i+1>u-i\geq0$). In other words,
$j<v$. Also, $i\leq u$ (since $u\geq i$). Thus, Lemma \ref{lem.ssyt.geq}
\textbf{(b)} yields $T\left(  u,v\right)  -u\geq T\left(  i,j\right)  -i$.
Subtracting the equality $T\left(  u,v\right)  =T\left(  i,j\right)  $ from
this inequality, we find $-u\geq-i$, so that $i\geq u$. Combining this with
$i\leq u$, we obtain $i=u$. Hence, $u=i$, so that $u-i+1=i-i+1=1$. Thus,
$v=j+1$ (since $v-j=u-i+1=1$). Combining $u=i$ with $v=j+1$, we obtain
$\left(  u,v\right)  =\left(  i,j+1\right)  =\left(  i,j\right)
_{\rightarrow}$. In other words, $d=c_{\rightarrow}$ (since $d=\left(
u,v\right)  $ and $c=\left(  i,j\right)  $). Moreover, we have $T\left(
u,v\right)  =T\left(  i,j\right)  $. In other words, $T\left(  d\right)
=T\left(  c\right)  $ (since $d=\left(  u,v\right)  $ and $c=\left(
i,j\right)  $). Thus, Lemma \ref{lem.ssyt.neighbors} \textbf{(b)} is proved in
Case 1.

Let us now consider Case 2. In this case, we have $u<i$. Hence, $u-i<0$, so
that $u-i\leq-1$ (since $u-i$ is an integer). In other words, $u-i+1\leq0$.
Therefore, $v\leq j$ (since $v-j=u-i+1\leq0$). Now, Lemma \ref{lem.ssyt.geq}
\textbf{(b)} (applied to $\left(  u,v\right)  $ and $\left(  i,j\right)  $
instead of $\left(  i,j\right)  $ and $\left(  u,v\right)  $) yields $T\left(
i,j\right)  -i\geq T\left(  u,v\right)  -u$ (since $u<i$ and $v\leq j$). In
other words, $T\left(  u,v\right)  -u\leq T\left(  i,j\right)  -i$.
Subtracting the equality $T\left(  u,v\right)  =T\left(  i,j\right)  $ from
this inequality, we find $-u\leq-i$, so that $u\geq i$. But this contradicts
$u<i$. Thus, we have found a contradiction in Case 2, which shows that Case 2
is impossible.

Consequently, the only possible case is Case 1. Since we have proved Lemma
\ref{lem.ssyt.neighbors} \textbf{(b)} in this Case 1, we thus conclude that
Lemma \ref{lem.ssyt.neighbors} \textbf{(b)} is proved. \medskip

\textbf{(c)} Assume that $d_{+T}=\left(  c_{+T}\right)  _{\downarrow}$. In
view of (\ref{pf.lem.ssyt.neighbors.c+T=}) and
(\ref{pf.lem.ssyt.neighbors.d+T=}), we can rewrite this as%
\begin{align*}
\left(  T\left(  u,v\right)  ,\ T\left(  u,v\right)  +v-u\right)    & =\left(
T\left(  i,j\right)  ,\ T\left(  i,j\right)  +j-i\right)  _{\downarrow}\\
& =\left(  T\left(  i,j\right)  +1,\ T\left(  i,j\right)  +j-i\right)
\end{align*}
(by the definition of $\left(  T\left(  i,j\right)  ,\ T\left(  i,j\right)
+j-i\right)  _{\downarrow}$). In other words, the two equalities $T\left(
u,v\right)  =T\left(  i,j\right)  +1$ and $T\left(  u,v\right)  +v-u=T\left(
i,j\right)  +j-i$ hold. Subtracting the former equality from the latter, we
obtain $v-u=j-i-1$. In other words, $v+i=u+j-1$, so that $v-j=u-i-1$.

We are in one of the following two cases:

\textit{Case 1:} We have $u>i$.

\textit{Case 2:} We have $u\leq i$.

Let us first consider Case 1. In this case, we have $u>i$. Hence, $u\geq i+1$
(since $u$ and $i$ are integers), so that $u-i\geq1$. In other words,
$u-i-1\geq0$. Therefore, $v\geq j$ (since $v-j=u-i-1\geq0$). In other words,
$j\leq v$. Also, $i\leq u$ (since $u>i$). Thus, Lemma \ref{lem.ssyt.geq}
\textbf{(b)} yields $T\left(  u,v\right)  -u\geq T\left(  i,j\right)  -i$.
Subtracting the equality $T\left(  u,v\right)  =T\left(  i,j\right)  +1$ from
this inequality, we find $-u\geq-i-1$, so that $i+1\geq u$. Combining this
with $u\geq i+1$, we obtain $u=i+1$. In other words, $u-i-1=0$. Therefore,
$v=j$ (since $v-j=u-i-1=0$). Combining $u=i+1$ with $v=j$, we obtain $\left(
u,v\right)  =\left(  i+1,j\right)  =\left(  i,j\right)  _{\downarrow}$. In
other words, $d=c_{\downarrow}$ (since $d=\left(  u,v\right)  $ and $c=\left(
i,j\right)  $). Moreover, we have $T\left(  u,v\right)  =T\left(  i,j\right)
+1$. In other words, $T\left(  d\right)  =T\left(  c\right)  +1$ (since
$d=\left(  u,v\right)  $ and $c=\left(  i,j\right)  $). Thus, Lemma
\ref{lem.ssyt.neighbors} \textbf{(c)} is proved in Case 1.

Let us now consider Case 2. In this case, we have $u\leq i$. Hence, $u-i\leq
0$, so that $u-i-1<u-i\leq0$. Therefore, $v<j$ (since $v-j=u-i-1<0$). Now,
Lemma \ref{lem.ssyt.geq} \textbf{(b)} (applied to $\left(  u,v\right)  $ and
$\left(  i,j\right)  $ instead of $\left(  i,j\right)  $ and $\left(
u,v\right)  $) yields $T\left(  i,j\right)  -i\geq T\left(  u,v\right)  -u$
(since $u\leq i$ and $v<j$). In other words, $T\left(  u,v\right)  -u\leq
T\left(  i,j\right)  -i$. Subtracting the equality $T\left(  u,v\right)
=T\left(  i,j\right)  +1$ from this inequality, we find $-u\leq-i-1<-i$. In
other words, $i<u$. But this contradicts $u\leq i$. Thus, we have found a
contradiction in Case 2, which shows that Case 2 is impossible.

Consequently, the only possible case is Case 1. Since we have proved Lemma
\ref{lem.ssyt.neighbors} \textbf{(c)} in this Case 1, we thus conclude that
Lemma \ref{lem.ssyt.neighbors} \textbf{(c)} is proved. \medskip

\textbf{(d)} Assume that $d_{+T}=\left(  c_{+T}\right)  _{\searrow}$. In view
of (\ref{pf.lem.ssyt.neighbors.c+T=}) and (\ref{pf.lem.ssyt.neighbors.d+T=}),
we can rewrite this as%
\begin{align*}
\left(  T\left(  u,v\right)  ,\ T\left(  u,v\right)  +v-u\right)    & =\left(
T\left(  i,j\right)  ,\ T\left(  i,j\right)  +j-i\right)  _{\searrow}\\
& =\left(  T\left(  i,j\right)  +1,\ T\left(  i,j\right)  +j-i+1\right)
\end{align*}
(by the definition of $\left(  T\left(  i,j\right)  ,\ T\left(  i,j\right)
+j-i\right)  _{\searrow}$). In other words, the two equalities $T\left(
u,v\right)  =T\left(  i,j\right)  +1$ and $T\left(  u,v\right)  +v-u=T\left(
i,j\right)  +j-i+1$ hold. Subtracting the former equality from the latter, we
obtain $v-u=j-i$. In other words, $v+i=u+j$, so that $v-j=u-i$.

We are in one of the following two cases:

\textit{Case 1:} We have $u>i$.

\textit{Case 2:} We have $u\leq i$.

Let us first consider Case 1. In this case, we have $u>i$.  Therefore, $u\geq
i+1$ (since $u$ and $i$ are integers), so that $u-i\geq1$. Therefore, $v>j$
(since $v-j=u-i\geq1>0$). In other words, $j<v$. Also, $i<u$ (since $u>i$).
Thus, Lemma \ref{lem.ssyt.geq} \textbf{(b)} yields $T\left(  u,v\right)
-u\geq T\left(  i,j\right)  -i$. Subtracting the equality $T\left(
u,v\right)  =T\left(  i,j\right)  +1$ from this inequality, we find
$-u\geq-i-1$, so that $i+1\geq u$. Combining this with $u\geq i+1$, we obtain
$u=i+1$. In other words, $u-i=1$. Hence, $v=j+1$ (since $v-j=u-i=1$).

Combining $u=i+1$ with $v=j+1$, we obtain $\left(  u,v\right)  =\left(
i+1,j+1\right)  =\left(  i,j\right)  _{\searrow}$. In other words,
$d=c_{\searrow}$ (since $d=\left(  u,v\right)  $ and $c=\left(  i,j\right)
$). Moreover, we have $T\left(  u,v\right)  =T\left(  i,j\right)  +1$. In
other words, $T\left(  d\right)  =T\left(  c\right)  +1$ (since $d=\left(
u,v\right)  $ and $c=\left(  i,j\right)  $). Thus, Lemma
\ref{lem.ssyt.neighbors} \textbf{(d)} is proved in Case 1.

Let us now consider Case 2. In this case, we have $u\leq i$. Hence, $u-i\leq
0$, so that $v\leq j$ (since $v-j=u-i\leq0$). Now, Lemma \ref{lem.ssyt.geq}
\textbf{(b)} (applied to $\left(  u,v\right)  $ and $\left(  i,j\right)  $
instead of $\left(  i,j\right)  $ and $\left(  u,v\right)  $) yields $T\left(
i,j\right)  -i\geq T\left(  u,v\right)  -u$ (since $u\leq i$ and $v\leq j$).
In other words, $T\left(  u,v\right)  -u\leq T\left(  i,j\right)  -i$.
Subtracting the equality $T\left(  u,v\right)  =T\left(  i,j\right)  +1$ from
this inequality, we find $-u\leq-i-1<-i$. In other words, $i<u$. But this
contradicts $u\leq i$. Thus, we have found a contradiction in Case 2, which
shows that Case 2 is impossible.

Consequently, the only possible case is Case 1. Since we have proved Lemma
\ref{lem.ssyt.neighbors} \textbf{(d)} in this Case 1, we thus conclude that
Lemma \ref{lem.ssyt.neighbors} \textbf{(d)} is proved.
\end{proof}

\begin{proof}
[Proof of Lemma \ref{lem.DS-vs-DT}.]
If $c\in Y\left(  \mu\right)  $ is any box, then the box $c_{+T}$
depends only on the position of the box $c$ and on the entry $T\left(
c\right)  $ in this box. Thus, if a box $c$ has the same entry in both
tableaux $T$ and $S$ (that is, satisfies $T\left(  c\right)  =S\left(
c\right)  $), then the boxes $c_{+T}$ and $c_{+S}$ will also be identical.
Therefore, from (\ref{eq.lem.DS-vs-DT.ass}), we obtain%
\begin{equation}
c_{+T}=c_{+S}\ \ \ \ \ \ \ \ \ \ \text{for all }c\in Y\left(  \mu\right)
\text{ distinct from }\left(  i,j\right)  .\label{pf.lem.DS-vs-DT.1}%
\end{equation}

Now, define the set $A:=\left\{  c_{+T}\ \mid\ c\in Y\left(  \mu\right)
\text{ such that }c\neq\left(  i,j\right)  \right\}  $. Then,%
\begin{align*}
A &  =\left\{  c_{+T}\ \mid\ c\in Y\left(  \mu\right)  \text{ such that }%
c\neq\left(  i,j\right)  \right\}  \\
&  =\left\{  c_{+S}\ \mid\ c\in Y\left(  \mu\right)  \text{ such that }%
c\neq\left(  i,j\right)  \right\}
\end{align*}
(since (\ref{pf.lem.DS-vs-DT.1}) allows us to rewrite $c_{+T}$ as $c_{+S}$ here).

Lemma \ref{lem.ssyt.neighbors} \textbf{(a)} shows that if two boxes $d,c\in
Y\left(  \mu\right)  $ satisfy $d_{+T}=c_{+T}$, then $d=c$. In other words,
the boxes $c_{+T}$ for all $c\in Y\left(  \mu\right)  $ are distinct. Hence,
in particular, for any box $c\in Y\left(  \mu\right)  $ that satisfies
$c\neq\left(  i,j\right)  $, we have $c_{T}\neq\left(  i,j\right)  _{+T}$. In
other words, any element of $A$ is distinct from $\left(  i,j\right)  _{+T}$
(since the elements of $A$ are exactly the boxes of the form $c_{+T}$, where
$c\in Y\left(  \mu\right)  $ is a box that satisfies $c\neq\left(
i,j\right)  $). In other words, $\left(  i,j\right)  _{+T}\notin A$.

However, the definition of $\DD\tup{T}$ yields
\begin{align*}
\mathbf{D}\left(  T\right)   &  =\left\{  c_{+T}\ \mid\ c\in Y\left(
\mu\right)  \right\}  \\
&  =\left\{  c_{+T}\ \mid\ c\in Y\left(  \mu\right)  \text{ such that }%
c\neq\left(  i,j\right)  \right\}  \cup\left\{  \left(  i,j\right)
_{+T}\right\}
\end{align*}
(here, we have split off the element for $c=\left(  i,j\right)  $ from the
set, since $\left(  i,j\right)  \in Y\left(  \mu\right)  $). In other words,%
\begin{equation}
\mathbf{D}\left(  T\right)  =A\cup\left\{  \left(  i,j\right)  _{+T}\right\}
\label{pf.lem.DS-vs-DT.DT=Acup}
\end{equation}
(since $A=\left\{  c_{+T}\ \mid\ c\in Y\left(  \mu\right)  \text{ such that
}c\neq\left(  i,j\right)  \right\}  $). Hence,%
\[
\mathbf{D}\left(  T\right)  \setminus\left\{  \left(  i,j\right)
_{+T}\right\}  =\left(  A\cup\left\{  \left(  i,j\right)  _{+T}\right\}
\right)  \setminus\left\{  \left(  i,j\right)  _{+T}\right\}  =A
\]
(since $\left(  i,j\right)  _{+T}\notin A$). The same argument (applied to $S$
instead of $T$) yields%
\[
\mathbf{D}\left(  S\right)  \setminus\left\{  \left(  i,j\right)
_{+S}\right\}  =A
\]
(since $A=\left\{  c_{+S}\ \mid\ c\in Y\left(  \mu\right)  \text{ such that
}c\neq\left(  i,j\right)  \right\}  $).

Now, if we replace the box $\left(  i,j\right)  _{+S}$ in the set
$\mathbf{D}\left(  S\right)  $ by the box $\left(  i,j\right)  _{+T}$, then
we obtain the set%
\[
\underbrace{\left(  \mathbf{D}\left(  S\right)  \setminus\left\{  \left(
i,j\right)  _{+S}\right\}  \right)  }_{=A}\cup\left\{  \left(  i,j\right)
_{+T}\right\}  =A\cup\left\{  \left(  i,j\right)  _{+T}\right\}
=\mathbf{D}\left(  T\right)
\qquad \qquad \left(\text{by \eqref{pf.lem.DS-vs-DT.DT=Acup}}\right) .
\]
In other words, the diagram $\mathbf{D}\left(  T\right)  $
can be obtained from $\mathbf{D}\left(  S\right)  $ by replacing the box
$\left(  i,j\right)  _{+S}$ by the box  $\left(  i,j\right)  _{+T}$. This
proves Lemma \ref{lem.DS-vs-DT}.
\end{proof}

\begin{proof}
[Proof of Lemma \ref{transition_wd}.]We begin with part \textbf{(b)} of the lemma.

\textbf{(b)} The map
\begin{align*}
Y\left(  \mu\right)   &  \rightarrow\mathbf{D}\left(  T\right)  ,\\
c  &  \mapsto c_{+T}%
\end{align*}
is well-defined (by (\ref{eq.def.DD.DDT=})) and surjective (again by
(\ref{eq.def.DD.DDT=})). Furthermore, this map is injective (since Lemma
\ref{lem.ssyt.neighbors} \textbf{(a)} shows that the boxes $c_{+T}$ for all
$c\in Y\left(  \mu\right)  $ are distinct). Hence, this map is bijective. In
other words, the map
\begin{align*}
Y\left(  \mu\right)   &  \rightarrow\mathbf{D}\left(  T\right)  ,\\
\left(  i,j\right)   &  \mapsto\left(  T\left(  i,j\right)  ,\ T\left(
i,j\right)  +j-i\right)
\end{align*}
is bijective (indeed, this is the same map as the one discussed in the
preceding sentence, since $c_{+T}$ is defined to be $\left(  T\left(
i,j\right)  ,\ T\left(  i,j\right)  +j-i\right)  $ for every box $c=\left(
i,j\right)  $). Hence, we can substitute $\left(  T\left(  i,j\right)
,\ T\left(  i,j\right)  +j-i\right)  $ for $\left(  i,j\right)  $ in the
product $\prod_{\left(  i,j\right)  \in\mathbf{D}\left(  T\right)  }\left(
x_{i}+y_{j}\right)  $. We thus obtain
\[
\prod_{\left(  i,j\right)  \in\mathbf{D}\left(  T\right)  }\left(  x_{i}%
+y_{j}\right)  =\prod_{\left(  i,j\right)  \in Y\left(  \mu\right)  }\left(
x_{T\left(  i,j\right)  }+y_{T\left(  i,j\right)  +j-i}\right)  .
\]
This proves Lemma \ref{transition_wd} \textbf{(b)}. \medskip

\textbf{(a)} Forget that we fixed $T$.

We define the \emph{load} of a semistandard tableau $T \in \SSYT\tup{\mu}$ to be the nonnegative
integer $\sum_{c\in Y\left(  \mu\right)  }T\left(  c\right)  $ (that is, the
sum of all entries of $T$). Note that this load is positive whenever $\mu
\neq\varnothing$ (since $\mu\neq\varnothing$ means that $Y\left(  \mu\right)
$ contains at least one box $c$, and of course the entry $T\left(  c\right)
$ in this box $c$ must be positive).

We shall prove Lemma \ref{transition_wd} \textbf{(a)} by induction on the load
of $T$.

\textit{Base case:} The load of $T$ can be $0$ only if $\mu=\varnothing$
(since the load of $T$ is positive whenever $\mu\neq\varnothing$). Of course,
$\mathbf{D}\left(  T\right)  =\varnothing$ in this case, and this renders the
claim of Lemma \ref{transition_wd} \textbf{(a)} trivial (since the empty
diagram $\varnothing$ is clearly an excitation of $Y\left(  \mu\right)
=\varnothing$). Thus, Lemma \ref{transition_wd} \textbf{(a)} is proved in the
case when the load of $T$ is $0$.

\textit{Induction step:} Fix a positive integer $n$. Assume (as the induction
hypothesis) that Lemma \ref{transition_wd} \textbf{(a)} is proved for all
tableaux $T\in\operatorname*{SSYT}\left(  \mu\right)  $ of load $n-1$. We now
fix a semistandard tableau $T\in\operatorname*{SSYT}\left(  \mu\right)  $ of
load $n$. Our goal is to prove Lemma \ref{transition_wd} \textbf{(a)} for this
$T$.

Lemma \ref{lem.ssyt.geq} \textbf{(a)} shows that the inequality $T\left(
i,j\right)  \geq i$ holds for each $\left(  i,j\right)  \in Y\left(
\mu\right)  $. If this inequality is always an equality, then we have
$\mathbf{D}\left(  T\right)  =Y\left(  \mu\right)  $ (this is easy to
see\footnote{\textit{Proof.} Assume that the inequality $T\left(  i,j\right)
\geq i$ is always an equality. In other words, each $\left(  i,j\right)  \in
Y\left(  \mu\right)  $ satisfies $T\left(  i,j\right)  =i$. Hence, each
$\left(  i,j\right)  \in Y\left(  \mu\right)  $ satisfies
\begin{align*}
\left(  i,j\right)  _{+T}  &  =\left(  T\left(  i,j\right)  ,\ T\left(
i,j\right)  +j-i\right)  \ \ \ \ \ \ \ \ \ \ \left(  \text{by the definition
of }\left(  i,j\right)  _{+T}\right) \\
&  =\left(  i,\ i+j-i\right)  \ \ \ \ \ \ \ \ \ \ \left(  \text{since
}T\left(  i,j\right)  =i\right) \\
&  =\left(  i,j\right)  .
\end{align*}
In other words, each $c\in Y\left(  \mu\right)  $ satisfies $c_{+T}=c$. Hence,
(\ref{eq.def.DD.DDT=}) can be rewritten as $\mathbf{D}\left(  T\right)
=\left\{  c\ \mid\ c\in Y\left(  \mu\right)  \right\}  =Y\left(  \mu\right)
$, qed.}), and thus the claim of Lemma \ref{transition_wd} \textbf{(a)} holds
(since $Y\left(  \mu\right)  $ itself is clearly an excitation of $Y\left(
\mu\right)  $, obtained by making a sequence of $0$ excited moves). Thus, for
the rest of this induction step, we WLOG assume that the inequality $T\left(
i,j\right)  \geq i$ is \textbf{not} always an equality. Hence, there exists a
box $\left(  i,j\right)  \in Y\left(  \mu\right)  $ such that $T\left(
i,j\right)  \neq i$. We call such a box \emph{interesting}.

Now, let $\left(  i,j\right)  $ be an interesting box with smallest possible
$i+j$. Then, a box $\left(  u,v\right)  $ with $u+v<i+j$ cannot be
interesting. In particular:

\begin{itemize}
\item The box $\left(  i-1,j\right)  $ cannot be interesting (since $\left(
i-1\right)  +j<i+j$). Hence,
\begin{equation}
\text{if }i>1\text{, then }T\left(  i-1,j\right)  =i-1
\label{pf.transition_wd.Ti-1j}%
\end{equation}
(because otherwise, the box $\left(  i-1,j\right)  $ would be interesting).

\item The box $\left(  i,j-1\right)  $ cannot be interesting (since
$i+\left(  j-1\right)  <i+j$). Hence,
\begin{equation}
\text{if }j>1\text{, then }T\left(  i,j-1\right)  =i
\label{pf.transition_wd.Tij-1}%
\end{equation}
(because otherwise, the box $\left(  i,j-1\right)  $ would be interesting).
\end{itemize}

Note that $T\left(  i,j\right)  \neq i$ (since the box $\left(  i,j\right)  $
is interesting) and therefore $T\left(  i,j\right)  >i$ (since Lemma
\ref{lem.ssyt.geq} \textbf{(a)} yields $T\left(  i,j\right)  \geq i$). Hence,
$T\left(  i,j\right)  \geq i+1$ (since $T\left(  i,j \right) $ and $i$ are
integers), so that%
\[
T\left(  i,j\right)  -1\geq i\geq1.
\]

Now, let us decrease the entry $T\left(  i,j\right)  $ of the tableau $T$ by
$1$, while leaving all other entries unchanged. The resulting filling of
$Y\left(  \mu\right)  $ will be called $\overline{T}$. Formally speaking,
$\overline{T}$ is thus the map from $Y\left(  \mu\right)  $ to $\set{1,2,3,\ldots}$ given by%
\begin{align}
\overline{T}\left(  i,j\right)   &  =T\left(  i,j\right)
-1\ \ \ \ \ \ \ \ \ \ \text{and}\label{pf.transition_wd.Tbar.1}\\
\overline{T}\left(  c\right)   &  =T\left(  c\right)
\ \ \ \ \ \ \ \ \ \ \text{for all }c\in Y\left(  \mu\right)  \text{ distinct
from }\left(  i,j\right)  . \label{pf.transition_wd.Tbar.2}%
\end{align}

It is easy to see (using \eqref{pf.transition_wd.Ti-1j} and
\eqref{pf.transition_wd.Tij-1}) that $\overline{T}$ is again a semistandard
tableau\footnote{\textit{Proof.} First, we note that the entries of
$\overline{T}$ are positive integers (since $\overline{T}\left(  i,j\right)
=T\left(  i,j\right)  -1\geq1$ shows that $\overline{T}\left(  i,j\right)  $
is a positive integer, and since all the other entries of $\overline{T}$ are
copied from $T$).
\par
Next, we claim that the entries of $\overline{T}$ weakly increase
left-to-right in each row. Indeed, by the construction of $\overline{T}$, this
will follow from the analogous property of $T$, as long as we can show that
the decreased entry $\overline{T}\left(  i,j\right)  $ is still greater or
equal to its neighboring entry $\overline{T}\left(  i,j-1\right)  $ (assuming
that $j>1$). But we can easily show this: If $j>1$, then
\begin{align*}
\overline{T}\left(  i,j-1\right)   &  =T\left(  i,j-1\right)
\ \ \ \ \ \ \ \ \ \ \left(  \text{by (\ref{pf.transition_wd.Tbar.1})}\right)
\\
&  =i\ \ \ \ \ \ \ \ \ \ \left(  \text{by (\ref{pf.transition_wd.Tij-1}%
)}\right)
\end{align*}
and thus
\[
\overline{T}\left(  i,j\right)  =T\left(  i,j\right)  -1\geq i=\overline
{T}\left(  i,j-1\right)  .
\]
Thus, we conclude that the entries of $\overline{T}$ weakly increase
left-to-right in each row.
\par
Finally, we claim that the entries of $\overline{T}$ strictly increase
top-to-bottom in each column. Indeed, by the construction of $\overline{T}$,
this will follow from the analogous property of $T$, as long as we can show
that the decreased entry $\overline{T}\left(  i,j\right)  $ is still greater
than its neighboring entry $\overline{T}\left(  i-1,j\right)  $ (assuming that
$i>1$). But we can easily show this: If $i>1$, then
\begin{align*}
\overline{T}\left(  i-1,j\right)   &  =T\left(  i-1,j\right)
\ \ \ \ \ \ \ \ \ \ \left(  \text{by (\ref{pf.transition_wd.Tbar.1})}\right)
\\
&  =i-1\ \ \ \ \ \ \ \ \ \ \left(  \text{by (\ref{pf.transition_wd.Ti-1j}%
)}\right)
\end{align*}
and thus
\[
\overline{T}\left(  i,j\right)  =T\left(  i,j\right)  -1\geq i>i-1=\overline
{T}\left(  i-1,j\right)  .
\]
Thus, we conclude that the entries of $\overline{T}$ strictly increase
top-to-bottom in each column.
\par
Altogether, we have now shown that $\overline{T}$ is a semistandard tableau.}.
Hence, $\overline{T}\in\operatorname*{SSYT}\left(  \mu\right)  $.

This tableau $\overline{T}$ is obtained from $T$ by decreasing the $\left(
i,j\right)  $-th entry by $1$. Thus, $\overline{T}$ has load $n-1$ (since $T$
has load $n$, but the load of a tableau is just the sum of its entries).
Hence, by our induction hypothesis, Lemma \ref{transition_wd} \textbf{(a)}
holds for $\overline{T}$ instead of $T$. In other words, $\mathbf{D}\left(
\overline{T}\right)  $ is an excitation of the diagram $Y\left(  \mu\right)
$. In other words, $\mathbf{D}\left(  \overline{T}\right)  $ can be obtained
from $Y\left(  \mu\right)  $ by a sequence of excited moves.

From (\ref{pf.transition_wd.Tbar.1}), we easily obtain\footnote{Recall that
$\left(  u,v\right)  _{\searrow}$ denotes the southeastern neighbor $\left(
u+1,v+1\right)  $ of a box $\left(  u,v\right)  $.}%
\begin{equation}
\left(  i,j\right)  _{+T}=\left(  \left(  i,j\right)  _{+\overline{T}}\right)
_{\searrow}\label{pf.transition_wd.+T1}%
\end{equation}
\footnote{\textit{Proof.} From (\ref{pf.transition_wd.Tbar.1}), we obtain
$\overline{T}\left(  i,j\right)  +1=T\left(  i,j\right)  $. The definition of
$\left(  i,j\right)  _{+\overline{T}}$ yields%
\[
\left(  i,j\right)  _{+\overline{T}}=\left(  \overline{T}\left(  i,j\right)
,\ \overline{T}\left(  i,j\right)  +j-i\right)  .
\]
Hence,
\begin{align*}
\left(  \left(  i,j\right)  _{+\overline{T}}\right)  _{\searrow} &  =\left(
\overline{T}\left(  i,j\right)  +1,\ \underbrace{\left(  \overline{T}\left(
i,j\right)  +j-i\right)  +1}_{=\overline{T}\left(  i,j\right)  +1+j-i}\right)
=\left(  \underbrace{\overline{T}\left(  i,j\right)  +1}_{=T\left(
i,j\right)  },\ \underbrace{\overline{T}\left(  i,j\right)  +1}_{=T\left(
i,j\right)  }+\,j-i\right) \\
&=\left(  T\left(  i,j\right)  ,\ T\left(
i,j\right)  +j-i\right)
=\left(  i,j\right)  _{+T}%
\end{align*}
(by the definition of $\left(  i,j\right)  _{+T}$).}.

We can rewrite (\ref{pf.transition_wd.Tbar.2}) as follows:%
\[
T\left(  c\right)  =\overline{T}\left(  c\right)
\ \ \ \ \ \ \ \ \ \ \text{for all }c\in Y\left(  \mu\right)  \text{ distinct
from }\left(  i,j\right)  .
\]
Hence, Lemma \ref{lem.DS-vs-DT} (applied to $S=\overline{T}$) yields that the
diagram $\mathbf{D}\left(  T\right)  $ can be obtained from $\mathbf{D}\left(
\overline{T}\right)  $ by replacing the box $\left(  i,j\right)
_{+\overline{T}}$ by the box $\left(  i,j\right)  _{+T}$.

In view of (\ref{pf.transition_wd.+T1}), we can rewrite this as follows: The
diagram $\mathbf{D}\left(  T\right)  $ can be obtained from $\mathbf{D}\left(
\overline{T}\right)  $ by replacing the box $\left(  i,j\right)
_{+\overline{T}}$ by its southeastern neighbor $\left(  \left(  i,j\right)
_{+\overline{T}}\right)  _{\searrow}$.

We shall now show that this replacement is an excited move.

Indeed, in order to show this, we need only to check that the diagram
$\mathbf{D}\left(  \overline{T}\right)  $ contains none of the three boxes
$\left(  \left(  i,j\right)  _{+\overline{T}}\right)  _{\downarrow}$, $\left(
\left(  i,j\right)  _{+\overline{T}}\right)  _{\rightarrow}$ and $\left(
\left(  i,j\right)  _{+\overline{T}}\right)  _{\searrow}$ (because this is the
condition under which the replacement of $\left(  i,j\right)  _{+\overline{T}%
}$ by $\left(  \left(  i,j\right)  _{+\overline{T}}\right)  _{\searrow}$ is
considered an excited move). To do so, we assume the contrary. Thus,
$\mathbf{D}\left(  \overline{T}\right)  $ contains one of these three boxes.
In other words, we must be in one of the following three cases:

\textit{Case 1:} The diagram $\mathbf{D}\left(  \overline{T}\right)  $
contains $\left(  \left(  i,j\right)  _{+\overline{T}}\right)  _{\downarrow}$.

\textit{Case 2:} The diagram $\mathbf{D}\left(  \overline{T}\right)  $
contains $\left(  \left(  i,j\right)  _{+\overline{T}}\right)
_{\rightarrow}$.

\textit{Case 3:} The diagram $\mathbf{D}\left(  \overline{T}\right)  $
contains $\left(  \left(  i,j\right)  _{+\overline{T}}\right)  _{\searrow}$.

\begin{enumerate}
\item
Let us first consider Case 1. In this case, the diagram $\mathbf{D}\left(
\overline{T}\right)  $ contains $\left(  \left(  i,j\right)  _{+\overline{T}%
}\right)  _{\downarrow}$. In other words,%
\[
\left(  \left(  i,j\right)  _{+\overline{T}}\right)  _{\downarrow}%
\in\mathbf{D}\left(  \overline{T}\right)  =\left\{  c_{+\overline{T}}%
\ \mid\ c\in Y\left(  \mu\right)  \right\}
\]
(by the definition of $\mathbf{D}\left(  \overline{T}\right)  $). In other
words, there exists some $c\in Y\left(  \mu\right)  $ such that $\left(
\left(  i,j\right)  _{+\overline{T}}\right)  _{\downarrow}=c_{+\overline{T}}$.
Consider this $c$.

The two boxes $c$ and $\left(  i,j\right)  $ in $Y\left(  \mu\right)  $
satisfy $c_{+\overline{T}}=\left(  \left(  i,j\right)  _{+\overline{T}%
}\right)  _{\downarrow}$. Hence, Lemma \ref{lem.ssyt.neighbors} \textbf{(c)}
(applied to $\overline{T}$, $\left(  i,j\right)  $ and $c$ instead of $T$, $c$
and $d$) yields that $c=\left(  i,j\right)  _{\downarrow}$ and $\overline
{T}\left(  c\right)  =\overline{T}\left(  i,j\right)  +1$. However,
(\ref{pf.transition_wd.Tbar.2}) yields $\overline{T}\left(  c\right)
=T\left(  c\right)  $ (since $c=\left(  i,j\right)  _{\downarrow}\neq\left(
i,j\right)  $). Thus,%
\[
T\left(  c\right)  =\overline{T}\left(  c\right)  =\overline{T}\left(
i,j\right)  +1=T\left(  i,j\right)  \ \ \ \ \ \ \ \ \ \ \left(  \text{by
(\ref{pf.transition_wd.Tbar.1})}\right)  .
\]
However, $c=\left(  i,j\right)  _{\downarrow}=\left(  i+1,j\right)  $, so that
$T\left(  c\right)  =T\left(  i+1,j\right)  >T\left(  i,j\right)  $ (since $T$
is a semistandard tableau, and thus its entries strictly increase
top-to-bottom in each column). This contradicts $T\left(  c\right)  =T\left(
i,j\right)  $. Thus, we have found a contradiction in Case 1.

\item
Let us next consider Case 2. In this case, the diagram $\mathbf{D}\left(
\overline{T}\right)  $ contains $\left(  \left(  i,j\right)  _{+\overline{T}%
}\right)  _{\rightarrow}$. In other words,%
\[
\left(  \left(  i,j\right)  _{+\overline{T}}\right)  _{\rightarrow}%
\in\mathbf{D}\left(  \overline{T}\right)  =\left\{  c_{+\overline{T}}%
\ \mid\ c\in Y\left(  \mu\right)  \right\}
\]
(by the definition of $\mathbf{D}\left(  \overline{T}\right)  $). In other
words, there exists some $c\in Y\left(  \mu\right)  $ such that $\left(
\left(  i,j\right)  _{+\overline{T}}\right)  _{\rightarrow}=c_{+\overline{T}}%
$. Consider this $c$.

The two boxes $c$ and $\left(  i,j\right)  $ in $Y\left(  \mu\right)  $
satisfy $c_{+\overline{T}}=\left(  \left(  i,j\right)  _{+\overline{T}%
}\right)  _{\rightarrow}$. Hence, Lemma \ref{lem.ssyt.neighbors} \textbf{(b)}
(applied to $\overline{T}$, $\left(  i,j\right)  $ and $c$ instead of $T$, $c$
and $d$) yields that $c=\left(  i,j\right)  _{\rightarrow}$ and $\overline
{T}\left(  c\right)  =\overline{T}\left(  i,j\right)  $. However,
(\ref{pf.transition_wd.Tbar.2}) yields $\overline{T}\left(  c\right)
=T\left(  c\right)  $ (since $c=\left(  i,j\right)  _{\rightarrow}\neq\left(
i,j\right)  $). Thus,%
\begin{align*}
T\left(  c\right)   &  =\overline{T}\left(  c\right)  =\overline{T}\left(
i,j\right)  =T\left(  i,j\right)  -1\ \ \ \ \ \ \ \ \ \ \left(  \text{by
(\ref{pf.transition_wd.Tbar.1})}\right) \\
&  <T\left(  i,j\right)  .
\end{align*}
However, $c=\left(  i,j\right)  _{\rightarrow}=\left(  i,j+1\right)  $, so
that $T\left(  c\right)  =T\left(  i,j+1\right)  \geq T\left(  i,j\right)  $
(since $T$ is a semistandard tableau, and thus its entries weakly increase
left-to-right in each row). This contradicts $T\left(  c\right)  <T\left(
i,j\right)  $. Thus, we have found a contradiction in Case 2.

\item
Let us finally consider Case 3. In this case, the diagram $\mathbf{D}\left(
\overline{T}\right)  $ contains $\left(  \left(  i,j\right)  _{+\overline{T}%
}\right)  _{\searrow}$. In other words,%
\[
\left(  \left(  i,j\right)  _{+\overline{T}}\right)  _{\searrow}\in
\mathbf{D}\left(  \overline{T}\right)  =\left\{  c_{+\overline{T}}\ \mid\ c\in
Y\left(  \mu\right)  \right\}
\]
(by the definition of $\mathbf{D}\left(  \overline{T}\right)  $). In other
words, there exists some $c\in Y\left(  \mu\right)  $ such that $\left(
\left(  i,j\right)  _{+\overline{T}}\right)  _{\searrow}=c_{+\overline{T}}$.
Consider this $c$.

The two boxes $c$ and $\left(  i,j\right)  $ in $Y\left(  \mu\right)  $
satisfy $c_{+\overline{T}}=\left(  \left(  i,j\right)  _{+\overline{T}%
}\right)  _{\searrow}$. Hence, Lemma \ref{lem.ssyt.neighbors} \textbf{(d)}
(applied to $\overline{T}$, $\left(  i,j\right)  $ and $c$ instead of $T$, $c$
and $d$) yields that $c=\left(  i,j\right)  _{\searrow}$ and $\overline
{T}\left(  c\right)  =\overline{T}\left(  i,j\right)  +1$. However,
(\ref{pf.transition_wd.Tbar.2}) yields $\overline{T}\left(  c\right)
=T\left(  c\right)  $ (since $c=\left(  i,j\right)  _{\searrow}\neq\left(
i,j\right)  $). Thus,%
\[
T\left(  c\right)  =\overline{T}\left(  c\right)  =\overline{T}\left(
i,j\right)  +1=T\left(  i,j\right)  \ \ \ \ \ \ \ \ \ \ \left(  \text{by
(\ref{pf.transition_wd.Tbar.1})}\right)  .
\]
However, $c=\left(  i,j\right)  _{\searrow}=\left(  i+1,j+1\right)  $, and
we have $i\leq i+1$ and $j\leq j+1$. Hence, Lemma \ref{lem.ssyt.geq}
\textbf{(b)} (applied to $\left(  u,v\right)  =\left(  i+1,j+1\right)  $)
yields $T\left(  i+1,j+1\right)  -\left(  j+1\right)  \geq T\left(
i,j\right)  -j$. Hence,%
\[
T\left(  i+1,j+1\right)  \geq\left(  T\left(  i,j\right)  -j\right)  +\left(
j+1\right)  =T\left(  i,j\right)  +1>T\left(  i,j\right)  .
\]
In other words, $T\left(  c\right)  >T\left(  i,j\right)  $ (since $c=\left(
i+1,j+1\right)  $). This contradicts $T\left(  c\right)  =T\left(  i,j\right)
$. Thus, we have found a contradiction in Case 3.
\end{enumerate}

We have now found a contradiction in each of our three cases. Thus, we always
obtain a contradiction. This shows that our assumption was false.

Thus, we have shown that our replacement of $\left(  i,j\right)
_{+\overline{T}}$ by $\left(  \left(  i,j\right)  _{+\overline{T}}\right)
_{\searrow}$ in $\mathbf{D}\left(  \overline{T}\right)  $ is an excited move.
Let $\mathbf{e}$ denote this excited move. Then, the diagram $\mathbf{D}%
\left(  T\right)  $ can be obtained from $\mathbf{D}\left(  \overline
{T}\right)  $ by this excited move $\mathbf{e}$ (since we know that the
diagram $\mathbf{D}\left(  T\right)  $ can be obtained from $\mathbf{D}\left(
\overline{T}\right)  $ by replacing the box $\left(  i,j\right)
_{+\overline{T}}$ by its southeastern neighbor $\left(  \left(  i,j\right)
_{+\overline{T}}\right)  _{\searrow}$). Since $\mathbf{D}\left(  \overline
{T}\right)  $ can, in turn, be obtained from $Y\left(  \mu\right)  $ by a
sequence of excited moves (as we know), we thus conclude that $\mathbf{D}%
\left(  T\right)  $ can be obtained from $Y\left(  \mu\right)  $ by a sequence
of excited moves (just apply the sequence of excited moves that gives
$\mathbf{D}\left(  \overline{T}\right)  $ first, and then perform the excited
move $\mathbf{e}$ to transform it further into $\mathbf{D}\left(  T\right)
$). In other words, $\mathbf{D}\left(  T\right)  $ is an excitation of
$Y\left(  \mu\right)  $. This proves Lemma \ref{transition_wd} \textbf{(a)}
for our tableau $T$. This completes the induction step, and thus Lemma
\ref{transition_wd} \textbf{(a)} is proved.

\medskip

\textbf{(c)} Let $\mathbf{b}=\left(  b_{1},b_{2},b_{3},\ldots\right)  $ be the
flagging induced by $\lambda/\mu$. Thus, $\mathcal{F}\left(  \lambda
/\mu\right)  =\operatorname{FSSYT}\left(  \mu,\mathbf{b}\right)  $ (by the
definition of $\mathcal{F}\left(  \lambda/\mu\right)  $).

We must prove the equivalence $\tup{\mathbf{D}\tup{T} \in \calE\tup{\lambda/\mu}} \Longleftrightarrow \tup{T \in \calF\tup{\lambda/\mu}}$. Let us prove the ``$\Longrightarrow$'' and ``$\Longleftarrow$'' directions of this equivalence separately:

$\Longrightarrow:$ Assume that $\mathbf{D}\left(  T\right)  \in\mathcal{E}%
\left(  \lambda/\mu\right)  $. We must prove that $T\in\mathcal{F}\left(
\lambda/\mu\right)  $.

Let $\left(  i,j\right)  \in Y\left(  \mu\right)  $. We shall prove that
$T\left(  i,j\right)  \leq b_{i}$.

Indeed, consider the box $\left(  i,\mu_{i}\right)  $. Then, $\left(
i,\mu_{i}\right)  \in Y\left(  \mu\right)  $ (since $\mu_{i}\leq\mu_{i}$), so
that $\left(  i,\mu_{i}\right)  _{+T}\in\mathbf{D}\left(  T\right)  $ (by
(\ref{eq.def.DD.DDT=})).

Set $k:=T\left(  i,\mu_{i}\right)  $ (this is well-defined since $\left(
i,\mu_{i}\right)  \in Y\left(  \mu\right)  $). Then, the definition of
$\left(  i,\mu_{i}\right)  _{+T}$ yields
\begin{align*}
\left(  i,\mu_{i}\right)  _{+T}  &  =\left(  T\left(  i,\mu_{i}\right)
,\ T\left(  i,\mu_{i}\right)  +\mu_{i}-i\right) \\
&  =\left(  k,\ k+\mu_{i}-i\right)  \ \ \ \ \ \ \ \ \ \ \left(  \text{since
}T\left(  i,\mu_{i}\right)  =k\right)  .
\end{align*}
Hence, $\left(  k,\ k+\mu_{i}-i\right)  =\left(  i,\mu_{i}\right)  _{+T}%
\in\mathbf{D}\left(  T\right)  \subseteq Y\left(  \lambda\right)  $ (since
$\mathbf{D}\left(  T\right)  \in\mathcal{E}\left(  \lambda/\mu\right)  $). In other words, $k+\mu_{i}-i\leq\lambda_{k}$. In other words, $\lambda_{k}%
-k\geq\mu_{i}-i$. By Lemma \ref{lem.flagging-of-lm.uniprop} (applied to
$j=k$), this is equivalent to $k\leq b_{i}$. Thus, we have $k\leq b_i$.
However, from $\left(  i,j\right)  \in Y\left(  \mu\right)  $, we obtain
$j\leq\mu_{i}$. Since the entries of $T$ weakly increase left-to-right in each
row, this entails $T\left(  i,j\right)  \leq T\left(  i,\mu_{i}\right)
= k\leq b_{i}$.

Forget that we fixed $\left(  i,j\right)  $. We thus have shown that $T\left(
i,j\right)  \leq b_{i}$ for all $\left(  i,j\right)  \in Y\left(  \mu\right)
$. In other words, the tableau $T$ is $\mathbf{b}$-flagged. In other words,
$T\in\operatorname{FSSYT}\left(  \mu,\mathbf{b}\right)  =\mathcal{F}\left(
\lambda/\mu\right)  $. Thus, the \textquotedblleft$\Longrightarrow
$\textquotedblright\ direction of Lemma \ref{transition_wd} \textbf{(c)} is proved.

$\Longleftarrow:$ Assume that $T\in\mathcal{F}\left(  \lambda/\mu\right)  $.
We must prove that $\mathbf{D}\left(  T\right)  \in\mathcal{E}\left(
\lambda/\mu\right)  $. In other words, we must prove that $\mathbf{D}\left(
T\right)  \subseteq Y\left(  \lambda\right)  $
(since Lemma \ref{transition_wd} \textbf{(a)} shows that $\mathbf{D}\tup{T}$ is an excitation of $Y\tup{\mu}$).

In other words, we must prove that $c_{+T}\in Y\left(  \lambda\right)  $ for
each $c\in Y\left(  \mu\right)  $ (by (\ref{eq.def.DD.DDT=})).

So let us do this. Consider any $c\in Y\left(  \mu\right)  $. We must prove
that $c_{+T}\in Y\left(  \lambda\right)  $.

Write the box $c$ as $c=\left(  i,j\right)  $. Hence, $\left(  i,j\right)
=c\in Y\left(  \mu\right)  $, so that $\mu_{i}\geq j$.

Set $k:=T\left(  i,j\right)  $. Then, $\left(  i,j\right)  _{+T}=\left(
k,\ k+j-i\right)  $ by the definition of $\left(  i,j\right)  _{+T}$.

However, $T\in\mathcal{F}\left(  \lambda/\mu\right)  =\operatorname{FSSYT}%
\left(  \mu,\mathbf{b}\right)  $, which shows that $T$ is $\mathbf{b}%
$-flagged. Therefore, $T\left(  i,j\right)  \leq b_{i}$. In other words,
$k\leq b_{i}$ (since $k=T\left(  i,j\right)  $). By Lemma
\ref{lem.flagging-of-lm.uniprop} (applied to $j=k$), this is equivalent to
$\lambda_{k}-k\geq\mu_{i}-i$. Hence, we have $\lambda_{k}-k\geq\underbrace{\mu
_{i}}_{\geq j}-i\geq j-i$. In other words, $k+j-i\leq\lambda_{k}$. In other
words, $\left(  i,j\right)  _{+T}\in Y\left(  \lambda\right)  $ (since we know
that $\left(  i,j\right)  _{+T}=\left(  k,\ k+j-i\right)  $). In other words,
$c_{+T}\in Y\left(  \lambda\right)  $ (since $c=\left(  i,j\right)  $). As
explained above, this completes the proof of the \textquotedblleft%
$\Longleftarrow$\textquotedblright\ direction of Lemma \ref{transition_wd}
\textbf{(c)}.

Thus, Lemma \ref{transition_wd} \textbf{(c)} is proved.
\end{proof}

\begin{proof}[Proof of Lemma \ref{transition_formula}.]
Lemma \ref{transition_wd}
\textbf{(a)} yields that the map
\begin{align*}
\operatorname{SSYT}\left(  \mu\right)   &  \rightarrow\left\{  \text{all
excitations of $Y\left(  \mu\right)  $}\right\}  ,\\
T  &  \mapsto\mathbf{D}\left(  T\right)
\end{align*}
is well-defined. It remains to show that this map is a bijection. Towards this
aim, we will prove the following two claims:

\begin{statement}
\textit{Claim 1:} Let $T$ and $S$ be two distinct tableaux in
$\operatorname*{SSYT}\left(  \mu\right)  $. Then, $\mathbf{D}\left(  T\right)
\neq\mathbf{D}\left(  S\right)  $.
\end{statement}

\begin{statement}
\textit{Claim 2:} Let $n\in\mathbb{N}$. Let $E$ be a diagram obtained from
$Y\left(  \mu\right)  $ by a sequence of $n$ excited moves. Then, there is a
tableau $T\in\operatorname*{SSYT}\left(  \mu\right)  $ such that
$E=\mathbf{D}\left(  T\right)  $.
\end{statement}

\begin{proof}
[Proof of Claim 1.]Assume the contrary. Thus, $\mathbf{D}\left(  T\right)
=\mathbf{D}\left(  S\right)  $.

Since $T$ and $S$ are distinct, there exists a box $\left(  i,j\right)  \in
Y\left(  \mu\right)  $ satisfying $T\left(  i,j\right)  \neq S\left(
i,j\right)  $. Pick such a box $\left(  i,j\right)  $ with smallest $i$.
Thus,%
\begin{equation}
T\left(  u,v\right)  =S\left(  u,v\right)
\label{pf.transition_formula.c1.pf.1}%
\end{equation}
for every box $\left(  u,v\right)  \in Y\left(  \mu\right)  $ satisfying
$u<i$ (since $\left(  i,j\right)  $ was chosen to have smallest $i$).

From $T\left(  i,j\right)  \neq S\left(  i,j\right)  $, we see that either
$T\left(  i,j\right)  <S\left(  i,j\right)  $ or $S\left(  i,j\right)
<T\left(  i,j\right)  $. We WLOG assume that $S\left(  i,j\right)  <T\left(
i,j\right)  $ is the case (since otherwise, we can simply swap $T$ with $S$).

The definition of $\mathbf{D}\left(  S\right)  $ yields $\mathbf{D}\left(
S\right)  =\left\{  c_{+S}\ \mid\ c\in Y\left(  \mu\right)  \right\}  $.
Hence,%
\begin{align*}
\left(  i,j\right)  _{+S}  &  \in\mathbf{D}\left(  S\right)
\ \ \ \ \ \ \ \ \ \ \left(  \text{since }\left(  i,j\right)  \in Y\left(
\mu\right)  \right) \\
&  =\mathbf{D}\left(  T\right)  \ \ \ \ \ \ \ \ \ \ \left(  \text{since
}\mathbf{D}\left(  T\right)  =\mathbf{D}\left(  S\right)  \right) \\
&  =\left\{  c_{+T}\ \mid\ c\in Y\left(  \mu\right)  \right\}
\ \ \ \ \ \ \ \ \ \ \left(  \text{by the definition of }\mathbf{D}\left(
T\right)  \right)  .
\end{align*}
In other words, $\left(  i,j\right)  _{+S}=c_{+T}$ for some $c\in Y\left(
\mu\right)  $. Consider this $c$, and denote it by $\left(  u,v\right)  $.
Thus,
\[
\left(  i,j\right)  _{+S}=\left(  u,v\right)  _{+T}=\left(  T\left(
u,v\right)  ,\ T\left(  u,v\right)  +v-u\right)
\]
(by the definition of $\left(  u,v\right)  _{+T}$). Hence,%
\[
\left(  T\left(  u,v\right)  ,\ T\left(  u,v\right)  +v-u\right)  =\left(
i,j\right)  _{+S}=\left(  S\left(  i,j\right)  ,\ S\left(  i,j\right)
+j-i\right)
\]
(by the definition of $\left(  i,j\right)  _{+S}$). In other words, the two
equalities $T\left(  u,v\right)  =S\left(  i,j\right)  $ and $T\left(
u,v\right)  +v-u=S\left(  i,j\right)  +j-i$ hold. Subtracting the former
equality from the latter, we obtain $v-u=j-i$. In other words, $v+i=j+u$.
Thus, $v-j=u-i$.

We are in one of the following two cases:

\textit{Case 1:} We have $u<i$.

\textit{Case 2:} We have $u\geq i$.

Let us first consider Case 1. In this case, we have $u<i$. Hence, $u-i<0$ and
$i>u$. Moreover, $v<j$ (since $v-j=u-i<0$). Hence, Lemma \ref{lem.ssyt.geq}
\textbf{(b)} (applied to $S$, $\left(  u,v\right)  $ and $\left(  i,j\right)
$ instead of $T$, $\left(  i,j\right)  $ and $\left(  u,v\right)  $) yields
$S\left(  i,j\right)  -i\geq S\left(  u,v\right)  -u$ (since $u<i$ and $v<j$).
Adding the inequality $i>u$ to this inequality, we obtain $S\left(
i,j\right)  >S\left(  u,v\right)  $. However, from
(\ref{pf.transition_formula.c1.pf.1}), we obtain $T\left(  u,v\right)
=S\left(  u,v\right)  $ (since $u<i$). Thus, $S\left(  i,j\right)  >S\left(
u,v\right)  =T\left(  u,v\right)  =S\left(  i,j\right)  $. This is absurd.
Thus, we have obtained a contradiction in Case 1.

Let us now consider Case 2. In this case, we have $u\geq i$. In other words,
$i\leq u$. Also, $u\geq i$ entails $u-i\geq0$ and thus $v\geq j$ (since
$v-j=u-i\geq0$). Hence, $j\leq v$. Thus, Lemma \ref{lem.ssyt.geq} \textbf{(b)}
yields $T\left(  u,v\right)  -u\geq T\left(  i,j\right)  -i$. Adding the
inequality $u\geq i$ to this inequality, we obtain $T\left(  u,v\right)  \geq
T\left(  i,j\right)  $. This contradicts $T\left(  u,v\right)  =S\left(
i,j\right)  <T\left(  i,j\right)  $. Thus, we have obtained a contradiction in
Case 2.

We have now obtained a contradiction in each of the two cases. Thus, we always
have a contradiction. Hence, our assumption was false. This proves Claim 1.
\end{proof}

\begin{proof}
[Proof of Claim 2.]We proceed by induction on $n$:

\textit{Base case:} Let us prove Claim 2 for $n=0$.

Indeed, let $E$ be a diagram obtained from $Y\left(  \mu\right)  $ by a
sequence of $0$ excited moves. Thus, $E$ must be $Y\left(  \mu\right)  $
itself. Now, let $T$ be the filling of the diagram $Y\left(  \mu\right)  $
(that is, the map $Y\left(  \mu\right)  \rightarrow \set{1,2,3,\ldots}$) that is given by%
\[
T\left(  i,j\right)  =i\ \ \ \ \ \ \ \ \ \ \text{for each }i\in Y\left(
\mu\right)  .
\]
(For instance, if $\mu=\left(  5,2,2\right)  $, then
$T=\ytableaushort{11111,22,33}$.) Note that $T$ is a semistandard tableau of
shape $\mu$ (indeed, the entries of $T$ are weakly increasing left-to-right in
each row\footnote{since they are constant in each row} and strictly increasing
top-to-bottom in each column\footnote{since the entries of $T$ in any given
column are $1,2,\ldots,k$ from top to bottom (where $k$ is the length of said
column)}). In other words, $T\in\operatorname*{SSYT}\left(  \mu\right)  $.
Moreover, if $c=\left(  i,j\right)  \in Y\left(  \mu\right)  $ is any box,
then%
\begin{align*}
c_{+T}  &  =\left(  T\left(  i,j\right)  ,\ T\left(  i,j\right)  +j-i\right)
\ \ \ \ \ \ \ \ \ \ \left(  \text{by the definition of }c_{+T}\text{, since
}c=\left(  i,j\right)  \right) \\
&  =\left(  i,\ i+j-i\right)  \ \ \ \ \ \ \ \ \ \ \left(  \text{since
}T\left(  i,j\right)  =i\right) \\
&  =\left(  i,j\right)  =c.
\end{align*}
Thus, $\left\{  c_{+T}\ \mid\ c\in Y\left(  \mu\right)  \right\}  =\left\{
c\ \mid\ c\in Y\left(  \mu\right)  \right\}  =Y\left(  \mu\right)  $.
Therefore, the definition of $\mathbf{D}\left(  T\right)  $ yields
$\mathbf{D}\left(  T\right)  =\left\{  c_{+T}\ \mid\ c\in Y\left(  \mu\right)
\right\}  =Y\left(  \mu\right)  =E$ (since $E$ is $Y\left(  \mu\right)  $
itself). Hence, we have found a tableau $T\in\operatorname*{SSYT}\left(
\mu\right)  $ such that $E=\mathbf{D}\left(  T\right)  $. Thus, Claim 2 is
proved for our $E$. This completes the base case.

\textit{Induction step:} Let $n$ be a positive integer. Assume (as the
induction hypothesis) that Claim 2 is proved for $n-1$ instead of $n$. We must
now prove Claim 2 for $n$.

So let $E$ be a diagram obtained from $Y\left(  \mu\right)  $ by a sequence of
$n$ excited moves. Then, we must prove that there is a tableau $T\in
\operatorname*{SSYT}\left(  \mu\right)  $ such that $E=\mathbf{D}\left(
T\right)  $.

We assumed that $E$ is obtained from $Y\left(  \mu\right)  $ by a sequence of
$n$ excited moves. Let $\mathbf{s}$ be this sequence of $n$ excited moves. Let
$\mathbf{e}$ be the last move in this sequence, and let $F$ be the diagram
obtained from $Y\left(  \mu\right)  $ after the first $n-1$ moves of this
sequence $\mathbf{s}$ (stopping short of the last move $\mathbf{e}$). Then,
$E$ is obtained from $F$ by the excited move $\mathbf{e}$.

Moreover, the diagram $F$ is obtained from $Y\left(  \mu\right)  $ by a
sequence of $n-1$ excited moves (namely, by the first $n-1$ moves of the
sequence $\mathbf{s}$). Hence, our induction hypothesis (applied to $F$
instead of $E$) shows that there is a tableau $T\in\operatorname*{SSYT}\left(
\mu\right)  $ such that $F=\mathbf{D}\left(  T\right)  $. Consider this
tableau $T$, and denote it by $S$. Thus, $S\in\operatorname*{SSYT}\left(
\mu\right)  $ and $F=\mathbf{D}\left(  S\right)  $.

Recall that $E$ is obtained from $F$ by the excited move $\mathbf{e}$. Hence,
$\mathbf{e}$ is an excited move for $F$. In other words, $\mathbf{e}$ replaces
some box $d\in F$ by its southeastern neighbor $d_{\searrow}$, where the box
$d$ is chosen in such a way that $F$ contains none of its three neighbors
$d_{\downarrow},d_{\rightarrow},d_{\searrow}$ (by the definition of an
\textquotedblleft excited move\textquotedblright). Consider this box $d$.

We have
\begin{align*}
d  &  \in F=\mathbf{D}\left(  S\right)  =\left\{  c_{+S}\ \mid\ c\in Y\left(
\mu\right)  \right\}  \ \ \ \ \ \ \ \ \ \ \left(  \text{by the definition of
}\mathbf{D}\left(  S\right)  \right) \\
&  =\left\{  \left(  i,j\right)  _{+S}\ \mid\ \left(  i,j\right)  \in Y\left(
\mu\right)  \right\}  \ \ \ \ \ \ \ \ \ \ \left(  \text{here, we have renamed
the index }c\text{ as }\left(  i,j\right)  \right)  .
\end{align*}
In other words, $d=\left(  i,j\right)  _{+S}$ for some box $\left(
i,j\right)  \in Y\left(  \mu\right)  $. Consider this $\left(  i,j\right)  $.
Thus,%
\[
d=\left(  i,j\right)  _{+S}=\left(  S\left(  i,j\right)  ,\ S\left(
i,j\right)  +j-i\right)
\]
(by the definition of $\left(  i,j\right)  _{+S}$). Therefore,%
\[
d_{\rightarrow}=\left(  S\left(  i,j\right)  ,\ S\left(  i,j\right)
+j-i\right)  _{\rightarrow}=\left(  S\left(  i,j\right)  ,\ S\left(
i,j\right)  +j-i+1\right)
\]
and%
\[
d_{\downarrow}=\left(  S\left(  i,j\right)  ,\ S\left(  i,j\right)
+j-i\right)  _{\downarrow}=\left(  S\left(  i,j\right)  +1,\ S\left(
i,j\right)  +j-i\right)  .
\]

We observe the following two facts:

\begin{enumerate}
\item We have%
\begin{equation}
S\left(  i,j\right)  +1\leq S\left(  i,j+1\right)
\ \ \ \ \ \ \ \ \ \ \text{if }\left(  i,j+1\right)  \in Y\left(  \mu\right)  .
\label{pf.transition_formula.c2.pf.row-still-grows}%
\end{equation}

[\textit{Proof:} Assume that $\left(  i,j+1\right)  \in Y\left(  \mu\right)
$. Since the entries of $S$ weakly increase left-to-right in each row (because
$S$ is a semistandard tableau), we then have $S\left(  i,j\right)  \leq
S\left(  i,j+1\right)  $.

We must prove that $S\left(  i,j\right)  +1\leq S\left(  i,j+1\right)  $.
Assume the contrary. Thus, $S\left(  i,j\right)  +1>S\left(  i,j+1\right)  $,
so that $S\left(  i,j\right)  +1\geq S\left(  i,j+1\right)  +1$ (since
$S\left(  i,j\right)  +1$ and $S\left(  i,j+1\right)  $ are integers). In
other words, $S\left(  i,j\right)  \geq S\left(  i,j+1\right)  $. Combining
this with $S\left(  i,j\right)  \leq S\left(  i,j+1\right)  $, we obtain
$S\left(  i,j\right)  =S\left(  i,j+1\right)  $.

However, from $\left(  i,j+1\right)  \in Y\left(  \mu\right)  $, we obtain
$\left(  i,j+1\right)  _{+S}\in F$ (since we have $F=\left\{  c_{+S}\ \mid\ c\in
Y\left(  \mu\right)  \right\}  $). The definition of $\left(  i,j+1\right)
_{+S}$ yields%
\begin{align*}
\left(  i,j+1\right)  _{+S}  &  =\left(  S\left(  i,j+1\right)  ,\ S\left(
i,j+1\right)  +\left(  j+1\right)  -i\right) \\
&  =\left(  S\left(  i,j\right)  ,\ S\left(  i,j\right)  +\left(  j+1\right)
-i\right)  \ \ \ \ \ \ \ \ \ \ \left(  \text{since }S\left(  i,j+1\right)
=S\left(  i,j\right)  \right) \\
&  =\left(  S\left(  i,j\right)  ,\ S\left(  i,j\right)  +j-i+1\right)
\ \ \ \ \ \ \ \ \ \ \left(  \text{since }\left(  j+1\right)  -i=j-i+1\right)
\\
&  =d_{\rightarrow}\ \ \ \ \ \ \ \ \ \ \left(  \text{since }d_{\rightarrow
}=\left(  S\left(  i,j\right)  ,\ S\left(  i,j\right)  +j-i+1\right)  \right)
\\
&  \notin F\ \ \ \ \ \ \ \ \ \ \left(  \text{since }F\text{ contains none of
}d_{\downarrow},d_{\rightarrow},d_{\searrow}\right)  .
\end{align*}
This contradicts $\left(  i,j+1\right)  _{+S}\in F$. This contradiction shows
that our assumption was false. Hence,
(\ref{pf.transition_formula.c2.pf.row-still-grows}) is proved.]

\item We have%
\begin{equation}
S\left(  i,j\right)  +1<S\left(  i+1,j\right)  \ \ \ \ \ \ \ \ \ \ \text{if
}\left(  i+1,j\right)  \in Y\left(  \mu\right)  .
\label{pf.transition_formula.c2.pf.col-still-grows}%
\end{equation}

[\textit{Proof:} Assume that $\left(  i+1,j\right)  \in Y\left(  \mu\right)
$. Since the entries of $S$ strictly increase top-to-bottom in each column
(because $S$ is a semistandard tableau), we then have $S\left(  i,j\right)
<S\left(  i+1,j\right)  $. Thus, $S\left(  i,j\right)  \leq S\left(
i+1,j\right)  -1$ (since $S\left(  i,j\right)  $ and $S\left(  i+1,j\right)  $
are integers). In other words, $S\left(  i,j\right)  +1\leq S\left(
i+1,j\right)  $.

We must prove that $S\left(  i,j\right)  +1<S\left(  i+1,j\right)  $. Assume
the contrary. Thus, $S\left(  i,j\right)  +1\geq S\left(  i+1,j\right)  $.
Combining this with $S\left(  i,j\right)  +1\leq S\left(  i+1,j\right)  $, we
obtain $S\left(  i,j\right)  +1=S\left(  i+1,j\right)  $.

However, from $\left(  i+1,j\right)  \in Y\left(  \mu\right)  $, we obtain
$\left(  i+1,j\right)  _{+S}\in F$ (since we have $F=\left\{  c_{+S}\ \mid\ c\in
Y\left(  \mu\right)  \right\}  $). The definition of $\left(  i+1,j\right)
_{+S}$ yields%
\begin{align*}
\left(  i+1,j\right)  _{+S}  &  =\left(  S\left(  i+1,j\right)  ,\ S\left(
i+1,j\right)  +j-\left(  i+1\right)  \right) \\
&  =\left(  S\left(  i,j\right)  +1,\ S\left(  i,j\right)  +1+j-\left(
i+1\right)  \right)  \\
&\ \ \ \ \ \ \ \ \ \ \ \ \ \ \ \ \ \  \ \ \ \left(  \text{since }S\left(
i+1,j\right)  =S\left(  i,j\right)  +1\right) \\
&  =\left(  S\left(  i,j\right)  +1,\ S\left(  i,j\right)  +j-i\right)
\ \ \ \ \ \ \ \ \ \ \left(  \text{since }1+j-\left(  i+1\right)  =j-i\right)
\\
&  =d_{\downarrow}\ \ \ \ \ \ \ \ \ \ \left(  \text{since }d_{\downarrow
}=\left(  S\left(  i,j\right)  +1,\ S\left(  i,j\right)  +j-i\right)  \right)
\\
&  \notin F\ \ \ \ \ \ \ \ \ \ \left(  \text{since }F\text{ contains none of
}d_{\downarrow},d_{\rightarrow},d_{\searrow}\right)  .
\end{align*}
This contradicts $\left(  i+1,j\right)  _{+S}\in F$. This contradiction shows
that our assumption was false. Hence,
(\ref{pf.transition_formula.c2.pf.col-still-grows}) is proved.]
\end{enumerate}

Now, let us increase the entry $S\left(  i,j\right)  $ of the tableau $S$ by
$1$, while leaving all other entries unchanged. The resulting filling of
$Y\left(  \mu\right)  $ will be called $T$. Formally speaking, $T$ is thus the
map from $Y\left(  \mu\right)  $ to $\set{1,2,3,\ldots}$ given by%
\begin{align}
T\left(  i,j\right)   &  =S\left(  i,j\right)
+1\ \ \ \ \ \ \ \ \ \ \text{and}\label{pf.transition_fml.T.1}\\
T\left(  c\right)   &  =S\left(  c\right)  \ \ \ \ \ \ \ \ \ \ \text{for all
}c\in Y\left(  \mu\right)  \text{ distinct from }\left(  i,j\right)  .
\label{pf.transition_fml.T.2}%
\end{align}

It is easy to see (using (\ref{pf.transition_formula.c2.pf.row-still-grows})
and (\ref{pf.transition_formula.c2.pf.col-still-grows})) that $T$ is again a
semistandard tableau\footnote{\textit{Proof.} First, we note that the entries
of $T$ are positive integers (since $T\left(  i,j\right)  =S\left(
i,j\right)  +1\geq1$ shows that $T\left(  i,j\right)  $ is a positive integer,
and since all the other entries of $T$ are copied from $S$).
\par
Next, we claim that the entries of $T$ weakly increase left-to-right in each
row. Indeed, by the construction of $T$, this will follow from the analogous
property of $S$, as long as we can show that the increased entry $T\left(
i,j\right)  $ is still smaller or equal than its neighboring entry $T\left(
i,j+1\right)  $ (assuming that $\left(  i,j+1\right)  \in Y\left(  \mu\right)
$). But we can easily show this: If $\left(  i,j+1\right)  \in Y\left(
\mu\right)  $, then
\begin{align*}
T\left(  i,j+1\right)   &  =S\left(  i,j+1\right)  \ \ \ \ \ \ \ \ \ \ \left(
\text{by (\ref{pf.transition_fml.T.2})}\right)  \\
&  \geq S\left(  i,j\right)  +1\ \ \ \ \ \ \ \ \ \ \left(  \text{by
(\ref{pf.transition_formula.c2.pf.row-still-grows})}\right)  \\
&  =T\left(  i,j\right)  \ \ \ \ \ \ \ \ \ \ \left(  \text{by
(\ref{pf.transition_fml.T.1})}\right)
\end{align*}
and therefore $T\left(  i,j\right)  \leq T\left(  i,j+1\right)  $. Thus, we
conclude that the entries of $T$ weakly increase left-to-right in each row.
\par
Finally, we claim that the entries of $T$ strictly increase top-to-bottom in
each column. Indeed, by the construction of $T$, this will follow from the
analogous property of $S$, as long as we can show that the increased entry
$T\left(  i,j\right)  $ is still smaller than its neighboring entry $T\left(
i+1,j\right)  $ (assuming that $\left(  i+1,j\right)  \in Y\left(  \mu\right)
$). But we can easily show this: If $\left(  i+1,j\right)  \in Y\left(
\mu\right)  $, then%
\begin{align*}
T\left(  i+1,j\right)   &  =S\left(  i+1,j\right)  \ \ \ \ \ \ \ \ \ \ \left(
\text{by (\ref{pf.transition_fml.T.2})}\right)  \\
&  >S\left(  i,j\right)  +1\ \ \ \ \ \ \ \ \ \ \left(  \text{by
(\ref{pf.transition_formula.c2.pf.col-still-grows})}\right)  \\
&  =T\left(  i,j\right)  \ \ \ \ \ \ \ \ \ \ \left(  \text{by
(\ref{pf.transition_fml.T.1})}\right)
\end{align*}
and thus $T\left(  i,j\right)  <T\left(  i+1,j\right)  $. Thus, we conclude
that the entries of $T$ strictly increase top-to-bottom in each column.
\par
Altogether, we have now shown that $T$ is a semistandard tableau.}. Thus,
$T\in\operatorname*{SSYT}\left(  \mu\right)  $.

From (\ref{pf.transition_fml.T.1}), we easily see that%
\begin{equation}
\left(  i,j\right)  _{+T}=\left(  \left(  i,j\right)  _{+S}\right)
_{\searrow}\label{pf.transition_fml.+T1}%
\end{equation}
\footnote{\textit{Proof.} From (\ref{pf.transition_fml.T.1}), we obtain
$S\left(  i,j\right)  +1=T\left(  i,j\right)  $. The definition of $\left(
i,j\right)  _{+S}$ yields%
\[
\left(  i,j\right)  _{+S}=\left(  S\left(  i,j\right)  ,\ S\left(  i,j\right)
+j-i\right)  .
\]
Hence,
\begin{align*}
\left(  \left(  i,j\right)  _{+S}\right)  _{\searrow} &  =\left(  S\left(
i,j\right)  +1,\ \underbrace{\left(  S\left(  i,j\right)  +j-i\right)
+1}_{=S\left(  i,j\right)  +1+j-i}\right)  \\
&  =\left(  \underbrace{S\left(  i,j\right)  +1}_{=T\left(  i,j\right)
},\ \underbrace{S\left(  i,j\right)  +1}_{=T\left(  i,j\right)  }+\,j-i\right)
=\left(  T\left(  i,j\right)  ,\ T\left(  i,j\right)  +j-i\right)  =\left(
i,j\right)  _{+T}%
\end{align*}
(by the definition of $\left(  i,j\right)  _{+T}$).}. Since $\left(
i,j\right)  _{+S}=d$, we can rewrite this as $\left(  i,j\right)
_{+T}=d_{\searrow}$.

However, Lemma \ref{lem.DS-vs-DT} shows that the diagram $\mathbf{D}\left(
T\right)  $ can be obtained from $\mathbf{D}\left(  S\right)  $ by replacing
the box $\left(  i,j\right)  _{+S}$ by the box $\left(  i,j\right)  _{+T}$
(since (\ref{pf.transition_fml.T.2}) holds). In other words, the diagram
$\mathbf{D}\left(  T\right)  $ can be obtained from $F$ by replacing the box
$d$ by the box $d_{\searrow}$ (since $\mathbf{D}\left(  S\right)  =F$ and
$\left(  i,j\right)  _{+S}=d$ and $\left(  i,j\right)  _{+T}=d_{\searrow}$).
In other words, the diagram $\mathbf{D}\left(  T\right)  $ can be obtained
from $F$ by the excited move $\mathbf{e}$ (since the replacement of the box
$d$ by $d_{\searrow}$ in the diagram $F$ is precisely the excited move
$\mathbf{e}$ (by the definition of the box $d$)).

However, we know that the diagram $E$ is also obtained from $F$ by the excited
move $\mathbf{e}$. Thus, both diagrams $E$ and $\mathbf{D}\left(  T\right)  $
can be obtained from $F$ by the excited move $\mathbf{e}$. Since any given
excited move has only one possible outcome, we thus conclude that these
diagrams $E$ and $\mathbf{D}\left(  T\right)  $ are identical. In other words,
$E=\mathbf{D}\left(  T\right)  $.

Hence, we have found a tableau $T\in\operatorname*{SSYT}\left(  \mu\right)  $
such that $E=\mathbf{D}\left(  T\right)  $. Thus, Claim 2 is proved for our
$E$. This completes the induction step.

The induction proof of Claim 2 is now complete.
\end{proof}

Claim 2 easily yields the following:

\begin{statement}
\textit{Claim 3:} Let $E\in\left\{  \text{all excitations of $Y\left(
\mu\right)  $}\right\}  $. Then, there is a tableau $T\in\operatorname*{SSYT}%
\left(  \mu\right)  $ such that $E=\mathbf{D}\left(  T\right)  $.
\end{statement}

\begin{proof}
[Proof of Claim 3.] We have
$E\in\left\{  \text{all excitations of $Y\left(  \mu\right)  $%
}\right\}$.
Thus, $E$ is an excitation of $Y\left(  \mu\right)
$. In other words, $E$ can be obtained from $Y\left(  \mu\right)
$ by a finite sequence of excited moves. Consider this sequence, and let $n$
be its length. Thus, $E$ can be obtained from $Y\left(  \mu\right)  $ by a
sequence of $n$ excited moves. Hence, Claim 2 shows that there is a tableau
$T\in\operatorname*{SSYT}\left(  \mu\right)  $ such that $E=\mathbf{D}\left(
T\right)  $. This proves Claim 3.
\end{proof}

Let us now consider the map%
\begin{align*}
\operatorname{SSYT}\left(  \mu\right)   &  \rightarrow\left\{  \text{all
excitations of $Y\left(  \mu\right)  $}\right\}  ,\\
T &  \mapsto\mathbf{D}\left(  T\right)
\end{align*}
once again. This map is injective (by Claim 1) and surjective (by Claim 3).
Hence, it is bijective, i.e., is a bijection. This completes the proof of Lemma
\ref{transition_formula}.
\end{proof}


\begin{proof}
[Proof of Lemma \ref{transition_formula_flagged}.]
We know that $\mathcal{E}%
\left(  \lambda/\mu\right)  $ is a subset of $\left\{  \text{all excitations
of $Y\left(  \mu\right)  $}\right\}  $, whereas $\mathcal{F}\left(
\lambda/\mu\right)  $ is a subset of $\operatorname{SSYT}\left(  \mu\right)  $.

Lemma \ref{transition_formula} yields that the map%
\begin{align*}
\operatorname{SSYT}\left(  \mu\right)   &  \rightarrow\left\{  \text{all
excitations of $Y\left(  \mu\right)  $}\right\}  ,\\
T &  \mapsto\mathbf{D}\left(  T\right)
\end{align*}
is well-defined and is a bijection. According to Lemma \ref{transition_wd}
\textbf{(c)}, a semistandard tableau $T\in\operatorname{SSYT}\left(
\mu\right)  $ belongs to $\mathcal{F}\left(  \lambda/\mu\right)  $ if and only
if its image $\mathbf{D}\left(  T\right)  $ under this map belongs to
$\mathcal{E}\left(  \lambda/\mu\right)  $. Hence, the subset $\mathcal{F}%
\left(  \lambda/\mu\right)  $ of $\operatorname{SSYT}\left(  \mu\right)  $
corresponds precisely to the subset $\mathcal{E}\left(  \lambda/\mu\right)  $
of $\left\{  \text{all excitations of $Y\left(  \mu\right)  $}\right\}  $
under this map. Therefore, restricting this map to $\mathcal{F}\left(
\lambda/\mu\right)  $, we obtain a bijection\footnote{Here is the argument in more detail:
Let $\Phi$ be the map
\begin{align*}
\operatorname{SSYT}\left(  \mu\right)   &  \rightarrow\left\{  \text{all
excitations of $Y\left(  \mu\right)  $}\right\}  ,\\
T &  \mapsto\mathbf{D}\left(  T\right) .
\end{align*}
As we have seen above, this map $\Phi$ is a bijection. Thus, $\Phi$ is injective and surjective.
Moreover, each $T \in \calF\tup{\lambda/\mu}$ satisfies $\mathbf{D}\tup{T} \in \calE\tup{\lambda/\mu}$
(by Lemma \ref{transition_wd} \textbf{(c)}). Hence, the map
\begin{align*}
\Psi: \mathcal{F}\left(  \lambda/\mu\right)    & \rightarrow\mathcal{E}\left(
\lambda/\mu\right)  ,\\
T  & \mapsto\mathbf{D}\left(  T\right)
\end{align*}
is well-defined.
Consider this map $\Psi$.
Clearly, $\Psi$ is a restriction of the map $\Phi$ (since both maps are given by the same formula), and thus is injective (since $\Phi$ is injective).
Let us next show that $\Psi$ is surjective.
\par
Indeed, let $E \in \calE\tup{\lambda/\mu}$.
Then, $E \in \left\{  \text{all
excitations of $Y\left(  \mu\right)  $}\right\}  $.
Hence, there exists a $T \in \SSYT\tup{\mu}$ such that  $E = \Phi\tup{T}$ (since $\Phi$ is surjective).
Consider this $T$. Then, $E = \Phi\tup{T} = \DD\tup{T}$ (by the definition of $\Phi$), so that $\DD\tup{T} = E \in \calE\tup{\lambda/\mu}$. Hence, Lemma \ref{transition_wd} \textbf{(c)} shows that $T \in \calF\tup{\lambda/\mu}$. Thus, $\Psi\tup{T}$ is well-defined and equals $\DD\tup{T}$ (by the definition of $\Psi$). Therefore, $\Psi\tup{T} = \DD\tup{T} = E$.
This shows that $E$ is a value of the map $\Psi$.
\par
Forget that we fixed $E$. We thus have shown that each $E \in \calE\tup{\lambda/\mu}$ is a value of the map $\Psi$.
Hence, the map $\Psi$ is surjective.
Thus, $\Psi$ is a bijection (since $\Psi$ is both injective and surjective).
In other words, the map
\begin{align*}
\mathcal{F}\left(  \lambda/\mu\right)    & \rightarrow\mathcal{E}\left(
\lambda/\mu\right)  ,\\
T  & \mapsto\mathbf{D}\left(  T\right)
\end{align*}
is a bijection (since this map is $\Psi$).}
\begin{align*}
\mathcal{F}\left(  \lambda/\mu\right)    & \rightarrow\mathcal{E}\left(
\lambda/\mu\right)  ,\\
T  & \mapsto\mathbf{D}\left(  T\right)  .
\end{align*}
This proves Lemma \ref{transition_formula_flagged}.
\end{proof}

\begin{proof}
[Proof of Corollary \ref{corollaryFlaggedExc}.] The definition of
$\mathbf{s}_{\lambda}\left[  \mu\right]  $ yields%
\begin{align*}
\mathbf{s}_{\lambda}\left[  \mu\right]    & =\sum_{D\in\mathcal{E}\left(
\lambda/\mu\right)  }\ \ \prod_{\left(  i,j\right)  \in D}\left(  x_{i}%
+y_{j}\right)  \nonumber
=\sum_{T\in\mathcal{F}\left(  \lambda/\mu\right)  }\ \ \prod_{\left(
i,j\right)  \in\mathbf{D}\left(  T\right)  }\left(  x_{i}+y_{j}\right)
\end{align*}
(here, we have substituted $\mathbf{D}\left(  T\right)  $ for $D$ in the sum,
since Lemma \ref{transition_formula_flagged} shows that the map
\begin{align*}
\mathcal{F}(\lambda/\mu) &  \rightarrow\mathcal{E}(\lambda/\mu),\\
T &  \mapsto\mathbf{D}\left(  T\right)
\end{align*}
is a bijection).
Hence,
\[
\mathbf{s}_{\lambda}\left[  \mu\right]  =\sum_{T\in\mathcal{F}\left(  \lambda/\mu\right)  }\underbrace{\prod_{\left(
i,j\right)  \in\mathbf{D}\left(  T\right)  }\left(  x_{i}+y_{j}\right)}_{\substack{=\prod_{(i,j)\in
Y(\mu)}\left(  x_{T(i,j)}+y_{T(i,j)+j-i}\right) \\ \text{(by Lemma \ref{transition_wd} \textbf{(b)})}}}
=\sum_{T\in\mathcal{F}(\lambda/\mu
)}\ \ \prod_{(i,j)\in Y(\mu)}\left(  x_{T(i,j)}+y_{T(i,j)+j-i}\right)  .
\]
This proves Corollary \ref{corollaryFlaggedExc}.
\end{proof}

\subsection{To Section \ref{sec.habc}}

\begin{proof}
[Proof of Lemma \ref{lem.h1bc}.] Definition \ref{defh} \textbf{(c)} yields%
\begin{align*}
h\left(  1,b,c\right)    & =\underbrace{\sum_{\substack{\left(  i_{1}%
,i_{2},\ldots,i_{1}\right)  \in\left[  b\right]  ^{1};\\i_{1}\leq i_{2}%
\leq\cdots\leq i_{1}}}}_{\substack{=\sum_{\left(  i_{1}\right)  \in\left[
b\right]  ^{1}}\\=\sum_{i_{1}\in\left[  b\right]  }}}\ \ \underbrace{\prod
_{j=1}^{1}\left(  x_{i_{j}}+y_{i_{j}+\left(  j-1\right)  +c
}\right)  }_{\substack{=x_{i_{1}}+y_{i_{1}+\left(  1-1\right)  +c }\\=x_{i_{1}}+y_{i_{1}+c}\\\text{(since }i_{1}+\left(  1-1\right)
+c  =i_{1}+c\text{)}}}=\sum_{i_{1}\in\left[  b\right]
}\left(  x_{i_{1}}+y_{i_{1}+c}\right)  \\
& =\underbrace{\sum_{i\in\left[  b\right]  }}_{=\sum_{i=1}^{b}}\left(
x_i+y_{i+c}\right)  \ \ \ \ \ \ \ \ \ \ \left(
\begin{array}
[c]{c}%
\text{here, we have renamed the}\\
\text{summation index }i_{1}\text{ as }i
\end{array}
\right)  \\
& =\sum_{i=1}^{b}\left(  x_{i}+y_{i+c}\right)  = \sum_{i=1}^{b}x_{i} + \sum_{i=1}^{b}y_{i+c}
= \sum_{i=1}^b x_i + \sum_{j=c+1}^{c+b} y_j
\end{align*}
(here, we have substituted $j$ for $i+c$ in the second sum).
This proves Lemma~\ref{lem.h1bc}.
\end{proof}

\begin{proof}[Proof of Lemma \ref{1hprop}.]
Let $a$ and $c$ be integers, and let $b$ be a positive integer.

If $a \leq 0$, then the claim of the lemma boils down to $1 = \tup{x_b+y_{a+b+c-1}} \cdot 0 + 1$ (by \eqref{eq.h.triv.aleq0}), which is obviously true. Thus, from now on, we WLOG assume that $a > 0$.

We can rewrite the definition of $h(a,b,c)$ as follows:
\begin{align*}
h(a,b,c)
&= \sum_{\substack{(i_1,i_2,\ldots,i_a) \in [b]^a; \\ i_1 \leq i_2 \leq \cdots \leq i_a}}\ \ \prod_{j=1}^a(x_{i_j}+y_{i_j+(j-1)+c}) \\
&= \sum_{\substack{(i_1,i_2,\ldots,i_a) \in [b]^a; \\ i_1 \leq i_2 \leq \cdots \leq i_a; \\ i_a = b}}\ \ \prod_{j=1}^a(x_{i_j}+y_{i_j+(j-1)+c}) + \sum_{\substack{(i_1,i_2,\ldots,i_a) \in [b]^a; \\ i_1 \leq i_2 \leq \cdots \leq i_a; \\ i_a \neq b}}\ \ \prod_{j=1}^a(x_{i_j}+y_{i_j+(j-1)+c})
\end{align*}
(here we have broken up the sum into a part with $i_a = b$ and a part with $i_a \neq b$).

In view of
\begin{align*}
&\sum_{\substack{(i_1,i_2,\ldots,i_{a}) \in [b]^{a}; \\ i_1 \leq i_2 \leq \cdots \leq i_{a}; \\ i_a = b}}\ \ \prod_{j=1}^{a}(x_{i_j}+y_{i_j+(j-1)+c}) \\
&= \sum_{\substack{(i_1,i_2,\ldots,i_{a}) \in [b]^{a}; \\ i_1 \leq i_2 \leq \cdots \leq i_{a}; \\ i_a = b}}\ \ \underbrace{(x_{i_a}+y_{i_a+(a-1)+c})}_{\substack{= x_{i_a} + y_{a+i_a+c-1} \\ = x_b + y_{a+b+c-1}\\ \tup{\text{since } i_a=b}}}
\prod_{j=1}^{a-1}(x_{i_j}+y_{i_j+(j-1)+c}) \\
& \qquad \qquad \tup{\text{here, we have split off the $j=a$ factor from the product}} \\
&= \tup{x_b+y_{a+b+c-1}} \sum_{\substack{(i_1,i_2,\ldots,i_{a}) \in [b]^{a}; \\ i_1 \leq i_2 \leq \cdots \leq i_{a}; \\ i_a = b}}\ \ \prod_{j=1}^{a-1}(x_{i_j}+y_{i_j+(j-1)+c}) \\
&= \tup{x_b+y_{a+b+c-1}}
\sum_{\substack{(i_1,i_2,\ldots,i_{a-1}) \in [b]^{a-1}; \\ i_1 \leq i_2 \leq \cdots \leq i_{a-1} \leq b}}\ \ \prod_{j=1}^{a-1}(x_{i_j}+y_{i_j+(j-1)+c}) \\
& \qquad \qquad \left( \begin{array}{c}
\text{here, we substituted $\tup{i_1, i_2, \ldots, i_{a-1}, b}$ for $\tup{i_1, i_2, \ldots, i_a}$} \\ \text{in the sum, since the condition $i_a = b$} \\ \text{uniquely determines the entry $i_a$}
\end{array} \right) \\
&= \tup{x_b+y_{a+b+c-1}}
\underbrace{\sum_{\substack{(i_1,i_2,\ldots,i_{a-1}) \in [b]^{a-1}; \\ i_1 \leq i_2 \leq \cdots \leq i_{a-1}}}\ \ \prod_{j=1}^{a-1}(x_{i_j}+y_{i_j+(j-1)+c})}_{\substack{= h\tup{a-1,b,c} \\ \text{(by the definition of $h\tup{a-1,b,c}$)}}} \\
& \qquad \qquad \left( \begin{array}{c}
\text{here, we replaced the condition $i_1 \leq i_2 \leq \cdots \leq i_{a-1} \leq b$ under} \\
\text{the summation sign by the condition $i_1 \leq i_2 \leq \cdots \leq i_{a-1}$,} \\ \text{which is equivalent because $i_1,i_2,\ldots,i_{a-1} \in \ive{b}$}
\end{array} \right) \\
&= \tup{x_b+y_{a+b+c-1}} h(a-1,b,c)
\end{align*}
and
\begin{align*}
&\sum_{\substack{(i_1,i_2,\ldots,i_a) \in [b]^a; \\ i_1 \leq i_2 \leq \cdots \leq i_a; \\ i_a \neq b}}\ \ \prod_{j=1}^a(x_{i_j}+y_{i_j+(j-1)+c}) \\
=\ &\sum_{\substack{(i_1,i_2,\ldots,i_a) \in [b]^a; \\ i_1 \leq i_2 \leq \cdots \leq i_a; \\ i_a < b}}\ \ \prod_{j=1}^a(x_{i_j}+y_{i_j+(j-1)+c})
\qquad \qquad
\tup{\begin{array}{c} \text{since the condition $i_a \neq b$} \\ \text{is equivalent to $i_a < b$} \\ \text{when $(i_1,i_2,\ldots,i_a) \in [b]^a$}
\end{array}} \\
=\ &\sum_{\substack{(i_1,i_2,\ldots,i_a) \in [b]^a; \\ i_1 \leq i_2 \leq \cdots \leq i_a < b}}\ \ \prod_{j=1}^a(x_{i_j}+y_{i_j+(j-1)+c}) \\
=\ &\sum_{\substack{(i_1,i_2,\ldots,i_a) \in [b-1]^a; \\ i_1 \leq i_2 \leq \cdots \leq i_a }}\ \ \prod_{j=1}^a(x_{i_j}+y_{i_j+(j-1)+c})
\qquad \qquad
\tup{\begin{array}{c} \text{since an element of $[b]$} \\ \text{that is $< b$ is the same as} \\ \text{an element of $[b-1]$}
\end{array}} \\
=\ & h(a,b-1,c)
\qquad \qquad \tup{\text{by the definition of $h(a,b-1,c)$}},
\end{align*}
we can rewrite this as 
\begin{align*}
h(a,b,c) = \tup{x_b+y_{a+b+c-1}} h(a-1,b,c) + h(a,b-1,c) .
\end{align*}
This proves the lemma.
\end{proof}

\begin{proof}[Proof of Lemma \ref{3hprop}.]
First, we observe that the lemma holds for $b=0$. Indeed, for $b=0$, it is
claiming that $h\left(  a,0,c\right)  -h\left(  a,0,c-1\right)  =\left(
y_{a+c-1}-y_{c}\right)  h\left(  a-1,0,c\right)  $. But this follows by
comparing
\begin{align*}
h\left(  a,0,c\right)  -h\left(  a,0,c-1\right)    &
=[a=0]-[a=0]\ \ \ \ \ \ \ \ \ \ \left(  \text{by (\ref{eq.h.triv.b=0}%
)}\right)  \\
& =0
\end{align*}
with
\begin{align*}
\left(  y_{a+c-1}-y_{c}\right)  h\left(  a-1,0,c\right)    &
=\underbrace{(y_{a+c-1}-y_{c})}_{=0\text{ if }a=1}\underbrace{[a-1=0]}%
_{=0\text{ if }a\neq1}\ \ \ \ \ \ \ \ \ \ \left(  \text{by
(\ref{eq.h.triv.b=0})}\right)  \\
& =0.
\end{align*}
Thus, the lemma is proved for $b=0$.

Furthermore, the lemma clearly holds for $a<0$, since all three h-polynomials
are $0$ in this case.

Thus, we can WLOG assume that $a\geq0$. Hence, $a+b\geq0$ (since both $a$ and
$b$ are $\geq0$). We shall now prove the lemma by induction on $a+b$.

The \textit{base case} ($a+b=0$) is clear, since $a+b=0$ entails $b=0$ (in
light of $a\geq0$), but we already have proved the lemma for $b=0$.

For the \textit{induction step}, we fix a positive integer $N$. Assume (as the
induction hypothesis) that the lemma holds whenever $a+b=N-1$. We must now
prove that the lemma holds whenever $a+b=N$. Thus, we fix integers $a,b\geq0$
satisfying $a+b=N$. Our goal is to prove the equality \eqref{eq.3hprop.eq}.

We WLOG assume that $b\neq0$, since we already have proved the lemma for
$b=0$. Thus, $b\geq1$ (since $b$ is a nonnegative integer), so that $b-1\geq
0$. Hence, by the induction hypothesis, we can apply the lemma to $b-1$
instead of $b$ (since $a+\left(  b-1\right)  =\underbrace{a+b}_{=N}-\,1=N-1$).
As a result, we obtain%
\begin{equation}
h\left(  a,b-1,c\right)  -h\left(  a,b-1,c-1\right)  =\left(  y_{a+b+c-2}%
-y_{c}\right)  \cdot h\left(  a-1,b-1,c\right)  .\label{pf.3hprop.ind-b-1}%
\end{equation}

Furthermore, by the induction hypothesis, we can apply the lemma to $a-1$
instead of $a$ (since $\left(  a-1\right)  +b=\underbrace{a+b}_{=N}-\,1=N-1$).
As a result, we obtain%
\begin{equation}
h\left(  a-1,b,c\right)  -h\left(  a-1,b,c-1\right)  =\left(  y_{a+b+c-2}%
-y_{c}\right)  \cdot h\left(  a-2,b,c\right)  .\label{pf.3hprop.ind-a-1}%
\end{equation}

On the other hand, Lemma \ref{1hprop} yields%
\begin{equation}
h\left(  a,b,c\right)  =\left(  x_{b}+y_{a+b+c-1}\right)  \cdot h\left(
a-1,b,c\right)  +h\left(  a,b-1,c\right)  .\label{pf.3hprop.lem1}%
\end{equation}

Furthermore, Lemma \ref{1hprop} (applied to $a-1$ instead of $a$) yields%
\begin{equation}
h\left(  a-1,b,c\right)  =\left(  x_{b}+y_{a+b+c-2}\right)  \cdot h\left(
a-2,b,c\right)  +h\left(  a-1,b-1,c\right)  .\label{pf.3hprop.lem1-a-1}%
\end{equation}
Finally, Lemma \ref{1hprop} (applied to $c-1$ instead of $c$) yields%
\begin{equation}
h\left(  a,b,c-1\right)  =\left(  x_{b}+y_{a+b+c-2}\right)  \cdot h\left(
a-1,b,c-1\right)  +h\left(  a,b-1,c-1\right)  .\label{pf.3hprop.lem1-c-1}%
\end{equation}

Let us set%
\begin{align*}
t  & :=h\left(  a,b,c\right)  ,\\
p  & :=h\left(  a-1,b,c\right)  ,\ \ \ \ \ \ \ \ \ \ \ \ \ \ \ \ q:=h\left(
a,b-1,c\right)  ,\ \ \ \ \ \ \ \ \ \ \ \ \ \ \ \ r:=h\left(  a,b,c-1\right)  ,\\
u  & :=h\left(  a,b-1,c-1\right)  ,\ \ \ \ \ \ \ \ \ \ v:=h\left(
a-1,b,c-1\right)  ,\ \ \ \ \ \ \ \ \ \ w:=h\left(  a-1,b-1,c\right)  ,\\
s  & :=h\left(  a-2,b,c\right)  ,\\
x  & :=x_{b},\ \ \ \ \ \ \ \ \ \ y:=y_{c},\ \ \ \ \ \ \ \ \ \ y^{\prime
}:=y_{a+b+c-1},\ \ \ \ \ \ \ \ \ \ y^{\prime\prime}:=y_{a+b+c-2}.
\end{align*}
Then, the equalities (\ref{pf.3hprop.ind-b-1}), (\ref{pf.3hprop.ind-a-1}),
(\ref{pf.3hprop.lem1}), (\ref{pf.3hprop.lem1-a-1}) and
(\ref{pf.3hprop.lem1-c-1}) can be rewritten as follows:%
\begin{align}
q-u  & =\left(  y^{\prime\prime}-y\right)  \cdot w;\label{pf.3hprop.sys1}\\
p-v  & =\left(  y^{\prime\prime}-y\right)  \cdot s;\label{pf.3hprop.sys2}\\
t  & =\left(  x+y^{\prime}\right)  \cdot p+q;\label{pf.3hprop.sys3}\\
p  & =\left(  x+y^{\prime\prime}\right)  \cdot s+w;\label{pf.3hprop.sys4}\\
r  & =\left(  x+y^{\prime\prime}\right)  \cdot v+u.\label{pf.3hprop.sys5}%
\end{align}

Recall that our goal is to prove the equality \eqref{eq.3hprop.eq}. In view
of the notations we have just introduced, we can rewrite this equality as
\[
t-r=\left(  y^{\prime}-y\right)  \cdot p.
\]
But it is not hard to derive this equality from the five equalities
(\ref{pf.3hprop.sys1})--(\ref{pf.3hprop.sys5}): Namely, subtracting
(\ref{pf.3hprop.sys5}) from (\ref{pf.3hprop.sys3}), we obtain%
\begin{align*}
t-r  & =\left(  \left(  x+y^{\prime}\right)  \cdot p+q\right)  -\left(
\left(  x+y^{\prime\prime}\right)  \cdot v+u\right)  \\
& =\underbrace{\left(  x+y^{\prime}\right)  \cdot p}_{=\left(  y^{\prime
}-y\right)  \cdot p+\left(  x+y\right)  \cdot p}-\left(  x+y^{\prime\prime
}\right)  \cdot v+\underbrace{q-u}_{\substack{=\left(  y^{\prime\prime
}-y\right)  \cdot w\\\text{(by (\ref{pf.3hprop.sys1}))}}}\\
& =\left(  y^{\prime}-y\right)  \cdot p+\underbrace{\left(  x+y\right)  \cdot
p-\left(  x+y^{\prime\prime}\right)  \cdot v+\left(  y^{\prime\prime
}-y\right)  \cdot w}_{\substack{=\left(  x+y^{\prime\prime}\right)  \left(
p-v\right)  +\left(  y-y^{\prime\prime}\right)  \left(  p-w\right)
\\\text{(by direct computation)}}}\\
& =\left(  y^{\prime}-y\right)  \cdot p+\left(  x+y^{\prime\prime}\right)
\underbrace{\left(  p-v\right)  }_{\substack{=\left(  y^{\prime\prime
}-y\right)  \cdot s\\\text{(by (\ref{pf.3hprop.sys2}))}}}+\left(
y-y^{\prime\prime}\right)  \underbrace{\left(  p-w\right)  }%
_{\substack{=\left(  x+y^{\prime\prime}\right)  \cdot s\\\text{(by
(\ref{pf.3hprop.sys4}))}}}\\
& =\left(  y^{\prime}-y\right)  \cdot p+\underbrace{\left(  x+y^{\prime\prime
}\right)  \left(  y^{\prime\prime}-y\right)  \cdot s+\left(  y-y^{\prime
\prime}\right)  \left(  x+y^{\prime\prime}\right)  \cdot s}%
_{\substack{=\left(  \left(  y^{\prime\prime}-y\right)  +\left(
y-y^{\prime\prime}\right)  \right)  \left(  x+y^{\prime\prime}\right)  \cdot
s\\=0\left(  x+y^{\prime\prime}\right)  \cdot s=0}}\\
& =\left(  y^{\prime}-y\right)  \cdot p.
\end{align*}
Thus, the equality \eqref{eq.3hprop.eq} is proved. This completes the
induction step, and therefore Lemma \ref{3hprop} is proved.
\end{proof}

\begin{proof}[Proof of Corollary \ref{2hprop}.]
Let $a$ and $c$ be integers, and let $b$ be a positive integer.
Lemma \ref{1hprop} yields
\[
h(a,b,c) = \tup{x_b+y_{a+b+c-1}} \cdot h(a-1,b,c) + h(a,b-1,c).
\]
Lemma \ref{3hprop} yields
\[
h\tup{a, b, c} - h\tup{a, b, c-1}
= \tup{y_{a+b+c-1} - y_c}\cdot h\tup{a-1, b, c}.
\]
Subtracting the latter equality from the former, we obtain
\begin{align*}
h\tup{a, b, c-1}
&= \tup{x_b+y_{a+b+c-1}} \cdot h(a-1,b,c) + h(a,b-1,c) \\
& \qquad \qquad - \tup{y_{a+b+c-1} - y_c}\cdot h\tup{a-1, b, c} \\
&= h(a,b-1,c) + (x_b+y_c)\cdot h(a-1,b,c) .
\end{align*}
In other words,
\begin{align*}
h(a,b-1,c) = h(a,b,c-1) - (x_b+y_c)\cdot h(a-1,b,c) .
\end{align*}
This proves Corollary \ref{2hprop}.
\end{proof}

\begin{proof}[Proof of Corollary \ref{3hprop2}.]
Lemma \ref{3hprop} (applied to $a+1$ and $c+1$ instead of $a$ and $c$) yields
\begin{align*}
h\tup{a+1, b, c+1} - h\tup{a+1, b, c+1-1}
= \tup{y_{\tup{a+1}+b+\tup{c+1}-1} - y_{c+1}}\cdot h\tup{a+1-1, b, c+1}.
\end{align*}
In view of $a+1-1=a$ and $c+1-1=c$ and $\tup{a+1}+b+\tup{c+1}-1 = a+b+c+1$, we can simplify this to
\begin{align*}
h\tup{a+1, b, c+1} - h\tup{a+1, b, c}
= \tup{y_{a+b+c+1} - y_{c+1}}\cdot h\tup{a, b, c+1}.
\end{align*}
In other words,
\[
h\tup{a+1, b, c+1}
= h\tup{a+1, b, c}
+ \tup{y_{a+b+c+1} - y_{c+1}} \cdot h\tup{a, b, c+1}.
\]
This proves Corollary~\ref{3hprop2}.
\end{proof}

\subsection{To Section \ref{sec.jt}}


\begin{proof}
[Proof of Lemma \ref{lem.flagJT.det=sum}.]\textbf{(a)} Let $\sigma\in S_{n}$.
We must prove the equality (\ref{eq.lem.flagJT.det=sum.a}). This equality
easily boils down to $0=0$ when $\sigma$ is not
legitimate\footnote{\textit{Proof.} Assume that $\sigma$ is not legitimate.
Then, there are no $\sigma$-arrays (by Definition \ref{def.jt.sig-array}
\textbf{(c)}). Hence, the sum $\sum_{\substack{T\text{ is a }\mathbf{b}%
\text{-flagged}\\\sigma\text{-array}}}w\left(  T\right)  $ is an empty sum and
thus equals $0$. On the other hand, not every $i\in\left[  n\right]  $
satisfies $\mu_{\sigma\left(  i\right)  }-\sigma\left(  i\right)  +i\geq0$
(since $\sigma$ is not legitimate). In other words, there exists a
$j\in\left[  n\right]  $ such that $\mu_{\sigma\left(  j\right)  }%
-\sigma\left(  j\right)  +j<0$. Consider this $j$.
\par
Now, recall that $h_{b;\ q}\left[  d\right]  $ is defined to be $0$ when
$q<0$. Hence, $h_{b_{\sigma\left(  j\right)  };\ \mu_{\sigma\left(  j\right)
}-\sigma\left(  j\right)  +j}\left[  j\right]  =0$ (since $\mu_{\sigma\left(
j\right)  }-\sigma\left(  j\right)  +j<0$). Hence, one factor of the product
$\prod_{i=1}^{n}h_{b_{\sigma\left(  i\right)  };\ \mu_{\sigma\left(  i\right)
}-\sigma\left(  i\right)  +i}\left[  i\right]  $ is $0$ (namely, the factor
for $i=j$). Therefore, this whole product is $0$. In other words, $\prod
_{i=1}^{n}h_{b_{\sigma\left(  i\right)  };\ \mu_{\sigma\left(  i\right)
}-\sigma\left(  i\right)  +i}\left[  i\right]  =0$. Since we also know that
$\sum_{\substack{T\text{ is a }\mathbf{b}\text{-flagged}\\\sigma\text{-array}%
}}w\left(  T\right)  $ equals $0$, we thus conclude that the equality
(\ref{eq.lem.flagJT.det=sum.a}) boils down to $0=0$.}. Thus, for the rest of
this proof, we WLOG assume that $\sigma$ is legitimate.

Set%
\[
q_{i}:=\mu_{\sigma\left(  i\right)  }-\sigma\left(  i\right)
+i\ \ \ \ \ \ \ \ \ \ \text{for each }i\in\left[  n\right]  .
\]
Note that this number $q_{i}$ is $\geq0$ because $\sigma$ is legitimate.

For any $\sigma$-array $T$, we have%
\[
w\left(  T\right)  =\prod_{(i,j)\in P(\sigma)}u_{T\left(  i,j\right)  ,\ j-i}%
\]
(by the definition of $w\left(  T\right)  $). Thus,%
\begin{equation}
\sum_{\substack{T\text{ is a }\mathbf{b}\text{-flagged}\\\sigma\text{-array}%
}}w\left(  T\right)  =\sum_{\substack{T\text{ is a }\mathbf{b}\text{-flagged}%
\\\sigma\text{-array}}}\ \ \prod_{(i,j)\in P(\sigma)}u_{T\left(  i,j\right)
,\ j-i}.\label{pf.lem.flagJT.det=sum.1}%
\end{equation}

For each $i\in\left[  n\right]  $, the $i$-th row of the diagram $P\left(
\sigma\right)  $ has $\mu_{\sigma\left(  i\right)  }-\sigma\left(  i\right)
+i$ many boxes (by the definition of $P\left(  \sigma\right)  $). In other
words, for each $i\in\left[  n\right]  $, the $i$-th row of the diagram
$P\left(  \sigma\right)  $ has $q_{i}$ many boxes (since $q_{i}=\mu
_{\sigma\left(  i\right)  }-\sigma\left(  i\right)  +i$). These boxes occupy
the columns $1,2,\ldots,q_{i}$ (since the rows of $P\left(  \sigma\right)  $
are left-aligned). Hence, these boxes are%
\[
\left(  i,1\right)  ,\ \left(  i,2\right)  ,\ \ldots,\ \left(  i,q_{i}\right)
.
\]
Thus, altogether, the boxes of $P\left(  \sigma\right)  $ are
\begin{align*}
& \left(  1,1\right)  ,\ \left(  1,2\right)  ,\ \ldots,\ \left(
1,q_{1}\right)  ,\\
& \left(  2,1\right)  ,\ \left(  2,2\right)  ,\ \ldots,\ \left(
2,q_{2}\right)  ,\\
& \ldots,\\
& \left(  n,1\right)  ,\ \left(  n,2\right)  ,\ \ldots,\ \left(
n,q_{n}\right)  .
\end{align*}
Therefore, the product sign $\prod_{(i,j)\in P(\sigma)}$ can be rewritten as
$\prod_{i=1}^{n}\ \ \prod_{j=1}^{q_{i}}$. Hence, the equality
(\ref{pf.lem.flagJT.det=sum.1}) rewrites as%
\begin{equation}
\sum_{\substack{T\text{ is a }\mathbf{b}\text{-flagged}\\\sigma\text{-array}%
}}w\left(  T\right)  =\sum_{\substack{T\text{ is a }\mathbf{b}\text{-flagged}%
\\\sigma\text{-array}}}\ \ \prod_{i=1}^{n}\ \ \prod_{j=1}^{q_{i}}u_{T\left(
i,j\right)  ,\ j-i}.\label{pf.lem.flagJT.det=sum.2}%
\end{equation}

Now, recall that for each $i\in\left[  n\right]  $, the $i$-th row of the
diagram $P\left(  \sigma\right)  $ has $q_{i}$ many boxes, and these boxes
occupy the columns $1,2,\ldots,q_{i}$. In a $\sigma$-array, these $q_{i}$
boxes have to be filled with positive integers $a_{i,1},a_{i,2},\ldots
,a_{i,q_{i}}$ that satisfy $a_{i,1}\leq a_{i,2}\leq\cdots\leq a_{i,q_{i}}$
(since the entries of a $\sigma$-array must be weakly increasing along each
row). Moreover, in a $\mathbf{b}$-flagged $\sigma$-array, these entries
$a_{i,1},a_{i,2},\ldots,a_{i,q_{i}}$ must belong to the set $\left[
b_{\sigma\left(  i\right)  }\right]  $ (since every entry of $T$ in the $i$-th
row must be $\leq b_{\sigma\left(  i\right)  }$). Thus, a $\mathbf{b}$-flagged
$\sigma$-array is simply a way to fill the $i$-th row of $P\left(
\sigma\right)  $ with positive integers $a_{i,1},a_{i,2},\ldots,a_{i,q_{i}}%
\in\left[  b_{\sigma\left(  i\right)  }\right]  $ satisfying $a_{i,1}\leq
a_{i,2}\leq\cdots\leq a_{i,q_{i}}$ for each $i\in\left[  n\right]  $. If we
denote our $\sigma$-array by $T$, then the latter integers $a_{i,j}$ are
simply its respective entries $T\left(  i,j\right)  $. Thus,
\begin{align}
&  \sum_{\substack{T\text{ is a }\mathbf{b}\text{-flagged}\\\sigma
\text{-array}}}\ \ \prod_{i=1}^{n}\ \ \prod_{j=1}^{q_{i}}u_{T\left(
i,j\right)  ,\ j-i}\nonumber\\
&  =\sum_{\substack{\left(a_{i,1},a_{i,2},\ldots,a_{i,q_{i}}\right)\in\left[
b_{\sigma\left(  i\right)  }\right]^{q_i}  \text{ for each }i\in\left[  n\right]
;\\a_{i,1}\leq a_{i,2}\leq\cdots\leq a_{i,q_{i}}\text{ for each }i\in\left[
n\right]  }}\ \ \prod_{i=1}^{n}\ \ \prod_{j=1}^{q_{i}}u_{a_{i,j}%
,\ j-i}\nonumber\\
&  =\prod_{i=1}^{n}\ \ \sum_{\substack{\left(a_{i,1},a_{i,2},\ldots,a_{i,q_{i}}\right)
\in\left[  b_{\sigma\left(  i\right)  }\right]^{q_i};\\a_{i,1}\leq a_{i,2}%
\leq\cdots\leq a_{i,q_{i}}}}\ \ \prod_{j=1}^{q_{i}}u_{a_{i,j},\ j-i}%
\label{sol.lem.flagJT.det=sum.4}%
\end{align}
(by the product rule).%
\footnote{Here is a more rigorous way of deriving this equality:

Consider the map%
\[
\left\{  \mathbf{b}\text{-flagged }\sigma\text{-arrays}\right\}  \rightarrow
\prod_{i=1}^{n}\left\{  \left(  a_{i,1},a_{i,2},\ldots,a_{i,q_{i}}\right)
\in\left[  b_{\sigma\left(  i\right)  }\right]  ^{q_{i}}\ \mid\ a_{i,1}\leq
a_{i,2}\leq\cdots\leq a_{i,q_{i}}\right\}
\]
that sends each $\mathbf{b}$-flagged $\sigma$-array $T$ to the $n$-tuple
\begin{align*}
& \big(\left(  T\left(  1,1\right)  ,\ T\left(  1,2\right)  ,\ \ldots
,\ T\left(  1,q_{1}\right)  \right)  ,\\
& \ \ \ \ \left(  T\left(  2,2\right)  ,\ T\left(  2,2\right)  ,\ \ldots
,\ T\left(  2,q_{2}\right)  \right)  ,\\
& \ \ \ \ \ldots,\\
& \ \ \ \ \left(  T\left(  n,1\right)  ,\ T\left(  n,2\right)  ,\ \ldots
,\ T\left(  n,q_{n}\right)  \right)  \big).
\end{align*}
This map is well-defined (because if $T$ is any $\mathbf{b}$-flagged $\sigma
$-array, then each $i\in\left[  n\right]  $ and each $j\in\left[
q_{i}\right]  $ satisfy $T\left(  i,j\right)  \leq b_{\sigma\left(  i\right)
}$ and therefore $T\left(  i,j\right)  \in\left[  b_{\sigma\left(  i\right)
}\right]  $, and we furthermore have $T\left(  i,1\right)  \leq T\left(
i,2\right)  \leq\cdots\leq T\left(  i,q_{i}\right)  $ for each $i\in\left[
n\right]  $ since $T$ is a $\sigma$-array). Moreover, it is injective (since
each entry of $T\left(  i,j\right)  $ can be read off from its output) and
surjective (because if we fill the $i$-th row of the diagram $P\left(
\sigma\right)  $ with some elements $a_{i,1},a_{i,2},\ldots,a_{i,q_{i}}%
\in\left[  b_{\sigma\left(  i\right)  }\right]  $ satisfying $a_{i,1}\leq
a_{i,2}\leq\cdots\leq a_{i,q_{i}}$ for each $i\in\left[  n\right]  $, then we
obtain a $\mathbf{b}$-flagged $\sigma$-array). Thus, it is bijective, i.e., is
a bijection. Hence, we can substitute $a_{i,j}$ for $T\left(  i,j\right)  $ in
the sum%
\[
\sum_{\substack{T\text{ is a }\mathbf{b}\text{-flagged}\\\sigma\text{-array}%
}}\ \ \prod_{i=1}^{n}\ \ \prod_{j=1}^{q_{i}}u_{T\left(  i,j\right)  ,\ j-i},
\]
and thus obtain
\begin{align*}
&  \sum_{\substack{T\text{ is a }\mathbf{b}\text{-flagged}\\\sigma
\text{-array}}}\ \ \prod_{i=1}^{n}\ \ \prod_{j=1}^{q_{i}}u_{T\left(
i,j\right)  ,\ j-i}\\
&  =\sum_{\substack{\left(  \left(  a_{1,1},a_{1,2},\ldots,a_{1,q_{1}}\right)
,\ \ \left(  a_{2,1},a_{2,2},\ldots,a_{2,q_{2}}\right)  ,\ \ \ldots
,\ \ \left(  a_{n,1},a_{n,2},\ldots,a_{n,q_{n}}\right)  \right)  \\\in
\prod\limits_{i=1}^{n}\left\{  \left(  a_{i,1},a_{i,2},\ldots,a_{i,q_{i}%
}\right)  \in\left[  b_{\sigma\left(  i\right)  }\right]  ^{q_{i}}%
\ \mid\ a_{i,1}\leq a_{i,2}\leq\cdots\leq a_{i,q_{i}}\right\}  }%
}\ \ \prod_{i=1}^{n}\ \ \prod_{j=1}^{q_{i}}u_{a_{i,j},\ j-i}\\
&=  \prod_{i=1}^{n}\ \ \sum_{\substack{\left(a_{i,1},a_{i,2},\ldots,a_{i,q_{i}}\right)
\in\left[  b_{\sigma\left(  i\right)  }\right]^{q_i}  ;\\a_{i,1}\leq a_{i,2}%
\leq\cdots\leq a_{i,q_{i}}}}\ \ \prod_{j=1}^{q_{i}}u_{a_{i,j},\ j-i}
\qquad \qquad \tup{\text{by the product rule}}.
\end{align*}
This proves (\ref{sol.lem.flagJT.det=sum.4}).}

However, for each $i\in\left[  n\right]  $, we have%
\begin{align*}
&
\sum_{\substack{\left(  a_{i,1},a_{i,2},\ldots,a_{i,q_{i}}\right)
\in\left[  b_{\sigma\left(  i\right)  }\right]  ^{q_{i}};\\a_{i,1}\leq
a_{i,2}\leq\cdots\leq a_{i,q_{i}}}}\ \ \prod_{j=1}^{q_{i}}u_{a_{i,j},\ j-i}\\
&  =\sum_{\substack{\left(  t_{1},t_{2},\ldots,t_{q_{i}}\right)  \in\left[
b_{\sigma\left(  i\right)  }\right]  ^{q_{i}};\\t_{1}\leq t_{2}\leq\cdots\leq
t_{q_{i}}}}\ \ \prod_{j=1}^{q_{i}}u_{t_{j},\ j-i}\ \ \ \ \ \ \ \ \ \ \left(
\begin{array}
[c]{c}%
\text{here, we have renamed}\\
\text{the index }\left(  a_{i,1},a_{i,2},\ldots,a_{i,q_{i}}\right)  \\
\text{as }\left(  t_{1},t_{2},\ldots,t_{q_{i}}\right)
\end{array}
\right)  \\
&  =h_{b_{\sigma\left(  i\right)  };\ q_{i}}\left[  i\right]
\ \ \ \ \ \ \ \ \ \ \left(  \text{by the definition of }h_{b_{\sigma\left(
i\right)  };\ q_{i}}\left[  i\right]  \right)  \\
&  =h_{b_{\sigma\left(  i\right)  };\ \mu_{\sigma\left(  i\right)  }%
-\sigma\left(  i\right)  +i}\left[  i\right]  \ \ \ \ \ \ \ \ \ \ \left(
\text{since }q_{i}=\mu_{\sigma\left(  i\right)  }-\sigma\left(  i\right)
+i\right)  .
\end{align*}
Thus, we can rewrite (\ref{sol.lem.flagJT.det=sum.4}) as%
\[
\sum_{\substack{T\text{ is a }\mathbf{b}\text{-flagged}\\\sigma\text{-array}%
}}\ \ \prod_{i=1}^{n}\ \ \prod_{j=1}^{q_{i}}u_{T\left(  i,j\right)
,\ j-i}=\prod_{i=1}^{n}h_{b_{\sigma\left(  i\right)  };\ \mu_{\sigma\left(
i\right)  }-\sigma\left(  i\right)  +i}\left[  i\right]  .
\]
In view of (\ref{pf.lem.flagJT.det=sum.2}), this rewrites as%
\[
\sum_{\substack{T\text{ is a }\mathbf{b}\text{-flagged}\\\sigma\text{-array}%
}}w\left(  T\right)  =\prod_{i=1}^{n}h_{b_{\sigma\left(  i\right)  }%
;\ \mu_{\sigma\left(  i\right)  }-\sigma\left(  i\right)  +i}\left[  i\right]
.
\]
This proves Lemma \ref{lem.flagJT.det=sum} \textbf{(a)}. \medskip

\textbf{(b)} Every square matrix $A$ satisfies $\det\left(  A^{T}\right)
=\det A$. In other words, every square matrix $\left(  a_{i,j}\right)
_{i,j\in\left[  n\right]  }$ satisfies $\det\left(  a_{j,i}\right)
_{i,j\in\left[  n\right]  }=\det\left(  a_{i,j}\right)  _{i,j\in\left[
n\right]  }$. Applying this to $a_{i,j}=h_{b_{j};\ \mu_{j}-j+i}\left[
i\right]  $, we obtain%
\begin{align*}
\det\left(  h_{b_{i};\ \mu_{i}-i+j}\left[  j\right]  \right)  _{i,j\in\left[
n\right]  }  &  =\det\left(  h_{b_{j};\ \mu_{j}-j+i}\left[  i\right]  \right)
_{i,j\in\left[  n\right]  }\\
&  =\sum_{\sigma\in S_{n}}\left(  -1\right)  ^{\sigma}\underbrace{\prod
_{i=1}^{n}h_{b_{\sigma\left(  i\right)  };\ \mu_{\sigma\left(  i\right)
}-\sigma\left(  i\right)  +i}\left[  i\right]  }_{\substack{=\sum
_{\substack{T\text{ is a }\mathbf{b}\text{-flagged}\\\sigma\text{-array}%
}}w\left(  T\right)  \\\text{(by Lemma \ref{lem.flagJT.det=sum} \textbf{(a)}%
)}}}\ \ \ \ \ \ \ \ \ \ \left(  \text{by (\ref{eq.det.leibniz})}\right) \\
&  =\sum_{\sigma\in S_{n}}\left(  -1\right)  ^{\sigma}\sum_{\substack{T\text{
is a }\mathbf{b}\text{-flagged}\\\sigma\text{-array}}}w\left(  T\right)  .
\end{align*}
This proves Lemma \ref{lem.flagJT.det=sum} \textbf{(b)}.
\end{proof}


\begin{proof}
[Proof of Lemma \ref{lem.flagJT.det=sum-twisted}.] Lemma
\ref{lem.flagJT.det=sum} \textbf{(b)} yields%
\begin{align}
\det\left(  h_{b_{i};\ \mu_{i}-i+j}\left[  j\right]  \right)  _{i,j\in\left[
n\right]  } &  =\sum_{\sigma\in S_{n}}\left(  -1\right)  ^{\sigma}%
\sum_{\substack{T\text{ is a }\mathbf{b}\text{-flagged}\\\sigma\text{-array}%
}}w\left(  T\right)  \nonumber\\
&  =\sum_{\sigma\in S_{n}}\ \ \sum_{\substack{T\text{ is a }\mathbf{b}%
\text{-flagged}\\\sigma\text{-array}}}\left(  -1\right)  ^{\sigma}w\left(
T\right)  .\label{pf.lem.flagJT.det=sum-twisted.2}%
\end{align}

However, the $\mathbf{b}$-flagged twisted arrays are precisely the pairs
$\left(  \sigma,T\right)  $ in which $\sigma\in S_{n}$ and $T$ is a
$\mathbf{b}$-flagged $\sigma$-array. Thus, the summation sign $\sum
_{\substack{\left(  \sigma,T\right)  \text{ is a }\mathbf{b}\text{-flagged}%
\\\text{twisted array}}}$ can be rewritten as the nested summation
$\sum_{\sigma\in S_{n}}\ \ \sum_{\substack{T\text{ is a }\mathbf{b}%
\text{-flagged}\\\sigma\text{-array}}}$. Hence,%
\[
\sum_{\substack{\left(  \sigma,T\right)  \text{ is a }\mathbf{b}%
\text{-flagged}\\\text{twisted array}}}\left(  -1\right)  ^{\sigma}w\left(
T\right)  =\sum_{\sigma\in S_{n}}\ \ \sum_{\substack{T\text{ is a }%
\mathbf{b}\text{-flagged}\\\sigma\text{-array}}}\left(  -1\right)  ^{\sigma
}w\left(  T\right)  .
\]
Comparing this with (\ref{pf.lem.flagJT.det=sum-twisted.2}), we obtain%
\[
\det\left(  h_{b_{i};\ \mu_{i}-i+j}\left[  j\right]  \right)  _{i,j\in\left[
n\right]  }=\sum_{\substack{\left(  \sigma,T\right)  \text{ is a }%
\mathbf{b}\text{-flagged}\\\text{twisted array}}}\left(  -1\right)  ^{\sigma
}w\left(  T\right)  .
\]
Thus, Lemma \ref{lem.flagJT.det=sum-twisted} is proved.
\end{proof}


\begin{proof}
[Proof of Lemma \ref{lem.flagJT.fail}.] We know that $\left(  \sigma,T\right)
$ is a twisted array. In other words, we have $\sigma \in S_n$, and $T$ is a $\sigma$-array. \medskip

\textbf{(a)} Assume that $\sigma\neq\operatorname*{id}$. Then, there exists
some $i\in\left\{  1,2,\ldots,n-1\right\}  $ such that $\sigma\left(
i\right)  >\sigma\left(  i+1\right)  $ (since otherwise, we would have
$\sigma\left(  1\right)  \leq\sigma\left(  2\right)  \leq\cdots\leq
\sigma\left(  n\right)  $, which would readily lead to $\sigma
=\operatorname*{id}$). Consider this $i$.

We have $\mu_{1}\geq\mu_{2}\geq\mu_{3}\geq\cdots$ (since $\mu$ is a partition).

Subtracting the chain of inequalities $1<2<3<\cdots$ from $\mu_{1}\geq\mu
_{2}\geq\mu_{3}\geq\cdots$, we obtain $\mu_{1}-1>\mu_{2}-2>\mu_{3}-3>\cdots$.
In other words, if $u$ and $v$ are two positive integers satisfying $u<v$,
then $\mu_{u}-u>\mu_{v}-v$. Applying this to $u=\sigma\left(  i+1\right)  $
and $v=\sigma\left(  i\right)  $, we obtain $\mu_{\sigma\left(  i+1\right)
}-\sigma\left(  i+1\right)  >\mu_{\sigma\left(  i\right)  }-\sigma\left(
i\right)  $ (since $\sigma\left(  i+1\right)  <\sigma\left(  i\right)  $).
Hence,%
\begin{align*}
\underbrace{\mu_{\sigma\left(  i+1\right)  }-\sigma\left(  i+1\right)  }%
_{>\mu_{\sigma\left(  i\right)  }-\sigma\left(  i\right)  }+\left(
i+1\right)   &  >\mu_{\sigma\left(  i\right)  }-\sigma\left(  i\right)
+\underbrace{\left(  i+1\right)  }_{>i}
>\mu_{\sigma\left(  i\right)  }-\sigma\left(  i\right)  +i.
\end{align*}
In other words, the $\left(  i+1\right)  $-th row of the diagram $P\left(
\sigma\right)  $ is longer than the $i$-th row (since Definition \ref{def.jt.sig-array} \textbf{(b)} shows that the $\left(  i+1\right)
$-th row has length $\mu_{\sigma\left(  i+1\right)  }-\sigma\left(
i+1\right)  +\left(  i+1\right)  $, while the $i$-th has length $\mu
_{\sigma\left(  i\right)  }-\sigma\left(  i\right)  +i$). Thus, the easternmost box
of the $\left(  i+1\right)  $-th row of the diagram $P\left(  \sigma\right)  $
is an outer failure of $\left(  \sigma,T\right)  $ (because its northern
neighbor lies too far east to be contained in the $i$-th row of $P\left(
\sigma\right)  $).
Therefore, $\left(  \sigma,T\right)  $ has an outer failure.
This proves Lemma \ref{lem.flagJT.fail} \textbf{(a)}. \medskip

\textbf{(b)} Assume that $\sigma=\operatorname*{id}$ and $T\notin%
\operatorname*{SSYT}\left(  \mu\right)  $.

For each $i\in\left[  n\right]  $, we have $\sigma\left(  i\right)  =i$ (since
$\sigma=\operatorname*{id}$) and thus%
\[
\mu_{\sigma\left(  i\right)  }-\sigma\left(  i\right)  +i=\mu_{i}-i+i=\mu_{i}.
\]
Hence, the diagram $P\left(  \sigma\right)  $ is precisely the Young diagram
$Y\left(  \mu\right)  $ (just compare their definitions). Therefore, $T$ is a
filling of $Y\left(  \mu\right)  $ (since $T$ is a filling of $P\left(
\sigma\right)  $) with positive integers that weakly increase left-to-right
along each row (since $T$ is a $\sigma$-array). If the entries of $T$ also
strictly increased top-to-bottom down each column, then it would follow that
$T$ is a semistandard tableau, whence $T\in\operatorname*{SSYT}\left(
\mu\right)  $; but this would contradict  $T\notin\operatorname*{SSYT}\left(
\mu\right)  $. Hence, the entries of $T$ cannot strictly increase
top-to-bottom down each column. Thus, there must be at least one column of $T$
in which the entries do not strictly increase top-to-bottom. In other words,
there must be at least one column of $T$ that has two adjacent boxes $\left(
i-1,j\right)  $ and $\left(  i,j\right)  $ satisfying $T\left(  i-1,j\right)
\geq T\left(  i,j\right)  $. Consider these two boxes. Then, $\left(
i,j\right)  $ is an inner failure of $\left(  \sigma,T\right)  $ (by the
definition of an inner failure). Therefore, $\left(  \sigma,T\right)  $ has an
inner failure. This proves Lemma \ref{lem.flagJT.fail} \textbf{(b)}. \medskip

\textbf{(c)} Assume that $\left(  \sigma,T\right)  $ is unfailing. Thus,
$\left(  \sigma,T\right)  $ has no failures.

If we had $\sigma\neq\operatorname*{id}$, then $\left(  \sigma,T\right)  $
would have an outer failure (by Lemma \ref{lem.flagJT.fail} \textbf{(a)}),
which would contradict the fact that $\left(  \sigma,T\right)  $ has no
failures. Thus, we cannot have $\sigma\neq\operatorname*{id}$. Hence, we have
$\sigma=\operatorname*{id}$.

If we had $T\notin\operatorname*{SSYT}\left(  \mu\right)  $, then $\left(
\sigma,T\right)  $ would have an inner failure (by Lemma \ref{lem.flagJT.fail}
\textbf{(b)}), which would contradict the fact that $\left(  \sigma,T\right)
$ has no failures. Thus, we cannot have $T\notin\operatorname*{SSYT}\left(
\mu\right)  $. Hence, we have $T\in\operatorname*{SSYT}\left(  \mu\right)  $.
This completes the proof of Lemma \ref{lem.flagJT.fail} \textbf{(c)}.
\end{proof}


\begin{proof}
[Proof of Lemma \ref{lem.flagJT.failsum}.] Let $\operatorname*{id}$ denote the
identity permutation in $S_{n}$. Then, the diagram $P\left(
\operatorname*{id}\right)  $ is a left-aligned diagram whose $i$-th row has
\begin{align*}
\mu_{\operatorname*{id}\left(  i\right)  }-\operatorname*{id}\left(  i\right)
+i  & =\mu_{i}-i+i\ \ \ \ \ \ \ \ \ \ \left(  \text{since }\operatorname*{id}%
\left(  i\right)  =i\right)  \\
& =\mu_{i}%
\end{align*}
boxes for each $i\in\left[  n\right]  $. But this precisely describes the
Young diagram $Y\left(  \mu\right)  $. Hence, $P\left(  \operatorname*{id}%
\right)  =Y\left(  \mu\right)  $.

If $T\in\operatorname*{SSYT}\left(  \mu\right)  $ is a semistandard tableau,
then the pair $\left(  \operatorname*{id},T\right)  $ is clearly a twisted
array (since $T$ is a filling of the diagram $Y\left(  \mu\right)  =P\left(
\operatorname*{id}\right)  $, and its entries weakly increase left-to-right
along each row). Moreover, this twisted array $\left(  \operatorname*{id}%
,T\right)  $ has no outer failures (since its shape $P\left(
\operatorname*{id}\right)  =Y\left(  \mu\right)  $ is the Young diagram of a
partition) and no inner failures (since $T$ is semistandard, so that the
entries of $T$ strictly increase down each column). In other words, this twisted
array $\left(  \operatorname*{id},T\right)  $ is unfailing. Thus, we obtain a
map%
\begin{align*}
\Phi:\operatorname*{SSYT}\left(  \mu\right)    & \rightarrow\left\{
\text{unfailing twisted arrays}\right\}  ,\\
T  & \mapsto\left(  \operatorname*{id},T\right)  .
\end{align*}
This map $\Phi$ is injective (obviously) and surjective (by Lemma
\ref{lem.flagJT.fail} \textbf{(c)})%
\footnote{Here is the proof of the surjectivity of $\Phi$ in some more detail:
Let $\tup{\sigma, T}$ be an unfailing twisted array. Then, Lemma
\ref{lem.flagJT.fail} \textbf{(c)} shows that $\sigma = \id$ and $T \in \SSYT\tup{\mu}$.
Hence, $\Phi\tup{T} = \tup{\id, T} = \tup{\sigma, T}$ (since $\id = \sigma$).
Therefore, $\tup{\sigma, T} = \Phi\tup{T}$.
Thus, $\tup{\sigma, T}$ is an image under the map $\Phi$.
Forget that we fixed $\tup{\sigma, T}$.
We thus have shown that each unfailing twisted array $\tup{\sigma, T}$ is an image under the map $\Phi$.
In other words, $\Phi$ is surjective.}.
Hence, it is a bijection.

Moreover, a semistandard tableau $T\in\operatorname*{SSYT}\left(  \mu\right)
$ is $\mathbf{b}$-flagged if and only if the corresponding twisted array
$\Phi\left(  T\right)  =\left(  \operatorname*{id},T\right)  $ is $\mathbf{b}%
$-flagged\footnote{\textit{Proof.} Let $T\in\operatorname*{SSYT}\left(
\mu\right)  $ be a semistandard tableau. Then, the tableau $T$ is $\mathbf{b}$-flagged if
and only if it satisfies the condition%
\begin{equation}
\left(  T\left(  i,j\right)  \leq b_{i}\ \ \ \ \ \ \ \ \ \ \text{for all
}\left(  i,j\right)  \in Y\left(  \mu\right)  \right)
.\label{pf.lem.flagJT.failsum.fn1.1}%
\end{equation}
On the other hand, the twisted array $\left(  \operatorname*{id},T\right)  $
is $\mathbf{b}$-flagged if and only if the $\operatorname*{id}$-array $T$ is
$\mathbf{b}$-flagged, i.e., if and only if it satisfies the condition%
\begin{equation}
\left(  T\left(  i,j\right)  \leq b_{\operatorname*{id}\left(  i\right)
}\ \ \ \ \ \ \ \ \ \ \text{for all }\left(  i,j\right)  \in P\left(
\operatorname*{id}\right)  \right)  .\label{pf.lem.flagJT.failsum.fn1.2}%
\end{equation}
But the two conditions (\ref{pf.lem.flagJT.failsum.fn1.1}) and
(\ref{pf.lem.flagJT.failsum.fn1.2}) are equivalent (since $Y\left(
\mu\right)  =P\left(  \operatorname*{id}\right)  $ and $b_{i}%
=b_{\operatorname*{id}\left(  i\right)  }$ for each $i$). Thus, the tableau $T$ is
$\mathbf{b}$-flagged if and only if $\left(  \operatorname*{id},T\right)  $ is
$\mathbf{b}$-flagged. In other words, the tableau 
$T$ is $\mathbf{b}$-flagged if and only
if $\Phi\left(  T\right)  $ is $\mathbf{b}$-flagged (since $\Phi\left(
T\right)  =\left(  \operatorname*{id},T\right)  $).}. Hence, the bijection
$\Phi$ can be restricted to a bijection%
\begin{align*}
\left\{  \mathbf{b}\text{-flagged }T\in\operatorname*{SSYT}\left(  \mu\right)
\right\}    & \rightarrow\left\{  \mathbf{b}\text{-flagged unfailing twisted
arrays}\right\}  ,\\
T  & \mapsto\left(  \operatorname*{id},T\right)  .
\end{align*}
Thus, we can substitute $\left(  \operatorname*{id},T\right)  $ for $\left(
\sigma,T\right)  $ in the sum $\sum_{\substack{\left(  \sigma,T\right)  \text{
is an}\\\text{unfailing }\mathbf{b}\text{-flagged}\\\text{twisted array}%
}}\left(  -1\right)  ^{\sigma}w\left(  T\right)  $. We thus obtain%
\begin{align*}
& \sum_{\substack{\left(  \sigma,T\right)  \text{ is an}\\\text{unfailing
}\mathbf{b}\text{-flagged}\\\text{twisted array}}}\left(  -1\right)  ^{\sigma
}w\left(  T\right)  \\
& =\underbrace{\sum_{\substack{T\in\operatorname*{SSYT}\left(  \mu\right)
;\\T\text{ is }\mathbf{b}\text{-flagged}}}}_{\substack{=\sum_{T\in
\operatorname{FSSYT}\left(  \mu,\mathbf{b}\right)  }\\\text{(by the definition
of }\operatorname{FSSYT}\left(  \mu,\mathbf{b}\right)  \text{)}}%
}\underbrace{\left(  -1\right)  ^{\operatorname*{id}}}_{=1}%
\underbrace{w\left(  T\right)  }_{\substack{=\prod_{(i,j)\in
P(\operatorname*{id})}u_{T\left(  i,j\right)  ,\ j-i}\\\text{(by the
definition of }w\left(  T\right)  \text{)}}}\\
& =\sum_{T\in\operatorname{FSSYT}\left(  \mu,\mathbf{b}\right)  }%
\ \ \prod_{(i,j)\in P(\operatorname*{id})}u_{T\left(  i,j\right)  ,\ j-i}%
=\sum_{T\in\operatorname{FSSYT}\left(  \mu,\mathbf{b}\right)  }\ \ \prod
_{(i,j)\in Y(\mu)}u_{T\left(  i,j\right)  ,\ j-i}%
\end{align*}
(since $P\left(  \operatorname*{id}\right)  =Y\left(  \mu\right)  $). Thus,
Lemma \ref{lem.flagJT.failsum} is proved.
\end{proof}

\begin{proof}[Proof of Lemma \ref{lem.flagJT.flip1}.]
Let $c = \tup{i,j} \in P\tup{\sigma}$ be the bottommost leftmost failure of $\tup{\sigma, T}$.
(This exists, because the twisted array $\tup{\sigma, T}$ is failing.)
Thus, $\tup{\sigma^{\prime}, T^{\prime}} = \operatorname{flip}\tup{\sigma, T}$ is constructed
as described in Definition \ref{def.flagJT.flip}.

Remark \ref{rmk.flagJT.fail.over1} shows that $i>1$ (since $\left(
i,j\right)  $ is a failure of $\left(  \sigma,T\right)  $), but we also have
$i\in\left[  n\right]  $ (since each box of $P\tup{\sigma}$ has the form $\tup{p,q}$ with $p\in\ive{n}$).
Thus, $i-1\in\left[  n\right]  $.

Definition \ref{def.flagJT.flip} yields $\sigma^{\prime}=\sigma\circ s_{i-1}$.
Thus, the permutation $\sigma^{\prime}$ is obtained from $\sigma$ by swapping
the values at $i-1$ and $i$. In other words, we have%
\[
\sigma^{\prime}\left(  i-1\right)  =\sigma\left(  i\right)
\ \ \ \ \ \ \ \ \ \ \text{and}\ \ \ \ \ \ \ \ \ \ \sigma^{\prime}\left(
i\right)  =\sigma\left(  i-1\right)
\]
and%
\begin{equation}
\sigma^{\prime}\left(  k\right)  =\sigma\left(  k\right)
\ \ \ \ \ \ \ \ \ \ \text{for each }k\in\left[  n\right]  \setminus\left\{
i,i-1\right\}  . \label{pf.lem.flagJT.flip1.siprime}%
\end{equation}

We set%
\[
\rho_{k}:=\mu_{\sigma\left(  k\right)  }-\sigma\left(  k\right)
+k\ \ \ \ \ \ \ \ \ \ \text{and}\ \ \ \ \ \ \ \ \ \ \rho_{k}^{\prime}%
:=\mu_{\sigma^{\prime}\left(  k\right)  }-\sigma^{\prime}\left(  k\right)  +k
\]
for each $k\in\left[  n\right]  $. Now, we claim the following:

\begin{statement}
\textit{Claim 1:} We have $\rho_{k}^{\prime}=\rho_{k}$ for each $k\in\left[
n\right]  \setminus\left\{  i,i-1\right\}  $.
\end{statement}

\begin{statement}
\textit{Claim 2:} We have $\rho_{i}^{\prime}=\rho_{i-1}+1$.
\end{statement}

\begin{statement}
\textit{Claim 3:} We have $\rho_{i-1}^{\prime}=\rho_{i}-1$.
\end{statement}

\begin{proof}
[Proof of Claim 1.]Let $k\in\left[  n\right]  \setminus\left\{  i,i-1\right\}
$. Then, (\ref{pf.lem.flagJT.flip1.siprime}) yields $\sigma^{\prime}\left(
k\right)  =\sigma\left(  k\right)  $. Now, recall that $\rho_{k}^{\prime}%
=\mu_{\sigma^{\prime}\left(  k\right)  }-\sigma^{\prime}\left(  k\right)  +k$
and $\rho_{k}=\mu_{\sigma\left(  k\right)  }-\sigma\left(  k\right)  +k$. The
right hand sides of these two equalities are identical (since $\sigma^{\prime
}\left(  k\right)  =\sigma\left(  k\right)  $). Hence, their left hand sides
are identical as well. In other words, $\rho_{k}^{\prime}=\rho_{k}$. This
proves Claim 1.
\end{proof}

\begin{proof}
[Proof of Claim 2.]The definition of $\rho_{i-1}$ yields
\[
\rho_{i-1}=\mu_{\sigma\left(  i-1\right)  }-\sigma\left(  i-1\right)  +\left(
i-1\right)  .
\]
The definition of $\rho_{i}^{\prime}$ yields%
\begin{align*}
\rho_{i}^{\prime}  &  =\mu_{\sigma^{\prime}\left(  i\right)  }-\sigma^{\prime
}\left(  i\right)  +i\\
&  =\mu_{\sigma\left(  i-1\right)  }-\sigma\left(  i-1\right)  +\underbrace{i}%
_{=\left(  i-1\right)  +1}\ \ \ \ \ \ \ \ \ \ \left(  \text{since }%
\sigma^{\prime}\left(  i\right)  =\sigma\left(  i-1\right)  \right) \\
&  =\underbrace{\mu_{\sigma\left(  i-1\right)  }-\sigma\left(  i-1\right)
+\left(  i-1\right)  }_{=\rho_{i-1}}+\, 1\\
&  =\rho_{i-1}+1.
\end{align*}
Thus, Claim 2 is proved.
\end{proof}

\begin{proof}
[Proof of Claim 3.]Similar to the above proof of Claim 2.
\end{proof}

Now, $T$ is a $\sigma$-array (since $\left(  \sigma,T\right)  $ is a twisted
array). Hence, the permutation $\sigma$ is legitimate (since $\sigma$-arrays
only exist when $\sigma$ is legitimate). In other words, each $k\in\left[
n\right]  $ satisfies $\mu_{\sigma\left(  k\right)  }-\sigma\left(  k\right)
+k\geq0$ (by the definition of \textquotedblleft legitimate\textquotedblright%
). In other words, each $k\in\left[  n\right]  $ satisfies%
\begin{equation}
\rho_{k}\geq0 \label{pf.lem.flagJT.flip1.rhoigeq0}%
\end{equation}
(since $\rho_{k}$ was defined to be $\mu_{\sigma\left(  k\right)  }%
-\sigma\left(  k\right)  +k$).

Recall that the $k$-th row of the diagram $P\left(  \sigma\right)  $ contains
$\mu_{\sigma\left(  k\right)  }-\sigma\left(  k\right)  +k$ boxes for each
$k\in\left[  n\right]  $ (by the definition of $P\left(  \sigma\right)  $). In
other words, the $k$-th row of the diagram $P\left(  \sigma\right)  $ contains
$\rho_{k}$ boxes for each $k\in\left[  n\right]  $ (since $\rho_{k}%
=\mu_{\sigma\left(  k\right)  }-\sigma\left(  k\right)  +k$). In particular,
the $i$-th row of the diagram $P\left(  \sigma\right)  $ contains $\rho_{i}$
boxes. Since this $i$-th row contains at least one box (namely, the bottommost
leftmost failure $\left(
i,j\right)  $), we thus obtain $\rho_{i}\geq1$.

Claim 3 yields $\rho_{i-1}^{\prime}=\rho_{i}-1\geq0$ (since $\rho_{i}\geq1$).
Claim 2 yields $\rho_{i}^{\prime}=\rho_{i-1}+1\geq\rho_{i-1}\geq0$ (by
(\ref{pf.lem.flagJT.flip1.rhoigeq0})). Claim 1 yields that every $k\in\left[
n\right]  \setminus\left\{  i,i-1\right\}  $ satisfies $\rho_{k}^{\prime}%
=\rho_{k}\geq0$ (by (\ref{pf.lem.flagJT.flip1.rhoigeq0})). Combining the
preceding three sentences, we obtain
\begin{equation}
\rho_{k}^{\prime}\geq0\ \ \ \ \ \ \ \ \ \ \text{for each }k\in\left[
n\right]  . \label{pf.lem.flagJT.flip1.rhoprigeq0}%
\end{equation}
In other words, $\mu_{\sigma^{\prime}\left(  k\right)  }-\sigma^{\prime
}\left(  k\right)  +k\geq0$ for each $k\in\left[  n\right]  $ (since $\rho
_{k}^{\prime}$ was defined to be $\mu_{\sigma^{\prime}\left(  k\right)
}-\sigma^{\prime}\left(  k\right)  +k$). In other words, the permutation
$\sigma^{\prime}$ is legitimate (by the definition of \textquotedblleft
legitimate\textquotedblright). Thus, the diagram $P\left(  \sigma^{\prime
}\right)  $ is well-defined. By its definition, this diagram $P\left(
\sigma^{\prime}\right)  $ consists of the boxes $\left(  k,\ell\right)  $ with
$k\in\left[  n\right]  $ and $\ell\leq\mu_{\sigma^{\prime}\left(  k\right)
}-\sigma^{\prime}\left(  k\right)  +k$. In other words, the diagram $P\left(
\sigma^{\prime}\right)  $ consists of the boxes $\left(  k,\ell\right)  $ with
$k\in\left[  n\right]  $ and $\ell\leq\rho_{k}^{\prime}$ (since $\rho
_{k}^{\prime}$ was defined to be $\mu_{\sigma^{\prime}\left(  k\right)
}-\sigma^{\prime}\left(  k\right)  +k$).

Next we claim the following:

\begin{statement}
\textit{Claim 4:} We have $\rho_{i}\geq j$ and $\rho_{i-1}\geq j-1$.
\end{statement}

\begin{proof}
[Proof of Claim 4.]The rows of the diagram $P\left(  \sigma\right)  $ are
left-aligned. Thus, from $\left(  i,j\right)  \in P\left(  \sigma\right)  $,
it follows that the $j$ boxes $\left(  i,1\right)  ,\ \left(  i,2\right)
,\ \ldots,\ \left(  i,j\right)  $ all belong to $P\left(  \sigma\right)  $.
Hence, the $i$-th row of $P\left(  \sigma\right)  $ has at least $j$ boxes. In
other words, $\rho_{i}\geq j$ (since the $i$-th row of the diagram $P\left(
\sigma\right)  $ contains $\rho_{i}$ boxes).

It remains to prove that $\rho_{i-1}\geq j-1$. Indeed, assume the contrary.
Thus, $\rho_{i-1}<j-1$. Hence, $j-1>\rho_{i-1}\geq0$ (by
(\ref{pf.lem.flagJT.flip1.rhoigeq0})), so that $j-1\geq1$. Therefore, $\left(
i,j-1\right)  $ is one of the $j$ boxes $\left(  i,1\right)  ,\ \left(
i,2\right)  ,\ \ldots,\ \left(  i,j\right)  $. Since the latter $j$ boxes all
belong to $P\left(  \sigma\right)  $, we thus conclude that $\left(
i,j-1\right)  \in P\left(  \sigma\right)  $.

If we had $\left(  i-1,j-1\right)  \notin P\left(  \sigma\right)  $, then the
box $\left(  i,j-1\right)  $ would be an outer failure of $\left(
\sigma,T\right)  $ (because $i>1$), which would contradict the fact that
$\left(  i,j\right)  $ is a \textbf{leftmost} failure of $\left(
\sigma,T\right)  $ (since the failure $\left(  i,j-1\right)  $ would lie
further west than $\left(  i,j\right)  $). Thus, we cannot have $\left(
i-1,j-1\right)  \notin P\left(  \sigma\right)  $. In other words, we must have
$\left(  i-1,j-1\right)  \in P\left(  \sigma\right)  $.

The rows of $P\left(  \sigma\right)  $ are left-aligned. Thus, from $\left(
i-1,j-1\right)  \in P\left(  \sigma\right)  $, it follows that the $j-1$ boxes
$\left(  i-1,1\right)  ,\ \left(  i-1,2\right)  ,\ \ldots,\ \left(
i-1,j-1\right)  $ all belong to $P\left(  \sigma\right)  $. Hence, the
$\left(  i-1\right)  $-st row of $P\left(  \sigma\right)  $ has at least $j-1$
boxes. In other words, $\rho_{i-1}\geq j-1$ (since the $\left(  i-1\right)
$-st row of the diagram $P\left(  \sigma\right)  $ contains $\rho_{i-1}$ boxes
(because the $k$-th row of the diagram $P\left(  \sigma\right)  $ contains
$\rho_{k}$ boxes for each $k\in\left[  n\right]  $)). Thus, the proof of Claim
4 is complete.
\end{proof}

Let us now recall how $T^{\prime}$ was defined: Namely, we obtain $T^{\prime}$
from $T$ by swapping the top floor of $c$ with the bottom floor of $c$ (see
Definition \ref{def.flagJT.flip} for the definitions of \textquotedblleft top
floor\textquotedblright\ and \textquotedblleft bottom floor\textquotedblright%
). Thus, the entries of $T^{\prime}$ are precisely the entries of $T$, but not
all in the same positions; namely, the entries of the top floor have moved one
unit southeast whereas the entries of the bottom floor have moved one unit
northwest. We shall give names to these entries according to how they move:

\begin{itemize}
\item The entries in the top floor of $c$ will be called \emph{falling
entries}, since they move down (more precisely, southeast) to their new
positions in $T^{\prime}$.

\item The entries in the bottom floor of $c$ will be called \emph{rising
entries}, since they move up (more precisely, northwest) to their new
positions in $T^{\prime}$.

\item All other entries of $T$ will be called \emph{staying entries}, as they
keep their positions in $T^{\prime}$.
\end{itemize}

(Strictly speaking, it is the boxes, not the entries, that we should be
naming, but we hope that this language is clear enough.) \medskip

From the definition of $T^{\prime}$, it is not immediately clear that
$T^{\prime}$ is a $\sigma^{\prime}$-array, or even that the entries of
$T^{\prime}$ appear contiguously in the rows (meaning that the rows have no \textquotedblleft
holes\textquotedblright, i.e., boxes that don't contain any entries). Let us
prove this step by step, starting with some properties of $T$:

\begin{statement}
\textit{Claim 5:} The $i$-th row of $T$ contains exactly $j$ staying entries
and $\rho_{i}-j$ rising entries.
\end{statement}

\begin{proof}
[Proof of Claim 5.]We saw in the proof of Claim 4 that the $j$ boxes $\left(
i,1\right)  ,\ \left(  i,2\right)  ,\ \ldots,\ \left(  i,j\right)  $ all
belong to $P\left(  \sigma\right)  $. Hence, the $i$-th row of $T$ contains
$j$ staying entries (namely, the entries in these $j$ boxes). The remaining
entries in the $i$-th row of $T$ are rising entries (since the bottom floor of
the failure $c=\left(  i,j\right)  $ begins with the next box $\left(
i,j+1\right)  $). How many are there?

Recall that the $i$-th row of the diagram $P\left(  \sigma\right)  $ contains
$\rho_{i}$ boxes. Thus, the $i$-th row of $T$ contains $\rho_{i}$ entries.
Since it contains $j$ staying entries, we thus conclude that it contains
$\rho_{i}-j$ rising entries. This completes the proof of Claim 5.
\end{proof}

\begin{statement}
\textit{Claim 6:} The $\left(  i-1\right)  $-st row of $T$ contains exactly
$j-1$ staying entries and $\rho_{i-1}-\left(  j-1\right)  $ falling entries.
\end{statement}

\begin{proof}
[Proof of Claim 6.]Recall that the $k$-th row of the diagram $P\left(
\sigma\right)  $ contains $\rho_{k}$ boxes for each $k\in\left[  n\right]  $.
Hence, the $\left(  i-1\right)  $-st row of $P\left(  \sigma\right)  $
contains $\rho_{i-1}$ boxes. These boxes are $\left(  i-1,1\right)  ,\ \left(
i-1,2\right)  ,\ \ldots,\ \left(  i-1,\rho_{i-1}\right)  $. Since Claim 4
yields $\rho_{i-1}\geq j-1$, we know that the first $j-1$ of these boxes are
$\left(  i-1,1\right)  ,\ \left(  i-1,2\right)  ,\ \ldots,\ \left(
i-1,j-1\right)  $. Thus, the $\left(  i-1\right)  $-st row of $T$ contains
$j-1$ staying entries (namely, the entries in these $j-1$ boxes). Since it
contains $\rho_{i-1}$ entries in total (because the $\left(  i-1\right)  $-st
row of $P\left(  \sigma\right)  $ contains $\rho_{i-1}$ boxes), it follows
that the remaining $\rho_{i-1}-\left(  j-1\right)  $ entries in this row are
falling entries (since the top floor of the failure $c=\left(  i,j\right)  $
begins with the next box $\left(  i-1,j\right)  $). This completes the proof
of Claim 6.
\end{proof}

Claim 5 shows that the rightmost staying entry in the $i$-th row of $T$ is
$T\left(  i,j\right)  $. Claim 6 shows that the rightmost staying entry in the
$\left(  i-1\right)  $-st row of $T$ (if such an entry exists at all) is
$T\left(  i-1,j-1\right)  $.

\begin{statement}
\textit{Claim 7:} The entries in the $i$-th row of $T^{\prime}$ occupy
precisely the boxes $\left(  i,1\right)  ,\ \left(  i,2\right)  ,\ \ldots
,\ \left(  i,\rho_{i}^{\prime}\right)  $.
\end{statement}

\begin{proof}
[Proof of Claim 7.]The $i$-th row of $T^{\prime}$ has two kinds of entries:
the staying entries of the $i$-th row of $T$ (which remain in their places in
$T^{\prime}$), and the falling entries of the $\left(  i-1\right)  $-st row of
$T$ (which are moved to the $i$-th row by the flip operation). There are $j$
of the former (by Claim 5) and $\rho_{i-1}-\left(  j-1\right)  $ of the latter
(by Claim 6). Moreover, the former occupy the boxes $\left(  i,1\right)
,\ \left(  i,2\right)  ,\ \ldots,\ \left(  i,j\right)  $ in $T^{\prime}$,
whereas the latter occupy the boxes $\left(  i,j+1\right)  ,\ \left(
i,j+2\right)  ,\ \ldots,\ \left(  i,j+\rho_{i-1}-\left(  j-1\right)  \right)
$ in $T^{\prime}$ (by the definition of the flip operation, and because there
are $\rho_{i-1}-\left(  j-1\right)  $ of them). Thus, altogether, there are
\[
j+\left(  \rho_{i-1}-\left(  j-1\right)  \right)  =\rho_{i-1}+1=\rho
_{i}^{\prime}\ \ \ \ \ \ \ \ \ \ \left(  \text{by Claim 2}\right)
\]
of these entries, and they occupy the boxes%
\[
\underbrace{\left(  i,1\right)  ,\ \left(  i,2\right)  ,\ \ldots,\ \left(
i,j\right)  }_{\text{staying entries}},\ \underbrace{\left(  i,j+1\right)
,\ \left(  i,j+2\right)  ,\ \ldots,\ \left(  i,j+\rho_{i-1}-\left(
j-1\right)  \right)  }_{\text{falling entries}},
\]
i.e., the boxes%
\[
\underbrace{\left(  i,1\right)  ,\ \left(  i,2\right)  ,\ \ldots,\ \left(
i,j\right)  }_{\text{staying entries}},\ \underbrace{\left(  i,j+1\right)
,\ \left(  i,j+2\right)  ,\ \ldots,\ \left(  i,\rho_{i}^{\prime}\right)
}_{\text{falling entries}}%
\]
(since $j+\left(  \rho_{i-1}-\left(  j-1\right)  \right)  =\rho_{i}^{\prime}%
$). This proves Claim 7.
\end{proof}

\begin{statement}
\textit{Claim 8:} The entries in the $\left(  i-1\right)  $-st row of
$T^{\prime}$ occupy precisely the boxes $\left(  i-1,1\right)  ,\ \left(
i-1,2\right)  ,\ \ldots,\ \left(  i-1,\rho_{i-1}^{\prime}\right)  $.
\end{statement}

\begin{proof}
[Proof of Claim 8.]The $\left(  i-1\right)  $-st row of $T^{\prime}$ has two
kinds of entries: the staying entries of the $\left(  i-1\right)  $-st row of
$T$ (which remain in their places in $T^{\prime}$), and the rising entries of
the $i$-th row of $T$ (which are moved to the $\left(  i-1\right)  $-st row by
the flip operation). There are $j-1$ of the former (by Claim 6) and $\rho
_{i}-j$ of the latter (by Claim 5). Moreover, the former occupy the boxes
$\left(  i-1,1\right)  ,\ \left(  i-1,2\right)  ,\ \ldots,\ \left(
i-1,j-1\right)  $ in $T^{\prime}$, whereas the latter occupy the boxes
$\left(  i-1,j\right)  ,\ \left(  i-1,j+1\right)  ,\ \ldots,\ \left(
i-1,\left(j-1\right)+\left(  \rho_{i}-j\right)  \right)  $ in $T^{\prime}$ (by the
definition of the flip operation, and because there are $\rho_{i}-j$ of them).
Thus, altogether, there are
\[
\left(  j-1\right)  +\left(  \rho_{i}-j\right)  =\rho_{i}-1=\rho_{i-1}%
^{\prime}\ \ \ \ \ \ \ \ \ \ \left(  \text{by Claim 3}\right)
\]
of these entries, and they occupy the boxes%
\[
\underbrace{\left(  i-1,1\right)  ,\ \left(  i-1,2\right)  ,\ \ldots,\ \left(
i-1,j-1\right)  }_{\text{staying entries}},\ \underbrace{\left(  i-1,j\right)
,\ \left(  i-1,j+1\right)  ,\ \ldots,\ \left(  i-1,\left(j-1\right)+\left(  \rho_{i}-j\right)  \right)  }_{\text{rising entries}},
\]
i.e., the boxes%
\[
\underbrace{\left(  i-1,1\right)  ,\ \left(  i-1,2\right)  ,\ \ldots,\ \left(
i-1,j-1\right)  }_{\text{staying entries}},\ \underbrace{\left(  i-1,j\right)
,\ \left(  i-1,j+1\right)  ,\ \ldots,\ \left(  i-1,\rho_{i-1}^{\prime}\right)
}_{\text{rising entries}}%
\]
(since $\left(  j-1\right)  +\left(  \rho_{i}-j\right)  =\rho_{i-1}^{\prime}%
$). This proves Claim 8.
\end{proof}

\begin{statement}
\textit{Claim 9:} For each $k\in\left[  n\right]  $, the entries in the $k$-th
row of $T^{\prime}$ occupy precisely the boxes $\left(  k,1\right)  ,\ \left(
k,2\right)  ,\ \ldots,\ \left(  k,\rho_{k}^{\prime}\right)  $.
\end{statement}

\begin{proof}
[Proof of Claim 9.]Let $k\in\left[  n\right]  $. We must prove that the
entries in the $k$-th row of $T^{\prime}$ occupy precisely the boxes $\left(
k,1\right)  ,\ \left(  k,2\right)  ,\ \ldots,\ \left(  k,\rho_{k}^{\prime
}\right)  $.

For $k=i$, this follows from Claim 7. For $k=i-1$, this follows from Claim 8.
Thus, we can WLOG assume that $k$ equals neither $i$ nor $i-1$. Assume this.
Hence, the $k$-th row of $T^{\prime}$ has the same entries (in the same
positions) as the $k$-th row of $T$ (because the flip operation affects only
the $i$-th and $\left(  i-1\right)  $-st rows). Moreover, $k\in\left[
n\right]  \setminus\left\{  i,i-1\right\}  $ (since $k$ equals neither $i$ nor
$i-1$) and thus $\rho_{k}^{\prime}=\rho_{k}$ (by
Claim 1).

Recall that the $k$-th row of the diagram $P\left(  \sigma\right)  $ contains
$\rho_{k}$ boxes, and these boxes are $\left(  k,1\right)  ,\ \left(
k,2\right)  ,\ \ldots,\ \left(  k,\rho_{k}\right)  $ (since the diagram
$P\left(  \sigma\right)  $ is left-aligned). Thus, the $k$-th row of $T$ has
$\rho_{k}$ entries, and these entries occupy the boxes $\left(  k,1\right)
,\ \left(  k,2\right)  ,\ \ldots,\ \left(  k,\rho_{k}\right)  $ (since $T$ is
a filling of $P\left(  \sigma\right)  $). Therefore, the same is true for
$T^{\prime}$ instead of $T$ (since the $k$-th row of $T^{\prime}$ has the same
entries (in the same positions) as the $k$-th row of $T$). In other words, the
entries in the $k$-th row of $T^{\prime}$ occupy precisely the boxes $\left(
k,1\right)  ,\ \left(  k,2\right)  ,\ \ldots,\ \left(  k,\rho_{k}^{\prime
}\right)  $. This proves Claim 9.
\end{proof}

Claim 9 shows that the entries of $T^{\prime}$ occupy precisely the boxes
$\left(  k,1\right)  ,\ \left(  k,2\right)  ,\ \ldots,\ \left(  k,\rho
_{k}^{\prime}\right)  $ for all $k\in\left[  n\right]  $. In other words, they
occupy precisely the boxes $\left(  k,\ell\right)  $ with $k\in\left[
n\right]  $ and $\ell\leq\rho_{k}^{\prime}$. Thus, $T^{\prime}$ is a filling
of the diagram $P\left(  \sigma^{\prime}\right)  $ (since the diagram
$P\left(  \sigma^{\prime}\right)  $ consists of the boxes $\left(
k,\ell\right)  $ with $k\in\left[  n\right]  $ and $\ell\leq\rho_{k}^{\prime}$).

Next, we shall show that the entries of $T^{\prime}$ weakly increase
left-to-right along each row. Again, we prove this via several simple claims:

\begin{statement}
\textit{Claim 10:}
If $j-1 \geq 1$, then $\tup{i-1, j-1} \in P\tup{\sigma}$ and $\tup{i, j-1} \in P\tup{\sigma}$ and $T\tup{i-1, j-1} < T\tup{i, j-1}$.
\end{statement}

\begin{proof}[Proof of Claim 10.]
Assume that $j-1 \geq 1$.

Claim 6 shows that the $\left(  i-1\right)  $-st row of $T$ contains exactly
$j-1$ staying entries. Hence, in particular, it has length $\geq j-1$. Thus,
$\left(  i-1,j-1\right)  \in P\left(  \sigma\right)  $ (since $j-1\geq1$).

Also, from $\left(  i,j\right)  \in P\left(  \sigma\right)  $ and $j-1<j$, we
obtain $\left(  i,j-1\right)  \in P\left(  \sigma\right)  $ (again since
$j-1\geq1$).

Recall that $\left(  i,j\right)  $ is a \textbf{leftmost} failure of $\left(
\sigma,T\right)  $.
Thus, $\tup{\sigma, T}$ has no failures that lie further west than $\tup{i, j}$.
Hence, the box $\left(  i,j-1\right)  $ cannot be a
failure of $\left(  \sigma,T\right)  $ (since it lies further west than
$\left(  i,j\right)  $).
In particular, this box $\left(  i,j-1\right)  $
cannot be an inner failure of $\left(  \sigma,T\right)  $. In other words, we
cannot have $T\left(  i-1,j-1\right)  \geq T\left(  i,j-1\right)  $ (by the
definition of an inner failure).
Hence, we must have $T\left(  i-1,j-1\right)
<T\left(  i,j-1\right)  $.
This finishes the proof of Claim 10.
\end{proof}

\begin{statement}
\textit{Claim 11:} The entries in the $i$-th row of $T^{\prime}$ weakly
increase left-to-right.
\end{statement}

\begin{proof}
[Proof of Claim 11.]The $i$-th row of $T^{\prime}$ has two kinds of entries:
the staying entries of the $i$-th row of $T$ (which remain in their places in
$T^{\prime}$), and the falling entries of the $\left(  i-1\right)  $-st row of
$T$ (which are moved to the $i$-th row by the flip operation). Of course, the
staying entries all lie to the left of the falling entries.

But $T$ is a $\sigma$-array, and thus the entries of $T$ weakly increase
left-to-right along each row. Thus, in particular, the entries in the $i$-th
row of $T$ weakly increase left-to-right, and so do the entries in the
$\left(  i-1\right)  $-st row of $T$.

Therefore, the staying entries in the $i$-th row of $T^{\prime}$ weakly
increase left-to-right (since they are copied unchanged from $T$), and the
falling entries in the $i$-th row of $T^{\prime}$ also weakly increase
left-to-right (since they have been moved from the $\left(  i-1\right)  $-st
row of $T$, without changing their order). If we can furthermore show
that the rightmost staying entry in this row is $\leq$ to the leftmost falling
entry\footnote{We are implicitly assuming that there \textbf{is} a rightmost
staying entry and there \textbf{is} a leftmost falling entry. But this is
unproblematic, because if one of these entries does not exist, then Claim 11
follows immediately.}, then we will be able to conclude from this that
\textbf{all} entries in the $i$-th row of $T^{\prime}$ weakly increase
left-to-right. Thus, we only need to show that the rightmost staying entry in
the $i$-th row of $T^{\prime}$ is $\leq$ to the leftmost falling entry.

In other words, we need to show that $T\left(  i,j\right)  \leq T\left(
i-1,j\right)  $ (since the rightmost staying entry in the $i$-th row of
$T^{\prime}$ is $T\left(  i,j\right)  $\ \ \ \ \footnote{because the rightmost
staying entry in the $i$-th row of $T$ is $T\left(  i,j\right)  $}, whereas
the leftmost falling entry is $T\left(  i-1,j\right)  $). But this is easy:
Since $T\left(  i-1,j\right)  $ is well-defined in the first place, we must
have $\left(  i-1,j\right)  \in P\left(  \sigma\right)  $, and therefore the
failure $c=\left(  i,j\right)  $ of $\left(  \sigma,T\right)  $ cannot be an
outer failure. Hence, $c$ must be an inner failure (since $c$ is a failure).
Thus, $T\left(  i-1,j\right)  \geq T\left(  i,j\right)  $ (by the definition
of an inner failure). In other words, $T\left(  i,j\right)  \leq T\left(
i-1,j\right)  $. As we explained, this completes the proof of Claim 11.
\end{proof}

\begin{statement}
\textit{Claim 12:} The entries in the $\left(  i-1\right)  $-st row of
$T^{\prime}$ weakly increase left-to-right.
\end{statement}

\begin{proof}
[Proof of Claim 12.]The $\left(i-1\right)$-st row of $T^{\prime}$ has two kinds of entries:
the staying entries of the $\left(  i-1\right)  $-st row of $T$ (which remain
in their places in $T^{\prime}$), and the rising entries of the $i$-th row of
$T$ (which are moved to the $\left(  i-1\right)  $-st row by the flip
operation). Of course, the staying entries all lie to the left of the rising entries.

But $T$ is a $\sigma$-array, and thus the entries of $T$ weakly increase
left-to-right along each row. Thus, in particular, the entries in the $i$-th
row of $T$ weakly increase left-to-right, and so do the entries in the
$\left(  i-1\right)  $-st row of $T$.

Therefore, the staying entries in the $\left(  i-1\right)  $-st row of
$T^{\prime}$ weakly increase left-to-right (since they are copied unchanged
from $T$), and the rising entries in the $\left(  i-1\right)  $-st row of
$T^{\prime}$ also weakly increase left-to-right (since they have been moved
from the $i$-th row of $T$, without changing their order). If we can
furthermore show that the rightmost staying entry in this row is $\leq$ to the
leftmost rising entry\footnote{We are implicitly assuming that there
\textbf{is} a rightmost staying entry and there \textbf{is} a leftmost rising
entry. But this is unproblematic, because if one of these entries does not
exist, then Claim 12 follows immediately.}, then we will be able to conclude
from this that \textbf{all} entries in the $\left(  i-1\right)  $-st row of
$T^{\prime}$ weakly increase left-to-right. Thus, we only need to show that
the rightmost staying entry in the $\left(  i-1\right)  $-st row of
$T^{\prime}$ is $\leq$ to the leftmost rising entry.

In other words, we need to show that $T\left(  i-1,j-1\right)  \leq T\left(
i,j+1\right)  $ (since the rightmost staying entry in the $\left(  i-1\right)
$-st row of $T^{\prime}$ is $T\left(  i-1,j-1\right)  $%
\ \ \ \ \footnote{because the rightmost staying entry in the $\left(
i-1\right)  $-st row of $T$ is $T\left(  i-1,j-1\right)  $}, whereas the
leftmost rising entry is $T\left(  i,j+1\right)  $). But this is easy:
Since the entry $T\tup{i-1, j-1}$ is well-defined in the first place, we have $j-1 \geq 1$. Thus, Claim 10 yields
$\tup{i-1, j-1} \in P\tup{\sigma}$ and $\tup{i, j-1} \in P\tup{\sigma}$ and $T\tup{i-1, j-1} < T\tup{i, j-1}$.
Furthermore, $T\left(  i,j-1\right)  \leq T\left(  i,j+1\right)  $ (since the
entries in the $i$-th row of $T$ weakly increase left-to-right). Hence,
$T\left(  i-1,j-1\right)  <T\left(  i,j-1\right)  \leq T\left(  i,j+1\right)
$. As we explained, this completes the proof of Claim 12.
\end{proof}

\begin{statement}
\textit{Claim 13:} For each $k\in\left[  n\right]  $, the entries in the
$k$-th row of $T^{\prime}$ weakly increase left-to-right.
\end{statement}

\begin{proof}
[Proof of Claim 13.]Let $k\in\left[  n\right]  $. We must prove that the
entries in the $k$-th row of $T^{\prime}$ weakly increase left-to-right.

If $k=i$, then this follows from Claim 11. If $k=i-1$, then this follows from
Claim 12. Thus, we WLOG assume that $k$ equals neither $i$ nor $i-1$. Hence,
the $k$-th row of $T^{\prime}$ has the same entries (in the same positions) as
the $k$-th row of $T$ (because the flip operation affects only the $i$-th and
$\left(  i-1\right)  $-st rows). But the entries in the $k$-th row of $T$
weakly increase left-to-right (since $T$ is a $\sigma$-array). Hence, the
entries in the $k$-th row of $T^{\prime}$ weakly increase left-to-right (since
the $k$-th row of $T^{\prime}$ has the same entries (in the same positions) as
the $k$-th row of $T$). This proves Claim 13.
\end{proof}

Recall that $T^{\prime}$ is a filling of the diagram $P\left(  \sigma^{\prime
}\right)  $ with positive integers. Moreover, these integers weakly increase
left-to-right along each row (by Claim 13). Hence, $T^{\prime}$ is a
$\sigma^{\prime}$-array (by the definition of a $\sigma^{\prime}$-array). In
other words, $\left(  \sigma^{\prime},T^{\prime}\right)  $ is a twisted array.

Next, we claim:

\begin{statement}
\textit{Claim 14:} The box $\left(  i,j\right)  $ is a failure of the twisted
array $\left(  \sigma^{\prime},T^{\prime}\right)  $.
\end{statement}

\begin{proof}
[Proof of Claim 14.]We know that $i>1$ and $\tup{i,j} \in P\tup{\sigma^{\prime}}$ (since the entry $T\tup{i,j}$ of $T$ does not change under the flip operation).
If $\left(  i-1,j\right)  \notin
P\left(  \sigma^{\prime}\right)  $, then the box $\left(  i,j\right)  $ is therefore an
outer failure of $\left(  \sigma^{\prime},T^{\prime}\right)  $, and thus Claim
14 is proved. Hence, for the rest of this proof, we WLOG assume that $\left(
i-1,j\right)  \in P\left(  \sigma^{\prime}\right)  $. Therefore, $T^{\prime
}\left(  i-1,j\right)  $ is a rising entry and equals $T\left(  i,j+1\right)
$ (by the construction of $T^{\prime}$). Meanwhile, $T^{\prime}\left(
i,j\right)  $ is a staying entry and thus equals $T\left(  i,j\right)  $.
However, the entries in the $i$-th row of $T$ weakly increase left-to-right
(since $T$ is a $\sigma$-array). Thus, $T\left(  i,j\right)  \leq T\left(
i,j+1\right)  $. In other words, $T^{\prime}\left(  i,j\right)  \leq
T^{\prime}\left(  i-1,j\right)  $ (since $T^{\prime}\left(  i,j\right)  $
equals $T\left(  i,j\right)  $ whereas $T^{\prime}\left(  i-1,j\right)  $
equals $T\left(  i,j+1\right)  $). In other words, $T^{\prime}\left(
i-1,j\right)  \geq T^{\prime}\left(  i,j\right)  $. Hence, the box $\left(
i,j\right)  $ is an inner failure of $\left(  \sigma^{\prime},T^{\prime
}\right)  $ (by the definition of an inner failure). This proves Claim 14.
\end{proof}

Claim 14 shows that the twisted array $\left(  \sigma^{\prime},T^{\prime
}\right)  $ has a failure, i.e., is failing. Thus, Lemma
\ref{lem.flagJT.flip1} \textbf{(a)} is proved. \medskip

\textbf{(b)} We make two further claims:

\begin{statement}
\textit{Claim 15:} The twisted array $\left(  \sigma^{\prime},T^{\prime
}\right)  $ has no failures in columns $1,2,\ldots,j-1$.
\end{statement}

\begin{proof}[Proof of Claim 15.]
We can restate the definition of a failure in terms of the
entries: A failure of a twisted array $\left(  \tau,S\right)  $ is either an
inner failure (i.e., an entry of $S$ that is smaller than or equal to its
northern neighbor\footnote{Formally, of course, a failure is not an entry but
rather the box containing this entry; but we ignore this distinction to keep
the writing clearer.}) or an outer failure (i.e., an entry of $S$ that has no
northern neighbor even though it is not in the first row). Thus, whether or
not a given box $\left(  p,q\right)  $ is a failure of a twisted array
$\left(  \tau,S\right)  $ depends only on the entries in the boxes $\left(
p,q\right)  $ and $\left(  p-1,q\right)  $ of $S$. In particular, it depends
only on the entries in the $q$-th column of $S$.
Therefore, for any given integer $q \geq 1$, the failures of a twisted
array $\tup{\tau, S}$ in the $q$-th column
depend only on the entries in the $q$-th column of $S$.

Hence, if a $\tau$-array $S$ and a $\tau'$-array $S'$ have
the same entries\footnote{``The same entries'' means ``the same entries in
the same position''.} in the $q$-th column (for a given $q \geq 1$), then
the two twisted arrays $\tup{\tau, S}$ and $\tup{\tau', S'}$ have the same
failures in the $q$-th column.

Now, recall that $T^{\prime}$ is obtained from $T$ by swapping the top floor
of $c=\left(  i,j\right)  $ with the bottom floor. This swap operation does
not affect the columns $1,2,\ldots,j-1$ (since all the entries being swapped
belong to columns $j,j+1,j+2,\ldots$).
Thus, in the columns $1,2,\ldots,j-1$,
the array $T^{\prime}$ has the same entries as $T$ (and in the same positions
as well).
Therefore, in the columns $1,2,\ldots,j-1$, the twisted array
$\left(  \sigma^{\prime},T^{\prime}\right)  $ has the same failures as
$\left(  \sigma,T\right)  $ (because
if a $\tau$-array $S$ and a $\tau'$-array $S'$ have the same entries
in the $q$-th column (for a given $q \geq 1$), then
the two twisted arrays $\tup{\tau, S}$ and $\tup{\tau', S'}$ have the same
failures in the $q$-th column).
Since $\left(
\sigma,T\right)  $ has no failures in these columns (because $\left(
i,j\right)  $ is a \textbf{leftmost} failure of $\left(  \sigma,T\right)  $),
we thus conclude that $\left(  \sigma^{\prime},T^{\prime}\right)  $ has no
failures in these columns either. This proves Claim 15.
\end{proof}

\begin{statement}
\textit{Claim 16:} The twisted array $\left(  \sigma^{\prime},T^{\prime
}\right)  $ has no failures in column $j$ lying south of the box $\left(
i,j\right)  $.
\end{statement}

\begin{proof}[Proof of Claim 16.]
Recall that $\left(
i,j\right)  $ is the \textbf{bottommost} leftmost failure of $\left(
\sigma,T\right)  $.
Thus, the twisted array $\left(  \sigma,T\right)  $ has no failures in
column $j$ lying south of the box $\left(  i,j\right)  $.
In other words, the twisted array $\tup{\sigma, T}$ has no failures in
the boxes \newline $\tup{i+1,j},\ \tup{i+2,j},\ \tup{i+3,j},\ \ldots$.
(Of course, a box that does not belong to $P\tup{\sigma}$ is not
considered as a failure of $\tup{\sigma, T}$.)

Recall the following fact (which we proved in our proof of
Claim 15): Whether or not a given box $\left(  p,q\right)  $ is a failure of a
twisted array $\left(  \tau,S\right)  $ depends only on the entries in the
boxes $\left(  p,q\right)  $ and $\left(  p-1,q\right)  $ of $S$.
Therefore, the failures of a twisted array $\tup{\tau, S}$ in the boxes
$\tup{i+1,j},\ \tup{i+2,j},\ \tup{i+3,j},\ \ldots$
depend only on the entries of $S$ in the boxes
$\tup{i,j},\ \tup{i+1,j},\ \tup{i+2,j},\ \ldots$.

Hence, if a $\tau$-array $S$ and a $\tau'$-array $S'$ have
the same entries\footnote{``The same entries'' means ``the same entries in
the same position''.} in the boxes
$\tup{i,j},\ \tup{i+1,j},\ \tup{i+2,j},\ \ldots$, then
the two twisted arrays $\tup{\tau, S}$ and $\tup{\tau', S'}$ have the same
failures in the boxes $\tup{i+1,j},\ \tup{i+2,j},\ \tup{i+3,j},\ \ldots$.

Now, recall that $T^{\prime}$ is obtained from $T$ by swapping the top floor
of $c=\left(  i,j\right)  $ with the bottom floor. This swap operation only
affects the entries in the boxes%
\[
\left(  i-1,j\right)  ,\ \ \left(  i-1,j+1\right)  ,\ \ \left(
i-1,j+2\right)  ,\ \ \ldots
\]
and in the boxes%
\[
\left(  i,j+1\right)  ,\ \ \left(  i,j+2\right)  ,\ \ \left(  i,j+3\right)
,\ \ \ldots.
\]
Thus, the only box \textbf{in column }$j$ that is affected by this swap
operation is the box $\left(  i-1,j\right)  $. In particular,
the entries in boxes $\tup{i,j},\ \tup{i+1,j},\ \tup{i+2,j},\ \ldots$
are not affected by this swap operation.
Hence, the arrays $T$ and $T'$ have the same entries in these boxes
$\tup{i,j},\ \tup{i+1,j},\ \tup{i+2,j},\ \ldots$.
Consequently, the twisted arrays $\tup{\sigma, T}$ and $\tup{\sigma', T'}$
have the same failures in the boxes
$\tup{i+1,j},\ \tup{i+2,j},\ \tup{i+3,j},\ \ldots$
(because if a $\tau$-array $S$ and a $\tau'$-array $S'$ have
the same entries in the boxes \newline
$\tup{i,j},\ \tup{i+1,j},\ \tup{i+2,j},\ \ldots$, then
the two twisted arrays $\tup{\tau, S}$ and $\tup{\tau', S'}$ have the same
failures in the boxes $\tup{i+1,j},\ \tup{i+2,j},\ \tup{i+3,j},\ \ldots$).
Since we know that the twisted array $\tup{\sigma, T}$ has no failures
in these boxes,
we thus conclude that the twisted array $\tup{\sigma', T'}$ has no failures
in these boxes either.
In other words, $\tup{\sigma', T'}$ has no failures in the $j$-th column
lying south of the box $\tup{i,j}$.
This proves Claim 16.
\end{proof}

The box $c=\left(  i,j\right)  $ is a failure of the twisted array $\left(
\sigma^{\prime},T^{\prime}\right)  $ (by Claim 14). It is thus a leftmost
failure of $\left(  \sigma^{\prime},T^{\prime}\right)  $ (since Claim 15 shows
that $\left(  \sigma^{\prime},T^{\prime}\right)  $ has no failures in any
column to its west), and therefore the bottommost leftmost failure of $\left(
\sigma^{\prime},T^{\prime}\right)  $ (since Claim 16 shows that $\left(
\sigma^{\prime},T^{\prime}\right)  $ has no failures in the $j$-th column to
the south of $\left(  i,j\right)  $). This proves Lemma \ref{lem.flagJT.flip1}
\textbf{(b)}. \medskip

\textbf{(c)} Lemma \ref{lem.flagJT.flip1} \textbf{(b)} shows that the box $c$
is again the bottommost leftmost failure of $\left(  \sigma^{\prime}%
,T^{\prime}\right)  $. Hence, the twisted array $\operatorname*{flip}\left(
\sigma^{\prime},T^{\prime}\right)  $ is obtained from $\left(  \sigma^{\prime
},T^{\prime}\right)  $ by exchanging the values of $\sigma^{\prime}$ on $i-1$
and $i$ and by swapping the top floor and the bottom floor of $c$. But
$\left(  \sigma^{\prime},T^{\prime}\right)  $ was obtained from $\left(
\sigma,T\right)  $ in the exact same way -- i.e., by exchanging the values of
$\sigma$ on $i-1$ and $i$ and by swapping the top floor and the bottom floor
of $c$. The nature of these operations is such that doing them twice in
succession returns the $\sigma$ and the $T$ to their original values (because
exchanging the values of $\sigma$ on $i-1$ and $i$ twice in succession returns
the original $\sigma$, whereas swapping the top floor and the bottom floor
twice in succession returns all entries to their original places). Thus,
$\operatorname*{flip}\left(  \sigma^{\prime},T^{\prime}\right)  =\left(
\sigma,T\right)  $. This proves Lemma \ref{lem.flagJT.flip1} \textbf{(c)}.
\medskip

\textbf{(d)} The definition of $\sigma^{\prime}$ yields $\sigma^{\prime
}=\sigma\circ s_{i-1}$. But it is well-known that the simple transposition
$s_{i-1}$ has sign $\left(  -1\right)  ^{s_{i-1}}=-1$. Furthermore, it is
well-known that any two permutations $\alpha,\beta\in S_{n}$ satisfy $\left(
-1\right)  ^{\alpha\circ\beta}=\left(  -1\right)  ^{\alpha}\cdot\left(
-1\right)  ^{\beta}$. Thus, $\left(  -1\right)  ^{\sigma\circ s_{i-1}}=\left(
-1\right)  ^{\sigma}\cdot\underbrace{\left(  -1\right)  ^{s_{i-1}}}%
_{=-1}=\left(  -1\right)  ^{\sigma}\cdot\left(  -1\right)  =-\left(
-1\right)  ^{\sigma}$.
In other words,
$\left(-1\right)^{\sigma^{\prime}} = -\left(-1\right)^{\sigma}$
(since $\sigma^{\prime}=\sigma\circ s_{i-1}$).
Thus, Lemma \ref{lem.flagJT.flip1} \textbf{(d)} is
proved. \medskip

\textbf{(e)} We shall abuse notation somewhat and speak of entries when we
really mean boxes.

If $e$ is the entry of $T$ in a given box $\left(  p,q\right)  $, then we let
$d\left(  e\right)  $ denote the number $q-p$ (so that the entry $e$ belongs
to the $d\left(  e\right)  $-th diagonal). Strictly speaking, this depends on
the box $\left(  p,q\right)  $, not on the entry $e$, but we will pretend that
each entry \textquotedblleft remembers\textquotedblright\ what box it is in.
We shall refer to the number $d\left(  e\right)  $ as the \emph{diagonal
position} of $e$ in $T$.

Now, the definition of the weight $w\left(  T\right)  $ yields%
\begin{equation}
w\left(  T\right)  =\prod_{\left(  p,q\right)  \in P\left(  \sigma\right)
}u_{T\left(  p,q\right)  ,\ q-p}=\prod_{e\text{ is an entry of }%
T}u_{e,\ d\left(  e\right)  } \label{pf.lem.flagJT.flip1.e.1}%
\end{equation}
(where we still pretend that each entry \textquotedblleft
remembers\textquotedblright\ its box, so that equal entries in different
positions create different factors of the product). Similarly,%
\begin{equation}
w\left(  T^{\prime}\right)  =\prod_{e\text{ is an entry of }T^{\prime}%
}u_{e,\ d\left(  e\right)  }, \label{pf.lem.flagJT.flip1.e.2}%
\end{equation}
where $d\left(  e\right)  $ now refers to the diagonal position of $e$ in
$T^{\prime}$.

Now, recall that $T^{\prime}$ is obtained from $T$ by swapping the top floor
of $c$ with the bottom floor of $c$. During this swapping operation, some
entries of $T$ get moved (namely, the rising entries are moved by $1$ unit to
the northwest, whereas the falling entries are moved by $1$ unit to the
southeast), but their diagonal positions remain unchanged (since a move by $1$
unit to the northwest or southeast does not change the diagonal position).
Hence, the array $T^{\prime}$ contains the same entries as $T$, possibly in
different positions but in the same diagonal positions. Thus,%
\[
\prod_{e\text{ is an entry of }T^{\prime}}u_{e,\ d\left(  e\right)  }%
=\prod_{e\text{ is an entry of }T}u_{e,\ d\left(  e\right)  }.
\]
In view of (\ref{pf.lem.flagJT.flip1.e.1}) and (\ref{pf.lem.flagJT.flip1.e.2}%
), we can rewrite this as $w\left(  T^{\prime}\right)  =w\left(  T\right)  $.
This proves Lemma \ref{lem.flagJT.flip1} \textbf{(e)}. \medskip

\textbf{(f)} Assume that $\left(  \sigma,T\right)  $ is $\mathbf{b}$-flagged.
We must prove that $\left(  \sigma^{\prime},T^{\prime}\right)  $ is
$\mathbf{b}$-flagged. In other words, we must prove that each entry in the
$k$-th row of $T^{\prime}$ is $\leq b_{\sigma^{\prime}\left(  k\right)  }$ for
each $k\in\left[  n\right]  $ (by the definition of \textquotedblleft%
$\mathbf{b}$-flagged $\sigma$-array\textquotedblright).

We have assumed that $\left(  \sigma,T\right)  $ is $\mathbf{b}$-flagged. In
other words, the $\sigma$-array $T$ is $\mathbf{b}$-flagged. In other words,
\begin{equation}
\text{each entry in the }k\text{-th row of }T\text{ is }\leq b_{\sigma\left(
k\right)  } \label{pf.lem.flagJT.flip1.f.T}%
\end{equation}
for each $k\in\left[  n\right]  $ (by the definition of \textquotedblleft%
$\mathbf{b}$-flagged\textquotedblright).

Again, we proceed by a series of claims:

\begin{statement}
\textit{Claim 17:} All staying entries in the $i$-th row of $T$ are $\leq
b_{\sigma^{\prime}\left(  i\right)  }$.
\end{statement}

\begin{proof}
[Proof of Claim 17.]Let $e$ be a staying entry in the $i$-th row of $T$. We
must prove that $e\leq b_{\sigma^{\prime}\left(  i\right)  }$.

The staying entries in the $i$-th row of $T$ are
$T\tup{i,1}$, $T\tup{i,2}$, $\ldots$, $T\tup{i,j}$.
Hence, $e=T\left(  i,\ell\right)  $ for some $\ell\in\left[  j\right]  $
(since $e$ is a staying entry in the $i$-th row of $T$). Consider this $\ell$.

But $T$ is a $\sigma$-array, and thus the entries of $T$ weakly increase
left-to-right along each row. Hence, $T\left(  i,\ell\right)  \leq T\left(
i,j\right)  $ (since $\ell\leq j$). It thus remains to prove that $T\left(
i,j\right)  \leq b_{\sigma^{\prime}\left(  i\right)  }$ (because once this is
proved, it will follow that $e=T\left(  i,\ell\right)  \leq T\left(
i,j\right)  \leq b_{\sigma^{\prime}\left(  i\right)  }$, which is precisely
our goal). In other words, it remains to prove that $T\left(  i,j\right)  \leq
b_{\sigma\left(  i-1\right)  }$ (since $\sigma^{\prime}\left(  i\right)
=\sigma\left(  i-1\right)  $).

Recall that $c=\left(  i,j\right)  $ is a failure of $\left(  \sigma,T\right)
$. Hence, we are in one of the following two cases:

\textit{Case 1:} The failure $c=\left(  i,j\right)  $ is an inner failure of
$\left(  \sigma,T\right)  $.

\textit{Case 2:} The failure $c=\left(  i,j\right)  $ is an outer failure of
$\left(  \sigma,T\right)  $.

Let us consider Case 1 first. In this case, $c=\left(  i,j\right)  $ is an
inner failure of $\left(  \sigma,T\right)  $. Hence, $T\left(  i-1,j\right)
\geq T\left(  i,j\right)  $ (by the definition of an inner failure), so that
$T\left(  i,j\right)  \leq T\left(  i-1,j\right)  $. But
(\ref{pf.lem.flagJT.flip1.f.T}) shows that each entry in the $\left(
i-1\right)  $-st row of $T$ is $\leq b_{\sigma\left(  i-1\right)  }$. Hence,
in particular, $T\left(  i-1,j\right)  \leq b_{\sigma\left(  i-1\right)  }$
(since $T\left(  i-1,j\right)  $ is an entry in the $\left(  i-1\right)  $-st
row of $T$). Therefore, $T\left(  i,j\right)  \leq T\left(  i-1,j\right)  \leq
b_{\sigma\left(  i-1\right)  }$. Thus we have proved $T\left(  i,j\right)
\leq b_{\sigma\left(  i-1\right)  }$ in Case 1.

Let us now consider Case 2. In this case, $c=\left(  i,j\right)  $ is an outer
failure of $\left(  \sigma,T\right)  $. Thus, $\left(  i-1,j\right)  \notin
P\left(  \sigma\right)  $. Hence, the $\left(  i-1\right)  $-st row of
$P\left(  \sigma\right)  $ has fewer than $j$ boxes (since the rows of
$P\left(  \sigma\right)  $ are left-aligned).

Recall that the $k$-th row of the diagram $P\left(  \sigma\right)  $ contains
$\rho_{k}$ boxes for each $k\in\left[  n\right]  $. Hence, the $\left(
i-1\right)  $-st row of $P\left(  \sigma\right)  $ has $\rho_{i-1}$ boxes.
Thus, $\rho_{i-1}<j$ (since the $\left(  i-1\right)  $-st row of $P\left(
\sigma\right)  $ has fewer than $j$ boxes). But Claim 4 yields $\rho_{i}\geq
j$. Hence, $\rho_{i}\geq j>\rho_{i-1}$ (since $\rho_{i-1}<j$).

Next, we claim that $\sigma\left(  i\right)  \leq\sigma\left(  i-1\right)  $.
Indeed, assume the contrary. Thus, $\sigma\left(  i\right)  >\sigma\left(
i-1\right)  $, so that $\sigma\left(  i\right)  \geq\sigma\left(  i-1\right)
+1$. Now, recall that $\mu_{1}\geq\mu_{2}\geq\mu_{3}\geq\cdots$ (since $\mu$
is a partition). Hence, $\mu_{p}\geq\mu_{q}$ for any two positive integers $p$
and $q$ satisfying $p<q$. Applying this to $p=\sigma\left(  i-1\right)  $ and
$q=\sigma\left(  i\right)  $, we obtain $\mu_{\sigma\left(  i-1\right)  }%
\geq\mu_{\sigma\left(  i\right)  }$ (since $\sigma\left(  i-1\right)
<\sigma\left(  i\right)  $). In other words, $\mu_{\sigma\left(  i\right)
}\leq\mu_{\sigma\left(  i-1\right)  }$. Now, the definition of $\rho_{i}$
yields%
\begin{align*}
\rho_{i} &  =\underbrace{\mu_{\sigma\left(  i\right)  }}_{\leq\mu
_{\sigma\left(  i-1\right)  }}-\underbrace{\sigma\left(  i\right)  }%
_{\geq\sigma\left(  i-1\right)  +1}+\underbrace{i}_{=\left(  i-1\right)  +1}\\
&  \leq\mu_{\sigma\left(  i-1\right)  }-\left(  \sigma\left(  i-1\right)
+1\right)  +\left(  \left(  i-1\right)  +1\right)  \\
&  =\mu_{\sigma\left(  i-1\right)  }-\sigma\left(  i-1\right)  +\left(
i-1\right)  =\rho_{i-1}%
\end{align*}
(since $\rho_{i-1}$ is defined to be $\mu_{\sigma\left(  i-1\right)  }%
-\sigma\left(  i-1\right)  +\left(  i-1\right)  $). But this contradicts
$\rho_{i}>\rho_{i-1}$. This contradiction shows that our assumption was false.

Hence, $\sigma\left(  i\right)  \leq\sigma\left(  i-1\right)  $ is proved. But
the flagging $\mathbf{b}$ is weakly increasing; thus, $b_{1}\leq b_{2}\leq
b_{3}\leq\cdots$. In other words, $b_{p}\leq b_{q}$ for any two positive
integers $p$ and $q$ satisfying $p\leq q$. Applying this to $p=\sigma\left(
i\right)  $ and $q=\sigma\left(  i-1\right)  $, we obtain $b_{\sigma\left(
i\right)  }\leq b_{\sigma\left(  i-1\right)  }$
(since $\sigma\left(  i\right)  \leq\sigma\left(  i-1\right)  $).

But (\ref{pf.lem.flagJT.flip1.f.T}) shows that each entry in the $i$-th row of
$T$ is $\leq b_{\sigma\left(  i\right)  }$. Hence, in particular, $T\left(
i,j\right)  \leq b_{\sigma\left(  i\right)  }$ (since $T\left(  i,j\right)  $
is an entry in the $i$-th row of $T$). Therefore, $T\left(  i,j\right)  \leq
b_{\sigma\left(  i\right)  }\leq b_{\sigma\left(  i-1\right)  }$. Thus we have
proved $T\left(  i,j\right)  \leq b_{\sigma\left(  i-1\right)  }$ in Case 2.

We have now proved $T\left(  i,j\right)  \leq b_{\sigma\left(  i-1\right)  }$
in both Cases 1 and 2. Hence, $T\left(  i,j\right)  \leq b_{\sigma\left(
i-1\right)  }$ always holds. As explained above, this completes the proof of
Claim 17.
\end{proof}

\begin{statement}
\textit{Claim 18:} All falling entries in the $\left(  i-1\right)  $-st row of
$T$ are $\leq b_{\sigma^{\prime}\left(  i\right)  }$.
\end{statement}

\begin{proof}
[Proof of Claim 18.]All entries in the $\left(  i-1\right)  $-st row of $T$
are $\leq b_{\sigma\left(  i-1\right)  }$ (by (\ref{pf.lem.flagJT.flip1.f.T}),
applied to $k=i-1$). In other words, all entries in the $\left(  i-1\right)
$-st row of $T$ are $\leq b_{\sigma^{\prime}\left(  i\right)  }$ (since
$\sigma^{\prime}\left(  i\right)  =\sigma\left(  i-1\right)  $). Hence, in
particular, all falling entries in the $\left(  i-1\right)  $-st row of $T$
are $\leq b_{\sigma^{\prime}\left(  i\right)  }$. This proves Claim 18.
\end{proof}

\begin{statement}
\textit{Claim 19:} All staying entries in the $\left(  i-1\right)  $-st row of
$T$ are $\leq b_{\sigma^{\prime}\left(  i-1\right)  }$.
\end{statement}

\begin{proof}
[Proof of Claim 19.]Let $e$ be a staying entry in the $\left(  i-1\right)
$-st row of $T$. We must prove that $e\leq b_{\sigma^{\prime}\left(
i-1\right)  }$.

The staying entries in the $\tup{i-1}$-st row of $T$ are
$T\tup{i-1,1}$, $T\tup{i-1,2}$, $\ldots$, $T\tup{i-1,j-1}$.
Hence, $e=T\left(  i-1,\ell\right)  $ for some $\ell\in\left[  j-1\right]  $
(since $e$ is a staying entry in the $\tup{i-1}$-st
row of $T$). Consider this $\ell$.

From $\ell\in\left[  j-1\right]  $, we obtain $\ell\leq j-1$ and thus
$j-1\geq\ell\geq1$.
Hence, Claim 10 yields $\tup{i-1, j-1} \in P\tup{\sigma}$ and
$\tup{i, j-1} \in P\tup{\sigma}$ and $T\tup{i-1, j-1} < T\tup{i, j-1}$.

But $T$ is a $\sigma$-array, and thus the entries of $T$ weakly increase
left-to-right along each row. Hence, from $\ell\leq j-1$, we obtain $T\left(
i-1,\ell\right)  \leq T\left(  i-1,j-1\right)  $.

Altogether, we now have $e=T\left(  i-1,\ell\right)  \leq T\left(
i-1,j-1\right)  <T\left(  i,j-1\right)  $.

But (\ref{pf.lem.flagJT.flip1.f.T}) shows that each entry in the $i$-th row of
$T$ is $\leq b_{\sigma\left(  i\right)  }$. Hence, in particular, $T\left(
i,j-1\right)  \leq b_{\sigma\left(  i\right)  }$ (since $T\left(
i,j-1\right)  $ is an entry in the $i$-th row of $T$). Therefore, $e<T\left(
i,j-1\right)  \leq b_{\sigma\left(  i\right)  }$. This rewrites as
$e<b_{\sigma^{\prime}\left(  i-1\right)  }$ (since $\sigma^{\prime}\left(
i-1\right)  =\sigma\left(  i\right)  $). Hence, of course, $e\leq
b_{\sigma^{\prime}\left(  i-1\right)  }$. This completes the proof of Claim 19.
\end{proof}

\begin{statement}
\textit{Claim 20:} All rising entries in the $i$-th row of $T$ are $\leq
b_{\sigma^{\prime}\left(  i-1\right)  }$.
\end{statement}

\begin{proof}
[Proof of Claim 20.]All entries in the $i$-th row of $T$ are $\leq
b_{\sigma\left(  i\right)  }$ (by (\ref{pf.lem.flagJT.flip1.f.T}), applied to
$k=i$). In other words, all entries in the $i$-th row of $T$ are $\leq
b_{\sigma^{\prime}\left(  i-1\right)  }$ (since $\sigma^{\prime}\left(
i-1\right)  =\sigma\left(  i\right)  $). Hence, in particular, all rising
entries in the $i$-th row of $T$ are $\leq b_{\sigma^{\prime}\left(
i-1\right)  }$. This proves Claim 20.
\end{proof}

\begin{statement}
\textit{Claim 21:} Let $k\in\left[  n\right]  $. Then, all entries in the
$k$-th row of $T^{\prime}$ are $\leq b_{\sigma^{\prime}\left(  k\right)  }$.
\end{statement}

\begin{proof}
[Proof of Claim 21.]The $i$-th row of $T^{\prime}$ has two kinds of entries:
the staying entries of the $i$-th row of $T$ (which remain in their places in
$T^{\prime}$), and the falling entries of the $\left(  i-1\right)  $-st row of
$T$ (which are moved to the $i$-th row by the flip operation). Both kinds of
entries are $\leq b_{\sigma^{\prime}\left(  i\right)  }$ (by Claim 17 and
Claim 18, respectively). Hence, all entries in the $i$-th row of $T^{\prime}$
are $\leq b_{\sigma^{\prime}\left(  i\right)  }$. In other words, Claim 21 is
proved for $k=i$.

The $\left(  i-1\right)  $-st row of $T^{\prime}$ has two kinds of entries:
the staying entries of the $\left(  i-1\right)  $-st row of $T$ (which remain
in their places in $T^{\prime}$), and the rising entries of the $i$-th row of
$T$ (which are moved to the $\left(  i-1\right)  $-st row by the flip
operation). Both kinds of entries are $\leq b_{\sigma^{\prime}\left(
i-1\right)  }$ (by Claim 19 and Claim 20, respectively). Hence, all entries in
the $\left(  i-1\right)  $-st row of $T^{\prime}$ are $\leq b_{\sigma^{\prime
}\left(  i-1\right)  }$. In other words, Claim 21 is proved for $k=i-1$.

We have now proved Claim 21 for $k=i$ and for $k=i-1$. Hence, for the rest of
this proof of Claim 21, we WLOG assume that $k$ equals neither $i$ nor
$i-1$. Hence, the entries in the $k$-th row of $T^{\prime}$ are precisely the
entries in the $k$-th row of $T$ (because the flip operation affects only the
$i$-th and $\left(  i-1\right)  $-st rows). But all the latter entries are
$\leq b_{\sigma\left(  k\right)  }$ (by (\ref{pf.lem.flagJT.flip1.f.T})).
Hence, all the former entries are $\leq b_{\sigma\left(  k\right)  }$ as well.
So we have shown that all entries in the $k$-th row of $T^{\prime}$ are $\leq
b_{\sigma\left(  k\right)  }$.

But $k\in\left[  n\right]  \setminus\left\{  i,i-1\right\}  $ (since $k$
equals neither $i$ nor $i-1$). Hence, $\sigma^{\prime}\left(  k\right)
=\sigma\left(  k\right)  $ (by (\ref{pf.lem.flagJT.flip1.siprime})). Recall
that all entries in the $k$-th row of $T^{\prime}$ are $\leq b_{\sigma\left(
k\right)  }$. In other words, all entries in the $k$-th row of $T^{\prime}$
are $\leq b_{\sigma^{\prime}\left(  k\right)  }$ (since $\sigma^{\prime
}\left(  k\right)  =\sigma\left(  k\right)  $). This proves Claim 21.
\end{proof}

Claim 21 shows that the $\sigma^{\prime}$-array $T^{\prime}$ is $\mathbf{b}%
$-flagged (by the definition of \textquotedblleft$\mathbf{b}$%
-flagged $\sigma$-array\textquotedblright). In other words, the twisted array $\left(
\sigma^{\prime},T^{\prime}\right)  $ is $\mathbf{b}$-flagged. This proves
Lemma \ref{lem.flagJT.flip1} \textbf{(f)}.
\end{proof}

The proof of Lemma \ref{lem.flagJT.cancel} uses the flip operation (defined in
Lemma \ref{lem.flagJT.flip1}) to cancel all the unwanted addends (i.e., the
addends corresponding to failing twisted arrays $\left(  \sigma,T\right)  $)
from the sum on the left hand side. This is best formalized using the
following general cancellation rule:

\begin{lemma}
\label{lem.A.cancel}Let $A$ be a finite set. Let $R$ be a ring. For each $a\in
A$, let $m_{a}$ be an integer, and let $r_{a}$ be an element of $R$. Let
$f:A\rightarrow A$ be a bijection. Assume that each $a\in A$ satisfies%
\begin{equation}
m_{f\left(  a\right)  }=-m_{a} \label{eq.lem.A.cancel.m}%
\end{equation}
and%
\begin{equation}
r_{f\left(  a\right)  }=r_{a}. \label{eq.lem.A.cancel.r}%
\end{equation}
Then,%
\[
\sum_{a\in A}m_{a}r_{a}=0.
\]

\end{lemma}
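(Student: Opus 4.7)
The plan is to prove this by analyzing the orbit structure of $f$. Since $A$ is finite and $f:A\to A$ is a bijection, $A$ decomposes as a disjoint union of $f$-orbits, each being a cycle of the form $\{a,\,f(a),\,f^{2}(a),\,\ldots,\,f^{k-1}(a)\}$ for some smallest positive integer $k$ with $f^{k}(a)=a$. It will suffice to show that the partial sum $\sum_{b\in\mathcal{O}}m_{b}r_{b}$ vanishes for every single orbit $\mathcal{O}$.

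First I would iterate the two hypotheses: from $m_{f(a)}=-m_{a}$ and $r_{f(a)}=r_{a}$, induction on $j$ gives $m_{f^{j}(a)}=(-1)^{j}m_{a}$ and $r_{f^{j}(a)}=r_{a}$ for all $j\geq0$. Therefore, fixing a representative $a$ of an orbit of length $k$, the orbit's contribution simplifies to
\[
\sum_{j=0}^{k-1}m_{f^{j}(a)}r_{f^{j}(a)}
=\left(\sum_{j=0}^{k-1}(-1)^{j}\right)m_{a}r_{a}.
\]

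Now I would split into two cases according to the parity of $k$. If $k$ is even, then $\sum_{j=0}^{k-1}(-1)^{j}=0$ and the orbit contributes $0$. If $k$ is odd, then applying the iterated identity with $j=k$ and using $f^{k}(a)=a$ gives $m_{a}=(-1)^{k}m_{a}=-m_{a}$, so $2m_{a}=0$; since $m_{a}$ is an integer, this forces $m_{a}=0$, and the orbit again contributes $0$. Summing over all orbits yields $\sum_{a\in A}m_{a}r_{a}=0$.

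The main subtlety — and it is a small one — is that the tempting one-line argument ``$S=\sum_{a}m_{f(a)}r_{f(a)}=-\sum_{a}m_{a}r_{a}=-S$, hence $S=0$'' is not valid because $R$ is an arbitrary ring and may have $2$-torsion. The orbit analysis above sidesteps this precisely by exploiting that $m_{a}$ is an \emph{integer}, so $2m_{a}=0$ does imply $m_{a}=0$; this is the only place where the integrality of the $m_{a}$'s is genuinely used. In the typical application (e.g.\ the flipping involution of Lemma \ref{lem.flagJT.flip1}) the map $f$ will actually be an involution, so every orbit has length $1$ or $2$, and the above argument specializes to the familiar sign-reversing involution cancellation; but stating the lemma in this orbit-based generality costs nothing.
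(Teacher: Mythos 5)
Your proof is correct, and it correctly flags the one genuine subtlety — that the naive ``$S=-S$, hence $S=0$'' argument fails when $R$ has $2$-torsion — and repairs it by exploiting that the $m_a$ are integers.

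Your route differs from the paper's. You decompose $A$ into $f$-orbits, iterate the hypotheses to get $m_{f^j(a)}=(-1)^j m_a$ and $r_{f^j(a)}=r_a$, and then argue by parity of the orbit length, using the integrality of $m_a$ to kill the odd-length orbits (since $2m_a=0$ forces $m_a=0$). The paper instead partitions $A$ by the \emph{sign} of $m_a$: the terms with $m_a=0$ vanish trivially, and the hypothesis $m_{f(a)}=-m_a$ shows that $f$ restricts to a bijection $A_+\to A_-$ between the subsets where $m_a>0$ and $m_a<0$; substituting along this bijection turns $\sum_{a\in A_-}m_ar_a$ into $-\sum_{a\in A_+}m_ar_a$, and the two cancel. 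Both arguments sidestep the $2$-torsion pitfall and both genuinely use that the $m_a$ are integers (yours via ``$2m=0\Rightarrow m=0$'', the paper's via the sign trichotomy). The paper's version is a bit shorter since it avoids discussing orbit lengths and iterated powers of $f$; your version is more structural and makes the mechanism of cancellation (pairing within orbits) more explicit. Either is a perfectly good proof.
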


\begin{proof}
[Proof of Lemma \ref{lem.A.cancel}.]The map $f$ is a bijection, thus injective
and surjective.

For each $a\in A$, the number $m_{a}$ is an integer, thus is either $>0$ or
$=0$ or $<0$. Hence, we can split up the sum $\sum_{a\in A}m_{a}r_{a}$ as
follows:%
\begin{align}
\sum_{a\in A}m_{a}r_{a}  &  =\sum_{\substack{a\in A;\\m_{a}>0}}m_{a}r_{a}%
+\sum_{\substack{a\in A;\\m_{a}=0}}\underbrace{m_{a}}_{=0}r_{a}+\sum
_{\substack{a\in A;\\m_{a}<0}}m_{a}r_{a}
=\sum_{\substack{a\in A;\\m_{a}>0}}m_{a}r_{a}+\underbrace{\sum
_{\substack{a\in A;\\m_{a}=0}}0r_{a}}_{=0}+\sum_{\substack{a\in A;\\m_{a}%
<0}}m_{a}r_{a}\nonumber\\
&  =\sum_{\substack{a\in A;\\m_{a}>0}}m_{a}r_{a}+\sum_{\substack{a\in
A;\\m_{a}<0}}m_{a}r_{a}. \label{pf.lem.A.cancel.1}%
\end{align}

Now, let us define two subsets%
\[
A_{+}:=\left\{  a\in A\ \mid\ m_{a}>0\right\}  \ \ \ \ \ \ \ \ \ \ \text{and}%
\ \ \ \ \ \ \ \ \ \ A_{-}:=\left\{  a\in A\ \mid\ m_{a}<0\right\}
\]
of $A$. Then, the summation signs $\sum_{\substack{a\in A;\\m_{a}>0}}$ and
$\sum_{\substack{a\in A;\\m_{a}<0}}$ can be rewritten as $\sum_{a\in A_{+}}$
and $\sum_{a\in A_{-}}$, respectively. Therefore, we can rewrite
(\ref{pf.lem.A.cancel.1}) as%
\begin{equation}
\sum_{a\in A}m_{a}r_{a}=\sum_{a\in A_{+}}m_{a}r_{a}+\sum_{a\in A_{-}}%
m_{a}r_{a}. \label{pf.lem.A.cancel.2}%
\end{equation}

Each $a\in A_{+}$ satisfies $f\left(  a\right)  \in A_{-}$%
\ \ \ \ \footnote{\textit{Proof.} Let $a\in A_{+}$. Thus, $a\in A$ and
$m_{a}>0$ (by the definition of $A_{+}$). However, (\ref{eq.lem.A.cancel.m})
yields $m_{f\left(  a\right)  }=-m_{a}<0$ (since $m_{a}>0$). Hence, $f\left(
a\right)  \in A$ and $m_{f\left(  a\right)  }<0$. In other words, $f\left(
a\right)  \in A_{-}$ (by the definition of $A_{-}$), qed.}. Hence, the map%
\begin{align*}
g:A_{+}  &  \rightarrow A_{-},\\
a  &  \mapsto f\left(  a\right)
\end{align*}
is well-defined. This map $g$ is a restriction of the injective map $f$, and
thus is injective itself. Furthermore, $g$ is
surjective\footnote{\textit{Proof.} Let $b\in A_{-}$. Then, there exists some
$a\in A$ such that $b=f\left(  a\right)  $ (since $f$ is surjective). Consider
this $a$. From $b\in A_{-}$, we obtain $b\in A$ and $m_{b}<0$ (by the
definition of $A_{-}$). However, from $b=f\left(  a\right)  $, we obtain
$m_{b}=m_{f\left(  a\right)  }=-m_{a}$ (by (\ref{eq.lem.A.cancel.m})). Hence,
$m_{a}=-m_{b}>0$ (since $m_{b}<0$) and therefore $a\in A_{+}$ (by the
definition of $A_{+}$). The definition of $g$ thus yields $g\left(  a\right)
=f\left(  a\right)  =b$. Thus, $b=g\left(  a\right)  \in g\left(
A_{+}\right)  $ (since $a\in A_{+}$).
\par
Forget that we fixed $b$. We thus have shown that $b\in g\left(  A_{+}\right)
$ for each $b\in A_{-}$. In other words, $A_{-}\subseteq g\left(
A_{+}\right)  $. Thus, the map $g$ is surjective.}. Hence, $g$ is a bijection
(since $g$ is both injective and surjective).

Therefore, we can substitute $g\left(  a\right)  $ for $a$ in the sum
$\sum_{a\in A_{-}}m_{a}r_{a}$. We thus obtain%
\begin{align*}
\sum_{a\in A_{-}}m_{a}r_{a}  &  =\sum_{a\in A_{+}}\underbrace{m_{g\left(
a\right)  }r_{g\left(  a\right)  }}_{\substack{=m_{f\left(  a\right)
}r_{f\left(  a\right)  }\\\text{(since the definition of }g\\\text{yields
}g\left(  a\right)  =f\left(  a\right)  \text{)}}}=\sum_{a\in A_{+}%
}\underbrace{m_{f\left(  a\right)  }}_{\substack{=-m_{a}\\\text{(by
(\ref{eq.lem.A.cancel.m}))}}}\ \ \underbrace{r_{f\left(  a\right)  }%
}_{\substack{=r_{a}\\\text{(by (\ref{eq.lem.A.cancel.r}))}}}\\
&  =\sum_{a\in A_{+}}\left(  -m_{a}\right)  r_{a}=-\sum_{a\in A_{+}}m_{a}%
r_{a}.
\end{align*}
Therefore,
\[
\sum_{a\in A_{+}}m_{a}r_{a}+\sum_{a\in A_{-}}m_{a}r_{a}=0.
\]
In light of this, we can rewrite (\ref{pf.lem.A.cancel.2}) as $\sum_{a\in
A}m_{a}r_{a}=0$. This proves Lemma \ref{lem.A.cancel}.
\end{proof}

\begin{proof}
[Proof of Lemma \ref{lem.flagJT.cancel}.]Each twisted array $\left(
\sigma,T\right)  $ is either failing or unfailing. Hence,%
\begin{align}
&  \sum_{\substack{\left(  \sigma,T\right)  \text{ is a }\mathbf{b}%
\text{-flagged}\\\text{twisted array}}}\left(  -1\right)  ^{\sigma}w\left(
T\right)  \nonumber\\
&  =\sum_{\substack{\left(  \sigma,T\right)  \text{ is a}\\\text{failing
}\mathbf{b}\text{-flagged}\\\text{twisted array}}}\left(  -1\right)  ^{\sigma
}w\left(  T\right)  +\sum_{\substack{\left(  \sigma,T\right)  \text{ is
an}\\\text{unfailing }\mathbf{b}\text{-flagged}\\\text{twisted array}}}\left(
-1\right)  ^{\sigma}w\left(  T\right)  .\label{pf.lem.flagJT.cancel.2}%
\end{align}
Our next goal is to show that the first sum on the right hand side is $0$.

Let $A$ be the set of all failing $\mathbf{b}$-flagged twisted arrays. This
set $A$ is finite\footnote{\textit{Proof.} Let $b_{\max}:=\max\left\{
b_{1},b_{2},\ldots,b_{n}\right\}  $ (this should be understood to mean $0$
when $n=0$). If $\left(  \sigma,T\right)  $ is a $\mathbf{b}$-flagged twisted
array, then $\sigma$ is a permutation in $S_{n}$, whereas $T$ is a filling of
the diagram $P\left(  \sigma\right)  $ with entries from the set $\left\{
1,2,\ldots,b_{\max}\right\}  $ (since the \textquotedblleft$\mathbf{b}%
$-flagged\textquotedblright\ condition forces each entry of $T$ to be $\leq
b_{i}$ for an appropriate $i\in\left[  n\right]  $, and thus to be $\leq
b_{\max}$). Clearly, this leaves only finitely many options for $\sigma$ and
only finitely many options for $T$. Thus, there are finitely many $\mathbf{b}%
$-flagged twisted arrays. Hence, a fortiori, there are only finitely many
failing $\mathbf{b}$-flagged twisted arrays. In other words, the set $A$ is
finite.}. Moreover, the following follows easily from Lemma
\ref{lem.flagJT.flip1}:

\begin{statement}
\textit{Claim 1:} For each $\left(  \sigma,T\right)  \in A$, we have
$\operatorname*{flip}\left(  \sigma,T\right)  \in A$.
\end{statement}

\begin{proof}
[Proof of Claim 1.] Let $\left(  \sigma,T\right)  \in A$. Thus, $\left(
\sigma,T\right)  $ is a failing $\mathbf{b}$-flagged twisted array (by the
definition of $A$).
Let $\left(  \sigma^{\prime},T^{\prime}\right)  $ be the pair
$\operatorname*{flip}\left(  \sigma,T\right)  $. Then, Lemma
\ref{lem.flagJT.flip1} \textbf{(a)} shows that the pair $\left(
\sigma^{\prime},T^{\prime}\right)  $ is again a failing twisted array, and
furthermore, Lemma \ref{lem.flagJT.flip1} \textbf{(f)} shows that this twisted
array $\left(  \sigma^{\prime},T^{\prime}\right)  $ is again $\mathbf{b}%
$-flagged. Thus, $\left(  \sigma^{\prime},T^{\prime}\right)  $ is a failing
$\mathbf{b}$-flagged twisted array. In other words, $\left(  \sigma^{\prime
},T^{\prime}\right)  \in A$ (by the definition of $A$). In other words,
$\operatorname*{flip}\left(  \sigma,T\right)  \in A$ (since $\left(
\sigma^{\prime},T^{\prime}\right)  $ is the pair $\operatorname*{flip}\left(
\sigma,T\right)  $). This proves Claim 1.
\end{proof}

Thanks to Claim 1, we can define a map%
\begin{align*}
\operatorname*{flip}:A  & \rightarrow A,\\
\left(  \sigma,T\right)    & \mapsto\operatorname*{flip}\left(  \sigma
,T\right)  .
\end{align*}
Lemma \ref{lem.flagJT.flip1} \textbf{(c)} shows that this map is inverse to
itself (i.e., if we apply it twice in succession to some twisted array
$\left(  \sigma,T\right)  $, then we obtain the original $\left(
\sigma,T\right)  $ back). Hence, this map is invertible, i.e., a bijection.

For each element $a=\left(  \sigma,T\right)  $ of $A$, we define the integer
$m_{a}:=\left(  -1\right)  ^{\sigma}$ and the element $r_{a}:=w\left(
T\right)  $ of $R$ (where $R$ is as in Definition \ref{defh} \textbf{(a)}).
Then, we claim the following:

\begin{statement}
\textit{Claim 2:} Each $a\in A$ satisfies $m_{\operatorname*{flip}\left(
a\right)  }=-m_{a}$ and $r_{\operatorname*{flip}\left(  a\right)  }=r_{a}$.
\end{statement}

\begin{proof}
[Proof of Claim 2.] Let $a\in A$. Write $a$ as $\left(  \sigma,T\right)  $.
Thus, $\left(  \sigma,T\right)  =a\in A$. Hence, $\left(  \sigma,T\right)  $
is a failing $\mathbf{b}$-flagged twisted array (by the definition of $A$).
Let $\left(
\sigma^{\prime},T^{\prime}\right)  $ be the pair $\operatorname*{flip}\left(
\sigma,T\right)  $. Then, $\left(  \sigma^{\prime},T^{\prime}\right)
=\operatorname*{flip}\left(  \sigma,T\right)  =\operatorname*{flip}\left(
a\right)  $ (since $\left(  \sigma,T\right)  =a$).

From $a=\left(  \sigma,T\right)  $, we obtain $m_{a}=m_{\left(  \sigma
,T\right)  }=\left(  -1\right)  ^{\sigma}$ (by the definition of $m_{\left(
\sigma,T\right)  }$) and $r_{a}=r_{\left(  \sigma,T\right)  }=w\left(
T\right)  $ (by the definition of $r_{\left(  \sigma,T\right)  }$). Similarly,
from $\operatorname*{flip}\left(  a\right)  =\left(  \sigma^{\prime}%
,T^{\prime}\right)  $, we obtain $m_{\operatorname*{flip}\left(  a\right)
}=\left(  -1\right)  ^{\sigma^{\prime}}$ and $r_{\operatorname*{flip}\left(
a\right)  }=w\left(  T^{\prime}\right)  $.

However, Lemma \ref{lem.flagJT.flip1} \textbf{(d)} yields $\left(  -1\right)
^{\sigma^{\prime}}=-\left(  -1\right)  ^{\sigma}$. In other words,
$m_{\operatorname*{flip}\left(  a\right)  }=-m_{a}$ (since
$m_{\operatorname*{flip}\left(  a\right)  }=\left(  -1\right)  ^{\sigma
^{\prime}}$ and $m_{a}=\left(  -1\right)  ^{\sigma}$).

Furthermore, Lemma \ref{lem.flagJT.flip1} \textbf{(e)} yields that $w\left(
T^{\prime}\right)  =w\left(  T\right)  $. In other words,
$r_{\operatorname*{flip}\left(  a\right)  }=r_{a}$ (since
$r_{\operatorname*{flip}\left(  a\right)  }=w\left(  T^{\prime}\right)  $ and
$r_{a}=w\left(  T\right)  $). This completes the proof of Claim 2.
\end{proof}

Thus we know that $\operatorname*{flip}:A\rightarrow A$ is a bijection and
satisfies Claim 2. Hence, we can apply Lemma \ref{lem.A.cancel} to
$f=\operatorname*{flip}$. As a result, we obtain%
\[
\sum_{a\in A}m_{a}r_{a}=0.
\]
In view of%
\begin{align*}
\sum_{a\in A}m_{a}r_{a}  & =\sum_{\left(  \sigma,T\right)  \in A}%
\underbrace{m_{\left(  \sigma,T\right)  }}_{\substack{=\left(  -1\right)
^{\sigma}\\\text{(by the definition}\\\text{of }m_{\left(  \sigma,T\right)
}\text{)}}}\ \ \underbrace{r_{\left(  \sigma,T\right)  }}_{\substack{=w\left(
T\right)  \\\text{(by the definition}\\\text{of }r_{\left(  \sigma,T\right)
}\text{)}}}\ \ \ \ \ \ \ \ \ \ \left(
\begin{array}
[c]{c}%
\text{here, we have renamed the}\\
\text{summation index }a\text{ as }\left(  \sigma,T\right)
\end{array}
\right)  \\
& =\sum_{\left(  \sigma,T\right)  \in A}\left(  -1\right)  ^{\sigma}w\left(
T\right)  \\
& =\sum_{\substack{\left(  \sigma,T\right)  \text{ is a}\\\text{failing
}\mathbf{b}\text{-flagged}\\\text{twisted array}}}\left(  -1\right)  ^{\sigma
}w\left(  T\right)  \ \ \ \ \ \ \ \ \ \ \left(
\begin{array}
[c]{c}%
\text{since }A\text{ is the set of all}\\
\text{failing }\mathbf{b}\text{-flagged twisted arrays}%
\end{array}
\right)  ,
\end{align*}
we can rewrite this as%
\[
\sum_{\substack{\left(  \sigma,T\right)  \text{ is a}\\\text{failing
}\mathbf{b}\text{-flagged}\\\text{twisted array}}}\left(  -1\right)  ^{\sigma
}w\left(  T\right)  =0.
\]

Thus, (\ref{pf.lem.flagJT.cancel.2}) becomes%
\begin{align*}
\sum_{\substack{\left(  \sigma,T\right)  \text{ is a }\mathbf{b}%
\text{-flagged}\\\text{twisted array}}}\left(  -1\right)  ^{\sigma}w\left(
T\right)    & =\underbrace{\sum_{\substack{\left(  \sigma,T\right)  \text{ is
a}\\\text{failing }\mathbf{b}\text{-flagged}\\\text{twisted array}}}\left(
-1\right)  ^{\sigma}w\left(  T\right)  }_{=0}+\sum_{\substack{\left(
\sigma,T\right)  \text{ is an}\\\text{unfailing }\mathbf{b}\text{-flagged}%
\\\text{twisted array}}}\left(  -1\right)  ^{\sigma}w\left(  T\right)  \\
& =\sum_{\substack{\left(  \sigma,T\right)  \text{ is an}\\\text{unfailing
}\mathbf{b}\text{-flagged}\\\text{twisted array}}}\left(  -1\right)  ^{\sigma
}w\left(  T\right)  .
\end{align*}
This proves Lemma \ref{lem.flagJT.cancel}.
\end{proof}

\begin{proof}
[Proof of Theorem \ref{thm.flagJT.gen}.] Lemma
\ref{lem.flagJT.det=sum-twisted} yields%
\begin{align*}
\det\left(  h_{b_{i};\ \mu_{i}-i+j}\left[  j\right]  \right)  _{i,j\in\left[
n\right]  }  & =\sum_{\substack{\left(  \sigma,T\right)  \text{ is a
}\mathbf{b}\text{-flagged}\\\text{twisted array}}}\left(  -1\right)  ^{\sigma
}w\left(  T\right)  \\
& =\sum_{\substack{\left(  \sigma,T\right)  \text{ is an}\\\text{unfailing
}\mathbf{b}\text{-flagged}\\\text{twisted array}}}\left(  -1\right)  ^{\sigma
}w\left(  T\right)  \ \ \ \ \ \ \ \ \ \ \left(  \text{by Lemma
\ref{lem.flagJT.cancel}}\right)  \\
& =\sum_{T\in\operatorname{FSSYT}\left(  \mu,\mathbf{b}\right)  }%
\ \ \prod_{(i,j)\in Y(\mu)}u_{T\left(  i,j\right)  ,\ j-i}%
\ \ \ \ \ \ \ \ \ \ \left(  \text{by Lemma \ref{lem.flagJT.failsum}}\right)  .
\end{align*}
This proves Theorem \ref{thm.flagJT.gen}.
\end{proof}

\begin{proof}[Proof of Proposition \ref{prop.flagJT.f}.]
We set
\[
u_{i,j}:=x_{i}+y_{i+j}\ \ \ \ \ \ \ \ \ \ \text{for each }\left(  i,j\right)
\in\mathbb{Z}\times\mathbb{Z}.
\]
Then, for any $b\in\mathbb{N}$ and $q,d\in\mathbb{Z}$, the elements
$h_{b;\ q}\left[  d\right]  $ defined in Theorem \ref{thm.flagJT.gen} are
given by
\begin{align}
h_{b;\ q}\left[  d\right]    & =\sum_{\substack{\left(  i_{1},i_{2}%
,\ldots,i_{q}\right)  \in\left[  b\right]  ^{q};\\i_{1}\leq i_{2}\leq
\cdots\leq i_{q}}}\ \ \prod_{j=1}^{q}\underbrace{u_{i_{j},\ j-d}%
}_{\substack{=x_{i_{j}}+y_{i_{j}+\left(  j-d\right)  }\\=x_{i_{j}}%
+y_{i_{j}+\left(  j-1\right)  +\left(  1-d\right)  }\\\text{(since
}j-d=\left(  j-1\right)  +\left(  1-d\right)  \text{)}}}\nonumber\\
& =\sum_{\substack{\left(  i_{1},i_{2},\ldots,i_{q}\right)  \in\left[
b\right]  ^q;\\i_{1}\leq i_{2}\leq\cdots\leq i_{q}}}\ \ \prod_{j=1}%
^{q}\left(  x_{i_{j}}+y_{i_{j}+\left(  j-1\right)  +\left(  1-d\right)
}\right)  \nonumber\\
& =h\left(  q,\ \ b,\ \ 1-d\right)
\label{pf.prop.flagJT.f.hbqd=}
\end{align}
(by Definition \ref{defh} \textbf{(c)}).

Furthermore, the definition of the $u_{i,j}$ yields%
\begin{align*}
& \sum_{T\in\operatorname{FSSYT}\left(  \mu,\mathbf{b}\right)  }%
\ \ \prod_{(i,j)\in Y(\mu)}\underbrace{u_{T\left(  i,j\right)  ,\ j-i}%
}_{=x_{T\left(  i,j\right)  }+y_{T\left(  i,j\right)  +j-i}}\\
& =\sum_{T\in\operatorname{FSSYT}(\mu,\mathbf{b})}\ \ \prod_{\left(
i,j\right)  \in Y\left(  \mu\right)  }\left(  x_{T\left(  i,j\right)
}+y_{T\left(  i,j\right)  +j-i}\right)  .
\end{align*}
Hence,%
\begin{align*}
& \sum_{T\in\operatorname{FSSYT}(\mu,\mathbf{b})}\ \ \prod_{\left(
i,j\right)  \in Y\left(  \mu\right)  }\left(  x_{T\left(  i,j\right)
}+y_{T\left(  i,j\right)  +j-i}\right)  \\
& =\sum_{T\in\operatorname{FSSYT}\left(  \mu,\mathbf{b}\right)  }%
\ \ \prod_{(i,j)\in Y(\mu)}u_{T\left(  i,j\right)  ,\ j-i}\\
& =\det\left(  \underbrace{h_{b_{i};\ \mu_{i}-i+j}
\left[  j\right] }_{\substack{=
h\left(  \mu_{i}-i+j,\ \ b_{i},\ \ 1-j\right)
\\ \text{(by \eqref{pf.prop.flagJT.f.hbqd=})}}} \right)
_{i,j\in\left[  n\right]  }\ \ \ \ \ \ \ \ \ \ \left(  \text{by Theorem
\ref{thm.flagJT.gen}}\right)  \\
& =\det\left(  h\left(  \mu_{i}-i+j,\ \ b_{i},\ \ 1-j\right)  \right)
_{i,j\in\left[  n\right]  }.
\end{align*}
This proves Proposition \ref{prop.flagJT.f}.
\end{proof}

\begin{proof}
[Proof of Corollary \ref{cor.slam.det}.] The definition of $\mathcal{F}%
(\lambda/\mu)$ yields $\mathcal{F}(\lambda/\mu)=\operatorname{FSSYT}\left(
\mu,\mathbf{b}\right)  $ (since $\mathbf{b}$ is the flagging
induced by $\lambda/\mu$). Furthermore, Lemma \ref{lem.flagging-of-lm.inc}
shows that the flagging $\mathbf{b}$ is weakly increasing.

Corollary \ref{corollaryFlaggedExc} yields
\begin{align*}
\mathbf{s}_{\lambda}\left[  \mu\right]   &  =\sum_{T\in\mathcal{F}\left(
\lambda/\mu\right)  }\ \ \prod_{\left(  i,j\right)  \in Y\left(  \mu\right)
}\left(  x_{T\left(  i,j\right)  }+y_{T\left(  i,j\right)  +j-i}\right)  \\
&  =\sum_{T\in\operatorname{FSSYT}(\mu,\mathbf{b})}\ \ \prod_{\left(
i,j\right)  \in Y\left(  \mu\right)  }(x_{T\left(  i,j\right)  }+y_{T\left(
i,j\right)  +j-i})\ \ \ \ \ \ \ \ \ \ \left(  \text{since }\mathcal{F}%
(\lambda/\mu)=\operatorname{FSSYT}\left(  \mu,\mathbf{b}\right)
\right)  \\
&  =\det\left(  h\left(  \mu_{i}-i+j,\ \ b_{i},\ \ 1-j\right)  \right)
_{i,j\in\left[  n\right]  }
\end{align*}
(by Proposition \ref{prop.flagJT.f}). This proves
Corollary \ref{cor.slam.det}.
\end{proof}

\subsection{To Section \ref{sec.det}}

\begin{proof}[Proof of Lemma \ref{determinant.sum}.]
Let $k\in\left[  n\right]  $.
Applying \eqref{pf.lem.det-u.lap} to $A=P\underset{\operatorname*{row}}{\overset{k}{\leftarrow}} Q$, we obtain
\begin{align}
\det\left(  P\underset{\operatorname*{row}}{\overset{k}{\leftarrow}}Q\right)
& =\sum_{\ell=1}^{n}\left(  -1\right)  ^{k+\ell}\underbrace{\left(
P\underset{\operatorname*{row}}{\overset{k}{\leftarrow}}Q\right)  _{k,\ell}%
}_{\substack{=Q_{k,\ell}\\\text{(since the matrix }%
P\underset{\operatorname*{row}}{\overset{k}{\leftarrow}}Q\\\text{takes its
}k\text{-th row from }Q\text{)}}}\det\underbrace{\left(  \left(
P\underset{\operatorname*{row}}{\overset{k}{\leftarrow}}Q\right)  _{\sim
k,\sim\ell}\right)  }_{\substack{=P_{\sim k,\sim\ell}\\\text{(since the matrix
}P\underset{\operatorname*{row}}{\overset{k}{\leftarrow}}Q\\\text{differs from
}P\text{ only}\\\text{in its }k\text{-th row)}}}\nonumber\\
& =\sum_{\ell=1}^{n}\left(  -1\right)  ^{k+\ell}Q_{k,\ell}\det\left(  P_{\sim
k,\sim\ell}\right)  .\label{pf.determinant.sum.2}%
\end{align}
Forget that we fixed $k$. We thus have proved the equality
\eqref{pf.determinant.sum.2} for each $k\in\left[  n\right]  $. Summing this
equality over all $k\in\left[  n\right]  $, we find%
\[
\sum_{k=1}^{n}\det\left(  P\underset{\operatorname*{row}%
}{\overset{k}{\leftarrow}}Q\right)  =\sum_{k=1}^{n}\ \ \sum_{\ell=1}%
^{n}\left(  -1\right)  ^{k+\ell}Q_{k,\ell}\det\left(  P_{\sim k,\sim\ell
}\right)  .
\]

An analogous argument (using Laplace expansion along columns rather than rows)
shows that%
\begin{align*}
\sum_{k=1}^{n}\det\left(  P\underset{\operatorname*{col}%
}{\overset{k}{\leftarrow}}Q\right)    & =\sum_{k=1}^{n}\ \ \sum_{\ell=1}%
^{n}\left(  -1\right)  ^{\ell+k}Q_{\ell,k}\det\left(  P_{\sim\ell,\sim
k}\right)  \\
& =\sum_{\ell=1}^{n}\ \ \sum_{k=1}^{n}\left(  -1\right)  ^{\ell+k}Q_{\ell
,k}\det\left(  P_{\sim\ell,\sim k}\right)  \\
& =\sum_{k=1}^{n}\ \ \sum_{\ell=1}^{n}\left(  -1\right)  ^{k+\ell}Q_{k,\ell
}\det\left(  P_{\sim k,\sim\ell}\right)
\end{align*}
(here, we have renamed the summation indices $\ell$ and $k$ as $k$ and $\ell
$). Comparing these two equalities, we obtain%
\[
\sum_{k=1}^{n}\det\left(  P\underset{\operatorname*{row}%
}{\overset{k}{\leftarrow}}Q\right)  =\sum_{k=1}^{n}\det\left(
P\underset{\operatorname*{col}}{\overset{k}{\leftarrow}}Q\right)  .
\]
This proves Lemma \ref{determinant.sum}.
\end{proof}

\begin{proof}
[Proof of Lemma~\ref{lem.det-u-cor}.]
Let $P$ be the $n\times n$-matrix
$\left(  u_{i,j}\right)  _{i,j\in\left[  n\right]  }$, and let $Q$ be the
$n\times n$-matrix $\left(  u_{i,j+1}\right)  _{i,j\in\left[  n\right]  }$.
Consider the matrices $P\underset{\operatorname*{row}}{\overset{k}{\leftarrow
}}Q$ and $P\underset{\operatorname*{col}}{\overset{k}{\leftarrow}}Q$ defined
in Lemma \ref{determinant.sum}. It is easy to see the following:

\begin{statement}
\textit{Claim 1:} Every $k\in\left[  n\right]  $ satisfies
\begin{equation}
P\underset{\operatorname*{row}}{\overset{k}{\leftarrow}}Q = \left(
u_{i,j+\left[  k=i\right]  }\right)  _{i,j\in\left[  n\right]  }%
\label{pf.lem.det-u-cor.row}%
\end{equation}
and%
\begin{equation}
P\underset{\operatorname*{col}}{\overset{k}{\leftarrow}}Q = \left(
u_{i,j+\left[  k=j\right]  }\right)  _{i,j\in\left[  n\right]  }%
.\label{pf.lem.det-u-cor.col}%
\end{equation}

\end{statement}

\begin{proof}
[Proof of Claim 1.] Let $k\in\left[  n\right]  $. As we recall, the matrix
$P\underset{\operatorname*{row}}{\overset{k}{\leftarrow}}Q$ is obtained from
$P$ by replacing the $k$-th row by the $k$-th row of $Q$. Hence, its $\left(
i,j\right)  $-th entry is given by%
\begin{equation}
\left(  P\underset{\operatorname*{row}}{\overset{k}{\leftarrow}}Q\right)
_{i,j}=%
\begin{cases}
P_{i,j}, & \text{if }i\neq k;\\
Q_{i,j}, & \text{if }i=k
\end{cases}
\label{pf.lem.det-u-cor.row.pf.1}%
\end{equation}
for all $i,j\in\left[  n\right]  $.

Now, let $i,j\in\left[  n\right]  $. Then, the element $u_{i,j+\left[
k=i\right]  }$ equals $u_{i,j}$ when $i\neq k$ (because in this case, we have
$k\neq i$ and thus $\left[  k=i\right]  =0$ and therefore $j+\left[
k=i\right]  =j+0=j$ and thus
$u_{i,j+\left[
k=i\right]  } = u_{i,j}$), but equals $u_{i,j+1}$ when $i=k$ (because in this case,
we have $k=i$ and thus $\left[  k=i\right]  =1$  and thus
$u_{i,j+\left[
k=i\right]  } = u_{i,j+1}$). Hence,%
\[
u_{i,j+\left[  k=i\right]  }=%
\begin{cases}
u_{i,j}, & \text{if }i\neq k;\\
u_{i,j+1}, & \text{if }i=k.
\end{cases}
\]
On the other hand, (\ref{pf.lem.det-u-cor.row.pf.1}) yields
\[
\left(  P\underset{\operatorname*{row}}{\overset{k}{\leftarrow}}Q\right)
_{i,j}=%
\begin{cases}
P_{i,j}, & \text{if }i\neq k;\\
Q_{i,j}, & \text{if }i=k
\end{cases}
=%
\begin{cases}
u_{i,j}, & \text{if }i\neq k;\\
u_{i,j+1}, & \text{if }i=k
\end{cases}
\]
(since $P_{i,j}=u_{i,j}$ (by the definition of $P$) and $Q_{i,j}=u_{i,j+1}$
(by the definition of $Q$)). Comparing these two equalities, we obtain%
\[
\left(  P\underset{\operatorname*{row}}{\overset{k}{\leftarrow}}Q\right)
_{i,j}=u_{i,j+\left[  k=i\right]  }.
\]

Forget that we fixed $i,j$. We thus have proved the equality $\left(
P\underset{\operatorname*{row}}{\overset{k}{\leftarrow}}Q\right)
_{i,j}=u_{i,j+\left[  k=i\right]  }$ for all $i,j\in\left[  n\right]  $. In
other words, $P\underset{\operatorname*{row}}{\overset{k}{\leftarrow}}%
Q = \left(  u_{i,j+\left[  k=i\right]  }\right)  _{i,j\in\left[  n\right]
}$. This proves (\ref{pf.lem.det-u-cor.row}). Similarly,
(\ref{pf.lem.det-u-cor.col}) can be shown. Thus, Claim 1 follows.
\end{proof}

However, Lemma \ref{determinant.sum} says that
\[
\sum_{k=1}^{n}\det\left(  P\underset{\operatorname*{row}%
}{\overset{k}{\leftarrow}}Q\right)  =\sum_{k=1}^{n}\det\left(
P\underset{\operatorname*{col}}{\overset{k}{\leftarrow}}Q\right)  .
\]
In view of (\ref{pf.lem.det-u-cor.row}) and (\ref{pf.lem.det-u-cor.col}), we
can rewrite this as%
\begin{equation}
\sum_{k=1}^{n}\det\left(  u_{i,j+\left[  k=i\right]  }\right)  _{i,j\in\left[
n\right]  }=\sum_{k=1}^{n}\det\left(  u_{i,j+\left[  k=j\right]  }\right)
_{i,j\in\left[  n\right]  }.
\label{pf.lem.det-u-cor.2}
\end{equation}

However, the right hand side of this equality can be simplified thanks to the following claim:

\begin{statement}
    \textit{Claim 2:} Let $k \in \ive{n-1}$.
    Then,
    \begin{equation}
    \det\left(  u_{i,j+\left[  k=j\right]  }\right)  _{i,j\in\left[  n\right]}=0.
    \label{pf.lem.det-u-cor.3}%
    \end{equation}
\end{statement}

\begin{proof}[Proof of Claim 2.]
We have $\ive{k=k} = 1$ and $\ive{k=k+1} = 0$.
Thus, for each $i \in \ive{n}$, we have
$u_{i,k+\left[  k=k\right]  }  =u_{i,k+1}$
and
$u_{i,\left(  k+1\right)  +\left[  k=k+1\right]  }  =u_{i,\left(  k+1\right) +0}=u_{i,k+1}$.
Comparing these two equalities, we see that
\[
u_{i,k+\left[  k=k\right]  } = u_{i,\left(  k+1\right)  +\left[ k=k+1\right]  }
\qquad \text{for each } i \in \ive{n}.
\]
In other words, the $k$-th and $\tup{k+1}$-st columns of the matrix $\left(
u_{i,j+\left[  k=j\right]  }\right)  _{i,j\in\left[  n\right]  }$
are equal. Hence, this matrix has two equal columns, and thus its determinant vanishes.
This proves Claim 2.
\end{proof}

Now, (\ref{pf.lem.det-u-cor.2}) becomes%
\begin{align*}
\sum_{k=1}^{n}\det\left(  u_{i,j+\left[  k=i\right]  }\right)  _{i,j\in\left[
n\right]  }  & =\sum_{k=1}^{n}\det\left(  u_{i,j+\left[  k=j\right]  }\right)
_{i,j\in\left[  n\right]  }\\
& =\sum_{k=1}^{n-1}\underbrace{\det\left(  u_{i,j+\left[  k=j\right]
}\right)  _{i,j\in\left[  n\right]  }}_{\substack{=0\\\text{(by
(\ref{pf.lem.det-u-cor.3}))}}}+\det\left(  u_{i,j+\left[  n=j\right]
}\right)  _{i,j\in\left[  n\right]  }\\
& =\det\left(  u_{i,j+\left[  n=j\right]  }\right)  _{i,j\in\left[  n\right]
}.
\end{align*}
This proves Lemma \ref{lem.det-u-cor}.
\end{proof}

\begin{proof}[Proof of Lemma~\ref{lem.det-uc}.]
Let $U$ denote the $n\times n$-matrix $\left(  u_{i,j}\right)  _{i,j\in\left[
n\right]  }$.

Let $k\in\left[  n\right]  $.

Define an $n\times n$-matrix
\begin{equation}
V_{k}:=\left(  u_{i,j+\left[  k=i\right]  }\right)  _{i,j\in\left[  n\right]
}.\label{pf.lem.det-u.Vk=}%
\end{equation}
This matrix $V_{k}$ differs from $U$ only in its $k$-th row (because if $i\neq
k$, then the $\left(  i,j\right)  $-th entry of the matrix $V_{k}$ equals%
\begin{align*}
u_{i,j+\left[  k=i\right]  }  & =u_{i,j}%
\ \ \ \ \ \ \ \ \ \ \left(  \text{since }\left[  k=i\right]  =0\text{
(because }k\neq i\text{)}\right),
\end{align*}
which is precisely the corresponding entry of $U$). In other words, the
matrices $V_{k}$ and $U$ become equal if we remove their $k$-th rows.
Therefore,
\begin{equation}
\left(  V_{k}\right)  _{\sim k,\sim\ell}=U_{\sim k,\sim\ell}%
\ \ \ \ \ \ \ \ \ \ \text{for each }\ell\in\left[  n\right]
.\label{pf.lem.det-u.4}%
\end{equation}

Moreover, for each $\ell\in\left[  n\right]  $, the $\left(  k,\ell\right)
$-th entry of $V_{k}$ is%
\begin{align}
\left(  V_{k}\right)  _{k,\ell}  & =u_{k,\ell+\left[  k=k\right]
}\ \ \ \ \ \ \ \ \ \ \left(  \text{by the definition of }V_{k}\right)
\nonumber\\
& =u_{k,\ell+1}\ \ \ \ \ \ \ \ \ \ \left(  \text{since }\left[  k=k\right]
=1\right)  .\label{pf.lem.det-u.5}%
\end{align}

Now, applying (\ref{pf.lem.det-u.lap}) to $A=V_{k}$, we obtain%
\begin{align}
\det\left( V_{k}\right)    & =\sum_{\ell=1}^{n}\left(  -1\right)  ^{k+\ell
}\underbrace{\left(  V_{k}\right)  _{k,\ell}}_{\substack{=u_{k,\ell
+1}\\\text{(by (\ref{pf.lem.det-u.5}))}}}\det\left(  \underbrace{\left(
V_{k}\right)  _{\sim k,\sim\ell}}_{\substack{=U_{\sim k,\sim\ell}\\\text{(by
(\ref{pf.lem.det-u.4}))}}}\right)  \nonumber\\
& =\sum_{\ell=1}^{n}\left(  -1\right)  ^{k+\ell}u_{k,\ell+1}\det\left(
U_{\sim k,\sim\ell}\right)  .\label{pf.lem.det-u.6}%
\end{align}

Now, we define a further $n\times n$-matrix
\begin{equation}
U_{k}:=\left(  u_{i,j+\left[  k=i\right]  }-p_{i}u_{i,j}\left[  k=i\right]
\right)  _{i,j\in\left[  n\right]  }.\label{pf.lem.det-u.Uk=}%
\end{equation}
Note that this matrix $U_{k}$ differs from $U$ only in its $k$-th row (because
if $i\neq k$, then the $\left(  i,j\right)  $-th entry of the matrix $U_{k}$
equals%
\begin{align*}
u_{i,j+\left[  k=i\right]  }-p_{i}u_{i,j}\left[  k=i\right]
& =\underbrace{u_{i,j+0}}_{=u_{i,j}}-\underbrace{p_{i}u_{i,j}0}_{=0}%
\ \ \ \ \ \ \ \ \ \ \left(  \text{since }\left[  k=i\right]  =0\text{ (because
}k\neq i\text{)}\right)  \\
& =u_{i,j},
\end{align*}
which is precisely the corresponding entry of $U$). In other words, the
matrices $U_{k}$ and $U$ become equal if we remove their $k$-th rows.
Therefore,
\begin{equation}
\left(  U_{k}\right)  _{\sim k,\sim\ell}=U_{\sim k,\sim\ell}%
\ \ \ \ \ \ \ \ \ \ \text{for each }\ell\in\left[  n\right]
.\label{pf.lem.det-u.1}%
\end{equation}

Moreover, for each $\ell\in\left[  n\right]  $, the $\left(  k,\ell\right)
$-th entry of $U_{k}$ is%
\begin{align}
\left(  U_{k}\right)  _{k,\ell}  & =u_{k,\ell+\left[  k=k\right]  }-p_{k
}u_{k,\ell}\left[  k=k\right]  \nonumber\\
& =u_{k,\ell+1}-p_{k}u_{k,\ell}1\ \ \ \ \ \ \ \ \ \ \left(  \text{since
}\left[  k=k\right]  =1\right)  \nonumber\\
& =u_{k,\ell+1}-p_{k}u_{k,\ell}.\label{pf.lem.det-u.2}%
\end{align}

Now, applying (\ref{pf.lem.det-u.lap}) to $A=U_{k}$, we obtain%
\begin{align}
\det\left(  U_{k}\right)    & =\sum_{\ell=1}^{n}\left(  -1\right)  ^{k+\ell
}\underbrace{\left(  U_{k}\right)  _{k,\ell}}_{\substack{=u_{k,\ell+1}%
-p_{k}u_{k,\ell}\\\text{(by (\ref{pf.lem.det-u.2}))}}}\det\left(
\underbrace{\left(  U_{k}\right)  _{\sim k,\sim\ell}}_{\substack{=U_{\sim
k,\sim\ell}\\\text{(by (\ref{pf.lem.det-u.1}))}}}\right)  \nonumber\\
& =\sum_{\ell=1}^{n}\left(  -1\right)  ^{k+\ell}\left(  u_{k,\ell+1}-p_{k
}u_{k,\ell}\right)  \det\left(  U_{\sim k,\sim\ell}\right)  \nonumber\\
& =\underbrace{\sum_{\ell=1}^{n}\left(  -1\right)  ^{k+\ell}u_{k,\ell+1}%
\det\left(  U_{\sim k,\sim\ell}\right)  }_{\substack{=\det\left(
V_{k}\right)  \\\text{(by (\ref{pf.lem.det-u.6}))}}}-\sum_{\ell=1}^{n}\left(
-1\right)  ^{k+\ell}p_{k}u_{k,\ell}\det\left(  U_{\sim k,\sim\ell}\right)
\nonumber\\
& =\det\left(  V_{k}\right)  -\sum_{\ell=1}^{n}\left(  -1\right)  ^{k+\ell
}p_{k}u_{k,\ell}\det\left(  U_{\sim k,\sim\ell}\right)
.\label{pf.lem.det-u.3}%
\end{align}

Forget that we fixed $k$. Thus, for each $k\in\left[  n\right]  $, we have
defined two matrices $V_{k}$ and $U_{k}$ and proved the relation
(\ref{pf.lem.det-u.3}) between their determinants.

Now, summing the equality (\ref{pf.lem.det-u.3}) for all $k\in\left[
n\right]  $, we obtain%
\begin{align*}
\sum_{k=1}^{n}\det\left(  U_{k}\right)    & =\sum_{k=1}^{n}\left(  \det\left(
V_{k}\right)  -\sum_{\ell=1}^{n}\left(  -1\right)  ^{k+\ell}p_{k}u_{k,\ell
}\det\left(  U_{\sim k,\sim\ell}\right)  \right)  \\
& =\sum_{k=1}^{n}\det\left(  V_{k}\right)  -\sum_{k=1}^{n}\ \ \sum_{\ell
=1}^{n}\left(  -1\right)  ^{k+\ell}p_{k}u_{k,\ell}\det\left(  U_{\sim
k,\sim\ell}\right)  .
\end{align*}
In view of%
\begin{align*}
\sum_{k=1}^{n}\det\underbrace{\left(  V_{k}\right)  }_{\substack{=\left(
u_{i,j+\left[  k=i\right]  }\right)  _{i,j\in\left[  n\right]  }\\\text{(by
the definition of }V_{k}\text{)}}}  & =\sum_{k=1}^{n}\det\left(
u_{i,j+\left[  k=i\right]  }\right)  _{i,j\in\left[  n\right]  }\\
& =\det\left(  u_{i,j+\left[  n=j\right]  }\right)  _{i,j\in\left[  n\right]
}\ \ \ \ \ \ \ \ \ \ \left(  \text{by Lemma \ref{lem.det-u-cor}}\right)
\end{align*}
and%
\begin{align}
& \sum_{k=1}^{n}\ \ \sum_{\ell=1}^{n}\left(  -1\right)  ^{k+\ell}p_{k}\underbrace{u_{k,\ell}}_{\substack{=U_{k,\ell}\\\text{(by the definition of
}U\text{)}}}\det\left(  U_{\sim k,\sim\ell}\right)  \nonumber\\
&=\sum_{k=1}^{n}p_{k}\underbrace{\sum_{\ell=1}^{n}\left(  -1\right)
^{k+\ell}U_{k,\ell}\det\left(  U_{\sim k,\sim\ell}\right)  }_{\substack{=\det
U\\\text{(by (\ref{pf.lem.det-u.lap}), applied to }A=U\text{)}}}\nonumber\\
& =\sum_{k=1}^{n}p_{k}\det U=\left(
\sum_{k=1}^{n}p_{k}\right)  \det\underbrace{U}_{=\left(  u_{i,j}\right)
_{i,j\in\left[  n\right]  }}\nonumber\\
& =\left(  \sum_{k=1}^{n}p_{k}\right)  \det\left(  u_{i,j}\right)
_{i,j\in\left[  n\right]  } ,
\label{pf.lem.det-u.term1}
\end{align}
this becomes
\begin{align*}
\sum_{k=1}^{n}\det\left(  U_{k}\right)
&=
\underbrace{\sum_{k=1}^{n}\det\left(  V_{k}\right)}_{= \det\left(  u_{i,j+\left[  n=j\right]  }\right)  _{i,j\in\left[  n\right]
}}
-
\underbrace{\sum_{k=1}^{n}\ \ \sum_{\ell
=1}^{n}\left(  -1\right)  ^{k+\ell}p_{k}u_{k,\ell}\det\left(  U_{\sim
k,\sim\ell}\right)}_{= \left(  \sum_{k=1}^{n}p_{k}\right)  \det\left(  u_{i,j}\right)
_{i,j\in\left[  n\right]  }}
\\
&=\det\left(  u_{i,j+\left[
n=j\right]  }\right)  _{i,j\in\left[  n\right]  }-\left(  \sum_{k=1}^{n}%
p_{k}\right)  \det\left(  u_{i,j}\right)  _{i,j\in\left[  n\right]  }.
\end{align*}
In view of (\ref{pf.lem.det-u.Uk=}), we can rewrite this as%
\begin{align*}
& \sum_{k=1}^{n}\det\left(  u_{i,j+\left[  k=i\right]  }-p_{i}u_{i,j}\left[
k=i\right]  \right)  _{i,j\in\left[  n\right]  }\\
& =\det\left(  u_{i,j+\left[  n=j\right]  }\right)  _{i,j\in\left[  n\right]
}-\left(  \sum_{k=1}^{n}p_{k}\right)  \det\left(  u_{i,j}\right)
_{i,j\in\left[  n\right]  }.
\end{align*}
This proves Lemma \ref{lem.det-uc}.
\end{proof}

\begin{proof}[Proof of Lemma \ref{lem.det.lastrow=01}.]
Let $A$ be the matrix $\left(  a_{i,j}\right)  _{i,j\in\left[  n\right]  }$.
Then,  $A_{\sim n,\sim n}=\left(  a_{i,j}\right)  _{i,j\in\left[  n-1\right]
}$. Now, (\ref{pf.lem.det-u.lap}) (applied to $k=n$) yields
\begin{align*}
\det A &  =\sum_{\ell=1}^{n}\left(  -1\right)  ^{n+\ell}\underbrace{A_{n,\ell
}}_{\substack{=a_{n,\ell}\\\text{(by the definition of }A\text{)}}}\det\left(
A_{\sim n,\sim\ell}\right)  \\
&= \sum_{\ell=1}^{n}\left(  -1\right)  ^{n+\ell}
a_{n,\ell}
\det\left(
A_{\sim n,\sim\ell}\right)  \\
&  =\sum_{\ell=1}^{n-1}\left(  -1\right)  ^{n+\ell}\underbrace{a_{n,\ell}%
}_{\substack{=0\\\text{(by (\ref{eq.lem.det.lastrow=01.ass}))}}}\det\left(
A_{\sim n,\sim\ell}\right)  +\underbrace{\left(  -1\right)  ^{n+n}}%
_{=1}a_{n,n}\det\underbrace{\left(  A_{\sim n,\sim n}\right)  }_{=\left(
a_{i,j}\right)  _{i,j\in\left[  n-1\right]  }}\\
&  =\underbrace{\sum_{\ell=1}^{n-1}\left(  -1\right)  ^{n+\ell}0\det\left(
A_{\sim n,\sim\ell}\right)  }_{=0}+\, a_{n,n}\cdot
\det\left(  a_{i,j}\right)
_{i,j\in\left[  n-1\right]  }=a_{n,n}\cdot\det\left(  a_{i,j}\right)
_{i,j\in\left[  n-1\right]  }.
\end{align*}
This proves Lemma \ref{lem.det.lastrow=01}.
\end{proof}

\subsection{To Section \ref{sec.further}}

\begin{proof}[Proof of Lemma \ref{lem.l-decrease}.]
Let $i$ be a positive integer.
Then, $\ell_i = \lambda_i - i$ (by the definition of $\ell_i$) and 
$\ell_{i+1} = \lambda_{i+1} - \tup{i+1}$ (by the definition of $\ell_{i+1}$).

However, $\lambda_i \geq \lambda_{i+1}$ (since $\lambda$ is a partition).
Hence, $\lambda_i - i \geq \lambda_{i+1} - i > \lambda_{i+1} - i - 1 = \lambda_{i+1} - \tup{i+1}$.
In view of $\ell_i = \lambda_i - i$ and $\ell_{i+1} = \lambda_{i+1} - \tup{i+1}$, we can rewrite this as $\ell_i > \ell_{i+1}$.

Forget that we fixed $i$. We thus have shown that $\ell_i > \ell_{i+1}$ for every positive integer $i$.
In other words, $\ell_1 > \ell_2 > \ell_3 > \cdots$.
Similarly, we can show that
$m_1 > m_2 > m_3 > \cdots$ and
$\ell^t_1 > \ell^t_2 > \ell^t_3 > \cdots$ and
$m^t_1 > m^t_2 > m^t_3 > \cdots$.
Thus, Lemma~\ref{lem.l-decrease} is proved.
\end{proof}
\begin{proof}[Proof of Lemma \ref{lem.nu=mu+k}.]
If $\nu$ is a partition that satisfies $\mu\lessdot\nu$, then $\nu$ can be
obtained from $\mu$ by incrementing exactly one entry of $\mu$ by $1$. In
other words, $\nu=\mu^{+k}$ for some positive integer $k$.
This $k$ must furthermore satisfy $k=1$ or $\mu_{k}\neq\mu_{k-1}$ (since
otherwise, $\mu^{+k}$ is not a partition). In other words, this $k$ must
belong to $\operatorname*{ER}\left(  \mu\right)  $.

Thus, we have shown that every partition $\nu$ that satisfies $\mu\lessdot\nu$
has the form $\mu^{+k}$ for some $k\in\operatorname*{ER}\left(  \mu\right)  $.
Conversely, it is clear that any partition $\mu^{+k}$ with $k\in
\operatorname*{ER}\left(  \mu\right)  $ is a partition $\nu$ that satisfies
$\mu\lessdot\nu$. Combining these two facts, we see that the partitions $\nu$
that satisfy $\mu\lessdot\nu$ are precisely the partitions $\mu^{+k}$ for the
elements $k\in\operatorname*{ER}\left(  \mu\right)  $.
Hence, Lemma \ref{lem.nu=mu+k} is proved.
\end{proof}

\begin{proof}[Proof of Lemma \ref{lem.mu+ki}.]
This follows immediately from the definition of $\mu^{+k}$.
\end{proof}

\begin{proof}[Proof of Lemma \ref{lem.Deltas.if-k-then-i}.]
    We must prove that $k \in \left[  n\right]$. Assume the contrary.
	Thus, $k > n$. Hence, $\mu_k = 0$ (since
	$\mu = \left(  \mu_{1}, \mu_{2}, \ldots, \mu_{n}\right)  $),
    so that the definition of $m_k$ yields
	$m_k = \underbrace{\mu_k}_{=0} -\, k = -k < -n$ (since $k > n$).
    However, the definition of $\ell_j$ yields
	$\ell_j = \underbrace{\lambda_j}_{\geq 0} -\, j \geq -j \geq -n$
	(since $j \leq n$).
    This contradicts $\ell_j = m_k < -n$. This
    contradiction shows that our assumption was false.
    Hence, Lemma~\ref{lem.Deltas.if-k-then-i} is proved.
\end{proof}

%

\begin{proof}[Proof of Lemma \ref{lem.Deltas.sum1i}.]
We have $\ell_j \in\Delta\left(  \mu \right)  =\left\{  m_{1}, m_2, m_3, \ldots \right\}  $.
In other words, $\ell_j = m_{k}$ for some positive integer
$k$. Consider this $k$.
Lemma~\ref{lem.Deltas.if-k-then-i}
yields $k \in \ive{n}$ (since $j \in \ive{n}$).
Thus, there exists at least one  $i \in\left[
n\right]  $ satisfying $\ell_j = m_i$ (namely, $i = k$).

Lemma~\ref{lem.l-decrease} yields $m_{1} > m_{2} > m_{3} > \cdots$. Hence, the numbers $m_1, m_2, m_3, \ldots$ are
distinct. Thus, there exists at most one $i \in\left[  n\right]  $ satisfying
$\ell_j = m_i$. Since we also know that there exists at least one such $i$,
we thus conclude that there exists exactly one such $i$. Hence, the sum
$\sum_{\substack{i\in\left[  n\right]  ;\\\ell_{j} = m_i}}1$ contains exactly
one addend, so that it simplifies to $1$. This proves Lemma
\ref{lem.Deltas.sum1i}.
\end{proof}

\begin{proof}[Proof of Lemma \ref{lem.Deltas.if-not-then-k}.]
Assume the contrary. Thus, $k>n$. Hence, $\lambda_{k}=0$ (since $\lambda
=\left(  \lambda_{1},\lambda_{2},\ldots,\lambda_{n}\right)  $). Thus, the
definition of $\ell_{k}$ yields $\ell_{k}=\underbrace{\lambda_{k}}_{=0}-\,k=-k$.
Similarly, $m_{k}=-k$. Comparing these equalities, we find $m_k = \ell_{k}\notin\Delta\left(
\mu\right)  =\left\{  m_{1},m_{2},m_{3},\ldots\right\}  $, which contradicts
$m_{k}\in\left\{  m_{1},m_{2},m_{3},\ldots\right\}  $. This
contradiction shows that our assumption was false. Hence, Lemma
\ref{lem.Deltas.if-not-then-k} is proved.
\end{proof}

\begin{proof}[Proof of Lemma \ref{lem.Deltas.f=g}.]
We have
\begin{align}
\sum_{\substack{i,j\in\left[  n\right]  ;\\\ell_{j}=m_{i}}}g\left(  j\right)
&  =\sum_{j\in\left[  n\right]  }\ \ \sum_{\substack{i\in\left[  n\right]
;\\\ell_{j}=m_{i}}}g\left(  j\right)  \nonumber\\
&  =\sum_{\substack{j\in\left[  n\right]  ;\\\ell_{j}\in\Delta\left(
\mu\right)  }}\ \ \underbrace{\sum_{\substack{i\in\left[  n\right]
;\\\ell_{j}=m_{i}}}g\left(  j\right)  }_{\substack{=g\left(  j\right)
\cdot\sum_{\substack{i\in\left[  n\right]  ;\\\ell_{j}=m_{i}}}1}%
}+\sum_{\substack{j\in\left[  n\right]  ;\\\ell_{j}\notin\Delta\left(
\mu\right)  }}\ \ \underbrace{\sum_{\substack{i\in\left[  n\right]
;\\\ell_{j}=m_{i}}}g\left(  j\right)  }_{\substack{=\left(  \text{empty
sum}\right)  \\\text{(since }\ell_{j}\notin\Delta\left(  \mu\right)  =\left\{
m_{1},m_{2},m_{3},\ldots\right\}  \\\text{shows that there exists no }%
i\in\left[  n\right]  \\\text{satisfying }\ell_{j}=m_{i}\text{)}}}\nonumber\\
&  =\sum_{\substack{j\in\left[  n\right]  ;\\\ell_{j}\in\Delta\left(
\mu\right)  }}g\left(  j\right)  \cdot\underbrace{\sum_{\substack{i\in\left[
n\right]  ;\\\ell_{j}=m_{i}}}1}_{\substack{=1\\\text{(by Lemma
\ref{lem.Deltas.sum1i})}}}+\sum_{\substack{j\in\left[  n\right]  ;\\\ell
_{j}\notin\Delta\left(  \mu\right)  }}\underbrace{\left(  \text{empty
sum}\right)  }_{=0}\nonumber\\
&  =\sum_{\substack{j\in\left[  n\right]  ;\\\ell_{j}\in\Delta\left(
\mu\right)  }}g\left(  j\right)  +\sum_{\substack{j\in\left[  n\right]
;\\\ell_{j}\notin\Delta\left(  \mu\right)  }}0\nonumber\\
&  =\sum_{\substack{j\in\left[  n\right]  ;\\\ell_{j}\in\Delta\left(
\mu\right)  }}g\left(  j\right)  .\label{pf.lem.Deltas.f=g.1}%
\end{align}
An analogous argument (with the labels $i,j,\lambda,\mu,m_{k},\ell_{k},g$
replaced by $j,i,\mu,\lambda,\ell_{k},m_{k},f$) shows that%
\begin{equation}
\sum_{\substack{j,i\in\left[  n\right]  ;\\m_{i}=\ell_{j}}}f\left(  i\right)
=\sum_{\substack{i\in\left[  n\right]  ;\\m_{i}\in\Delta\left(  \lambda
\right)  }}f\left(  i\right)  .\label{pf.lem.Deltas.f=g.2}%
\end{equation}

However, the summation sign $\sum_{\substack{i,j\in\left[  n\right]
;\\\ell_{j}=m_{i}}}$ can be rewritten as $\sum_{\substack{j,i\in\left[
n\right]  ;\\m_{i}=\ell_{j}}}$ (since $\ell_{j}=m_{i}$ is equivalent to
$m_{i}=\ell_{j}$). Thus,%
\[
\sum_{\substack{i,j\in\left[  n\right]  ;\\\ell_{j}=m_{i}}}g\left(  j\right)
= \sum_{\substack{j,i\in\left[  n\right]  ;\\m_{i}=\ell_{j}}%
}\ \ \underbrace{g\left(  j\right)  }_{\substack{=f\left(  i\right)  \\\text{(by
(\ref{eq.lem.Deltas.f=g.as}))}}}=\sum_{\substack{j,i\in\left[  n\right]
;\\m_{i}=\ell_{j}}}f\left(  i\right)  .
\]
In other words, the left hand sides of the equalities
(\ref{pf.lem.Deltas.f=g.1}) and (\ref{pf.lem.Deltas.f=g.2}) are equal. Hence,
their right hand sides are equal as well. In other words, we have%
\[
\sum_{\substack{j\in\left[  n\right]  ;\\\ell_{j}\in\Delta\left(  \mu\right)
}}g\left(  j\right)  =\sum_{\substack{i\in\left[  n\right]  ;\\m_{i}\in
\Delta\left(  \lambda\right)  }}f\left(  i\right)  .
\]
This proves Lemma \ref{lem.Deltas.f=g}.
\end{proof}

\begin{proof}[Proof of Lemma \ref{lem.bj=i}.]
The definition of $b_i$ yields
\begin{equation}
b_i = \max\left\{  k\geq0\mid\lambda_{k}-k\geq\mu_i-i\right\} .
\label{pf.lem.bj=i.1}
\end{equation}

Lemma~\ref{lem.l-decrease} yields $\ell_{1}>\ell_{2}>\ell_{3}>\cdots$.
Hence, the numbers
$\ell_{1},\ell_{2},\ldots,\ell_j$ are $\geq\ell_j$, whereas the numbers
$\ell_{j+1},\ell_{j+2},\ell_{j+3},\ldots$ are not. Therefore, $\max\left\{
k\geq0\mid\ell_{k}\geq\ell_j\right\}  =j$. In view of
\begin{align*}
\ell_j  = m_i = \mu_i - i
\ \ \ \ \ \ \ \ \ \ \left(  \text{by the definition of }m_i\right)
\end{align*}
and%
\[
\ell_{k}=\lambda_{k}-k\ \ \ \ \ \ \ \ \ \ \left(  \text{by the definition of
}\ell_{k}\right)  ,
\]
we can rewrite this as
$\max\left\{  k\geq0 \mid \lambda_{k}-k\geq\mu_i - i\right\}  =j$.
Hence, (\ref{pf.lem.bj=i.1}) can be rewritten as
$b_i = j$. This proves Lemma \ref{lem.bj=i}.
\end{proof}

\begin{proof}[Proof of Lemma \ref{lem.Deltas.x}.]
If $i$ and $j$ are two positive integers
satisfying $m_{i}=\ell_{j}$, then $x_{b_{i}}=x_{j}$
(since Lemma \ref{lem.bj=i} yields $b_{i}=j$).

Hence, Lemma \ref{lem.Deltas.f=g} (applied to $f\left(  i\right)  =x_{b_{i}}$
and $g\left(  j\right)  =x_{j}$) yields%
\begin{align}
\sum_{\substack{i\in\left[  n\right]  ;\\m_{i}\in\Delta\left(  \lambda\right) }}x_{b_{i}}
&  =\sum_{\substack{j\in\left[  n\right]  ;\\\ell_{j}\in
\Delta\left(  \mu\right)  }}x_{j}=\underbrace{\sum_{j\in\left[  n\right]
}x_{j}}_{\substack{=x_{1}+x_{2}+\cdots+x_{n}\\=\sum_{i=1}^{n}x_{i}}%
}-\sum_{\substack{j\in\left[  n\right]  ;\\\ell_{j}\notin\Delta\left(
\mu\right)  }}x_{j}
 =\sum_{i=1}^{n}x_{i}
- \sum_{\substack{j\in\left[  n\right]  ;\\\ell
_{j}\notin\Delta\left(  \mu\right)  }}x_{j}. \nonumber
\end{align}

In other words,
\[
\sum_{i=1}^{n}x_{i}-\sum_{\substack{i\in\left[  n\right]  ;\\m_{i}\in
\Delta\left(  \lambda\right)  }}x_{b_{i}}
= \sum_{\substack{j\in\left[  n\right]  ;\\\ell
_{j}\notin\Delta\left(  \mu\right)  }}x_{j}
= \sum_{\substack{k\in\left[  n\right]  ;\\\ell
_k\notin\Delta\left(  \mu\right)  }}x_{k}
\]
(here, we have renamed the summation index $j$ as $k$).
This proves Lemma \ref{lem.Deltas.x}.
\end{proof}

\begin{proof}[Proof of Lemma \ref{lem.Deltas.mi+1+bi}.]
Set $k=m_{i}+1+b_{i}$.

We make the following two observations:

\begin{itemize}
\item We have
\begin{equation}
    \lambda_{b_{i}}\geq k \text{ if } b_{i}\geq1 .
\label{pf.lem.Deltas.mi+1+bi.obs1}
\end{equation}

[\textit{Proof:} Assume that $b_{i}\geq1$. Then, $b_{i}$ is a positive integer. Hence, Lemma \ref{lem.flagging-of-lm.uniprop}
(applied to $j=b_{i}$) yields that $\lambda_{b_{i}}-b_{i}\geq\mu_{i}-i$ (since
$b_{i}\leq b_{i}$). In view of $m_{i}=\mu_{i}-i$, we can rewrite this as
$\lambda_{b_{i}}-b_{i}\geq m_{i}$. In other words,
$m_{i}\leq\lambda_{b_{i}}-b_{i}$.

Furthermore, $m_{i}\notin\Delta\left(  \lambda\right)  =\left\{  \ell_{1}%
,\ell_{2},\ell_{3},\ldots\right\}  $. In other words, $m_{i}\neq\ell_{k}$ for
every positive integer $k$. In other words, $m_{i}\neq
\lambda_{k}-k$ for every positive integer $k$ (since
$\ell_{k}$ is defined to be $\lambda_{k}-k$). Applying this to $k=b_{i}$, we
obtain $m_{i}\neq\lambda_{b_{i}}-b_{i}$. Combined with $m_{i}\leq
\lambda_{b_{i}}-b_{i}$, this yields $m_{i}<\lambda_{b_{i}}-b_{i}$. In other
words, $m_{i}+b_{i}<\lambda_{b_{i}}$. Equivalently, $\lambda_{b_{i}}%
>m_{i}+b_{i}$. Since both sides of this equality are integers, we thus obtain
$\lambda_{b_{i}}\geq m_{i}+b_{i}+1=m_{i}+1+b_{i}=k$. This proves \eqref{pf.lem.Deltas.mi+1+bi.obs1}.]

\item We have
\begin{equation}
    \lambda_{j}<k \text{ for each integer } j>b_{i} .
\label{pf.lem.Deltas.mi+1+bi.obs2}
\end{equation}

[\textit{Proof:} We don't have $b_{i}+1\leq b_{i}$. Hence, Lemma
\ref{lem.flagging-of-lm.uniprop} (applied to $j=b_{i}+1$) yields that we don't
have $\lambda_{b_{i}+1}-\left(  b_{i}+1\right)  \geq\mu_{i}-i$ either. In
other words, we have $\lambda_{b_{i}+1}-\left(  b_{i}+1\right)  <\mu_{i}-i$.
Thus,%
\[
\lambda_{b_{i}+1}<\underbrace{\mu_{i}-i}_{\substack{=m_{i}\\\text{(by the
definition of }m_{i}\text{)}}}+\left(  b_{i}+1\right)  =m_{i}+\left(
b_{i}+1\right)  =m_{i}+1+b_{i}=k.
\]
Now, for each integer $j > b_i$, we have $j\geq b_{i}+1$ and thus
$\lambda_{j}\leq\lambda_{b_{i}+1}$ (since $\lambda_{1}\geq\lambda_{2}\geq\lambda_{3}\geq\cdots$)
and therefore $\lambda
_{j}\leq\lambda_{b_{i}+1}<k$.
This proves \eqref{pf.lem.Deltas.mi+1+bi.obs2}.]
\end{itemize}

Combining \eqref{pf.lem.Deltas.mi+1+bi.obs1} with \eqref{pf.lem.Deltas.mi+1+bi.obs2}, we see that $b_{i}$ is the largest $j\geq1$
satisfying $\lambda_{j}\geq k$
(where we agree that if no such $j$ exists, then we consider the largest such $j$ to be $0$).
In other words,
\begin{equation}
b_{i}=\max\left\{  j\geq1\mid\lambda_{j}\geq k\right\}  .
\label{pf.lem.Deltas.ybij.a.0}
\end{equation}

Also, \eqref{pf.lem.Deltas.mi+1+bi.obs2}
(applied to $j = b_i+1$) yields $\lambda_{b_i+1} < k$ (since $b_i+1 > b_i$),
so that $k > \lambda_{b_i+1} \geq 0$.
Thus, $k$ is a positive integer. In
other words, $k\geq1$.

From (\ref{eq.def.lambdat.3}), we thus obtain
\[
\lambda_{k}^{t}=\max\left\{  j\geq1\mid\lambda_{j}\geq k\right\}  =b_{i}%
\]
(by (\ref{pf.lem.Deltas.ybij.a.0})). Now, the definition of $\ell_{k}^{t}$
yields%
\[
\ell_{k}^{t}=\underbrace{\lambda_{k}^{t}}_{=b_{i}}-\underbrace{k}%
_{=m_{i}+1+b_{i}}=b_{i}-\left(  m_{i}+1+b_{i}\right)  =-1-m_{i}.
\]

We have thus shown that $k\geq1$ and $\ell_{k}^{t}=-1-m_{i}$.
In other words, $m_i + 1 + b_i \geq 1$ and
$\ell_{m_{i}+1+b_{i}}^{t}=-1-m_{i}$ (since $k=m_{i}+1+b_{i}$). This proves
Lemma \ref{lem.Deltas.mi+1+bi}.
\end{proof}

\begin{proof}[Proof of Lemma \ref{lem.ER-bb}.]
Recall that $\lambda_0 = \infty$ by convention.
We also set $\ell_0 := \infty$.

We defined $\ell_i$ by the equality $\ell_i = \lambda_i - i$ for all $i \geq 1$. This equality holds for $i = 0$ as well (since $\ell_0 = \infty$ and $\lambda_0 = \infty$), and thus it holds for all $i \in \NN$.
In other words, $\ell_k = \lambda_k - k$ for all $k \in \NN$.

The definition of $b_{j}$ yields%
\begin{align}
b_{j} &= \max\left\{  k\geq0\mid\lambda_{k}-k\geq\mu_{j}-j\right\} \nonumber \\
&= \max\left\{  k\geq0 \mid \ell_k\geq m_j\right\}
\label{pf.lem.ER-bb.j}
\end{align}
(since $\ell_k = \lambda_k - k$ and $m_j = \mu_j - j$).
Similarly, from the definition of $b_{j-1}$, we obtain
\begin{align}
b_{j-1} &= \max\left\{  k\geq0 \mid \ell_k\geq m_{j-1}\right\} .
\label{pf.lem.ER-bb.j-1.1}
\end{align}
However, $\mu_{j-1}=\mu_{j}$, so that
$\mu_{j-1}-\left(  j-1\right)  =\mu_{j}-\left(  j-1\right)  =\mu_{j}-j+1$.
In other words, $m_{j-1} = m_j + 1$
(since $m_{j-1} = \mu_{j-1}-\left(  j-1\right)$ and $m_j = \mu_j - j$).
Thus, the inequality $\ell_k \geq m_{j-1}$ (for any given $k\geq0$) is
equivalent to $\ell_k \geq m_j + 1$, which in turn is equivalent to
$\ell_k > m_j$ (since $m_j$ is not $\infty$).
Thus, we can rewrite (\ref{pf.lem.ER-bb.j-1.1}) as
\begin{equation}
b_{j-1} = \max\left\{  k\geq0 \mid \ell_k > m_j\right\}
.
\label{pf.lem.ER-bb.j-1.2}%
\end{equation}

Lemma~\ref{lem.l-decrease} yields $\ell_{1} > \ell_{2} > \ell_{3} > \cdots$. Since
$\ell_{0} > \ell_1$ (because $\ell_0 = \infty$ while $\ell_1$ is finite), we can extend this to
$\ell_{0}>\ell_{1}>\ell_{2}>\ell_{3}>\cdots$. \medskip

\textbf{(a)} Assume that $m_j \notin\Delta\left(  \lambda\right)  $.
Thus, $m_j \notin\Delta\left(  \lambda\right)  =\left\{  \ell_{1}%
,\ell_{2},\ell_{3},\ldots\right\}  $. In other words, for every $k\geq1$, we
have $m_j \neq \ell_k$ (by the definition of $\ell_{k}$).
This also holds for $k=0$ (since $\ell_0 = \infty$), and thus holds for
each $k\geq0$. In other words, we have $\ell_k \neq m_j$ for each
$k\geq0$.

But this, in turn, entails that the weak inequality $\ell_k \geq m_j$ is equivalent to the strict inequality $\ell_k > m_j$ for
any $k\geq0$. Hence, the right hand sides of the equalities
(\ref{pf.lem.ER-bb.j}) and (\ref{pf.lem.ER-bb.j-1.2}) are equal. Therefore,
the left hand sides of these equalities are equal as well. In other words,
$b_{j}=b_{j-1}$. This proves Lemma \ref{lem.ER-bb} \textbf{(a)}. \medskip

\textbf{(b)} Assume that $m_j \in\Delta\left(  \lambda\right)  $. Thus,
$m_j \in\Delta\left(  \lambda\right)  =\left\{  \ell_{1},\ell_{2}%
,\ell_{3},\ldots\right\}  $. In other words, there exists some $i\geq1$ such
that $m_j = \ell_{i}$. Consider this $i$.

We can rewrite (\ref{pf.lem.ER-bb.j}) as%
\begin{equation}
b_{j} = \max\left\{  k\geq0 \mid \ell_{k}\geq\ell_{i}\right\}
\label{pf.lem.ER-bb.b.j}
\end{equation}
(since $m_j=\ell_{i}$).
For the same reasons, we can rewrite (\ref{pf.lem.ER-bb.j-1.2}) as
\begin{equation}
b_{j-1} = \max\left\{  k\geq0 \mid \ell_{k}>\ell_{i}\right\}
.\label{pf.lem.ER-bb.b.j-1}
\end{equation}

But we have $\ell_{0}>\ell_{1}>\ell_{2}>\ell_{3}>\cdots$. Hence, the numbers
$\ell_{0},\ell_{1},\ldots,\ell_{i}$ are $\geq\ell_{i}$, whereas the numbers
$\ell_{i+1},\ell_{i+2},\ell_{i+3},\ldots$ are not. Therefore, $\max\left\{
k\geq0 \mid \ell_{k}\geq\ell_{i}\right\}  =i$. In view of this, we can rewrite
(\ref{pf.lem.ER-bb.b.j}) as $b_{j}=i$.

We have $\ell_{0}>\ell_{1}>\ell_{2}>\ell_{3}>\cdots$. Hence, the
numbers $\ell_{0},\ell_{1},\ldots,\ell_{i-1}$ are $>\ell_{i}$, whereas the
numbers $\ell_{i},\ell_{i+1},\ell_{i+2},\ldots$ are not. Therefore,
$\max\left\{  k\geq0 \mid \ell_{k}>\ell_{i}\right\}  =i-1$. In view of this, we
can rewrite (\ref{pf.lem.ER-bb.b.j-1}) as $b_{j-1}=i-1$. Hence, $i = b_{j-1} + 1$.

Therefore, $b_{j} = i = b_{j-1}+1$.
This proves Lemma \ref{lem.ER-bb} \textbf{(b)}.
\end{proof}

\begin{proof}
[Proof of Lemma \ref{lem.b-vs-bk}.]Recall that $\lambda_{0} = \infty$ by
convention. Let us also set $\ell_{0} := \infty$. We thus have
\[
\ell_{p} = \lambda_{p} - p \qquad\text{ for all } p \geq0
\]
(indeed, this is clear for $p = 0$, and is the definition of $\ell_{p}$ for $p
> 0$). Also, $\ell_{0}>\ell_{1}>\ell_{2}>\ell_{3}>\cdots$ (this is proved as
in the proof of Lemma \ref{lem.ER-bb} above).

Let $i$ be a positive integer. The definition of $m_{i}$ yields $m_{i}=\mu
_{i}-i$. Also, Lemma \ref{lem.mu+ki} yields $\left(  \mu^{+k}\right)  _{i}%
=\mu_{i}+\left[  k=i\right]  $. Subtracting $i$ from both sides of this
equality, we obtain%
\begin{align}
\left(  \mu^{+k}\right)  _{i}-i  & =\mu_{k}+\left[  k=i\right]
-i=\underbrace{\mu_{i}-i}_{=m_{i}}+\left[  k=i\right]  \nonumber\\
& =m_{i}+\left[  k=i\right]  .\label{pf.lem.b-vs-bk.mu-to-m}%
\end{align}

The definition of $b_{i}$ yields
\begin{align}
b_{i}= &  \ \max\left\{  k\geq0\mid\lambda_{k}-k\geq\mu_{i}-i\right\}
\nonumber\\
= &  \ \max\left\{  p\geq0\mid\lambda_{p}-p\geq\mu_{i}-i\right\}  \nonumber\\
= &  \ \max\left\{  p\geq0\mid\ell_{p}\geq\mu_{i}-i\right\}
\label{pf.lem.b-vs-bk.1a}%
\end{align}
(since each $p\geq0$ satisfies $\ell_{p}=\lambda_{p}-p$). The same argument
(applied to $\mu^{+k}$ and $b_{i}^{\ast}$ instead of $\mu$ and $b_{i}$) yields%
\[
b_{i}^{\ast}=\max\left\{  p\geq0\mid\ell_{p}\geq\left(  \mu^{+k}\right)
_{i}-i\right\}  .
\]
Using (\ref{pf.lem.b-vs-bk.mu-to-m}), we can rewrite this as
\begin{equation}
b_{i}^{\ast}=\max\left\{  p\geq0\mid\ell_{p}\geq m_{i}+\left[  k=i\right]
\right\}  .\label{pf.lem.b-vs-bk.2}%
\end{equation}

Also, using $\mu_{i}-i=m_{i}$, we can rewrite (\ref{pf.lem.b-vs-bk.1a}) as%
\begin{equation}
b_{i}=\max\left\{  p\geq0\mid\ell_{p}\geq m_{i}\right\}
.\label{pf.lem.b-vs-bk.1}%
\end{equation}
\medskip

\textbf{(c)} Assume that $i\neq k$. Then, $k\neq i$, so that $\left[
k=i\right]  =0$ and thus $m_{i}+\left[  k=i\right]  =m_{i}$. Hence, the right
hand sides of the equalities (\ref{pf.lem.b-vs-bk.2}) and
(\ref{pf.lem.b-vs-bk.1}) are equal. Therefore, their left hand sides are equal
as well. In other words, $b_{i}^{\ast}=b_{i}$. This proves Lemma
\ref{lem.b-vs-bk} \textbf{(c)}. \medskip

\textbf{(a)} Assume that $m_{k} \notin\Delta\left(  \lambda\right)  $. We must
prove that $b_{i}^{\ast}=b_{i}$.

If $i\neq k$, then this follows from part \textbf{(c)}.

Thus, we WLOG assume that $i=k$. Hence, $k=i$, so that $\left[  k=i\right]
=1$. Also, recall that $m_{k}\notin\Delta\left(  \lambda\right)  $. In other
words, $m_{i}\notin\Delta\left(  \lambda\right)  $ (since $k=i$).

We shall now prove the following:

\begin{statement}
\textit{Claim 1:} Let $p\geq0$ be an integer. Then, the statements
\textquotedblleft$\ell_{p}\geq m_{i}+\left[  k=i\right]  $\textquotedblright%
\ and \textquotedblleft$\ell_{p}\geq m_{i}$\textquotedblright\ are equivalent.
\end{statement}

\begin{proof}
[Proof of Claim 1.] This equivalence is obvious if $p=0$ (because in this
case, we have $\ell_{p}=\ell_{0}=\infty$, and thus both statements
\textquotedblleft$\ell_{p}\geq m_{i}+\left[  k=i\right]  $\textquotedblright%
\ and \textquotedblleft$\ell_{p}\geq m_{i}$\textquotedblright\ are true).
Hence, for the rest of this proof, we WLOG assume that $p\neq0$. Hence,
$p\geq1$ (since $p$ is an integer), and thus $\ell_{p}$ is an integer.

If we had $m_{i}=\ell_{p}$, then we would have $m_{i}=\ell_{p}\in\left\{
\ell_{1},\ell_{2},\ell_{3},\ldots\right\}  $ (since $p\geq1$ is an integer),
which would contradict $m_{i}\notin\Delta\left(  \lambda\right)  =\left\{
\ell_{1},\ell_{2},\ell_{3},\ldots\right\}  $. Hence, we cannot have
$m_{i}=\ell_{p}$. Thus, we have $m_{i}\neq\ell_{p}$. In other words, $\ell
_{p}\neq m_{i}$.

Now, we have the following chain of logical equivalences:%
\begin{align*}
&  \ \left(  \ell_{p}\geq m_{i}+\left[  k=i\right]  \right)  \\
&  \Longleftrightarrow\ \left(  \ell_{p}\geq m_{i}+1\right)
\ \ \ \ \ \ \ \ \ \ \left(  \text{since }\left[  k=i\right]  =1\right)  \\
&  \Longleftrightarrow\ \left(  \ell_{p}>m_{i}\right)
\ \ \ \ \ \ \ \ \ \ \left(  \text{since }\ell_{p}\text{ and }m_{i}\text{ are
integers}\right)  \\
&  \Longleftrightarrow\ \left(  \ell_{p}\geq m_{i}\right)
\ \ \ \ \ \ \ \ \ \ \left(  \text{since we know that }\ell_{p}\neq
m_{i}\right)  .
\end{align*}
In other words, the statements \textquotedblleft$\ell_{p}\geq m_{i}+\left[
k=i\right]  $\textquotedblright\ and \textquotedblleft$\ell_{p}\geq m_{i}%
$\textquotedblright\ are equivalent. This proves Claim 1.
\end{proof}

Now, the right hand sides of the equalities (\ref{pf.lem.b-vs-bk.2}) and
(\ref{pf.lem.b-vs-bk.1}) are equal (because Claim 1 shows that the statements
\textquotedblleft$\ell_{p}\geq m_{i}+\left[  k=i\right]  $\textquotedblright%
\ and \textquotedblleft$\ell_{p}\geq m_{i}$\textquotedblright\ are
equivalent). Therefore, their left hand sides are equal as well. In other
words, $b_{i}^{\ast}=b_{i}$. This proves Lemma \ref{lem.b-vs-bk} \textbf{(a)}.
\medskip

\textbf{(b)} Assume that $m_{k} \in\Delta\left(  \lambda\right)  $. We must
prove that $b_{i}^{\ast}=b_{i}-\left[  k=i\right]  $.

If $i\neq k$, then this follows from part \textbf{(c)} (since $i\neq k$
entails $k\neq i$ and thus $\left[  k=i\right]  =0$, so that $b_i - \ive{k=i} = b_i - 0 = b_i$,
but part \textbf{(c)} yields $b_{i}^{\ast} = b_{i} = b_i - \ive{k=i}$).
Thus, we WLOG assume that $i=k$. Hence, $k=i$,
so that $\left[  k=i\right]  =1$.

We have $i=k$, so that $m_{i}=m_{k}\in\Delta\left(  \lambda\right)  =\left\{
\ell_{1},\ell_{2},\ell_{3},\ldots\right\}  $. In other words, $m_{i}=\ell_{j}$
for some positive integer $j$. Consider this $j$.

But we have $\ell_{0}>\ell_{1}>\ell_{2}>\ell_{3}>\cdots$. Hence, the numbers
$\ell_{0},\ell_{1},\ldots,\ell_{j}$ are $\geq\ell_{j}$, whereas the numbers
$\ell_{j+1},\ell_{j+2},\ell_{j+3},\ldots$ are not. Therefore, $\max\left\{
p\geq0\mid\ell_{p}\geq\ell_{j}\right\}  =j$. In view of $m_{i}=\ell_{j}$, we
can rewrite this as%
\[
\max\left\{  p\geq0\mid\ell_{p}\geq m_{i}\right\}  =j.
\]
This allows us to rewrite (\ref{pf.lem.b-vs-bk.1}) as $b_{i}=j$.

Again, recall that $\ell_{0}>\ell_{1}>\ell_{2}>\ell_{3}>\cdots$. Hence, the
numbers $\ell_{0},\ell_{1},\ldots,\ell_{j-1}$ are $>\ell_{j}$, whereas the
numbers $\ell_{j},\ell_{j+1},\ell_{j+2},\ldots$ are not. Therefore,
$\max\left\{  p\geq0\mid\ell_{p}>\ell_{j}\right\}  =j-1$. In other words,
\[
\max\left\{  p\geq0\mid\ell_{p}\geq\ell_{j}+1\right\}  =j-1
\]
(since the inequality $\ell_{p}>\ell_{j}$ is equivalent to $\ell_{p}\geq
\ell_{j}+1$). We can rewrite this further as%
\[
\max\left\{  p\geq0\mid\ell_{p}\geq m_{i}+\left[  k=i\right]  \right\}  =j-1
\]
(since $m_{i}+\underbrace{\left[  k=i\right]  }_{=1}=\underbrace{m_{i}}%
_{=\ell_{j}}+1=\ell_{j}+1$). This allows us to rewrite (\ref{pf.lem.b-vs-bk.2}%
) as $b_{i}^{\ast}=j-1$. Comparing this with $\underbrace{b_{i}}%
_{=j}-\underbrace{\left[  k=i\right]  }_{=1}=j-1$, we obtain $b_{i}^{\ast
}=b_{i}-\left[  k=i\right]  $. This proves Lemma \ref{lem.b-vs-bk}
\textbf{(b)}.
\end{proof}

\subsection{To Section \ref{sec.kpf}}

\begin{proof}[Proof of Lemma \ref{lem.nu-sub-lambda-irrelevant}.]
It clearly suffices to prove that $\mathbf{s}_{\lambda}\left[  \nu\right]  =0$
for any partition $\nu$ that does not satisfy $\nu\subseteq\lambda$. But this
is obvious, because if $\nu$ does not satisfy $\nu\subseteq\lambda$, then the
set $\mathcal{E}\left(  \lambda/\nu\right)  $ is empty (by Lemma
\ref{lem.exc.empty}, applied to $\mu=\nu$), and thus we have%
\[
\mathbf{s}_{\lambda}\left[  \nu\right]  =\sum_{D\in\mathcal{E}\left(
\lambda/\nu\right)  }\ \ \prod_{\left(  i,j\right)  \in D}\left(  x_{i}%
+y_{j}\right)  =\left(  \text{empty sum}\right)  =0.
\]

\end{proof}

\begin{proof}[Proof of Lemma \ref{lem.konval-RHS-as-sum}.]
Lemma \ref{lem.nu=mu+k} yields that the partitions $\nu$ that satisfy
$\mu\lessdot\nu$ are precisely the partitions $\mu^{+k}$ for the elements
$k\in\operatorname*{ER}\left(  \mu\right)  $. Hence,
\begin{equation}
\sum_{\mu\lessdot\nu}\mathbf{s}_{\lambda}\left[  \nu\right]  =\sum
_{k\in\operatorname*{ER}\left(  \mu\right)  }\mathbf{s}_{\lambda}\left[
\mu^{+k}\right]  .
\label{pf.lem.konval-RHS-as-sum.1}
\end{equation}
(since the partitions $\mu^{+k}$ for different $k \in \operatorname{ER}\tup{\mu}$ are furthermore distinct).

Recall that $\mu_n = 0$, so that $\mu_j = 0$ for all $j \geq n$.
Hence, every integer $k>n$ satisfies $\mu_{k-1}=\mu_{k}$ (since
$\mu_{k-1}=0$ and $\mu_{k}=0$) and thus $k\notin\operatorname*{ER}\left(
\mu\right)  $. In other words, every integer in $\operatorname*{ER}\left(
\mu\right)  $ is $\leq n$. In other words, $\operatorname*{ER}\left(
\mu\right)  \subseteq\left[  n\right]  $. However,
\begin{align*}
\sum_{k=1}^{n}\mathbf{s}_{\lambda}\left[  \mu^{+k}\right]    &
=\underbrace{\sum_{\substack{k\in\left[  n\right]  ;\\k\in\operatorname*{ER}%
\left(  \mu\right)  }}}_{\substack{=\sum_{k\in\operatorname*{ER}\left(
\mu\right)  }\\\text{(since }\operatorname*{ER}\left(  \mu\right)
\subseteq\left[  n\right]  \text{)}}}\mathbf{s}_{\lambda}\left[  \mu
^{+k}\right]  +\sum_{\substack{k\in\left[  n\right]  ;\\k\notin%
\operatorname*{ER}\left(  \mu\right)  }}\underbrace{\mathbf{s}_{\lambda
}\left[  \mu^{+k}\right]  }_{\substack{=0\\\text{(by the definition of
}\mathbf{s}_{\lambda}\left[  \mu^{+k}\right]  \text{,}\\\text{since }%
k\notin\operatorname*{ER}\left(  \mu\right)  \text{ shows that }\mu
^{+k}\\\text{is not a partition)}}}\\
& =\sum_{k\in\operatorname*{ER}\left(  \mu\right)  }\mathbf{s}_{\lambda
}\left[  \mu^{+k}\right]  .
\end{align*}
Comparing this with (\ref{pf.lem.konval-RHS-as-sum.1}), we find%
\[
\sum_{k=1}^{n}\mathbf{s}_{\lambda}\left[  \mu^{+k}\right]  =\sum_{\mu
\lessdot\nu}\mathbf{s}_{\lambda}\left[  \nu\right]  =\sum_{\mu\lessdot
\nu\subseteq\lambda}\mathbf{s}_{\lambda}\left[  \nu\right]
\]
(by Lemma \ref{lem.nu-sub-lambda-irrelevant}). This proves Lemma
\ref{lem.konval-RHS-as-sum}.
\end{proof}

\begin{proof}
[Proof of Lemma \ref{lem.mu+k.sdet}.]Let $\mathbf{b}^{\ast}=\left(
b_{1}^{\ast},b_{2}^{\ast},b_{3}^{\ast},\ldots\right)  $ be the flagging
induced by $\lambda/\mu^{+k}$. \medskip

\textbf{(a)} Assume that $m_{k}\notin\Delta\left(  \lambda\right)  $.

We are in one of the following two cases:

\textit{Case 1:} We have $k\in\operatorname*{ER}\left(  \mu\right)  $.

\textit{Case 2:} We have $k\notin\operatorname*{ER}\left(  \mu\right)  $.

Let us first consider Case 1. In this case, we have $k\in\operatorname*{ER}%
\left(  \mu\right)  $. Hence, $\mu^{+k}$ is a partition.

For each $i\geq1$, we have $b_{i}^{\ast}=b_{i}$ (by Lemma \ref{lem.b-vs-bk}
\textbf{(a)}, since $m_{k}\notin\Delta\left(  \lambda\right)  $) and $\left(
\mu^{+k}\right)  _{i}=\mu_{i}+\left[  k=i\right]  $ (by Lemma \ref{lem.mu+ki}).

However, $\mu^{+k}$ is a partition, and $\mathbf{b}^{\ast}=\left(  b_{1}%
^{\ast},b_{2}^{\ast},b_{3}^{\ast},\ldots\right)  $ is the flagging induced by
$\lambda/\mu^{+k}$. Thus, (\ref{eq.snu=.det}) (applied to $\mu^{+k}$, $\mathbf{b}^{\ast}$ and
$b_{i}^{\ast}$ instead of $\mu$, $\mathbf{b}$ and $b_{i}$) yields%
\begin{align*}
\mathbf{s}_{\lambda}\left[  \mu^{+k}\right]   &  =\det\left(  h\left(
\underbrace{\left(  \mu^{+k}\right)  _{i}}_{=\mu_{i}+\left[  k=i\right]
}-i+j,\ \ \underbrace{b_{i}^{\ast}}_{=b_{i}},\ \ 1-j\right)  \right)
_{i,j\in\left[  n\right]  }\\
&  =\det\left(  h\left(  \mu_{i}-i+j+\left[  k=i\right]  ,\ \ b_{i}%
,\ \ 1-j\right)  \right)  _{i,j\in\left[  n\right]  }.
\end{align*}
Thus, Lemma \ref{lem.mu+k.sdet} \textbf{(a)} is proved in Case 1.

Let us now consider Case 2. In this case, we have $k\notin\operatorname*{ER}%
\left(  \mu\right)  $. Hence, $k\neq1$ and $\mu_{k-1}=\mu_{k}$. Thus, Lemma
\ref{lem.ER-bb} \textbf{(a)} (applied to $j=k$) yields $b_{k}=b_{k-1}$ (since
$m_{k}\notin\Delta\left(  \lambda\right)  $). In other words, $b_{k-1}=b_{k}$.

Now, consider the matrix%
\[
\left(  h\left(  \mu_{i}-i+j+\left[  k=i\right]  ,\ \ b_{i},\ \ 1-j\right)
\right)  _{i,j\in\left[  n\right]  }.
\]
The $\left(  k-1\right)  $-st and $k$-th rows of this matrix are
identical\footnote{\textit{Proof.} For each $j\in\left[  n\right]  $, the
$j$-th entry of the $\left(  k-1\right)  $-st row of this matrix is%
\begin{align*}
& h\left(  \underbrace{\mu_{k-1}}_{=\mu_{k}}-\left(  k-1\right)
+j+\underbrace{\left[  k=k-1\right]  }_{=0},\ \ \underbrace{b_{k-1}}_{=b_{k}%
},\ \ 1-j\right)  \\
& =h\left(  \underbrace{\mu_{k}-\left(  k-1\right)  +j+0}_{=\mu_{k}%
-k+j+1},\ \ b_{k},\ \ 1-j\right)  =h\left(  \mu_{k}-k+j+1,\ \ b_{k}%
,\ \ 1-j\right)  ,
\end{align*}
whereas the $j$-th entry of the $k$-th row of this matrix is%
\begin{align*}
h\left(  \mu_{k}-k+j+\underbrace{\left[  k=k\right]  }_{=1},\ \ b_{k}%
,\ \ 1-j\right)
 =h\left(  \mu_{k}-k+j+1,\ \ b_{k},\ \ 1-j\right)  .
\end{align*}
These two entries are clearly equal. Thus, the $\left(  k-1\right)  $-st and
$k$-th rows of this matrix are identical.}. Hence, this matrix has two equal
rows, so that its determinant vanishes. In other words,%
\[
\det\left(  h\left(  \mu_{i}-i+j+\left[  k=i\right]  ,\ \ b_{i}%
,\ \ 1-j\right)  \right)  _{i,j\in\left[  n\right]  }=0.
\]

On the other hand, $\mu^{+k}$ is not a partition (since $k\notin%
\operatorname*{ER}\left(  \mu\right)  $), so that $\mathbf{s}_{\lambda}\left[
\mu^{+k}\right]  =0$ (by definition of $\mathbf{s}_{\lambda}\left[  \mu
^{+k}\right]  $). Comparing this with the preceding equality, we obtain%
\[
\mathbf{s}_{\lambda}\left[  \mu^{+k}\right]  =\det\left(  h\left(  \mu
_{i}-i+j+\left[  k=i\right]  ,\ \ b_{i},\ \ 1-j\right)  \right)
_{i,j\in\left[  n\right]  }.
\]
Thus, Lemma \ref{lem.mu+k.sdet} \textbf{(a)} is proved in Case 2.

We have now proved Lemma \ref{lem.mu+k.sdet} \textbf{(a)} in both Cases 1 and
2, so that its proof is complete. \medskip

\textbf{(b)} Assume that $m_{k}\in\Delta\left(  \lambda\right)  $.

We are in one of the following two cases:

\textit{Case 1:} We have $k\in\operatorname*{ER}\left(  \mu\right)  $.

\textit{Case 2:} We have $k\notin\operatorname*{ER}\left(  \mu\right)  $.

Let us first consider Case 1. In this case, we have $k\in\operatorname*{ER}%
\left(  \mu\right)  $. Hence, $\mu^{+k}$ is a partition.

For each $i\geq1$, we have $b_{i}^{\ast}=b_{i}-\left[  k=i\right]  $ (by Lemma
\ref{lem.b-vs-bk} \textbf{(b)}, since $m_{k}\in\Delta\left(  \lambda\right)
$) and $\left(  \mu^{+k}\right)  _{i}=\mu_{i}+\left[  k=i\right]  $ (by Lemma
\ref{lem.mu+ki}).

However, $\mu^{+k}$ is a partition, and $\mathbf{b}^{\ast}=\left(  b_{1}%
^{\ast},b_{2}^{\ast},b_{3}^{\ast},\ldots\right)  $ is the flagging induced by
$\lambda/\mu^{+k}$. Thus, (\ref{eq.snu=.det}) (applied to $\mu^{+k}$, $\mathbf{b}^{\ast}$ and
$b_{i}^{\ast}$ instead of $\mu$, $\mathbf{b}$ and $b_{i}$) yields%
\begin{align*}
\mathbf{s}_{\lambda}\left[  \mu^{+k}\right]   &  =\det\left(  h\left(
\underbrace{\left(  \mu^{+k}\right)  _{i}}_{=\mu_{i}+\left[  k=i\right]
}-i+j,\ \ \underbrace{b_{i}^{\ast}}_{=b_{i}-\left[  k=i\right]  }%
,\ \ 1-j\right)  \right)  _{i,j\in\left[  n\right]  }\\
&  =\det\left(  h\left(  \mu_{i}-i+j+\left[  k=i\right]  ,\ \ b_{i}-\left[
k=i\right]  ,\ \ 1-j\right)  \right)  _{i,j\in\left[  n\right]  }.
\end{align*}
Thus, Lemma \ref{lem.mu+k.sdet} \textbf{(b)} is proved in Case 1.

Let us now consider Case 2. In this case, we have $k\notin\operatorname*{ER}%
\left(  \mu\right)  $. Hence, $k\neq1$ and $\mu_{k-1}=\mu_{k}$. Thus, Lemma
\ref{lem.ER-bb} \textbf{(b)} (applied to $j=k$) yields $b_{k}=b_{k-1}+1$
(since $m_{k}\in\Delta\left(  \lambda\right)  $). In other words,
$b_{k-1}=b_{k}-1$.

Now, consider the matrix%
\[
\left(  h\left(  \mu_{i}-i+j+\left[  k=i\right]  ,\ \ b_{i}-\left[
k=i\right]  ,\ \ 1-j\right)  \right)  _{i,j\in\left[  n\right]  }.
\]
The $\left(  k-1\right)  $-st and $k$-th rows of this matrix are
identical\footnote{\textit{Proof.} For each $j\in\left[  n\right]  $, the
$j$-th entry of the $\left(  k-1\right)  $-st row of this matrix is%
\begin{align*}
& h\left(  \underbrace{\mu_{k-1}}_{=\mu_{k}}-\left(  k-1\right)
+j+\underbrace{\left[  k=k-1\right]  }_{=0},\ \ \underbrace{b_{k-1}}%
_{=b_{k}-1}-\underbrace{\left[  k=k-1\right]  }_{=0},\ \ 1-j\right)  \\
& =h\left(  \underbrace{\mu_{k}-\left(  k-1\right)  +j}_{=\mu_{k}%
-k+j+1},\ \ b_{k}-1,\ \ 1-j\right)  =h\left(  \mu_{k}-k+j+1,\ \ b_{k}%
-1,\ \ 1-j\right)  ,
\end{align*}
whereas the $j$-th entry of the $k$-th row of this matrix is%
\begin{align*}
h\left(  \mu_{k}-k+j+\underbrace{\left[  k=k\right]  }_{=1},\ \ b_{k}%
-\underbrace{\left[  k=k\right]  }_{=1},\ \ 1-j\right)
= h\left(  \mu_{k}-k+j+1,\ \ b_{k}-1,\ \ 1-j\right)  .
\end{align*}
These two entries are clearly equal. Thus, the $\left(  k-1\right)  $-st and
$k$-th rows of this matrix are identical.}. Hence, this matrix has two equal
rows, so that its determinant vanishes. In other words,%
\[
\det\left(  h\left(  \mu_{i}-i+j+\left[  k=i\right]  ,\ \ b_{i}-\left[
k=i\right]  ,\ \ 1-j\right)  \right)  _{i,j\in\left[  n\right]  }=0.
\]

On the other hand, $\mu^{+k}$ is not a partition (since $k\notin%
\operatorname*{ER}\left(  \mu\right)  $), so that $\mathbf{s}_{\lambda}\left[
\mu^{+k}\right]  =0$ (by definition of $\mathbf{s}_{\lambda}\left[  \mu
^{+k}\right]  $). Comparing this with the preceding equality, we obtain%
\[
\mathbf{s}_{\lambda}\left[  \mu^{+k}\right]  =\det\left(  h\left(  \mu
_{i}-i+j+\left[  k=i\right]  ,\ \ b_{i}-\left[  k=i\right]  ,\ \ 1-j\right)
\right)  _{i,j\in\left[  n\right]  }.
\]
Thus, Lemma \ref{lem.mu+k.sdet} \textbf{(b)} is proved in Case 2.

We have now proved Lemma \ref{lem.mu+k.sdet} \textbf{(b)} in both Cases 1 and
2, so that its proof is complete.
\end{proof}

\begin{proof}[Proof of Lemma \ref{lem.k-th-det-0}.]
Recall that $\mathbf{b}=\left(
b_{1},b_{2},b_{3},\ldots\right)  $ is the flagging induced by $\lambda/\mu$,
and that we have $\mu=\left(  \mu_{1},\mu_{2},\ldots,\mu_{n}\right)  $. Hence,
Corollary \ref{cor.slam.det} yields
\[
\mathbf{s}_{\lambda}\left[  \mu\right]  =\det\left(  \underbrace{h\left(
\mu_{i}-i+j,\ \ b_{i},\ \ 1-j\right)  }_{\substack{=u_{i,j}\\\text{(by the
definition of }u_{i,j}\text{)}}}\right)  _{i,j\in\left[  n\right]  }%
=\det\left(  u_{i,j}\right)  _{i,j\in\left[  n\right]  }.
\]
This proves (\ref{eq.lem.k-th-det-0.0}). However, the same
argument can be applied to $n-1$ instead of $n$ (since $\mu=\left(  \mu
_{1},\mu_{2},\ldots,\mu_{n-1}\right)  $), and thus we obtain
(\ref{eq.lem.k-th-det-0.-1}).
\end{proof}

\begin{proof}
[Proof of Lemma \ref{lem.k-th-det-1}.]
We first recall that every $i\in\left[  n\right]  $ satisfies%
\[
\left[  k=i\right]  =%
\begin{cases}
0, & \text{if }i\neq k;\\
1, & \text{if }i=k.
\end{cases}
\]
Hence, every $i,j\in\left[  n\right]  $ satisfy%
\begin{align}
& u_{i,j+\left[  k=i\right]  }-p_{i}u_{i,j}\left[  k=i\right]  \nonumber\\
& =%
\begin{cases}
u_{i,j+0}-p_{i}u_{i,j}0, & \text{if }i\neq k;\\
u_{i,j+1}-p_{i}u_{i,j}1, & \text{if }i=k
\end{cases}
\nonumber\\
& =%
\begin{cases}
u_{i,j}, & \text{if }i\neq k;\\
u_{i,j+1}-p_{i}u_{i,j}, & \text{if }i=k
\end{cases}
\label{pf.lem.k-th-det-1.prepare}%
\end{align}
(since $u_{i,j+0}-p_{i}u_{i,j}0=u_{i,j+0}=u_{i,j}$ and $u_{i,j}1=u_{i,j}$).

We shall use the notation $A_{i,j}$ for
the $\left(  i,j\right)  $-th entry of a matrix $A$.

We are in one of the following two cases:

\textit{Case 1:} We have $m_{k}\in\Delta\left(  \lambda\right)  $.

\textit{Case 2:} We have $m_{k}\notin\Delta\left(  \lambda\right)  $.

Let us first consider Case 1. In this case, we have $m_{k}\in\Delta\left(
\lambda\right)  $. Thus, Lemma \ref{lem.mu+k.sdet} \textbf{(b)} yields
\begin{equation}
\mathbf{s}_{\lambda}\left[  \mu^{+k}\right]  =\det\left(  h\left(  \mu
_{i}-i+j+\left[  k=i\right]  ,\ \ b_{i}-\left[  k=i\right]  ,\ \ 1-j\right)
\right)  _{i,j\in\left[  n\right]  }.\label{pf.lem.k-th-det-1.c1.1}%
\end{equation}
Let $A$ denote the matrix on the right hand side of this equality. Thus,
(\ref{pf.lem.k-th-det-1.c1.1}) rewrites as%
\begin{equation}
\mathbf{s}_{\lambda}\left[  \mu^{+k}\right]  =\det
A.\label{pf.lem.k-th-det-1.c1.1W}%
\end{equation}

We shall now compute the $\left(  i,j\right)  $-th entry
\[
A_{i,j}=h\left(  \mu_{i}-i+j+\left[  k=i\right]  ,\ \ b_{i}-\left[
k=i\right]  ,\ \ 1-j\right)
\]
of the matrix $A$ for any two numbers $i,j\in\left[  n\right]  $:

\begin{itemize}
\item When $i\neq k$, we have $\left[  k=i\right]  =0$ and thus%
\begin{align}
A_{i,j}  & =h\left(  \mu_{i}-i+j+\underbrace{\left[  k=i\right]  }%
_{=0},\ \ b_{i}-\underbrace{\left[  k=i\right]  }_{=0},\ \ 1-j\right)
\nonumber\\
& =h\left(  \mu_{i}-i+j,\ \ b_{i},\ \ 1-j\right)  =u_{i,j}%
\label{pf.lem.k-th-det-1.c1.Aij1}%
\end{align}
(by the definition of $u_{i,j}$).

\item When $i=k$, we have $\left[  k=i\right]  =1$ and $b_{i}>0$%
\ \ \ \ \footnote{\textit{Proof.} We have $i=k$, thus $m_{i}=m_{k}\in
\Delta\left(  \lambda\right)  =\left\{  \ell_{1},\ell_{2},\ell_{3}%
,\ldots\right\}  $. Hence, there exists some $p\geq1$ such that $m_{i}%
=\ell_{p}$. Consider this $p$. Thus, Lemma \ref{lem.bj=i} (applied to $p$ instead of $j$)
yields $b_{i}=p\geq1>0$.}, and thus%
\begin{align}
A_{i,j}  & =h\left(  \mu_{i}-i+j+\underbrace{\left[  k=i\right]  }%
_{=1},\ \ b_{i}-\underbrace{\left[  k=i\right]  }_{=1},\ \ 1-j\right)
\nonumber\\
& =h\left(  \mu_{i}-i+j+1,\ \ b_{i}-1,\ \ 1-j\right)  \nonumber\\
& =\underbrace{h\left(  \mu_{i}-i+j+1,\ \ b_{i},\ \ -j\right)  }%
_{\substack{=u_{i,j+1}\\\text{(by the definition of }u_{i,j+1}\text{)}%
}}\nonumber\\
& \ \ \ \ \ \ \ \ \ \ -\left(  x_{b_{i}}+\underbrace{y_{1-j}}%
_{\substack{=0\\\text{(since }1-j\leq0\text{)}}}\right)  \cdot
\underbrace{h\left(  \mu_{i}-i+j,\ \ b_{i},\ \ 1-j\right)  }%
_{\substack{=u_{i,j}\\\text{(by the definition of }u_{i,j}\text{)}%
}}\nonumber\\
& \ \ \ \ \ \ \ \ \ \ \ \ \ \ \ \ \ \ \ \ \left(
\begin{array}
[c]{c}%
\text{by Corollary \ref{2hprop}, applied to }a=\mu_{i}-i+j+1\\
\text{and }b=b_{i}\text{ and }c=1-j\text{ (since }b_i>0\text{)}%
\end{array}
\right)  \nonumber\\
& =u_{i,j+1}-\underbrace{x_{b_{i}}}_{\substack{=p_{i}\\\text{(by the
definition of }p_{i}\text{,}\\\text{since }i=k\text{ entails }m_{i}=m_{k}%
\in\Delta\left(  \lambda\right)  \text{)}}}u_{i,j}\nonumber\\
& =u_{i,j+1}-p_{i}u_{i,j}.\label{pf.lem.k-th-det-1.c1.Aij2}%
\end{align}

\end{itemize}

Combining the formulas (\ref{pf.lem.k-th-det-1.c1.Aij1}) and
(\ref{pf.lem.k-th-det-1.c1.Aij2}), we conclude that the $\left(  i,j\right)
$-th entry of $A$ always equals%
\[
A_{i,j}
=
\begin{cases}
u_{i,j}, & \text{if }i\neq k;\\
u_{i,j+1}-p_{i}u_{i,j}, & \text{if }i=k
\end{cases}
\quad =
u_{i,j+\left[  k=i\right]  }-p_{i}u_{i,j}\left[  k=i\right]
\]
(by \eqref{pf.lem.k-th-det-1.prepare}).
Hence,%
\[
A=\left(  u_{i,j+\left[  k=i\right]  }-p_{i}u_{i,j}\left[  k=i\right]
\right)  _{i,j\in\left[  n\right]  }.
\]
Therefore, (\ref{pf.lem.k-th-det-1.c1.1W}) rewrites as%
\[
\mathbf{s}_{\lambda}\left[  \mu^{+k}\right]  =\det\left(  u_{i,j+\left[
k=i\right]  }-p_{i}u_{i,j}\left[  k=i\right]  \right)  _{i,j\in\left[
n\right]  }.
\]
Thus, Lemma \ref{lem.k-th-det-1} is proved in Case 1.

Let us now consider Case 2. In this case, we have $m_{k}\notin\Delta\left(
\lambda\right)  $. Thus, Lemma \ref{lem.mu+k.sdet} \textbf{(a)} yields
\begin{equation}
\mathbf{s}_{\lambda}\left[  \mu^{+k}\right]  =\det\left(  h\left(  \mu
_{i}-i+j+\left[  k=i\right]  ,\ \ b_{i},\ \ 1-j\right)  \right)
_{i,j\in\left[  n\right]  }.\label{pf.lem.k-th-det-1.c1.2}%
\end{equation}
Let $A$ denote the matrix on the right hand side of this equality. Thus,
(\ref{pf.lem.k-th-det-1.c1.2}) rewrites as%
\begin{equation}
\mathbf{s}_{\lambda}\left[  \mu^{+k}\right]  =\det
A.\label{pf.lem.k-th-det-1.c1.2W}%
\end{equation}

We shall now compute the $\left(  i,j\right)  $-th entry
\[
A_{i,j}=h\left(  \mu_{i}-i+j+\left[  k=i\right]  ,\ \ b_{i},\ \ 1-j\right)
\]
of the matrix $A$ for any two numbers $i,j\in\left[  n\right]  $:

\begin{itemize}
\item When $i\neq k$, we have $\left[  k=i\right]  =0$ and thus%
\begin{align}
A_{i,j}  & =h\left(  \mu_{i}-i+j+\underbrace{\left[  k=i\right]  }%
_{=0},\ \ b_{i},\ \ 1-j\right)  \nonumber\\
& =h\left(  \mu_{i}-i+j,\ \ b_{i},\ \ 1-j\right)  =u_{i,j}%
\label{pf.lem.k-th-det-1.c2.Aij1}%
\end{align}
(by the definition of $u_{i,j}$).

\item When $i=k$, we have $\left[  k=i\right]  =1$ and thus
\begin{align}
A_{i,j}  & =h\left(  \mu_{i}-i+j+\underbrace{\left[  k=i\right]  }%
_{=1},\ \ b_{i},\ \ \underbrace{1-j}_{=-j+1}\right)  \nonumber\\
& =h\left(  \mu_{i}-i+j+1,\ \ b_{i},\ \ -j+1\right)  \nonumber\\
& =\underbrace{h\left(  \mu_{i}-i+j+1,\ \ b_{i},\ \ -j\right)  }%
_{\substack{=u_{i,j+1}\\\text{(by the definition of }u_{i,j+1}\text{)}%
}}\nonumber\\
& \ \ \ \ \ \ \ \ \ \ +\left(  y_{\left(  \mu_{i}-i+j\right)  +b_{i}+\left(
-j\right)  +1}-\underbrace{y_{-j+1}}_{\substack{=0\\\text{(since }%
-j+1\leq0\text{)}}}\right)  \cdot\underbrace{h\left(  \mu_{i}-i+j,\ \ b_{i}%
,\ \ -j+1\right)  }_{\substack{=u_{i,j}\\\text{(by the definition of }%
u_{i,j}\text{)}}}\nonumber\\
& \ \ \ \ \ \ \ \ \ \ \ \ \ \ \ \ \ \ \ \ \left(
\begin{array}
[c]{c}%
\text{by Corollary \ref{3hprop2},} \\ \text{applied to }a=\mu_{i}-i+j\text{ and }b=b_{i}\text{ and }c=-j
\end{array}
\right)  \nonumber\\
& =u_{i,j+1}+\underbrace{y_{\left(  \mu_{i}-i+j\right)  +b_{i}+\left(
-j\right)  +1}}_{\substack{=y_{m_{i}+1+b_{i}}\\\text{(since }\left(  \mu
_{i}-i+j\right)  +b_{i}+\left(  -j\right)  +1=\left(  \mu_{i}-i\right)
+1+b_{i}=m_{i}+1+b_{i}\\\text{(because }\mu_{i}-i=m_{i}\text{))}}}\cdot\,
u_{i,j}\nonumber\\
& =u_{i,j+1}+y_{m_{i}+1+b_{i}}\cdot u_{i,j}=u_{i,j+1}-\underbrace{\left(
-y_{m_{i}+1+b_{i}}\right)  }_{\substack{=p_{i}\\\text{(by the definition of
}p_{i}\text{,}\\\text{since }i=k\text{ entails }m_{i}=m_{k}\notin\Delta\left(
\lambda\right)  \text{)}}}\cdot\, u_{i,j}\nonumber\\
& =u_{i,j+1}-p_{i}u_{i,j}.\label{pf.lem.k-th-det-1.c2.Aij2}%
\end{align}

\end{itemize}

Combining the formulas (\ref{pf.lem.k-th-det-1.c2.Aij1}) and
(\ref{pf.lem.k-th-det-1.c2.Aij2}), we conclude that the $\left(  i,j\right)
$-th entry of $A$ always equals%
\[
A_{i,j}
=
\begin{cases}
u_{i,j}, & \text{if }i\neq k;\\
u_{i,j+1}-p_{i}u_{i,j}, & \text{if }i=k
\end{cases}
\quad =
u_{i,j+\left[  k=i\right]  }-p_{i}u_{i,j}\left[  k=i\right]
\]
(by \eqref{pf.lem.k-th-det-1.prepare}).
Hence,%
\[
A=\left(  u_{i,j+\left[  k=i\right]  }-p_{i}u_{i,j}\left[  k=i\right]
\right)  _{i,j\in\left[  n\right]  }.
\]

Therefore, (\ref{pf.lem.k-th-det-1.c1.2W}) rewrites as%
\[
\mathbf{s}_{\lambda}\left[  \mu^{+k}\right]  =\det\left(  u_{i,j+\left[
k=i\right]  }-p_{i}u_{i,j}\left[  k=i\right]  \right)  _{i,j\in\left[
n\right]  }.
\]
Thus, Lemma \ref{lem.k-th-det-1} is proved in Case 2.

We have now proved Lemma \ref{lem.k-th-det-1} in both Cases 1 and 2.
Consequently, Lemma \ref{lem.k-th-det-1} always holds.
\end{proof}

\begin{proof}[Proof of Lemma \ref{lem.k-th-det-2}.]
We have%
\begin{align*}
& \sum_{k=1}^{n}\mathbf{s}_{\lambda}\left[  \mu^{+k}\right]  \\
& =\sum_{k=1}^{n}\det\left(  u_{i,j+\left[  k=i\right]  }-p_{i}u_{i,j}\left[
k=i\right]  \right)  _{i,j\in\left[  n\right]  }\ \ \ \ \ \ \ \ \ \ \left(
\text{by Lemma \ref{lem.k-th-det-1}}\right)  \\
& =\det\left(  u_{i,j+\left[  n=j\right]  }\right)  _{i,j\in\left[  n\right]
}-\left(  \sum_{k=1}^{n}p_{k}\right)  \underbrace{\det\left(  u_{i,j}\right)
_{i,j\in\left[  n\right]  }}_{\substack{=\mathbf{s}_{\lambda}\left[
\mu\right]  \\\text{(by (\ref{eq.lem.k-th-det-0.0}))}}%
}\ \ \ \ \ \ \ \ \ \ \left(  \text{by Lemma \ref{lem.det-uc}}\right)  \\
& =\det\left(  u_{i,j+\left[  n=j\right]  }\right)  _{i,j\in\left[  n\right]
}-\left(  \sum_{k=1}^{n}p_{k}\right)  \mathbf{s}_{\lambda}\left[  \mu\right]
.
\end{align*}
This proves the lemma.
\end{proof}

\begin{proof}
[Proof of Lemma \ref{lem.k-th-det-3}.]For each $\ell\in\left[  n-1\right]  $,
we have $\underbrace{\mu_{n}}_{=0}-\,n+\ell=-n+\ell<0$ (since $\ell\leq n-1<n$)
and $\left[  n=\ell\right]  =0$ (since $\ell\leq n-1<n$ and thus $n\neq\ell$).
Hence, for each $\ell\in\left[  n-1\right]  $, we have
\begin{align*}
u_{n,\ell+\left[  n=\ell\right]  }  & =u_{n,\ell}\ \ \ \ \ \ \ \ \ \ \left(
\text{since }\ell+\underbrace{\left[  n=\ell\right]  }_{=0}=\ell\right)  \\
& =h\left(  \mu_{n}-n+\ell,\ b_{n},\ 1-\ell\right)
\ \ \ \ \ \ \ \ \ \ \left(  \text{by the definition of }u_{n,\ell}\right)  \\
& =0\ \ \ \ \ \ \ \ \ \ \left( \text{by  the definition of $h\tup{a,b,c}$, since }\mu_{n}-n+\ell<0\right)  .
\end{align*}
Therefore, we can apply Lemma \ref{lem.det.lastrow=01} to $a_{i,j}%
=u_{i,j+\left[  n=j\right]  }$. We thus obtain%
\begin{equation}
\det\left(  u_{i,j+\left[  n=j\right]  }\right)  _{i,j\in\left[  n\right]
}=u_{n,n+\left[  n=n\right]  }\cdot\det\left(  u_{i,j+\left[  n=j\right]
}\right)  _{i,j\in\left[  n-1\right]  }.\label{pf.lem.k-th-det-3.1}%
\end{equation}

However, $\left[  n=n\right]  =1$, so that
\begin{align}
u_{n,n+\left[  n=n\right]  }  & =u_{n,n+1}\nonumber\\
& =h\left(  \underbrace{\mu_{n}}_{=0}-\,n+\left(  n+1\right)
,\ \underbrace{b_{n}}_{=n},\ \underbrace{1-\left(  n+1\right)  }_{=-n}\right)
\ \ \ \ \ \ \ \ \ \ \left(  \text{by the definition of }u_{n,n+1}\right)
\nonumber\\
& =h\left(  \underbrace{-n+\left(  n+1\right)  }_{=1},\ n,\ -n\right)
= h\left(  1,\ n,\ -n\right) \nonumber \\
&= \sum_{i=1}^n x_i + \sum_{j=-n+1}^{-n+n} \underbrace{y_j}%
_{\substack{=0\\\text{(since }j \leq -n+n = 0\text{, and}\\\text{since }y_i = 0 \text{
for all }i \leq 0\text{)}}}
\qquad \qquad \left(\text{by Lemma \ref{lem.h1bc}}\right) \nonumber \\
&= \sum_{i=1}^n x_i + \underbrace{\sum_{j=-n+1}^{-n+n} 0}_{=0}
=\sum_{i=1}^{n}x_{i}%
.\label{pf.lem.k-th-det-3.2}%
\end{align}
Moreover, for every $i,j\in\left[  n-1\right]  $, we have $j\neq n$ (since
$j\leq n-1<n$) and thus $\left[  j=n\right]  =0$, so that
\[
u_{i,j+\left[  n=j\right]  }=u_{i,j+0}=u_{i,j}.
\]
Hence,%
\begin{equation}
\left(  u_{i,j+\left[  n=j\right]  }\right)  _{i,j\in\left[  n-1\right]
}=\left(  u_{i,j}\right)  _{i,j\in\left[  n-1\right]  }%
.\label{pf.lem.k-th-det-3.3}%
\end{equation}

Using (\ref{pf.lem.k-th-det-3.2}) and (\ref{pf.lem.k-th-det-3.3}), we can
rewrite (\ref{pf.lem.k-th-det-3.1}) as%
\[
\det\left(  u_{i,j+\left[  n=j\right]  }\right)  _{i,j\in\left[  n\right]
}=\left(  \sum_{i=1}^{n}x_{i}\right)  \cdot\underbrace{\det\left(
u_{i,j}\right)  _{i,j\in\left[  n-1\right]  }}_{\substack{=\mathbf{s}%
_{\lambda}\left[  \mu\right]  \\\text{(by (\ref{eq.lem.k-th-det-0.-1}))}%
}}=\left(  \sum_{i=1}^{n}x_{i}\right)  \cdot\mathbf{s}_{\lambda}\left[
\mu\right]  .
\]
This proves Lemma \ref{lem.k-th-det-3}.
\end{proof}

\begin{proof}
[Proof of Lemma \ref{lem.k-th-det-4}.]From $\lambda_n = 0$, we obtain $\lambda_{n+1} = 0$.
Similarly, $\mu_{n+1} = 0$. Thus, Lemma \ref{lem.Deltas.x} can be applied.

We have
\begin{align*}
\sum_{k=1}^{n}p_{k} &  =\sum_{i=1}^{n}p_{i}\\
&  =\sum_{i=1}^{n}%
\begin{cases}
x_{b_{i}}, & \text{if }m_{i}\in\Delta\left(  \lambda\right)  ;\\
-y_{m_{i}+1+b_{i}}, & \text{if }m_{i}\notin\Delta\left(  \lambda\right)
\end{cases}
\ \ \ \ \ \ \ \ \ \ \left(  \text{by the definition of }p_{i}\right)  \\
&  =\sum_{\substack{i\in\left[  n\right]  ;\\m_{i}\in\Delta\left(
\lambda\right)  }}x_{b_{i}}-\sum_{\substack{i\in\left[  n\right]
;\\m_{i}\notin\Delta\left(  \lambda\right)  }}y_{m_{i}+1+b_{i}}.
\end{align*}
Thus,
\begin{align*}
\sum_{i=1}^{n}x_{i}-\sum_{k=1}^{n}p_{k} &  =\sum_{i=1}^{n}x_{i}-\left(
\sum_{\substack{i\in\left[  n\right]  ;\\m_{i}\in\Delta\left(  \lambda\right)
}}x_{b_{i}}-\sum_{\substack{i\in\left[  n\right]  ;\\m_{i}\notin\Delta\left(
\lambda\right)  }}y_{m_{i}+1+b_{i}}\right)  \\
&  =\underbrace{\sum_{i=1}^{n}x_{i}-\sum_{\substack{i\in\left[  n\right]
;\\m_{i}\in\Delta\left(  \lambda\right)  }}x_{b_{i}}}_{\substack{=\sum
_{\substack{k\in\left[  n\right]  ;\\\ell_{k}\notin\Delta\left(  \mu\right)
}}x_{k}\\\text{(by Lemma \ref{lem.Deltas.x})}}}+\sum_{\substack{i\in\left[
n\right]  ;\\m_{i}\notin\Delta\left(  \lambda\right)  }}y_{m_{i}+1+b_{i}}\\
&  =\sum_{\substack{k\in\left[  n\right]  ;\\\ell_{k}\notin\Delta\left(
\mu\right)  }}x_{k}+\sum_{\substack{i\in\left[  n\right]  ;\\m_{i}\notin%
\Delta\left(  \lambda\right)  }}y_{m_{i}+1+b_{i}}.
\end{align*}
This proves Lemma \ref{lem.k-th-det-4}.
\end{proof}

\begin{proof}
[Proof of Lemma \ref{lem.konvalinka-bi}.]Define $u_{i,j}$ and $p_{i}$ as in
Convention~\ref{conv.u-and-p}. Lemma \ref{lem.konval-RHS-as-sum} yields%
\begin{align*}
\sum_{\mu\lessdot\nu\subseteq\lambda}\mathbf{s}_{\lambda}\left[  \nu\right]
&  =\sum_{k=1}^{n}\mathbf{s}_{\lambda}\left[  \mu^{+k}\right]  \\
&  =\underbrace{\det\left(  u_{i,j+\left[  n=j\right]  }\right)
_{i,j\in\left[  n\right]  }}_{\substack{=\left(  \sum_{i=1}^{n}x_{i}\right)
\cdot\mathbf{s}_{\lambda}\left[  \mu\right]  \\\text{(by Lemma
\ref{lem.k-th-det-3})}}}-\left(  \sum_{k=1}^{n}p_{k}\right)  \mathbf{s}%
_{\lambda}\left[  \mu\right]  \qquad\qquad\left(  \text{by Lemma
\ref{lem.k-th-det-2}}\right)  \\
&  =\left(  \sum_{i=1}^{n}x_{i}\right)  \cdot\mathbf{s}_{\lambda}\left[
\mu\right]  -\left(  \sum_{k=1}^{n}p_{k}\right)  \mathbf{s}_{\lambda}\left[
\mu\right]  \\
&  =\underbrace{\left(  \sum_{i=1}^{n}x_{i}-\sum_{k=1}^{n}p_{k}\right)
}_{\substack{=\sum_{\substack{k\in\left[  n\right]  ;\\\ell_{k}\notin%
\Delta\left(  \mu\right)  }}x_{k}+\sum_{\substack{i\in\left[  n\right]
;\\m_{i}\notin\Delta\left(  \lambda\right)  }}y_{m_{i}+1+b_{i}}\\\text{(by
Lemma \ref{lem.k-th-det-4})}}}\cdot\,\mathbf{s}_{\lambda}\left[  \mu\right]
\\
&  =\left(  \sum_{\substack{k\in\left[  n\right]  ;\\\ell_{k}\notin%
\Delta\left(  \mu\right)  }}x_{k}+\sum_{\substack{i\in\left[  n\right]
;\\m_{i}\notin\Delta\left(  \lambda\right)  }}y_{m_{i}+1+b_{i}}\right)
\mathbf{s}_{\lambda}\left[  \mu\right]  .
\end{align*}
This proves Lemma \ref{lem.konvalinka-bi}.
\end{proof}

\subsection{To Section \ref{sec.pf-hlf} and Theorem \ref{thm.main}}

\begin{proof}[Proof of Lemma \ref{lem.pf-hlf.w-w}.]
From $\tup{i,j} \in Y\tup{\lambda}$, we obtain
$j\leq\lambda_{i}$, so that $\lambda_{i}\geq j$. Hence, Lemma
\ref{lem.conj.uniprop} (applied to $i$ and $j$ instead of $j$ and $i$) yields
$\lambda_{j}^{t}\geq i$. Now, the definition of $\ell_{i}$ yields $\ell
_{i}=\underbrace{\lambda_{i}}_{\geq j}-\,i\geq j-i$.
Furthermore, the definition
of $\ell^t_j$ yields
$\ell^t_j = \lambda^t_j - j$, so that
$-\ell^t_j = -\tup{\lambda^t_j - j} = j - \lambda^t_j$.
In other words, $j - \lambda^t_j = -\ell^t_j$.
Moreover, we thus obtain $-\ell^t_j = j - \underbrace{\lambda^t_j}_{\geq i} \leq j-i$.
Thus,
$-\ell_{j}^{t}\leq j-i\leq\ell_{i}$ (since $\ell_{i}\geq j-i$).

Next, we shall show that $n\geq\ell_{j}^{t}$.
Indeed, Lemma~\ref{lem.conj.uniprop} (applied to $j$ and $n$ instead of $i$ and $j$) shows that we have the equivalence $\tup{\lambda^t_j \geq n} \ \Longleftrightarrow\ \tup{\lambda_n \geq j}$.
Since the statement $\tup{\lambda_n \geq j}$ is false (because $\lambda_n = 0 < j$), we thus conclude that the statement $\tup{\lambda^t_j \geq n}$ is false as well.
In other words, we have $\lambda^t_j < n$.
However, the definition of $\ell_{j}^{t}$ yields
$\ell_{j}^{t}=\lambda_{j}^{t}-\underbrace{j}_{>0}<\lambda_{j}^{t} < n$.
Hence, $n \geq \ell_{j}^{t}$, so that $-n\leq-\ell_{j}^{t}$.

Now, the definition of $w_{k}$ yields $w_{\ell_{i}}=\sum_{k=-n}^{\ell_{i}%
}z_{k}$ and $w_{-\ell_{j}^{t}-1}=\sum_{k=-n}^{-\ell_{j}^{t}-1}z_{k}$. Hence,%
\begin{align*}
w_{\ell_{i}}  & =\sum_{k=-n}^{\ell_{i}}z_{k}=\underbrace{\sum_{k=-n}%
^{-\ell_{j}^{t}-1}z_{k}}_{=w_{-\ell_{j}^{t}-1}}+\sum_{k=-\ell_{j}^{t}}%
^{\ell_{i}}z_{k}\ \ \ \ \ \ \ \ \ \ \left(  \text{since }-n\leq-\ell_{j}%
^{t}\leq\ell_{i}\right)  \\
& =w_{-\ell_{j}^{t}-1}+\sum_{k=-\ell_{j}^{t}}^{\ell_{i}}z_{k}.
\end{align*}
In other words,
\begin{equation}
w_{\ell_{i}}-w_{-\ell_{j}^{t}-1}=\sum_{k=-\ell_{j}^{t}}^{\ell_{i}}%
z_{k}.\label{pf.lem.pf-hlf.w-w.1}%
\end{equation}

Meanwhile, the equality \eqref{eq.def.hlcz} (with $c$, $i$ and $j$ renames as $\tup{i,j}$, $k$ and $p$) says that
\begin{equation}
h_{\lambda} \tup{\tup{i,j}; z}
= \sum_{\left(  k,p\right)  \in H_{\lambda}\left(
(i,j)\right)  }z_{p-k}.
\label{pf.lem.pf-hlf.w-w.2}
\end{equation}
Now, let us study the boxes $\left(  k,p\right)  $ that belong to the hook
$H_{\lambda}\left(  (i,j)\right)  $. These are the boxes of $Y\left(
\lambda\right)  $ that lie either in the $i$-th row to the east of $\left(
i,j\right)  $ (including $\left(  i,j\right)  $ itself), or in the $j$-th
column to the south of $\left(  i,j\right)  $. Thus, these boxes come in two types:

\begin{enumerate}
\item The boxes $\left(  k,p\right)  \in H_{\lambda}\left(  (i,j)\right)  $
that belong to the $i$-th row (i.e., that satisfy $k=i$) are the boxes
\[
\left(  i,j\right)  ,\ \left(  i,j+1\right)  ,\ \left(  i,j+2\right)
,\ \ldots,\ \left(  i,\lambda_{i}\right)
\]
(since the $i$-th row of $Y\left(  \lambda\right)  $ has $\lambda_{i}$ boxes).

\item The boxes $\left(  k,p\right)  \in H_{\lambda}\left(  (i,j)\right)  $
that do not belong to the $i$-th row (i.e., that satisfy $k\neq i$) are the
boxes
\[
\left(  i+1,j\right)  ,\ \left(  i+2,j\right)  ,\ \left(  i+3,j\right)
,\ \ldots,\ \left(  \lambda_{j}^{t},j\right)
\]
(since the $j$-th column of $Y\left(  \lambda\right)  $ has $\lambda_{j}^{t}$ boxes
(by \eqref{eq.def.lambdat.5}, applied to $k=j$)).
\end{enumerate}

Altogether, the boxes in $H_{\lambda}\left(  (i,j)\right)  $ are therefore the
boxes%
\begin{align*}
& \left(  i,j\right)  ,\ \left(  i,j+1\right)  ,\ \left(  i,j+2\right)
,\ \ldots,\ \left(  i,\lambda_{i}\right)  ,\\
& \left(  i+1,j\right)  ,\ \left(  i+2,j\right)  ,\ \left(  i+3,j\right)
,\ \ldots,\ \left(  \lambda_{j}^{t},j\right)
\end{align*}
(and clearly, all these boxes are distinct). Hence,%
\begin{align*}
\sum_{\left(  k,p\right)  \in H_{\lambda}\left(  (i,j)\right)  }z_{p-k}  &
=\underbrace{\left(  z_{j-i}+z_{\left(  j+1\right)  -i}+z_{\left(  j+2\right)
-i}+\cdots+z_{\lambda_{i}-i}\right)  }_{\substack{=z_{j-i}+z_{\left(
j-i\right)  +1}+z_{\left(  j-i\right)  +2}+\cdots+z_{\lambda_{i}-i}%
\\=\sum_{k=j-i}^{\lambda_{i}-i}z_{k}}}\\
& \ \ \ \ \ \ \ \ \ \ +\underbrace{\left(  z_{j-\left(  i+1\right)
}+z_{j-\left(  i+2\right)  }+z_{j-\left(  i+3\right)  }+\cdots+z_{j-\lambda
_{j}^{t}}\right)  }_{\substack{=z_{\left(  j-i\right)  -1}+z_{\left(
j-i\right)  -2}+\cdots+z_{j-\lambda_{j}^{t}}\\=\sum_{k=j-\lambda_{j}^{t}%
}^{j-i-1}z_{k}}}\\
& =\sum_{k=j-i}^{\lambda_{i}-i}z_{k}+\sum_{k=j-\lambda_{j}^{t}}^{j-i-1}%
z_{k}\\
& =\sum_{k=j-i}^{\ell_{i}}z_{k}+\sum_{k=-\ell_{j}^{t}}^{j-i-1}z_{k}
\ \ \ \ \ \ \ \ \ \ \left(  \text{since }\lambda_{i}-i=\ell_{i}\text{ and
}j-\lambda_{j}^{t}=-\ell_{j}^{t}\right)  \\
&
= \sum_{k=-\ell_{j}^{t}}^{j-i-1}z_{k} + \sum_{k=j-i}^{\ell_{i}}z_{k}
=\sum_{k=-\ell_{j}^{t}}^{\ell_{i}}z_{k}\ \ \ \ \ \ \ \ \ \ \left(
\text{since }-\ell_{j}^{t}\leq j-i\leq\ell_{i}\right)  \\
& =w_{\ell_{i}}-w_{-\ell_{j}^{t}-1}\ \ \ \ \ \ \ \ \ \ \left(  \text{by
(\ref{pf.lem.pf-hlf.w-w.1})}\right)  .
\end{align*}
Hence, we can rewrite (\ref{pf.lem.pf-hlf.w-w.2}) as%
\[
h_{\lambda} \tup{\tup{i,j}; z}
= w_{\ell_{i}}-w_{-\ell_{j}^{t}-1}.
\]
This proves Lemma \ref{lem.pf-hlf.w-w}.
\end{proof}

\begin{proof}[Proof of Lemma \ref{section14.wY}.]
We are in one of the following two cases:

\textit{Case 1:} We have $i\leq n$.

\textit{Case 2:} We have $i>n$.

Let us first consider Case 1. In this case, $i\leq n$. Thus, $m_{i}%
=\underbrace{\mu_{i}}_{\geq0}-\underbrace{i}_{\leq n}\geq-n$.

Since $\lambda\supseteq\mu$, we have $\lambda_{i}\geq\mu_{i}$, and thus
$\ell_{i}\geq m_{i}$.

However, the definition of the $w_{k}$ yields $w_{\ell_{i}}=\sum_{k=-n}%
^{\ell_{i}}z_{k}$ and $w_{m_{i}}=\sum_{k=-n}^{m_{i}}z_{k}$. Hence,
\begin{align}
w_{\ell_{i}}-w_{m_{i}}  & =\sum_{k=-n}^{\ell_{i}}z_{k}-\sum_{k=-n}^{m_{i}%
}z_{k}\nonumber\\
& =\sum_{k=m_{i}+1}^{\ell_{i}}z_{k}\ \ \ \ \ \ \ \ \ \ \left(  \text{since
}\ell_{i}\geq m_{i}\geq-n\right)  .\label{pf.section14.wY.2}%
\end{align}

However, the integers $j\geq1$ satisfying $\left(  i,j\right)  \in Y\left(
\lambda/\mu\right)  $ are exactly the integers $j$ such that $\mu_{i}%
<j\leq\lambda_{i}$. Hence,
\begin{align*}
\sum_{\substack{j\geq1;\\\left(  i,j\right)  \in Y\left(  \lambda/\mu\right)
}}z_{j-i}  & =\sum_{j=\mu_{i}+1}^{\lambda_{i}}z_{j-i}\\
& =\sum_{k=\mu_{i}-i+1}^{\lambda_{i}-i}z_{k}\ \ \ \ \ \ \ \ \ \ \left(
\begin{array}
[c]{c}%
\text{here, we have substituted }k\\
\text{for }j-i\text{ in the sum}%
\end{array}
\right)  \\
& =\sum_{k=m_{i}+1}^{\ell_{i}}z_{k}\ \ \ \ \ \ \ \ \ \ \left(  \text{since
}\mu_{i}-i=m_{i}\text{ and }\lambda_{i}-i=\ell_{i}\right)  .
\end{align*}
Comparing this with \eqref{pf.section14.wY.2}, we obtain $w_{\ell_{i}%
}-w_{m_{i}}=\sum_{\substack{j\geq1;\\\left(  i,j\right)  \in Y\left(
\lambda/\mu\right)  }}z_{j-i}$. Hence, Lemma \ref{section14.wY} is proved in
Case 1.

Let us now consider Case 2. In this case, $i>n$. Thus, both $\lambda_{i}$ and
$\mu_{i}$ equal $0$ (by the definition of $n$), so that we have $\lambda_i = \mu_i$,
and therefore we have
$\ell_{i}=m_{i}$ (since $\ell_i = \lambda_i - i$ and $m_i = \mu_i - i$). Hence, $w_{\ell_{i}}=w_{m_{i}}$, so that $w_{\ell_{i}%
}-w_{m_{i}}=0$. On the other hand, the diagram $Y\left(  \lambda/\mu\right)  $
has no boxes in the $i$-th row (since $\lambda_{i}=0$), so that the sum
$\sum_{\substack{j\geq1;\\\left(  i,j\right)  \in Y\left(  \lambda/\mu\right)
}}z_{j-i}$ is empty and thus equals $0$. Comparing this with $w_{\ell_{i}%
}-w_{m_{i}}=0$, we obtain $w_{\ell_{i}}-w_{m_{i}}=\sum_{\substack{j\geq
1;\\\left(  i,j\right)  \in Y\left(  \lambda/\mu\right)  }}z_{j-i}$. Thus,
Lemma \ref{section14.wY} is proved in Case 2.

We have now proved Lemma \ref{section14.wY} in both possible cases.
\end{proof}

\begin{proof}[Proof of Lemma \ref{lem.pf-hlf.4}.]
Lemma \ref{lem.Deltas.f=g} (applied to
$f\left(  i\right)  =w_{m_{i}}$ and $g\left(  j\right)  =w_{\ell_{j}}$)
yields
\begin{equation}
\sum_{\substack{i\in\lbrack n];\\m_{i}\in\Delta(\lambda)}}w_{m_{i}}%
=\sum_{\substack{j\in\lbrack n];\\\ell_{j}\in\Delta(\mu)}}w_{\ell_{j}%
}\label{pf.lem.pf-hlf.4.1}%
\end{equation}
(since $w_{m_{i}}=w_{\ell_{j}}$ whenever $m_{i}=\ell_{j}$).

From $\lambda_{n}=0$, we see that every $\left(  i,j\right)  \in Y\left(
\lambda/\mu\right)  $ satisfies $i<n$ and thus $i\leq n$ and therefore $i \in \ive{n}$. Hence,%
\begin{align*}
\sum_{\left(  i,j\right)  \in Y\left(  \lambda/\mu\right)  }z_{j-i}  &
=\sum_{i\in\left[  n\right]  }\ \ \underbrace{\sum_{\substack{j\geq1;\\\left(
i,j\right)  \in Y\left(  \lambda/\mu\right)  }}z_{j-i}}_{\substack{=w_{\ell
_{i}}-w_{m_{i}}\\\text{(by Lemma \ref{section14.wY})}}}=\sum_{i\in\left[
n\right]  }\left(  w_{\ell_{i}}-w_{m_{i}}\right)  \\
& =\underbrace{\sum_{i\in\left[  n\right]  }w_{\ell_{i}}}_{=\sum
_{\substack{i\in\left[  n\right]  ;\\\ell_{i}\in\Delta\left(  \mu\right)
}}w_{\ell_{i}}+\sum_{\substack{i\in\left[  n\right]  ;\\\ell_{i}\notin%
\Delta\left(  \mu\right)  }}w_{\ell_{i}}}-\underbrace{\sum_{i\in\left[
n\right]  }w_{m_{i}}}_{=\sum_{\substack{i\in\left[  n\right]  ;\\m_{i}%
\in\Delta\left(  \lambda\right)  }}w_{m_{i}}+\sum_{\substack{i\in\left[
n\right]  ;\\m_{i}\notin\Delta\left(  \lambda\right)  }}w_{m_{i}}}\\
& =\left(  \sum_{\substack{i\in\left[  n\right]  ;\\\ell_{i}\in\Delta\left(
\mu\right)  }}w_{\ell_{i}}+\sum_{\substack{i\in\left[  n\right]  ;\\\ell
_{i}\notin\Delta\left(  \mu\right)  }}w_{\ell_{i}}\right)  -\left(
\sum_{\substack{i\in\left[  n\right]  ;\\m_{i}\in\Delta\left(  \lambda\right)
}}w_{m_{i}}+\sum_{\substack{i\in\left[  n\right]  ;\\m_{i}\notin\Delta\left(
\lambda\right)  }}w_{m_{i}}\right)  \\
& =\underbrace{\sum_{\substack{i\in\left[  n\right]  ;\\\ell_{i}\in
\Delta\left(  \mu\right)  }}w_{\ell_{i}}}_{=\sum_{\substack{j\in\lbrack
n];\\\ell_{j}\in\Delta(\mu)}}w_{\ell_{j}}}+\underbrace{\sum_{\substack{i\in
\left[  n\right]  ;\\\ell_{i}\notin\Delta\left(  \mu\right)  }}w_{\ell_{i}}%
}_{=\sum_{\substack{k\in\left[  n\right]  ;\\\ell_{k}\notin\Delta\left(
\mu\right)  }}w_{\ell_{k}}}-\underbrace{\sum_{\substack{i\in\left[  n\right]
;\\m_{i}\in\Delta\left(  \lambda\right)  }}w_{m_{i}}}_{\substack{=\sum
_{\substack{j\in\lbrack n];\\\ell_{j}\in\Delta(\mu)}}w_{\ell_{j}}\\\text{(by
(\ref{pf.lem.pf-hlf.4.1}))}}}-\sum_{\substack{i\in\left[  n\right]
;\\m_{i}\notin\Delta\left(  \lambda\right)  }}w_{m_{i}}\\
& =\sum_{\substack{j\in\lbrack n];\\\ell_{j}\in\Delta(\mu)}}w_{\ell_{j}}%
+\sum_{\substack{k\in\left[  n\right]  ;\\\ell_{k}\notin\Delta\left(
\mu\right)  }}w_{\ell_{k}}-\sum_{\substack{j\in\lbrack n];\\\ell_{j}\in
\Delta(\mu)}}w_{\ell_{j}}-\sum_{\substack{i\in\left[  n\right]  ;\\m_{i}%
\notin\Delta\left(  \lambda\right)  }}w_{m_{i}}\\
& =\sum_{\substack{k\in\left[  n\right]  ;\\\ell_{k}\notin\Delta\left(
\mu\right)  }}w_{\ell_{k}}-\sum_{\substack{i\in\left[  n\right]
;\\m_{i}\notin\Delta\left(  \lambda\right)  }}w_{m_{i}}.
\end{align*}
This proves Lemma \ref{lem.pf-hlf.4}.
\end{proof}

\begin{proof}[Proof of Lemma \ref{rec.2}.]
Let $\mathbf{b}=\left(  b_{1},b_{2},b_{3}%
,\ldots\right)  $ be the flagging induced by $\lambda/\mu$.
(This was introduced in Definition~\ref{def.flagging-of-lm}.)

Recall that $x_1, x_2, x_3, \ldots$ and $y_1, y_2, y_3, \ldots$ have been indeterminates so far. But now let us instead set
\[
x_{i}:=w_{\ell_{i}}\ \ \ \ \ \ \ \ \ \ \text{and}\ \ \ \ \ \ \ \ \ \ y_{i}%
:=-w_{-\ell_{i}^{t}-1}\ \ \ \ \ \ \ \ \ \ \text{for each }i\geq1.
\]
Thus, for any $\left(  i,j\right)  \in Y\left(  \lambda\right)  $, we have%
\begin{align}
\underbrace{x_{i}}_{=w_{\ell_{i}}}+\underbrace{y_{j}}_{=-w_{-\ell_{j}^{t}-1}}
& =w_{\ell_{i}}+\left(  -w_{-\ell_{j}^{t}-1}\right)  =w_{\ell_{i}}%
-w_{-\ell_{j}^{t}-1}\nonumber\\
& =h_{\lambda}\left(  \left(  i,j\right)  ;z\right)  \label{pf.rec.2.x+y=h}
\end{align}
(by Lemma \ref{lem.pf-hlf.w-w}).

Now, Lemma \ref{lem.konvalinka-bi} yields%
\begin{align}
& \left(  \sum_{\substack{k\in\left[  n\right]  ;\\\ell_{k}\notin\Delta\left(
\mu\right)  }}x_{k}+\sum_{\substack{i\in\left[  n\right]  ;\\m_{i}\notin%
\Delta\left(  \lambda\right)  }}y_{m_{i}+1+b_{i}}\right)  \mathbf{s}_{\lambda
}\left[  \mu\right]  \nonumber\\
& =\sum_{\mu\lessdot\nu\subseteq\lambda}\mathbf{s}_{\lambda}\left[
\nu\right]  \label{pf.rec.2.1}%
\end{align}
(where the sum on the right hand side ranges over all partitions $\nu$ that
satisfy $\mu\lessdot\nu\subseteq\lambda$). However, for each $k\in\left[
n\right]  $ satisfying $\ell_{k}\notin\Delta\left(  \mu\right)  $, we have
$x_{k}=w_{\ell_{k}}$ (by the definition of $x_{k}$). Thus,%
\begin{equation}
\sum_{\substack{k\in\left[  n\right]  ;\\\ell_{k}\notin\Delta\left(
\mu\right)  }}x_{k}=\sum_{\substack{k\in\left[  n\right]  ;\\\ell_{k}%
\notin\Delta\left(  \mu\right)  }}w_{\ell_{k}}.\label{pf.rec2.2a}%
\end{equation}

Furthermore, for each $i\in\left[  n\right]  $ satisfying $m_{i}\notin%
\Delta\left(  \lambda\right)  $, we have $m_{i}+1+b_{i}\geq1$ and $\ell
_{m_{i}+1+b_{i}}^{t}=-1-m_{i}$ (both by Lemma \ref{lem.Deltas.mi+1+bi}) and%
\begin{align*}
y_{m_{i}+1+b_{i}}  & =-w_{-\ell_{m_{i}+1+b_{i}}^{t}-1}%
\ \ \ \ \ \ \ \ \ \ \left(  \text{by the definition of }y_{m_{i}+1+b_{i}%
}\right)  \\
& =-w_{-\left(  -1-m_{i}\right)  -1}\ \ \ \ \ \ \ \ \ \ \left(  \text{since
}\ell_{m_{i}+1+b_{i}}^{t}=-1-m_{i}\right)  \\
& =-w_{m_{i}}\ \ \ \ \ \ \ \ \ \ \left(  \text{since }-\left(  -1-m_{i}%
\right)  -1=m_{i}\right)  .
\end{align*}
Hence,%
\[
\sum_{\substack{i\in\left[  n\right]  ;\\m_{i}\notin\Delta\left(
\lambda\right)  }}y_{m_{i}+1+b_{i}}=\sum_{\substack{i\in\left[  n\right]
;\\m_{i}\notin\Delta\left(  \lambda\right)  }}\left(  -w_{m_{i}}\right)
=-\sum_{\substack{i\in\left[  n\right]  ;\\m_{i}\notin\Delta\left(
\lambda\right)  }}w_{m_{i}}.
\]

Adding this equality to the equality (\ref{pf.rec2.2a}), we find%
\begin{align}
\sum_{\substack{k\in\left[  n\right]  ;\\\ell_{k}\notin\Delta\left(
\mu\right)  }}x_{k}+\sum_{\substack{i\in\left[  n\right]  ;\\m_{i}\notin%
\Delta\left(  \lambda\right)  }}y_{m_{i}+1+b_{i}}  & =\sum_{\substack{k\in
\left[  n\right]  ;\\\ell_{k}\notin\Delta\left(  \mu\right)  }}w_{\ell_{k}%
}+\left(  -\sum_{\substack{i\in\left[  n\right]  ;\\m_{i}\notin\Delta\left(
\lambda\right)  }}w_{m_{i}}\right)  \nonumber\\
& =\sum_{\substack{k\in\left[  n\right]  ;\\\ell_{k}\notin\Delta\left(
\mu\right)  }}w_{\ell_{k}}-\sum_{\substack{i\in\left[  n\right]
;\\m_{i}\notin\Delta\left(  \lambda\right)  }}w_{m_{i}}\nonumber\\
& =\sum_{\left(  i,j\right)  \in Y\left(  \lambda/\mu\right)  }z_{j-i}%
\label{pf.rec2.2e}%
\end{align}
(by Lemma \ref{lem.pf-hlf.4}).

Furthermore, if $\nu$ is any partition, then the definition of
$\mathbf{s}_{\lambda}\left[  \nu\right]  $ yields
\begin{align}
\mathbf{s}_{\lambda}\left[  \nu\right]    & = \sum_{D\in\mathcal{E}\left(
\lambda/\nu\right)  }\ \ \prod_{\left(  i,j\right)  \in D}\underbrace{\left(
x_{i}+y_{j}\right)  }_{\substack{=h_{\lambda}\left(  \left(  i,j\right)
;z\right)  \\\text{(by (\ref{pf.rec.2.x+y=h}),}\\\text{since }\left(
i,j\right)  \in D\subseteq Y\left(  \lambda\right)  \text{)}}}
= \sum_{D\in\mathcal{E}\left(
\lambda/\nu\right)  }\ \ \prod_{\left(  i,j\right)  \in D} h_{\lambda}\left(  \left(  i,j\right)
;z\right)
\nonumber\\
& =\sum_{E\in\mathcal{E}\left(  \lambda/\nu\right)  }\ \ \prod_{c\in
E}h_{\lambda}\left(  c;z\right)  \ \ \ \ \ \ \ \ \ \ \left(
\begin{array}
[c]{c}%
\text{here, we have renamed the}\\
\text{indices }D\text{ and }\left(  i,j\right)  \text{ as }E\text{ and }c
\end{array}
\right)  \nonumber\\
& =\sum_{E\in\mathcal{E}\left(  \lambda/\nu\right)  }\dfrac{\prod_{c\in
Y\left(  \lambda\right)  }h_{\lambda}\left(  c;z\right)  }{\prod_{c\in
Y\left(  \lambda\right)  \setminus E}h_{\lambda}\left(  c;z\right)
}\label{pf.rec2.2c}%
\end{align}
(since $\prod_{c\in E}h_{\lambda}\left(  c;z\right)  =\dfrac{\prod_{c\in
Y\left(  \lambda\right)  }h_{\lambda}\left(  c;z\right)  }{\prod_{c\in
Y\left(  \lambda\right)  \setminus E}h_{\lambda}\left(  c;z\right)  }$ for
every subset $E$ of $Y\left(  \lambda\right)  $).

In particular, applying this to $\nu=\mu$, we obtain%
\begin{equation}
\mathbf{s}_{\lambda}\left[  \mu\right]  =\sum_{E\in\mathcal{E}\left(
\lambda/\mu\right)  }\dfrac{\prod_{c\in Y\left(  \lambda\right)  }h_{\lambda
}\left(  c;z\right)  }{\prod_{c\in Y\left(  \lambda\right)  \setminus
E}h_{\lambda}\left(  c;z\right)  }.\label{pf.rec2.2d}%
\end{equation}

Now, using the equalities (\ref{pf.rec2.2e}), (\ref{pf.rec2.2c}) and
(\ref{pf.rec2.2d}), we can rewrite (\ref{pf.rec.2.1}) as%
\begin{align*}
\left(  \sum_{\left(  i,j\right)  \in Y\left(  \lambda/\mu\right)  }%
z_{j-i}\right)  \sum_{E\in\mathcal{E}\left(  \lambda/\mu\right)  }\dfrac
{\prod_{c\in Y\left(  \lambda\right)  }h_{\lambda}\left(  c;z\right)  }%
{\prod_{c\in Y\left(  \lambda\right)  \setminus E}h_{\lambda}\left(
c;z\right)  }
 =\sum_{\mu\lessdot\nu\subseteq\lambda}\mathbf{\ \ }\sum_{E\in\mathcal{E}%
\left(  \lambda/\nu\right)  }\dfrac{\prod_{c\in Y\left(  \lambda\right)
}h_{\lambda}\left(  c;z\right)  }{\prod_{c\in Y\left(  \lambda\right)
\setminus E}h_{\lambda}\left(  c;z\right)  }.
\end{align*}
Dividing both sides of this equality by $\left(  \sum_{\left(  i,j\right)  \in
Y\left(  \lambda/\mu\right)  }z_{j-i}\right)  \prod_{c \in
Y\left(  \lambda\right)  }h_{\lambda}\left( c;z\right)  $
(this is allowed, since $\mu \neq \lambda$ ensures that $ \sum_{\left(  i,j\right)  \in
Y\left(  \lambda/\mu\right)  }z_{j-i} \neq 0$),
we obtain%
\begin{align*}
\sum_{E\in\mathcal{E}\left(  \lambda/\mu\right)  }\dfrac{1}{\prod_{c\in
Y\left(  \lambda\right)  \setminus E}h_{\lambda}\left(  c;z\right)  }
=\dfrac{1}{\sum_{\left(  i,j\right)  \in Y\left(  \lambda/\mu\right)
}z_{j-i}}\cdot\sum_{\mu\lessdot\nu\subseteq\lambda}\mathbf{\ \ }\sum
_{E\in\mathcal{E}\left(  \lambda/\nu\right)  }\dfrac{1}{\prod_{c\in Y\left(
\lambda\right)  \setminus E}h_{\lambda}\left(  c;z\right)  }.
\end{align*}
We can rewrite this as%
\[
\sum_{E\in\mathcal{E}\left(  \lambda/\mu\right)  }\ \ \prod_{c\in Y\left(
\lambda\right)  \setminus E}\dfrac{1}{h_{\lambda}\left(  c;z\right)  }%
=\frac{1}{\sum_{(i,j)\in Y(\lambda/\mu)}z_{j-i}}\ \ \sum_{\mu\lessdot
\nu\subseteq\lambda}\ \ \sum_{E\in\mathcal{E}\left(  \lambda/\nu\right)
}\ \ \prod_{c\in Y\left(  \lambda\right)  \setminus E}\dfrac{1}{h_{\lambda
}\left(  c;z\right)  }%
\]
(since $\dfrac{1}{\prod_{c\in Y\left(  \lambda\right)  \setminus E}h_{\lambda
}\left(  c;z\right)  }=\prod_{c\in Y\left(  \lambda\right)  \setminus E}%
\dfrac{1}{h_{\lambda}\left(  c;z\right)  }$ for every diagram $E$). Thus,
Lemma \ref{rec.2} is proved.
\end{proof}

\begin{proof}[Proof of Theorem~\ref{thm.main}.]
We induct on $\left\vert Y\left(
\lambda/\mu\right)  \right\vert $:

\textit{Base case:} Assume that $\left\vert Y\left(  \lambda/\mu\right)
\right\vert =0$. Thus, $\mu=\lambda$ (since $\lambda\supseteq\mu$ by
assumption), so that the set $\operatorname*{SYT}\left(  \lambda/\mu\right)  $
consists of a single tableau $T$ (with no entries), and this tableau $T$
satisfies $\zz_{T}=1$. Hence, $\sum_{T\in\operatorname*{SYT}\left(  \lambda
/\mu\right)  }\zz_{T}=1$. On the other hand, from $\mu=\lambda$, we obtain
\[
\mathcal{E}\left(  \lambda/\mu\right)  =\mathcal{E}\left(  \lambda
/\lambda\right)  =\left\{  Y\left(  \lambda\right)  \right\}
\ \ \ \ \ \ \ \ \ \ \left(  \text{by Lemma \ref{lem.exc.equal}}\right)  ,
\]
and thus%
\begin{align*}
& \sum_{E\in\mathcal{E}\left(  \lambda/\mu\right)  }\ \ \prod_{c\in Y\left(
\lambda\right)  \setminus E}\dfrac{1}{h_{\lambda}\left(  c;z\right)  }
=\prod_{c\in Y\left(  \lambda\right)  \setminus Y\left(  \lambda\right)
}\dfrac{1}{h_{\lambda}\left(  c;z\right)  }=\left(  \text{empty product}%
\right)  =1.
\end{align*}
Comparing this with $\sum_{T\in\operatorname*{SYT}\left(  \lambda/\mu\right)
}\zz_{T}=1$, we find%
\[
\sum_{T\in\operatorname*{SYT}\left(  \lambda/\mu\right)  }\zz_{T}=\sum
_{E\in\mathcal{E}\left(  \lambda/\mu\right)  }\ \ \prod_{c\in Y\left(
\lambda\right)  \setminus E}\dfrac{1}{h_{\lambda}\left(  c;z\right)  }.
\]
Hence, Theorem~\ref{thm.main} is proved in the case when $\left\vert Y\left(
\lambda/\mu\right)  \right\vert =0$. The base case is thus finished.

\textit{Induction step:} Fix a positive integer $N$. Assume (as the induction
hypothesis) that Theorem~\ref{thm.main} holds for $\left\vert Y\left(
\lambda/\mu\right)  \right\vert =N-1$. We now fix a skew partition
$\lambda/\mu$ with $\left\vert Y\left(  \lambda/\mu\right)  \right\vert =N$.
Our goal is to prove that Theorem~\ref{thm.main} holds for this $\lambda/\mu$.

From $\left\vert Y\left(  \lambda/\mu\right)  \right\vert =N>0$, we obtain
$\mu\neq\lambda$. Hence, Lemma~\ref{rec.2} yields%
\begin{align}
& \sum_{E\in\mathcal{E}\left(  \lambda/\mu\right)  }\ \ \prod_{c\in Y\left(
\lambda\right)  \setminus E}\dfrac{1}{h_{\lambda}\left(  c;z\right)
}\nonumber\\
& =\frac{1}{\sum_{(i,j)\in Y(\lambda/\mu)}z_{j-i}}\ \ \sum_{\mu\lessdot
\nu\subseteq\lambda}\ \ \sum_{E\in\mathcal{E}\left(  \lambda/\nu\right)
}\ \ \prod_{c\in Y\left(  \lambda\right)  \setminus E}\dfrac{1}{h_{\lambda
}\left(  c;z\right)  }.\label{pf.thm.main.1}%
\end{align}

On the other hand, Lemma \ref{recursion.main} \textbf{(b)} shows that%
\begin{equation}
\sum_{T\in\operatorname*{SYT}\left(  \lambda/\mu\right)  }\zz_{T}=\frac{1}%
{\sum\limits_{(i,j)\in Y\left(  \lambda/\mu\right)  }z_{j-i}}\cdot
\sum\limits_{\mu\lessdot\nu\subseteq\lambda}\ \ \sum_{T\in\operatorname*{SYT}%
\left(  \lambda/\nu\right)  }\zz_{T}.\label{pf.thm.main.2}%
\end{equation}

Now, if $\nu$ is any partition satisfying $\mu\lessdot\nu\subseteq\lambda$,
then $\left\vert Y\left(  \lambda/\nu\right)  \right\vert =\left\vert Y\left(
\lambda/\mu\right)  \right\vert -1$ (since $\mu\lessdot\nu$ entails that the
diagram $Y\left(  \nu\right)  $ has exactly one more box than $Y\left(
\mu\right)  $, and thus $Y\left(  \lambda/\nu\right)  $ has one fewer box
than $Y\left(  \lambda/\mu\right)  $), and thus $\left\vert Y\left(
\lambda/\nu\right)  \right\vert =\underbrace{\left\vert Y\left(  \lambda
/\mu\right)  \right\vert }_{=N}-1=N-1$, which allows us to apply our induction
hypothesis to $\nu$ instead of $\mu$. As a consequence, we see that any such
partition $\nu$ satisfies%
\[
\sum_{T\in\operatorname*{SYT}\left(  \lambda/\nu\right)  }\zz_{T}=\sum
_{E\in\mathcal{E}\left(  \lambda/\nu\right)  }\ \ \prod_{c\in Y\left(
\lambda\right)  \setminus E}\dfrac{1}{h_{\lambda}\left(  c;z\right)  }.
\]
Hence, we can rewrite (\ref{pf.thm.main.2}) as
\[
\sum_{T\in\operatorname*{SYT}\left(  \lambda/\mu\right)  }\zz_{T}=\frac{1}%
{\sum\limits_{(i,j)\in Y\left(  \lambda/\mu\right)  }z_{j-i}}\cdot
\sum\limits_{\mu\lessdot\nu\subseteq\lambda}\ \ \sum_{E\in\mathcal{E}\left(
\lambda/\nu\right)  }\ \ \prod_{c\in Y\left(  \lambda\right)  \setminus
E}\dfrac{1}{h_{\lambda}\left(  c;z\right)  }.
\]
Comparing this with (\ref{pf.thm.main.1}), we obtain%
\[
\sum_{T\in\operatorname*{SYT}\left(  \lambda/\mu\right)  }\zz_{T}=\sum
_{E\in\mathcal{E}\left(  \lambda/\mu\right)  }\ \ \prod_{c\in Y\left(
\lambda\right)  \setminus E}\dfrac{1}{h_{\lambda}\left(  c;z\right)  }.
\]
In other words, Theorem~\ref{thm.main} holds for our $\lambda/\mu$. This
completes the induction step, and thus Theorem~\ref{thm.main} is proved.
\end{proof}

\subsection{To Section \ref{sec.konva-forreal} and Theorem \ref{mainKonvalinka}}

\begin{proof}[Proof of Lemma \ref{lem.conj.disjoint}.]
We are in one of the following two cases:

\textit{Case 1:} We have $\lambda_{j}\geq i$.

\textit{Case 2:} We have $\lambda_{j}<i$.

Let us first consider Case 1. In this case, we have $\lambda_{j}\geq i$. Thus,
Lemma \ref{lem.conj.uniprop} yields $\lambda_{i}^{t}\geq j$. Hence,
$\underbrace{\lambda_{j}}_{\geq i}+\underbrace{\lambda_{i}^{t}}_{\geq
j}-\,i-j\geq i+j-i-j=0>-1$, so that $\lambda_{j}+\lambda_{i}^{t}-i-j\neq-1$.
This proves Lemma \ref{lem.conj.disjoint} in Case 1.

Let us now consider Case 2. In this case, we have $\lambda_{j}<i$. Hence, we
don't have $\lambda_{j}\geq i$. According to Lemma \ref{lem.conj.uniprop},
this shows that we don't have $\lambda_{i}^{t}\geq j$ either. Hence, we have
$\lambda_{i}^{t}<j$, so that $\lambda_{i}^{t}\leq j-1$ (since $\lambda_{i}%
^{t}$ and $j$ are integers). Now, $\underbrace{\lambda_{j}}_{<i}%
+\underbrace{\lambda_{i}^{t}}_{\leq j-1}-\,i-j<i+j-1-i-j=-1$, so that
$\lambda_{j}+\lambda_{i}^{t}-i-j\neq-1$. This proves Lemma
\ref{lem.conj.disjoint} in Case 2.

Lemma \ref{lem.conj.disjoint} has now been proved in both cases, and thus
always holds.
\end{proof}

\begin{proof}[Proof of Lemma \ref{lem.conj.tt}.]
Well-known and easy consequence of the definition of $\lambda^t$.
\end{proof}

\begin{proof}[Proof of Lemma \ref{prop.Delta.disjoint}.]
    Assume the contrary. Thus, $-1-p \in \Delta\left(\lambda^t\right)$.
    
	We have $p \in \Delta\left(\lambda\right) = \set{\lambda_i - i \ \mid \ i \geq 1}$ (by the definition of $\Delta\left(\lambda\right)$). In other words, $p = \lambda_j - j$ for some $j \geq 1$. Consider this $j$.
    
    Moreover, $-1-p \in \Delta\left(\lambda^t\right) = \set{\lambda^t_i - i \ \mid \ i \geq 1 \ldots}$ (by the definition of $\Delta\left(\lambda^t\right)$). In other words, $-1-p = \lambda^t_i - i$ for some $i \geq 1$. Consider this $i$.

    Adding the equalities $p = \lambda_j - j$ and $-1-p = \lambda^t_i - i$ together, we obtain
    \[
    p + \tup{-1-p} = \tup{\lambda_j - j} + \tup{\lambda^t_i - i}
    = \lambda_j + \lambda^t_i - i - j \neq -1
    \]
    (by Lemma~\ref{lem.conj.disjoint}). But this contradicts the obvious equality $p + \tup{-1-p} = -1$.
    Hence, our assumption was wrong, and Lemma~\ref{prop.Delta.disjoint} is proved.
\end{proof}

\begin{proof}[Proof of Lemma \ref{lem.Deltas.if-not-then-k2}.]
This is just Lemma \ref{lem.Deltas.if-not-then-k}, with the letters $\lambda$, $\mu$, $\ell_j$ and $k$ renamed as $\mu$, $\lambda$, $m_{j}$ and $i$.
\end{proof}

\begin{proof}[Proof of Lemma \ref{lem.Deltas.ybij}.]
We shall proceed in multiple steps.

\begin{enumerate}
\item The set $\left\{  i\geq1\ \mid\ m_{i}\notin\Delta\left(  \lambda\right)
\right\}  $ is finite.

[\textit{Proof:} Pick an $n\in\mathbb{N}$ that is large enough that
$\lambda=\left(  \lambda_{1},\lambda_{2},\ldots,\lambda_{n}\right)  $ (that
is, $\lambda_{n+1}=0$) and $\mu=\left(  \mu_{1},\mu_{2},\ldots,\mu_{n}\right)
$ (that is, $\mu_{n+1}=0$). (Such an $n$ exists, since $\lambda$ and $\mu$ are
partitions.) Then, Lemma
\ref{lem.Deltas.if-not-then-k2} shows that every positive integer $i$ that satisfies
$m_i \notin\Delta\left(  \lambda\right)  $ must satisfy $i\in\left[  n\right]
$. In other words, $\left\{  i\geq1\ \mid\ m_{i}%
\notin\Delta\left(  \lambda\right)  \right\}  \subseteq\left[  n\right]  $.
Hence, the set $\left\{  i\geq1\ \mid
\ m_{i}\notin\Delta\left(  \lambda\right)  \right\}  $ is finite (since
$\left[  n\right]  $ is finite).]

\item If $i$ is a positive integer satisfying $m_{i}\notin\Delta\left(
\lambda\right)  $, then $m_{i}+1+b_{i}$ is a positive integer $p$ satisfying
$\ell_{p}^{t}\notin\Delta\left(  \mu^{t}\right)  $.

[\textit{Proof:} Let $i$ be a positive integer satisfying $m_{i}\notin%
\Delta\left(  \lambda\right)  $. Lemma \ref{lem.Deltas.mi+1+bi} then
shows that $m_{i}+1+b_{i}\geq1$ and $\ell_{m_{i}+1+b_{i}}^{t}=-1-m_{i}$. From
$m_{i}+1+b_{i}\geq1$, we see that $m_{i}+1+b_{i}$ is a positive integer.
Moreover, recall that $\Delta\left(
\mu\right)  = \left\{  m_{1}, m_{2}, m_{3}, \ldots\right\} $, so that $m_{i}
\in\Delta\left(  \mu\right) $. Hence, Lemma~\ref{prop.Delta.disjoint}
(applied to $\mu$ and $m_{i}$ instead of $\lambda$ and $p$) yields that
$-1-m_{i} \notin\Delta\left(  \mu^{t} \right) $. Therefore, $\ell
_{m_{i}+1+b_{i}}^{t}=-1-m_{i}\notin\Delta\left(  \mu^{t}\right)  $. Hence,
$m_{i}+1+b_{i}$ is a positive integer $p$ satisfying $\ell_{p}^{t}\notin%
\Delta\left(  \mu^{t}\right)  $.]

\item The map
\begin{align*}
\Phi:\left\{  i\geq1\ \mid\ m_{i}\notin\Delta\left(  \lambda\right)  \right\}
&  \rightarrow\left\{  p\geq1\ \mid\ \ell_{p}^{t}\notin\Delta\left(  \mu
^{t}\right)  \right\}  ,\\
i  &  \mapsto m_{i}+1+b_{i}%
\end{align*}
is well-defined.

[\textit{Proof:} This is saying that if $i$ is a positive integer satisfying
$m_{i}\notin\Delta\left(  \lambda\right)  $, then $m_{i}+1+b_{i}$ is a
positive integer $p$ satisfying $\ell_{p}^{t}\notin\Delta\left(  \mu
^{t}\right)  $. But we just showed this in the previous step.]

\item This map $\Phi$ is furthermore injective.

[\textit{Proof:} Let $u$ and $v$ be two distinct elements of $\left\{
i\geq1\ \mid\ m_{i}\notin\Delta\left(  \lambda\right)  \right\}  $ that
satisfy $\Phi\left(  u\right)  =\Phi\left(  v\right)  $. We must show that
$u=v$.

The definition of $\Phi$ yields $\Phi\left(  u\right)  =m_{u}+1+b_{u}$.
However, we have $u\in\left\{  i\geq1\ \mid\ m_{i}\notin\Delta\left(
\lambda\right)  \right\}  $, so that $u\geq1$ and $m_{u}\notin\Delta\left(
\lambda\right)  $. Hence, Lemma \ref{lem.Deltas.mi+1+bi} (applied to
$i=u$) shows that $m_{u}+1+b_{u}\geq1$ and $\ell_{m_{u}+1+b_{u}}^{t}=-1-m_{u}%
$. In view of $\Phi\left(  u\right)  =m_{u}+1+b_{u}$, we can rewrite these
facts as $\Phi\left(  u\right)  \geq1$ and $\ell_{\Phi\left(  u\right)  }%
^{t}=-1-m_{u}$. Similarly, we find $\Phi\left(  v\right)  \geq1$ and
$\ell_{\Phi\left(  v\right)  }^{t}=-1-m_{v}$.

Now, consider the two equalities $\ell_{\Phi\left(  u\right)  }^{t}=-1-m_{u}$
and $\ell_{\Phi\left(  v\right)  }^{t}=-1-m_{v}$. Their left hand sides are
equal, since $\Phi\left(  u\right)  =\Phi\left(  v\right)  $. Thus, their
right hand sides are equal as well. In other words, $-1-m_{u}=-1-m_{v}$, so
that $m_{u}=m_{v}$.

However, Lemma~\ref{lem.l-decrease} yields $m_{1}%
>m_{2}>m_{3}>\cdots$. In particular, the numbers $m_{1},m_{2},m_{3},\ldots$ are
distinct. Hence, from $m_{u}=m_{v}$, we obtain $u=v$.

This completes the proof of the injectivity of $\Phi$.]

\item We have $\left\vert \left\{  i\geq1\ \mid\ m_{i}\notin\Delta\left(
\lambda\right)  \right\}  \right\vert \leq\left\vert \left\{  p\geq
1\ \mid\ \ell_{p}^{t}\notin\Delta\left(  \mu^{t}\right)  \right\}  \right\vert
$.

[\textit{Proof:} We have just shown that there is an injective map (namely,
$\Phi$) from the set $\left\{  i\geq1\ \mid\ m_{i}\notin\Delta\left(
\lambda\right)  \right\}  $ to the set $\left\{  p\geq1\ \mid\ \ell_{p}%
^{t}\notin\Delta\left(  \mu^{t}\right)  \right\}  $. Thus, the size of the
former set is at most as large as the size of the latter. In other words,
$\left\vert \left\{  i\geq1\ \mid\ m_{i}\notin\Delta\left(  \lambda\right)
\right\}  \right\vert \leq\left\vert \left\{  p\geq1\ \mid\ \ell_{p}^{t}%
\notin\Delta\left(  \mu^{t}\right)  \right\}  \right\vert $.]

\item The sets $\left\{  i\geq1\ \mid\ m_{i}\notin\Delta\left(  \lambda
\right)  \right\}  $ and $\left\{  p\geq1\ \mid\ \ell_{p}^{t}\notin%
\Delta\left(  \mu^{t}\right)  \right\}  $ are finite and have the same size.

[\textit{Proof:} We have just shown that
\begin{align}
\left\vert \left\{  i\geq1\ \mid\ m_{i}\notin\Delta\left(  \lambda\right)
\right\}  \right\vert  & \leq\left\vert \left\{  p\geq1\ \mid\ \ell_{p}%
^{t}\notin\Delta\left(  \mu^{t}\right)  \right\}  \right\vert \nonumber\\
& =\left\vert \left\{  i\geq1\ \mid\ \ell_{i}^{t}\notin\Delta\left(  \mu
^{t}\right)  \right\}  \right\vert \label{pf.lem.Deltas.ybij.b.ineq1}%
\end{align}
(here, we renamed the index $p$ as $i$). But we can apply the same reasoning
to the partitions $\mu^{t}$ and $\lambda^{t}$ instead of $\lambda$ and $\mu$.
As a result, we obtain%
\[
\left\vert \left\{  i\geq1\ \mid\ \ell_{i}^{t}\notin\Delta\left(  \mu\right)
\right\}  \right\vert \leq\left\vert \left\{  i\geq1\ \mid\ m_{i}^{tt}%
\notin\Delta\left(  \left(  \lambda^{t}\right)  ^{t}\right)  \right\}
\right\vert ,
\]
where we set $m_{i}^{tt}=\left(  \mu^{t}\right)  _{i}^{t}-i$ for each $i\geq
1$. This can be simplified to
\[
\left\vert \left\{  i\geq1\ \mid\ \ell_{i}^{t}\notin\Delta\left(  \mu\right)
\right\}  \right\vert \leq\left\vert \left\{  i\geq1\ \mid\ m_{i}\notin%
\Delta\left(  \lambda\right)  \right\}  \right\vert
\]
(since Lemma \ref{lem.conj.tt} yields $\left(  \lambda^{t}\right)
^{t}=\lambda$ and $\left(  \mu^{t}\right)  ^{t}=\mu$, so that $m_{i}%
^{tt}=\underbrace{\left(  \mu^{t}\right)  _{i}^{t}}_{\substack{=\mu
_{i}\\\text{(since }\left(  \mu^{t}\right)  ^{t}=\mu\text{)}}}-i=\mu
_{i}-i=m_{i}$ for every $i\geq1$). Combining this inequality with the
inequality (\ref{pf.lem.Deltas.ybij.b.ineq1}), we obtain%
\begin{align*}
\left\vert \left\{  i\geq1\ \mid\ m_{i}\notin\Delta\left(  \lambda\right)
\right\}  \right\vert  & =\left\vert \left\{  i\geq1\ \mid\ \ell_{i}^{t}%
\notin\Delta\left(  \mu^{t}\right)  \right\}  \right\vert \\
& =\left\vert \left\{  p\geq1\ \mid\ \ell_{p}^{t}\notin\Delta\left(  \mu
^{t}\right)  \right\}  \right\vert .
\end{align*}
Thus, the sets $\left\{  i\geq1\ \mid\ m_{i}\notin\Delta\left(  \lambda
\right)  \right\}  $ and $\left\{  p\geq1\ \mid\ \ell_{p}^{t}\notin%
\Delta\left(  \mu^{t}\right)  \right\}  $ have the same size. Since the first
of them is finite (by Step 1 above), we conclude that they are both are finite.]

\item The map $\Phi$ is bijective.

[\textit{Proof:} The sets $\left\{  i\geq1\ \mid\ m_{i}\notin\Delta\left(
\lambda\right)  \right\}  $ and $\left\{  p\geq1\ \mid\ \ell_{p}^{t}%
\notin\Delta\left(  \mu^{t}\right)  \right\}  $ are finite and have the same
size (as we have just seen), and the map $\Phi$ between these sets is
injective (by Step 4 above). It is known that any injective map between two
finite sets of the same size must be bijective. Applying this to the map
$\Phi$, we thus conclude that $\Phi$ is bijective.]

\end{enumerate}

Thus, the map $\Phi$ is a bijection. This proves Lemma \ref{lem.Deltas.ybij}
(since the map $\Phi$ is exactly the map described in Lemma \ref{lem.Deltas.ybij}).
\end{proof}

\begin{proof}[Proof of Lemma \ref{lem.Deltas.y}.]
Lemma \ref{lem.Deltas.if-not-then-k2} shows that every positive integer $i$ that satisfies
$m_{i}\notin\Delta\left(  \lambda\right)  $ must satisfy $i\in\left[
n\right]  $. Hence, the summation sign $\sum_{\substack{i\geq1;\\m_{i}%
\notin\Delta\left(  \lambda\right)  }}$ is equivalent to $\sum_{\substack{i\in
\left[  n\right]  ;\\m_{i}\notin\Delta\left(  \lambda\right)  }}$. Therefore,%
\begin{equation}
\sum_{\substack{i\geq1;\\m_{i}\notin\Delta\left(  \lambda\right)  }%
}y_{m_{i}+1+b_{i}}=\sum_{\substack{i\in\left[  n\right]  ;\\m_{i}\notin%
\Delta\left(  \lambda\right)  }}y_{m_{i}+1+b_{i}}.\label{pf.lem.Deltas.y}%
\end{equation}

On the other hand, Lemma \ref{lem.Deltas.ybij} shows that the map%
\begin{align*}
\left\{  i\geq1\ \mid\ m_{i}\notin\Delta\left(  \lambda\right)  \right\}   &
\rightarrow\left\{  p\geq1\ \mid\ \ell_{p}^{t}\notin\Delta\left(  \mu
^{t}\right)  \right\}  ,\\
i &  \mapsto m_{i}+1+b_{i}%
\end{align*}
is a bijection. Hence, we can substitute $k$ for $m_{i}+1+b_{i}$ in the sum
$\sum_{\substack{i\geq1;\\m_{i}\notin\Delta\left(  \lambda\right)  }%
}y_{m_{i}+1+b_{i}}$. We thus obtain%
\[
\sum_{\substack{i\geq1;\\m_{i}\notin\Delta\left(  \lambda\right)  }%
}y_{m_{i}+1+b_{i}}
=\sum_{\substack{k\geq1;\\\ell_{k}^{t}\notin%
\Delta\left(  \mu^{t}\right)  }}y_{k} .
\]
Comparing this with
(\ref{pf.lem.Deltas.y}), we obtain%
\[
\sum_{\substack{i\in\left[  n\right]  ;\\m_{i}\notin\Delta\left(
\lambda\right)  }}y_{m_{i}+1+b_{i}}=\sum_{\substack{k\geq1;\\\ell_{k}%
^{t}\notin\Delta\left(  \mu^{t}\right)  }}y_{k}.
\]
This proves Lemma \ref{lem.Deltas.y}.
\end{proof}

\begin{proof}[Proof of Theorem \ref{mainKonvalinka}.]
If  $k$ is a positive integer that
satisfies $\ell_{k}\notin\Delta\left(  \mu\right)  $, then $k\in\left[
n\right]  $ (by Lemma \ref{lem.Deltas.if-not-then-k}). Thus, the summation
sign \textquotedblleft$\sum_{\substack{k\in\left[  n\right]  ;\\\ell_{k}%
\notin\Delta\left(  \mu\right)  }}$\textquotedblright\ is equivalent to the
summation sign \textquotedblleft$\sum_{\substack{k\geq1;\\\ell_{k}\notin%
\Delta\left(  \mu\right)  }}$\textquotedblright. Hence,%
\begin{equation}
\sum_{\substack{k\in\left[  n\right]  ;\\\ell_{k}\notin\Delta\left(
\mu\right)  }}x_{k}=\sum_{\substack{k\geq1;\\\ell_{k}\notin\Delta\left(
\mu\right)  }}x_{k}.\label{pf.mainKonvalinka.1}%
\end{equation}
Lemma \ref{lem.Deltas.y} yields%
\begin{equation}
\sum_{\substack{i\in\left[  n\right]  ;\\m_{i}\notin\Delta\left(
\lambda\right)  }}y_{m_{i}+1+b_{i}}=\sum_{\substack{k\geq1;\\\ell_{k}%
^{t}\notin\Delta\left(  \mu^{t}\right)  }}y_{k}.\label{pf.mainKonvalinka.2}%
\end{equation}

Now, Lemma \ref{lem.konvalinka-bi} yields%
\[
\left(  \sum_{\substack{k\in\left[  n\right]  ;\\\ell_{k}\notin\Delta\left(
\mu\right)  }}x_{k}+\sum_{\substack{i\in\left[  n\right]  ;\\m_{i}\notin%
\Delta\left(  \lambda\right)  }}y_{m_{i}+1+b_{i}}\right)  \mathbf{s}_{\lambda
}\left[  \mu\right]  =\sum_{\mu\lessdot\nu\subseteq\lambda}\mathbf{s}%
_{\lambda}\left[  \nu\right]  .
\]
In view of (\ref{pf.mainKonvalinka.1}) and (\ref{pf.mainKonvalinka.2}), we can
rewrite this as%
\[
\left(  \sum_{\substack{k\geq1;\\\ell_{k}\notin\Delta\left(  \mu\right)
}}x_{k}+\sum_{\substack{k\geq1;\\\ell_{k}^{t}\notin\Delta\left(  \mu
^{t}\right)  }}y_{k}\right)  \mathbf{s}_{\lambda}\left[  \mu\right]
=\sum_{\mu\lessdot\nu\subseteq\lambda}\mathbf{s}_{\lambda}\left[  \nu\right]
.
\]
This proves Theorem \ref{mainKonvalinka}.
\end{proof}

\section{\label{sec.odds}Appendix: Odds and ends}

In this short section, we shall discuss how a few of the auxiliary results
shown above can be extended or generalized.

\subsection{To Section \ref{sec.jt}}

The most general result in Section \ref{sec.jt} is Theorem
\ref{thm.flagJT.gen}. We can generalize it further by replacing the partition
$\mu$ by a skew partition $\mu/\nu$ and adding a second flagging $\mathbf{a}$.
Here are the relevant definitions:

\begin{definition}
Let $\mu/\nu$ be a skew partition.

\begin{enumerate}
\item[\textbf{(a)}] A \emph{semistandard tableau} of shape $\mu/\nu$ is
defined just like a semistandard tableau of shape $\mu$ (Definition
\ref{SSYT_def}), but with each \textquotedblleft$\mu$\textquotedblright%
\ replaced by \textquotedblleft$\mu/\nu$\textquotedblright.

\item[\textbf{(b)}] Let $\mathbf{a}=\left(  a_{1},a_{2},a_{3},\ldots\right)  $
and $\mathbf{b}=\left(  b_{1},b_{2},b_{3},\ldots\right)  $ be two flaggings (i.e., sequences of positive integers). A
semistandard tableau $T$ of shape $\mu/\nu$ is said to be $\mathbf{b}%
/\mathbf{a}$\emph{-flagged} if and only if it satisfies
\[
a_{i}\leq T\left(  i,j\right)  \leq b_{i}\qquad\text{for all }\left(
i,j\right)  \in Y\left(  \mu/\nu\right)
\]
(that is, all entries in row $i$ are $\geq a_{i}$ and $\leq b_{i}$).

We let $\operatorname{FSSYT}\left(  \mu/\nu,\ \mathbf{b}/\mathbf{a}\right)  $
be the set of all $\mathbf{b}/\mathbf{a}$-flagged semistandard tableaux of
shape $\mu/\nu$.
\end{enumerate}
\end{definition}

Now we can generalize Theorem \ref{thm.flagJT.gen} as follows:

\begin{theorem}
\label{thm.flagJT.gen-skew}
Let $R$ be a commutative ring. Let $u_{i,j}$ be an
element of $R$ for each pair $\left(  i,j\right)  \in\mathbb{Z}\times
\mathbb{Z}$. For each $a,b\in\mathbb{N}$ and $r,q,d\in\mathbb{Z}$, we define
an element $h_{a,\ b;\ r,\ q}\left[  d\right]  \in R$ by%
\[
h_{a,\ b;\ r,\ q}\left[  d\right]  :=\sum_{\substack{\left(  i_{r+1}%
,i_{r+2},\ldots,i_{q}\right)  \in\left[  a,b\right]  ^{q-r};\\i_{r+1}\leq
i_{r+2}\leq\cdots\leq i_{q}}}\ \ \prod_{j=r+1}^{q}u_{i_{j},\ j-d}%
\]
(where $\left[  a,b\right]  :=\left\{  a,a+1,a+2,\ldots,b\right\}  $). This
sum is understood to be $0$ if $q<r$, and to be $1$ if $q=r$.

Let $\mu=\left(  \mu_{1},\mu_{2},\ldots,\mu_{n}\right)  $ and $\nu=\left(
\nu_{1},\nu_{2},\ldots,\nu_{n}\right)  $ be two partitions such that
$\nu\subseteq\mu$. Let $\mathbf{a}=\left(  a_{1},a_{2},a_{3},\ldots\right)  $
and $\mathbf{b}=\left(  b_{1},b_{2},b_{3},\ldots\right)  $ be two weakly
increasing flaggings. Then,
\begin{equation}
\sum_{T\in\operatorname{FSSYT}\left(  \mu/\nu,\ \mathbf{b}/\mathbf{a}\right)
}\ \ \prod_{\left(  i,j\right)  \in Y\left(  \mu/\nu\right)  }u_{T\left(
i,j\right)  ,\ j-i}=\det\left(  h_{a_{j},\ b_{i};\ \nu_{j},\ \mu_{i}%
-i+j}\left[  j\right]  \right)  _{i,j\in\left[  n\right]  }.\nonumber
\end{equation}

\end{theorem}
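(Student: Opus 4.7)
My plan is to adapt the proof of Theorem \ref{thm.flagJT.gen} in Subsection \ref{subsec.jt.proof}, adjusting the combinatorial setup to accommodate the skew shape and the second flagging. For a permutation $\sigma \in S_n$, I would call $\sigma$ \emph{legitimate} when $\mu_{\sigma(i)} - \sigma(i) + i \geq \nu_i$ for every $i \in \left[n\right]$, and define $P(\sigma)$ to be the diagram whose $i$-th row consists of the boxes $(i, \nu_i + 1),\ (i, \nu_i + 2),\ \ldots,\ (i, \mu_{\sigma(i)} - \sigma(i) + i)$, so that its rows are no longer left-aligned but rather start at column $\nu_i + 1$. A $\sigma$-array would still be a filling of $P(\sigma)$ with positive integers weakly increasing along each row, the weight would still be $w(T) := \prod_{(i,j) \in P(\sigma)} u_{T(i,j),\ j-i}$, and a $\sigma$-array $T$ would be called $\mathbf{b}/\mathbf{a}$-flagged when every entry in its $i$-th row lies in the interval $\left[a_i,\ b_{\sigma(i)}\right]$.

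With these definitions in place, the analogue of Lemma \ref{lem.flagJT.det=sum} is a routine extension (transpose identity, Leibniz expansion, row-by-row interpretation of each $h$-factor) and produces
\[
\det\left(h_{a_j,\ b_i;\ \nu_j,\ \mu_i - i + j}\left[j\right]\right)_{i,\, j \in \left[n\right]} \;=\; \sum_{\substack{(\sigma, T) \text{ is a } \mathbf{b}/\mathbf{a}\text{-flagged}\\\text{twisted array}}} (-1)^\sigma w(T).
\]
The failing contributions to this signed sum are to be canceled by a sign-reversing involution parallel to the flipping procedure of Definition \ref{def.flagJT.flip}. The definition of failure needs one small modification: an \emph{outer failure} is a box $(i, j) \in P(\sigma)$ with $i > 1$, $j > \nu_{i-1}$, and $(i-1, j) \notin P(\sigma)$. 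The new condition ``$j > \nu_{i-1}$'' prevents counting boxes whose missing northern neighbor lies inside $Y(\nu)$ (where no tableau entry lives in the skew setting) rather than past the eastern end of the $(i-1)$-st row of $P(\sigma)$. With this adjustment, the analogue of Lemma \ref{lem.flagJT.fail} identifies the unfailing $\mathbf{b}/\mathbf{a}$-flagged twisted arrays with the pairs $(\id, T)$ for $T \in \operatorname{FSSYT}(\mu/\nu,\ \mathbf{b}/\mathbf{a})$, so that their contribution reproduces the left-hand side of the theorem.

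The flip itself is defined exactly as in Definition \ref{def.flagJT.flip}, and the proofs that flipping is an involution, negates $(-1)^\sigma$, preserves $w(T)$, and preserves the $\mathbf{b}$-flagging carry over line-for-line from Lemma \ref{lem.flagJT.flip1}; the only point worth noting is that under the revised outer-failure definition one still obtains $\sigma(i) \leq \sigma(i-1)$ from the strict inequality $\mu_{\sigma(i)} - \sigma(i) + i > \mu_{\sigma(i-1)} - \sigma(i-1) + (i-1)$ (which follows from $j > \mu_{\sigma(i-1)} - \sigma(i-1) + (i-1)$ and $j \leq \mu_{\sigma(i)} - \sigma(i) + i$), and hence $b_{\sigma(i)} \leq b_{\sigma(i-1)}$ by weak monotonicity of $\mathbf{b}$.

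The genuinely new check, which I expect to be the main obstacle, is that the flip also preserves the $\mathbf{a}$-flagging. Moving the bottom floor (entries originally $\geq a_i$) into row $i-1$ is harmless because $a_{i-1} \leq a_i$ by the weak monotonicity of $\mathbf{a}$. Moving the top floor (entries originally $\geq a_{i-1}$) into row $i$ requires showing that each such entry is $\geq a_i$; this splits by the nature of the bottommost leftmost failure at $(i, j)$. If $(i, j)$ is an inner failure, then $T(i-1, j) \geq T(i, j) \geq a_i$ shows the leftmost top-floor entry is $\geq a_i$, and the remaining top-floor entries are $\geq a_i$ by the weakly-increasing-along-rows property of a $\sigma$-array. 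If $(i, j)$ is an outer failure, then the top floor is empty (since $j$ lies past the east end of the $(i-1)$-st row of $P(\sigma)$) and there is nothing to check. Once these checks are assembled, the involution cancels the failing contributions in pairs and the surviving unfailing sum equals $\sum_{T \in \operatorname{FSSYT}(\mu/\nu,\ \mathbf{b}/\mathbf{a})} \prod_{(i, j) \in Y(\mu/\nu)} u_{T(i, j),\ j-i}$, completing the proof.
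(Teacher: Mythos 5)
Your proposal matches the approach the paper itself sketches (the paper only lists the definitional changes and leaves the details of adapting the flip argument to the reader), and you have fleshed out the details correctly. Your reformulation of the outer-failure condition with ``$j > \nu_{i-1}$'' is equivalent to the paper's ``$(i-1,j) \notin P(\sigma) \cup Y(\nu)$'', since $(i-1,j) \in Y(\nu)$ is precisely $j \leq \nu_{i-1}$. You have correctly pinpointed and handled the genuinely new verification---that the flip preserves the lower bound $a_i$---and your case analysis (inner failure: $T(i-1,j) \geq T(i,j) \geq a_i$; outer failure: the top floor is empty because $j$ lies past the east end of row $i-1$, which follows from $(i-1,j) \notin P(\sigma)$ together with $j > \nu_{i-1}$) is sound. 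Your observation that the outer-failure case still yields $\sigma(i) \leq \sigma(i-1)$ via the strict inequality $\mu_{\sigma(i)} - \sigma(i) + i > \mu_{\sigma(i-1)} - \sigma(i-1) + (i-1)$ is the right way to rescue the $\mathbf{b}$-flagging argument (the paper's Claim 17, Case 2). One minor point worth making explicit if you write this up in full: the skew analogue of Claim 4 (that the $(i-1)$-st row extends at least to column $j-1$ or else has no staying entries) and of Claim 10 (requiring $j-1 > \nu_{i-1}$ rather than $j-1 \geq 1$ for the boxes $(i-1,j-1)$ and $(i,j-1)$ to exist) need their hypotheses adjusted to ``$j - 1 > \nu_{i-1}$'' and to degenerate gracefully when $j = \nu_{i-1}+1$; but these are vacuous-case adjustments that do not threaten the argument, and you have implicitly allowed for them.
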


The proof of Theorem \ref{thm.flagJT.gen-skew} is mostly analogous to our
proof of Theorem \ref{thm.flagJT.gen}, with some occasional complications due
to the $a_{i}\leq T\left(  i,j\right)  $ conditions and due to the presence of
$\nu$. Some changes need to be made in Definition \ref{def.jt.sig-array}: In
the definition of a legitimate permutation $\sigma\in S_{n}$, the inequality
$\mu_{\sigma\left(  i\right)  }-\sigma\left(  i\right)  +i\geq0$ must be
replaced by $\mu_{\sigma\left(  i\right)  }-\sigma\left(  i\right)  +i\geq
\nu_{i}$. In the definition of $P\left(  \sigma\right)  $, the inequality
$j\leq\mu_{\sigma\left(  i\right)  }-\sigma\left(  i\right)  +i$ must be
replaced by $\nu_{i}<j\leq\mu_{\sigma\left(  i\right)  }-\sigma\left(
i\right)  +i$. The $\mathbf{b}$-flagged $\sigma$-arrays should be replaced by
\textquotedblleft$\mathbf{b}/\mathbf{a}$-flagged $\sigma$%
-arrays\textquotedblright, which are defined similarly but require that every
entry of $T$ in the $i$-th row is $\leq b_{\sigma\left(  i\right)  }$ and
$\geq a_{i}$ (note the different subscripts!). The definition of an outer
failure (Definition \ref{def.flagJT.fail} \textbf{(a)}) should be adapted by
replacing \textquotedblleft$\left(  i-1,j\right)  \notin P\left(
\sigma\right)  $\textquotedblright\ by \textquotedblleft$\left(  i-1,j\right)
\notin P\left(  \sigma\right)  \cup Y\left(  \nu\right)  $\textquotedblright%
\ (so that $\left(  i,j\right)  $ does \textbf{not} count as a failure if
$\nu_{i-1}\geq j$). We leave it to the reader to check that the argument (most
importantly, the proof of Lemma \ref{lem.flagJT.flip1}) survives all these
changes (and some others that are forced by these).

\subsection{To Section \ref{sec.det}}

As we said, Lemma \ref{determinant.sum} is just the $r=1$ case of
\cite[\S 319]{MuiMet60}. Here is the general case:

\begin{lemma}
\label{determinant.sumr}

Let $P$ and $Q$ be two $n\times n$-matrices over some commutative ring.

For each subset $K$ of $\left[  n\right]  $, we let
$P\underset{\operatorname*{row}}{\overset{K}{\leftarrow}}Q$ denote the
$n\times n$-matrix that is obtained from $P$ by replacing the $k$-th row by
the $k$-th row of $Q$ for all $k\in K$. (That is, the $\left(  i,j\right)
$-th entry of this matrix is $%
\begin{cases}
P_{i,j}, & \text{ if }i\notin K;\\
Q_{i,j}, & \text{ if }i\in K
\end{cases}
$ for every $i,j\in\left[  n\right]  $.)

For each subset $K$ of $\left[  n\right]  $, we let
$P\underset{\operatorname*{col}}{\overset{K}{\leftarrow}}Q$ denote the
$n\times n$-matrix that is obtained from $P$ by replacing the $k$-th column by
the $k$-th column of $Q$ for all $k\in K$. (That is, the $\left(  i,j\right)
$-th entry of this matrix is $%
\begin{cases}
P_{i,j}, & \text{ if }j\notin K;\\
Q_{i,j}, & \text{ if }j\in K
\end{cases}
$ for every $i,j\in\left[  n\right]  $.)

Let $r\in\left\{  0,1,\ldots,n\right\}  $. Then,
\[
\sum_{\substack{K\subseteq\left[  n\right]  ;\\\left\vert K\right\vert
=r}}\det\left(  P\underset{\operatorname*{row}}{\overset{K}{\leftarrow}%
}Q\right)  =\sum_{\substack{K\subseteq\left[  n\right]  ;\\\left\vert
K\right\vert =r}}\det\left(  P\underset{\operatorname*{col}%
}{\overset{K}{\leftarrow}}Q\right)  .
\]

\end{lemma}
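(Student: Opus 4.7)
The plan is to mimic the approach of the $r=1$ case (Lemma~\ref{determinant.sum}), but using the \emph{generalized} Laplace expansion (of Laplace himself) rather than the familiar expansion along a single row or column. Concretely, for any $n\times n$-matrix $A$ over a commutative ring and any subset $I\subseteq[n]$ of size $r$, the generalized Laplace expansion along the rows in $I$ reads
\[
\det A \;=\; \sum_{\substack{J\subseteq[n];\\|J|=r}} (-1)^{\sigma(I)+\sigma(J)}\det\!\left(A_{I,J}\right)\det\!\left(A_{[n]\setminus I,\,[n]\setminus J}\right),
\]
where $\sigma(S):=\sum_{s\in S}s$, and $A_{I,J}$ denotes the $r\times r$ submatrix of $A$ with row-index set $I$ and column-index set $J$. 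The analogous expansion along the columns in $I$ reads
\[
\det A \;=\; \sum_{\substack{J\subseteq[n];\\|J|=r}} (-1)^{\sigma(J)+\sigma(I)}\det\!\left(A_{J,I}\right)\det\!\left(A_{[n]\setminus J,\,[n]\setminus I}\right).
\]
If this generalized expansion is not considered known, I would first derive it from \eqref{pf.lem.det-u.lap} by induction on $r$: expand along one of the rows of $I$, apply the inductive expansion of size $r-1$ to each resulting $(n-1)\times(n-1)$ determinant along the remaining rows of $I$, and reassemble the signs using the standard shuffle identity for the parities.

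Once the generalized expansion is in hand, the proof is essentially a bookkeeping exercise. Fix $K\subseteq[n]$ with $|K|=r$, and apply the row-version of the expansion to the matrix $P\underset{\operatorname*{row}}{\overset{K}{\leftarrow}} Q$, using $I=K$. Because the rows of this matrix indexed by $K$ come from $Q$ while the rows indexed by $[n]\setminus K$ come from $P$, the $r\times r$ minor on rows $K$ is $Q_{K,L}$, and the complementary minor is $P_{[n]\setminus K,\,[n]\setminus L}$. Summing over all such $K$ and then renaming the dummy $L$ as $K'$ and vice versa, we obtain
\[
\sum_{\substack{K\subseteq[n];\\|K|=r}}\det\!\left(P\underset{\operatorname*{row}}{\overset{K}{\leftarrow}} Q\right)
= \sum_{\substack{K,L\subseteq[n];\\|K|=|L|=r}}(-1)^{\sigma(K)+\sigma(L)}\det\!\left(Q_{K,L}\right)\det\!\left(P_{[n]\setminus K,\,[n]\setminus L}\right).
\]
Now do the same on the right-hand side: apply the column-version of the expansion to $P\underset{\operatorname*{col}}{\overset{K}{\leftarrow}} Q$, using $I=K$. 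The $r\times r$ minor on columns $K$ is now $Q_{L,K}$ and the complementary minor is $P_{[n]\setminus L,\,[n]\setminus K}$, so summing over $K$ (and swapping the roles of $K$ and $L$) yields exactly the same double sum as above.

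The main obstacle is whether we treat the generalized Laplace expansion as an assumed fact or derive it from scratch. Since the paper has so far only needed the single-row/column Laplace expansions (\ref{pf.lem.det-u.lap}) and (\ref{pf.lem.det-u.lapc}), to keep the exposition self-contained I would include a short inductive derivation of the multi-row/column expansion, with the care needed for the sign being the only mildly delicate point (handled using the identity $\sigma(I)+\sigma(I\setminus\{i\})\equiv i+|\{j\in I:j<i\}|\pmod 2$ or, equivalently, the identification of $(-1)^{\sigma(I)+\sigma(J)}$ as the sign of the shuffle that moves the rows of $I$ to the top and the columns of $J$ to the left). Everything else is routine rearrangement of sums.
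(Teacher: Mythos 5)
Your proof is correct and takes exactly the approach the paper sketches: the paper's "proof" is merely the remark that one should imitate the $r=1$ argument using Laplace expansion along multiple rows or columns, which is precisely what you carry out (identify the minor on rows/columns $K$ as a minor of $Q$, identify the complementary minor as a minor of $P$, and observe that both double sums over size-$r$ subsets coincide after relabeling).
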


This can be proved in a similar way as we proved Lemma \ref{determinant.sum},
but using Laplace expansion along multiple rows/columns (see, e.g.,
\cite[Theorem 6.156]{detnotes}).
\medskip

Finally, we note that Lemma \ref{lem.det-uc} has an analogue in which $p_{i}$
is replaced by $p_{j}$:

\begin{lemma}
\label{lem.det-u}
Let $n$ be a positive integer. Let $R$ be a commutative ring.

Let $u_{i,j}$ be an element of $R$ for each $i\in\left[  n\right]  $ and each
$j\in\left[  n+1\right]  $. Let $p_{1},p_{2},\ldots,p_{n}$ be $n$ further
elements of $R$.

Then,%
\begin{align*}
&  \sum_{k=1}^{n}\det\left(  u_{i,j+\left[  k=i\right]  }-p_{j}u_{i,j}\left[
k=i\right]  \right)  _{i,j\in\left[  n\right]  }\\
&  =\det\left(  u_{i,j+\left[  n=j\right]  }\right)  _{i,j\in\left[  n\right]
}-\left(  \sum_{k=1}^{n}p_{k}\right)  \det\left(  u_{i,j}\right)
_{i,j\in\left[  n\right]  }.
\end{align*}

\end{lemma}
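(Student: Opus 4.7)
The plan is to proceed very closely along the lines of the proof of Lemma~\ref{lem.det-uc}, using linearity of the determinant in the $k$-th row to split each of the determinants on the left hand side into two pieces: one contributing the $u_{i,j+[k=i]}$ term and one contributing the $-p_j u_{i,j}[k=i]$ term. For each $k \in [n]$, let $V_k$ denote the matrix obtained from $U := (u_{i,j})_{i,j \in [n]}$ by replacing the $k$-th row by $(u_{k,j+1})_{j \in [n]}$, and let $W_k$ denote the matrix obtained from $U$ by replacing the $k$-th row by $(p_j u_{k,j})_{j \in [n]}$. Then Laplace expansion along the $k$-th row (or, equivalently, a direct linearity argument) yields
\[
\det\bigl(u_{i,j+[k=i]} - p_j u_{i,j}[k=i]\bigr)_{i,j \in [n]} = \det V_k - \det W_k.
\]

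For $\sum_{k=1}^n \det V_k$, the argument is the same as in the proof of Lemma~\ref{lem.det-uc}: applying Lemma~\ref{lem.det-u-cor} directly gives $\sum_{k=1}^n \det V_k = \det(u_{i,j+[n=j]})_{i,j \in [n]}$. This is the easy half.

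The main point, and the only genuine difference from Lemma~\ref{lem.det-uc}, is handling $\sum_{k=1}^n \det W_k$: since the $p$-factors now depend on the column index $j$ rather than the row index, we cannot simply pull them out of the determinant. The natural trick is to let $D$ be the diagonal matrix with entries $p_1, p_2, \ldots, p_n$, so that $UD$ is the matrix with $(i,j)$-entry $p_j u_{i,j}$; then $W_k = U\underset{\operatorname{row}}{\overset{k}{\leftarrow}} UD$. Now apply Lemma~\ref{determinant.sum} with $P = U$ and $Q = UD$ to obtain
\[
\sum_{k=1}^n \det\!\left(U\underset{\operatorname{row}}{\overset{k}{\leftarrow}} UD\right) = \sum_{k=1}^n \det\!\left(U\underset{\operatorname{col}}{\overset{k}{\leftarrow}} UD\right).
\]
The matrix $U\underset{\operatorname{col}}{\overset{k}{\leftarrow}} UD$ is just $U$ with its $k$-th column multiplied by the scalar $p_k$ (because the $k$-th column of $UD$ is $p_k$ times the $k$-th column of $U$), so its determinant is $p_k \det U$. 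Hence $\sum_{k=1}^n \det W_k = \left(\sum_{k=1}^n p_k\right)\det U = \left(\sum_{k=1}^n p_k\right)\det(u_{i,j})_{i,j \in [n]}$.

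Combining the two halves gives the claimed identity. The expected obstacle is really just recognizing that the $p_j$-dependence needs to be transferred from rows to columns before it can be simplified, which is exactly what Lemma~\ref{determinant.sum} is designed for; everything else is bookkeeping that parallels the proof of Lemma~\ref{lem.det-uc}.
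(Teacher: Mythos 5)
Your proof is correct. Let me compare it with the paper's approach.

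The paper proves Lemma~\ref{lem.det-u} by copying the proof of Lemma~\ref{lem.det-uc} (which is based on explicit Laplace expansion along the $k$-th row) and noting that the one step that changes is the handling of the $\sum_{k}\sum_{\ell}(-1)^{k+\ell}p_{\ell}u_{k,\ell}\det(U_{\sim k,\sim\ell})$ term; since $p_{\ell}$ now depends on the column index, one first swaps the order of summation and then applies \emph{column} Laplace expansion (\eqref{pf.lem.det-u.lapc}) rather than row Laplace expansion. Your proof replaces this manual Laplace-expansion bookkeeping with a cleaner conceptual step: by writing the $p_j$-scaled row as the $k$-th row of $UD$ for the diagonal matrix $D = \operatorname{diag}(p_1,\ldots,p_n)$, you can invoke Lemma~\ref{determinant.sum} directly with $P = U$, $Q = UD$ to move the $p_j$-dependence from rows to columns, after which each $\det\bigl(U\underset{\operatorname{col}}{\overset{k}{\leftarrow}}UD\bigr) = p_k\det U$ is immediate from column-linearity. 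This is a genuine simplification in the sense that it makes the ``transfer rows to columns'' step an explicit one-line application of an already-proved lemma, rather than an in-line computation; it also nicely explains \emph{why} the $p_j$-version and the $p_i$-version (Lemma~\ref{lem.det-uc}) end up with the same right-hand side. The two proofs are equivalent at the level of ideas --- Lemma~\ref{determinant.sum} is itself proved by Laplace expansion and sum-swapping --- but your packaging is tidier.
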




\begin{proof}[Proof of Lemma \ref{lem.det-u}.]
Proceed exactly as in the above proof of Lemma \ref{lem.det-uc}, replacing the $p_j$ by $p_i$. The only nontrivial change is replacing the computation \eqref{pf.lem.det-u.term1} by
\begin{align}
& \sum_{k=1}^{n}\ \ \sum_{\ell=1}^{n}\left(  -1\right)  ^{k+\ell}p_{\ell
}u_{k,\ell}\det\left(  U_{\sim k,\sim\ell}\right)  \nonumber\\
& =\sum_{\ell=1}^{n}p_{\ell}\sum_{k=1}^{n}\left(  -1\right)  ^{k+\ell
}\underbrace{u_{k,\ell}}_{\substack{=U_{k,\ell}\\\text{(by the definition of
}U\text{)}}}\det\left(  U_{\sim k,\sim\ell}\right)  \nonumber\\
& =\sum_{\ell=1}^{n}p_{\ell}\underbrace{\sum_{k=1}^{n}\left(  -1\right)
^{k+\ell}U_{k,\ell}\det\left(  U_{\sim k,\sim\ell}\right)  }_{\substack{=\det
U\\\text{(by (\ref{pf.lem.det-u.lapc}), applied to }A=U\text{)}}}
=\sum_{\ell=1}^{n}p_{\ell}\det U=\sum_{k=1}^{n}p_{k}\det U
\nonumber\\
&=\left(
\sum_{k=1}^{n}p_{k}\right)  \det\underbrace{U}_{=\left(  u_{i,j}\right)
_{i,j\in\left[  n\right]  }}
=\left(  \sum_{k=1}^{n}p_{k}\right)  \det\left(  u_{i,j}\right)
_{i,j\in\left[  n\right]  }.
\nonumber
\end{align}
\end{proof}

\subsection{To Section \ref{sec.konva-forreal}}

Finally, we note that Lemma \ref{prop.Delta.disjoint} has a converse:

\begin{proposition}
\label{prop.Delta.disjoint-conv} Let $\lambda$ be any partition. Let
$p\in\mathbb{Z}$. Then, $p\in\Delta\left(  \lambda\right)  $ if and only if
$-1-p\notin\Delta\left(  \lambda^{t}\right)  $.
\end{proposition}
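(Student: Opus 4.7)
The $(\Rightarrow)$ direction of this equivalence is precisely Lemma~\ref{prop.Delta.disjoint}, so all the work goes into the converse: assuming $p \notin \Delta(\lambda)$, the plan is to produce an explicit $j \geq 1$ with $\lambda^t_j - j = -1-p$. By Lemma~\ref{lem.l-decrease} the sequence $\lambda_1 - 1 > \lambda_2 - 2 > \lambda_3 - 3 > \cdots$ is strictly decreasing, and eventually equals $-i$; hence, extending by the convention $\lambda_0 := +\infty$, the set $\{i \geq 0 \mid \lambda_i - i > p\}$ is a nonempty finite set containing $0$. I would let $k$ be its maximum and set $j := p + k + 1$.

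Once $j \geq 1$ is in hand, the verification that $\lambda^t_j - j = -1-p$ is short. By the choice of $k$, we have $\lambda_k \geq p + k + 1 = j$, and $\lambda_{k+1} - (k+1) \leq p$ by maximality; the hypothesis $p \notin \Delta(\lambda)$ then upgrades this last inequality to $\lambda_{k+1} - (k+1) < p$, so $\lambda_{k+1} < j$. Formula (\ref{eq.def.lambdat.3}), together with the convention that the max of the empty set is $0$, yields $\lambda^t_j = k$ (both when $k \geq 1$, using $\lambda_k \geq j > \lambda_{k+1}$ and the monotonicity of $\lambda$, and when $k = 0$, since $\lambda_1 < j$ forces $\lambda^t_j = 0$). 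Hence $\lambda^t_j - j = k - (p+k+1) = -1-p$, i.e., $-1-p \in \Delta(\lambda^t)$.

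The crux of the argument is showing $j \geq 1$, since the identity $\lambda^t_j = k$ only makes sense when $j$ is a positive integer. Suppose, for contradiction, $j \leq 0$; then $p \leq -k - 1$, so $p < 0$ and $-p$ is a positive integer with $-p \geq k + 1 > k$. I would split on $\lambda_{-p}$: if $\lambda_{-p} > 0$, then $\lambda_{-p} > 0 = p + (-p)$, which places $-p$ into the set whose maximum is $k$, contradicting $-p > k$; if $\lambda_{-p} = 0$, then $\lambda_{-p} - (-p) = p$, which shows $p \in \Delta(\lambda)$ and contradicts the standing hypothesis. Either way $j \leq 0$ is impossible, so $j \geq 1$ and the construction succeeds.
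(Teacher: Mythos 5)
Correct, and essentially the same approach as the paper: both isolate the same threshold index (your $k$ is the largest $i\geq 0$ with $\ell_i > p$; the paper takes the largest $i$ with $\ell_i\geq p$ and, inside a contradiction, reduces to $\ell_i>p$, at which point the paper's argument that $\lambda^t_{i+p+1}=i$ is precisely your argument that $\lambda^t_j=k$), differing only in that you argue the contrapositive directly and compute $\lambda^t_j$ via \eqref{eq.def.lambdat.3} rather than Lemma~\ref{lem.conj.uniprop}. One small simplification: your case split on $\lambda_{-p}$ to prove $j\geq 1$ is unnecessary, since the inequality $\lambda_{k+1}<j$ (which you derive anyway, and whose derivation does not depend on first knowing $j\geq 1$) immediately gives $j>\lambda_{k+1}\geq 0$ and hence $j\geq 1$ --- this is the paper's one-line justification.
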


\begin{proof}
[Proof of Proposition \ref{prop.Delta.disjoint-conv}.]The \textquotedblleft
only if\textquotedblright\ direction was Lemma \ref{prop.Delta.disjoint}. It
remains to prove the \textquotedblleft if\textquotedblright\ direction. Thus,
we assume that $-1-p\notin\Delta\left(  \lambda^{t}\right)  $. Our goal is to
prove that $p\in\Delta\left(  \lambda\right)  $.

We will use Convention \ref{conv.further.convs}. Set $\lambda_{0}:=\infty$ and $\ell_{0}:=\infty$.
Thus, the equality $\ell_{i}=\lambda_{i}-i$ holds not only for all $i\geq1$
(for which it follows from Convention \ref{conv.further.convs}), but also for
all $i=0$. In other words, $\ell_{i}=\lambda_{i}-i$ for each $i\in\mathbb{N}$.

Also, $\ell_{0}>\ell_{1}>\ell_{2}>\ell_{3}>\cdots$ (this is proved as in the
proof of Lemma \ref{lem.ER-bb} above). Hence, there are only finitely many
$i\in\mathbb{N}$ that satisfy $\ell_{i}\geq p$. Moreover, there exists at
least one such $i$ (namely, $i=0$, since $\ell_{0}=\infty\geq p$). Consider
the \textbf{largest} such $i$. Then, $\ell_{i}\geq p$ but $\ell_{i+1}<p$.

We shall show that $\ell_{i}=p$.

Indeed, assume the contrary. Thus, $\ell_{i}\neq p$. Combining this with
$\ell_{i}\geq p$, we obtain $\ell_{i}>p$. In other words, $\lambda_{i}-i>p$
(since $\ell_{i}=\lambda_{i}-i$). In other words, $\lambda_{i}>i+p$. Hence,
$\lambda_{i}\geq i+p+1$ (since $\lambda_{i}$ and $i+p$ are integers or
$\infty$).

Also, we have $\ell_{i+1}=\lambda_{i+1}-\left(  i+1\right)  =\lambda
_{i+1}-i-1$ and thus $\lambda_{i+1}-i-1=\ell_{i+1}<p$. In other words,
$\lambda_{i+1}-i<p+1$. Since $\lambda_{i+1}-i$ and $p+1$ are integers, this
entails $\lambda_{i+1}-i\leq\left(  p+1\right)  -1=p$. Therefore,
$\lambda_{i+1}\leq i+p<i+p+1$. Thus, $i+p+1>\lambda_{i+1}\geq0$. This shows
that $i+p+1$ is a positive integer.

Thus, Lemma \ref{lem.conj.uniprop} (applied to $i+p+1$ and $i+1$ instead of
$i$ and $j$) yields the logical equivalence
\[
\left(  \lambda_{i+p+1}^{t}\geq i+1\right)  \ \Longleftrightarrow\ \left(
\lambda_{i+1}\geq i+p+1\right)  .
\]
Since the statement $\left(  \lambda_{i+1}\geq i+p+1\right)  $ is false
(because $\lambda_{i+1}<i+p+1$), we thus conclude that the statement $\left(
\lambda_{i+p+1}^{t}\geq i+1\right)  $ is also false. In other words,
$\lambda_{i+p+1}^{t}<i+1$. Hence, $\lambda_{i+p+1}^{t}\leq i$ (since both
sides of the inequality are integers).

Moreover, Lemma \ref{lem.conj.uniprop} (applied to $i+p+1$ and $i$ instead of
$i$ and $j$) yields the logical equivalence\footnote{To be precise, this
argument only works when $i>0$. However, in the remaining case, the conclusion
($\lambda_{i+p+1}^{t}\geq i$) is obvious anyway (since $\lambda_{i+p+1}%
^{t}\geq0$), so we don't need this argument.}%
\[
\left(  \lambda_{i+p+1}^{t}\geq i\right)  \ \Longleftrightarrow\ \left(
\lambda_{i}\geq i+p+1\right)  .
\]
Since the statement $\lambda_{i}\geq i+p+1$ holds, we thus conclude that
$\lambda_{i+p+1}^{t}\geq i$ holds as well.

Combining $\lambda_{i+p+1}^{t}\geq i$ with $\lambda_{i+p+1}^{t}\leq i$, we
find $\lambda_{i+p+1}^{t}=i$. Now, the definition of $\ell_{i+p+1}^{t}$ yields%
\[
\ell_{i+p+1}^{t}=\underbrace{\lambda_{i+p+1}^{t}}_{=i}-\left(  i+p+1\right)
=i-\left(  i+p+1\right)  =-1-p.
\]
Hence,%
\begin{align*}
-1-p  & =\ell_{i+p+1}^{t}\in\left\{  \ell_{1}^{t},\ell_{2}^{t},\ell_{3}%
^{t},\ldots\right\}  \ \ \ \ \ \ \ \ \ \ \left(  \text{since }i+p+1\text{ is a
positive integer}\right)  \\
& =\Delta\left(  \lambda^{t}\right)  ,
\end{align*}
which contradicts $-1-p\notin\Delta\left(  \lambda^{t}\right)  $. This
contradiction shows that our assumption was false. Hence, $\ell_{i}=p$ is proved.

Now, $i\neq0$ (since $\ell_{i}=p\neq\infty=\ell_{0}$) and thus $i\geq1$.
Hence, $\ell_{i}\in\left\{  \ell_{1},\ell_{2},\ell_{3},\ldots\right\}  $.
Thus, from $\ell_{i}=p$, we obtain%
\[
p=\ell_{i}\in\left\{  \ell_{1},\ell_{2},\ell_{3},\ldots\right\}
=\Delta\left(  \lambda\right)  ,
\]
which is precisely what we desired to prove. Thus, the proof of Proposition
\ref{prop.Delta.disjoint-conv} is complete.
\end{proof}

\section{\label{sec.gv-apx}Appendix: Deriving Proposition \ref{prop.flagJT.f} from the literature}

In this short appendix, we shall briefly outline how Proposition \ref{prop.flagJT.f} can be derived
from some known results.

\subsection{\label{subsec.gv-apx.1}Deriving Proposition \ref{prop.flagJT.f} as a consequence of Gessel--Viennot}

First, we shall show how Proposition \ref{prop.flagJT.f} can be obtained
from \cite[Theorem 3]{GesVie89}.

Let us use the notations of \cite[Theorem 3]{GesVie89}, but set $b_{i}:=1$ and
$\mu:=\varnothing$ and $k:=n$.
Furthermore, we define a labelling set
$L:=\mathbb{Z}^{2}$ (consisting of all possible boxes), a relabeling function
$\mathbf{f}$ given by $f_{r}\left(  s\right)  :=\left(  r,s\right)  \in L$,
and a weight function $w$ given by $w\left(  r,s\right)  =x_{s}+y_{s+r}$ for
each $\left(  r,s\right)  \in L$ (where we set $x_{i}=y_{i}=0$ for all
$i\leq0$). Then, $w\left(  f_{r}\left(  s\right)  \right)  =x_{s}+y_{s+r}$ for
any $r$ and $s$. Now, \cite[Theorem 3]{GesVie89} says that
\begin{align}
& \left(  \text{the sum of the weights of }\mathbf{f}\left(  T\right)  \text{
over all tableaux }T\text{ of shape }\lambda\right.  \nonumber\\
& \ \ \ \ \ \ \ \ \ \ \left.  \text{satisfying }1\leq T_{i,j}\leq d_{i}\text{
for all }\left(  i,j\right)  \in Y\left(  \lambda\right)  \right)  \nonumber\\
& =\det\left(  H_{\mathbf{f}}\left(  -i+1,\ 1,\ \lambda_{j}-j+1,\ d_{j}%
\right)  \right)  _{i,j\in\left[  n\right]  }.
\label{eq.from-gessel.1}
\end{align}
However, the left hand side of this equality is easily seen to be exactly our%
\[
\sum_{T\in\operatorname{FSSYT}\tup{\mu,\mathbf{b}}}\ \ \prod_{\left(  i,j\right)
\in Y\left(  \mu\right)  }\left(  x_{T\left(  i,j\right)  }+y_{T\left(
i,j\right)  +j-i}\right)  ,
\]
if we rename $\lambda$ and $d_{i}$ as $\mu$ and $b_{i}$. On the other hand,
the right hand side of \eqref{eq.from-gessel.1} (after the same renaming) becomes our%
\[
\det\left(  h\left(  \mu_{j}-j+i,\ \ b_{j},\ \ 1-i\right)  \right)
_{i,j\in\lbrack n]},
\]
because every $i,j\in\left[  n\right]  $ satisfy
\begin{align*}
& H_{\mathbf{f}}\left(  -i+1,\ 1,\ \lambda_{j}-j+1,\ d_{j}\right)  \\
& =\sum_{1\leq n_{-i+1}\leq n_{-i+2}\leq\cdots\leq n_{\lambda_{j}-j}\leq
d_{j}}w\left(  f_{-i+1}\left(  n_{-i+1}\right)  \right)  w\left(
f_{-i+2}\left(  n_{-i+2}\right)  \right)  \cdots w\left(  f_{\lambda_{j}%
-j}\left(  n_{\lambda_{j}-j}\right)  \right)  \\
& =\sum_{1\leq n_{-i+1}\leq n_{-i+2}\leq\cdots\leq n_{\lambda_{j}-j}\leq
d_{j}}\ \ \prod_{g=-i+1}^{\lambda_{j}-j}\underbrace{w\left(  f_{g}\left(
n_{g}\right)  \right)  }_{=x_{n_{g}}+y_{n_{g}+g}}\\
& =\sum_{1\leq n_{-i+1}\leq n_{-i+2}\leq\cdots\leq n_{\lambda_{j}-j}\leq
d_{j}}\ \ \prod_{g=-i+1}^{\lambda_{j}-j}\left(  x_{n_{g}}+y_{n_{g}+g}\right)
\\
& =\sum_{1\leq m_{1}\leq m_{2}\leq\cdots\leq m_{\lambda_{j}-j+i}\leq d_{j}%
}\ \ \prod_{g=1}^{\lambda_{j}-j+i}\left(  x_{m_{g}}+y_{m_{g}+g-i}\right)  \\
& \ \ \ \ \ \ \ \ \ \ \ \ \ \ \ \ \ \ \ \ \left(  \text{here, we have shifted
the indices by setting }m_{g}=n_{g-i}\right)  \\
& =\sum_{\substack{\left(  m_{1},m_{2},\ldots,m_{\lambda_{j}-j+i}\right)
\in\left[  d_{j}\right]  ^{\lambda_{j}-j+i};\\m_{1}\leq m_{2}\leq\cdots\leq
m_{\lambda_{j}-j+i}}}\ \ \prod_{g=1}^{\lambda_{j}-j+i}\left(  x_{m_{g}%
}+y_{m_{g}+g-i}\right)  \\
& =h\left(  \lambda_{j}-j+i,\ \ d_{j},\ \ 1-i\right)
\ \ \ \ \ \ \ \ \ \ \left(  \text{using our Definition \ref{defh}}\right)  .
\end{align*}

Hence, the equality \eqref{eq.from-gessel.1} becomes
\begin{align*}
\sum_{T\in\operatorname{FSSYT}\tup{\mu,\mathbf{b}}}\ \ \prod_{\left(
i,j\right)  \in Y\left(  \mu\right)  }\left(  x_{T\left(  i,j\right)
}+y_{T\left(  i,j\right)  +j-i}\right) 
& =\det\left(  h\left(  \mu_{j}-j+i,\ \ b_{j},\ \ 1-i\right)  \right)
_{i,j\in\lbrack n]}\\
& =\det\left(  h\left(  \mu_{i}-i+j,\ \ b_{i},\ \ 1-j\right)  \right)
_{i,j\in\lbrack n]}%
\end{align*}
(since $\det\left(  A^{T}\right)  =\det A$ for any square matrix $A$).
Proposition \ref{prop.flagJT.f} thus follows.

\subsection{\label{subsec.gv-apx.2}Deriving Proposition \ref{prop.flagJT.f} from Chen--Li--Louck}

In their study of flagged double Schur functions, Chen, Li and Louck have obtained a determinantal formula \cite[second bullet point after Theorem 4.2]{ChLiLo02} that can, too, be used to prove Proposition \ref{prop.flagJT.f}.
We shall recall this formula, and then very tersely outline the derivation.

For any $a, b, c \in \ZZ$, we define an element $h_a\tup{X_b / Y_c}$ of $R$ by
\[
h_a\tup{X_b / Y_c} :=
\tup{\text{the coefficient of } t^a \text{ in the power series }
\dfrac{\prod_{i=1}^c \tup{1 + y_it}}{\prod_{j=1}^b \tup{1 - x_jt}}
\in R\ive{\ive{t}}}
\]
(understanding empty products as $1$ as usual).
Then, \cite[second bullet point after Theorem 4.2]{ChLiLo02} (translated into our language, and upon the substitution $y_j \mapsto -y_j$) says that
\begin{align*}
	\sum_{T \in \FSSYT(\mu, \bb)}\ \ \prod_{\tup{i,j}\in Y\tup{\mu}} \tup{x_{T\tup{i,j}} + y_{T\tup{i,j}+j-i}} =
    \det\tup{h_{\mu_i - i + j}\tup{X_{b_i} / Y_{\lambda_i + b_i - i}}}_{i, j \in \ive{n}}
\end{align*}
(with the notations of Theorem~\ref{prop.flagJT.f}).
In order to derive Proposition \ref{prop.flagJT.f} from this, it suffices to show that
\begin{align}
h\tup{a,b,c} = h_a \tup{X_b / Y_{a+b+c-1}}
\label{eq.subsec.gv-apx.2.habc=}
\end{align}
for any $a, b, c \in \ZZ$ satisfying $c \leq 0$.
This equality \eqref{eq.subsec.gv-apx.2.habc=} can, in turn, be derived again from \cite[second bullet point after Theorem 4.2]{ChLiLo02} (applied to $n = 1$, $\lambda_1 = a$ and $b_1 = b$). (To be more precise, this proves \eqref{eq.subsec.gv-apx.2.habc=} for $c = 0$, but then the general case follows by shifting the $y$-variables.)

\end{document}